\newcommand{\C}{\mathbb{C}}
\newcommand{\Hcal}{\mathcal{H}}
\newcommand{\param}{\mathfrak{c}}
\newcommand{\paramq}{\mathfrak{c}}
\newcommand{\Ocat}{\mathcal{O}}
\newcommand{\Str}{O}
\newcommand{\A}{\mathcal{A}}
\newcommand{\Pro}{\mathcal{P}}
\newcommand{\Loc}{\operatorname{Loc}}
\newcommand{\codim}{\operatorname{codim}}
\newcommand{\Coh}{\operatorname{Coh}}
\newcommand{\Hom}{\operatorname{Hom}}
\newcommand{\Ext}{\operatorname{Ext}}
\newcommand{\Z}{\mathbb{Z}}
\newcommand{\Ca}{\mathsf{C}}
\newcommand{\Q}{\mathbb{Q}}
\newcommand{\g}{\mathfrak{g}}
\newcommand{\h}{\mathfrak{h}}
\newcommand{\bor}{\mathfrak{b}}
\newcommand{\lf}{\mathfrak{l}}
\newcommand{\Orb}{\mathbb{O}}
\newcommand{\quo}{/\!/}
\newcommand{\ring}{\mathsf{R}}
\newcommand{\Weyl}{\mathbf{A}}
\newcommand{\HC}{\operatorname{HC}}
\newcommand{\B}{\mathcal{B}}
\newcommand{\F}{\mathbb{F}}
\newcommand{\End}{\operatorname{End}}
\newcommand{\Ecal}{\mathcal{E}}
\newcommand{\gr}{\operatorname{gr}}
\newcommand{\U}{\mathcal{U}}
\newcommand{\M}{\mathcal{M}}
\newcommand{\GL}{\operatorname{GL}}
\newcommand{\VA}{\operatorname{V}}
\newcommand{\Pic}{\operatorname{Pic}}
\newcommand{\Fr}{\operatorname{Fr}}
\newcommand{\R}{\mathbb{R}}
\newtheorem{Thm}{Theorem}[subsection]
\newtheorem{Prop}[Thm]{Proposition}
\newtheorem{Cor}[Thm]{Corollary}
\newtheorem{Lem}[Thm]{Lemma}
\theoremstyle{definition}
\newtheorem{Ex}[Thm]{Example}
\newtheorem{defi}[Thm]{Definition}
\newtheorem{Rem}[Thm]{Remark}
\newtheorem{Conj}[Thm]{Conjecture}
\numberwithin{equation}{section}
\title{Localization theorems for quantized symplectic resolutions}
\author{Ivan Losev}
\thanks{Yale University, New Haven, CT, and IAS, Princeton, NJ}
\thanks{e-mail: ivan.loseu@gmail.com}
\thanks{MSC 2010: 16G99}
\begin{document}
\begin{abstract}
The goal of this paper is to establish Beilinson-Bernstein type localization theorems
for quantizations of some conical symplectic resolutions. We prove the full
localization theorems for finite and affine type A Nakajima quiver varieties.
The proof is based on two partial results that hold in  more general situations.
First, we establish an exactness result for global section functor if there is
a tilting generator that has a rank 1 summand. Second, we examine when the global
section functor restricts to an equivalence between categories $\mathcal{O}$.
\end{abstract}
\maketitle
\tableofcontents
\section{Introduction}
\subsection{Beilinson-Bernstein localization theorems}\label{SS_BB_review}
In this section we recall classical results of Beilinson and Bernstein,
\cite{BB_abel,BB_derived} on the localization theorems for semisimple
Lie algebras. These theorems are of crucial importance for Lie representation
theory.

Let $G$ be a semisimple algebraic group over $\C$, $\g$ be its Lie algebra
and $\U:=U(\g)$ be the universal enveloping algebra. We pick Cartan and
Borel subalgebras $\h\subset \bor\subset \g$ and let $W$ denote the
Weyl group. The center of $\U$ is identified with $\C[\h^*]^W$
by means of the Harish-Chandra isomorphism (we use the usual Weyl group
action on $\mathfrak{h}^*$ so that the central character of the trivial $\g$-module
is $W\rho$, where $\rho\in \h^*$ denotes half the sum of positive roots). So, for $\lambda\in \h^*$,
we can consider the central reduction $\U_\lambda$ of $\U$ by the maximal
ideal in $\C[\h^*]^W$ consisting of all elements that vanish at $\lambda$.

On the other hand, consider the flag variety $\mathcal{B}$ for $\g$.
 Let
$\mathcal{D}_\lambda$ denote the sheaf of $(\lambda-\rho)$-twisted differential
operators on $\mathcal{B}$. The algebra of global sections $\Gamma(\mathcal{D}_\lambda)$
is identified with $\U_\lambda$, while the higher cohomology spaces of $\mathcal{D}_\lambda$ vanish.
So we can consider the global section functor $\Gamma: \Coh(\mathcal{D}_\lambda)
\rightarrow \U_\lambda\operatorname{-mod}$ and its derived functor
$R\Gamma: D^b(\Coh(\mathcal{D}_\lambda))
\rightarrow D^b(\U_\lambda\operatorname{-mod})$. The functor $\Gamma$
has left adjoint, the localization functor $\Loc:=\mathcal{D}_\lambda\otimes_{\U_\lambda}\bullet$.
Its derived functor $L\Loc: D^-(\U_\lambda\operatorname{-mod})\rightarrow
D^-(\Coh(\mathcal{D}_\lambda))$ is left adjoint of $R\Gamma$ viewed as a functor
between the $D^-$ categories.

The following theorem is the main result of \cite{BB_abel}.

\begin{Thm}\label{Thm:BB_abel}
Suppose that $\langle\lambda,\alpha^\vee\rangle\not\in \Z_{< 0}$ for all positive
coroots $\alpha^\vee$. Then $\Gamma$ is an exact functor. Moreover, if
$\langle\lambda,\alpha^\vee\rangle\not\in \Z_{\leqslant 0}$ for all positive
coroots $\alpha^\vee$, then $\Gamma$ is an equivalence.
\end{Thm}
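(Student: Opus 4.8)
The plan is to prove the classical Beilinson--Bernstein theorem in two stages, mirroring the structure of the statement. First I would establish exactness of $\Gamma$, and second, under the stronger hypothesis, upgrade this to an equivalence.

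\smallskip

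\emph{Exactness.} Since $\mathcal{B}$ is projective and $\mathcal{D}_\lambda$ is a coherent sheaf of algebras whose global sections give $\U_\lambda$ with vanishing higher cohomology, the functor $R\Gamma$ is defined on $D^b(\Coh(\mathcal{D}_\lambda))$ and its restriction to an individual coherent $\mathcal{D}_\lambda$-module $M$ is computed by the sheaf cohomology $H^i(\mathcal{B},M)$, which vanishes for $i>\dim\mathcal{B}$. To prove $\Gamma$ is exact it suffices to show $H^i(\mathcal{B},M)=0$ for all $i>0$ and all coherent $\mathcal{D}_\lambda$-modules $M$. The key input is that, when $\langle\lambda,\alpha^\vee\rangle\notin\Z_{<0}$ for all positive coroots, the line bundle governing the twist is, in the appropriate sense, ``non-negative'', and one can filter $M$ by coherent $\Str_{\mathcal{B}}$-submodules whose associated graded pieces are coherent sheaves tensored with sufficiently positive powers of an ample line bundle; the standard argument is to use the Euler vector field / the action of the center of $\U$ together with the $\GL$-equivariant structure to reduce cohomology vanishing for $\mathcal{D}_\lambda$-modules to Serre vanishing on $\mathcal{B}$. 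More precisely, one shows that $R\Gamma$ kills no nonzero object: if $R\Gamma(M)=0$ for $M\in D^b(\Coh(\mathcal{D}_\lambda))$ then $M=0$, by a support/induction-on-$\dim\Supp$ argument using that local cohomology along a closed $G$-orbit again lands in modules built from positive line bundles. Combining no-kernel with the cohomological amplitude bound forces each $H^i$, $i>0$, to vanish.

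\smallskip

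\emph{Equivalence.} Under $\langle\lambda,\alpha^\vee\rangle\notin\Z_{\leqslant0}$, I would show that $\Loc$ and $\Gamma$ are mutually inverse. One direction is formal from exactness plus the adjunction: the counit $\Loc\circ\Gamma\to\mathrm{id}$ and unit $\mathrm{id}\to\Gamma\circ\Loc$ are natural transformations, and it suffices to check they are isomorphisms. For the unit, $\Gamma\circ\Loc(\U_\lambda)=\Gamma(\mathcal{D}_\lambda)=\U_\lambda$ by hypothesis, and since $\Gamma$ is exact and commutes with direct sums while $\Loc$ is right exact, the unit is an isomorphism on all of $\U_\lambda\text{-mod}$ by taking free presentations. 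For the counit, one must show $\Loc$ is exact (equivalently, has no higher derived functors on the relevant category) and that $\Loc\circ\Gamma(M)\to M$ is an isomorphism for every coherent $\mathcal{D}_\lambda$-module $M$; here the strict inequality excluding $0$ is exactly what is needed to rule out the ``small'' $\mathcal{D}_\lambda$-modules (those with no global sections, which exist precisely at the walls), so that $\Gamma$ is faithful on $\Coh(\mathcal{D}_\lambda)$, and a standard argument with the exactness of both functors and a five-lemma/devissage on supports then promotes faithfulness to the counit being an isomorphism.

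\smallskip

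The main obstacle is the cohomology-vanishing step $H^i(\mathcal{B},M)=0$ for $i>0$: unlike Serre vanishing it must hold for \emph{all} coherent $\mathcal{D}_\lambda$-modules simultaneously, with no twist by an ample bundle available, and the proof genuinely uses the representation theory of $\g$ (the action of a principal $\mathfrak{sl}_2$, or the translation principle reducing to the case of regular dominant $\lambda$ where one can compare with $\lambda=\rho$ and the dualizing sheaf). Establishing faithfulness of $\Gamma$ at the boundary of the dominant chamber --- i.e.\ that the strict inequality $\langle\lambda,\alpha^\vee\rangle\neq 0$ rules out nonzero modules killed by $\Gamma$ --- is the second delicate point, and is what distinguishes the equivalence statement from mere exactness.
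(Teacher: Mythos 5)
The paper does not prove Theorem~\ref{Thm:BB_abel} directly; it quotes it as the main result of \cite{BB_abel} and only later (Section~1.6.1) remarks that the machinery of Theorems~\ref{Thm:exactness} and~\ref{Thm:O_regular}, specialized to $X = T^*\mathcal{B}$ with the characteristic-$p$ tilting bundle, recovers it --- while calling this ``perhaps, the hardest way to prove these theorems.'' Your sketch follows the classical Beilinson--Bernstein route (direct cohomology vanishing on $\mathcal{B}$, then conservativity of $\Gamma$), which is a genuinely different and much more direct path than anything in the paper.

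However, the mechanism you describe for the exactness step would fail as written. You propose filtering $M$ by coherent $\Str_{\mathcal{B}}$-submodules ``whose associated graded pieces are coherent sheaves tensored with sufficiently positive powers of an ample line bundle.'' For $M$ not $\Str$-coherent, the natural exhaustion $M_k = \mathcal{D}_\lambda^{\leqslant k}\cdot M_0$ has subquotients governed by symmetric powers of the tangent bundle, which do not become increasingly positive up the filtration, so Serre vanishing gives no control over $H^{>0}(\mathcal{B},M)$. Your final paragraph correctly observes that no ample twist is available and that the representation theory of $\g$ must enter --- which contradicts the filtration mechanism of the first paragraph. The missing device is the translation-functor (Casselman--Osborne) argument: tensor $M$ with $V_\mu \otimes \Str_{\mathcal{B}}$, where $V_\mu$ is the irreducible finite-dimensional $\g$-module of highest weight $\mu$; this sheaf carries a $\bor$-equivariant filtration by line bundles $\Str(\nu)$ indexed by the weights $\nu$ of $V_\mu$, and the action of the center of $\U$ on the resulting cohomology, combined with the dominance hypothesis on $\lambda$, splits off the extremal piece and relates $H^{>0}(\mathcal{B},M)$ to $H^{>0}(\mathcal{B}, M\otimes\Str(\mu))$, which one can then kill for suitable $\mu$. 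Without this device the exactness step is not a proof. On the equivalence step, your unit argument via free presentations is correct; but the claim that one must also ``show $\Loc$ is exact'' has the logic inverted --- once $\Gamma$ is exact and the unit is an isomorphism, $\Gamma$ is a Serre quotient functor, and it becomes an equivalence precisely when it is conservative (the point you do correctly flag at the end); exactness of $\Loc$ then follows for free as the quasi-inverse of an exact equivalence.
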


The derived analog of this theorem was obtained in \cite{BB_derived}.

\begin{Thm}\label{Thm:BB_derived}
Suppose that $\langle\lambda,\alpha^\vee\rangle\neq 0$ for all
coroots $\alpha^\vee$. Then $R\Gamma$ is an equivalence.
\end{Thm}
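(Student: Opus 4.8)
The plan is to prove that, for the adjoint pair $L\Loc\dashv R\Gamma$, both the unit $\operatorname{id}\to R\Gamma\circ L\Loc$ and the counit $L\Loc\circ R\Gamma\to\operatorname{id}$ are isomorphisms. For the unit, observe that since $\Gamma(\mathcal{D}_\lambda)=\U_\lambda$ and the higher cohomology of $\mathcal{D}_\lambda$ vanishes, we have $R\Gamma\circ L\Loc(\U_\lambda)=R\Gamma(\mathcal{D}_\lambda)=\U_\lambda$, and the unit on this object is the identity. The functor $L\Loc$ is a left adjoint, and $R\Gamma$ commutes with arbitrary direct sums because $\mathcal{B}$ is a noetherian space of finite cohomological dimension; hence the full triangulated subcategory of objects on which the unit is an isomorphism is closed under arbitrary direct sums. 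It contains the free rank-one module $\U_\lambda$, and $\U_\lambda$ generates $D^-(\U_\lambda\operatorname{-mod})$ under shifts, cones and direct sums, so the unit is an isomorphism on all of $D^-$, in particular on $D^b$. (Here regularity of $\lambda$ enters to ensure that $\U_\lambda$ has finite homological dimension, so that $L\Loc$ and $R\Gamma$ preserve the bounded derived categories and $\U_\lambda$ generates $D^b(\U_\lambda\operatorname{-mod})$ as a thick subcategory.) Thus $L\Loc$ is fully faithful.

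Given this, it suffices to show that $R\Gamma\colon D^b(\Coh(\mathcal{D}_\lambda))\to D^b(\U_\lambda\operatorname{-mod})$ is conservative. Indeed, for any object $X$ apply $R\Gamma$ to the counit $\varepsilon_X\colon L\Loc\,R\Gamma(X)\to X$: by the triangle identity and full faithfulness of $L\Loc$ the morphism $R\Gamma(\varepsilon_X)$ is inverse to the (invertible) unit, hence an isomorphism, so $R\Gamma$ kills the cone of $\varepsilon_X$; conservativity then forces $\varepsilon_X$ to be an isomorphism, and $R\Gamma$ is an equivalence. To prove conservativity I would reduce to the dominant regular case. For $w\in W$, the theory of intertwining (twisting) functors on twisted differential operators on $\mathcal{B}$ produces a functor $I_w\colon D^b(\Coh(\mathcal{D}_\lambda))\to D^b(\Coh(\mathcal{D}_{w\lambda}))$ that is an equivalence when $\lambda$ is regular and that intertwines global sections, $R\Gamma_{w\lambda}\circ I_w\cong R\Gamma_\lambda$ under the canonical identification $\U_\lambda=\U_{w\lambda}$ (possibly up to a homological shift, which is irrelevant here). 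Now pick $w$ so that $w\lambda$ is dominant and regular -- possible precisely because $\lambda$ is regular (using the integral Weyl group of $\lambda$ if $\lambda$ is not integral) -- so that the stronger hypothesis of Theorem~\ref{Thm:BB_abel} holds for $w\lambda$. By that theorem $\Gamma\colon\Coh(\mathcal{D}_{w\lambda})\to\U_{w\lambda}\operatorname{-mod}$ is an exact equivalence of abelian categories, hence $R\Gamma_{w\lambda}$ is an equivalence on derived categories, in particular conservative; therefore $R\Gamma_\lambda\cong R\Gamma_{w\lambda}\circ I_w$ is conservative, and we are done.

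The adjunction formalism, the cohomology-vanishing input $R\Gamma(\mathcal{D}_\lambda)=\U_\lambda$, and the generation argument in the first paragraph are all routine. The real work -- and the main obstacle -- is the construction and analysis of the intertwining functors: one defines the kernels on $\mathcal{B}\times\mathcal{B}$ attached to simple reflections, shows that composing them along a reduced expression for $w$ is independent of the expression, proves that for regular $\lambda$ the resulting $I_w$ is invertible (this is exactly where regularity is indispensable -- for singular $\lambda$ these functors cease to be equivalences), and computes $R\Gamma$ of the kernels to obtain the intertwining relation with the global section functors. A more hands-on alternative, bypassing intertwining functors, would be to prove conservativity directly by noetherian induction on the support of a coherent $\mathcal{D}_\lambda$-module together with a Borel--Weil--Bott type computation on the locally closed strata; this is heavier, so I would favor the reduction above.
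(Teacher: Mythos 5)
Your proposal is correct and follows the classical route of \cite{BB_derived}, which is precisely the reference the paper cites for this theorem; the paper itself does not reproduce a proof at this point in the text. The two-step plan you give — full faithfulness of $L\Loc$ from the cohomology vanishing $R\Gamma(\mathcal{D}_\lambda)=\U_\lambda$ plus generation by the free module, followed by conservativity of $R\Gamma$ via intertwining (Radon-transform) functors that move $\lambda$ into the dominant regular locus covered by Theorem~\ref{Thm:BB_abel} — is the standard one, and you have correctly isolated where regularity is essential (invertibility of the intertwining functors, and finite global dimension of $\U_\lambda$ so that the bounded derived categories are preserved and $\U_\lambda$ generates $D^b$ as a thick subcategory). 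The small wrinkle in your first paragraph — phrasing the generation step in terms of closure under arbitrary direct sums inside $D^-$, which is not literally closed under such sums — is harmless: the cleaner version works with the thick subcategory of $D^b$ generated by $\U_\lambda$, using the finite global dimension you invoke anyway.

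That said, the paper remarks in Section~1.7.1 that it recovers Theorems~\ref{Thm:BB_abel} and \ref{Thm:BB_derived} by a genuinely different (and, by its own admission, harder) route. There the argument runs through the positive-characteristic tilting machinery: one constructs a tilting bundle $E^\theta$ on $T^*\mathcal{B}$ from a Frobenius-constant quantization at $p\gg 0$, verifies property ($\heartsuit$) (that $\mathcal{O}_X$ is a summand, via \cite{BMR_sing,BM}), quantizes to get $\Hcal_\lambda=\Gamma(\mathcal{E}nd(\mathcal{E}^\theta_\lambda))$ with its idempotent $e$ satisfying $e\Hcal_\lambda e=\U_\lambda$, and then applies Lemma~\ref{Lem:derived_localization_connection}: derived localization holds iff $\Hcal_\lambda e\Hcal_\lambda=\Hcal_\lambda$ iff $\U_\lambda$ has finite homological dimension, the last being the classical consequence of $\lambda$ being regular. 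Your intertwining-functor proof is self-contained in the world of twisted $\mathcal{D}$-modules on $\mathcal{B}$ and is considerably more economical for this particular case; the paper's route buys uniformity, since it applies verbatim to quantized symplectic resolutions for which no analogue of intertwining functors is available.
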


\subsection{Generalizations to quantized symplectic resolutions}
The two theorems quoted above fit into a more general framework of
{\it quantized conical symplectic resolutions}. Here we start with a pair
of varieties $X$ and $Y$, where $Y$ is affine and a morphism $X\rightarrow Y$ that is
a resolution of singularities. These data is supposed to satisfy
a number of assumptions, most importantly, that $X$ is symplectic,
i.e., carries an algebraic symplectic form, and that the algebra
$\C[Y]$ is positively graded. For example, for
$Y$ one can take the nilpotent cone in $\g$ and for $X$
its Springer resolution, that is, $X=T^*\mathcal{B}$.

One can talk about filtered quantizations of $X$. Roughly speaking,
those are sheaves of filtered algebras on $X$ that deform the structure
sheaf $\Str_X$ viewed as a sheaf of Poisson algebras. For example,
we can view $\mathcal{D}_\lambda$ (or, more precisely, its
{\it microlocalizaton}) as a filtered quantization of $T^*\mathcal{B}$.
In the general case, the filtered quantizations of $X$ are parameterized
by the space $\paramq:=H^2(X,\C)$, see \cite[Section 2.3]{quant_iso}.

For $\lambda\in \paramq$, we will write $\A_\lambda^\theta$ for the filtered
quantization of $X$ corresponding to $\lambda$ (here  $\theta$ is an element of
$H^2(X,\mathbb{R})$ that encodes a choice of $X$, we will elaborate on this below,
Section \ref{SSS_classif_resol}).
The algebra $\A_\lambda:=\Gamma(\A_\lambda^\theta)$ is a
quantization of a filtered algebra $\C[Y]$ and $R^i\Gamma(\A_\lambda^\theta)=0$
for $i>0$. Further, it makes sense to speak about the category
$\Coh(\A_\lambda^\theta)$ of coherent $\A_\lambda^\theta$-modules,
see, for example, the introduction to \cite[Section 4]{BPW}
or \cite[Section 2.3]{BL}. So we can still consider
the global section functor $\Gamma: \Coh(\A_\lambda^\theta)
\rightarrow \A_\lambda\operatorname{-mod}$, its left adjoint,  the localization
functor $\Loc:\A_\lambda\operatorname{-mod}\rightarrow
\Coh(\A_\lambda^\theta)$ as well as their derived analogs.

\begin{defi}\label{defi:localization}
We say that {\it abelian localization holds} for $\A_\lambda^\theta$ if $\Gamma,\Loc$
are  mutually inverse equivalences between $\Coh(\A_\lambda^\theta)$ and $\A_\lambda\operatorname{-mod}$.
We say that {\it derived localization holds} for $\A_\lambda^\theta$ if $R\Gamma$
and $L\Loc$ are mutually inverse equivalences between $D^b(\Coh(\A_\lambda^\theta))$
and $D^b(\A_\lambda\operatorname{-mod})$.
\end{defi}

Questions to be investigated in this paper are when (i.e., for which $\lambda,\theta$)
\begin{enumerate}
\item does abelian localization hold?
\item is $\Gamma$ exact?
\item does derived localization hold?
\end{enumerate}
Note that the affirmative answer to (1) is equivalent to the affirmative answers to
both (2) and (3).

We will explain standard by now conjectures in Section \ref{SS_conjectures}
and known results in Section \ref{SS_known_results}. Then we will proceed to
explaining results of this paper.

\subsection{Conjectures}\label{SS_conjectures}
Now we state three conjectures related to questions (1)-(3). The first one characterizes
the locus of parameters where derived localization holds in terms of the algebraic properties
of $\A_\lambda$.

\begin{Conj}\label{Conj:fin_hom_dim}
The following two conditions are equivalent:
\begin{enumerate}
\item The algebra $\A_\lambda$ has finite homological dimension.
\item Derived localization holds for $\A_\lambda^\theta$.
\end{enumerate}
\end{Conj}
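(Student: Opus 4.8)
The implication $(2)\Rightarrow(1)$ is the easier one --- the analogue of the (easy) observation that regularity of $\lambda$ is necessary in Theorem \ref{Thm:BB_derived}. If $R\Gamma$ and $L\Loc$ are mutually inverse equivalences then $D^b(\Coh(\A_\lambda^\theta))\simeq D^b(\A_\lambda\operatorname{-mod})$. Since $\A_\lambda^\theta$ is a filtered quantization of the \emph{smooth} symplectic variety $X$, it is \'etale-locally a Weyl algebra, so $\Coh(\A_\lambda^\theta)$ has finite homological dimension, bounded by the sum of the local homological dimension of $\A_\lambda^\theta$ and $\dim_\C X$; a standard argument transports this finiteness across the equivalence to give $\Ext^i_{\A_\lambda}(M,N)=0$ for $i\gg 0$ and all $M,N$, which is $(1)$.

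For $(1)\Rightarrow(2)$ --- the general localization theorem --- I would first reduce to a generation statement. Assume $\A_\lambda$ has finite homological dimension, so that $D^b(\A_\lambda\operatorname{-mod})=D^{\mathrm{perf}}(\A_\lambda)$ is the thick subcategory generated by $\A_\lambda$. As $R^{>0}\Gamma(\A_\lambda^\theta)=0$, we have $L\Loc(\A_\lambda)=\A_\lambda^\theta$ and the adjunction unit $\A_\lambda\to R\Gamma L\Loc(\A_\lambda)=R\Gamma(\A_\lambda^\theta)=\A_\lambda$ is invertible. Finite homological dimension of $\A_\lambda$ bounds the Tor-amplitude of $L\Loc$ and $\dim_\C X$ bounds the cohomological amplitude of $R\Gamma$, so both restrict to functors between $D^b(\A_\lambda\operatorname{-mod})$ and $D^b(\Coh(\A_\lambda^\theta))$; since $R\Gamma L\Loc$ and the identity are then triangulated endofunctors of $D^b(\A_\lambda\operatorname{-mod})$ agreeing on the generator $\A_\lambda$, the unit $\operatorname{id}\to R\Gamma L\Loc$ is an isomorphism. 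Hence $L\Loc$ is fully faithful, with essential image the thick subcategory $\langle\A_\lambda^\theta\rangle\subseteq D^b(\Coh(\A_\lambda^\theta))$ generated by $\A_\lambda^\theta$, and $(2)$ is equivalent to the single assertion that $\A_\lambda^\theta$ classically generates $D^b(\Coh(\A_\lambda^\theta))$ --- equivalently, that $R\Gamma$ is conservative, equivalently that the counit $L\Loc R\Gamma\to\operatorname{id}$ is an isomorphism (apply $R\Gamma$ to it and use that the unit is invertible).

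I see two complementary routes to this generation statement. The structural one rests on a tilting generator $T$ of $\A_\lambda^\theta$ --- a coherent module, projective over $\Str_X$, with no higher self-extensions and generating $D^b(\Coh(\A_\lambda^\theta))$ --- chosen to have $\A_\lambda^\theta$, the ``rank one summand'' of the introduction, as a direct summand; setting $B=\End_{\A_\lambda^\theta}(T)$ one then has $D^b(\Coh(\A_\lambda^\theta))\simeq D^b(B\operatorname{-mod})$ (with $B$ of finite homological dimension, $X$ being smooth), under which $R\Gamma$ is identified with the idempotent truncation $N\mapsto eN$ and $\A_\lambda$ with $eBe$, so that $(2)$ becomes the statement that this truncation is an equivalence. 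The more hands-on route is d\'evissage along the characteristic variety: for $M\in\Coh(\A_\lambda^\theta)$ one inducts on $\dim\Supp M$, peeling off the top-dimensional part of the support (read off a good filtration) to reduce to $M$ supported on a single irreducible subvariety $\Lambda\subset X$, where one hopes to build $M$ from $\A_\lambda^\theta$ by resolutions involving the twists $\A_\lambda^\theta\otimes_{\Str_X}\mathcal{L}$ by line bundles $\mathcal{L}$ on $X$, the contracting $\C^\times$-action on $Y=\Spec\C[Y]$ supplying the ampleness needed to make such twists accessible from $\A_\lambda^\theta$.

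The main obstacle --- and the reason this remains a conjecture --- is exactly this generation/conservativity. It genuinely fails when $\A_\lambda$ has infinite homological dimension, so the hypothesis must enter in a non-formal way; the delicate point, hidden in the last step of the d\'evissage, is to control coherent $\A_\lambda^\theta$-modules supported on the contracted fibres of $X\to Y$ with vanishing derived global sections. In the cases treated in this paper one sidesteps it by transporting generation from a chamber where abelian localization is already known, using the equivalences $\Coh(\A_\lambda^\theta)\simeq\Coh(\A_{\lambda+\chi}^\theta)$ obtained by twisting with quantized line bundles ($\chi$ in the integral lattice of $\paramq$), together with the analysis of categories $\Ocat$ --- the analogue of the intertwining-functor proof of Theorem \ref{Thm:BB_derived}. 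I expect a proof in full generality to require either a classification of the admissible supports of coherent modules annihilated by $R\Gamma$, or a reduction to positive characteristic controlling $D^b(\Coh(\A_\lambda^\theta))$ uniformly in $\lambda$.
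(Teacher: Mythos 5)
Your reduction of the statement to the generation/conservativity claim for $R\Gamma$ is correct and is exactly the framework the paper uses (via Lemma \ref{Lem:derived_localization_connection}): under the equivalence $D^b(\Coh(\A_\lambda^\theta))\simeq D^b(\Hcal_\lambda\operatorname{-mod})$ coming from the quantized tilting generator $\Ecal^\theta_\lambda$, the functor $R\Gamma$ becomes $e\bullet$ for the idempotent cutting out the rank-one summand, $\A_\lambda = e\Hcal_\lambda e$, and derived localization is equivalent to $\Hcal_\lambda e\Hcal_\lambda = \Hcal_\lambda$. Your easy direction $(2)\Rightarrow(1)$ likewise matches the paper's argument (finite homological dimension of $\Coh(\A_\lambda^\theta)$ transported across the equivalence).

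Where you stop, however, is where the paper actually closes the loop under $(\heartsuit)$, and your speculation about what would be required is off the mark. You present the truncation-equivalence step $\Hcal_\lambda e\Hcal_\lambda = \Hcal_\lambda$ as the irreducible open obstruction, to be attacked either by classifying supports of $R\Gamma$-acyclic modules or by uniform characteristic-$p$ control. In fact, once one has an idempotent $e$ in a Gorenstein algebra $B=\Hcal_\lambda$ with $eBe=\A_\lambda$, the implication \emph{$eBe$ has finite homological dimension $\Rightarrow BeB=B$} is a purely homological theorem — Bezrukavnikov's result recorded as \cite[Theorem 5.5]{Etingof_affine} — provided one also has the double-centralizer property $B\cong\End_{eBe}(Be)$ and the Cohen–Macaulay/Gorenstein structure. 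The paper supplies exactly these inputs: Lemma \ref{Lem:tilting_CM} gives that $H=\End(E^\theta)$ (hence its filtered quantization $\Hcal_\lambda$) is Cohen–Macaulay and Gorenstein, and Lemma \ref{Lem:He_properties} gives $H\cong\End_{eHe}(He)$ with $eH,He$ maximal Cohen–Macaulay. So under $(\heartsuit)$ the conjecture is not an open generation problem; it is a theorem, and the non-formal role of the hypothesis that you correctly sensed is precisely the role of ``finite global dimension forces $eBe$-stable objects to see everything'' in that homological argument. Your d\'evissage-along-support route is genuinely different from what the paper does and is not developed far enough to assess, but the concrete missing ingredient in your structural route is this Gorenstein/double-centralizer theorem, not a classification of supports or a characteristic-$p$ reduction.
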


Note also that (2) easily implies (1). Conjecture \ref{Conj:fin_hom_dim}
implies, in particular, that whether derived localization holds depends only on $\A_\lambda$ and not
on the choice of $X$. In particular, if abelian localization holds for $\A_\lambda^\theta$, then derived localization
holds for $\A_\lambda^{\theta'}$ with all $\theta'$.
In all examples that we know the converse is also true:
if $\A_\lambda$ has finite homological dimension, then abelian localization
holds for  $\A_\lambda^\theta$ with some $\theta$.

The next two conjectures give a description of the locus of parameters $\lambda$
where abelian and derived localizations hold.  In order to state them we need some
terminology.

In \cite{Namikawa_birational} Namikawa  constructed a finite collection of rational
hyperplanes in $H^2(X,\C)$ that control  the deformations of both $X$ and $Y$
and also the classification of (partial) symplectic resolutions of $Y$. We call these
hyperplanes the {\it classical walls}. We will elaborate on them below
in Section \ref{SS_resolution_generalities}. For example,
when $X=T^*\mathcal{B}$, then the classical walls are exactly  the root hyperplanes.
We will also need an integral lattice  $H^2(X,\C)$: the image of
$\operatorname{Pic}(X)$ under $c_1$. For $X=T^*\mathcal{B}$ this lattice is the
weight lattice.

Starting from the classical walls we can define two collections of affine hyperplanes
in $H^2(X,\C)$ viewed as a parameter space for quantizations: {\it essential} hyperplanes and {\it singular} hyperplanes. We will give precise definitions later, Section
\ref{SS_essent_sing}, but will highlight certain features here:
\begin{enumerate}
\item Each singular hyperplane is essential. A hyperplane is essential
if and only if it is obtained from a singular one via a shift by an
integral element.
\item There are finitely many singular hyperplanes and each of them
is parallel to a classical wall.
\end{enumerate}

 For example,
when $X=T^*\mathcal{B}$, then the singular hyperplanes are again exactly the
root hyperplanes.

The essential hyperplanes can be characterized as follows: the complement to their
union consists precisely of parameters $\lambda$ such that $\A_\lambda$ is simple
and abelian localization holds for $\A_\lambda^\theta$ for all choices of $\theta$.
It is expected that a hyperplane is singular if for a ``very generic'' element $\lambda$
in the hyperplane, the algebra $\A_\lambda$ has infinite homological dimension.
A precise definition is more technical and will be given later.

Now we explain how to determine the singular hyperplanes. If $\dim H^2(X,\C)>1$, then
we can reduce the computation to that for ``smaller'' symplectic resolutions
(that are expected to have 1-dimensional second cohomology spaces). For example,
for $X=T^*\mathcal{B}$ we will get a reduction to $T^*\mathbb{P}^1$, which is a very
easy case. It is known how to completely determine the singular hyperplanes in a
number of examples.

The following conjecture concerns the locus of $\lambda$ where derived localization holds
(equivalently, modulo Conjecture \ref{Conj:fin_hom_dim}, $\A_\lambda$ has infinite homological
dimension).

\begin{Conj}\label{Conj:der_loc}
The algebra $\A_\lambda$ has infinite homological dimension if and only if $\lambda$
lies in a singular hyperplane.
\end{Conj}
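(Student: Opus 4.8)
The plan is to reduce Conjecture~\ref{Conj:der_loc} to the case $\dim\paramq=1$ by restricting the quantizations to formal transverse slices along the symplectic leaves of $Y$. Two ingredients are needed: (a) a local-to-global principle, that $\A_\lambda$ has finite homological dimension if and only if, for every $y\in Y$, the completion $\A_\lambda^{\wedge_y}$ with respect to the ideal of $y$ in a good filtration does --- and since $\A_\lambda^{\wedge_y}$ depends only on the symplectic leaf containing $y$, this is finitely many conditions; (b) the slice decomposition $\A_\lambda^{\wedge_y}\cong\widehat{\mathbb{A}}\,\widehat{\otimes}\,\underline{\A}_{\underline\lambda}^{\wedge_0}$, where $\widehat{\mathbb{A}}$ is a completed Weyl algebra (of finite homological dimension), $\underline{X}\to\underline{Y}$ is the conical symplectic resolution furnished by the transverse slice at a generic point of the leaf, and $\underline\lambda$ is the image of $\lambda$ under the natural projection $\paramq\twoheadrightarrow H^2(\underline{X},\C)$. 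Iterating (b) along a chain of leaves and applying (a), the conjecture reduces to the following rank-one statement: if $\dim H^2(\underline{X},\C)=1$, then $\underline{\A}_{\underline\lambda}^{\wedge_0}$ has finite homological dimension if and only if $\underline\lambda$ is not the unique singular parameter $\underline\lambda_0$ of $\underline{X}$. By the compatibility of the slicing with the combinatorics of walls set up in Sections~\ref{SS_resolution_generalities} and~\ref{SS_essent_sing}, the parameter $\lambda$ lies on a singular hyperplane of $X$ precisely when some slice parameter $\underline\lambda$ equals the corresponding $\underline\lambda_0$, so this reduction is faithful in both directions.

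For the rank-one ``if'' assertion ($\underline\lambda\neq\underline\lambda_0$ implies finite homological dimension) I would invoke the one-dimensional localization theorem. When $\dim H^2(\underline{X},\C)=1$ there are exactly two quantization orientations $\pm\theta$, and for every $\underline\lambda$ off the singular parameter, abelian localization holds for $\underline{\A}_{\underline\lambda}^{\theta}$ with one of them: for $\underline\lambda$ off all essential hyperplanes this is the characterization of essential hyperplanes recalled above, while for $\underline\lambda$ on an essential-but-nonsingular hyperplane it is a Beilinson--Bernstein type positivity statement (as in \cite{BPW}, to which the exactness criterion of the present paper also applies). Abelian localization for $\underline{\A}_{\underline\lambda}^{\theta}$ forces $\Coh(\underline{\A}_{\underline\lambda}^{\theta})$, and hence $\underline{\A}_{\underline\lambda}\operatorname{-mod}$, to have finite homological dimension: $\underline{\A}_{\underline\lambda}^{\theta}$ is, in the conical topology, a filtered deformation of the structure sheaf of a smooth symplectic variety, hence of finite local homological dimension, and a local-to-global spectral sequence over the finite-dimensional $\underline{X}$ bounds the global dimension of its category of coherent modules. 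Completing at $0$ can only decrease homological dimension, giving finiteness of $\operatorname{gldim}\underline{\A}_{\underline\lambda}^{\wedge_0}$.

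For the rank-one ``only if'' assertion ($\underline\lambda=\underline\lambda_0$ implies infinite homological dimension) one must produce an $\underline{\A}_{\underline\lambda_0}$-module with no finite projective resolution. The natural candidate is a finite-dimensional simple module $N$ whose derived localization is set-theoretically supported on the exceptional fibre of $\underline{X}\to\underline{Y}$; one then wants $\operatorname{Ext}^\bullet_{\underline{\A}_{\underline\lambda_0}}(N,N)$ to be infinite-dimensional. Since derived localization fails at $\underline\lambda_0$ for either orientation, this cannot be transported from the geometric side --- where the category of coherent modules always has finite global dimension --- and the bound has to be argued intrinsically, e.g.\ from the explicit structure of category $\Ocat$ over $\underline{\A}_{\underline\lambda_0}$. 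In every rank-one example known to me this works out: for the $A_1$ surface $\underline{X}=T^*\mathbb{P}^1$ it is the singular block of category $\Ocat$ for $\mathfrak{sl}_2$, which is equivalent to $\C[x]/(x^2)\operatorname{-mod}$ and so has a simple object with a periodic, infinite projective resolution; for $\underline{X}=\operatorname{Hilb}^n(\C^2)$ it is the spherical rational Cherednik algebra at an aspherical parameter; for minimal nilpotent orbit resolutions it is a suitably singular central reduction of $U(\g)$. A uniform argument replacing this case analysis is the natural next task.

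The main obstacle is exactly this uniform rank-one lower bound, together with making step (b) rigorous. For the former, the failure of derived localization at the singular parameter is what makes infinite homological dimension plausible, but it also blocks the obvious $\operatorname{Ext}$ computation, so one must exhibit an intrinsic mechanism --- for instance, that category $\Ocat$ over $\underline{\A}_{\underline\lambda_0}$ is a non-semisimple highest-weight category whose Koszul or Ringel dual is infinite-dimensional --- and at present this seems to require case-by-case verification. For the propagation, one must check the inequality $\operatorname{gldim}\A_\lambda\geq\operatorname{gldim}\A_\lambda^{\wedge_y}$ in the filtered-complete setting, and that the transverse parameter really is $\underline\lambda_0$ for \emph{every} $\lambda$ on the hyperplane rather than only a very generic one; the latter is, once the notion of a singular hyperplane is in place (Section~\ref{SS_essent_sing}), part of its definition, hence automatic.
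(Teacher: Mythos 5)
The statement you are addressing is stated as a \emph{conjecture}, and the paper does not prove it in general: it establishes Conjectures~\ref{Conj:fin_hom_dim}, \ref{Conj:der_loc}, \ref{Conj:abelian} only for finite and affine type $A$ quiver varieties (Theorem~\ref{Thm:type_A}), and the route there goes through tilting bundles satisfying $(\heartsuit)$: the criterion $\Hcal_\lambda e\Hcal_\lambda=\Hcal_\lambda$ from Lemma~\ref{Lem:derived_localization_connection} (which imports Bezrukavnikov's theorem from \cite{Etingof_affine}), exactness of $\Gamma$ via translation functors for $\Hcal_\lambda$ (Theorem~\ref{Thm:exactness_precise}), the $\Ocat$-regularity analysis (Theorem~\ref{Thm:O_regular_precise} and Proposition~\ref{Prop:O_reg_rk1}), Hikita's conjecture for $T^*\operatorname{Gr}(k,n)$ and $\operatorname{Hilb}_n(\C^2)$, and finally lifting from category $\Ocat$ to all modules (Proposition~\ref{Prop:O_to_mod}). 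Your proposal takes a genuinely different and in some respects more direct route: a local-to-global reduction of homological dimension along symplectic leaves, followed by slice reduction to rank $1$. The appeal of your route is that it does not invoke $(\heartsuit)$ or category $\Ocat$ at all. The paper avoids this route precisely because both of its steps are, at present, unproven.

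The concrete gaps are the ones you yourself flag, and they are genuine. First, the rank-one ``only if'' step --- that at a singular parameter the (completed) rank-$1$ algebra has infinite homological dimension --- has no uniform argument, and the case analysis you sketch covers $T^*\mathbb{P}^1$, $\operatorname{Hilb}_n(\C^2)$ and minimal orbit cases but not, e.g., the higher-rank Gieseker slices that actually arise as $\underline{X}$ for affine type $A$ (and which the paper treats only through the $\Ocat$-machinery and \cite{Gies}). Note also that your phrase ``the unique singular parameter $\underline\lambda_0$'' is incorrect: already for $T^*\operatorname{Gr}(k,n)$ there are $n-1$ singular parameters (Proposition~\ref{Prop:singular_real}), and more generally $\Sigma_\Upsilon$ is an interval of integer-shifted points, not a singleton. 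Second, the local-to-global principle (a) is not established anywhere in the paper, and the inequality $\operatorname{gldim}\A_\lambda\geq\operatorname{gldim}\A_\lambda^{\wedge_y}$, as well as the converse that finiteness at all completions implies global finiteness, needs an actual argument in the filtered noncommutative setting --- the associated graded comparison gives an upper bound of the form $\operatorname{gldim}\A_\lambda\leq\operatorname{gldim}\gr\A_\lambda$, but descent of finite global dimension to completions and the local-to-global converse are both nontrivial. Until these two points are resolved, the proposal is a plausible programme rather than a proof; its advantage over the paper's route, if it can be completed, would be a proof of Conjecture~\ref{Conj:der_loc} free of $(\heartsuit)$ and of the $\Ocat$/Hikita apparatus.
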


\begin{defi}\label{defi:sing_parameters}
Below we call $\lambda\in \paramq$ {\it singular} if it lies in a singular hyperplane
and {\it regular} else.
\end{defi}

Now we proceed to abelian localization. We assume that
\begin{itemize}
\item[(*)] if an essential hyperplane $\Upsilon'$ {\it lies between} two singular hyperplanes,
$\Upsilon^1, \Upsilon^2$, then it is singular. A rigorous meaning of  ``lying between'' is as follows:
for every (equivalently, for some) $\lambda\in \Upsilon'$, the cone spanned by the subsets $\{\mu\in H^2(X,\R)| \lambda+\mu\in \Upsilon^i\}$  span $H^2(X,\R)$.
\end{itemize}
This condition holds in all examples we know, it will be a part of a formal definition of a singular
hyperplane.

Now take a regular parameter $\lambda$. Let $\Upsilon_1,\ldots,\Upsilon_k$ be all classical
walls that are parallel to essential hyperplanes containing $\lambda$. For example, if $\lambda$
is ``very generic'' on an essential wall, then $k=1$. In the general case, the hyperplanes $\Upsilon_1,\ldots,\Upsilon_k$
split $H^2(X,\mathbb{R})$ into chambers. Thanks to (*), $\lambda$ lies in the same  half space
with respect to all singular hyperplanes parallel to $\Upsilon_i$, and so specifies
a chamber.

\begin{Conj}\label{Conj:abelian}
Suppose that (*) holds.
Then the following two conditions are equivalent:
\begin{enumerate}
\item Abelian localization holds for the quantization $\A_\lambda^\theta$ of $X$.
\item The ample cone of $X$ lies in the chamber determined by $\lambda$ as in the previous paragraph.
\end{enumerate}
\end{Conj}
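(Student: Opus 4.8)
The plan is to establish the two implications of the equivalence separately, in both cases reducing everything to the exactness of $\Gamma$. As noted after Definition~\ref{defi:localization}, abelian localization for $\A_\lambda^\theta$ amounts to ``$\Gamma$ exact'' plus ``derived localization''; since $\lambda$ is regular the latter holds and, being a property of $\A_\lambda$ alone, is independent of $\theta$ (these are Conjectures~\ref{Conj:fin_hom_dim} and~\ref{Conj:der_loc}, invoked in the cases where they are known). So the whole $\theta$-dependence sits in the exactness of $\Gamma$, equivalently in whether the translation bimodule $\A_{\lambda,\lambda+n\theta}:=\Gamma(\A^\theta_{\lambda,\lambda+n\theta})$ --- where $\A^\theta_{\lambda,\lambda+n\theta}$ is the quantization of the ample line bundle $\Str_X(n\theta)$, $\theta$ an integral ample class and $n\gg 0$ --- induces a Morita equivalence $\A_\lambda\operatorname{-mod}\simeq\A_{\lambda+n\theta}\operatorname{-mod}$. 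Recall here that, $\theta$ being ample for $X$, abelian (hence exact and derived) localization \emph{always} holds for $\A_{\lambda+n\theta}^\theta$ once $n\gg 0$; thus on $X$ tensoring with $\Str_X(\pm n\theta)$ is harmless and only the algebra-level statement is at issue.

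For $(2)\Rightarrow(1)$, assume the ample cone of $X$ lies in the chamber determined by $\lambda$. The key construction is a finite subset $P\subset\Pic(X)$ containing $0$, shaped as an alcove around the origin, such that $\lambda+\chi$ is regular for every $\chi\in P$ and $T:=\bigoplus_{\chi\in P}\Str_X(\chi)$ is a tilting generator of $D^b(\Coh X)$: the chamber hypothesis together with~(*) keeps $\lambda$ and all the $\lambda+\chi$ on the regular side of every singular hyperplane, which is precisely what is needed for the vanishing $\Ext^{>0}_{\Str_X}(\Str_X(\chi),\Str_X(\chi'))=0$ and for generation. (In finite and affine type $A$ such generators are available; in general one expects them from the usual reduction to positive characteristic.) Since $0\in P$, the summand $\Str_X$ makes $T$ a tilting generator with a rank $1$ summand, so the first of the two partial results announced in the abstract applies and $\Gamma$ is exact for $\A_\lambda^\theta$. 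As $\lambda$ is regular, $R\Gamma$ is an equivalence (derived localization); an exact functor that induces a derived equivalence is an equivalence of abelian categories, with inverse $\Loc$, so abelian localization holds.

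For $(1)\Rightarrow(2)$, assume abelian localization holds for $\A_\lambda^\theta$ but the ample cone is not in the chamber of $\lambda$ --- a chamber that is well defined by~(*). Since $\Gamma$ is then an equivalence intertwining the Hamiltonian $\C^\times$-actions, it restricts to an equivalence between the geometric category $\Ocat(\A_\lambda^\theta)$ and the intrinsic, $\theta$-independent category $\Ocat(\A_\lambda)$. But the second of the two partial results announced in the abstract --- the analysis of when $\Gamma$ restricts to an equivalence of categories $\Ocat$ --- shows that this forces the chamber condition: the highest-weight order on $\Ocat(\A_\lambda^\theta)$ is governed by the $\theta$-attracting sets of the torus fixed loci, and it can be matched along $\Gamma$ with the intrinsic highest-weight order on $\Ocat(\A_\lambda)$ only when the ample cone lies in the chamber cut out by $\lambda$; this contradicts the assumption. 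A more self-contained variant: choose a very generic point $\mu$ on a singular hyperplane $\Upsilon^0$ crossed by the segment from $\lambda$ to $\lambda+n\theta$, pass to the formal slice along the symplectic leaf attached to $\Upsilon^0$ --- where $H^2$ is one-dimensional and $\Upsilon^0$ degenerates to a point --- and check by the $T^*\mathbb{P}^1$-type rank-one computation, made compatible with the present setup via the restriction functors for categories $\Ocat$, that the line-bundle bimodule is not an equivalence across that point; this failure then propagates back up the chain of translations to $\A_{\lambda,\lambda+n\theta}$.

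I expect the main obstacle to be the construction in the second paragraph: proving the $\Ext$-vanishing for the line bundles $\Str_X(\chi)$ over an alcove whose shape must be dictated by the singular-hyperplane configuration, and proving that these line bundles generate $D^b(\Coh X)$ --- in short, exhibiting a tilting generator with a rank $1$ summand for exactly the parameters allowed by the chamber condition. That is the genuine geometric input, and fitting the alcove to the walls is where the convexity hypothesis~(*) is needed. On the converse side the secondary difficulty is establishing the compatibility of the category-$\Ocat$ restriction functors with the localization functors near a symplectic leaf, which is what legitimises the reduction to the rank one case.
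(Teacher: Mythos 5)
The statement you are trying to prove is a \emph{conjecture}, and the paper explicitly states that it cannot prove it in general (``We cannot prove either of the conjectures in the general situation''). The paper's own contribution on Conjecture~\ref{Conj:abelian} is the reduction in Section~\ref{SSS_conj_equiv}: Proposition~\ref{Prop:A_analog} shows that, granting Conjecture~\ref{Conj:fin_hom_dim}, the implication $(2)\Rightarrow(1)$ follows from Conjecture~\ref{Conj:der_loc}, and it proves this by the translation-functor argument of Theorem~\ref{Thm:exactness_precise} applied directly to the sheaves $\A^?_?$, \emph{without} any tilting bundle and without $(\heartsuit)$. The paper also proves the conjecture in the special case of finite and affine type $A$ quiver varieties (Theorem~\ref{Thm:type_A}), but only under the extra assumptions $(\heartsuit)$, $|X^T|<\infty$, etc.

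Your proposal presents itself as a full proof, but it silently assumes Conjectures~\ref{Conj:fin_hom_dim} and~\ref{Conj:der_loc} (``since $\lambda$ is regular the latter holds and, being a property of $\A_\lambda$ alone, is independent of $\theta$'' --- that is precisely the content of those two conjectures), and it also imports Theorems~\ref{Thm:exactness} and~\ref{Thm:O_regular}, each of which requires additional hypotheses ($(\heartsuit)$ for the first, $|X^T|<\infty$ for the second) that are not part of the statement of Conjecture~\ref{Conj:abelian}. These imports should be stated openly: a conditional implication (``modulo Conjectures~\ref{Conj:fin_hom_dim} and~\ref{Conj:der_loc} and under $(\heartsuit)$ and $|X^T|<\infty$'') is an honest and useful result; an unconditional ``proof'' here would be wrong.

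Beyond the above, there is a concrete mathematical gap in the $(2)\Rightarrow(1)$ direction: you claim to manufacture a tilting generator with rank~$1$ summand as $T=\bigoplus_{\chi\in P}\Str_X(\chi)$ for a finite alcove $P$ of integral classes. For most conical symplectic resolutions --- already for $X=T^*\mathcal{B}$ --- direct sums of line bundles do \emph{not} generate $D^b(\Coh(X))$, so such $T$ is not a tilting generator; the bundle $E^\theta$ used in the paper is constructed from Frobenius-constant quantizations in characteristic $p$ and is a sum of line bundles only in very special cases (hypertoric varieties). This construction is exactly where your proposal breaks. The paper's own route to $(2)\Rightarrow(1)$ under $(\heartsuit)$ is Theorem~\ref{Thm:exactness_precise}, whose key is an induction over a chain of translation bimodules $\Hcal_{\lambda,\chi}$ across the walls, using Lemmas~\ref{Lem:iso_equiv}, \ref{Lem:one_direction}, \ref{Lem:iso_right_order} and Weil-generic base cases from Lemma~\ref{Lem:ab_loc_Weil_generic}; and the $\A$-version (Proposition~\ref{Prop:A_analog}) replaces $(\heartsuit)$ by the assumption that ``derived localization holds'' behaves well in $\theta$, i.e.\ Conjecture~\ref{Conj:fin_hom_dim}. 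Your $(1)\Rightarrow(2)$ sketch (matching highest-weight orders, or passing to rank-one slices) is closer in spirit to the paper's Section~\ref{SS_O_reg_intro}--\ref{SS_Hikita_connection} machinery, but again requires $|X^T|<\infty$ and Hikita-type inputs (Corollary~\ref{Cor:localization_Hikita}, Lemma~\ref{Lem:Hikita}) that are only available in special cases, and even there the paper needs a separate argument (Proposition~\ref{Prop:O_to_mod}) to pass from an equivalence on categories $\Ocat$ to an equivalence on all modules --- a step your sketch omits.
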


In fact, we will see below, Section \ref{SSS_conj_equiv}, that, modulo Conjecture \ref{Conj:fin_hom_dim},
Conjecture \ref{Conj:abelian} follows
from Conjecture \ref{Conj:der_loc}. On the
other hand, Conjecture \ref{Conj:abelian} implies the hard part of
Conjecture \ref{Conj:der_loc} ($\A_\lambda$ has infinite homological dimension
implies that $\lambda$ is in a singular hyperplane).

We cannot prove either of the conjectures in the general situation. We will prove
some weaker or related statements under additional assumptions that hold in many
examples. This will allow us to recover most of the known cases of the conjectures
as well as to establish them in several new situations.

\subsection{Known results}\label{SS_known_results}
Now we want to describe what is known about Conjectures \ref{Conj:fin_hom_dim},
\ref{Conj:der_loc}, \ref{Conj:abelian}.

Conjecture \ref{Conj:fin_hom_dim} is known to hold in the following important
examples:
\begin{enumerate}
\item $X=T^*\mathcal{B}$, this is classical.
\item $X$ is a symplectic resolution of a symplectic quotient singularity, \cite{GL}.
Here the condition that $\A_\lambda$ has finite homological dimension is equivalent to
the claim that $\lambda$ is ``spherical'' thanks to a result of
Bezrukavnikov, \cite[Theorem 5.5]{Etingof_affine}.
Below we will elaborate on this in  a
more general situation.
\item $X$ is obtained as a Hamiltonian reduction of a symplectic vector space
and several technical conditions hold, see \cite[Assumptions 3.2]{MN_derived}.
In particular, these assumptions are satisfied when $X$ is  a smooth Nakajima
quiver variety (at least, for a finite or affine type quiver) or a smooth
hypertoric variety. This covers most of (2).
\end{enumerate}

Together with these results,  the present paper establishes
Conjecture \ref{Conj:fin_hom_dim} in almost all known examples.

Now we list several cases where abelian and derived localization theorems
has been established. These results are known to confirm Conjectures \ref{Conj:der_loc}
and \ref{Conj:abelian}, or these conjectures can be deduced from them
with a bit of work.

\begin{enumerate}
\item  For $X=T^*\mathcal{B}$, we have the original Beilinson-Bernstein theorems.
\item (1) is easily generalized to the case when we replace $\mathcal{B}$ with a
partial flag variety $G/P$. The original proof of Theorem \ref{Thm:BB_abel}
works in this case as well.
\item (1) also generalizes in a different way: to the case when $X$ is a Slodowy
variety, i.e. the preimage of  a Slodowy slice in $T^*\mathcal{B}$, see
\cite{DK,Ginzburg_HC}. This is a special case of a general phenomenon: if abelian
localization holds for $\A_\lambda^\theta$, it also holds for all ``slice quantizations'',
as we will see below, Section \ref{SSS_res_fun}.
The converse is not true, so for parabolic Slodowy varieties
(=preimages of Slodowy slices inside of $T^*(G/P)$) the exact locus where abelian
localization holds is not known, in general.
\item The next case that was studied was $X=\operatorname{Hilb}_n(\C^2)$.
Here abelian and derived localization theorems follow from
\cite{GS}, see also \cite{KR}.
\item The previous example can be generalized in two ways. First, we can replace
$Y=(\C^2)^n/S_n$ with $Y=(\C^2)^n/\Gamma_n$, where $\Gamma_n=S_n\ltimes (\Z/\ell\Z)$.
The algebras $\A_\lambda$ are precisely the spherical rational Cherednik algebras.
The aspherical locus (=the locus, where $\A_\lambda$ has infinite homological
dimension) was determined in \cite{DG}. And \cite{cycl_ab_loc} proves a version of
an abelian localization theorem.
\item Another generalization of (5) is the case when $X$ is the Gieseker moduli
space, the smooth Nakajima quiver variety associated to the Jordan quiver
 with arbitrary dimension and framing.
Here abelian and derived localization theorems were established in
\cite[Section 5]{Gies}. We will recall these results below as we will need them.
\item Finally, when $X$ is a hypertoric variety, then  abelian localization
was proved in \cite{Bellamy_Kuwabara}.
\end{enumerate}

We should also mention results of \cite{MN_exact} that, in particular, give sufficient conditions
on $\lambda$ for the global section functor to be exact in the case when $X,Y$ are Hamiltonian reductions
(of symplectic vector space by a reductive group action). In general, these conditions are not necessary.
However, it looks like they are necessary in the case when we deal with Nakajima quiver varieties
for a quiver with a single vertex.

We would like to also mention a general result from \cite{catO_charp}. It says that weak versions
of Conjectures \ref{Conj:der_loc}, \ref{Conj:abelian} hold: instead of singular hyperplanes
that we can explicitly describe in many cases, we need to remove an unspecified (but finite)
collection of essential hyperplanes that are obtained from singular ones by integral shifts. The method
of proof does not allow to even get an estimate on how many hyperplanes we need to remove.

\subsection{Tilting bundles and exactness of global section functor}
We have two general results on Conjectures \ref{Conj:der_loc} and
\ref{Conj:abelian}. The first one shows that
under a suitable assumption on $X$ that we will explain below in this section the global section
functor is exact if $\lambda,\theta$ are as in Corollary \ref{Conj:abelian}.

Now we introduce our assumption on $X$. Kaledin, \cite{Kaledin}, constructed a
{\it tilting bundle} on $X$ to be denoted by $E^\theta$. Recall that this means
that $E^\theta$ is a vector bundle on $X$ without higher self-extensions and such that
$H:=\End(E^\theta)$ has finite homological dimension.  By
\cite[Proposition 2.2]{BK} the functor $R\Gamma(E^\theta\otimes\bullet)$ is an
equivalence $D^b(\Coh (X))\xrightarrow{\sim}D^b(H\operatorname{-mod})$.

The assumption we need is as follows.

\begin{itemize}
\item[($\heartsuit$)] The bundle $E^\theta$ has a direct summand that is a line
bundle.
\end{itemize}

We can actually assume that this summand is the structure sheaf $\Str_X$.

($\heartsuit$) is a pretty subtle property, which is difficult to check. A priori,
it depends on a choice of $E^\theta$. It is known
to hold in three families of examples.

\begin{itemize}
\item $X=T^*(G/P)$ for an arbitrary semisimple algebraic group $G$ and its parabolic
subgroup $P$. This follows from results of \cite{BMR,BMR_sing} (for $P=B$)
and \cite{BM} (in the general case).
In fact, this gives rise to a tilting generator satisfying ($\heartsuit$)
on any parabolic Slodowy variety.
\item $X$ is a symplectic resolution of a symplectic quotient singularity.
Here a tilting bundle satisfying ($\heartsuit$) (a Procesi bundle)
was constructed in \cite{BK}.
\item Finally, $X$ is a smooth Coulomb branch of a gauge theory, \cite{Webster_tilting}.
This includes hypertoric varieties, finite and affine type A Nakajima quiver varieties
as special cases.
\end{itemize}

We can quantize $E^\theta$ to a right $\A_\lambda^\theta$-module $\Ecal^\theta_\lambda$.
This gives a sheaf of algebras $\Hcal_\lambda^\theta:=\mathcal{E}nd(\Ecal^\theta_\lambda)$
that quantizes $\mathcal{E}nd(E^\theta)$. We set $\Hcal_\lambda:=\Gamma(\Hcal_\lambda^\theta)$,
this algebra quantizes $H$. We have a functor $\tilde{\Gamma}:D^b(\Coh(\A_\lambda^\theta))
\rightarrow D^b(\Hcal_\lambda\operatorname{-mod})$ given by $\Gamma(\Ecal^\theta_\lambda\otimes_{\A_\lambda^\theta}\bullet)$. It is easy to
see, compare to \cite[Section 5]{GL}, that $R\tilde{\Gamma}$ is always an equivalence.

In the case when $X$ is a resolution of a symplectic quotient singularity and
$E^\theta$ is a Procesi bundle, $\Hcal_\lambda$ is a symplectic reflection algebra from \cite{EG}.
We do not know other cases where $\Hcal_\lambda$ has a similarly explicit description.

Thanks to ($\heartsuit$) we get an idempotent $e\in \Hcal_\lambda$ such that $e\Hcal_\lambda e=\A_\lambda$. From \cite[Theorem 5.5]{Etingof_affine} one can deduce that Conjecture \ref{Conj:fin_hom_dim} holds. Note that $\Gamma=e\tilde{\Gamma}$.

This is the first of our main results.

\begin{Thm}\label{Thm:exactness}
We suppose that ($\heartsuit$) (and (*)) hold.
Let $X$ and $\lambda$ be as in Conjecture \ref{Conj:abelian}.
Then $\tilde{\Gamma}: \Coh(\A_\lambda^\theta)\rightarrow
\Hcal_\lambda\operatorname{-mod}$ is an equivalence of abelian categories and, therefore,
$\Gamma$ is exact.
\end{Thm}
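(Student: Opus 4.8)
The plan is to exploit the fact that $R\tilde\Gamma$ is always an equivalence of derived categories, and upgrade it to an equivalence of abelian categories by checking that $\tilde\Gamma$ is exact, i.e. that $R^i\tilde\Gamma=0$ for $i>0$ on every object of $\Coh(\A_\lambda^\theta)$. Since $R\tilde\Gamma$ is a $t$-exact-up-to-shift equivalence of the standard $t$-structures once we know exactness, the only real content is the vanishing of higher derived sections. The key observation is that the sheaf $\Ecal^\theta_\lambda$ is a quantization of the Kaledin tilting bundle $E^\theta$, so on the associated-graded level $\gr\bigl(\Ecal^\theta_\lambda\otimes_{\A_\lambda^\theta}M\bigr)$ is a coherent sheaf on $X$ built from $E^\theta$, and $E^\theta$ has no higher cohomology (being a tilting bundle, $H^{>0}(X,E^\theta)=0$, and the same for $E^\theta\otimes F$ for $F$ in a suitable class). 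One then wants to deduce $R^i\tilde\Gamma M=0$ from the classical vanishing via a filtered/graded degeneration argument, as is standard in the quantized symplectic resolution literature (e.g. the cohomology vanishing arguments in \cite{BPW}, \cite{BL}).

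First I would reduce to the case where $M$ is a coherent $\A_\lambda^\theta$-module equipped with a good filtration, so that $\Ecal^\theta_\lambda\otimes_{\A_\lambda^\theta}M$ inherits a good filtration with $\gr\bigl(\Ecal^\theta_\lambda\otimes_{\A_\lambda^\theta}M\bigr)$ a coherent sheaf on $X$ that is a quotient of (a sum of shifts of) $E^\theta\otimes_{\Str_X}\gr M$; the point is that $\gr M$ is a coherent $\Str_X$-module supported on $X$, and $E^\theta\otimes_{\Str_X}(\text{coherent})$ still has vanishing higher cohomology — this is exactly the defining tilting property together with the fact that on a conical symplectic resolution $X\to Y$ with $Y$ affine, $H^{>0}(X,-)$ vanishes on the relevant class of sheaves because $E^\theta$ generates $D^b(\Coh X)$ and $H=\End(E^\theta)$ has finite homological dimension, so $H^{>0}(X,E^\theta\otimes P)=0$ for all $P$ in the image of $E^\theta\otimes(-)$, which includes all coherent sheaves since that functor is essentially surjective on objects up to the derived equivalence. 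I would then invoke the standard spectral sequence / Rees-algebra argument: if $\gr$ of a filtered sheaf has vanishing higher cohomology, so does the filtered sheaf itself (using that cohomology commutes with the relevant direct limits and that $\C[Y]$ is positively graded so the filtration is complete and exhaustive in the required sense). This yields $R^i\tilde\Gamma M=0$ for all $i>0$, hence $\tilde\Gamma$ is exact.

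Given exactness of $\tilde\Gamma$, the derived equivalence $R\tilde\Gamma\colon D^b(\Coh(\A_\lambda^\theta))\xrightarrow{\sim} D^b(\Hcal_\lambda\operatorname{-mod})$ is now $t$-exact for the standard $t$-structures: it sends $\Coh(\A_\lambda^\theta)$ into $\Hcal_\lambda\operatorname{-mod}$ (by exactness) and its quasi-inverse, being left adjoint and hence right-exact, together with the vanishing of negative $\operatorname{Tor}$'s coming from finite homological dimension of $\Hcal_\lambda$, sends $\Hcal_\lambda\operatorname{-mod}$ back into $\Coh(\A_\lambda^\theta)$; a $t$-exact equivalence of derived categories restricts to an equivalence of hearts, so $\tilde\Gamma\colon \Coh(\A_\lambda^\theta)\to\Hcal_\lambda\operatorname{-mod}$ is an equivalence of abelian categories. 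Finally, since $\Gamma=e\tilde\Gamma$ where $e\in\Hcal_\lambda$ is the idempotent with $e\Hcal_\lambda e=\A_\lambda$ furnished by ($\heartsuit$), and multiplication by an idempotent $e\Hcal_\lambda\otimes_{\Hcal_\lambda}(-)$ is exact, $\Gamma$ is a composition of two exact functors and hence exact.

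The main obstacle I anticipate is the precise cohomology-vanishing step: showing $H^{>0}(X,E^\theta\otimes_{\Str_X}F)=0$ for \emph{all} coherent $F$ on $X$ (not just locally free ones), which is what the degeneration argument needs on the associated-graded level. This does not follow formally from "$E^\theta$ is tilting" in the naive sense; one must use that $E^\theta$ is a \emph{generator} of $D^b(\Coh X)$ and that $X\to Y$ is a resolution with $Y$ affine, so that $R\Gamma$ has cohomological amplitude controlled by $\operatorname{hd}(H)<\infty$ — here is where ($\heartsuit$) is genuinely used, since the line-bundle summand $\Str_X$ guarantees $\A_\lambda=e\Hcal_\lambda e$ has finite homological dimension by \cite[Theorem 5.5]{Etingof_affine}, and more importantly lets one compare $\Gamma$ on $\A_\lambda^\theta$-modules directly with $\tilde\Gamma$. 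A secondary subtlety is ensuring the filtration on $\Ecal^\theta_\lambda\otimes_{\A_\lambda^\theta}M$ is genuinely good and that the conicality hypothesis (positive grading on $\C[Y]$) makes the Rees/degeneration comparison valid; this is routine in this area but must be spelled out. The role of Conjecture \ref{Conj:abelian}'s hypotheses on $(\lambda,\theta)$ and of (*) enters only through guaranteeing that we are in the chamber where $\A_\lambda$ is of finite homological dimension and where the relevant sheaves are well-behaved, so those hypotheses are used as a black box via the cited results rather than re-proved.
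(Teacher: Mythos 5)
Your proposal has a fatal gap at the central step. You need $H^{>0}(X, E^\theta\otimes_{\Str_X}F)=0$ for \emph{every} coherent sheaf $F$ on $X$ (since $\gr M$ ranges over all coherent sheaves supported on $X$), and this is false. Take $X=T^*\mathbb{P}^1$ with tilting bundle $E^\theta=\Str_X\oplus\Str_X(1)$ and $F=\pi^*\Str_{\mathbb{P}^1}(-2)$: then $H^1(X,\Str_X\otimes F)=H^1(\mathbb{P}^1,\bigoplus_{n\geqslant 0}\Str(-2n-2))\neq 0$. The derived equivalence $R\Gamma(E^\theta\otimes-)\colon D^b(\Coh X)\to D^b(H\text{-mod})$ is \emph{not} $t$-exact in general --- that is precisely the content of noncommutative crepant resolutions carrying a nontrivial perverse $t$-structure --- so ``$E^\theta\otimes(-)$ is essentially surjective up to the derived equivalence'' does not let you conclude vanishing of higher cohomology. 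You half-acknowledge this obstruction yourself, but the finite homological dimension of $H$ and the generator property do not repair it.

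A structural symptom of the flaw: if your associated-graded argument were correct, it would prove $\tilde\Gamma$ is exact for \emph{every} $\lambda$ and $\theta$, since neither the associated-graded sheaf $E^\theta\otimes\gr M$ nor the cohomology-vanishing claim refers to $\lambda$ at all. But the theorem is manifestly false outside the compatible chamber (abelian localization fails for many $\lambda$), so the hypothesis ``$\theta$ lies in the chamber determined by $\lambda$'' must enter the proof substantively --- it cannot be reduced to ``$\A_\lambda$ has finite homological dimension''. Your own summary at the end confirms you are only using the conjecture's hypotheses ``as a black box'', which means the argument cannot possibly be sensitive to the interaction of $\lambda$ and $\theta$ that the statement requires.

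The paper's actual proof (Theorem \ref{Thm:exactness_precise}) takes an entirely different route and makes the $\lambda$-$\theta$ compatibility essential. It introduces enhanced translation bimodules $\Hcal_{\lambda,\chi}$, proves under the cohomology-vanishing statement of Proposition \ref{Prop:cohomology_vanishing} (which concerns $H^\theta\otimes\Str(m\chi)$ for ample $\chi$, not arbitrary coherent sheaves) that the comparison morphisms $\Hcal_{\lambda+\chi,\chi'}\otimes^L_{\Hcal_{\lambda+\chi}}\Hcal_{\lambda,\chi}\to\Hcal_{\lambda,\chi+\chi'}$ are isomorphisms under various sufficient conditions (Lemmas \ref{Lem:iso_equiv}, \ref{Lem:one_direction}, \ref{Lem:iso_right_order}), and then runs an induction across the chambers $C_0,\ldots,C_m$ separating $-\theta$ from $\theta$, using abelian localization at Weil generic parameters on essential hyperplanes (Lemma \ref{Lem:ab_loc_Weil_generic}) and the characterization of when a right $t$-exact derived Morita equivalence is $t$-exact (Lemma \ref{Lem:abloc_Morita}). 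The chamber induction is where the compatibility of $\lambda$ with $\theta$ does real work; no argument that treats $\lambda$ and $\theta$ independently, as yours does, can recover it.
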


In fact, we expect that the locus where $\tilde{\Gamma}$ is an equivalence is larger than
what we have in the theorem. We will comment on this below, see Section
\ref{SSS_enhanced_localization}.

The proof of Theorem \ref{Thm:exactness} is based on analyzing compositions of  {\it translation
functors},  right t-exact derived equivalences between the categories $D^b(\Hcal_\lambda\operatorname{-mod})$
and $D^b(\Hcal_{\lambda+\chi}\operatorname{-mod})$ for integral $\chi$.

The same argument in the general setting implies that Conjectures \ref{Conj:fin_hom_dim},
\ref{Conj:der_loc} imply Conjecture \ref{Conj:abelian}.

\subsection{$\mathcal{O}$-regular parameters}\label{SS_O_reg_intro}
Conjecture \ref{Conj:der_loc} currently seems to be out of reach. However, we are able to
control a related regularity condition on $\lambda$ that makes sense under an additional
assumption on $X$. Throughout this section we assume that  there is a Hamiltonian  action of a torus $T$ on $X$ with finitely many fixed points.

Here are some examples when this happens.

\begin{enumerate}
\item We can take $X=T^*(G/P)$ or, more generally, parabolic Slodowy varieties
for principal Levi nilpotent elements.
\item Nakajima quiver varieties for finite or affine type A quivers.
\item Hypertoric varieties.
\item Note that the varieties in (2) and (4) are special cases of smooth Coulomb
branches for gauge theories. Other examples here include the symplectic duals
to some quiver varieties associated to Dynkin quivers. Those are slices in affine Grassmanians of types A,D,E that admit symplectic resolutions.
\end{enumerate}

There are also examples among Nakajima quiver varieties for type $D,E$ finite quivers.

When $T$ acts on $X$ with finitely many fixed points,
it makes sense to speak about category $\mathcal{O}$
of $\A_\lambda$-modules generalizing the classical
BGG category $O$, see \cite[Section 3.2]{BLPW}. We denote this category by $\Ocat_\nu(\A_\lambda)$,
it depends on the choice of a generic one-parameter subgroup $\nu:\C^\times\rightarrow T$.
Namely, $\nu$ gives rise to a grading $\A_\lambda=\bigoplus_i \A_\lambda^i$. We can form
the subalgebra $\A_\lambda^{\geqslant 0}:=\bigoplus_{i\geqslant 0}\A_\lambda^i$ and
its two-sided dideal $\A_\lambda^{>0}$.

\begin{defi}\label{defi:O}
The category $\Ocat_\nu(\A_\lambda)$, by definition, consists of all finitely generated
$\A_\lambda$-modules, where $\A_\lambda^{>0}$ acts locally nilpotently.
\end{defi}

The subspace $\A_\lambda^{\geqslant 0}\cap \A_\lambda \A_\lambda^{>0}$
is also a two-sided ideal in $\A_\lambda^{\geqslant 0}$. So we can form the quotient
\begin{equation}\label{eq:Cartan_sub}
\Ca_\nu(\A_\lambda)=\A_{\lambda}^{\geqslant 0}/(\A_{\lambda}^{\geqslant 0}\cap \A_\lambda\A_\lambda^{>0}).
\end{equation}
This algebra was called the {\it B-algebra} in \cite{BLPW} and the {\it Cartan subquotient}
in \cite{catO_R}, we will use the latter name. This is a finite dimensional algebra.
A principal reason why $\Ca_\nu(\A_\lambda)$ is important in the study of $\Ocat_\nu(\A_\lambda)$ is that the simples in $\Ocat_\nu(\A_\lambda)$ are in a natural one-to-one correspondence with the irreducible
$\Ca_\nu(\A_\lambda)$-modules.

We will need the following notion.

\begin{defi}\label{defi:O_regular}
We say that $\lambda\in \paramq$ is {\it $\Ocat$-regular} if $\Ca_\nu(\A_\lambda)$ is a semisimple
commutative algebra of dimension equal to the number of $T$-fixed points in $X$. Otherwise,
we say that $\lambda$ is {\it $\Ocat$-singular}.
\end{defi}

One can show that the locus of $\Ocat$-singular parameters in $\paramq$ is Zariski closed,
we will provide a proof below. We denote this locus by $\paramq^{\Ocat-sing}$. It follows from
\cite[Section 4.4]{catO_charp} that $\paramq^{\Ocat-sing}$ is contained in a finite union of
essential hyperplanes. But, again, in general, we have no control on how many essential
hyperplanes one needs to take. One should expect the following to hold.

\begin{Conj}\label{Conj:O_singular}
The locus $\paramq^{\Ocat-sing}$ coincides with the union of singular hyperplanes.
\end{Conj}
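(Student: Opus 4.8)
The plan is to prove the two inclusions separately, reducing each to a generic-point statement along the essential hyperplanes. Recall (from the discussion below, and from \cite[Section 4.4]{catO_charp}) that $\paramq^{\Ocat-sing}$ is Zariski closed and contained in a finite union of essential hyperplanes. Hence for the inclusion ``union of singular hyperplanes $\subseteq \paramq^{\Ocat-sing}$'' it suffices to show that a very generic point of each singular hyperplane is $\Ocat$-singular: closedness of $\paramq^{\Ocat-sing}$ then forces the whole hyperplane, and so their union, to be $\Ocat$-singular. For the reverse inclusion it suffices to show that every regular $\lambda$ (that is, every $\lambda$ lying on no singular hyperplane) is $\Ocat$-regular.

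For the reverse inclusion, suppose $\lambda$ is regular. If $\lambda$ avoids all essential hyperplanes there is nothing to do, since then $\A_\lambda$ is simple and abelian localization holds for $\A_\lambda^\theta$ for every $\theta$. In general $\A_\lambda$ has finite homological dimension (this is the content of Conjectures \ref{Conj:fin_hom_dim} and \ref{Conj:der_loc}, and is known unconditionally in the families of Section \ref{SS_known_results}), so by Conjecture \ref{Conj:abelian} one may pick $\theta$ in a chamber for which $\A_\lambda^\theta$ is a quantization of a (possibly different) conical symplectic resolution $X_\theta$ of $Y$ and abelian localization holds for $\A_\lambda^\theta$. Since $\Ocat_\nu(\A_\lambda)$ and its Cartan subquotient $\Ca_\nu(\A_\lambda)$ are built from $\A_\lambda$ and $\nu$ alone, this yields $\Ocat_\nu(\A_\lambda)\simeq\Ocat_\nu(\A_\lambda^\theta)$, the geometric category $\mathcal{O}$ attached to $X_\theta$; its Cartan subquotient is computed by hyperbolic localization at the isolated $T$-fixed points of $X_\theta$ and equals $\bigoplus_{p\in X_\theta^T}\C$, which is semisimple and commutative of dimension $|X_\theta^T|$. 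Finally $|X_\theta^T|=\chi(X_\theta)=\chi(X)=|X^T|$, since all conical symplectic resolutions of $Y$ are diffeomorphic (they are linked by Mukai flops, which preserve Euler characteristics). Thus $\lambda$ is $\Ocat$-regular.

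For the forward inclusion, let $\lambda$ be very generic in a singular hyperplane $\Upsilon$, with underlying classical wall $\Upsilon_0$. A very generic point of $\Upsilon_0$ meets a codimension-two symplectic leaf of $Y$, and passing to the transverse symplectic slice there, as in Section \ref{SSS_res_fun}, produces a conical symplectic resolution $\underline{X}\to\underline{Y}$ with $\dim H^2(\underline{X},\C)=1$ together with a restriction functor from $\Ocat_\nu(\A_\lambda)$ to $\Ocat_{\underline\nu}(\underline{\A}_{\underline\lambda})$, where $\underline\lambda$ is the restriction of $\lambda$; by the very definition of a singular hyperplane (given below), $\underline\lambda$ is the singular value of this one-parameter family. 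I would then proceed in two steps. First, verify directly that at its singular value such a rank-one quantization is $\Ocat$-singular; the prototype is $\underline{X}=T^*\mathbb{P}^1$, where $\underline{\A}_{\underline\lambda}$ is a central reduction of $U(\mathfrak{sl}_2)$ and $\Ca_{\underline\nu}(\underline{\A}_{\underline\lambda})$ is a two-dimensional local ring $\C[h]/(h-a)^2$ precisely at the singular value (so the singular block of category $\mathcal{O}$ has one simple rather than $|(T^*\mathbb{P}^1)^T|=2$), and the finitely many other rank-one slices that occur — rank-one hypertoric and Kleinian-type — are handled by the same kind of computation. Second, invoke the compatibility of the Cartan subquotient with the slice restriction functor: $X_\theta^T$ fibres over $\underline{X}^T$ and, if $\Ca_\nu(\A_\lambda)$ were semisimple commutative of dimension $|X^T|$, then each $\Ca_{\underline\nu}(\underline{\A}_{\underline\lambda})$ coming from a slice would be semisimple commutative of the matching dimension. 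Taking the slice just described gives a contradiction, so $\lambda$ is $\Ocat$-singular.

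The main obstacle is twofold. On one hand, the reverse inclusion rests on Conjectures \ref{Conj:fin_hom_dim}--\ref{Conj:abelian} — specifically on knowing that for every regular $\lambda$ some chamber admits abelian localization — which at present is available only in the families of Section \ref{SS_known_results}; this is why the statement remains conjectural in general. On the other hand, the step in the forward inclusion that carries $\Ocat$-singularity of a slice back to $\Ocat$-singularity upstairs requires precise control of the slice restriction functors at the level of the algebras $\Ca_\nu$ and of the $T$-fixed-point combinatorics; these functors are only known to be exact on category $\mathcal{O}$, so this compatibility is genuinely new input beyond \cite{catO_R} and is where the substance of a complete proof would lie.
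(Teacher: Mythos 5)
Your proposal does not close the central gap, and the sticking point is precisely in the step you present as the easy one, the reverse inclusion. You argue that once abelian localization holds for some choice of $\theta$ (granting Conjectures \ref{Conj:fin_hom_dim}--\ref{Conj:abelian}), the equivalence $\Ocat_\nu(\A_\lambda)\simeq\Ocat_\nu(\A_\lambda^\theta)$ forces $\Ca_\nu(\A_\lambda)$ to be the semisimple commutative algebra $\bigoplus_{p\in X^T}\C$. This does not follow. The Cartan subquotient $\Ca_\nu(\A_\lambda)$ is a subquotient of the algebra $\A_\lambda$ itself, not an invariant of the abelian category $\Ocat_\nu(\A_\lambda)$; the category equivalence tells you only that $\Ca_\nu(\A_\lambda)$ has $|X^T|$ irreducible modules, since the simples in $\Ocat_\nu(\A_\lambda)$ biject with $\Irr(\Ca_\nu(\A_\lambda))$. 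That alone does not bound the dimension of $\Ca_\nu(\A_\lambda)$: it could perfectly well have a nonzero nilpotent radical. The missing upper bound on $\dim\Ca_\nu(\A_\lambda)$ is exactly what the paper's Lemma \ref{Lem:O_reg_equivalent} together with Theorem \ref{Thm:O_regular_precise} establishes, and the proof there is a genuine induction over blocks of a deformed highest-weight category over $\ring=\C[\paramq]^{\wedge_\lambda}$ (Propositions \ref{Prop:deformed_category_O}, \ref{Prop:deformed_quotient}), requiring in addition either that $\paramq^{\Ocat-sing}$ has local codimension $\geqslant 2$ or that the map $\Ca_\nu(\A_\lambda)\to\Ca_\nu(\A_\lambda^\theta)$ is a priori surjective. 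None of this is captured by the slogan ``hyperbolic localization computes the Cartan subquotient.''

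For the forward inclusion (singular hyperplanes lie in $\paramq^{\Ocat-sing}$), you are closer to the right mechanism, but note two mismatches with the paper. First, the paper's rank-one reduction, Proposition \ref{Prop:O_reg_rk1}, is stated and proved only for \emph{non-singular} essential hyperplanes, and its proof of $(1)\Rightarrow(2)$ explicitly invokes non-singularity to guarantee $\Gamma^\theta_\lambda$ is an equivalence at a Zariski generic point. Second, the paper does not prove the forward inclusion at all in general: it is declared a standing hypothesis in the introduction (``In all examples that we know the singular hyperplanes lie in $\paramq^{\Ocat-sing}$, so we assume that as well''). Also, the compatibility of the Cartan subquotient with slice restriction that you flag as ``genuinely new input'' is in fact worked out in the paper at the level of completed Rees algebras (the isomorphism $R_\hbar(\Ca_\nu(\A_{\tilde{\Upsilon}}))^{\wedge_\zeta}\cong\bigoplus_i\C[[\tilde{\Upsilon}_\hbar]]\widehat{\otimes}_{\C[[\hbar]]}R_\hbar(\Ca_\nu(\underline{\A}^i_{\eta_i(\tilde{\Upsilon})}))^{\wedge_0}$ in the proof of Proposition \ref{Prop:O_reg_rk1}), rather than via a restriction functor on category $\mathcal{O}$. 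Finally, where the paper does prove the conjecture unconditionally -- for finite and affine type $A$ quiver varieties (Section 5) -- it needs substantially more than the scaffolding you sketch: Hikita's theorem for $T^*\operatorname{Gr}(k,n)$ and $\operatorname{Hilb}_n(\C^2)$ to get the upper bound on $\dim\Ca_\nu$ at rank one, a dedicated surjectivity argument for Gieseker spaces (Lemma \ref{Lem:Cartan_surjectivity}), and Theorem \ref{Thm:O_regular_precise}(2) to carry the surjectivity into $\Ocat$-regularity.
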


The next theorem is the second main result of this paper.

\begin{Thm}\label{Thm:O_regular}
As before we assume that $T$ acts on $X$ via a Hamiltonian action with finitely
many fixed points.
Suppose $\Gamma: \Coh(\A_\lambda^\theta)
\rightarrow \A_\lambda\operatorname{-mod}$ is exact. Then the intersection
of $\paramq^{O-sing}$ with a suitable neighborhood (in the usual complex topology)
of $\lambda$ has codimension $1$.
\end{Thm}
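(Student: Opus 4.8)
The plan is to reduce the statement to a deformation-theoretic dimension count for the locus where the Cartan subquotient $\Ca_\nu(\A_\lambda)$ fails to be semisimple of the maximal dimension. First I would observe that $\Ocat$-regularity at $\lambda$ is an \emph{open} condition (this is the Zariski-closedness of $\paramq^{\Ocat-sing}$ promised in the excerpt, which I would establish by noting that $\Ca_\nu(\A_\lambda)$ assembles into a coherent sheaf of finite-dimensional algebras over the parameter space, and semisimplicity of the fiber together with having the right dimension is an open condition on the base). So it suffices to show that $\lambda$ does \emph{not} lie in the interior of $\paramq^{\Ocat-sing}$, i.e. that some parameters arbitrarily close to $\lambda$ are $\Ocat$-regular, while also ruling out that $\paramq^{\Ocat-sing}$ has codimension $\geqslant 2$ near $\lambda$. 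The key input is the hypothesis that $\Gamma:\Coh(\A_\lambda^\theta)\to\A_\lambda\text{-mod}$ is exact at the given $\lambda$.

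The main step is to exploit exactness of $\Gamma$ to compare category $\Ocat$ downstairs with its geometric counterpart $\Ocat_\nu(\A_\lambda^\theta)$ upstairs on $X$. When $\Gamma$ is exact, $\Loc$ and $\Gamma$ restrict to mutually quasi-inverse equivalences between the geometric category $\Ocat$ on $X$ and $\Ocat_\nu(\A_\lambda)$ (one checks that $\Gamma$ and $\Loc$ preserve the respective subcategories of modules on which $\A_\lambda^{>0}$, resp.\ its sheaf analog, acts locally nilpotently, using that the $T$-fixed-point data is the same upstairs and downstairs, and that $\Loc$ of a finitely generated module is coherent). The geometric category $\Ocat$ on $X$, however, has a fixed-point/localization description (à la BLPW) in which the number of simples is \emph{always} the number of $T$-fixed points: its simples are indexed by the fixed points via the attracting-set stratification, independently of $\lambda$. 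Hence, when $\Gamma$ is exact, $\Ocat_\nu(\A_\lambda)$ has exactly $\#X^T$ simple objects, so the irreducible $\Ca_\nu(\A_\lambda)$-modules number exactly $\#X^T$; consequently $\dim\Ca_\nu(\A_\lambda)\geqslant \#X^T$ with equality precisely when $\Ca_\nu(\A_\lambda)$ is semisimple (and then automatically commutative, since it has $\#X^T$ one-dimensional simples). Thus, \emph{once $\Gamma$ is exact}, $\Ocat$-regularity is equivalent to $\Ca_\nu(\A_\lambda)$ being \emph{reduced/semisimple}, i.e.\ to the vanishing of a nilpotent "defect", which is cut out by the vanishing of finitely many functions on the deformation space.

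For the codimension-$1$ claim I would argue locally near $\lambda$ in $\paramq$. After the reduction above, $\paramq^{\Ocat-sing}$ near $\lambda$ is the non-reduced locus of the family of finite-dimensional algebras $\Ca_\nu(\A_\bullet)$, each of which (for $\bullet$ near $\lambda$, where exactness persists by openness of the exactness locus — another point I would need, but it follows since exactness of $\Gamma$ is itself governed by the essential-hyperplane arrangement, or can be seen directly from semicontinuity of $R^{>0}\Gamma$ and of $\mathrm{Ext}$-vanishing for the generating family) has a fixed number $\#X^T$ of simples of total multiplicity-with-nilpotents summing to $\dim\Ca_\nu$. The locus where the Jacobson radical is nonzero is a proper closed subvariety; I would show it is a divisor by comparing with a one-dimensional slice through $\lambda$ transverse to the essential hyperplanes: along a generic line $\ell$ through $\lambda$ in $\paramq$, the restriction of $\Ca_\nu(\A_\bullet)$ is a family over a curve, and $\Ocat$-singular points on $\ell$ are isolated (since on $\ell$ one meets each essential hyperplane in a point, and $\paramq^{\Ocat-sing}\subset$ finitely many essential hyperplanes); pairing this with the fact that $\paramq^{\Ocat-sing}$ is a union of pieces of hyperplanes (it sits inside a finite union of essential hyperplanes, each of codimension $1$) forces every component of $\paramq^{\Ocat-sing}$ through $\lambda$ to be an open piece of an essential hyperplane, hence codimension exactly $1$ — unless $\paramq^{\Ocat-sing}$ is empty near $\lambda$, but if $\lambda$ itself is $\Ocat$-singular that is excluded, and if $\lambda$ is $\Ocat$-regular the statement is about a neighborhood and is vacuous/trivially codimension $1$ in the sense intended.

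The hard part will be the second step: making precise and rigorous the equivalence $\Ocat_\nu(\A_\lambda)\simeq\Ocat_\nu(\A_\lambda^\theta)$ (geometric) under the mere hypothesis that $\Gamma$ is exact — in particular checking that $\Loc$ sends modules in $\Ocat_\nu(\A_\lambda)$ to coherent $\A_\lambda^\theta$-modules supported on the right (attracting) locus with $\A_\lambda^{\theta,>0}$ acting locally nilpotently, without already assuming abelian localization — and then invoking the $\lambda$-independence of the number of simples in the geometric category $\Ocat$. Once that bridge is in place, the codimension count is, as sketched, a soft consequence of the fact that the $\Ocat$-singular locus is simultaneously (i) contained in finitely many essential hyperplanes and (ii) the non-reduced locus of a coherent family of algebras of locally constant "length data," which cannot drop codimension below that of the ambient hyperplanes it lives in.
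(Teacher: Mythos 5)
The key step in your argument is the claim that exactness of $\Gamma$ forces $\Gamma$ and $\Loc$ to restrict to mutually quasi-inverse equivalences between $\Ocat_\nu(\A_\lambda^\theta)$ and $\Ocat_\nu(\A_\lambda)$, and hence that $\Ocat_\nu(\A_\lambda)$ always has exactly $\#X^T$ simples whenever $\Gamma$ is exact. This is false, and it is precisely the failure mode that the theorem is about. Exactness only gives $\Gamma\circ\Loc\cong\operatorname{id}$, so $\Gamma$ is a Serre quotient functor; it need not be an equivalence, and when $\lambda$ is $\Ocat$-singular it is not. A concrete counterexample: take $\g=\mathfrak{sl}_2$, $X=T^*\mathbb{P}^1$, and $\lambda=0$ (the $W$-fixed, singular central character). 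Here $\langle\lambda,\alpha^\vee\rangle=0\notin\Z_{<0}$, so $\Gamma$ is exact by Theorem \ref{Thm:BB_abel}, and $\#X^T=2$; but the BGG block of $\U_0$ has a single irreducible (the antidominant Verma), so $\Ocat_\nu(\A_\lambda)$ has one simple, not two, and $\Gamma$ kills one of the two simples upstairs. Thus the equivalence you want does not follow from exactness, and the count of simples is exactly the quantity whose behavior has to be established, not assumed. As the paper notes in Section \ref{SS_O_reg_intro}, the direction that does hold is the opposite one: exactness together with $\Ocat$-regularity implies that $\Gamma$ restricts to an equivalence on category $\Ocat$.

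The paper's actual proof (of the sharper Theorem \ref{Thm:O_regular_precise}, which implies the statement) runs quite differently. It passes to the deformed category $\Ocat_\nu(\A_\ring)$ over $\ring=\C[\paramq]^{\wedge_\lambda}$ and exploits exactness to show (Proposition \ref{Prop:deformed_quotient}) that $\Hom_{\A_{\ring'}}(\A_{\ring,\chi},\bullet)$ is a quotient functor between deformed categories $\Ocat$ that becomes an equivalence over $\operatorname{Frac}\ring$; one then runs a descending induction over the $h$-eigenvalues $\alpha_1,\ldots,\alpha_k$ of $\Ca_\nu(\A_\lambda)$. The crucial mechanism in the step $(C_{i-1})\Rightarrow(A_i)$ is that the deformed Verma $\Delta_\ring(\Ca_\nu(\A_\ring)^{\alpha_i})$ is projective in the truncated subcategory, so its endomorphism algebra $\Ca_\nu(\A_\ring)^{\alpha_i}$ is a \emph{free} $\ring$-module; since $\Ca_\nu(\A_\ring^\theta)^{\alpha_i}$ is also free of the same generic rank, the failure of the comparison map to be an isomorphism is a determinantal divisor, which is ruled out by hypothesis (1) (codimension $\geqslant 2$) or forced to be an isomorphism by (2) (surjectivity). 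Note the direction of reasoning: the codimension-$1$ conclusion is the \emph{contrapositive} of ``codimension $\geqslant 2$ implies $\Ocat$-regular,'' so the paper never needs, and could not have, the equivalence of categories $\Ocat$ you posit as an intermediate step. If you wanted to salvage your approach, you would have to replace ``$\Gamma$ is an equivalence on $\Ocat$'' with ``$\Gamma$ is a Serre quotient on $\Ocat$'' and then find a way to bound the defect; at that point you would essentially be rediscovering the freeness-of-Cartan-pieces argument.
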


Let us explain a connection of this result with abelian localization. Following
\cite[Section 3.3]{BLPW}, it makes sense to speak about the category $\Ocat_\nu(\A_\lambda^\theta)$,
this is a full subcategory in $\Coh(\A_\lambda^\theta)$. The functor $\Gamma$
restricts to  $\Ocat_\nu(\A^\theta_\lambda)\rightarrow \Ocat_\nu(\A_\lambda)$.
If it is exact, then it is a Serre quotient functor. So if $\lambda$ is $\Ocat$-regular,
then the categories   $\Ocat_\nu(\A^\theta_\lambda),\Ocat_\nu(\A_\lambda)$
have the same number of simple objects. Hence $\Gamma$ gives an equivalence between
these categories.


Now assume that  $X^T$ is finite, and ($\heartsuit$)  holds (and so does (*) from
Section \ref{SS_conjectures}). In all examples that we know the singular hyperplanes
lie in $\paramq^{\Ocat-sing}$, so we assume that as well. Theorem \ref{Thm:exactness}
implies that outside of the union of singular hyperplanes, the functor $\Gamma:
\Coh(\A_\lambda^\theta)\rightarrow \A_\lambda\operatorname{-mod}$ is exact.
So Theorem \ref{Thm:O_regular} implies that $\paramq^{\Ocat-sing}$ is the union
of hyperplanes. Below we will see that, in the case when
$\dim \paramq>1$, the computation of these hyperplanes can be reduced to ``smaller''
resolutions.

Now assume that $\Gamma$ is exact and $\lambda\in \paramq^{\Ocat-reg}$.
Then $\Gamma^\theta_\lambda$ is a Serre quotient functor between two categories
with the same number of simples. So it is an equivalence.

Under some additional assumptions, if $\Gamma^\theta_\lambda$ is an equivalence
$\Ocat_\nu(\A^\theta_\lambda)\rightarrow \Ocat_\nu(\A_\lambda)$, then
$\Gamma^\theta_\lambda$ is an equivalence $\Coh(\A^\theta_\lambda)
\rightarrow \A_\lambda\operatorname{-mod}$. We will establish this implication
for finite and affine type A quivers.

\subsection{Special cases}
Now we would like to discuss special cases when we can establish the exactness of the global
section functor and/or the derived localization theorem using Theorems \ref{Thm:exactness}
and \ref{Thm:O_regular}.

\subsubsection{Springer resolution and generalization}
We get new proofs of Theorems \ref{Thm:BB_abel} and \ref{Thm:BB_derived}. However,
this is, perhaps, the hardest way to prove these theorems. In the case of $X=T^*(G/P)$
we can prove analogs of Theorems \ref{Thm:BB_abel}, \ref{Thm:BB_derived} modulo
determining the singular hyperplanes, we do not have an independent way of doing this.

\subsubsection{Nakajima quiver varieties of finite and affine type}
Here we are able to completely determine the singular hyperplanes, this is easy to do for all quivers of finite and affine type and was essentially done in \cite{BL,Gies}.
We then prove  Conjectures \ref{Conj:der_loc} and \ref{Conj:abelian}
for finite and affine type A quivers. We  also show that for affine type quivers
of types D and E with 1-dimensional framing at the extending vertex
(this gives resolutions of symplectic quotient singularities) the functor
$\tilde{\Gamma}$ is an equivalence (and hence $\Gamma$ is exact) on the
locus where we expect $\Gamma$ to be an equivalence.

We would like to point out an application of these results.
In \cite{BL,perv} the author (partly joint with Bezrukavnikov)
computed the number of finite dimensional irreducible representations
for the algebras $\A_\lambda$ associated to finite and affine type Nakajima
quiver varieties in the case when derived localization
holds for $\lambda$. Once we know that Conjecture \ref{Conj:der_loc},
we know an exact locus where the number of finite dimensional irreducible
representations is as found in \cite{BL,perv}.

\subsubsection{Other cases} Techniques of this paper should allow to reprove
Conjectures \ref{Conj:der_loc} and \ref{Conj:abelian} for hypertoric varieties
and also prove them for slices in affine Grassmanians of type D and E that admit symplectic
resolutions. We are not going to elaborate on these  families of varieties.

{\bf Acknowledgements}: I would like to thank Roman Bezrukavnikov, Yoshinori
Namikawa, and Ben Wesbter for stimulating discussions.  I am partially supported by the NSF under grant
DMS-2001139.

\section{Preliminaries}

\subsection{Symplectic resolutions}\label{SS_resolution_generalities}
\subsubsection{Definition}
Let $Y$ be a normal Poisson affine variety over $\C$.
We assume that the algebra $\C[Y]$ is positively graded, i.e., we have an algebra grading
$\C[Y]=\bigoplus_{i\geqslant 0}\C[Y]_i$ with $\C[Y]_0=\C$.
Further, we assume  there is a positive integer $d$
such that $\deg \{\cdot,\cdot\}=-d$, meaning that $\{\C[Y]_i,\C[Y]_j\}\subset \C[Y]_{i+j-d}$ for all $i,j$.

\begin{defi}\label{defi:sympl_resol}
By a {\it symplectic resolution
of singularities} of $Y$ one means a pair $(X,\pi)$ of
\begin{itemize}
\item a smooth symplectic algebraic variety $X$
\item and a morphism $\pi:X\rightarrow Y$ of Poisson varieties that is a projective resolution of singularities.
\end{itemize}
\end{defi}

The $\C^\times$-action lifts from $Y$ to $X$ making $\rho$ equivariant, see
Step 1 of the proof of \cite[Proposition A.7]{Namikawa_flop}. Clearly, such a lift is unique.
This action of $\C^\times$ on $X$ will be called {\it contracting}.

An example, already mentioned in the introduction is the Springer resolution: $Y$
is the nilpotent cone in a semisimple Lie algebra $\g$, and $X=T^*\mathcal{B}$,
where $\mathcal{B}$ is the flag variety for $\g$. The morphism $\pi$ is the
Springer morphism, i.e., the moment map for the action of $G$ on $X$. We consider the
fiberwise action of  $\C^\times$ on $T^*\mathcal{B}$. Here $d=1$.

\subsubsection{Structural results}
\begin{Lem}\label{Lem:cohom_vanishing}
We have the following results
\begin{enumerate}
\item   $\pi^*:\C[Y]\rightarrow \C[X]$  is an isomorphism,
\item  $H^i(X,\Str_X)=0$ for $i>0$,
\item  $H^i(X,\C)=0$ for $i$ odd.
\end{enumerate}
\end{Lem}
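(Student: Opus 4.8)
The plan is to treat the three assertions separately: (1) and (2) are essentially formal, while (3) carries the real content. For (1) I would first establish the sheaf-level statement $\pi_*\Str_X=\Str_Y$. Since $\pi$ is projective, $\pi_*\Str_X$ is a coherent — hence finite, hence integral — sheaf of $\Str_Y$-algebras; since $\pi$ is birational it embeds into the constant sheaf of rational functions $\C(X)=\C(Y)$ and it contains $\Str_Y$; and since $Y$ is normal, $\Str_Y$ is integrally closed in $\C(Y)$. These three facts force $\pi_*\Str_X=\Str_Y$. Passing to global sections over the affine $Y$ gives $\C[X]=\Gamma(X,\Str_X)=\Gamma(Y,\pi_*\Str_X)=\Gamma(Y,\Str_Y)=\C[Y]$, and this identification is by construction $\pi^*$.

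For (2), write $\dim X=2n$. A symplectic form $\omega$ is nondegenerate, so $\omega^{\wedge n}$ is a nowhere-vanishing section of the canonical bundle $\omega_X=\Omega^{2n}_X$; hence $\omega_X\cong\Str_X$. Grauert--Riemenschneider vanishing applies, $\pi$ being a projective morphism from a smooth variety over $\C$, and since $\pi$ is birational it gives $R^q\pi_*\omega_X=0$ for all $q>0$; equivalently $R^q\pi_*\Str_X=0$ for $q>0$ (so $Y$ has rational singularities). The Leray spectral sequence of $\pi$ then degenerates and, combined with (1), gives $H^i(X,\Str_X)=H^i(Y,\Str_Y)$, which vanishes for $i>0$ since $Y$ is affine.

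For (3) I would use the contracting $\C^\times$-action. Because $\C[Y]$ is positively graded with $\C[Y]_0=\C$, this action contracts $Y$ to its unique fixed point $0$, and properness of $\pi$ guarantees that the limits $\lim_{t\to 0}t\cdot x$ exist on $X$ as well; thus $X$ retracts $\C^\times$-equivariantly onto the projective central fibre $\pi^{-1}(0)$. Writing $H^\bullet(X,\C)=\mathbb H^\bullet(Y,R\pi_*\underline{\C}_X)$ and using that a symplectic resolution is semismall, the Decomposition Theorem gives $R\pi_*\underline{\C}_X[\dim X]=\bigoplus_a IC_{\overline{Y_a}}(\mathcal L_a)$ with no additional cohomological shifts, the sum running over certain symplectic leaves $Y_a\subset Y$. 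Since $Y$ is $\C^\times$-contracted to $0$, the contraction principle $\mathbb H^\bullet(Y,\mathcal G)\cong\mathcal H^\bullet(\mathcal G)_0$ (valid for $\C^\times$-monodromic $\mathcal G$) reduces $H^\bullet(X,\C)$ to a direct sum of the stalk cohomologies $\mathcal H^\bullet(IC_{\overline{Y_a}}(\mathcal L_a))_0$. As each leaf closure $\overline{Y_a}$ and $X$ itself are even-dimensional, one obtains $H^i(X,\C)=0$ for odd $i$ provided the complexes $IC_{\overline{Y_a}}$ are parity-even, i.e. have stalk cohomology supported in degrees of a single parity.

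The main obstacle is precisely this parity statement for the intersection cohomology of symplectic singularities; here I would invoke the structure theory of symplectic singularities (results of Kaledin and of Namikawa) rather than argue from scratch, since all other steps are bookkeeping. An alternative, more elementary route to (3) — run the Bialynicki--Birula decomposition with respect to a generic one-parameter subgroup, reducing to isolated fixed points and an affine paving of $X$ — works cleanly in the principal examples but presupposes a torus acting with isolated fixed points, which is not available at this level of generality.
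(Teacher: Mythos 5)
Parts (1) and (2) are correct and are precisely the arguments the paper alludes to: (1) is the Zariski main theorem argument (coherence of $\pi_*\Str_X$ from properness, birationality placing it between $\Str_Y$ and $\C(Y)$, normality of $Y$ forcing equality), and (2) combines $\omega_X\cong\Str_X$, Grauert--Riemenschneider vanishing, the Leray spectral sequence, and affineness of $Y$.

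Part (3) carries the real content, and your sketch has a genuine gap that you flag but do not close. Kaledin's semismallness, the decomposition theorem, and the $\C^\times$-contraction principle are all sound, and they correctly reduce the claim to even parity of $\mathcal{H}^\bullet\bigl(IC_{\overline{Y_a}}(\mathcal L_a)\bigr)_0$ for each summand of $R\pi_*\underline{\C}_X[\dim X]$. But that parity statement cannot be safely outsourced to ``the structure theory of symplectic singularities'': by your own contraction-principle identity, $\bigoplus_a\mathcal{H}^i\bigl(IC_{\overline{Y_a}}(\mathcal L_a)\bigr)_0\cong H^{i+\dim X}(X,\C)$, so parity of those stalks at $0$ is \emph{equivalent} to the assertion $H^{\mathrm{odd}}(X,\C)=0$ being proved, and invoking it as an external input is circular. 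An induction on dimension does not obviously rescue the argument, since the leaf closures $\overline{Y_a}$ need not admit conical symplectic resolutions of their own, so there is no smaller instance of the theorem to cite. The paper avoids all of this by citing \cite[Proposition 2.5]{BPW} directly, which establishes the odd vanishing by a route that does not presuppose IC parity. Your Bialynicki--Birula alternative is valid when a torus acts with isolated fixed points, but, as you correctly note, that hypothesis is not available at this level of generality.
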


(1) holds because $\pi$ is birational and $Y$ is normal. (2) is a consequence of
the Grauert-Riemenschneider theorem. (3) is \cite[Proposition 2.5]{BPW}. The following
is a classical consequence of (3).

\begin{Cor}\label{Lem:Pic_Hom}
The Chern character map $c_1:\operatorname{Pic}(X)\rightarrow H^2(X,\Z)$ induces an isomorphism
$\mathbb{Q}\otimes_{\Z}\operatorname{Pic}(X)\xrightarrow{\sim} H^2(X,\mathbb{Q})$.
\end{Cor}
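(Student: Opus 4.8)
The plan is to recover the statement from the classical exponential-sequence argument; the only non-formal ingredient will be the comparison of the algebraic and analytic Picard groups, which is where the geometry of conical symplectic resolutions really enters.

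First I would check that $H^i(X^{an},\Str_{X^{an}})=0$ for $i=1,2$. The Grauert--Riemenschneider theorem, already used in the proof of Lemma~\ref{Lem:cohom_vanishing}(2), gives $R^i\pi_*\omega_X=0$ for $i>0$, and $\omega_X$ is trivial because $X$ is symplectic, so $R^i\pi_*\Str_X=0$ for $i>0$. As $\pi$ is proper, the comparison theorem for proper morphisms gives $R^i\pi_*^{an}\Str_{X^{an}}=(R^i\pi_*\Str_X)^{an}=0$ for $i>0$, while $\pi_*^{an}\Str_{X^{an}}=\Str_{Y^{an}}$ by Lemma~\ref{Lem:cohom_vanishing}(1). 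Since $Y$ is affine, $Y^{an}$ is Stein, so $H^i(Y^{an},\Str_{Y^{an}})=0$ for $i>0$ by Cartan's Theorem~B, and the analytic Leray spectral sequence for $\pi$ then yields $H^i(X^{an},\Str_{X^{an}})=0$ for all $i>0$. Running the exponential sequence $0\to\underline{\Z}\to\Str_{X^{an}}\to\Str_{X^{an}}^{\times}\to 0$ and using the vanishing of $H^1$ and $H^2$ of $\Str_{X^{an}}$, the connecting map, which is $c_1$, becomes an isomorphism $\Pic^{an}(X)=H^1(X^{an},\Str_{X^{an}}^{\times})\xrightarrow{\ \sim\ }H^2(X^{an},\underline{\Z})=H^2(X,\Z)$. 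Note also that $H^\bullet(X,\Q)$ is finite dimensional: the contracting $\C^\times$-action makes $X$ deformation retract onto the projective variety $\pi^{-1}(0)$.

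The remaining, and main, point is that the natural map $\Pic(X)\to\Pic^{an}(X)$ is a rational isomorphism; composed with the previous step this says exactly that $c_1$ induces an isomorphism $\Q\otimes_\Z\Pic(X)\xrightarrow{\sim}H^2(X,\Q)$. For an arbitrary smooth non-proper variety this can fail --- an algebraic line bundle may be analytically, but not algebraically, torsion --- so one has to use the extra structure: the contracting $\C^\times$-action makes $X$ retract onto the projective fibre $\pi^{-1}(0)$, and for conical symplectic resolutions it is known that $H^\bullet(X,\Q)$ is then of Hodge--Tate type and that algebraic and analytic line bundles agree rationally (due to Namikawa and Kaledin; see \cite{BPW}). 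I would cite this rather than reprove it. All the rest is a formal consequence of Grauert--Riemenschneider, the comparison theorem for proper morphisms, Cartan's Theorem~B, and the exponential sequence, and the genuine obstacle is precisely this algebraic-to-analytic comparison.
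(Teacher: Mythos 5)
Your proof is sound, and it flags the right issues. The paper itself gives no argument here---it just declares the corollary a ``classical consequence of (3)''---so there is no written proof to compare against, but the exponential-sequence route you take is almost certainly the intended one. Two remarks that may be useful.

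First, the paper's attribution is a little imprecise: nothing in your (correct) argument actually uses item~(3) of Lemma~\ref{Lem:cohom_vanishing}. What you use are the normality statement~(1), the cohomology vanishing~(2), and the conical/projective-over-affine geometry. (In fact~(3) alone would be insufficient: vanishing of odd cohomology does not give $H^2(X^{an},\Str_{X^{an}})=0$, which is what kills the obstruction map in the exponential sequence.) It is likely the paper is citing~(3) as a proxy for the full content of the quoted proposition in \cite{BPW}, which establishes the Picard statement along with the odd-vanishing.

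Second, you are right that the genuine content is the passage from $\Pic^{an}(X)$ to $\Pic(X)$, and you are right not to try to reprove it. One could make this step slightly more self-contained (and upgrade the conclusion to an integral isomorphism, which is what actually holds) by observing that the contracting $\C^{\times}$-action gives a canonical equivariant structure on any line bundle (because $H^0(X,\Str_X^{\times})=\C^{\times}$, as $\C[X]=\C[Y]$ is positively graded with degree-zero piece $\C$), that $X$ retracts onto the projective core $\pi^{-1}(0)$ so that restriction identifies $H^2(X,\Z)$ with $H^2(\pi^{-1}(0),\Z)$, and that line bundles on $\pi^{-1}(0)$ extend uniquely to the formal neighbourhood and then algebraize via the grading. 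This is essentially the ``conical GAGA'' argument, and it is what underlies the references you cite; spelling it out would remove the only black box in your proof. As written, though, your argument is correct and identifies exactly where the non-formal input lives.
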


\begin{defi}\label{defi:param}
Below we will write $\param$ for $H^2(X,\C)$,  $\param_{\Q}$ for $H^2(X,\mathbb{Q})$,
and $\param_{\Z}$ for the image of $\operatorname{Pic}(X)$ in $\param_{\Q}$.
\end{defi}

The following is a consequence of \cite[Theorem 2.3]{Kaledin_symplectic}.

\begin{Lem}\label{Lem:fin_leaves}
The variety $Y$ has finitely many symplectic leaves.
\end{Lem}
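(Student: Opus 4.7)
The plan is to reduce the statement to Kaledin's finiteness theorem for symplectic singularities. The only real content is in verifying that $Y$ is a symplectic singularity in the sense of Beauville, after which Theorem 2.3 of \cite{Kaledin_symplectic} applies directly.

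First I would show that $Y$ has symplectic singularities. The regular locus $Y^{reg}$ inherits a non-degenerate Poisson bivector, and hence a symplectic form $\omega$, coming from the fact that the generic symplectic leaf of $Y$ is open: indeed, via $\pi$ the ring $\C[Y]=\pi^*\C[Y]=\C[X]$ is identified with the functions on the symplectic variety $X$, so the Poisson structure has maximal rank on a dense open subset. Since $\pi$ is birational and $Y$ is normal, $\pi$ restricts to an isomorphism $\pi^{-1}(Y^{reg})\xrightarrow{\sim} Y^{reg}$ away from the exceptional divisor, and since $\pi$ is a Poisson morphism the pullback $\pi^*\omega$ coincides with the restriction of the symplectic form $\Omega_X$ on $X$ to $\pi^{-1}(Y^{reg})$. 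As $\Omega_X$ is a regular (in fact symplectic) form on all of $X$, this says that $\pi^*\omega$ extends to a regular $2$-form on the resolution $X$, which is exactly Beauville's criterion for $Y$ to have symplectic singularities.

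Now I would invoke Kaledin's Theorem 2.3 from \cite{Kaledin_symplectic}: a normal Poisson variety with symplectic singularities admits a finite stratification whose strata are precisely its symplectic leaves (and each stratum closure is itself a variety with symplectic singularities). This immediately yields the lemma. There is no serious obstacle here; once symplectic singularities has been established, the finiteness statement is a direct application of Kaledin's result. One could alternatively exploit the contracting $\C^\times$-action, which preserves the foliation by symplectic leaves and forces each leaf closure to contain the vertex, combining this with the local finiteness of leaves near any point to get a direct argument, but invoking Kaledin's theorem is by far the cleanest route.
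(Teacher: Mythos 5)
Your proposal takes exactly the paper's route: the paper simply cites Kaledin's Theorem 2.3 from \cite{Kaledin_symplectic}, leaving implicit the standard fact that a normal Poisson variety admitting a symplectic resolution has symplectic singularities in Beauville's sense, which is precisely what that theorem requires. You supply this verification correctly (and your claim that $\pi$ restricts to an isomorphism over $Y^{reg}$ is indeed valid here, since a symplectic resolution is crepant, so it cannot contract anything over a smooth point), so the argument is complete and matches the paper's.
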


\subsubsection{Deformations}\label{SSS_deformations}
We will be interested in deformations $X_{\param'}/\param'$ of $X$, where $\param'$ is a finite dimensional vector space,
and $X_{\param'}$ is a symplectic scheme over $\param'$ with a symplectic form $\hat{\omega}\in
\Omega^2(X_{\param'}/\param')$  and also with a $\C^\times$-action
on $X_{\param'}$ having the following properties:
\begin{itemize}
\item $\hat{\omega}|_X=\omega$,
\item the morphism $X_{\param'}\rightarrow \param'$ is $\C^\times$-equivariant,
\item the restriction of the $\C^\times$-action to $X$ coincides with the contracting action,
\item $t.\hat{\omega}:=t^{d}\hat{\omega}$.
\end{itemize}
It turns out that there is a {\it universal} such deformation $X_{\param}$ over $\param(=H^2(X,\C))$
(``universal'' means that any other deformation is obtained via the pull-back with respect to
a unique linear map $\param'\rightarrow \param$).  This result for formal deformations is
due to Kaledin-Verbitsky, \cite{KV}, but then it carries over to the algebraic setting
thanks to the contracting $\C^\times$-action on $X$, see \cite[Lemmas 12,22, Proposition 13]{Namikawa_flop}.

Let $Y_{\param}$ stand for $\operatorname{Spec}(\C[X_{\param}])$. Then the natural morphism
$\tilde{\pi}:X_{\param}\rightarrow Y_{\param}$ is projective and birational. The scheme $Y_{\param}$
is a deformation of $Y$ over $\param$ meaning that $Y_\param$ is flat over $\param$ and $\{0\}\times_{\param}Y_{\param}$ is identified with $Y$.

For $\zeta\in \param$, let $X_\zeta,Y_\zeta$
denote the fibers of $X_{\param},Y_{\param}$ over $\zeta$. Let $\param^{sing}$ denote the
locus of $\zeta\in \param$ such that the following equivalent conditions hold
\begin{itemize}
\item
$\pi_\lambda:X_\zeta\rightarrow Y_\zeta$
is not an isomorphism
\item  equivalently, $Y_\zeta$ is not smooth,
\item equivalently,
$X_\zeta$ is not affine.
\end{itemize}
Then, according to the main theorem in \cite{Namikawa_birational},
$\param^{sing}$ is the union of
rational codimension $1$ vector subspaces in $\param$.

\begin{defi}\label{defi:walls} These hyperplanes (as well as their real loci)
will be called {\it classical walls}. An element  $\theta\in \param_{\R}$ lying away from
the classical walls will be called {\it generic}.
\end{defi}

\begin{Ex}
Let $X=T^*\mathcal{B}$. Let $\h\subset\bor\subset \g$
denote the Cartan and Borel subalgebras inside $\g$. Let $\mathfrak{n}$ denote the
maximal $\operatorname{ad}$-nilpotent subalgebra of $\bor$ so that $\bor=\h\ltimes \mathfrak{n}$.
Let $W$ denote the Weyl group.

The space $\param$ is naturally identified with $\h^*$.  We can identify $X$ with $G\times^B \bor^\perp$, where we write $\bullet^\perp$ for the annihilator in the dual space. Then $X_\param$ is identified
with $G\times^B \mathfrak{n}^{\perp}$ with $X_\param\rightarrow \param$
is induced from the projection $\mathfrak{n}^\perp\rightarrow \h^*$.
Further, $Y_{\param}\cong \g^*\times_{\h^*/W}\h^*$ and
$\tilde{\pi}:X_{\param}\rightarrow Y_{\param}$ is Grothendieck's simultaneous resolution.
In particular, the classical walls are precisely the root hyperplanes.
\end{Ex}

\subsubsection{Classification of resolutions}\label{SSS_classif_resol}
We proceed to classifying the  possible symplectic resolutions of $Y$ following
\cite{Namikawa_birational}.

Suppose $X^1,X^2$ are two  symplectic resolutions of $Y$. There are
open subvarieties $\breve{X}^i\subset X^i, i=1,2,$ with $\operatorname{codim}_X^i X^i\setminus \breve{X}^i
\geqslant 2$ such that the rational map $X^1\dashrightarrow X^2$ restricts to an isomorphism $\breve{X}^1\xrightarrow{\sim} \breve{X}^2$,
see, e.g., \cite[Proposition 2.19]{BPW}. This allows to identify the Picard groups $\operatorname{Pic}(X^1),\operatorname{Pic}(X^2)$. Denote the resulting abelian group by $\Lambda$.

Let $C_{mov}$  denote the movable cone of $X$. It does not depend on the choice of a resolution by the previous paragraph. According to the main theorem of \cite{Namikawa_birational},
there is a finite group $W_X$ acting on $\param_{\mathbb{R}}$ as a reflection group, such that $C_{mov}$ is a fundamental chamber for $W_X$. Further, the set of classical walls (Definition
\ref{defi:walls}) is $W_X$-stable  and the walls for $C_{mov}$ are among the classical walls. So the
classical walls
further split $C_{mov}$ into chambers and the set of (isomorphism classes of) conical symplectic resolutions of $Y$ is in one-to-one
correspondence with the set of chambers inside $C_{mov}$. Under this correspondence,
a symplectic resolution goes to the closure of its ample cone.

Now we want to define a $W_X$-action on $\Lambda$ so that the 1st Chern class  map
$c_1:\Lambda\rightarrow \param$ is $W_X$-equivariant.
We note that $W_X$ acts on $Y_{\param}$ by $\C^\times$-equivariant Poisson automorphisms making the morphism $Y_{\param}\rightarrow \param$
$W_X$-equivariant. The quotient $Y_\param/W_X$ is a universal $\C^\times$-equivariant
deformation of $Y$, \cite{Namikawa2}. In particular, it is independent of $X$. It follows
that $Y_\param$ is independent of the choice of $X$ as well.

Note that
the locus in $X_{\param}$, where $X_{\param}\rightarrow Y_{\param}$ is not an isomorphism
has codimension $\geqslant 2$. Let $Y_\param^0$ denote the complement to this locus. This locus is easily seen to coincide with the union of the smooth loci in $Y_\zeta$ for $\zeta\in \param$. In
particular, $Y_\param^0\subset Y_{\param}$ is independent of the choice of $X$.

Thanks to $\operatorname{codim}_{X_{\param}}(X_\param \setminus Y_\param^0)\geqslant 2$, we can  identify  $\Pic(X_{\param})$
with $\Pic(Y_\param^0)$. On the other hand, every line bundle on $X$ uniquely deforms
to $X_{\param}$ thanks to $H^i(X,O_X)=0$ for $i>0$.
This gives an identification $\Pic(X)\cong \Pic(X_{\param})$. So for two different
resolutions $X^1,X^2$ we get identifications
\begin{equation}\label{eq:Pic_ident}
\Pic(X^1)\xrightarrow{\sim} \Pic(X^1_\param)\xrightarrow{\sim} \Pic(Y^0_\param)
\xrightarrow{\sim} \Pic(X^2_\param)\xrightarrow{\sim} \Pic(X^2).
\end{equation}
Let $\breve{X}^1_{\param},\breve{X}^2_\param$ denote the loci in $X_\param^1,X_\param^2$
where the rational map $X_\param^1\dashrightarrow X_\param^2$ is an isomorphism.
Note that $Y^0_\param\subset \breve{X}^i_\param$ and $\breve{X}^i=\breve{X}^i_\param\cap X^i$.
It follows that (\ref{eq:Pic_ident}) coincides with the identification constructed in
the beginning of the section.

For a regular element $\theta\in C_{mov}$ we define $X^\theta$ to be the symplectic resolution corresponding to the chamber of $\theta$.
For $w\in W_X$, we set $X^{w\theta}:=X^\theta$. But we twist the identification of $\param$ with
$H^2(X,\C)$ by $w$ so that the ample cone for $X$ in $\param$ now contains $w\theta$.
So $X^\theta$ now makes sense for all regular $\theta\in \param_{\mathbb{R}}$.
Note that $W_X$ acts on $Y_\param^0$, which gives a $W_X$-action on $\Pic(X^\theta)$. We identify
$\Pic(X^{\theta})$ with $\Pic(X^{w\theta})$ by  $w$. In particular, for $\chi\in \Lambda$
it makes sense to speak about the line bundle $\Str(\chi)$ on  $X^\theta$ for every
regular $\theta$.

Now we explain what this means for $X=T^*\mathcal{B}$. In this case, the movable cone
coincides with the ample cone of $X$ and is the positive Weyl chamber. So all $X^\theta$
are just $T^*\mathcal{B}$ with different identifications between $H^2(X,\C)$ and $\h^*$.

\subsubsection{Partial resolutions}\label{SSS_partial_resolutions}
Let $X=X^\theta$ be a symplectic resolution of $Y$ and let $C$ denote the closure of the ample
cone for $X$. Pick a face $\underline{C}\subset C$. By standard results in the MMP, the choice of
$\underline{C}$ gives rise to a factorization
$$X\xrightarrow{\pi_1} X^{\underline{C}}\xrightarrow{\pi_2}Y, $$
where
\begin{itemize}
\item $X^{\underline{C}}$ is normal,
\item $\pi_1,\pi_2$ are projective and birational,
\item and for a rational point $\chi$ in the relative interior of $\underline{C}$,
the point $\chi$ is a positive multiple of $c_1(\pi_1^*\mathcal{L})$ for an
ample line bundle $\mathcal{L}$ on $X^{\underline{C}}$. Conversely, for any
ample line bundle $\mathcal{L}$ on $X^{\underline{C}}$, the element
$c_1(\pi_1^*\mathcal{L})$ lies in the relative interior of $ \underline{C}$.
\end{itemize}

See, for example, \cite[Section 3-2]{KMM}. Note that in \cite{KMM} the result is stated
in terms of the nef cone. By a theorem of Kleiman, in our situation, the nef cone
is dual to the ample cone, and so we can talk about the ample cone instead.

Again, we explain what we get in the case of $X=T^*\mathcal{B}$. The choice of
$\underline{C}$ gives rise to that of a standard parabolic subgroup $P\supset B$. Let
$L\subset P$ be the standard Levi subgroup and $U$ be the unipotent radical of
$P$. The choice of $P$ is uniquely characterized by the condition that $\underline{C}$
coincides with the subset of strictly dominant elements in
$\mathfrak{X}(L)\otimes_{\mathbb{Z}}\mathbb{R}$. Let $Y_L$ denote the nilpotent cone for $L$. Then $X^{\underline{C}}=
G\times^P (Y_L\times \mathfrak{p}^\perp)$.

\subsubsection{Further examples}
\begin{Ex}\label{Ex:partial_flag}
Let $P$ be a parabolic subgroup in  a semisimple algebraic group $G$. Fix a Levi decomposition
$P=L\ltimes U$. Consider the variety $X=T^*(G/P)$. The group $G$ acts on $X$ with an open
orbit, to be denoted by $\tilde{\Orb}$. The orbit $\tilde{\Orb}$ is an equivariant cover of
a nilpotent orbit in $\g^*$. Then $X$ is a symplectic resolution of $Y:=\operatorname{Spec}(\C[\tilde{
\Orb}])$. One can show that $\tilde{\Orb}$ only depends on $L$, so, in general, $Y$ has several
non-isomorphic symplectic resolutions. We have $\param=(\lf/[\lf,\lf])^*$ and $X_{\param}=
G\times^P ([\lf,\lf]+\mathfrak{u})^\perp$. The classical walls are the hyperplanes in
$(\lf/[\lf,\lf])^*$ defined by coroot hyperplanes $\alpha^\vee=0$, where $\alpha$ is
a root of $\g$ but not of $\lf$.
\end{Ex}

\begin{Ex}\label{Ex:Coulomb}
In this paper we will also need another class of symplectic resolutions that is more recent and much less
understood: smooth Coulomb branches of gauge theories. We start with a connected reductive group
$G$ and its representation $V$. Starting from $(G,V)$ Braverman, Finkelberg and Nakajima, \cite[Section 6]{BFN1},  produce a normal affine variety $Y$ defined as the spectrum of a convolution algebra of a suitable infinite dimensional analog of the Steinberg variety.

Let $T$ denote a maximal torus in $N_{\operatorname{GL}(V)}(G)/G$.
The variety $Y$ admits a deformation $Y_{\mathfrak{t}}$ over $\mathfrak{t}$, where
$\mathfrak{t}=\operatorname{Lie}(T)$. Moreover, for a one-parameter subgroup
$\nu:\C^\times\rightarrow T$ there is a partial resolution $X^\nu\rightarrow Y$,
\cite[Introduction]{BFN2}. In some cases it is known that $X^\nu$ is smooth. These cases include
the following:
\begin{itemize}
\item the case when $T$ is a torus. Then $X^\nu$ is a smooth hypertoric variety.
Moreover, every smooth hypertoric variety arises in this way.
\item some cases when $V$ is a framed representation of a quiver and $G$
is the corresponding group. We will review these cases below,
Section \ref{SSS_quiver_examples}.
\end{itemize}
\end{Ex}

Finally, we will need another class of symplectic resolutions: Nakajima quiver varieties.
This is the most important class of symplectic resolutions for the purposes of this paper
and we will consider it in Section \ref{SS_Nakajima}.

\subsubsection{Conical neighborhoods and slices}\label{SSS_conical_slices}
Take $y\in Y_{\param}$. Let $\zeta$ denote the image of $y$ in $\param$. Consider the
completion $\C[Y_\param]^{\wedge_\zeta}$. This is a Poisson algebra over the completion $\C[\param]^{\wedge_y}$. Further, set $Y_{\param}^{\wedge_y}:=\operatorname{Spec}(\C[Y_{\param}^{\wedge_y}])$
and $X_\param^{\wedge_y}:=Y_{\param}^{\wedge_y}\times_{Y_\param}X_\param$.
Note that $\C[Y_\param^{\wedge_y}]$ is naturally identified with $\C[X_\param^{\wedge_y}]$.

The space $\param^*$ embeds into $\C[\param]^{\wedge_\zeta}$ via $\alpha\mapsto \alpha-\langle\zeta,\alpha\rangle$.  We have a unique $\C^\times$-action on $\C[\param]^{\wedge_\zeta}$ by topological algebra automorphisms characterized by the property
that $\param^*$ has degree $d$.

Below we always impose the following assumption that holds in all examples we know.

\begin{itemize}
\item[($\diamondsuit$)] There is a $\C^\times$-action on $X_\param^{\wedge_y}$ that
\begin{itemize}
\item makes the morphism $X_\param^{\wedge_y}\rightarrow \param^{\wedge_\zeta}$
$\C^\times$-equivariant,
\item rescales the fiberwise symplectic form on $X_\param^{\wedge_y}$ by $t\mapsto t^d$,
\item induces a contracting action on $Y_\param^{\wedge_y}$.
\end{itemize}
\end{itemize}

We would like to deduce some corollaries. We can find an embedding
$\C[[T_y\mathcal{L}]]\rightarrow \C[Y_{\param}]^{\wedge_y}$. Such embeddings are conjugate under
Hamiltonian automorphisms  of $\C[Y_\param]^{\wedge_y}$.
We can decompose $\C[Y_{\param}]^{\wedge_y}$
into the completed tensor product of complete local Poisson algebras $\C[[T_y\mathcal{L}]]
\widehat{\otimes} \underline{A}'_{\param}$, where $\mathcal{L}$ is the symplectic leaf
through $y$ in $Y_\zeta$ and $\underline{A}'_\param$ is the Poisson centralizer of
$\C[[T_y\mathcal{L}]]$ under an embedding $\C[[T_y\mathcal{L}]]
\hookrightarrow \C[Y_\param]^{\wedge_y}$.

Under assumption ($\diamondsuit$), we can choose the decomposition
$$\C[Y_\param]^{\wedge_y}\cong \C[[T_y\mathcal{L}]]
\widehat{\otimes} \underline{A}'_{\param}$$
to be $\C^\times$-stable. Define $\underline{A}_\param$ to be
the $\C^\times$-finite part of $\underline{A}'_\param$. Then
$\underline{A}_\param$ is a finitely generated graded Poisson algebra.
We set $\underline{Y}_\param:=\operatorname{Spec}(\underline{A}_\param)$
so that $Y_{\param}^{\wedge_y}$ is identified with
$(\underline{Y}_\param\times T_y \mathcal{L})^{\wedge_0}$.

The variety $\underline{Y}$ admits a conical symplectic resolution $\underline{X}$
with a deformation $\underline{X}_\param$ over $\param$, see, e.g., \cite[Section 2.1.6]{catO_charp}.
By the construction, we have
$X_\param^{\wedge_y}\cong (\underline{X}_\param\times T_y\mathcal{L})^{\wedge_0}$.

\begin{Ex}\label{Ex:Springer_conical_slice}
Let $X=T^*\mathcal{B}$ so that $X_{\param}=G\times^B \mathfrak{n}^\perp$.
We can rescale the $\C^\times$-action to assume that $d=2$. We claim that
($\diamondsuit$) holds.

Let $L$ be the stabilizer of $\lambda\in \mathfrak{h}^*\hookrightarrow \mathfrak{g}^*$
in $G$ and $\mathcal{N}_L$ be the nilpotent cone in $\mathfrak{l}^*$. Then
$Y_\zeta=G\times^L \mathcal{N}_L$. So the symplectic leaves in $Y_\zeta$
are indexed by the nilpotent orbits of $L$. Let $e\in \lf^*$ be a nilpotent element.
We can consider the Slodowy slice $S_L$ to $e$ in $\mathfrak{l}^*$, this is a transverse
slice to $Le$ in $\lf^*$. This slice comes with a contracting $\C^\times$-action
with $d=2$. The point $e$ defines $y\in Y_\zeta$. We have $T_y\mathcal{L}=\lf^\perp\oplus T_e(Le)$.

Recall that $Y_\param=\h^*\times_{\h^*/W}\g^*$. For $\underline{Y}_{\param}$
we can take $\h^*\times_{\h^*/W_L}S_L$. Then $\underline{X}_\param$
is the preimage of $S_L$ in $L\times^{B\cap L}(\lf^*\cap\mathfrak{n}^\perp)$.
The varieties $\underline{Y}_{\param},\underline{X}_\param$ come with a contracting
$\C^\times$-action with  $d=2$. It is easy to see that $(T_y\mathcal{L}\times \underline{Y}_\param)^{\wedge_0}\cong Y_\param^{\wedge_y}$.

This shows, in particular, that ($\diamondsuit$) holds for $T^*\mathcal{B}$.
A similar construction shows that ($\diamondsuit$) holds for $T^*(G/P)$.
\end{Ex}

Now we return to the general situation. Set $\underline{\param}:=H^2(\underline{X},\C)$.
Note that $\underline{\param}=H^2(\underline{\pi}^{-1}(0),\C)$ and
$\underline{\pi}^{-1}(0)\hookrightarrow X_\zeta$. The spaces $H^2(X_\param,\C)$ and
$H^2(X,\C)=\param$ are naturally identified. This gives rise to the pullback map
$\param=H^2(X_{\param},\C)\rightarrow H^2(\underline{\pi}^{-1}(0),\C)=\underline{\param}$
to be denoted by $\eta$. We have a similar map between the Picard groups:
for this one needs to notice that $\operatorname{Pic}(\underline{X})=
\operatorname{Pic}(\underline{X}^{\wedge_0})$ thanks to the contracting $\C^\times$-action.
The maps $\eta:\operatorname{Pic}(X)\rightarrow \operatorname{Pic}(\underline{X})$
and $\eta: H^2(X,\C)\rightarrow H^2(\underline{X},\C)$ are intertwined by the
maps $c_1$.

By the construction, we have $\underline{X}_{\param}=\param\times_{\underline{\param}} \underline{X}_{\underline{\param}}$.

Now we consider the case when $\zeta$ is generic (in the sense explained below)
in a classical wall. The following is \cite[Proposition 2.3]{catO_charp}.

\begin{Prop}\label{Prop:rank1}
Suppose that $\zeta\in \param$ is such that
the only rational hyperplane in $\param$ that contains $\lambda$ is a classical
wall, say $\Upsilon$. Let $y\in Y_\zeta$ and form the symplectic resolution  $\underline{X}$
as above in this section. Then the kernel of $\eta: \param\rightarrow
\underline{\param}$ coincides with $\Upsilon$.
\end{Prop}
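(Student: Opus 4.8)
The plan is to understand the map $\eta:\param\to\underline{\param}$ through the geometry of the deformation $\underline{X}_\param=\param\times_{\underline{\param}}\underline{X}_{\underline{\param}}$ and the locus $\param^{sing}$ of parameters over which the (partial) resolution fails to be an isomorphism. The key observation is that $\underline{X}_\param$ is, étale-locally near $y$ in the base, a product $\underline{X}_\param\times T_y\mathcal L$, and $\underline{X}$ itself is a genuine symplectic resolution of a cone $\underline{Y}$ with $H^2(\underline X,\C)=\underline{\param}$. So the singular locus $\underline{\param}^{sing}\subset\underline{\param}$ for the family $\underline{X}_{\underline{\param}}\to\underline{\param}$ is, by Namikawa's theorem, a union of rational hyperplanes, and by the product decomposition $\param^{sing}\cap(\text{small nbhd of }\zeta)$ is precisely $\eta^{-1}(\underline{\param}^{sing})$, at least for parameters near $\zeta$.

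First I would record that $\eta$ is a surjection $\param\twoheadrightarrow\underline{\param}$: this follows because $\underline X$ is a symplectic resolution of $\underline Y$ obtained as a slice of $X_\param$, so every class in $H^2(\underline X,\C)$ is the restriction of a class on $X_\param$, hence on $X$; surjectivity can also be seen on the Picard level using $\operatorname{Pic}(\underline X)=\operatorname{Pic}(\underline X^{\wedge_0})$ and the fact that line bundles on $X$ deform over $\param$. Consequently $\ker\eta$ is a vector subspace of codimension $\dim\underline{\param}$. Next I would argue that $\ker\eta\subset\Upsilon$. Indeed, if $\mu\in\ker\eta$, then the fiber $\underline X_{\eta(\zeta+t\mu)}=\underline X_{\eta(\zeta)}$ is constant as $t$ varies, but more to the point the whole slice family over the line $\zeta+t\mu$ is pulled back from a single point of $\underline{\param}$, so $\zeta+t\mu$ lies in $\param^{sing}$ whenever $\zeta$ does (the slice being non-affine forces $Y_{\zeta+t\mu}$ to be singular near $y$). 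Since $\zeta$ lies on the classical wall $\Upsilon$ and on no other rational hyperplane, and $\param^{sing}$ is a union of rational hyperplanes, the line $\zeta+t\mu$ must lie entirely in $\Upsilon$; as $\zeta\in\Upsilon$ this gives $\mu\in\Upsilon$ (using that $\Upsilon$ is a linear subspace). Hence $\ker\eta\subseteq\Upsilon$.

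For the reverse inclusion $\Upsilon\subseteq\ker\eta$, equivalently $\codim\ker\eta\le 1$, i.e. $\dim\underline{\param}\le 1$: here the point is that $y$ is chosen on $Y_\zeta$ with $\zeta$ generic in exactly one wall, so the slice $\underline Y$ is "as small as possible" — its deformation theory is governed by a single wall. Concretely, $\underline{\param}^{sing}$ is nonempty (since $\zeta\in\Upsilon$ means $\eta(\zeta)\in\underline{\param}^{sing}$, as $\underline X\to\underline Y$ is not an isomorphism) and is a union of rational hyperplanes in $\underline{\param}$; pulling back along $\eta$, each such hyperplane gives a rational hyperplane in $\param$ through $\zeta$, which by the genericity hypothesis must equal $\Upsilon$. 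Thus $\eta^{-1}$ of every hyperplane in $\underline{\param}^{sing}$ is $\Upsilon$, which forces $\underline{\param}^{sing}$ to be a single hyperplane and $\eta|$ restricted appropriately to have $1$-dimensional "transverse" direction; combined with surjectivity of $\eta$ and $\ker\eta\subseteq\Upsilon$ this yields $\ker\eta=\Upsilon$.

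The main obstacle I anticipate is making the comparison $\param^{sing}\cap(\text{nbhd of }\zeta)=\eta^{-1}(\underline{\param}^{sing})$ fully rigorous: one must pass between the formal/analytic slice picture ($X_\param^{\wedge_y}\cong(\underline X_\param\times T_y\mathcal L)^{\wedge_0}$) and the algebraic statement about Namikawa's hyperplane arrangements, and in particular check that non-affineness of $X_\zeta$ near $y$ is detected exactly by non-affineness of the corresponding $\underline X$-fiber. This is where assumption ($\diamondsuit$) and the $\C^\times$-equivariance of the slice decomposition are essential, since they let one descend from the completion to the honest graded (conical) varieties; I would lean on \cite[Section 2.1.6]{catO_charp} and \cite[Proposition 2.19]{BPW}-style codimension-$2$ arguments to justify that the Picard and $H^2$ identifications are compatible with taking slices.
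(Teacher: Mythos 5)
The paper itself does not prove this proposition; it is quoted from \cite[Proposition 2.3]{catO_charp}, so I will evaluate your argument on its own terms.

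Your overall strategy — compare the locus where fibers are singular on both sides of the isomorphism $X_\param^{\wedge_y}\cong(\underline{X}_\param\times T_y\mathcal{L})^{\wedge_0}$, using that $\underline{X}_\param=\param\times_{\underline{\param}}\underline{X}_{\underline{\param}}$, that $\param^{sing}$ and $\underline{\param}^{sing}$ are unions of rational hyperplanes through the origin, and that $\zeta$ lies on no rational hyperplane but $\Upsilon$ — is the right framework, and the inclusion $\ker\eta\subseteq\Upsilon$ is handled correctly: the affine hyperplane $\zeta+\ker\eta$ lands in $\param^{sing}$, hence by irreducibility in a single classical wall containing $\zeta$, hence in $\Upsilon$. (Two cosmetic issues: what you call ``$\eta(\zeta)$'' should be $0$, since the base of the slice family is identified with $\C[\param]^{\wedge_0}$ by the translation $\zeta'\mapsto\zeta'-\zeta$; and strictly you should record that $y$ is taken on a closed, e.g.\ minimal, leaf of $Y_\zeta$, as the statement is vacuously wrong if $y$ is a smooth point.)

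The gap is in the last step. From $\eta^{-1}(H)=\Upsilon$ for every hyperplane component $H$ of $\underline{\param}^{sing}$ you correctly deduce that $\underline{\param}^{sing}$ is a single linear hyperplane, namely $\eta(\Upsilon)$, and that $\ker\eta\subseteq\Upsilon$. But nothing so far rules out the scenario $\dim\underline{\param}\geqslant 2$, $\underline{\param}^{sing}=\eta(\Upsilon)$ a hyperplane of positive dimension, and $\ker\eta\subsetneq\Upsilon$ of codimension $\dim\underline{\param}$ in $\param$; this is perfectly consistent with surjectivity of $\eta$, with $\eta^{-1}(\underline{\param}^{sing})=\Upsilon$, and with $0\in\underline{\param}^{sing}$. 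To close the argument you must show $\dim\underline{\param}=1$, and the input you are missing is geometric: the relative nef cone of the conical symplectic resolution $\underline{X}\to\underline{Y}$ is a proper rational polyhedral cone in $\underline{\param}_\R$ (proper because $\underline{Y}$ is affine with trivial Picard group, so there is no numerically $\pi$-trivial direction), and its supporting hyperplanes are among the classical walls of $\underline{X}$. A proper polyhedral cone of full dimension $n\geqslant 2$ has at least two distinct facet hyperplanes, so $\dim\underline{\param}\geqslant 2$ would force $\underline{\param}^{sing}$ to contain at least two distinct hyperplanes, contradicting what you proved. Without this (or an equivalent) input, the sentence ``which forces \ldots\ $\eta|$ restricted appropriately to have $1$-dimensional transverse direction'' is an unjustified leap, since it is precisely $\dim\underline{\param}=1$ that is at stake.
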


\begin{Ex}\label{Ex:rank1_cotangent}
Let $X=T^*\mathcal{B}$. For $\lambda$ as above, we have $\underline{X}=T^*\mathbb{P}^1$
hence $\underline{\param}$ is 1-dimensional. More generally for $X=T^*(G/P)$, all possible
$\underline{X}$ in the previous proposition are going to be Slodowy subvarieties of
 $T^*(G_1/P_1)$, where $G_1$ is a Levi subgroup
in $G$ and $P_1=P\cap G_1$ is a maximal parabolic subgroup of $G_1$.
\end{Ex}

\subsection{Quantizations}
\subsubsection{Definitions}
We will study filtered quantizations of $Y,Y_{\param}, X,X_{\param}$.
By a quantization of $Y$, we mean a pair $(\A,\iota)$ of
\begin{itemize}
\item a filtered
algebra $\A=\bigcup_i \A_{\leqslant i}$ such that $\deg [\cdot,\cdot]\leqslant -d$ meaning that
$[\A_{\leqslant i},\A_{\leqslant j}]\subset
\A_{\leqslant i+j-d}$ for all $i,j$,
\item and an isomorphism $\iota:\gr\A\xrightarrow{\sim} \C[Y]$ of graded Poisson algebras.
\end{itemize}
Similarly, a quantization of  $Y_{\param}$ is a pair $(\tilde{\A},\iota)$, where
$\tilde{\A}$ is a filtered $\C[\param]$-algebra (with $\param^*$ in degree $d$
and $\deg [\cdot,\cdot]\leqslant -d$) together with an isomorphism
$\iota: \gr\tilde{\A}\xrightarrow{\sim} \C[Y_{\param}]$ of graded Poisson $\C[\param]$-algebras. For $\lambda\in \param$,
we set $\A_\lambda:=\C_\lambda\otimes_{\C[\param]}\tilde{\A}$, this is a filtered quantization of
$\C[Y]$.

By a quantization of $X=X^\theta$, we mean a pair $(\A^\theta,\iota)$ of
\begin{itemize}
\item  a sheaf $\A^\theta$ of filtered algebras in the conical
topology on $X$  that is complete
and separated with respect to the filtration and satisfies $\deg [\cdot,\cdot]\leqslant -d$
\item together with an isomorphism $\iota: \gr\A^\theta\xrightarrow{\sim} \Str_{X^\theta}$ (of sheaves of graded Poisson algebras).
\end{itemize}

Similarly, we can talk about quantizations of $X_{\param}^\theta$.

\subsubsection{Classification of quantizations}
\cite[Theorem 1.8]{BK} (with ramifications given in \cite[Section 2.3]{quant_iso}) shows that the quantizations
$\A^\theta$ of $X$ are parameterized (up to an isomorphism) by the points in $\param=H^2(X,\C)$.
More precisely, there is a {\it canonical} quantization $\A_{\paramq}^\theta$ of $X_{\param}^\theta$ such that the quantization of $X^\theta$ corresponding to $\lambda\in \paramq$ is the specialization of $\A_{\paramq}^\theta$ to $\lambda$. We will write $\A_\lambda^\theta$ for that specialization.

Set $\A_{\paramq}:=\Gamma(\A_{\paramq}^\theta)$. This is a quantization of $Y_{\param}$ because
$R\Gamma(\Str_X)=\C[Y]$. As our notation suggests, the algebra $\A_{\paramq}$ is independent of
$\theta$, this follows from  \cite[Proposition 3.8]{BPW}. We set $\A_\lambda=\C_\lambda\otimes_{\C[\paramq]}\A_{\paramq}$.

It was observed in the proof of  \cite[Proposition 3.10]{BPW} that the Namikawa-Weyl group $W_X$ acts on $\A_{\paramq}$ by filtered algebra
automorphisms lifting the action on $\paramq$. In particular, $\A_\lambda\cong \A_{w\lambda}$
for all $\lambda\in \paramq$ and all $w\in W_X$.

In fact, every quantization of $\C[Y]$ has the form $\A_\lambda$ for some $\lambda\in \paramq$,
\cite[Theorem 3.4]{orbit}.

\begin{Ex}\label{Ex:Springer}
Let $X=T^*\mathcal{B}$, recall that $\paramq=\h^*$. Then $\A_\lambda^\theta$ is the microlocalization of
the sheaf $\mathcal{D}^\lambda_{\mathcal{B}}$ of $\lambda-\rho$-twisted differential
operators. Similarly, $\A_\paramq^\theta$ is the microlocalization of $(\upsilon_*D_{G/U})^T$,
where $\upsilon$ is the projection $G/U\rightarrow G/B=\mathcal{B}$. The algebra $\A_\lambda$
is the central reduction $\U_\lambda$ of $U(\g)$. And $\A_{\paramq}$ is
$\C[\h^*]\otimes_{\C[\h^*]^W}U(\g)$.
\end{Ex}

\subsubsection{Slice quantizations}\label{SSS_slice_quant}
We use the notation and conventions of Section \ref{SSS_conical_slices}.
Consider the algebra $\underline{\A}_{\underline{\paramq}}$, the global sections
of the canonical quantization $\underline{X}_{\underline{\param}}$, and set
$\underline{\A}_\paramq:=\C[\paramq]\otimes_{\C[\underline{\paramq}]}\underline{\A}_{\underline{\paramq}}$.
Further let $\mathbb{A}$ denote the Weyl algebra of the symplectic vector space
$T_y\mathcal{L}$. Form the filtered algebra $\mathbb{A}\otimes \underline{\A}_{\paramq}$,
take its Rees algebra $R_\hbar(\mathbb{A}\otimes \underline{\A}_{\paramq})$
and complete it at zero. Denote this completion by $R_\hbar(\mathbb{A}\otimes \underline{\A}_{\paramq})^{\wedge_0}$.

On the other hand, we can take the Rees algebra $R_\hbar(\A_{\paramq})$ and complete it
at $y$ getting the algebra $R_\hbar(\A_{\paramq})^{\wedge_y}$. It was shown in
\cite[Remark 3.4]{perv} that we have a $\C[[\paramq,\hbar]]$-linear isomorphism
$$R_\hbar(\A_{\paramq})^{\wedge_y}\cong R_\hbar(\mathbb{A}\otimes \underline{\A}_{\paramq})^{\wedge_0}$$
that lifts the isomorphism $\C[Y_\param]^{\wedge_y}\cong \C[T_y \mathcal{L}\times \underline{Y}_\param]^{\wedge_0}$.

\begin{Ex}\label{Ex:W-alg}
Let $Y$ be the nilpotent cone in $\g^*$. For simplicity, take $y\in Y$.
From $y$ we can form the finite W-algebra $\mathcal{W}$ whose center is still
$\C[\h^*]^W$. Then, by a construction of the W-algebra,
$\underline{\A}_{\paramq}=\C[\h^*]\otimes_{\C[\h^*]^W}\mathcal{W}$.
\end{Ex}

\subsection{Special case: Nakajima quiver varieties}\label{SS_Nakajima}
\subsubsection{Definition of quiver varieties}
Let $Q$ be a quiver (=oriented graph, we allow loops and multiple edges). We can formally represent $Q$ as a quadruple $(Q_0,Q_1,t,h)$, where $Q_0$ is a finite set of vertices, $Q_1$ is a finite set of arrows,
$t,h:Q_1\rightarrow Q_0$ are maps that to an arrow $a$ assign its tail and head.

Pick vectors $v,w\in \Z_{\geqslant 0}^{Q_0}$ and vector spaces $V_i,W_i$ with
$\dim V_i=v_i, \dim W_i=w_i$. Consider the (co)framed representation space
$$R=R(Q,v,w):=\bigoplus_{a\in Q_1}\Hom(V_{t(a)},V_{h(a)})\oplus \bigoplus_{i\in Q_0} \Hom(V_i,W_i).$$
We will also consider the cotangent bundle $T^* R=R\oplus R^*$ that can be identified with
$$\bigoplus_{a\in Q_1}\left(\Hom(V_{t(a)},V_{h(a)})\oplus \Hom(V_{h(a)}, V_{t(a)})\right)\oplus \bigoplus_{i\in Q_0} \left(\Hom(V_i,W_i)\oplus \Hom(W_i,V_i)\right).$$
The space $T^*R$ carries  a natural symplectic form, denote it by $\omega$.
On $R$ we have a natural action of the group $G:=\prod_{i\in Q_0} \GL(v_i)$. This action extends to
an action on $T^*R$ by linear symplectomorphisms.
As any action by linear symplectomorphisms, the $G$-action
on $T^*R$ admits a moment map, i.e., a $G$-equivariant morphism $\mu:T^*R\rightarrow \g^*$ with the
property that $\{\mu^*(x),\bullet\}=x_{T^*R}$ for any $x\in \g$. Here $\mu^*:\g\rightarrow \C[T^*R]$
denotes the dual map to $\mu$, $\{\bullet,\bullet\}$ is the Poisson bracket on $\C[T^*R]$ induced by $\omega$,
and $x_{T^*R}$ is the vector field on $T^*R$ induced by $x$ via the $G$-action. The moment map
$\mu$ is recovered from $\mu^*(x)=x_{R}$.
Also we consider the dilation action of the one-dimensional torus $\C^\times$ on $T^*R$ given by $t.r=t^{-1}r$.

The character lattice of $G$ is identified with $\Z^{Q_0}$ via $(\theta_i)_{i\in Q_0}\mapsto \theta$,
where $\theta((g_i))=\prod_{i\in Q_0} \det(g_i)^{\theta_i}$. Similarly, the space $\g^{*G}$
is identified with $\C^{Q_0}$. We set $\tilde{\param}:=\C^{Q_0}$.

We say that $\theta\in \Z^{Q_0}$ is generic if
$G$ acts on $\mu^{-1}(0)^{\theta-ss}$ freely.
According to
\cite{Nakajima94} $\theta$ is {\it generic} provided $\theta\cdot v'\neq 0$ for all roots
$v'$ of $Q$ such that $v'\leqslant v$. Note, however, that
$\mu^{-1}(0)^{\theta-ss}$ may be empty.

Define the following Hamiltonian reductions.

\begin{equation}\label{eq:quiver_def}
\M_0^0:=\mu^{-1}(0)\quo G, \M_{\tilde{\param}}^0:=\mu^{-1}(\tilde{\param})\quo G,
\M_0^\theta:=\mu^{-1}(0)\quo^\theta G, \M_{\tilde{\param}}^\theta:=\mu^{-1}(\tilde{\param})\quo^\theta G.
\end{equation}
Here $\quo$ stands for the categorical quotient and $\quo^\theta$ stands for the GIT quotient with
respect to $\theta$.
We will write $\M_0^0(v,w)$ instead of $\M_0^0$ when we want to indicate the dependence on $v,w$.
We remark that, by the construction, we have projective morphisms $\M_0^\theta\rightarrow \M_0^0,
\M^\theta_{\tilde{\param}}\rightarrow \M^0_{\tilde{\param}}$.

Note that since $\theta$ is generic, we have a linear map $\Z^{Q_0}\rightarrow \operatorname{Pic}(\M^\theta_0), \chi\mapsto \Str(\chi),$ by equivariant descent.
Note that $\Str(\theta)$ is ample
on $\M_0^\theta$.

We will need the following result that is a consequence, for example, of
\cite[Theorem 1.2]{MN_Kirwan}.

\begin{Lem}\label{Lem:2nd_surjectivity}
The induced map $\tilde{\param}\rightarrow H^2(\M_0^\theta,\C)$ is surjective.
\end{Lem}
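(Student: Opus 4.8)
The plan is to use the construction of $\M_0^\theta$ as a GIT quotient and appeal to the Kirwan surjectivity statement, in the sharp form proved by McGerty–Nevins. First I would recall the general setup: $\M_0^\theta = \mu^{-1}(0)\quo^\theta G$, where $G=\prod_i\GL(v_i)$ acts on $T^*R$ and $\mu:T^*R\to\g^*$ is the moment map, with the dilation $\C^\times$-action commuting with $G$. Because $\theta$ is generic, $G$ acts freely on the semistable locus $\mu^{-1}(0)^{\theta-ss}$, so $\M_0^\theta$ is a smooth symplectic variety and the quotient map $\mu^{-1}(0)^{\theta-ss}\to\M_0^\theta$ is a principal $G$-bundle. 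The character lattice $\Z^{Q_0}$ of $G$ maps to $\Pic(\M_0^\theta)$ by equivariant descent, and composing with $c_1$ gives the map $\tilde\param=\C^{Q_0}\to H^2(\M_0^\theta,\C)$ in the statement; concretely this is the composite $\tilde\param\to H^2_G(\mathrm{pt},\C)\to H^2_G(\mu^{-1}(0)^{\theta-ss},\C)\cong H^2(\M_0^\theta,\C)$, the last isomorphism because the action is free.

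The key input is the Kirwan-type surjectivity result. The classical Kirwan surjectivity theorem says that for a suitable reductive group action, $H^*_G(\text{ambient})\to H^*_G(\text{semistable locus})$ is surjective; here the ambient space $T^*R$ is $G$-equivariantly contractible, so $H^*_G(T^*R,\C)=H^*_G(\mathrm{pt},\C)$, and one needs surjectivity onto $H^*_G(\mu^{-1}(0)^{\theta-ss},\C)$, which factors through the Kirwan map on the full ambient space. The subtlety, and the reason one cites \cite[Theorem 1.2]{MN_Kirwan} rather than classical Kirwan, is that $\mu^{-1}(0)$ is not smooth and the naive Kirwan map from the ambient cohomology need not be surjective for singular GIT quotients — McGerty–Nevins establish the surjectivity precisely in this Hamiltonian-reduction (Nakajima quiver variety) setting. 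So the main step is: invoke \cite[Theorem 1.2]{MN_Kirwan} to get that $H^*_G(\mathrm{pt},\C)\to H^*(\M_0^\theta,\C)$ is surjective in all degrees, and then restrict attention to degree $2$.

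Finally I would check that the degree-$2$ part of $H^*_G(\mathrm{pt},\C)$ surjects onto $H^2(\M_0^\theta,\C)$ via the map in the lemma. Since $G=\prod_i\GL(v_i)$ is connected reductive, $H^2_G(\mathrm{pt},\C)$ is spanned by the Chern classes of the determinant characters $\det_i$, i.e. it is exactly the image of $\tilde\param=\C^{Q_0}$; higher-degree generators of $H^*_G(\mathrm{pt},\C)$ live in degrees $\geq 2$ but products of lower-degree classes and the higher primitive generators cannot contribute to degree $2$ beyond what $\tilde\param$ already gives. Hence surjectivity of the full Kirwan map in degree $2$ is exactly the assertion that $\tilde\param\to H^2(\M_0^\theta,\C)$ is surjective. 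I expect the main obstacle to be purely a matter of citing the right form of Kirwan surjectivity — \cite{MN_Kirwan} handles the singularity of $\mu^{-1}(0)$ — together with the bookkeeping that $H^2$ of the classifying space of a product of general linear groups is generated by the determinant characters, so that the abstract surjectivity specializes to the concrete map in the statement.
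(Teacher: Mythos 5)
Your proposal is correct and follows exactly the route the paper takes: the paper simply asserts the lemma as a consequence of McGerty–Nevins' Kirwan surjectivity theorem \cite[Theorem 1.2]{MN_Kirwan}, and you fill in the same deduction — identify the map $\tilde\param\to H^2(\M_0^\theta,\C)$ with the degree-$2$ Kirwan map $H^2_G(\mathrm{pt},\C)\to H^2(\M_0^\theta,\C)$, invoke McGerty–Nevins for surjectivity, and note that $H^2_G(\mathrm{pt},\C)$ is exactly the span of the determinant characters $c_1(\det_i)$. The bookkeeping about degree-$2$ generators of $H^*_G(\mathrm{pt},\C)$ and the freeness of the $G$-action on the semistable locus is all as it should be.
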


Note that we have a contracting action of $\C^\times$ on $\M_0^\theta,\M_{\param}^\theta$
coming from the action of $\C^\times$ on $T^*R$ introduced above.

\subsubsection{Flatness}
In what follows we impose the following condition
\begin{itemize}
\item[($\sharp$)] The moment map $\mu: T^*R\rightarrow \g^*$ is flat.
\end{itemize}

Here is a consequence of ($\sharp$) obtained, for example, in \cite[Proposition 2.5]{BL}.

\begin{Lem}\label{Lem:resolution}
The morphism $\M_0^\theta\rightarrow \M_0^0$ is a resolution of singularities.
\end{Lem}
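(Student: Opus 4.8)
The plan is to reduce the statement to a known structural fact about Nakajima quiver varieties, namely that under the flatness assumption ($\sharp$) the zero fiber $\mu^{-1}(0)$ is a complete intersection in $T^*R$ of the expected dimension, and that $G$ acts on the $\theta$-semistable locus $\mu^{-1}(0)^{\theta\text{-}ss}$ freely (because $\theta$ is generic), so that $\M_0^\theta$ is smooth. First I would record that flatness of $\mu$ together with $\mu^{-1}(0)$ being nonempty forces every irreducible component of $\mu^{-1}(0)$ to have dimension $\dim T^*R-\dim\g=\dim R-\dim\g+\dim\g^*-\dim\g+\ldots$; more precisely $\dim\mu^{-1}(0)=\dim T^*R-\dim\g$ since $\mu$ is a flat map to $\g^*$ hitting $0$. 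Restricting to the free locus and taking the quotient by $G$ (of dimension $\dim\g$) shows $\dim\M_0^\theta=\dim T^*R-2\dim\g$, which is even and equals the dimension of the generic symplectic leaf, hence $\M_0^\theta$ has pure dimension.

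Next I would argue that $\M_0^\theta\to\M_0^0$ is birational. The point is that over the locus of $\M_0^0$ corresponding to closed $G$-orbits with trivial stabilizer (equivalently, stable points in the affine GIT sense), semistability with respect to $\theta$ is automatic, so the map is an isomorphism there; and this locus is a dense open subset of $\M_0^0$ because the generic closed orbit in $\mu^{-1}(0)$ is free — this is exactly where genericity of $v$ relative to the roots enters, or one invokes the standard fact that $\M_0^0$ is irreducible of the expected dimension under ($\sharp$). Thus $\M_0^\theta\to\M_0^0$ is a projective (it is projective by construction, being a GIT quotient mapping to the affine quotient) birational morphism from a smooth variety; since $\M_0^0$ is normal (an affine GIT quotient of a normal — indeed smooth — variety, or again a standard fact), this is a resolution of singularities.

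The main obstacle, and the only genuinely nontrivial input, is the smoothness of $\M_0^\theta$, i.e. the freeness of the $G$-action on $\mu^{-1}(0)^{\theta\text{-}ss}$. Freeness of the $G$-action on the $\theta$-\emph{stable} locus of $R$ itself (with no moment-map condition) is Nakajima's genericity result quoted in the excerpt, but one needs it on $\mu^{-1}(0)^{\theta\text{-}ss}$ with semistable replaced by stable, and one needs that semistable equals stable there so that the quotient is geometric. This is where ($\sharp$) is used in combination with genericity of $\theta$: flatness guarantees $\mu^{-1}(0)$ is a complete intersection so that its smooth points are dense and the quotient inherits smoothness, and genericity of $\theta$ kills strictly semistable points. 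I would assemble these by citing \cite[Proposition 2.5]{BL} as already done in the statement of Lemma \ref{Lem:resolution}, since the hard analytic content is packaged there; my proof would therefore be a short paragraph verifying the hypotheses of that cited proposition (that $\theta$ is generic and ($\sharp$) holds) and concluding.

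\begin{proof}
By ($\sharp$), the moment map $\mu:T^*R\to\g^*$ is flat, so $\mu^{-1}(0)$ is a complete intersection; in particular each of its components has dimension $\dim T^*R-\dim\g$, and its smooth locus is dense. Since $\theta$ is generic, $G$ acts freely on $\mu^{-1}(0)^{\theta\text{-}ss}$ and every semistable point is stable, so the GIT quotient $\M_0^\theta=\mu^{-1}(0)\quo^\theta G$ is a geometric quotient of a smooth open subset of $\mu^{-1}(0)$ (flatness forces this open locus to meet every component), hence is smooth of dimension $\dim T^*R-2\dim\g$. The natural morphism $\M_0^\theta\to\M_0^0$ is projective by construction. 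Over the open dense subset of $\M_0^0$ parametrizing closed free $G$-orbits in $\mu^{-1}(0)$, every point is $\theta$-stable, so the morphism is an isomorphism there; thus it is birational. As $\M_0^0$ is normal (being the affine GIT quotient of the complete intersection $\mu^{-1}(0)$, which is reduced and irreducible under ($\sharp$)), a projective birational morphism onto it from the smooth variety $\M_0^\theta$ is a resolution of singularities. This is precisely \cite[Proposition 2.5]{BL}.
\end{proof}
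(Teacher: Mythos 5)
The paper gives no proof here: it simply invokes \cite[Proposition 2.5]{BL}, and your write-up ultimately does the same, so you are taking the same route. Your attempted reconstruction of that proposition, however, has two genuine gaps.

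First, the smoothness of $\mu^{-1}(0)^{\theta\text{-}ss}$ is not a consequence of flatness (which only controls dimensions and Cohen--Macaulayness), and the parenthetical ``flatness forces this open locus to meet every component'' is beside the point. The correct mechanism is purely symplectic: for a Hamiltonian $G$-action, $\ker d\mu_x$ is the $\omega$-orthogonal of $T_x(G\cdot x)$, so $d\mu_x$ is surjective exactly when $\operatorname{Lie}G_x=0$; freeness of the $G$-action on $\mu^{-1}(0)^{\theta\text{-}ss}$ therefore makes $0$ a regular value of $\mu$ restricted to $(T^*R)^{\theta\text{-}ss}$, and that is why $\mu^{-1}(0)^{\theta\text{-}ss}$ is smooth and the free quotient $\M_0^\theta$ is smooth. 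Second, the normality of $\M_0^0$ does not follow from ``$\mu^{-1}(0)$ is a reduced irreducible complete intersection.'' Complete intersection gives Cohen--Macaulay, hence $(S_2)$, but Serre's criterion also requires $(R_1)$, which is not automatic; one either verifies $(R_1)$ directly or cites Crawley-Boevey's normality result for the moment-map fiber (a separate theorem from the flatness criterion you quote). Relatedly, the assertion that free closed orbits form a dense open subset of $\M_0^0$ is exactly the nontrivial content being packaged in \cite[Proposition 2.5]{BL}; it is not an immediate consequence of $(\sharp)$ and deserves more than the one sentence you give it. None of this changes the fact that the right reference is the one you and the paper both cite, but as a stand-alone argument the reconstruction would not compile.
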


Let $\M^\theta_\param$ denote the universal deformation of $\M^\theta_0$.
Then we have $\M^\theta_{\tilde{\param}}=\tilde{\param}\times_{\param}\M^\theta_\param$,
this follows from \cite[Section 3.2]{quant_iso}.

Crawley-Boevey in \cite[Theorem 1.1]{CB} found a necessary and sufficient combinatorial condition
for ($\sharp$) to hold. For a vector $v\in \Z^{Q_0}$ we define $(v,v)$
as $\sum_{i\in Q_0}v_i^2-\sum_{a\in Q_1}v_{t(a)}v_{h(a)}$ (so that for a real
root we have $(v,v)=1$). According to \cite{CB},
($\sharp$) is equivalent to the following condition ($\sharp'$):

\begin{itemize}
\item[($\sharp'$)] For all decompositions $v=\sum_{i=0}^k v^i$, where all $v^i$
are in $\Z_{\geqslant 0}^{Q_0}$ and $v^1,\ldots,v^k$ are roots of $Q$, we have
\begin{equation}\label{eq:CB_condition} w\cdot v-(v,v)\geqslant w\cdot v^0-(v^0,v^0)+k-\sum_{i=1}^k (v^i,v^i).
\end{equation}
\end{itemize}

We will only need this in the  case when $Q$ is of finite
or affine type.
Assume $Q$ has no loops (the only affine quiver without this property is the Jordan quiver, which
needs to be analyzed separately, but this is easy).
Let $\alpha_i, i\in Q_0,$ denote the simple roots for $\g(Q)$ and
$\varpi_i, i\in Q_0,$ denote the fundamental weights. We record the framing
vector $w$ as the dominant weight $\omega:=\sum_{i\in Q_0} w_i\varpi_i$
and the dimension vector $v$ as a weight $\nu:=\omega-\sum_{i\in Q_0}v_i \alpha_i$.

The following results was obtained in \cite[Lemma 2.1]{BL}.

\begin{Lem}\label{Lem:dominant}
Suppose $Q$ is of finite or affine type. If $\nu$ is dominant, then
($\sharp'$), equivalently, ($\sharp$) holds.
\end{Lem}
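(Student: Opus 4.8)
The plan is to verify the combinatorial inequality ($\sharp'$) directly, exploiting the representation-theoretic reformulation available for finite and affine type quivers. Recall that for any vector $u \in \Z^{Q_0}$, writing $u$ as a weight $\omega - \sum_i u_i \alpha_i$ is not quite what we want; instead, I would introduce the standard bilinear form on the root lattice of $\g(Q)$ and note that $(u,u) = \sum_i u_i^2 - \sum_{a \in Q_1} u_{t(a)} u_{h(a)}$ is exactly the value of the symmetric Cartan form $\langle u, u \rangle_{\mathrm{Cartan}}$ (for a quiver without loops the Euler form symmetrizes to the Cartan form). The key translation is that the quantity $w \cdot v - (v,v)$ appearing on both sides of (\ref{eq:CB_condition}) can be rewritten using $\omega$ and $\nu$: one checks $w\cdot v - (v,v) = \tfrac{1}{2}\bigl((\omega,\omega) - (\nu,\nu)\bigr)$ up to a fixed constant, where $(\cdot,\cdot)$ now denotes the symmetrized Cartan form on the weight lattice. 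So the inequality becomes a statement comparing $(\nu,\nu)$ against an expression built from the $v^i$.

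Next I would set up the bookkeeping for a decomposition $v = v^0 + v^1 + \dots + v^k$ with $v^1, \dots, v^k$ roots of $Q$. Writing $\nu^0 := \omega - \sum_i v^0_i \alpha_i$ and $\beta^j := \sum_i v^j_i \alpha_i$ for $j \geq 1$ (each a positive root of $\g(Q)$ since $Q$ has no loops and is of finite or affine type), we have $\nu = \nu^0 - \sum_{j=1}^k \beta^j$. Substituting into (\ref{eq:CB_condition}) and using $(\beta^j, \beta^j) = 2 - 2(v^j,v^j)$ for a root (with $(v^j,v^j) \le 1$, equality for real roots, $(v^j,v^j) = 0$ for imaginary roots in affine type), the inequality reduces — after expanding $(\nu,\nu) = (\nu^0,\nu^0) - 2\sum_j (\nu^0,\beta^j) + (\sum_j \beta^j, \sum_j \beta^j)$ — to an inequality of the shape
\begin{equation*}
\sum_{j=1}^k \langle \nu^0, \beta^j \rangle \;\geq\; \sum_{1 \le i < j \le k} (\beta^i, \beta^j) \;+\; (\text{correction from imaginary roots}),
\end{equation*}
where I am writing $\langle\cdot,\cdot\rangle$ for the appropriate pairing of a weight with a coroot. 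The point is now: since $\nu$ is dominant and $\nu = \nu^0 - \sum_j \beta^j$, one extracts dominance information about $\nu^0$ relative to the $\beta^j$, and the cross-terms $(\beta^i,\beta^j)$ for distinct positive roots in finite or affine type are bounded below in a controlled way (for finite ADE type $(\beta^i,\beta^j) \in \{-1,0,1,2\}$ after normalization, and the genuinely dangerous positive cross-terms only occur when the roots are proportional or equal). I expect the cleanest route is an induction on $k$: peel off one root $\beta^k$, use that $\nu + \beta^k$ is still ``dominant enough'' (or handle the failure of dominance by a case analysis on which coroot pairings went negative), and apply the inductive hypothesis to $v - v^k$.

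The main obstacle I anticipate is the affine case, specifically decompositions involving the imaginary root $\delta$ (and its multiples): there $(v^j,v^j) = 0$, so the term $k - \sum (v^i,v^i)$ does not simply telescope, and one must control how many imaginary-root summands can appear. I would handle this by using that $\langle \nu, \delta^\vee \rangle$ (the level) is fixed and nonnegative when $\nu$ is dominant, which caps the number of $\delta$-summands and forces the remaining real-root summands into a finite-type sub-root-system where the finite-type argument applies. An alternative, possibly shorter, strategy worth trying first: invoke the geometric characterization instead of the combinatorial one — dominance of $\nu$ implies $\M_0^\theta(v,w) \ne \emptyset$ and is of expected dimension (this is essentially the content of Nakajima's work and is recalled in \cite{BL}), and flatness of $\mu$ is equivalent to all fibers of $\mu$ having the expected dimension, which can be read off from the non-emptiness and dimension count of the $\M^0_0(v',w')$ for $v' \le v$; but since the statement cites \cite[Lemma 2.1]{BL} I would ultimately defer to that argument and present the combinatorial verification above only as the conceptual skeleton.
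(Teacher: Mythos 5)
The paper itself gives no proof of this lemma: it is stated with a citation to \cite[Lemma 2.1]{BL}, so there is no internal argument to compare against. That said, your translation is on the right track and can in fact be closed out more cleanly than your sketch suggests, and it is worth flagging where the sketch as written would get stuck.

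Your identity $w\cdot v - (v,v) = \tfrac{1}{2}\bigl((\omega,\omega)-(\nu,\nu)\bigr)$ is correct (using $(\alpha_i,\varpi_j)=\delta_{ij}$, $(\alpha_i,\alpha_j)=2\delta_{ij}-a_{ij}$, and $(\sum v_i\alpha_i,\sum v_i\alpha_i)=2(v,v)$), and applying it to both $v$ and $v^0$ together with $(\beta^j,\beta^j)=2(v^j,v^j)$ turns ($\sharp'$) exactly into
$$(\nu^0,\nu^0)-(\nu,\nu)\;\geqslant\; 2t,$$
where $t$ is the number of imaginary roots among $\beta^1,\dots,\beta^k$. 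At that point there is no need for an induction on $k$, and the induction you propose is in fact risky: $\nu+\beta^j$ has no reason to remain dominant when $\beta^j$ is a real root (dominance is lost in exactly the situations you'd need to control), so ``peel off one root and recurse'' does not straightforwardly go through. The cleaner route is the identity
$$(\nu^0,\nu^0)-(\nu,\nu) \;=\; 2(\nu^0-\nu,\nu)+(\nu^0-\nu,\nu^0-\nu).$$
The second term is $\geqslant 0$ because the Cartan form is positive semidefinite on the root lattice in finite and affine type --- this absorbs, in one stroke, all the cross-terms $(\beta^i,\beta^j)$ that you were worried about. For the first term, $(\nu^0-\nu,\nu)=\sum_j(\beta^j,\nu)$, and each summand is $\geqslant 0$ by dominance; moreover every imaginary summand $\beta^j=m_j\delta$ contributes $m_j(\delta,\omega)=m_j\,(w\cdot\delta)\geqslant 1$ because the marks $a_i$ are all positive and $w\neq 0$. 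Hence $(\nu^0-\nu,\nu)\geqslant t$, which is exactly what is needed.

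This also clarifies the point where your affine discussion misidentifies the mechanism: the level does not enter by ``capping the number of $\delta$-summands'' (the number of such summands is constrained only by $v^0\geqslant 0$, not by the level); what actually makes the inequality close is that each $\delta$-summand pairs with $\nu$ to give at least $1$, precisely matching the $+1$ it contributes to the defect $k-\sum(v^i,v^i)$ on the right-hand side of ($\sharp'$). Finally, one should note the implicit nondegeneracy hypothesis $w\neq 0$ (equivalently $\omega\neq 0$), without which $w\cdot\delta$ could vanish; and the Jordan-quiver case, which has a loop and falls outside the ``no loops'' normalization the paper uses here, would need a separate (easy) verification as the paper remarks.
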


The reasons why this is useful is because every $\M^\theta_0(v,w)$ is isomorphic
to $\M^\theta_0(v',w)$, where $\nu'$ is dominant. For general $Q$, $\M_0^\theta$
is still a symplectic resolution of singularities.

\subsubsection{Quantizations}
In this section we construct quantizations of $\M^0_0,\M^0_{\tilde{\param}},\M^\theta_0,
\M^\theta_{\tilde{\param}}$. Recall that $\theta$ is generic. We assume ($\sharp$)
holds.

Quantizations of $\M_0^0$ will be constructed via quantum Hamiltonian reduction.
This reduction will depend on a parameter $\lambda\in \tilde{\param}=\C^{Q_0}$.

Consider the algebra $D(R)$ of differential operators on $R$. We equip this algebra
with the Bernstein filtration. The group $G$ acts on
$D(R)$ in a Hamiltonian way with the symmetrized
quantum comoment map $\xi\mapsto \Phi(\xi):=\frac{1}{2}(\xi_R+\xi_{R^*})$.
Fix $\lambda\in \tilde{\paramq}\cong\g^{*G}$. Then we can define a filtered algebra
$$\A^0_\lambda:=[D(R)/D(R)\{\Phi(\xi)-\lambda(x)| x\in \g\}]^G.$$
Thanks to ($\sharp$) this algebra is a filtered quantization of $\M^0_0$.

Similarly, we can construct a quantization
$$\A^0_{\tilde{\paramq}}:=[D(R)/D(R)\{\Phi(\xi)| \xi\in [\g,\g]\}]$$
of $\M^0_{\tilde{\param}}$. Note that $\A^0_\lambda$ is the specialization
of $\A^0_{\tilde{\paramq}}$ to $\lambda\in \tilde{\paramq}$.

Now we proceed to quantizations of $\M^\theta_0, \M^\theta_{\tilde{\param}}$.
Let $D_R$ denote the microlocalization of $D(R)$ to $T^*R$. We write
$D_R^{\theta-ss}$ for the restriction of $D_R$ to $(T^*R)^{\theta-ss}$.
Let $\varrho: \mu^{-1}(0)^{\theta-ss}\rightarrow \M^\theta_0$ be the
quotient morphism. Then we set
$$\A^\theta_\lambda=\varrho_*[D_R^{\theta-ss}/D_R^{\theta-ss}\{\Phi(\xi)-\lambda(\xi)| \xi\in \g\}]^G.$$
This is a filtered quantization of $\M_0^\theta$. Since $\pi:\M_0^\theta\rightarrow \M_0^0$
is a resolution of singularities, we see that $\Gamma(\A^\theta_\lambda)=\A^0_\lambda$,
compare to \cite[Lemma 4.2.4]{quant_iso}.

Similarly we can define the quantization $\A^\theta_{\tilde{\paramq}}$ of $\M^\theta_{\tilde{\param}}$.
This quantization satisfies $\Gamma(\A^\theta_{\tilde{\paramq}})=\A^0_{\tilde{\paramq}}$.

It was shown in \cite[Section 5.4]{quant_iso} that  $\A^\theta_{\tilde{\paramq}}$ is obtained from
the canonical quantization $\A^\theta_\param$ via the pullback under the natural
map $\tilde{\param}\rightarrow \param$.

\subsubsection{Examples and special cases}\label{SSS_quiver_examples}
\begin{Ex}\label{Ex:Grassmanian}
Consider the case when we have quiver of type $A_1$: one vertex and no loops.
Assume $\theta>0$.
Here $\M^\theta_0=T^*\operatorname{Gr}(v,w)$ and $\A^\theta_\lambda$
is the microlocalization of the sheaf $D^\lambda_{\operatorname{Gr}(v,w)}$
of $(\lambda-w/2)$-twisted differential operators on $\operatorname{Gr}(v,w)$.
For $\theta<0$ we get $\M^\theta_0=T^*\operatorname{Gr}(w-v,w)$.
\end{Ex}

\begin{Ex}\label{Ex:Slodowy_A}
More generally, for type $A$ Dynkin quivers we recover parabolic Slodowy
slices of type $A$, see \cite{Maffei_Slodowy}.
\end{Ex}

\begin{Ex}\label{Ex:Gieseker}
Consider the case of Jordan quiver.  We write $n$ for $v$ and $r$ for $w$.
The variety $\M^\theta(n,r)$ is the Gieseker moduli space of rank $r$
degree $n$ torsion free sheaves on $\mathbb{P}^2$ framed at $\infty$.
We will also consider a closely related variety $\bar{\M}^\theta(n,r)$,
where in the definition we replace $R=\mathfrak{gl}_n\oplus \Hom(\C^n,\C^r)$
with $\bar{R}:=\mathfrak{sl}_n\oplus \Hom(\C^n,\C^r)$. Note that
$\M^\theta(n,r)=\C^2\times \bar{\M}^\theta(n,r)$.
\end{Ex}

\begin{Ex}\label{Ex:SRA}
Now consider the situation when $Q$ is an affine quiver, $v=n\delta$, where we write
$\delta$ for the indecomposable imaginary root, and $w=\epsilon_0$, the one-dimensional
framing at the extending vertex. Let $\Gamma_1$ be the finite subgroup of
$\operatorname{SL}_2(\C)$ of the same type as $Q$. Set $\Gamma_n=S_n\ltimes \Gamma_1^n$,
this group naturally acts on $(\C^2)^n$. Then $\M^0(n\delta,\epsilon_0)\cong (\C^2)^n/\Gamma_n$.
Moreover, the algebras $\A^0_{\lambda}$ are precisely the spherical reflection algebras
from \cite{EG}, see \cite{EGGO} or \cite{quant_iso}.
\end{Ex}

\begin{Rem}\label{Rem:Coulomb}
Some quiver varieties arise also as smooth Coulomb branches. Namely, if $Q$
is a finite or affine type $A$ quiver and $\nu$ is dominant, then
$\M_0^\theta$ is a smooth Coulomb branch  of a (finite/ affine type A)
quiver gauge theory, \cite[Sections 4,5]{BFN2}. Every finite and affine
type quiver variety $\M_0^\theta$ arises in this way.
\end{Rem}

\subsubsection{Symplectic leaves}\label{SSS_sympl_leaves}
We assume that ($\sharp$) holds. We would like to understand the symplectic leaves of
the specialization $\M_\zeta^0$ of $\M^0_{\tilde{\param}}$ to $\zeta$. Using this
we will obtain a (well-known) description of the classical walls. For simplicity,
below we assume that $Q$ is of finite or affine type.

It is a classical fact that follows,  for example, from \cite{CB}, that
the stratification by symplectic leaves coincides with the stratification
by representation types. Namely, one can view a point $y\in \M^0_\zeta$
as an isomorphism class of a semisimple representation of a suitable deformed preprojective algebra $\Pi^w_\zeta$. The quiver used to construct this algebra is as follows: we consider
the quiver $Q^w$ that is obtained from $Q$ by adding a new vertex $\infty$ with $w_i$
arrows from $i$ to $\infty$. The algebra $\Pi^w_\zeta$ is associated to this quiver
and the parameter $(-\sum_{i}\zeta_i v_i,\zeta)$, where the first coordinate corresponds
to the vertex $\infty$.

Denote the representation of $\Pi^w_\zeta$ corresponding to $y$ by $r$. We can decompose $r$ into the sum of irreducibles with multiplicities. The collection of the dimensions of irreducible
summands together with their multiplicities is what we mean by the representation
type of $y$.

In more detail we can write $r$ as $r^0\oplus r_1^{\oplus n_1}\oplus\ldots \oplus r_k^{\oplus n_k}$,
where $r^0$ is an irreducible representation of $\Pi^w_\zeta$ with dimension
$(1,v^0)$ (the first component corresponds to the new vertex $\infty$ which encodes
the framing) and $r^1,\ldots,r^k$ are pairwise non-isomorphic irreducible representations with dimensions
$v^1,\ldots,v^k$. Formally, the representation type of $y$ is defined to be $(v^0; v^1,n_1;\ldots; v^k,n_k)$.
We need to understand the possible values of $v^0$ and of $v^1,\ldots,v^k$.

We start  with $v^1,\ldots,v^k$. The following lemma follows easily from results of
\cite[Theorem 1.2]{CB}. Let $\Delta^+(\zeta)$ denote the set of positive roots for $\g(Q)$ that
vanish on $\zeta$.

\begin{Lem}\label{Lem:dim_description}
Recall that we assume that $Q$ is of finite or affine type.
The following are exactly the possible values of $v^1,\ldots,v^k$:
\begin{itemize}
\item the minimal roots in $\Delta^+(\zeta)$,
\item or, in the case of affine $Q$, the indecomposable imaginary root $\delta$,
assuming $\zeta\cdot \delta=0$.
\end{itemize}
\end{Lem}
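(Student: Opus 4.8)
The plan is to reduce everything to Crawley-Boevey's description of which dimension vectors support irreducible (equivalently, simple) representations of a deformed preprojective algebra, applied to the framed quiver $Q^w$ with its parameter $(-\sum_i \zeta_i v_i, \zeta)$. Recall that a point $y\in \M^0_\zeta$ is a semisimple $\Pi^w_\zeta$-module $r = r^0\oplus r_1^{\oplus n_1}\oplus\cdots\oplus r_k^{\oplus n_k}$, with $r^0$ having $\infty$-component $1$ and each $r_j$ ($j\geqslant 1$) having $\infty$-component $0$, so $r_j$ is really a simple module over the (unframed) deformed preprojective algebra $\Pi_\zeta = \Pi(Q)$ with parameter $\zeta\in\C^{Q_0}$. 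By \cite[Theorem 1.2]{CB}, the dimension vectors $u$ of simple $\Pi_\zeta$-modules are exactly the positive roots $u$ of $Q$ with $\zeta\cdot u = 0$ that are not expressible as a sum $u = \sum u^{(i)}$ of $\geqslant 2$ positive roots each orthogonal to $\zeta$ with $p(u) > \sum p(u^{(i)})$, where $p(u) = 1 - (u,u)$ (and when $(u,u)=2$, i.e. $u$ a real root, $u$ is a simple dimension vector iff it is a positive real root orthogonal to $\zeta$). So $v^1,\dots,v^k$ range over the ``$\zeta$-orthogonal roots that admit a simple representation.''

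The key step is then to show that for $Q$ of finite or affine type this set of ``simple-root dimension vectors orthogonal to $\zeta$'' is precisely the set described in the Lemma: the \emph{minimal} elements of $\Delta^+(\zeta)$, together with $\delta$ in the affine case when $\zeta\cdot\delta=0$. First I would show that any minimal root $u\in\Delta^+(\zeta)$ does support a simple: if $u = \sum u^{(i)}$ with each $u^{(i)}$ a positive root orthogonal to $\zeta$, then by minimality there is only one summand, so the Crawley-Boevey decomposability obstruction cannot apply; hence $u$ is a simple dimension vector. Conversely, if $u\in\Delta^+(\zeta)$ is \emph{not} minimal, pick $u'\in\Delta^+(\zeta)$ with $u' < u$; I would argue $u - u'$ is again a non-negative combination that can be written as a sum of positive roots orthogonal to $\zeta$ (using that in finite/affine type every non-negative $\zeta$-orthogonal vector in the root lattice decomposes into $\zeta$-orthogonal positive roots — here the root system on the face $\{\zeta = 0\}$ is itself of finite or affine type, being a sub-root-system), and that the resulting decomposition of $u$ violates the $p$-inequality because $p$ is (sub)additive with a strict drop whenever one splits a root into two roots unless one is imaginary; this forces $u$ not simple, \emph{except} in the affine case where $u$ could be $\delta$ (or a multiple of $\delta$) living on the wall $\zeta\cdot\delta=0$. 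The affine imaginary direction has to be handled by hand: $\delta$ has $(\delta,\delta)=0$, so $p(\delta)=1$, and $\delta$ supports a simple $\Pi_\zeta$-module exactly when $\zeta\cdot\delta = 0$ (this is in \cite{CB}); higher multiples $n\delta$ decompose as $n$ copies of $\delta$ with no drop in $p$, so only $\delta$ itself appears, and it is incomparable to the finite minimal roots, so it must be listed separately.

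The subtlety — and the step I expect to be the main obstacle — is the converse direction for non-minimal roots when the quiver is affine: one must be careful that a $\zeta$-orthogonal root $u$ strictly above a minimal one but \emph{not} a multiple of $\delta$ genuinely fails to be a simple dimension vector, i.e. that the chosen decomposition $u = u' + (u-u')$ really can be refined into $\zeta$-orthogonal positive roots with a strict $p$-drop. The cleanest way is to invoke the structure of the sub-root-system $\Delta(\zeta) := \{\alpha\in\Delta(Q) : \zeta\cdot\alpha = 0\}$: for $Q$ finite or affine this is a (possibly reducible) finite or affine root system, its positive part is $\Delta^+(\zeta)$, and ``minimal'' means minimal in the standard partial order on this subsystem. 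For finite-type subsystems every positive root above a simple one is a sum of positive roots with the requisite strict drop in $p$ (an elementary computation: $p$ of a real root is $0$, and $p(\alpha+\beta) \geqslant p(\alpha)+p(\beta)+ (\alpha,\beta)\cdot(-1) > p(\alpha)+p(\beta)$ when $(\alpha,\beta)<0$, which holds along a root string); for an affine component one additionally uses the $\delta$ analysis above. I would also quote Lemma \ref{Lem:resolution} and the identification of symplectic leaves with representation types (stated in the text, following \cite{CB}) to know that these are exactly the $v^j$ that \emph{occur}, not merely the ones that are combinatorially allowed; but that identification is already granted in the excerpt, so the real content is the root-combinatorial characterization just sketched.
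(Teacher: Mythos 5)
Your overall strategy is the one the paper intends: it dismisses the lemma with the single sentence that it ``follows easily from \cite[Theorem 1.2]{CB},'' and your plan to identify $v^1,\ldots,v^k$ with the dimension vectors of simple $\Pi_\zeta$-modules and read the answer off Crawley-Boevey's set $\Sigma_\zeta$ is exactly that route. However, you have recorded Crawley-Boevey's criterion with the inequality inverted, and this is a genuine error that would derail a careful writeup. The correct statement is that $u\in\Sigma_\zeta$ iff $p(u)>\sum_i p(u^{(i)})$ holds for \emph{every} proper decomposition $u=\sum_i u^{(i)}$ into positive $\zeta$-orthogonal roots, whereas you wrote that $u$ is a simple dimension vector iff \emph{no} decomposition satisfies $p(u)>\sum p(u^{(i)})$. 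With your version, since $p\geqslant 0$ on all roots and $p(u)=0$ for $u$ real, no decomposition of a real root can ever satisfy $0>\sum p(u^{(i)})$, so every $\zeta$-orthogonal real positive root would trivially count as simple; that is what your parenthetical ``$u$ real is a simple dimension vector iff it is a positive real root orthogonal to $\zeta$'' asserts, and it contradicts the lemma (and Crawley-Boevey). With the correct criterion, a real $u$ lies in $\Sigma_\zeta$ iff it admits \emph{no} proper decomposition into positive $\zeta$-orthogonal roots at all, which is precisely minimality, so the ``$p$-drop'' bookkeeping you introduce is a red herring in the real-root case.

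Your $p$-arithmetic is also off: with the Tits form $q$ and its associated symmetric bilinear form $(\cdot,\cdot)$ one has $p(\alpha+\beta)=p(\alpha)+p(\beta)-1-(\alpha,\beta)$, so $p(\alpha+\beta)>p(\alpha)+p(\beta)$ iff $(\alpha,\beta)\leqslant -2$, not iff $(\alpha,\beta)<0$ as you claim. In particular, splitting a real root into two real roots does \emph{not} change $p$ (both sides are $0$), so ``strict drop whenever one splits a root unless one factor is imaginary'' is false. The inequality carries content only in the imaginary direction: $p(\delta)=1>0=\sum p(\beta_i)$ for any decomposition of $\delta$ into ($\zeta$-orthogonal, hence real) roots, so $\delta\in\Sigma_\zeta$; while $p(n\delta)=1\leqslant n=\sum p(\delta)$ rules out $n\delta$ for $n\geqslant 2$. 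You state both facts, but they need to be derived from the correct criterion and the correct formula. The one structural point you identify correctly and that the paper leaves implicit is that $\Delta(\zeta)$ is a finite or affine sub-root-system whose simple system w.r.t.\ $\Delta^+(\zeta)$ consists exactly of the minimal elements of $\Delta^+(\zeta)$, so that every non-minimal positive root admits a decomposition into two elements of $\Delta^+(\zeta)$. Once the two mistakes above are fixed, that observation together with the treatment of $\delta$ and $n\delta$ does complete the argument.
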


Now we proceed  to $v^0$.

The following lemma is a straightforward consequence of \cite[Theorem 1.2]{CB}.

\begin{Lem}\label{Lem:simple}
We continue to assume that $Q$ is of finite or affine type. Then the following
two conditions are equivalent:
\begin{itemize}
\item $(1,v^0)$ is the dimension of an irreducible representation,
\item $(1,v^0)$ is a root for $Q^w$. Moreover, for any decomposition $v^0=\sum_{j=0}^\ell \underline{v}^j$ with $\ell>0$, where all $\underline{v}^j$
are nonzero elements of $\Z^{Q_0}_{\geqslant 0}$, and the elements
$\underline{v}^1,\ldots, \underline{v}^\ell$ satisfy $\zeta\cdot \underline{v}^j=0$
and  are as in Lemma \ref{Lem:dim_description},
we have the following inequality:
\begin{equation}\label{eq:inequality_irreducible}
w\cdot v^0-(v^0,v^0)> w\cdot \underline{v}^0-(\underline{v}^0,\underline{v}^0)+s,
\end{equation}
where $s$ is the number of  summands among $\underline{v}^1,\ldots,\underline{v}^\ell$
equal to $\delta$.
\end{itemize}
\end{Lem}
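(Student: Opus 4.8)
The plan is to deduce the statement directly from Crawley-Boevey's description of the dimension vectors of simple modules over a deformed preprojective algebra, \cite[Theorem 1.2]{CB}, applied to the framed quiver $Q^w$ with the parameter $\lambda'=(-\sum_{i\in Q_0}\zeta_iv_i,\zeta)$ used to build $\Pi^w_\zeta$. Recall that for a quiver $Q'$ with Tits form $q$ one sets $p(\alpha)=1-q(\alpha)$, and that \cite[Theorem 1.2]{CB} asserts: $\alpha$ is the dimension vector of a simple $\Pi^{\lambda'}(Q')$-module if and only if $\alpha$ is a positive root, $\lambda'\cdot\alpha=0$, and $p(\alpha)>\sum_t p(\beta_t)$ for every expression $\alpha=\sum_t\beta_t$ as a sum of at least two positive roots $\beta_t$ with $\lambda'\cdot\beta_t=0$. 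So the first step is simply to record that ``$(1,v^0)$ is a simple dimension for $\Pi^w_\zeta$'' is equivalent to ``$(1,v^0)$ is a positive root of $Q^w$, $\lambda'\cdot(1,v^0)=0$, and the $p$-inequality holds for all such decompositions.''

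The second step is to unwind both the decompositions and the numerical invariant. Since the $\infty$-coordinate of $(1,v^0)$ equals $1$, in any decomposition $(1,v^0)=\sum_t\beta_t$ into positive roots of $Q^w$ exactly one summand has $\infty$-coordinate $1$ --- write it $(1,\underline v^0)$ --- while the remaining summands have $\infty$-coordinate $0$ and hence are positive roots $\underline v^1,\dots,\underline v^\ell$ of $Q$ itself; for these the condition $\lambda'\cdot\beta_t=0$ reads $\zeta\cdot\underline v^j=0$. Next I would compute the Tits form of $Q^w$ from that of $Q$: adjoining the vertex $\infty$ together with $w_i$ arrows $i\to\infty$ gives, for any $u\in\Z^{Q_0}$, $q_{Q^w}((1,u))=1+(u,u)-w\cdot u$, hence $p_{Q^w}((1,u))=w\cdot u-(u,u)$; and for a positive root $\underline v^j$ of $Q$ one has $p_Q(\underline v^j)=1-(\underline v^j,\underline v^j)$, which equals $0$ when $\underline v^j$ is real and $1$ when $\underline v^j=\delta$ (the only imaginary possibility in finite or affine type). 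Substituting these values, Crawley-Boevey's inequality $p_{Q^w}((1,v^0))>p_{Q^w}((1,\underline v^0))+\sum_{j=1}^{\ell}p_Q(\underline v^j)$ becomes exactly $(\ref{eq:inequality_irreducible})$, with $s$ the number of summands equal to $\delta$.

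The remaining --- and only genuinely technical --- point is the equivalence between Crawley-Boevey's condition, quantified over \emph{all} decompositions of $(1,v^0)$ into positive roots of $Q^w$ vanishing on $\lambda'$, and the condition in the statement, quantified only over the decompositions $v^0=\underline v^0+\sum_{j=1}^\ell\underline v^j$ in which $\underline v^1,\dots,\underline v^\ell$ are the minimal roots of $\Delta^+(\zeta)$ or the root $\delta$ allowed by Lemma \ref{Lem:dim_description} and $\underline v^0$ is an arbitrary nonzero vector. One direction is formal once one absorbs all summands of $\infty$-coordinate $1$ into $(1,\underline v^0)$. For the converse one invokes the structure of $\Delta^+(\zeta)$ for a finite or affine quiver --- every positive root vanishing on $\zeta$ is a sum of the minimal ones, and every multiple of $\delta$ is a sum of copies of $\delta$ --- together with the behaviour of $p$ under such refinements, controlled by the fact that $(\beta',\beta'')=-1/2$ whenever $\beta',\beta'',\beta'+\beta''$ are all real roots, so that refining a non-$\delta$ summand never weakens the inequality; this reduces any Crawley-Boevey decomposition to one of the stated form. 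Keeping track of the harmless side condition $\lambda'\cdot(1,v^0)=0$ (which unwinds to $\zeta\cdot v^0=\zeta\cdot v$ and is automatic in the situation where the lemma is applied, since one is simultaneously prescribing the complementary unframed simples with $\zeta\cdot\underline v^j=0$) and of the imaginary summands (which produce the constant $s$) is exactly this bookkeeping, and it is the main obstacle, though a routine one.
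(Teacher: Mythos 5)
Your route is the one the paper indicates: it offers no proof of this lemma beyond the phrase ``straightforward consequence of \cite[Theorem 1.2]{CB},'' and your plan --- apply Crawley-Boevey to $Q^w$ with the parameter $\lambda'=(-\zeta\cdot v,\zeta)$, observe that exactly one summand carries the $\infty$-coordinate $1$, and translate $p$-values via the Tits form of $Q^w$ --- is exactly that consequence spelled out. The arithmetic is correct: $q_{Q^w}(1,u)=1+(u,u)-w\cdot u$, hence $p_{Q^w}(1,u)=w\cdot u-(u,u)$, while $p_Q(\underline v^j)=1-(\underline v^j,\underline v^j)$ is $0$ for a real root and $1$ for $\delta$, so Crawley-Boevey's inequality becomes precisely \eqref{eq:inequality_irreducible} with $s$ the number of $\delta$'s.

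Where you should tighten is the quantifier-matching step you defer as ``routine bookkeeping.'' You assert one direction is ``formal,'' but Crawley-Boevey only quantifies over decompositions of $(1,v^0)$ into positive roots of $Q^w$ vanishing on $\lambda'$, whereas the lemma quantifies over decompositions $v^0=\underline v^0+\sum_{j\ge1}\underline v^j$ in which $\underline v^0$ is an \emph{arbitrary} nonzero element of $\Z_{\ge 0}^{Q_0}$, not required to make $(1,\underline v^0)$ a root of $Q^w$. So the lemma a priori checks \emph{more} decompositions than CB does, and the implication ``$(1,v^0)$ simple $\Rightarrow$ lemma's inequality for all its decompositions'' is not literally immediate; one must either argue that for $(1,\underline v^0)$ not a root the inequality is automatic (e.g.\ because $p_{Q^w}(1,\underline v^0)$ is dominated by $\sum_t p_{Q^w}(\beta^t)$ over root decompositions of $(1,\underline v^0)$ vanishing on $\lambda'$, which lets you splice in a CB decomposition), or read the lemma with the implicit restriction that $(1,\underline v^0)$ is a root. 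Symmetrically, for the converse, your refinement argument (real roots split with $p$ preserved, $m\delta$ splits into $m$ copies of $\delta$ with $p$ increasing) is right, but you also need to dispose of the CB decomposition whose $\infty$-summand is $(1,0)$, which the lemma's constraint $\underline v^0\ne0$ excludes; this case only arises when $\lambda'_\infty=-\zeta\cdot v=0$ and should be handled explicitly. Finally, you correctly note that $\lambda'\cdot(1,v^0)=0$ is an implicit hypothesis supplied by the context (it holds because $\zeta\cdot v^0=\zeta\cdot v$ once the complementary summands vanish on $\zeta$); it would be cleaner to state it as such rather than leave it in the closing parenthetical.
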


The following is a consequence of Lemma \ref{Lem:simple}, see
\cite[Lemma 2.1]{BL}.

\begin{Cor}\label{Cor:simple0}
Let $\zeta=0$. Then $(1,v^0)$ is the dimension of an irreducible representation
of $\bar{\Pi}^0$ if and only if
\begin{itemize}
\item for $Q$ of  finite type: $\nu^0$ is dominant, where $\nu^0$ is the weight formed from $v^0$;
\item for $Q$ of affine type: either $w\cdot \delta>1$ and $\nu^0$ is dominant, or $w\cdot\delta=1$
and $v^0=0$.
\end{itemize}
\end{Cor}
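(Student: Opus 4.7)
The plan is to deduce this directly from Lemma \ref{Lem:simple} by specializing to $\zeta = 0$ and recasting the combinatorial inequality (\ref{eq:inequality_irreducible}) in weight-theoretic terms. When $\zeta = 0$, every positive root of $Q$ vanishes on $\zeta$, so the minimal elements of $\Delta^+(\zeta)$ are precisely the simple roots $\alpha_i$; in the affine case $\delta$ is also admissible since $\zeta \cdot \delta = 0$ holds automatically. Thus the summands $\underline{v}^1, \ldots, \underline{v}^\ell$ appearing in Lemma \ref{Lem:simple} are simple roots or, for affine $Q$, copies of $\delta$.

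The computational heart of the argument is the following identification of the Tits form with the Cartan form of $\mathfrak{g}(Q)$. Writing $v' := \sum_{i \in Q_0} v_i \alpha_i$ and $\omega := \sum_{i \in Q_0} w_i \varpi_i$ as elements of the weight lattice, one has $(v, v) = \tfrac{1}{2}(v', v')$ and $w \cdot v = (\omega, v')$, whence
\[
w \cdot v - (v, v) \;=\; \tfrac{1}{2}\bigl[(\omega, \omega) - (\nu, \nu)\bigr], \qquad \nu = \omega - v'.
\]
Applying this to $v^0$ and $\underline{v}^0$ and setting $\beta := \sum_{j \geq 1} \underline{v}^j = \beta' + s\delta$ with $\beta'$ the sum of the simple-root summands, the inequality (\ref{eq:inequality_irreducible}) becomes $2(\nu^0, \beta) + (\beta, \beta) > 2s$. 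Using $(\delta, \alpha_i) = 0 = (\delta, \delta)$ and $(\nu^0, \delta) = (\omega, \delta) = w \cdot \delta$ in the affine case, this simplifies to
\[
2(\nu^0, \beta') + (\beta', \beta') \;>\; 2s\,(1 - w \cdot \delta).
\]

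For finite type $Q$ we have $s = 0$ throughout. Choosing the single-summand decomposition $\beta' = \alpha_i$ forces $(\nu^0, \alpha_i) \geq 0$ by integrality, so $\nu^0$ must be dominant; conversely, if $\nu^0$ is dominant then $(\nu^0, \beta') \geq 0$ for every $\beta' \neq 0$ in the positive root lattice, while $(\beta', \beta') > 0$ by positive definiteness, so the inequality holds strictly for every admissible decomposition. The first condition of Lemma \ref{Lem:simple}, that $(1, v^0)$ be a root of $Q^w$, is then the classical statement that any dominant $\nu^0 \leq \omega$ occurs as a weight of the irreducible $\mathfrak{g}(Q)$-module $V(\omega)$, which is automatic since $v^0 \in \Z^{Q_0}_{\geq 0}$.

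For affine $Q$ the new input is the $\delta$-summand: taking $\beta' = 0$, $s = 1$ (admissible precisely when $v^0 \geq \delta$ componentwise) yields $0 > 2(1 - w \cdot \delta)$, forcing $w \cdot \delta \geq 2$. When $w \cdot \delta \geq 2$ the right-hand side of the displayed inequality is nonpositive, so dominance of $\nu^0$ (extracted as in the finite-type analysis from single-simple-root decompositions) together with the positive semidefiniteness of the Cartan form on the affine root lattice controls all decompositions, giving the first alternative. The hard part is the corner case $w \cdot \delta = 1$, where $\omega = \varpi_e$ is a single level-$1$ fundamental weight: here any $v^0 \geq \delta$ is immediately excluded by the $\delta$-decomposition, and a direct check with the affine Cartan matrix shows that for any remaining nonzero $v^0$, either $\nu^0 = \varpi_e - \sum v^0_i \alpha_i$ fails to be dominant, or $(1, v^0)$ fails to be a root of $Q^w$ at level $1$ altogether; thus $v^0 = 0$ is forced. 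I expect this last case analysis to be the main technical obstacle, as it requires using both clauses of Lemma \ref{Lem:simple} in tandem rather than the inequality alone.
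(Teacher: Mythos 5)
Your translation via $w\cdot v - (v,v) = \tfrac12\bigl[(\omega,\omega)-(\nu,\nu)\bigr]$ and the reduction to $2(\nu^0,\beta') + (\beta',\beta') > 2s(1-w\cdot\delta)$ are correct, and the finite-type and $w\cdot\delta\geqslant 2$ cases go through, modulo one small remark: for the ``only if'' direction the decomposition $\beta'=\alpha_i$ is admissible only when $v^0_i\geqslant 1$, but if $v^0_i=0$ then $((v^0)',\alpha_i)\leqslant 0$ gives $(\nu^0,\alpha_i)\geqslant w_i\geqslant 0$ for free, so dominance at $\alpha_i$ still holds. The strategy matches the source \cite[Lemma 2.1]{BL} that the paper cites without further proof.

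The genuine gap, which you flag yourself, is the $w\cdot\delta=1$ case. A ``direct check with the affine Cartan matrix'' is not the right tool; what is needed is a short Lie-theoretic observation. The hypothesis $w\cdot\delta=1$ forces $\omega=\varpi_e$ for a single vertex $e$ with mark $a_e=1$, and at level~$1$ the only dominant integral weights $\nu^0$ with $\omega-\nu^0=(v^0)'$ a nonnegative integer combination of simple roots are $\omega-n\delta$, $n\geqslant 0$: the weight $\nu^0$ also has level~$1$, and were $\nu^0$ congruent modulo $\Z\delta$ to some $\varpi_{e'}$ with $e'\neq e$ then $\omega-\nu^0$ would fail to lie in the root lattice, contradicting $(v^0)'\in\bigoplus_i\Z\alpha_i$. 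Hence $v^0=n\delta$, and $n\geqslant 1$ is killed by the $\delta$-decomposition you already set up, leaving $v^0=0$. One further caution: the passage from ``$(1,v^0)$ is a root of $Q^w$'' to ``$\nu^0$ is a weight of $L(\omega)$,'' on which your converse direction rests, is a framed variant of Kac's theorem and does not follow from dominance alone; it deserves a citation or a brief argument rather than being asserted as automatic.
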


\subsubsection{Classical walls}\label{SSS_class_wall_quiver}
We will also use Lemmas \ref{Lem:dim_description} and \ref{Lem:simple} to recover the
classical walls. We still assume that $Q$ is of finite or affine type. We also assume that $\nu$
is dominant. Note that if $Q$ is of affine type and $(\omega,\delta)=1$, this
means that $v=n\delta$ for some $n$.

The condition that $\M^0_\zeta$ is singular is equivalent to the claim that
there is more than one representation type in $\M^0_\zeta$.

By \cite[Theorem 2.8]{Nakajima94}, each classical wall has the
form $\{\zeta| \zeta\cdot v'=0\}$ for some root  $v'$ of $Q$ with $v'\leqslant v$.
However, not every hyperplane of this form is a wall.

First we need to understand the case when $v'$ is a real root. Let $\beta$ denote the
root of $\g(Q)$ corresponding to $v'$.  Further, let $L^\omega$ denote the irreducible
representation of $\g(Q)$ with highest weight $\omega$. We take $\zeta$ to be
a generic element in the orthogonal complement of $v'$.

\begin{Lem}\label{Lem:leaves_generic_real}
Let $\beta$ be a real root.
Let $m\geqslant 0$ be maximal such that $\nu+m\beta$ is a weight of $L^\omega$.
Then there are $m+1$ symplectic leaves in $\M^0_\zeta$, they correspond to
the representation types $(v-iv';v',i)$ with $i=0,\ldots,m$. In particular,
$(v')^\perp$ is a classical wall if and only if $\nu+\beta$ is a weight of $L^\omega$.
\end{Lem}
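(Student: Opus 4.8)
The plan is to run the representation--type description of the symplectic leaves of $\M^0_\zeta$ from Section \ref{SSS_sympl_leaves} through Lemmas \ref{Lem:dim_description} and \ref{Lem:simple}, and then to translate the resulting combinatorial count into the language of weights of $L^\omega$. First I would use that $\zeta$ is generic on the hyperplane $(v')^\perp$ together with the fact that $\beta=v'$ is a real root: any positive root vanishing on such a generic $\zeta$ must be proportional to $v'$, hence equal to $v'$ itself, and in particular the imaginary root $\delta$ does \emph{not} vanish on $\zeta$. Thus $\Delta^+(\zeta)=\{v'\}$, so by Lemma \ref{Lem:dim_description} the only dimension vector that can occur among $v^1,\dots,v^k$ is $v'$; and since $v'$ is a real root with $\zeta\cdot v'=0$ there is (by \cite{CB}) a unique simple $\Pi_\zeta$--module of dimension $v'$. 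Consequently every representation type occurring in $\M^0_\zeta$ has the form $(v-iv';v',i)$ for some $i\geqslant 0$ (with $i=0$ meaning the point is itself simple), and distinct $i$ give distinct leaves; it remains to pin down exactly which $i$ occur, i.e.\ for which $i$ the vector $(1,v-iv')$ is the dimension of a simple $\Pi^w_\zeta$--module.

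Next I would feed this into Lemma \ref{Lem:simple}. Since $\delta$ does not vanish on $\zeta$, every admissible decomposition $v-iv'=\underline v^0+\underline v^1+\dots+\underline v^\ell$ appearing there has $\underline v^j=v'$ for all $j\geqslant 1$ and $s=0$. Setting $f(j):=w\cdot(v-jv')-(v-jv',v-jv')$, the two conditions of Lemma \ref{Lem:simple} become: $(1,v-iv')$ is a root of $Q^w$, and $f(i)>f(i+\ell)$ for all $\ell\geqslant 1$ with $v-(i+\ell)v'\geqslant 0$. Using $(v',v')=1$ one computes $f(j)=f(0)-cj-j^2$ with $c:=w\cdot v'-2(v,v')=\langle \nu,\beta^\vee\rangle$, so $f(j)-f(j+1)=c+2j+1$ is strictly increasing in $j$, and $c\geqslant 0$ because $\nu$ is dominant. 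Hence $f(j)>f(j+1)$ for every $j\geqslant 0$, so the second condition is automatic for $i\geqslant 0$, and the only remaining constraint is that $(1,v-iv')$ be a root of $Q^w$.

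Finally I would invoke the standard dictionary between the framed quiver $Q^w$ and the integrable highest weight module $L^\omega$: writing the weight attached to $v-jv'$ as $\nu+j\beta$ (since $\omega-(\nu+j\beta)=\sum_i(v_i-jv'_i)\alpha_i$), the vector $(1,v-jv')$ is a root of $Q^w$ if and only if $\nu+j\beta$ is a weight of $L^\omega$; see \cite{Nakajima94, CB} (equivalently, this is non-emptiness of $\M^\theta(v-jv',w)$). The set $\{j\in\Z:\nu+j\beta\in\operatorname{wt}(L^\omega)\}$ is an unbroken $\beta$--string, and it is invariant under $j\mapsto -c-j$ because $\operatorname{wt}(L^\omega)$ is $s_\beta$--stable and $s_\beta(\nu+j\beta)=\nu-(c+j)\beta$; moreover it contains $0$, since $\nu$ is dominant and $\nu\preceq\omega$ forces $\nu\in\operatorname{wt}(L^\omega)$. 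Therefore this set is a symmetric integral interval $[j_{\min},j_{\max}]$ with $j_{\min}\leqslant 0\leqslant j_{\max}=m$, so its non-negative part is exactly $\{0,1,\dots,m\}$. Combining the three steps: the representation types occurring in $\M^0_\zeta$ are precisely $(v-iv';v',i)$ for $i=0,\dots,m$, whence $m+1$ symplectic leaves; and $(v')^\perp$ is a classical wall if and only if $\M^0_\zeta$ is singular, i.e.\ (by Section \ref{SSS_class_wall_quiver}) has more than one representation type, i.e.\ $m\geqslant 1$, i.e.\ $\nu+\beta$ is a weight of $L^\omega$.

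I expect the main obstacle to be bookkeeping rather than depth: one must make sure the chain ``dimension of a simple $\Pi^w_\zeta$--module'' $\Leftrightarrow$ ``root of $Q^w$'' $\Leftrightarrow$ ``weight of $L^\omega$'' is invoked with the genericity of $\zeta$ in force (so that no imaginary--root contributions $\delta$ enter Lemmas \ref{Lem:dim_description}--\ref{Lem:simple}), and that uniqueness of the simple of dimension $v'$ really yields a bijection between representation types and leaves rather than an over- or under-count; the parabola identity for $f$ and the $s_\beta$--symmetry of the $\beta$--string are routine.
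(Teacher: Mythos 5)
Your proposal is correct and follows essentially the same strategy as the paper: the heart of the matter in both cases is the computation that, because $\nu$ is dominant, the function $j\mapsto (\nu+j\beta,\nu+j\beta)$ is increasing (equivalently $j\mapsto w\cdot(v-jv')-(v-jv',v-jv')$ is strictly decreasing for $j\ge 0$), so Crawley-Boevey's inequality in Lemma \ref{Lem:simple} is automatic and the admissible representation types are governed purely by which $(1,v-jv')$ are roots of $Q^w$. The only place where the routes diverge is the translation to weights of $L^\omega$: you make the $\beta$-string symmetry and the ``root of $Q^w$ iff weight of $L^\omega$'' dictionary explicit, whereas the paper instead notes that for the maximal $j$ the variety $\M^0_\zeta(v-jv',w)$ coincides with its regular locus, hence $\M^\theta_\zeta(v-jv',w)\neq\varnothing$, and then appeals to Nakajima's non-emptiness criterion; both routes are valid and rely on the same underlying facts.
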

\begin{proof}
Let $v^0=v-jv'$ hence $\nu^0=\nu+j\beta$. We have $(\nu+j\beta,\nu+j\beta)=(\nu,\nu)+2j(\nu,\beta)+2j^2$.
Since $\nu$ is dominant, the function $(\nu+j\beta,\nu+j\beta)$ is increasing in $j$. Inequality (\ref{eq:inequality_irreducible}) is therefore always satisfied. Because of this,
$v^0=v-jv'$ with maximal $j$ such that  $\M^0_\zeta(v^0,w)\neq \varnothing$ is a root.
If $\M_\zeta^\theta(v^0,w)\neq \varnothing$, then $\M^0_\zeta(v^0,w)\neq \varnothing$. The former is equivalent to the condition that
$\nu^0$ is a weight of $L_\omega$. Since $j$ is maximal such that
$\M^0_\zeta(v^0,w)\neq \varnothing$, we conclude that
$\M^0_\zeta(v^0,w)$ coincides with its regular locus in the sense
of \cite[Section 2]{Nakajima94}. It follows, in particular, that
$\M_\zeta^\theta(v^0,w)$ is nonempty.
\end{proof}

In particular, the lemma shows that there is a minimal (with respect to inclusion of closures)
symplectic leaf in $\M^0(v,w)$.

Now consider the case when $v'=\delta$, the indecomposable affine root. Assume
first that $w\cdot \delta=1$. Then $v=n\delta$. For a generic parameter $\zeta\in \ker\delta$,
the leaves in $\M^0_\zeta(n\delta,w)$ are parameterized by the partitions of $n$. In particular,
there is again a unique minimal leaf, it corresponds to the one part partition. And $\delta=0$ is a classical wall if and only if
$n>1$.

The following lemma is proved similarly to Lemma \ref{Lem:leaves_generic_real}.

\begin{Lem}\label{Lem:leaves_generic_imaginary}
Assume that $Q$ is affine and $w\cdot \delta>1$. Let $m$ be maximal such that $\nu+m\delta$
is a weight of $L^\omega$. The hyperplane $\ker\delta$ is a classical  wall if and only
if $m>0$. Moreover, there is a unique minimal symplectic leaf, it corresponds to
the representation type $(v-m\delta; \delta,m)$.
\end{Lem}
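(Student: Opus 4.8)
The plan is to mimic the proof of Lemma \ref{Lem:leaves_generic_real}, replacing the real root $v'$ by the imaginary root $\delta$ and carefully tracking the extra terms that $\delta$ contributes (the summand count $s$ in \eqref{eq:inequality_irreducible} and the fact that $(\delta,\delta)=0$). First I would fix a generic $\zeta$ in $\ker\delta$, so that by Lemma \ref{Lem:dim_description} the only admissible dimension vectors for the non-framed irreducible summands are $\delta$ itself (there are no real roots vanishing on a generic point of $\ker\delta$, since $\ker\delta$ is not a real-root hyperplane for $w\cdot\delta>1$ unless forced — here one uses that $\nu$ dominant and $w\cdot\delta>1$ puts us in the situation where $\delta$ is the relevant root). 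Hence a point of $\M^0_\zeta(v,w)$ has representation type $(v^0;\delta,j)$ for some $j\geqslant 0$ with $v^0=v-j\delta$, i.e.\ $\nu^0=\nu+j\delta$.

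Next I would determine which $j$ actually occur by applying Lemma \ref{Lem:simple}. The key computation is that since $(\delta,\delta)=0$, we get $(v^0,v^0)=(\nu+j\delta,\nu+j\delta)$ computed in the root lattice, which equals $(\nu,\nu)+2j(\nu,\delta)$; because $\nu$ is dominant, $(\nu,\delta)\geqslant 0$, and in fact one checks $(\nu,\delta)$ has the right sign so that $w\cdot v^0-(v^0,v^0)$ is monotone in $j$ in the direction needed for inequality \eqref{eq:inequality_irreducible} to hold automatically for every decomposition $v^0=\underline v^0+(\text{copies of }\delta)$. Thus, exactly as in Lemma \ref{Lem:leaves_generic_real}, the vector $v^0=v-j\delta$ with \emph{maximal} $j$ such that $\M^0_\zeta(v^0,w)\neq\varnothing$ is automatically the dimension of an irreducible; and the existence of a nonempty moduli space $\M^0_\zeta(v^0,w)$ is equivalent, via the nonemptiness criterion of \cite{Nakajima94}, to $\nu^0=\nu+j\delta$ being a weight of $L^\omega$. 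Since the weights of $L^\omega$ on the $\delta$-string through $\nu$ form an unbroken string $\nu,\nu+\delta,\ldots,\nu+m\delta$ (here $m$ is as in the statement), this maximal $j$ is precisely $m$, and the corresponding stratum is its own regular locus (so $\M^\theta_\zeta(v-m\delta,w)$ is nonempty and the leaf is genuinely realized). The leaf of representation type $(v-m\delta;\delta,m)$ then lies in the closure of every other leaf, giving the unique minimal leaf; and $\ker\delta$ contains more than one representation type — equivalently $\M^0_\zeta$ is singular, equivalently $\ker\delta$ is a classical wall — if and only if $m>0$.

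The main obstacle I anticipate is the sign/monotonicity bookkeeping: one must verify that $\delta$ contributes to the right-hand side of \eqref{eq:inequality_irreducible} with a coefficient ($s$, the number of $\delta$-summands) that is dominated by the growth of $w\cdot v^0-(v^0,v^0)$ as $v^0$ increases along the $\delta$-string, using only $\nu$ dominant and $w\cdot\delta>1$. Concretely, writing $v^0=\underline v^0+t\delta$ one needs $w\cdot v^0-(v^0,v^0)-\bigl(w\cdot\underline v^0-(\underline v^0,\underline v^0)\bigr) = t\,(w\cdot\delta) - 2t(\underline\nu^0,\delta) > t = s$, which amounts to $w\cdot\delta - 2(\underline\nu^0,\delta) > 1$; the dominance of $\underline\nu^0$ (inherited along the string) forces $(\underline\nu^0,\delta)\leqslant 0$, while $w\cdot\delta\geqslant 2$, so the inequality holds. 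Once this is in place the rest is a direct transcription of the argument for Lemma \ref{Lem:leaves_generic_real}. The only other point needing a word is why, for generic $\zeta\in\ker\delta$, no real root $\beta$ with $v_\beta\leqslant v$ vanishes on $\zeta$ other than those forced — this follows because such a $\beta$ would make $\ker\delta$ coincide with $\beta^\perp$, contradicting genericity of $\zeta$ within $\ker\delta$ unless $\ker\delta=\beta^\perp$, a case excluded by the hypothesis $w\cdot\delta>1$ together with $\nu$ dominant.
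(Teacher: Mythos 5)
Your overall strategy is right — the paper's own "proof" is literally the single sentence "proved similarly to Lemma~\ref{Lem:leaves_generic_real}," and you correctly identify the two new ingredients the imaginary case requires: $(\delta,\delta)=0$, and the appearance of the summand count $s$ on the right of \eqref{eq:inequality_irreducible}. The reduction to "representation type $(v-j\delta;\delta,j)$ for generic $\zeta\in\ker\delta$," the use of Lemma~\ref{Lem:simple}, the identification of the maximal $j$ with $m$ via the nonemptiness criterion of \cite{Nakajima94} and the regular-locus argument, and the conclusion about the unique minimal leaf and the wall condition $m>0$ all match what the paper intends.

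However, the computation in your "main obstacle" paragraph is wrong, and it is worth being precise about why, because the correct computation is actually cleaner and is exactly where the hypothesis $w\cdot\delta>1$ enters. You write
$$w\cdot v^0-(v^0,v^0)-\bigl(w\cdot\underline v^0-(\underline v^0,\underline v^0)\bigr) = t\,(w\cdot\delta) - 2t(\underline\nu^0,\delta),$$
and then argue $(\underline\nu^0,\delta)\leqslant 0$ from dominance. Neither half is right. The form $(\cdot,\cdot)$ in \eqref{eq:inequality_irreducible} is the Tits form on dimension vectors, and for an affine quiver $\delta$ lies in its \emph{radical}: $(u,\delta)=0$ for every $u\in\Z^{Q_0}$, and $(\delta,\delta)=0$. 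So with $v^0=\underline v^0+t\delta$ one simply gets $(v^0,v^0)=(\underline v^0,\underline v^0)$, and the difference is exactly $t\,(w\cdot\delta)$ — there is no cross term to estimate. The inequality $t\,(w\cdot\delta)>t=s$ is then \emph{equivalent} to $w\cdot\delta>1$, which is the standing hypothesis; no dominance of $\underline\nu^0$ is needed (and indeed $\underline\nu^0=\omega-\sum\underline v^0_i\alpha_i$ need not be dominant, nor is the sign you assert for $(\underline\nu^0,\delta)$ clearly correct). Your appearance of $(\underline\nu^0,\delta)$ comes from silently switching from the Tits form on dimension vectors to a weight-space pairing; these differ by a shift involving $\omega$ and are not interchangeable in \eqref{eq:inequality_irreducible}. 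Because your two errors happen to push in the same direction, your final conclusion is accidentally correct, but the argument you give for it is not. Replace that paragraph by the one-line observation that $\delta$ kills the Tits form, and the proof is sound.

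One smaller point: your justification for "no real root $\beta\leqslant v$ vanishes on generic $\zeta\in\ker\delta$" via "$\ker\delta=\beta^\perp$ is excluded by $w\cdot\delta>1$ and $\nu$ dominant" is a detour. The cleaner reason is that for an affine quiver no real root is proportional to $\delta$ (real roots have the form $\alpha+n\delta$ with $\alpha\neq 0$ a finite root), so $\ker\beta\neq\ker\delta$ always, and genericity of $\zeta$ inside $\ker\delta$ therefore avoids all $\ker\beta$. This has nothing to do with $w\cdot\delta>1$ or dominance of $\nu$.
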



\subsubsection{Local structure}\label{SSS_slices_quiver}
Now we are going to describe the local behaviour of the variety $\M_{\tilde{\param}}^0$ and
the morphism $\M_{\tilde{\param}}^\theta\rightarrow \M_{\tilde{\param}}^0$ following \cite[Proposition 6.2]{Nakajima94}
and also \cite[Section 2.1.6]{BL}.

Pick $y\in \M^0_\zeta$ and let $(v^0;v^1,n_1;\ldots; v^k;n_k)$ be the corresponding
representation type. We define the new quiver $\underline{Q}$ and new dimension and framing
vectors $\underline{v}$ and $\underline{w}$ as follows. For the set of vertices of $\underline{Q}$ we take $\{1,\ldots,k\}$.  The number of arrows between $i$ and  $j$ in $\underline{Q}$ is
$\delta_{ij}-(v^i,v^j)$. We define $\underline{w}_i:=w\cdot v^i-(v^0,v^i)$ and $\underline{v}_i=n_i$
for $i=1,\ldots,k$.  Set $\underline{G}:=\prod_{i=1}^k \operatorname{GL}(n_i)$. Note that this group
is the stabilizer of a semisimple representation lying over $y$. This gives an embedding
$\underline{G}\rightarrow G$ and hence the restriction maps $\tilde{\eta}: \g^{*G}\rightarrow
\underline{\g}^{*\underline{G}},\mathfrak{X}(G)\rightarrow \mathfrak{X}(\underline{G})$.

From $\underline{Q},\underline{v},\underline{w}$ and the restriction of $\theta$ to $\underline{G}$ we get the quiver varieties
$\underline{\M}^0_{\underline{\tilde{\param}}}, \underline{\M}^\theta_{\underline{\tilde{\param}}}$.
Consider their base changes $\underline{\M}^0_{\tilde{\param}}:=\tilde{\param}\times_
{\underline{\tilde{\param}}}\underline{\M}^0_{\underline{\tilde{\param}}}$ and
$\underline{\M}^\theta_{\tilde{\param}}$.

For the symplectic vector space $V:=T_y\mathcal{L}$, where $\mathcal{L}$
stands for the symplectic leaf of $y$,  we get isomorphisms
$$(\M^0_{\tilde{\param}})^{\wedge_y}\cong (\underline{\M}^0_{\tilde{\param}}\times V)^{\wedge_0},
(\M^\theta_{\tilde{\param}})^{\wedge_y}\cong (\underline{\M}^\theta_{\tilde{\param}}\times V)^{\wedge_0}.$$
These are isomorphisms of schemes over $\tilde{\param}^{\wedge_0}$, where in the left hand sides we identify
$\tilde{\param}^{\wedge_\zeta}$ with $\tilde{\param}^{\wedge_0}$ by shift by $-\zeta$. Also these isomorphisms
intertwine the projective morphisms
$(\M^\theta_{\tilde{\param}})^{\wedge_y}\rightarrow
(\M^0_{\tilde{\param}})^{\wedge_y}$ and
$(\underline{\M}^\theta_{\tilde{\param}}\times V)^{\wedge_0}
\rightarrow (\underline{\M}^0_{\tilde{\param}}\times V)^{\wedge_0}$.

We can apply this analysis to finite or affine type $A$. We easily arrive at the following
result.

\begin{Lem}\label{Lem:type_A_slices}
If $Q$ is of finite or affine type A, then so is $\underline{Q}$.
\end{Lem}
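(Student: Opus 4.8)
The plan is to read off the structure of the slice quiver $\underline{Q}$ directly from the combinatorial construction recalled in Section~\ref{SSS_slices_quiver}. Recall that the vertices of $\underline{Q}$ are indexed by $\{1,\dots,k\}$, where $v^1,\dots,v^k$ are the dimension vectors of the pairwise non-isomorphic irreducible summands $r^1,\dots,r^k$ of the semisimple representation over $y$, and the number of arrows between $i$ and $j$ is $\delta_{ij}-(v^i,v^j)$. So what I need to show is that for $Q$ of finite or affine type $A$, the symmetric bilinear form $(\cdot,\cdot)$ restricted to the sublattice spanned by the $v^i$ — which determines $\underline{Q}$ entirely — is again of finite or affine type $A$.

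First I would invoke Lemma~\ref{Lem:dim_description}, which tells us exactly what the vectors $v^i$ can be: each is either a minimal root in $\Delta^+(\zeta)$, or (in the affine case, when $\zeta\cdot\delta=0$) the indecomposable imaginary root $\delta$. So I would split into cases according to the type of $Q$. For $Q$ of finite type $A_n$: then $\Delta^+(\zeta)$ is a sub-root-system of $A_n$ cut out by a collection of coordinate-difference hyperplanes, hence a product of type $A$ root systems; its minimal positive roots are the simple roots of that sub-system, and these are pairwise orthogonal or linked by a single edge exactly according to a disjoint union of type $A$ Dynkin diagrams. Thus $\underline{Q}$ is a disjoint union of type $A$ quivers, which is still ``of finite type $A$'' in the sense needed. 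For $Q$ of affine type $\tilde{A}_{n-1}$ there are two subcases: if $\zeta\cdot\delta\neq 0$, then no imaginary root appears, $\Delta^+(\zeta)$ consists of finitely many real roots, its minimal elements again form a disjoint union of type $A$ Dynkin diagrams, so $\underline{Q}$ is of finite type $A$; if $\zeta\cdot\delta=0$, then $\delta$ may appear as one of the $v^i$, and I would check that the inner products $(\delta,\delta)=0$, $(\delta,\alpha)$ for $\alpha$ a root of $\tilde{A}$, force the component of $\underline{Q}$ containing that vertex to be an affine type $A$ (cyclic) quiver — a careful bookkeeping of which minimal roots in $\ker\zeta$ link to $\delta$ shows the diagram closes up into a cycle, while the remaining components stay finite type $A$.

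The bookkeeping in the affine, $\zeta\cdot\delta=0$ case is the step I expect to be the main obstacle: one has to be sure that the number of edges $\delta_{ij}-(v^i,v^j)$ never exceeds what a type $A$ (finite or affine) diagram allows — i.e. no double edges, no branch vertices appear — and that when $\delta$ is present the cycle structure is genuinely $\tilde{A}$ and not, say, a disconnected union that one is tempted to miscount. Here I would lean on the explicit realization of type $A$ (affine) quiver varieties as parabolic Slodowy slices (Example~\ref{Ex:Slodowy_A}, \cite{Maffei_Slodowy}) or on a direct computation with the form $(v,v)=\sum v_i^2-\sum_{a\in Q_1}v_{t(a)}v_{h(a)}$ on the relevant sublattice, using that the $v^i$ are (minimal) real roots of a type $A$ Kac--Moody algebra, possibly together with $\delta$. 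Since any $\tilde A$ sub-configuration of a type $A$ or $\tilde A$ root lattice is obtained from intervals on a line or a circle, the resulting Cartan-type matrix on $\{1,\dots,k\}$ is tridiagonal/cyclic with off-diagonal entries $0$ or $-1$, which is precisely the statement that $\underline{Q}$ is of finite or affine type $A$. Finally I would remark that this is all that is needed downstream: the slice construction reduces localization questions for $\M_{\tilde\param}^\theta$ to the same questions for the (smaller) $\underline{\M}_{\tilde\param}^\theta$, and Lemma~\ref{Lem:type_A_slices} guarantees we stay within the type $A$ world where the full localization theorems are proved.
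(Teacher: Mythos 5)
Your approach is exactly the intended one — the paper gives no proof (it declares the lemma to ``follow easily'' from the slice construction of Section~\ref{SSS_slices_quiver} and the classification of possible $v^i$ in Lemmas~\ref{Lem:dim_description},~\ref{Lem:simple}) — and the finite type $A$ case and the affine case with $\zeta\cdot\delta\neq 0$ are handled correctly: the $v^i$ run over minimal roots of a closed sub-root-system of $A_{n-1}$ (using the linear section $\alpha+m\delta\mapsto\alpha$ in the affine case), and closed sub-root-systems of type $A$ are always products of type $A$'s, so the arrow-counting formula produces a disjoint union of $A$-chains.

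However, your picture of the $\delta$-vertex in the affine, $\zeta\cdot\delta=0$ case is wrong, even though the conclusion survives. You write that ``a careful bookkeeping of which minimal roots in $\ker\zeta$ link to $\delta$ shows the diagram closes up into a cycle.'' But $\delta$ lies in the radical of the bilinear form, so $(\delta,v^j)=0$ for every real $v^j$: the $\delta$-vertex receives $\delta_{ij}-(\delta,v^j)=0$ arrows to any other vertex, and $1-(\delta,\delta)=1$ loop. It is therefore an \emph{isolated} Jordan-quiver component ($\tilde A_0$); nothing ``closes up'' through it. The cycles in $\underline{Q}$ come entirely from the real minimal roots: writing $\Phi'=\{\alpha\in A_{n-1}:\zeta\cdot\alpha=0\}=\prod_i A_{a_i}$, the minimal roots of $\Delta^+(\zeta)$ for each factor are its $a_i$ simple roots together with $\delta-\theta_i$ (where $\theta_i$ is the highest root of that factor), and these $a_i+1$ roots pair up into an $\tilde A_{a_i}$ cycle, pairwise disjoint from the $\delta$-vertex. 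So $\underline Q$ is a disjoint union of chains, cycles, and possibly a Jordan quiver — all finite or affine type $A$ — which is what you wanted, but the bookkeeping is different from what you sketch. One more small point: the parabolic Slodowy slice realization (Example~\ref{Ex:Slodowy_A}) applies only to \emph{finite} type $A$ Dynkin quivers, so it is not an available fallback for the affine case, where the direct root-combinatorics computation is the only route.
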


\subsubsection{Rank 1 varieties}
We will describe the data $(\underline{Q},\underline{v},\underline{w})$ and also the maps
$\tilde{\eta}:\tilde{\param}\rightarrow \tilde{\underline{\param}}$ when $\zeta$
is generic in a classical wall and $y$ is a point in the  unique minimal symplectic leaf in
$\M^0_\zeta$.

The following lemmas are consequences of the discussions in Sections \ref{SSS_class_wall_quiver} and \ref{SSS_slices_quiver}. We assume that the weight $\nu$ is dominant.

\begin{Lem}\label{Lem:rk1_slice_real}
Suppose that the classical wall in question is defined by a real root $\beta$.
Then $\underline{Q}$ is a type $A_1$ quiver and $\tilde{\eta}(?)=\langle\beta,?\rangle$.
In the notation of Lemma \ref{Lem:leaves_generic_real}, we have $\underline{v}=m$
and $\underline{w}=(\nu,\beta)+2m$.
\end{Lem}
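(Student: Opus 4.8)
The plan is to apply the local structure results of Section \ref{SSS_slices_quiver} to the point $y$ in the minimal symplectic leaf of $\M^0_\zeta$ and then unwind the resulting combinatorial data. First I would recall from Lemma \ref{Lem:leaves_generic_real} that, since $\nu$ is dominant and $\beta$ is a real root, for a generic $\zeta$ in the wall $\beta^\perp$ the minimal leaf corresponds to the representation type $(v-mv';v',m)$, where $m\geqslant 0$ is maximal with $\nu+m\beta$ a weight of $L^\omega$ and $v'$ is the dimension vector of $\beta$; implicit in Lemma \ref{Lem:leaves_generic_real} is that this type actually occurs (the relevant quiver varieties are nonempty). So there is a single "small" summand $r^1$ of dimension $v^1=v'$ occurring with multiplicity $n_1=m$, and $v^0=v-mv'$.

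Next I would feed $(v^0;v^1,n_1)$ into the recipe of Section \ref{SSS_slices_quiver}. The new quiver $\underline{Q}$ has vertex set $\{1\}$, a single vertex, with the number of loops equal to $1-(v^1,v^1)=1-(v',v')$. Since $\beta$ is a real root we have $(v',v')=1$, so $\underline{Q}$ has no loops: it is the $A_1$ quiver. The new framing is $\underline{w}_1=w\cdot v^1-(v^0,v^1)=w\cdot v'-(v-mv',v')=(w\cdot v'-(v,v'))+m(v',v')$. Translating $w\cdot v'-(v,v')$ into root-theoretic language via $w\leftrightarrow\omega$, $v\leftrightarrow \nu$ (so $\nu=\omega-\sum v_i\alpha_i$) and the standard identity relating the symmetrized Euler form to the Cartan pairing, one gets $w\cdot v'-(v,v')=(\nu,\beta)$; combined with $(v',v')=1$ this yields $\underline{w}=(\nu,\beta)+2m$. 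The new dimension vector is $\underline{v}_1=n_1=m$, i.e.\ $\underline{v}=m$. Finally the map $\tilde{\eta}:\tilde{\param}\to\underline{\tilde{\param}}$ is the restriction along the embedding $\underline{G}=\GL(m)\hookrightarrow G$ attached to the summand $r^1$, and on $\g^{*G}=\C^{Q_0}$ this restriction is exactly pairing with the dimension vector $v'$ of $r^1$, i.e.\ $\tilde{\eta}(?)=\langle\beta,?\rangle$ under the identification of $v'$ with $\beta$.

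The one genuinely delicate point is the identity $w\cdot v'-(v,v')=(\nu,\beta)$ (equivalently, the computation of $\underline{w}$), since it requires carefully matching the combinatorial bookkeeping on dimension/framing vectors with the weight-lattice conventions fixed in Section \ref{SS_Nakajima}; everything else is a direct substitution into the formulas of Section \ref{SSS_slices_quiver}. I would verify this identity by writing $(v,v')=(v^0,v')+m(v',v')$ is not circular---rather I would expand $(v,v')$ directly using $v=\omega^\vee\!-\!\ldots$; concretely, using $(\nu,\beta)=(\omega,\beta)-\sum_i v_i(\alpha_i,\beta)$ and the definitions $w\cdot v'=\sum_i w_i v'_i=(\omega,\beta)$ (as $v'$ is the coroot, or root, vector of $\beta$ in the simply-laced case) and $(v,v')=\sum_{i,j}v_i v'_j(\alpha_i,\alpha_j)/\ldots =\sum_i v_i(\alpha_i,\beta)$, the claimed equality drops out. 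One should also note that $m$ here is the same integer as in Lemma \ref{Lem:leaves_generic_real}, which is where dominance of $\nu$ is used (it guarantees the minimal leaf is the one described there and that $j\mapsto(\nu+j\beta,\nu+j\beta)$ is increasing so the nonemptiness/irreducibility conditions hold). This completes the identification of $(\underline{Q},\underline{v},\underline{w})$ and $\tilde{\eta}$.
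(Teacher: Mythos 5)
Your approach is exactly what the paper intends: the paper gives no explicit proof of this lemma and simply points to the discussions in Sections \ref{SSS_class_wall_quiver} and \ref{SSS_slices_quiver}, and your argument is the natural unwinding of those. The identifications of $\underline{Q}$ as the $A_1$ quiver (zero loops since $\beta$ is real), of $\underline{v}=n_1=m$, and of $\tilde{\eta}$ as restriction along $\GL(m)\hookrightarrow G$ hence pairing against the dimension vector $v'\leftrightarrow\beta$ are all correct.

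However, the final computation of $\underline{w}$ is internally inconsistent as written. You expand $\underline{w}_1=\bigl(w\cdot v'-(v,v')\bigr)+m(v',v')$, claim $w\cdot v'-(v,v')=(\nu,\beta)$, and then say that ``combined with $(v',v')=1$'' one gets $(\nu,\beta)+2m$ --- but $(\nu,\beta)+m\cdot 1=(\nu,\beta)+m$, not $(\nu,\beta)+2m$. The source of the slip is a normalization mismatch between the two formulas of Section \ref{SSS_slices_quiver}: the loop-count formula $\delta_{ij}-(v^i,v^j)$ is consistent with the Tits form (so $q(\beta)=1$ for a real root, as the paper notes), but the framing formula $\underline{w}_i=w\cdot v^i-(v^0,v^i)$ uses the symmetrized Euler form, i.e.\ the Cartan pairing, for which $(\beta,\beta)=2$. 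With that pairing one has simultaneously $w\cdot v'-(v,v')=(\omega,\beta)-(v,\beta)=(\nu,\beta)$ (using $(\varpi_i,\alpha_j)=\delta_{ij}$ in the simply-laced case) and $m(v',v')=2m$, yielding the asserted $\underline{w}=(\nu,\beta)+2m$. So your final answer is right, but the line ``combined with $(v',v')=1$'' should be replaced by ``combined with $(v',v')=(\beta,\beta)=2$ in the Cartan normalization, which is the one relevant to the framing formula,'' and the earlier use of $(v',v')=1$ should be confined to the loop count, where the Tits-form normalization is the correct one.
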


\begin{Lem}\label{Lem:rk1_slice_imaginary}
Suppose that the classical wall in question is defined by
the indecomposable imaginary root $\delta$. Then $\underline{Q}$
is the Jordan quiver and $\tilde{\eta}(?)=\langle\delta, ?\rangle$.
We have $\underline{v}=m$ and $\underline{w}=(\nu,\delta)$.
\end{Lem}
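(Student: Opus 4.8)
The statement is a "rank-1 reduction" lemma: when the classical wall is the hyperplane $\delta^\perp$ defined by the indecomposable imaginary root $\delta$ and $y$ lies in the unique minimal symplectic leaf of $\M^0_\zeta$, the slice quiver $\underline{Q}$ is the Jordan quiver, $\tilde\eta(?) = \langle\delta,?\rangle$, $\underline{v}=m$ and $\underline{w} = (\nu,\delta)$. My plan is to read everything off the local structure result of Section \ref{SSS_slices_quiver} (the isomorphism $(\M^0_{\tilde{\param}})^{\wedge_y}\cong(\underline{\M}^0_{\tilde{\param}}\times V)^{\wedge_0}$ and the attendant combinatorial recipe for $(\underline{Q},\underline{v},\underline{w})$) combined with the description of the minimal leaf from Lemma \ref{Lem:leaves_generic_imaginary}.

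First I would fix notation: by Lemma \ref{Lem:leaves_generic_imaginary}, since $w\cdot\delta>1$ and $\ker\delta$ is a classical wall, $m\geqslant 1$ is maximal with $\nu+m\delta$ a weight of $L^\omega$, and the minimal leaf consists of points of representation type $(v^0;v^1,n_1;\dots;v^k,n_k)$ with a single nonzero $v^i$, namely $v^0 = v - m\delta$, and one repeated summand $v^1=\delta$ with multiplicity $n_1 = m$; so $k=1$. Now I apply the recipe of Section \ref{SSS_slices_quiver}: the vertex set of $\underline Q$ is $\{1,\dots,k\}=\{1\}$, a single vertex; the number of arrows from $1$ to $1$ is $\delta_{11} - (v^1,v^1) = 1 - (\delta,\delta) = 1 - 0 = 1$ since $\delta$ is the indecomposable imaginary root (so $(\delta,\delta)=0$). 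Hence $\underline Q$ is the Jordan quiver. The framing is $\underline w_1 = w\cdot v^1 - (v^0,v^1) = w\cdot\delta - (v-m\delta,\delta) = (\omega,\delta) - (\nu,\delta)$; here I use that in the weight normalization $w$ records $\omega=\sum w_i\varpi_i$ so $w\cdot\delta = (\omega,\delta)$, and that $(v,\delta)$ corresponds under the same dictionary to $(\omega,\delta)-(\nu,\delta)$ since $\nu = \omega - \sum v_i\alpha_i$ and $(\alpha_i,\delta)=0$ (again $\delta$ imaginary, orthogonal to all roots in affine type is false in general, but $(\delta,\alpha_i)=0$ holds because $\delta$ is in the radical of the affine Cartan form). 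Combining, $\underline w_1 = (\omega,\delta) - \big((\omega,\delta)-(\nu,\delta)\big) = (\nu,\delta)$. And $\underline v_1 = n_1 = m$, as claimed. For $\tilde\eta$: the map $\tilde\eta:\g^{*G}\to\underline\g^{*\underline G}$ is restriction along the embedding $\underline G = \GL(m)\hookrightarrow G=\prod\GL(v_i)$ determined by the block structure of the semisimple representation $v = 1\cdot v^0 \oplus m\cdot\delta$; the induced map on characters $\Z^{Q_0}\to\Z^{\{1\}}=\Z$ is $\chi\mapsto \chi\cdot\delta = \langle\chi,\delta\rangle$ (one copy of the $\delta$-block), i.e. $\tilde\eta(?) = \langle\delta,?\rangle$.

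The step I expect to require the most care is verifying that the minimal leaf is exactly of the stated representation type — i.e. that genericity of $\zeta$ in $\ker\delta$ (meaning $\Delta^+(\zeta) = \{$roots vanishing on $\zeta\}$ contains only positive multiples of $\delta$, so by Lemma \ref{Lem:dim_description} the only admissible summand dimensions $v^i$ are $\delta$ itself) together with maximality of $m$ (so that $v^0 = v-m\delta$ satisfies the irreducibility criterion of Lemma \ref{Lem:simple}) forces $k=1$ and $v^1=\delta$, $n_1=m$, with no further real-root summands. This is precisely the content of Lemma \ref{Lem:leaves_generic_imaginary}, whose proof is stated to be parallel to that of Lemma \ref{Lem:leaves_generic_real}; so strictly speaking this lemma follows by \emph{combining} Lemma \ref{Lem:leaves_generic_imaginary} with the slice recipe of Section \ref{SSS_slices_quiver}, and the only genuine computations are the three short ones above: $(\delta,\delta)=0$, $\underline w_1 = (\nu,\delta)$, and the identification of $\tilde\eta$. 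A secondary point worth a sentence is that we should confirm $\tilde\eta$ on $\tilde\param=\C^{Q_0}$ (the Lie-algebra version, relevant for deformation parameters) and on $\mathfrak X(G)$ (the character version, relevant for stability) agree under the respective dictionaries — both are the single-block restriction and hence both equal pairing with $\delta$.
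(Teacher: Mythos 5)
Your proof is correct and follows precisely the route the paper intends (the paper gives no detailed proof, saying only that the lemma is a consequence of Sections \ref{SSS_class_wall_quiver} and \ref{SSS_slices_quiver}). You correctly extract the representation type of the minimal leaf from Lemma \ref{Lem:leaves_generic_imaginary}, feed it into the slice recipe of Section \ref{SSS_slices_quiver} to get the Jordan quiver and $\underline v = n_1 = m$, and identify $\tilde\eta$ by tracking how the block embedding $\GL(m)\hookrightarrow G$ acts on determinants.

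One remark on the computation of $\underline w$: it is tidier (and less prone to misreading) to note that since $\delta$ lies in the radical of the form, both $(\delta,\delta)=0$ and $(v,\delta)=0$, so $(v^0,v^1)=(v-m\delta,\delta)=0$ outright, and hence $\underline w_1 = w\cdot\delta$. One then converts to weight language: $w\cdot\delta = (\omega,\delta)$ and $(\omega,\delta)=(\nu,\delta)$ because $\omega-\nu=\sum v_i\alpha_i$ pairs to zero with $\delta$. Your written chain passes through the identity $(v,\delta)=(\omega,\delta)-(\nu,\delta)$, both sides of which vanish — correct, but it reads as though $(\omega,\delta)-(\nu,\delta)$ is being used as a nonzero correction term when in fact it is the vanishing of this difference (i.e. $(\omega,\delta)=(\nu,\delta)$) that one really needs. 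The self-correcting parenthetical about $(\alpha_i,\delta)=0$ should simply state the fact; $\delta$ in the radical of the affine Cartan form gives $(\alpha_i,\delta)=0$ for all $i$, which is all you use.
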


\subsection{Harish-Chandra bimodules}
\subsubsection{Definition and basic constructions}
Let $\chi\in \param$. The following definition generalizes the classical
HC bimodules over semisimple Lie algebras, see \cite{HC,BPW,BL}.

\begin{defi}\label{defi:HC}
An $\A_{\paramq}$-bimodule $\B$ is called a {\it Harish-Chandra $(\A_{\paramq},\chi)$-bimodule}
if the following hold:
\begin{itemize}
\item[(i)] For all $\alpha\in \paramq^*$ and $b\in \B$ we have $[\alpha,b]=\langle \alpha,\chi\rangle b$,
\item[(ii)] and there is a bimodule filtration (called {\it good}) $\B=\bigcup \B_{\leqslant j}$ such that
$[\A_{\paramq,\leqslant i},\B_{\leqslant j}]\subset \B_{\leqslant i+j-d}$ for all $i,j$
and $\gr\B$ is a finitely generated $\C[Y_\param]$-bimodule.
\end{itemize}
\end{defi}

For example, $\A_{\paramq}$ is a HC $(\A_\paramq,0)$-bimodule.

The category of HC $(\A_{\paramq},\chi)$-bimodules (with morphisms being bimodule homomorphisms)
will be denoted by $\HC(\A_{\paramq},\chi)$.

Let $\B$ be a HC $(\A_{\paramq},\chi)$-bimodule. If the right action of $\A_{\paramq}$  on $\B$
factors through $\A_\lambda$, then we say that $\B$ is a HC $\A_{\lambda+\chi}$-$\A_\lambda$-module.

Note also that $\operatorname{Tor}^{\A_{\param}}_i(\bullet,\bullet)$ gives a functor
$$\HC(\A_{\paramq},\chi')\times \HC(\A_{\paramq},\chi)\rightarrow \HC(\A_{\paramq},\chi+\chi'),$$
see \cite[Proposition 6.3]{BPW}. Similarly, the functors
$\operatorname{Ext}^{\A_{\param}}_i$ and $\operatorname{Ext}^{\A_{\param}^{opp}}_i$
send HC bimodules to HC bimodules.

For a HC $(\A_{\paramq},\chi)$-bimodule $\B$, we can define its associated variety
$\VA(\B)\subset Y_{\param}$, it is the closed subvariety of $Y_\param$ defined by
the annihilator of $\gr \B$.

For a HC bimodule $\B\in \HC(\A_{\paramq},\chi)$ we can define its {\it right $\param$-support}
$$\operatorname{Supp}^r_{\paramq}(\B):=\{\lambda\in \paramq| \B\otimes_{\C[\paramq]}\C_\lambda
\neq \{0\}\}.$$ The following result is \cite[Proposition 2.6]{catO_R}.

\begin{Lem}\label{Lem:support_closed}
$\operatorname{Supp}^r_{\paramq}(\B)$  is closed and its asymptotic cone
coincides with the support of $\gr\B$, where the associated graded bimodule is taken
with respect to any good filtration.
\end{Lem}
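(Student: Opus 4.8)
The plan is to analyze the two assertions of Lemma \ref{Lem:support_closed} separately, reducing everything to the behaviour of a good filtration on $\B$ under specialization. Throughout I fix a good filtration $\B=\bigcup_j \B_{\leqslant j}$ as in Definition \ref{defi:HC}(ii), so that $\gr\B$ is a finitely generated $\C[Y_\param]$-bimodule, hence a finitely generated $\C[Y_\param]$-module via, say, the left action (the left and right actions differ by a nilpotent correction in the associated graded, so the module-finiteness and the support do not depend on the side). Write $\operatorname{Supp}(\gr\B)\subset Y_\param$ for the support, and let $\mathcal{C}\subset \paramq$ be its image under the projection $Y_\param\to\paramq$; since $\C[Y_\param]$ is a finitely generated $\C[\paramq]$-algebra and $\gr\B$ is finitely generated over it, $\mathcal{C}$ is a closed conical subset of $\paramq$.

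First I would prove closedness of $\operatorname{Supp}^r_{\paramq}(\B)$. The key point is that $\B\otimes_{\C[\paramq]}\C_\lambda\neq 0$ if and only if $\B\neq \mathfrak{m}_\lambda\B$ where $\mathfrak{m}_\lambda\subset\C[\paramq]$ is the maximal ideal at $\lambda$; and the induced filtration on $\B\otimes_{\C[\paramq]}\C_\lambda$ has associated graded a quotient of $\gr\B\otimes_{\C[\paramq]}\C_\lambda$. Hence if $\lambda\notin\operatorname{Supp}^r_{\paramq}(\B)$ then, by taking a good filtration compatible with the quotient, $\gr\B\otimes_{\C[\paramq]}\C_\lambda=0$ forces... no --- this is the wrong direction; what is true is that $\operatorname{Supp}^r_{\paramq}(\B)\subseteq\mathcal{C}$ always, by right-exactness of $\otimes$. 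For the reverse inclusion and for closedness one argues: $\gr\B$ being finitely generated over $\C[\paramq]$ in each filtration degree and finitely generated over $\C[Y_\param]$ overall, Nakayama's lemma over the local ring $\C[\paramq]_{\mathfrak{m}_\lambda}$ shows that $\B_{\mathfrak{m}_\lambda}=0$ iff $(\gr\B)_{\mathfrak{m}_\lambda}=0$ iff $\lambda\notin\operatorname{Supp}(\gr\B$ as a $\C[\paramq]$-module$)=\mathcal{C}$. Therefore $\operatorname{Supp}^r_{\paramq}(\B)=\mathcal{C}$, which is closed. Actually the cleanest route is to invoke that $\B$ itself, while not finitely generated over $\C[\paramq]$, has $\C[\paramq]$-finite filtration pieces and is the union of them; a point $\lambda$ lies in the support iff it lies in the support of some $\B_{\leqslant j}$ (finitely generated over $\C[\paramq]$), and the supports of the $\B_{\leqslant j}$ form an increasing chain of closed sets which stabilizes because $\mathcal{C}$ is Noetherian --- so the union is closed and equals $\mathcal{C}$.

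For the statement about asymptotic cones: the asymptotic (= tangent at infinity, via the Rees construction with respect to the $\C^\times$-action contracting $\paramq$) cone of $\operatorname{Supp}^r_{\paramq}(\B)=\mathcal{C}$ is computed by degenerating $\C[\paramq]/I(\mathcal{C})$ to its associated graded under the order filtration on $\C[\paramq]$; concretely the Rees algebra $R_\hbar(\C[\paramq]/I(\mathcal{C}))$ has special fiber $\C[\paramq]/\operatorname{gr} I(\mathcal{C})$, and by standard properties of Rees deformations the asymptotic cone of $\mathcal{C}$ is $\Spec$ of that. On the other hand $\operatorname{Supp}(\gr\B)$ is, by construction, $\Spec\C[Y_\param]/\sqrt{\operatorname{Ann}_{\C[Y_\param]}\gr\B}$ projected to $\paramq$, i.e.\ the vanishing locus in $\paramq$ of $(\operatorname{Ann}\gr\B)\cap\C[\paramq]=\operatorname{gr}(\operatorname{Ann}_{\C[\paramq]}\B)$ up to radical. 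So what must be checked is that $\operatorname{gr}$ of the $\C[\paramq]$-annihilator of $\B$ agrees, up to radical, with the $\C[\paramq]$-annihilator of $\gr\B$. That $\operatorname{gr}(\operatorname{Ann}\B)\subseteq\operatorname{Ann}(\gr\B)$ is formal; the reverse inclusion up to radical is the substantive point and follows from the standard fact (for finitely generated filtered modules over a Noetherian filtered algebra with commutative associated graded) that $\sqrt{\operatorname{Ann}(\gr\B)}=\sqrt{\operatorname{gr}(\operatorname{Ann}\B)}$, applied here with $\C[\paramq]$ central so that its annihilator interacts well with the filtration. Passing to radicals on both sides and then taking $\Spec$ gives the equality of the asymptotic cone of $\operatorname{Supp}^r_{\paramq}(\B)$ with $\operatorname{Supp}(\gr\B)$.

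The main obstacle I anticipate is the second part: making the comparison between $\operatorname{gr}(\operatorname{Ann}_{\C[\paramq]}\B)$ and $\operatorname{Ann}_{\C[\paramq]}(\gr\B)$ precise, because $\B$ is not finitely generated as a $\C[\paramq]$-module, only as a filtered bimodule with $\C[\paramq]$-finite pieces, so one cannot directly quote the textbook lemma about associated graded of annihilators. The fix is to work at the level of good filtrations and reduce to finitely generated pieces: choose generators $b_1,\dots,b_N$ of $\gr\B$ over $\C[Y_\param]$ sitting in degrees $\leqslant j_0$, note that a central element $f\in\C[\paramq]$ of filtration degree $k$ annihilates $\B$ iff its leading symbol annihilates $\gr\B$ iff it annihilates the finitely generated submodule $\B_{\leqslant j_0}$ together with finitely many of its translates --- everything then takes place inside a fixed finitely generated filtered $\C[\paramq]$-module, and the classical associated-graded-of-annihilator argument applies verbatim. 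The rest is bookkeeping with Rees algebras and radicals, which I would not spell out in detail; alternatively one simply cites \cite[Proposition 2.6]{catO_R} since the statement is reproduced there, but the sketch above is the argument behind it.
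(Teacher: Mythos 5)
Your proof asserts that $\operatorname{Supp}^r_{\paramq}(\B)=\mathcal{C}$, where $\mathcal{C}$ is the (conical) support of $\gr\B$ over $\C[\paramq]$. This equality is false in general, and if it were true the second half of the lemma would be vacuous, since the asymptotic cone of a cone is itself. Indeed, take $\B$ to be a HC $\A_{\lambda+\chi}$-$\A_\lambda$-bimodule with $\lambda\neq 0$, so that the right $\A_\paramq$-action factors through $\A_\lambda$. Then $\operatorname{Supp}^r_{\paramq}(\B)\subset\{\lambda\}$, which is not a cone, while $\operatorname{Supp}(\gr\B)\subset\{0\}$: because $\paramq^*$ sits in filtration degree $d>0$, the symbol of $\alpha-\langle\lambda,\alpha\rangle$ is $\alpha$, so $\paramq^*$ annihilates $\gr\B$. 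This same observation breaks the filtration argument you propose: the induced filtration on $\B\otimes_{\C[\paramq]}\C_\lambda=\B/\B\mathfrak{m}_\lambda$ has associated graded a quotient of $\gr\B/\paramq^*\gr\B=\gr\B\otimes_{\C[\paramq]}\C_0$, \emph{not} of $\gr\B\otimes_{\C[\paramq]}\C_\lambda$ as you write. Likewise, the Nakayama step (``$\B_{\mathfrak{m}_\lambda}=0$ iff $(\gr\B)_{\mathfrak{m}_\lambda}=0$'') is unavailable: $\B$ is not finitely generated over $\C[\paramq]$, and localization at $\mathfrak{m}_\lambda$ does not commute with passage to the associated graded when $\C[\paramq]\subset\A_\paramq$ carries a nontrivial filtration. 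Finally, ``$\C[Y_\param]$ is a finitely generated $\C[\paramq]$-algebra'' does not give closedness of $\mathcal{C}$ by itself, since affine morphisms need not have closed image; what matters is the grading (bounded below with finite-dimensional graded pieces).

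The Rees construction you mention only in passing is in fact the main tool, and it is needed for both halves. Form $R_\hbar(\B)=\bigoplus_j\hbar^j\B_{\leqslant j}$, a graded, $\hbar$-torsion-free $\C[\paramq][\hbar]$-module with bounded-below grading and finite-dimensional graded pieces. One shows that its support over $\C[\paramq][\hbar]$ is a closed conical subset of $\paramq\times\C$ (for the $\C^\times$-action with $\paramq^*$ in weight $d$ and $\hbar$ in weight $1$); its slice at $\hbar=1$ is $\operatorname{Supp}^r_{\paramq}(\B)$, giving closedness, while its slice at $\hbar=0$ is $\operatorname{Supp}(\gr\B)$, and $\hbar$-torsion-freeness is what identifies the latter with the asymptotic cone of the former. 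Your write-up skips the degeneration entirely, but that degeneration is the substance of the lemma.
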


\subsubsection{Translation bimodules}
Translation bimodules introduced in the generality we need in \cite{BPW}
is the main family of HC bimodules we will need in the present paper.

Take $\chi\in \Lambda$, let
$\Str_\param(\chi)$ denote the corresponding line bundle on $X_{\param}$.
By \cite[Section 5.1]{BPW}, this bundle
admits a unique filtered quantization to a right $\A_{\paramq}^\theta$-module,
to be denoted by $\A_{\paramq,\chi}^\theta$. There is a commuting
left $\A_{\paramq}^\theta$-action on $\A_{\paramq,\chi}^\theta$
satisfying condition (i) of Definition \ref{defi:HC}.

\begin{defi}\label{defi:translation}
By the {\it translation bimodule} $\A_{\paramq,\chi}$ we mean $\Gamma(\A_{\paramq,\chi}^\theta)$.
\end{defi}

It was shown in \cite[Propositions 6.23,6.24]{BPW}
that $\A_{\paramq,\chi}$ is independent of the choice of
$\theta$ and that it is a HC $(\A_{\paramq},\chi)$-bimodule.

For $\lambda\in \paramq$, we set
$$\A_{\lambda,\chi}:=\A_{\paramq,\chi}\otimes_{\C[\paramq]}\C_\lambda,
\A_{\lambda,\chi}^\theta:=\A_{\paramq,\chi}\otimes_{\C[\paramq]}\C_\lambda.$$

We will need the following properties of translation bimodules.

\begin{Lem}\label{Lem:specialization}
Suppose that $H^i(X^\theta, \Str(\chi))=0$ for all $i>0$. Then the following claims hold:
\begin{enumerate}
\item $R\Gamma(\A_{\paramq,\chi}^\theta)=\A_{\paramq,\chi}$.
\item The bimodule $\A_{\paramq,\chi}$ is a flat right $\C[\paramq]$-module.
\item $R\Gamma(\A_{\lambda,\chi}^\theta)=\A_{\lambda,\chi}$.
\end{enumerate}
\end{Lem}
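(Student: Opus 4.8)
The plan is to deduce all three claims from the vanishing hypothesis $H^i(X^\theta,\Str(\chi))=0$ for $i>0$, using the fact that $\A^\theta_{\paramq,\chi}$ is a quantization (in the conical topology) of the sheaf $\Str_\param(\chi)$, together with the behaviour of cohomology under deformation and specialization. First I would address (1). Equip $\A^\theta_{\paramq,\chi}$ with its filtration; then $\gr \A^\theta_{\paramq,\chi}\cong \Str_{X_\param}(\chi)$ as a sheaf on $X_\param$. Now $X_\param\to \paramq$ is flat with special fibre $X=X^\theta$, and $\Str_{X_\param}(\chi)$ restricts to $\Str_X(\chi)$ on the central fibre; since $\C[\paramq]$ is a polynomial ring (and everything is $\C^\times$-equivariant for the contracting action) the vanishing $H^i(X,\Str_X(\chi))=0$ for $i>0$ together with the semicontinuity/cohomology-and-base-change machinery forces $H^i(X_\param,\Str_{X_\param}(\chi))=0$ for $i>0$, and moreover $\Gamma(X_\param,\Str_{X_\param}(\chi))$ is a flat (indeed free, by grading) $\C[\paramq]$-module whose fibre at $0$ is $\Gamma(X,\Str_X(\chi))$. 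Passing to the Rees/completed picture and using the standard spectral-sequence argument that associated-graded vanishing of higher cohomology of a quantization propagates to the quantization itself (this is exactly the mechanism behind $R\Gamma(\Str_X)=\C[Y]$ and behind \cite[Proposition 3.8]{BPW}, \cite[Section 5.1]{BPW}), I get $R^i\Gamma(\A^\theta_{\paramq,\chi})=0$ for $i>0$, i.e. $R\Gamma(\A^\theta_{\paramq,\chi})=\Gamma(\A^\theta_{\paramq,\chi})=\A_{\paramq,\chi}$.

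Next, claim (2): flatness of $\A_{\paramq,\chi}$ over $\C[\paramq]$ on the right. Because $\A^\theta_{\paramq,\chi}$ is the quantization of $\Str_\param(\chi)$, its associated graded module over $\C[\paramq]$ is $\Gamma(X_\param,\Str_{X_\param}(\chi))$ — here I use (1), so that taking global sections commutes with forming associated graded — and this is a flat (free, graded) $\C[\paramq]$-module by the previous paragraph. A filtered module whose associated graded is a flat $\C[\paramq]$-module is itself flat (the filtration is exhausting, separated and compatible with the grading on $\C[\paramq]$, so one runs the usual graded-to-filtered flatness argument: $\operatorname{Tor}^{\C[\paramq]}_1$ vanishes on $\gr$, hence on each filtered piece by induction, hence on the colimit). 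This gives (2).

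Finally (3): combine (1) and (2). By definition $\A^\theta_{\lambda,\chi}=\A^\theta_{\paramq,\chi}\otimes^{\mathbf{L}}_{\C[\paramq]}\C_\lambda$ once we know the underived tensor product agrees with the derived one, which is exactly the flatness from (2) applied to $\A^\theta_{\paramq,\chi}$ as a sheaf (flatness can be checked on global sections after invoking (1), or stalk-wise; either way it follows from (2) together with the fact that $\A^\theta_{\paramq,\chi}$ is coherent over $\A^\theta_\paramq$ which is $\C[\paramq]$-flat). Then $R\Gamma$ commutes with the (derived $=$ underived) base change $\otimes_{\C[\paramq]}\C_\lambda$ because $\C_\lambda$ is a perfect $\C[\paramq]$-complex and $R\Gamma$ is $\C[\paramq]$-linear; using (1) we get
\[
R\Gamma(\A^\theta_{\lambda,\chi})=R\Gamma(\A^\theta_{\paramq,\chi})\otimes^{\mathbf{L}}_{\C[\paramq]}\C_\lambda=\A_{\paramq,\chi}\otimes_{\C[\paramq]}\C_\lambda=\A_{\lambda,\chi},
\]
where the last step again uses flatness of $\A_{\paramq,\chi}$ from (2). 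In particular $R^i\Gamma(\A^\theta_{\lambda,\chi})=0$ for $i>0$.

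The main obstacle is the first step: rigorously upgrading the sheaf-cohomology vanishing on the single fibre $X=X^\theta$ to vanishing over the whole base $X_\param$ and then transferring it from the commutative sheaf $\Str_{X_\param}(\chi)=\gr\A^\theta_{\paramq,\chi}$ to the quantization $\A^\theta_{\paramq,\chi}$ itself. This requires care because $X_\param$ is not proper over $\paramq$ (only $\pi$ is projective, and $Y_\param$ is affine over $\paramq$), so one works via the affinization $Y_\param\to\paramq$ and the contracting $\C^\times$-action to make cohomology-and-base-change and the filtered-to-associated-graded spectral sequence argument go through; this is essentially the argument already used in \cite{BPW} for $\chi$ in the relevant cone, and the present lemma is the version recorded for arbitrary $\chi$ with the stated vanishing. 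Once that is in hand, steps (2) and (3) are formal.
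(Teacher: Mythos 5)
Your proposal follows essentially the same route as the paper: reduce to the classical cohomology vanishing $H^i(X_\param,\Str_\param(\chi))=0$ for $i>0$ (and flatness of $\Gamma(\Str_\param(\chi))$ over $\C[\param]$), then propagate this through the Rees construction — the paper does this via the short exact sequence $0\to R_\hbar(\A^\theta_{\paramq,\chi})\xrightarrow{\hbar}R_\hbar(\A^\theta_{\paramq,\chi})\to\Str_\param(\chi)\to 0$ and its long exact cohomology sequence, which is precisely the ``standard spectral-sequence argument'' you invoke — to get (1) and to see that $\A_{\paramq,\chi}$ is a filtered deformation of the flat (hence free, by the grading) module $\Gamma(\Str_\param(\chi))$, giving (2), with (3) then a formal consequence of (1) and (2) exactly as you argue. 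The proof is correct and matches the paper's.
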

\begin{proof}
Since $H^i(X^\theta, \Str(\chi))=0$ for $i>0$, we see that $H^i(X_\param^\theta, \Str_\param(\chi))=0$
for $i>0$. Moreover, $\Gamma(\Str_\param(\chi))$ is a flat deformation of $\Gamma(\Str(\chi))$.
Apply the standard long exact sequence in cohomology for $R\Gamma$ to
$$0\rightarrow R_\hbar(\A_{\paramq,\chi}^\theta)\xrightarrow{\hbar\cdot}
R_\hbar(\A_{\paramq,\chi}^\theta)\rightarrow \Str_{\param}(\chi)\rightarrow 0,$$
where  $R_\hbar(\A_{\paramq,\chi}^\theta)$ is the Rees sheaf of $\A_{\paramq,\chi}^\theta$.
We see that $R\Gamma(\A_{\param,\chi}^\theta)=\A_{\paramq,\chi}$ (which is part (1)) and the right hand side  is a filtered deformation of $\Gamma(\Str_\param(\chi))$. The latter is flat, hence free over
$\C[\param]$, hence so is  $\A_{\paramq,\chi}$. This proves (2). (3) easily follows from
here and also follows from \cite[Proposition 6.26]{BPW}.
\end{proof}

\begin{Rem}\label{Rem:translation_coincidence}
Note that if $\Gamma_\lambda^\theta$ is exact, then the conclusion of (3) holds. This also
follows from \cite[Proposition 6.26]{BPW}.
 \end{Rem}

\subsubsection{Connection to abelian localization}
Here we discuss a connection between the translation bimodules and abelian localization.

We write $\Gamma_\lambda^\theta$ for the global
section functor $\Coh(\A_\lambda^\theta)\rightarrow \A_\lambda\operatorname{-mod}$
and $\Loc_\lambda^\theta$ for its left adjoint functor. The following lemma is
elementary.

\begin{Lem}\label{Lem:fun_iso1}
We have a functor isomorphism
$$R\Gamma(\A_{\lambda,\chi}^\theta)\otimes^L_{\A_\lambda}\bullet
\cong R\Gamma_{\lambda+\chi}^\theta(\A_{\lambda,\chi}^\theta\otimes_{\A_\lambda^\theta}
L\Loc_\lambda^\theta(\bullet)).$$
\end{Lem}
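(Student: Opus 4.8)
The plan is to unwind both sides as compositions of functors on derived categories and match them using adjunction together with the projection-type formula for tensoring a sheaf of bimodules against a localized module. First I would recall the defining adjunction $(L\Loc_\lambda^\theta, R\Gamma_\lambda^\theta)$ between $D^-(\A_\lambda\operatorname{-mod})$ and $D^-(\Coh(\A_\lambda^\theta))$, so that for any $M\in D^-(\A_\lambda\operatorname{-mod})$ the object $L\Loc_\lambda^\theta(M)$ is the natural ``sheafification'' $\A_\lambda^\theta\otimes^L_{\A_\lambda}M$. The sheaf of bimodules $\A_{\lambda,\chi}^\theta$ is a right $\A_\lambda^\theta$-module and a left $\A_{\lambda+\chi}^\theta$-module, so $\A_{\lambda,\chi}^\theta\otimes^L_{\A_\lambda^\theta}(\bullet)$ sends $D^-(\Coh(\A_\lambda^\theta))$ to $D^-(\Coh(\A_{\lambda+\chi}^\theta))$, and then $R\Gamma_{\lambda+\chi}^\theta$ lands in $D^-(\A_{\lambda+\chi}\operatorname{-mod})$; note that the left $\A_{\lambda+\chi}$-module structure coming through $\chi$ matches condition (i) of Definition \ref{defi:HC}, so the target is correct on both sides.

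Next I would establish the key ``change of rings'' identity at the level of sheaves: for $M\in D^-(\A_\lambda\operatorname{-mod})$,
\[
\A_{\lambda,\chi}^\theta\otimes^L_{\A_\lambda^\theta} L\Loc_\lambda^\theta(M)
\;=\;
\A_{\lambda,\chi}^\theta\otimes^L_{\A_\lambda^\theta}\bigl(\A_\lambda^\theta\otimes^L_{\A_\lambda}M\bigr)
\;\cong\;
\A_{\lambda,\chi}^\theta\otimes^L_{\A_\lambda}M,
\]
which is just associativity of derived tensor product (the sheaf $\A_{\lambda,\chi}^\theta$ is already a right $\A_\lambda^\theta$-module, hence a right $\A_\lambda$-module, so cancelling the intermediate $\A_\lambda^\theta$ is formal). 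Applying $R\Gamma$ to the right-hand side and using the projection formula $R\Gamma\bigl(\A_{\lambda,\chi}^\theta\otimes^L_{\A_\lambda}M\bigr)\cong R\Gamma(\A_{\lambda,\chi}^\theta)\otimes^L_{\A_\lambda}M$ then yields exactly $R\Gamma(\A_{\lambda,\chi}^\theta)\otimes^L_{\A_\lambda}\bullet$, the left-hand side of the asserted isomorphism. Finally I would note that $R\Gamma_{\lambda+\chi}^\theta$ applied to $\A_{\lambda,\chi}^\theta\otimes^L_{\A_\lambda^\theta}L\Loc_\lambda^\theta(\bullet)$ is literally $R\Gamma$ of that sheaf-level object, so chaining the two displays gives the claim.

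The only genuine point requiring care — and the step I expect to be the main obstacle — is the projection formula $R\Gamma\bigl(\mathcal{F}\otimes^L_{\A_\lambda}M\bigr)\cong R\Gamma(\mathcal{F})\otimes^L_{\A_\lambda}M$ for a coherent $\A_\lambda^\theta$-module $\mathcal{F}$ (here $\mathcal{F}=\A_{\lambda,\chi}^\theta$) tensored with a complex of modules over the \emph{constant} algebra $\A_\lambda$; this is where the conical-topology/completeness hypotheses on the quantization and the fact that $M$ can be resolved by frees over $\A_\lambda$ are used. One reduces to $M=\A_\lambda$ (where both sides are $R\Gamma(\mathcal{F})$) by taking a free resolution and commuting $R\Gamma$ past the resulting direct sums and the (finite) totalization, invoking that $R\Gamma$ has finite cohomological amplitude on $\Coh(\A_\lambda^\theta)$ because of $R^i\Gamma(\Str_X)=0$ for $i>0$ and the usual degeneration/induction on the order of the filtration. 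I would present this as ``elementary,'' citing the analogous computation in \cite[Section 5]{GL}, since the argument is the standard one and the lemma is only used as a bookkeeping device.
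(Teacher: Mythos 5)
Your argument is correct. The paper states this lemma without proof, calling it elementary, and the expected argument is exactly the one you give: rewrite $L\Loc_\lambda^\theta(\bullet)$ as $\A_\lambda^\theta\otimes^L_{\A_\lambda}\bullet$, cancel the intermediate $\A_\lambda^\theta$ by associativity of derived tensor, and then establish the projection-type formula $R\Gamma\bigl(\A_{\lambda,\chi}^\theta\otimes^L_{\A_\lambda}M\bigr)\cong R\Gamma(\A_{\lambda,\chi}^\theta)\otimes^L_{\A_\lambda}M$ by resolving $M\in D^-(\A_\lambda\operatorname{-mod})$ by free $\A_\lambda$-modules, noting that $\A_{\lambda,\chi}^\theta\otimes_{\A_\lambda}\A_\lambda^{(I)}\cong(\A_{\lambda,\chi}^\theta)^{(I)}$, and commuting $R\Gamma$ past the (possibly infinite) direct sums and the totalization using finite cohomological amplitude of $R\Gamma$ on $X^\theta$, which is projective over the affine $Y$. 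One small point worth stating explicitly: the tensor $\A_{\lambda,\chi}^\theta\otimes_{\A_\lambda^\theta}(\bullet)$ appearing in the lemma carries no $L$ because $\A_{\lambda,\chi}^\theta$ is the quantization of a line bundle, hence a locally free rank-one right $\A_\lambda^\theta$-module and in particular flat over $\A_\lambda^\theta$; you use this silently when you replace it by the derived tensor in the associativity step.
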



\begin{Cor}\label{Cor:abel_loc_bimod}
Suppose that abelian localization holds for $\A_{\lambda+\chi}^\theta$. Then the equivalence $$\Gamma_{\lambda+\chi}^\theta(\A_{\lambda,\chi}^\theta\otimes_{\A_\lambda^\theta}\bullet):
\Coh(\A^\theta_\lambda)\xrightarrow{\sim} \A_{\lambda+\chi}\operatorname{-mod}.
    $$
    identifies $\Gamma(\A^\theta_{\lambda,\chi})\otimes^L_{\A_\lambda}\bullet$ with $L\Loc_\lambda^\theta$
    and $R\Hom_{\A_{\lambda+\chi}}(\Gamma(\A^\theta_{\lambda,\chi}),\bullet)$ with $R\Gamma_\lambda^\theta$.
\end{Cor}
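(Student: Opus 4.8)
The plan is to feed the hypothesis ``abelian localization holds for $\A_{\lambda+\chi}^\theta$'' into the functor isomorphism of Lemma \ref{Lem:fun_iso1} and read off both claims. First I would observe that abelian localization for $\A_{\lambda+\chi}^\theta$ means precisely that $\Gamma_{\lambda+\chi}^\theta$ and $\Loc_{\lambda+\chi}^\theta$ are mutually inverse equivalences of abelian categories; since these are derived-exact in that case, $R\Gamma_{\lambda+\chi}^\theta$ and $L\Loc_{\lambda+\chi}^\theta$ are mutually inverse equivalences of the derived categories as well. In particular the composite $\Gamma_{\lambda+\chi}^\theta(\A_{\lambda,\chi}^\theta\otimes_{\A_\lambda^\theta}\bullet)$ is a composition of the exact functor $\A_{\lambda,\chi}^\theta\otimes_{\A_\lambda^\theta}\bullet$ (a twist by a line bundle, hence an equivalence $\Coh(\A_\lambda^\theta)\xrightarrow{\sim}\Coh(\A_{\lambda+\chi}^\theta)$) with the equivalence $\Gamma_{\lambda+\chi}^\theta$, so it is indeed an equivalence $\Coh(\A_\lambda^\theta)\xrightarrow{\sim}\A_{\lambda+\chi}\operatorname{-mod}$ as asserted, and likewise on the derived level, where it coincides with $R\Gamma_{\lambda+\chi}^\theta(\A_{\lambda,\chi}^\theta\otimes_{\A_\lambda^\theta}L\Loc_\lambda^\theta(\bullet))$ composed with $L\Loc_\lambda^\theta$ on the source — but since $L\Loc_\lambda^\theta$ need not be an equivalence, I should be careful: the point is only that the displayed functor is an equivalence, and I will name it $F$.

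For the first identification, Lemma \ref{Lem:fun_iso1} gives $R\Gamma(\A_{\lambda,\chi}^\theta)\otimes^L_{\A_\lambda}\bullet\cong F\circ L\Loc_\lambda^\theta$ as functors $D^-(\A_\lambda\operatorname{-mod})\to D^-(\A_{\lambda+\chi}\operatorname{-mod})$. By Lemma \ref{Lem:specialization}(3) (whose hypothesis $H^i(X^\theta,\Str(\chi))=0$, $i>0$, is guaranteed here, since abelian localization for $\A_{\lambda+\chi}^\theta$ forces derived localization and hence the vanishing; alternatively one invokes Remark \ref{Rem:translation_coincidence} together with exactness of $\Gamma_{\lambda+\chi}^\theta$) we have $R\Gamma(\A_{\lambda,\chi}^\theta)=\Gamma(\A_{\lambda,\chi}^\theta)=:\A_{\lambda,\chi}^\theta$ concentrated in degree $0$, so the left side is $\Gamma(\A_{\lambda,\chi}^\theta)\otimes^L_{\A_\lambda}\bullet$. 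Composing both sides of the isomorphism with the inverse equivalence $F^{-1}$ (on the target) yields $F^{-1}\bigl(\Gamma(\A^\theta_{\lambda,\chi})\otimes^L_{\A_\lambda}\bullet\bigr)\cong L\Loc_\lambda^\theta$, which is the first assertion once we note that ``identifies $\cdots$ with $L\Loc_\lambda^\theta$'' means exactly transporting along $F$.

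For the second identification I would pass to right adjoints. The functor $L\Loc_\lambda^\theta$ is left adjoint to $R\Gamma_\lambda^\theta$ by construction (recalled in Section \ref{SS_BB_review} and carried over to the present setting). On the other side, $\Gamma(\A^\theta_{\lambda,\chi})\otimes^L_{\A_\lambda}\bullet$ has right adjoint $R\Hom_{\A_{\lambda+\chi}}(\Gamma(\A^\theta_{\lambda,\chi}),\bullet)$ by the standard derived tensor-Hom adjunction, and the equivalence $F$ is its own quasi-inverse's adjoint, so transporting the adjunction $\bigl(\Gamma(\A^\theta_{\lambda,\chi})\otimes^L_{\A_\lambda}\bullet,\, R\Hom_{\A_{\lambda+\chi}}(\Gamma(\A^\theta_{\lambda,\chi}),\bullet)\bigr)$ along $F$ and comparing with $(L\Loc_\lambda^\theta, R\Gamma_\lambda^\theta)$ forces $R\Hom_{\A_{\lambda+\chi}}(\Gamma(\A^\theta_{\lambda,\chi}),\bullet)$ to be identified with $R\Gamma_\lambda^\theta$ by uniqueness of adjoints. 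The only genuinely delicate point is the bookkeeping of which category each functor lives on and making sure the cohomological-vanishing hypothesis needed for Lemma \ref{Lem:specialization}(3) is actually in force; I expect this to be the main (though minor) obstacle, and it is resolved either by the remark above or directly by Remark \ref{Rem:translation_coincidence}.
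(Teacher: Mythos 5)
Your proof is essentially the argument the paper intends: feed the abelian localization hypothesis into Lemma \ref{Lem:fun_iso1}, replace $R\Gamma(\A_{\lambda,\chi}^\theta)$ by $\Gamma(\A_{\lambda,\chi}^\theta)$, and pass to right adjoints for the second identification. However, your primary justification for the replacement step is a non-sequitur: abelian (or derived) localization for $\A_{\lambda+\chi}^\theta$ is a statement about the quantized category $\Coh(\A_{\lambda+\chi}^\theta)$ and does \emph{not} imply the classical vanishing $H^i(X^\theta,\Str(\chi))=0$ for $i>0$. That is a statement about the associated graded sheaf, and there is no implication in that direction from a filtered complete sheaf to its $\gr$; only the reverse implication holds. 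Your alternative justification is the correct one: $\A_{\lambda,\chi}^\theta$ is a coherent left $\A_{\lambda+\chi}^\theta$-module, so the exactness of $\Gamma_{\lambda+\chi}^\theta$ kills its higher cohomology, and the argument behind Remark \ref{Rem:translation_coincidence} (or \cite[Proposition 6.26]{BPW}) identifies the degree-zero global sections with $\A_{\lambda,\chi}$. With that citation doing the work rather than the cohomology-vanishing claim, the rest of your argument — factoring $F$, invoking Lemma \ref{Lem:fun_iso1}, and transporting the adjunction $(L\Loc_\lambda^\theta,R\Gamma_\lambda^\theta)$ along $F$ — is correct.
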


We also have the  following results, \cite[Proposition 2.11]{catO_charp}, a related
result was previously obtained in \cite[Section 5.3]{BPW}.

\begin{Prop}\label{Prop:localization_translation}
For $\lambda\in \paramq$ the following two conditions are equivalent:
\begin{enumerate}
\item Abelian localization holds for $\lambda$,
\item There is $\chi\in \Lambda$ satisfying the following conditions
\begin{itemize}
\item $\chi$ is ample for $X^\theta$,
\item $H^i(X^\theta,\Str(m\chi))=0$ for all $i,m>0$,
\item
and for all $m\geqslant 0$, the bimodules $\A_{\lambda+m\chi,\chi}$
and $\A_{\lambda+(m+1)\chi,-\chi}$ are mutually inverse Morita equivalences.
\end{itemize}
\end{enumerate}
\end{Prop}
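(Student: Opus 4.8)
The plan is to prove the two implications separately, with the direction $(2)\Rightarrow(1)$ being the more substantial one. For $(1)\Rightarrow(2)$ I would argue as follows: if abelian localization holds for $\lambda$, then by Corollary \ref{Lem:Pic_Hom} and the MMP picture of Section \ref{SSS_classif_resol} one can pick $\chi\in\Lambda$ that is ample for $X^\theta$; by Serre vanishing (applied on $X^\theta$, or rather on $X_{\paramq}^\theta$ and specialized) we can moreover arrange $H^i(X^\theta,\Str(m\chi))=0$ for all $i,m>0$ after replacing $\chi$ by a large multiple. For such $\chi$, Lemma \ref{Lem:specialization}(3) gives $R\Gamma(\A_{\mu,\chi}^\theta)=\A_{\mu,\chi}$ for every $\mu$. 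Now since abelian localization holds for $\lambda$, the parameters $\lambda+m\chi$ all lie in the (open, by the essential-hyperplanes description) abelian localization locus for $m\geqslant 0$ — here one uses that adding an ample integral class keeps us in the good chamber, via Conjecture-type input that is actually available unconditionally in this setting because ampleness of $\chi$ moves $\lambda$ deeper into the localization region. Then Corollary \ref{Cor:abel_loc_bimod} identifies tensoring with $\A_{\mu,\chi}^\theta$ on the sheaf side with an equivalence $\Coh(\A_\mu^\theta)\xrightarrow{\sim}\Coh(\A_{\mu+\chi}^\theta)$ (the sheaf-level translation is a twist by a line bundle, hence an equivalence), and passing to global sections via the two localization equivalences shows $\A_{\mu,\chi}\otimes^L_{\A_\mu}\bullet$ is an equivalence $D^b(\A_\mu\text{-mod})\to D^b(\A_{\mu+\chi}\text{-mod})$; since both sides are honest abelian-category tensor functors (flatness from Lemma \ref{Lem:specialization}(2)), they are Morita equivalences, and $\A_{\mu+\chi,-\chi}$ is the inverse by the same argument run with $-\chi$, or by uniqueness of inverses. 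Taking $\mu=\lambda+m\chi$ gives (2).

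For $(2)\Rightarrow(1)$, the idea is a telescoping/induction argument in $m$. Fix $\chi$ as in (2). The key geometric input is that since $\chi$ is ample with $H^{>0}(X^\theta,\Str(m\chi))=0$, abelian localization is known to hold for $\lambda+m\chi$ once $m\gg0$: this is the ``ample shift'' statement, which in this generality follows from the fact that $\A_{\lambda+m\chi}$-modules and $\Coh(\A_{\lambda+m\chi}^\theta)$-modules are identified in the large-twist limit (essentially a Proj-type / graded-module argument using that $\bigoplus_{m\geqslant0}\A_{\lambda,m\chi}$ is a finitely generated algebra whose Proj recovers $X^\theta$). So choose $N$ with abelian localization holding for $\lambda+N\chi$. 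Now I want to descend from $\lambda+N\chi$ to $\lambda$ one step at a time: the hypothesis that $\A_{\lambda+m\chi,\chi}$ and $\A_{\lambda+(m+1)\chi,-\chi}$ are mutually inverse Morita equivalences for all $m\geqslant0$ lets me transport the localization equivalence. Concretely, by Lemma \ref{Lem:fun_iso1} and Corollary \ref{Cor:abel_loc_bimod}, if abelian localization holds for $\lambda+(m+1)\chi$ then $R\Gamma_{\lambda+m\chi}^\theta$ is identified with $R\Hom_{\A_{\lambda+(m+1)\chi}}(\A_{\lambda+m\chi,\chi},\bullet)$ composed with the localization equivalence at $\lambda+(m+1)\chi$ and then with $\A_{\lambda+(m+1)\chi,-\chi}\otimes\bullet$; the Morita hypothesis makes this composite an equivalence, hence $R\Gamma_{\lambda+m\chi}^\theta$ is an equivalence, i.e.\ derived localization holds for $\lambda+m\chi$. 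One then upgrades derived to abelian localization at $\lambda+m\chi$ using that the Morita equivalence is exact (it is given by tensoring with a flat bimodule on both sides by Lemma \ref{Lem:specialization}(2)), so the t-structures match and $\Gamma_{\lambda+m\chi}^\theta$ itself is exact; combined with the derived equivalence this gives abelian localization at $\lambda+m\chi$. Induction downward from $m=N$ to $m=0$ finishes the proof.

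The step I expect to be the main obstacle is the ``ample shift'' / base case: establishing that abelian localization holds for $\lambda+m\chi$ for $m\gg0$, together with the compatibility of the Morita equivalences with the relevant t-structures needed to go from a derived equivalence back to an abelian one at each inductive step. The flatness in Lemma \ref{Lem:specialization}(2) is what makes the exactness bookkeeping work, but one must be careful that $\A_{\lambda+m\chi,\chi}\otimes_{\A_{\lambda+m\chi}}\bullet$ being a two-sided inverse equivalence at the abelian level (not merely derived) is exactly the Morita hypothesis in (2), so the argument is really about correctly threading Corollary \ref{Cor:abel_loc_bimod} through the telescope; the remaining verifications (Serre vanishing, finite generation of the section algebra of line bundle twists, independence of $\theta$) are standard given the results already recalled in Sections \ref{SS_Nakajima} and the quantization subsections.
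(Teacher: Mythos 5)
The paper does not prove this proposition itself; it cites \cite[Proposition 2.11]{catO_charp} (with a predecessor in \cite[Section 5.3]{BPW}), so I will assess the proposal on its own terms.

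Your direction $(2)\Rightarrow(1)$ is essentially right and matches the standard approach: use Lemma \ref{Cor:abel_loc} to obtain abelian localization at $\lambda+N\chi$ for $N\gg 0$ (since $\chi$ is ample it is transverse to all classical walls, so the ray $\{\lambda+n\chi\}_{n\geqslant 0}$ meets each of the finitely many bad essential hyperplanes at most once, and for $n$ large enough avoids them all), then descend one step at a time using the hypothesis that $\A_{\lambda+m\chi,\chi}$ and $\A_{\lambda+(m+1)\chi,-\chi}$ are mutually inverse Morita bimodules: this is precisely the $\A$-sheaf analogue of Lemma \ref{Lem:abloc_Morita} (Remark \ref{Rem:usual_bimod_another}), run in the downward direction, combined with Corollary \ref{Cor:abel_loc_bimod} and Lemma \ref{Lem:specialization}(3). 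However, you flag the ``base case'' as the main obstacle of this direction; it is not -- it is supplied directly by Lemma \ref{Cor:abel_loc}. The real difficulty sits elsewhere.

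The genuine gap is in your $(1)\Rightarrow(2)$. You assert that abelian localization at $\lambda$ propagates to $\lambda+m\chi$ for all $m\geqslant 0$ because ``adding an ample integral class keeps us in the good chamber,'' calling this ``Conjecture-type input that is actually available unconditionally.'' It is not available as a black box: by the $\A$-version of Lemma \ref{Lem:abloc_Morita}, abelian localization at $\lambda$ together with abelian localization at $\lambda+\chi$ is \emph{equivalent} to the Morita condition at that step, so asserting the propagation is essentially asserting the conclusion. You need either a direct argument that an ample shift preserves abelian localization (for instance, a Serre-type argument showing $\Coh(\A_\lambda^\theta)$ is recovered from the section $\Z$-algebra $\bigoplus_{m\geqslant n\geqslant 0}\Gamma(\A^\theta_{\lambda+n\chi,(m-n)\chi})$ when $\chi$ is ample with the stated vanishing, as in \cite[Section 5.3]{BPW}), or a different route to the Morita statements that does not first pass through abelian localization at all the shifts. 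A second, smaller slip: your invocation of Lemma \ref{Lem:specialization}(2) to pass from the derived equivalence $\A_{\mu,\chi}\otimes^L_{\A_\mu}\bullet$ to a Morita equivalence is misplaced -- that lemma gives flatness of $\A_{\paramq,\chi}$ over $\C[\paramq]$, not over $\A_\mu$, and so says nothing about exactness of the tensor functor. The correct reason the derived equivalence is a Morita equivalence is t-exactness: when abelian localization holds at both $\mu$ and $\mu+\chi$, all three functors in Lemma \ref{Lem:fun_iso1} are t-exact, and a t-exact derived equivalence of module categories given by $\otimes^L$ with a bimodule is a Morita equivalence; the inverse bimodule is identified with $\A_{\mu+\chi,-\chi}$ by running the same comparison with $-\chi$ and using that the sheaf-level compositions are the identity.
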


\subsubsection{Restriction functors}\label{SSS_res_fun}
We use the notation of Section \ref{SSS_slice_quant}. We have the restriction
functor $\bullet_{\dagger,y}:\HC(\A_{\paramq},\chi)\rightarrow \HC(\underline{\A}_{\paramq},\chi)$.

In this generality the functor was constructed in \cite[Section 2.4.4]{catO_charp}.
It has the following properties:
\begin{itemize}
\item[(i)] It is exact and $\C[\paramq]$-linear.
\item[(ii)] We have $\B_{\dagger,y}=0$ if and only if $y\not\in \VA(\B)$.
\item[(iii)] For $\B\in \HC(\A_\lambda,\chi)$ and $y\in Y$, the space $\B_{\dagger,y}$ is finite dimensional and nonzero
if and only if $\mathcal{L}$ is open in $\VA(\B)$.
\item[(iv)] The functor $\bullet_{\dagger,y}$ is monoidal.
\item[(v)] Assume that $H^i(X^\theta, \Str(\chi))=0$ for all $i>0$.
Then $(\A_{\paramq,\chi})_{\dagger,y}=\underline{\A}_{\paramq,\chi}$.
\end{itemize}

We note that the bimodule $\underline{\A}_{\paramq,\chi}$ is obtained
from $\C[\paramq]\otimes_{\C[\underline{\paramq}]}\underline{\A}_{\underline{\paramq},\eta(\chi)}$,
where $\eta$ here is the  map $\Lambda\rightarrow \underline{\Lambda}$
described in Section \ref{SSS_conical_slices},
by shifting the left $\C[\paramq]$-action so that (i) from Definition \ref{defi:HC} holds.

From (iv),(v) and Proposition  \ref{Prop:localization_translation}
we deduce the following.

\begin{Cor}\label{Cor:loc_slice}
If abelian localization holds for $\A_\lambda^\theta$, then it also
holds for $\underline{\A}_\lambda^\theta$.
\end{Cor}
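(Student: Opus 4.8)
The plan is to deduce Corollary \ref{Cor:loc_slice} from Proposition \ref{Prop:localization_translation} by pushing the Morita-equivalence criterion through the restriction functor $\bullet_{\dagger,y}$. Assume abelian localization holds for $\A_\lambda^\theta$. By the implication $(1)\Rightarrow(2)$ of Proposition \ref{Prop:localization_translation}, there is $\chi\in\Lambda$ that is ample for $X^\theta$, has $H^i(X^\theta,\Str(m\chi))=0$ for all $i,m>0$, and is such that for every $m\geqslant 0$ the bimodules $\A_{\lambda+m\chi,\chi}$ and $\A_{\lambda+(m+1)\chi,-\chi}$ are mutually inverse Morita equivalences. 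I would like to conclude that $\eta(\chi)\in\underline{\Lambda}$ serves the same role for $\underline{\A}_\lambda^\theta$ on $\underline{X}$.

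First I would apply $\bullet_{\dagger,y}$ to the bimodules in question. By property (v) of Section \ref{SSS_res_fun} (whose hypothesis $H^i(X^\theta,\Str(\chi))=0$ is part of what we are given, and which also applies with $\chi$ replaced by $-\chi$ provided the vanishing holds there too --- I would note that ampleness of $\chi$ together with the stated vanishing is exactly condition (2) of Proposition \ref{Prop:localization_translation}, so $-\chi$ behaves symmetrically on the relevant shifted parameters), we get $(\A_{\paramq,\chi})_{\dagger,y}=\underline{\A}_{\paramq,\chi}$, and specializing the $\C[\paramq]$-action at $\lambda+m\chi$ gives $(\A_{\lambda+m\chi,\chi})_{\dagger,y}=\underline{\A}_{\lambda+m\chi,\chi}$, and similarly for $-\chi$. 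Now the condition that $\A_{\lambda+m\chi,\chi}$ and $\A_{\lambda+(m+1)\chi,-\chi}$ are mutually inverse Morita equivalences means precisely that their tensor products over the appropriate algebras are isomorphic to the respective regular bimodules. Since $\bullet_{\dagger,y}$ is monoidal (property (iv)) and exact (property (i)), applying it transforms these isomorphisms into isomorphisms $\underline{\A}_{\lambda+m\chi,\chi}\otimes\underline{\A}_{\lambda+(m+1)\chi,-\chi}\cong\underline{\A}_{\lambda+(m+1)\chi}$ and the reverse composition, i.e. the slice translation bimodules are mutually inverse Morita equivalences as well. One subtlety: $\bullet_{\dagger,y}$ a priori lands in $\HC(\underline{\A}_{\paramq},\chi)$ with the parameter $\chi$ read via $\eta$, so I would identify $\underline{\A}_{\lambda+m\chi,\chi}$ with the translation bimodule $\underline{\A}_{\underline{\lambda}+m\eta(\chi),\eta(\chi)}$ for $\underline{X}$ using the description of $\underline{\A}_{\paramq,\chi}$ recorded right after property (v).

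Next I would check the two geometric conditions on $\eta(\chi)$ for $\underline{X}$. For ampleness: $\chi$ ample for $X^\theta$ means $\chi$ lies in the ample cone, and since $\underline{X}$ is the symplectic resolution built from the slice construction (Section \ref{SSS_conical_slices}) with $\Pic(\underline{X})$ receiving $\Pic(X)$ via $\eta$, and since $\eta$ is compatible with $c_1$, the image $\eta(\chi)$ is ample (or at least nef --- but one can arrange ampleness, or just replace $\chi$ by a large multiple, which does not affect any of the Morita statements since those are stated for all $m$). For the cohomology vanishing $H^i(\underline{X},\underline{\Str}(m\eta(\chi)))=0$: this should follow from the corresponding vanishing on $X$ via the slice structure $X_\param^{\wedge_y}\cong(\underline{X}_\param\times T_y\mathcal{L})^{\wedge_0}$, or alternatively one can bypass it because for an ample line bundle on a conical symplectic resolution higher cohomology always vanishes (this is standard, following from e.g. the grading and Kodaira-type vanishing, and is implicitly used throughout). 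Having verified all three bullet points of condition (2) of Proposition \ref{Prop:localization_translation} for $\underline{\A}_\lambda^\theta$ with the parameter $\eta(\chi)$, the implication $(2)\Rightarrow(1)$ gives that abelian localization holds for $\underline{\A}_\lambda^\theta$, as desired.

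The main obstacle I anticipate is bookkeeping around what exactly ``mutually inverse Morita equivalences'' unpacks to and verifying that the monoidal functor $\bullet_{\dagger,y}$ genuinely sends the unit object (the regular bimodule $\A_{\paramq}$, or its specialization) to the unit object $\underline{\A}_{\paramq}$ --- this is where monoidality of $\bullet_{\dagger,y}$ must be used with care, since a monoidal functor sends the unit to the unit by definition, but one must make sure the ``unit'' in the source category of HC bimodules is indeed $\A_{\paramq}$ with its $0$-twist and that specialization at $\lambda$ is compatible with all of this. A secondary point requiring attention is the passage between the ``for all $m\geqslant 0$'' statements before and after applying $\bullet_{\dagger,y}$: since $\bullet_{\dagger,y}$ is $\C[\paramq]$-linear and exact, specialization commutes with it, so each individual $m$ is handled uniformly, but I would spell this out. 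Everything else is a direct transport of structure through the functors already constructed in \cite{catO_charp}.
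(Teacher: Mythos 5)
Your proposal is essentially the proof the paper intends: the text says only ``From (iv),(v) and Proposition \ref{Prop:localization_translation} we deduce the following,'' and your argument is exactly that deduction --- get $\chi$ from $(1)\Rightarrow(2)$ of Proposition \ref{Prop:localization_translation}, push the Morita statement through the monoidal, $\C[\paramq]$-linear restriction functor $\bullet_{\dagger,y}$ using (iv) and (v), observe $\eta(\chi)$ is ample for $\underline{X}^\theta$, and conclude by $(2)\Rightarrow(1)$ for $\underline{X}$.

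One point in your write-up is not actually justified as stated: you invoke property (v) also for $-\chi$, i.e.\ you want $(\A_{\paramq,-\chi})_{\dagger,y}=\underline{\A}_{\paramq,-\chi}$, but the hypothesis of (v) for $-\chi$ would be $H^i(X^\theta,\Str(-\chi))=0$ for $i>0$, and this typically \emph{fails} when $\chi$ is ample. Your parenthetical that ``$-\chi$ behaves symmetrically on the relevant shifted parameters'' is not a proof of that vanishing. The correct fix is to recall that the translation bimodule $\A_{\paramq,-\chi}$ is independent of the choice of $\theta$ (\cite[Propositions 6.23, 6.24]{BPW}, quoted in the paper right after Definition \ref{defi:translation}), and likewise for $\underline{\A}_{\paramq,-\chi}$; so one may apply property (v) with the opposite resolution $X^{-\theta}$, on which $-\chi$ is ample and hence has no higher cohomology, to conclude $(\A_{\paramq,-\chi})_{\dagger,y}=\underline{\A}_{\paramq,-\chi}$. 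With that substitution the rest of your argument goes through. (Your fallback remark that one could ``replace $\chi$ by a large multiple'' to force ampleness of $\eta(\chi)$ is also not quite right as stated --- the Morita statements for $\chi$ do not formally yield those for $N\chi$ --- but it is also unnecessary, since $\eta$ is pullback along a closed embedding of the central fibers compatible with $c_1$, so $\eta(\chi)$ is already ample.)
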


\subsection{Essential and singular hyperplanes}\label{SS_essent_sing}
\subsubsection{Construction of singular hyperplanes}
Here we will recall the notion of an essential hyperplane and introduce the notion of
a singular hyperplane. Pick a classical wall $\Upsilon$. Fix $\zeta\in \Upsilon$ as in Proposition \ref{Prop:rank1} so that, for all $y\in Y_\zeta$,
the rank of the map $\eta:\paramq\rightarrow \underline{\paramq}$ is equal to $1$.
Let $\underline{\paramq}'$ denote the image of $\eta$, it is 1-dimensional by Proposition
\ref{Prop:rank1}. A priori, as a subspace of $\underline{\paramq}$, it depends on the choice of
$y$. We consider the lattice $\underline{\param}'_\Z$ that is the image of
$\Lambda$ in $\underline{\param}$, it is isomorphic to $\Z$.

Consider the algebra $\underline{\A}_{\underline{\paramq}'}$. Pick an ample line bundle
$\Str(\chi)$ on $X$. Let $\Str(\underline{\chi})$ denote the induced line bundle on
$\underline{X}$. Consider the bimodules $\underline{\A}_{\underline{\paramq}',\underline{\chi}},
\underline{\A}_{\underline{\paramq}',-\underline{\chi}}$. Note that $\underline{\chi}$ is  ample.
It follows that the locus in $\underline{\paramq}'$,
where these bimodules fail to be mutually inverse Morita equivalence bimodules
is a finite subset, compare to, e.g., \cite[Lemma 2.12]{catO_charp}.
This finite subset will be denoted by $\Sigma_y$.

We define $\Sigma_\Upsilon$ as follows. We pick points $y_1,\ldots,y_k$, one in
each minimal symplectic leaf in $Y_\zeta$. Consider the union $\bigcup_i \Sigma_{y_i}$.
By definition,
$\Sigma_\Upsilon$  consists of all $\underline{\lambda}$ in $\underline{\paramq}'$
such that either $\underline{\lambda}\in \bigcup\Sigma_{y_i}$ or there are $\underline{\chi}_1,\underline{\chi}_2\in \underline{\param}'_{\Z}$
of different signs such that
 $\underline{\lambda}+\underline{\chi}_j\in \bigcup\Sigma_{y_i}, j=1,2$.

\begin{defi}\label{defi:singular_hyperplane}
By a {\it singular hyperplane} in $\paramq$, we mean a hyperplane of the form
$\eta^{-1}(\sigma)$, where $\eta:\paramq\twoheadrightarrow \underline{\paramq}'$
is associated to an arbitrary classical wall $\Upsilon$ and $\sigma\in \Sigma_\Upsilon$.
\end{defi}

\begin{Ex}\label{Ex:Springer_singular}
Let $X=T^*\mathcal{B}$. Then $\underline{X}=T^*\mathbb{P}^1$ so that $\underline{\paramq}\cong
\underline{\paramq}'\cong \C$. There is only one minimal symplectic leaf in $Y_\zeta$
(equivalently, in $\underline{Y}$). We have $\Sigma_\Upsilon=\{0\}$. It follows that
the singular hyperplanes are precisely the classical walls (=root hyperplanes).
\end{Ex}

As was mentioned in the introduction, the essential hyperplanes are defined as follows.

\begin{defi}\label{defi:essential_hyperplanes}
By an {\it essential hyperplane} in $\paramq$ we mean an affine hyperplane, which is obtained from
a singular hyperplane by a translation by an element of $\param_{\Z}$. We say that a classical wall
is {\it relevant} for $\lambda$ if $\lambda$ lies in the essential hyperplane parallel to that
classical wall.
\end{defi}

Note that this definition is the same as in \cite[Section 4.1]{catO_charp}, but
our notation here is different.

\subsubsection{Properties of essential and singular hyperplanes}
Here is a result that follows from \cite[Lemma 4.6]{catO_charp}.

\begin{Prop}\label{Prop:outside_essential}
Let $\lambda\in \paramq$ lie outside of the union of all essential hyperplanes.
Then $\A_\lambda$ is simple and has finite homological dimension. Moreover,
abelian localization holds for $\A_\lambda^\theta$ for all generic $\theta$.
\end{Prop}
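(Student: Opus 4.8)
The plan is to deduce everything from the slice-reduction machinery of Sections \ref{SSS_conical_slices}--\ref{SSS_res_fun} together with Proposition \ref{Prop:localization_translation}. First I would establish abelian localization for $\A_\lambda^\theta$ when $\lambda$ avoids all essential hyperplanes. Fix an ample $\chi\in\Lambda$ for $X^\theta$; after replacing $\chi$ by a positive multiple we may assume $H^i(X^\theta,\Str(m\chi))=0$ for all $i,m>0$ (ampleness plus the contracting $\C^\times$-action and Serre vanishing, carried over to the conical topology). By Proposition \ref{Prop:localization_translation} it then suffices to show that, for all $m\geqslant 0$, the translation bimodules $\A_{\lambda+m\chi,\chi}$ and $\A_{\lambda+(m+1)\chi,-\chi}$ are mutually inverse Morita equivalences. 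The composition $\A_{\lambda+(m+1)\chi,-\chi}\otimes_{\A_{\lambda+(m+1)\chi}}\A_{\lambda+m\chi,\chi}$ is, by the Tor/monoidality formalism of \cite{BPW}, a HC $(\A_\paramq,0)$-bimodule specializing at $\lambda+m\chi$ to something with a natural map to $\A_{\lambda+m\chi}$ (and similarly the other composition), so the obstruction to being an equivalence is measured by HC bimodules supported on proper subvarieties of $Y$. To kill these I would use the restriction functors $\bullet_{\dagger,y}$: by property (ii), such a defect bimodule vanishes iff its restriction to every point $y$ of every positive-dimensional leaf vanishes, and by property (v) the restriction of a translation bimodule is again a translation bimodule $\underline{\A}_{\paramq,\pm\chi}$ on the slice $\underline{X}$. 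Thus the question reduces to the slices.

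The key point is the choice of $\zeta$ and the translation to the essential-hyperplane condition. For each classical wall $\Upsilon$, pick $\zeta\in\Upsilon$ as in Proposition \ref{Prop:rank1}, so $\eta\colon\paramq\to\underline{\paramq}'$ has rank one; the image of $\lambda$ under $\eta$ is a parameter $\underline{\lambda}\in\underline{\paramq}'$, and the essential hyperplanes parallel to $\Upsilon$ are exactly the preimages under $\eta$ of the shifts $\sigma+\underline{\param}'_\Z$ with $\sigma\in\Sigma_\Upsilon$ (Definitions \ref{defi:singular_hyperplane}, \ref{defi:essential_hyperplanes}). Hence $\lambda$ avoiding all essential hyperplanes means precisely that $\underline{\lambda}+\chi\Z$ misses $\Sigma_\Upsilon$ for every $\Upsilon$ — which is exactly the condition guaranteeing (by the definition of $\Sigma_\Upsilon$, which already folds in the ``lying between'' clause) that $\underline{\A}_{\underline{\lambda}+m\underline{\chi},\underline{\chi}}$ and $\underline{\A}_{\underline{\lambda}+(m+1)\underline{\chi},-\underline{\chi}}$ are mutually inverse Morita equivalences on $\underline{X}$ for all $m\geqslant 0$. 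Feeding this back through property (v) (and monoidality, property (iv)) shows the defect bimodules on $X$ restrict to zero at every $y$ in a positive-dimensional leaf, hence are supported at finitely many points, hence — being HC bimodules that are also equivariant deformations flat over the relevant parameter — vanish. This gives the Morita equivalences and so abelian localization.

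Finally, with abelian localization in hand, $\A_\lambda\cong e\Hcal_\lambda e$ type arguments are not needed: $\A_\lambda\operatorname{-mod}\simeq\Coh(\A_\lambda^\theta)$ has finite homological dimension because $\Coh(\A_\lambda^\theta)$ does (it is a quantization of the smooth variety $X$; equivalently use \cite[Theorem 5.5]{Etingof_affine} together with $(\heartsuit)$ in the cases where that applies, or the general vanishing for $D^b(\Coh)$ of a smooth stack). For simplicity of $\A_\lambda$: any two-sided ideal $I\subset\A_\lambda$ corresponds under $\Loc_\lambda^\theta$ to a $\Coh(\A_\lambda^\theta)$-subbimodule of $\A_\lambda^\theta$, equivalently a HC $\A_\lambda$-$\A_\lambda$-bimodule with $\VA(I)\subsetneq Y$; restricting to a point $y$ in the open leaf and using that the slice algebra $\underline{\A}_{\underline{\lambda}}$ is simple (again because $\underline{\lambda}$ is regular on the rank-one slice, where simplicity is elementary) forces $I=0$ or $I=\A_\lambda$. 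The statement ``for all generic $\theta$'' follows since the argument used only genericity of $\theta$ (to have a symplectic resolution $X^\theta$ and an ample $\chi$), and $\A_\lambda$ is independent of $\theta$. The main obstacle I anticipate is the bookkeeping in the second paragraph: verifying that ``$\lambda$ off all essential hyperplanes'' translates, slice by slice and leaf by leaf, into exactly the hypothesis needed to invoke the finiteness of $\Sigma_y$ and $\Sigma_\Upsilon$ — in particular handling all minimal leaves $y_1,\dots,y_k$ simultaneously and checking the shift-by-$\chi$ (the $\underline{\chi}_1,\underline{\chi}_2$ of opposite signs) clause is what rules out the remaining potential failures.
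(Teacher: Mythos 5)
The central gap is the bridge between $Y_\param$-supports and $\paramq$-supports. The restriction functors $\bullet_{\dagger,y}$ detect the associated variety $\VA(\B)\subset Y_\param$ (property (ii) of Section \ref{SSS_res_fun}), but whether a defect bimodule $\B$ vanishes after specialization at your chosen $\lambda$ is governed by $\operatorname{Supp}^r_\paramq(\B)\subset\paramq$. These are linked precisely by Lemma \ref{Lem:support_closed} (the $\paramq$-support is Zariski closed with asymptotic cone equal to the support of $\gr\B$), which you never invoke, and without it the implication ``$\B_{\dagger,y}=0$ for various $y$'' $\Rightarrow$ ``$\B\otimes_{\C[\param]}\C_\lambda=0$'' simply does not follow. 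Your phrase ``being HC bimodules that are also equivariant deformations flat over the relevant parameter --- vanish'' does not close the gap: $\B$ is certainly not flat over $\C[\paramq]$ (its $\paramq$-support is proper), and if $\VA(\B)$ were contained in the fixed-point section $\{0\}\times\param\subset Y_\param$ --- where there is no positive-dimensional leaf to probe with $\bullet_{\dagger,y}$ --- your test would be blind to it while $\operatorname{Supp}^r_\paramq(\B)$ could still contain $\lambda$. The mechanism that actually works (and is the content of the cited [catO\_charp, Lemmas 4.6--4.7], repackaged in this paper as Lemmas \ref{Lem:inverse_Morita} and \ref{Cor:abel_loc}) is three-stepped: (a) since $\gr\B$ is supported away from $Y^0_\param$, the asymptotic cone of $\operatorname{Supp}^r_\paramq(\B)$ lies in $\param^{sing}$; (b) hence $\operatorname{Supp}^r_\paramq(\B)$ lies in finitely many affine translates of classical walls; (c) restriction at a minimal-leaf point $y\in Y_\zeta$ for $\zeta$ generic in each wall identifies those translates as essential hyperplanes, exactly via the rank-one slices of Proposition \ref{Prop:rank1}. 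Your outline addresses only (c).

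There is a second gap in the simplicity argument. You slice a proper ideal $I$ at a point $y$ in the open leaf of $\VA(\A_\lambda/I)\subset Y$; this is a slice over $\zeta=0$, where $\underline{X}$ can have $\dim H^2(\underline{X})$ of any size --- it is not the rank-one slice of Proposition \ref{Prop:rank1} that enters Definition \ref{defi:singular_hyperplane}. Your inductive use of ``$\underline{\A}_{\eta(\lambda)}$ is simple'' therefore requires knowing that $\eta(\lambda)$ avoids the essential hyperplanes of $\underline{X}$ whenever $\lambda$ avoids those of $X$, and this compatibility of essential hyperplanes under slicing is a genuine statement (compare Lemma \ref{Lem:singular_hyperplane_slice}, which the paper only establishes for type $A$ quiver varieties) that you assume silently. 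Your bookkeeping paragraph correctly flags the ``lying between'' clause and the need to handle all minimal leaves $y_1,\dots,y_k$ simultaneously, but the prior two issues are the ones that would actually break the argument.
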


The following claim follows from the proof of \cite[Lemma 4.7]{catO_charp}.

\begin{Lem}\label{Lem:inverse_Morita}
Let $\chi\in \Lambda$. Then the locus in $\paramq$ where
$\A_{\paramq,\chi}\otimes_{\A_{\paramq}} \A_{\paramq,-\chi}\rightarrow \A_{\paramq}$ is an isomorphism is Zariski open and contains the complement to a finite union
of essential hyperplanes.
\end{Lem}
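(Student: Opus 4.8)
The plan is to reduce the statement to two facts about the Harish-Chandra bimodule $\B:=\operatorname{cone}(\A_{\paramq,\chi}\otimes_{\A_{\paramq}}\A_{\paramq,-\chi}\to \A_{\paramq})$ (or rather its cohomology bimodules), namely that its right $\paramq$-support is Zariski closed and that it misses the generic point of every non-essential direction. First I would set $\B^i:=\operatorname{Tor}^{\A_{\paramq}}_i(\A_{\paramq,\chi},\A_{\paramq,-\chi})$ for $i>0$ together with $\B^0:=\operatorname{coker}(\A_{\paramq,\chi}\otimes_{\A_{\paramq}}\A_{\paramq,-\chi}\to\A_{\paramq})$ and $\B':=\ker(\dots)$; by the remarks after Definition \ref{defi:HC}, all of these are HC $(\A_{\paramq},0)$-bimodules. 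The locus in question is exactly the complement of $\bigcup_i \operatorname{Supp}^r_{\paramq}(\B^i)\cup \operatorname{Supp}^r_{\paramq}(\B')$, and by Lemma \ref{Lem:support_closed} each of these supports is Zariski closed with asymptotic cone equal to $\operatorname{Supp}(\gr \B^i)$ (resp. $\gr\B'$) inside $Y_\param$. So the locus where the natural map is an isomorphism is automatically Zariski open; it remains to show its complement lies in a finite union of essential hyperplanes.

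For that, the key point is to specialize at a generic $\lambda$ of a non-essential hyperplane — or rather at a generic point of $\paramq$ — and invoke Proposition \ref{Prop:outside_essential}: outside the union of essential hyperplanes, $\A_\lambda$ is simple of finite homological dimension and abelian localization holds for $\A_\lambda^\theta$ for all generic $\theta$. By Proposition \ref{Prop:localization_translation} (or directly from Lemma \ref{Lem:specialization} together with $H^i(X^\theta,\Str(m\chi))$-vanishing for $\chi$ ample, after possibly replacing $\chi$ by $\pm\chi$ and using that $\A_{\paramq,\chi}$ only depends on $\theta$ up to iso), the bimodules $\A_{\lambda,\chi}$ and $\A_{\lambda+\chi,-\chi}$ become mutually inverse Morita equivalences, which forces $\A_{\lambda,\chi}\otimes_{\A_\lambda}\A_{\lambda+\chi,-\chi}\xrightarrow{\sim}\A_\lambda$, hence $\B^i\otimes_{\C[\paramq]}\C_\lambda=0$ and $\B'\otimes_{\C[\paramq]}\C_\lambda=0$ for such $\lambda$. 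Therefore each $\operatorname{Supp}^r_{\paramq}(\B^i)$ and $\operatorname{Supp}^r_{\paramq}(\B')$ is a proper Zariski-closed subset avoiding a dense open set; being closed it is contained in a finite union of hypersurfaces, and one checks these hypersurfaces are affine hyperplanes, in fact essential ones, because the support is conical-up-to-shift: its asymptotic cone $\operatorname{Supp}(\gr\B^i)\subset Y_\param$ is $W_X$-stable and supported over a union of classical walls (this is where one uses that the graded objects are finitely generated $\C[Y_\param]$-bimodules and the structure of $\param^{sing}$), and matching the shift with the integral lattice $\param_\Z$ puts the support inside translates of singular hyperplanes by $\param_\Z$, i.e.\ essential hyperplanes.

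The main obstacle I anticipate is the last bookkeeping step: going from ``the bad locus is a proper Zariski-closed subset'' to ``it is a finite union of \emph{essential} hyperplanes'' — one must genuinely identify which hyperplanes occur, not merely bound the codimension. The cleanest route is the one used in the proof of \cite[Lemma 4.7]{catO_charp} that the lemma cites: reduce to rank one via the restriction functors $\bullet_{\dagger,y}$ of Section \ref{SSS_res_fun}, which are exact, monoidal and $\C[\paramq]$-linear, so they commute with forming $\B^i,\B'$ and with specialization; property (ii) says $\B^i_{\dagger,y}=0$ unless $y\in\VA(\B^i)$, and property (v) computes $(\A_{\paramq,\chi})_{\dagger,y}=\underline{\A}_{\paramq,\chi}$. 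Thus the bad locus, intersected with a neighborhood of a generic point $\zeta$ of a classical wall $\Upsilon$, is pulled back via $\eta:\paramq\twoheadrightarrow\underline{\paramq}'$ from the finite bad set in the rank-one slice, which by definition of $\Sigma_\Upsilon$ and Definition \ref{defi:singular_hyperplane} lands in singular hyperplanes; running over all $\Upsilon$ and translating by $\param_\Z$ (the ambiguity in choosing $\chi$ in its integral class, compare Lemma \ref{Lem:specialization}) yields a finite union of essential hyperplanes. The only real care needed is that finitely many classical walls and finitely many slice points $y_i$ suffice, which is guaranteed by Lemma \ref{Lem:fin_leaves} and the finiteness of classical walls.
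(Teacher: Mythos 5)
The paper itself offers no proof of this lemma; it simply cites \cite[Lemma 4.7]{catO_charp}, so I can only assess your reconstruction on its own merits. Your first two ingredients are correct: the bad locus is the right $\paramq$-support of an HC bimodule, hence Zariski closed by Lemma~\ref{Lem:support_closed} (modulo the small over-inclusion of the higher Tor's, which is harmless), and Proposition~\ref{Prop:outside_essential} together with the translation-bimodule machinery shows the bad locus avoids the complement of the union of \emph{all} essential hyperplanes.

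The gap is in passing from that to a \emph{finite} union. The asymptotic-cone argument does not deliver this: knowing that the asymptotic cone of $\operatorname{Supp}^r_{\paramq}(\B)$ lies in $\param^{sing}$ only constrains the leading behavior of each component, and an irreducible closed hypersurface whose asymptotic cone is a linear hyperplane need not itself be an affine hyperplane (consider $\{y=x^2\}\subset\C^2$, whose asymptotic cone is $\{x=0\}$). The appeal to $W_X$-stability and ``matching the shift with $\param_\Z$'' is asserted rather than proved, and I do not see a mechanism forcing it. The restriction-functor route also has a directional error: $\C[\paramq]$-linearity of $\bullet_{\dagger,y}$ gives $\operatorname{Supp}^r_{\paramq}(\B_{\dagger,y})\subset\operatorname{Supp}^r_{\paramq}(\B)$, a \emph{lower} bound on the bad locus, not the upper bound you need; and for $y\in Y_\zeta$ with $\zeta$ a nonzero generic point of a classical wall, an HC bimodule whose associated graded is supported over $Y_0$ restricts to zero at $y$ regardless of its $\paramq$-support, so the bad locus is not ``pulled back from the slice'' in the way you describe. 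What actually closes the argument is a Baire-category (Weil-generic-point) observation, in the same spirit as Lemma~\ref{Lem:ab_loc_Weil_generic}: $\operatorname{Supp}^r_{\paramq}(\B)$ is Zariski closed, hence has finitely many irreducible components; each component is an irreducible complex variety contained in the countable union of all essential hyperplanes, and since an irreducible complex variety cannot be covered by countably many proper Zariski-closed subsets, each component lies inside a \emph{single} essential hyperplane. This gives the finite union without any appeal to asymptotic cones beyond properness, and without the slice reduction.
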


We also have the following two results on abelian localization. The following
claim follows from Lemma \cite[Lemma 4.7]{catO_charp}.

\begin{Lem}\label{Cor:abel_loc}
Let $\theta$ be generic and $\chi\in \Lambda$
correspond to an ample line bundle on $X^\theta$.
There is a finite collection of essential hyperplanes
such that abelian localization holds for $(\lambda,\theta)$ if,
for, the set $\{\lambda+n\chi| n\geqslant 0\}$ does not intersect the
union of these essential hyperplanes.
\end{Lem}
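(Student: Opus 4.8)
The plan is to derive the statement from Proposition \ref{Prop:localization_translation} combined with Lemma \ref{Lem:inverse_Morita}. Fix the given ample class $\chi\in\Lambda$. Two of the three conditions in part (2) of Proposition \ref{Prop:localization_translation} hold automatically: $\chi$ is ample for $X^\theta$ by hypothesis, and $H^i(X^\theta,\Str(m\chi))=0$ for all $i,m>0$ because $X^\theta$ is a conical symplectic resolution. Indeed, $K_{X^\theta}$ is trivial (hence $\pi$-nef) while $\Str(m\chi)$ is $\pi$-ample, so relative Kawamata--Viehweg vanishing gives $R^i\pi_*\Str(m\chi)=0$ for $i>0$; since $Y$ is affine, the Leray spectral sequence then yields $H^i(X^\theta,\Str(m\chi))=0$. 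So it remains to arrange, by restricting $\lambda$, the third condition: that $\A_{\lambda+m\chi,\chi}$ and $\A_{\lambda+(m+1)\chi,-\chi}$ are mutually inverse Morita equivalences for every $m\geqslant 0$, i.e.\ that the two bimodule multiplication maps between them are isomorphisms.

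Here I would invoke Lemma \ref{Lem:inverse_Morita}, once for $\chi$ and once for $-\chi$. It produces finite unions $\mathcal{E}_\chi$ and $\mathcal{E}_{-\chi}$ of essential hyperplanes such that outside $\mathcal{E}_\chi$ (resp.\ outside $\mathcal{E}_{-\chi}$) the universal multiplication map $\A_{\paramq,\chi}\otimes_{\A_{\paramq}}\A_{\paramq,-\chi}\to\A_{\paramq}$ (resp.\ $\A_{\paramq,-\chi}\otimes_{\A_{\paramq}}\A_{\paramq,\chi}\to\A_{\paramq}$) specializes to an isomorphism. Tracking the parameter shifts built into the Harish-Chandra condition (i) of Definition \ref{defi:HC}, the specialization of the first map at a parameter $\mu+\chi$ is identified with the multiplication map $\A_{\mu,\chi}\otimes_{\A_\mu}\A_{\mu+\chi,-\chi}\to\A_{\mu+\chi}$, and that of the second at $\mu$ with $\A_{\mu+\chi,-\chi}\otimes_{\A_{\mu+\chi}}\A_{\mu,\chi}\to\A_\mu$; that specialization commutes with these relative tensor products on the open locus in question follows from Lemma \ref{Lem:specialization}, in particular the flatness of $\A_{\paramq,\pm\chi}$ over $\C[\paramq]$. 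Consequently, for a given $m\geqslant 0$ the pair $\A_{\lambda+m\chi,\chi}$, $\A_{\lambda+(m+1)\chi,-\chi}$ consists of mutually inverse Morita equivalences provided $\lambda+(m+1)\chi\notin\mathcal{E}_\chi$ and $\lambda+m\chi\notin\mathcal{E}_{-\chi}$.

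Finally, set $\mathcal{E}:=\mathcal{E}_\chi\cup\mathcal{E}_{-\chi}$, a finite union of essential hyperplanes. If the ray $\{\lambda+n\chi\mid n\geqslant 0\}$ does not meet $\mathcal{E}$, then $\lambda+(m+1)\chi\notin\mathcal{E}_\chi$ and $\lambda+m\chi\notin\mathcal{E}_{-\chi}$ for all $m\geqslant 0$, so by the previous paragraph and Proposition \ref{Prop:localization_translation} abelian localization holds for $(\lambda,\theta)$. One should also record that a translate of an essential hyperplane by an integral element (an element of $\param_{\Z}$, and $\chi\in\Lambda$ is such) is again essential, by Definition \ref{defi:essential_hyperplanes}; this is what lets us absorb the various $\chi$-shifts without enlarging the class of hyperplanes used.

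The step I expect to be the main obstacle is the index bookkeeping in the middle paragraph: matching the ``universal'' isomorphism loci of Lemma \ref{Lem:inverse_Morita} with the precise specialized translation bimodules $\A_{\lambda+m\chi,\chi}$ and $\A_{\lambda+(m+1)\chi,-\chi}$ appearing in Proposition \ref{Prop:localization_translation}, and confirming that specialization genuinely commutes with $\otimes_{\A_{\paramq}}$ on the relevant Zariski-open set rather than merely up to higher $\operatorname{Tor}$. Everything else is formal.
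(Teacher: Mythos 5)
Your argument is correct and is, in substance, the argument the paper alludes to: the lemma is stated as a consequence of \cite[Lemma 4.7]{catO_charp}, and your route through Proposition \ref{Prop:localization_translation} (which is \cite[Proposition 2.11]{catO_charp}) together with Lemma \ref{Lem:inverse_Morita} (which is itself extracted from the proof of \cite[Lemma 4.7]{catO_charp}) is exactly how that implication is made. The index bookkeeping you flagged works out: specializing the right $\C[\paramq]$-action of $\A_{\paramq,\chi}\otimes_{\A_{\paramq}}\A_{\paramq,-\chi}$ to $\mu$ gives canonically $\A_{\mu-\chi,\chi}\otimes_{\A_{\mu-\chi}}\A_{\mu,-\chi}$ by a purely formal identity (both $\otimes_{\A_{\paramq}}$ and $\otimes_{\C[\paramq]}\C_\mu$ are right exact, and one just collapses $\A_{\paramq,\chi}\otimes_{\A_{\paramq}}\A_{\mu-\chi}\cong\A_{\mu-\chi,\chi}$), so no $\operatorname{Tor}$ or flatness input is needed at this point — your worry about the underived tensor product failing to commute with specialization is unfounded, since the issue would only arise for derived tensor products. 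With that, taking $\mathcal{E}=\mathcal{E}_\chi\cup\mathcal{E}_{-\chi}$ is exactly right, and the translate-by-$\param_\Z$ stability of essential hyperplanes (Definition \ref{defi:essential_hyperplanes}) justifies that these remain essential hyperplanes. The cohomology vanishing $H^i(X^\theta,\Str(m\chi))=0$ for $\chi$ ample is indeed standard for conical symplectic resolutions, via relative Grauert--Riemenschneider/Kawamata--Viehweg and $Y$ being affine, as you say.
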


 Note that
a choice of a non-singular essential hyperplane $\tilde{\Upsilon}$ gives a preferred halfspace
for $\Upsilon$: we pick the unique halfspace $\Theta$ such that $(\tilde{\Upsilon}+\Theta)\cap(\tilde{\Upsilon}+\param_\Z)$ does not intersect singular
hyperplanes. Recall that an element of an irreducible algebraic variety over $\C$
is called {\it Weil generic} if it does not lie in the union of countably many
subvarieties.

\begin{Lem}\label{Lem:ab_loc_Weil_generic}
Let $\lambda$ be a Weil generic point in a non-singular essential hyperplane $\tilde{\Upsilon}$
and let $\theta$ be generic and lying in the halfspace discussed in the previous
paragraph. Then   abelian localization holds for $\A_\lambda^\theta$.
\end{Lem}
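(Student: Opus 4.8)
The plan is to deduce Lemma~\ref{Lem:ab_loc_Weil_generic} from the previous results, in particular from Lemma~\ref{Cor:abel_loc} and from the openness statements of Section~\ref{SS_essent_sing}. Fix $\chi\in\Lambda$ corresponding to an ample line bundle on $X^\theta$; by the choice of $\theta$, $\chi$ points into the preferred halfspace $\Theta$ for every relevant classical wall, which is exactly what makes the ray $\{\lambda+n\chi\mid n\geqslant 0\}$ move away from the singular hyperplanes. Lemma~\ref{Cor:abel_loc} provides a finite collection $\mathcal{E}$ of essential hyperplanes such that abelian localization holds for $(\lambda,\theta)$ as soon as $\{\lambda+n\chi\mid n\geqslant 0\}$ avoids $\bigcup\mathcal{E}$. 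So the entire problem reduces to showing: for $\lambda$ Weil generic in $\tilde\Upsilon$ and $\theta$ chosen as above, the ray $\lambda+\Z_{\geqslant 0}\chi$ misses every hyperplane in $\mathcal{E}$.

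First I would handle the hyperplanes in $\mathcal{E}$ that are \emph{not} parallel to $\tilde\Upsilon$. Such a hyperplane $\Upsilon'$ meets the affine line $\lambda+\R\chi$ in at most one point $\lambda+t_0\chi$, and $t_0$ is a fixed rational function of $\lambda$ (it is not identically $\infty$, since the line is not parallel to $\Upsilon'$ — here one may also need to note that $\chi$ is not parallel to $\Upsilon'$, which follows because $\chi$ is ample hence in the interior of the movable cone, whereas $\Upsilon'$ is a classical-wall-parallel hyperplane; if $\chi$ happened to be parallel to $\Upsilon'$ then the line misses $\Upsilon'$ entirely and there is nothing to check). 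The condition ``$t_0\in\Z_{\geqslant 0}$'' carves out a countable union of proper subvarieties of $\tilde\Upsilon$ (one for each nonnegative integer $n$, namely $\{\lambda\in\tilde\Upsilon\mid \lambda+n\chi\in\Upsilon'\}$, which is a proper affine subspace of $\tilde\Upsilon$ because $\tilde\Upsilon\not\subset\Upsilon'-n\chi$). Being Weil generic, $\lambda$ avoids all of them; and there are only finitely many $\Upsilon'\in\mathcal{E}$ to treat, so finitely many countable unions, still a countable union of proper subvarieties.

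Next I would deal with the hyperplanes $\Upsilon'\in\mathcal{E}$ \emph{parallel} to $\tilde\Upsilon$. For such $\Upsilon'$, the whole ray $\lambda+\Z_{\geqslant 0}\chi$ is disjoint from $\Upsilon'$ once we know $\tilde\Upsilon$ itself is disjoint from the preimage of $\Upsilon'$ under the relevant projection in the right direction — more precisely, $\lambda+n\chi\in\Upsilon'$ forces $\lambda\in\Upsilon'-n\chi$, and $\Upsilon'-n\chi$ is again an essential hyperplane parallel to $\tilde\Upsilon$ (essential hyperplanes are translates of singular ones by $\param_\Z$, and $n\chi\in\param_\Z$). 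Now the point of the choice of $\theta$: since $\theta$ lies in the preferred halfspace, the integral translates of $\Upsilon'$ obtained by adding positive multiples of $\chi$ march away from the band of singular hyperplanes parallel to $\Upsilon'$; in particular $\Upsilon'-n\chi$ for $n\geqslant 1$ is \emph{not} singular and, crucially, is a definite hyperplane different from $\tilde\Upsilon$ itself unless $n=0$. For $n=0$, $\tilde\Upsilon=\Upsilon'$ would contradict the hypothesis that $\tilde\Upsilon$ is non-singular while $\Upsilon'\in\mathcal{E}$ could a priori be singular — but in fact the relevant statement is just that $\tilde\Upsilon$ is chosen \emph{non-singular}, so if $\Upsilon'$ is singular then $\Upsilon'\neq\tilde\Upsilon$ and we are fine, while if $\Upsilon'$ is a non-singular essential hyperplane we intersect the Weil-generic locus in $\tilde\Upsilon$ with the proper subvariety $\tilde\Upsilon\cap\Upsilon'$ (proper precisely when $\Upsilon'\neq\tilde\Upsilon$). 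The one case that must be excluded by hand is $\Upsilon'=\tilde\Upsilon$ with $n=0$: then $\lambda\in\Upsilon'$ automatically, and we would need $\tilde\Upsilon\notin\mathcal{E}$. Here I would invoke that abelian localization does hold for Weil-generic $\lambda$ on a non-singular essential hyperplane by the very openness/genericity mechanism built into the proof of \cite[Lemma 4.7]{catO_charp}, i.e.\ one arranges $\mathcal{E}$ in Lemma~\ref{Cor:abel_loc} to exclude $\tilde\Upsilon$; alternatively, use that $\A_{\paramq,\chi}\otimes\A_{\paramq,-\chi}\to\A_\paramq$ is an isomorphism off a finite union of essential hyperplanes (Lemma~\ref{Lem:inverse_Morita}) and iterate along the ray via Proposition~\ref{Prop:localization_translation}.

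The main obstacle I anticipate is the bookkeeping for the parallel hyperplanes: one has to be careful that the finite exceptional collection $\mathcal{E}$ can be taken to contain, among essential hyperplanes parallel to a given classical wall $\Upsilon_i$, only the \emph{singular} ones together with finitely many others all lying on the \emph{far} side (in the $-\chi$ direction) — equivalently, that walking in the $+\chi$ direction from a Weil-generic point of $\tilde\Upsilon$ one eventually and permanently leaves every bad hyperplane. This is exactly the content encoded in the definition of the preferred halfspace together with the finiteness of $\Sigma_\Upsilon$, so the argument is really a matter of assembling Lemmas~\ref{Lem:inverse_Morita}, \ref{Cor:abel_loc} and the halfspace discussion correctly; no new input beyond what is already in Section~\ref{SS_essent_sing} should be needed. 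I would write it as: choose $\chi$ ample in $\Theta$, apply Lemma~\ref{Cor:abel_loc} to get $\mathcal{E}$, observe each $\Upsilon'\in\mathcal{E}$ contributes a countable union of proper subvarieties $\{\lambda\in\tilde\Upsilon : \lambda+n\chi\in\Upsilon'\}$ to the bad locus in $\tilde\Upsilon$ (using non-parallelism or, in the parallel case, the halfspace choice to see these are proper), and conclude by Weil genericity.
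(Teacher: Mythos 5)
Your reduction to ``the ray $\lambda+\Z_{\geqslant0}\chi$ misses a finite list of essential hyperplanes'' is the right first move, and the bookkeeping for non-parallel hyperplanes and for parallel hyperplanes with $n\geqslant 1$ is fine. But you have correctly put your finger on the problem and then not solved it: the $n=0$ case, where $\Upsilon'=\tilde\Upsilon$ itself. The finite collection $\mathcal{E}$ produced by Lemma~\ref{Cor:abel_loc} is not under your control — it comes out of the proof of \cite[Lemma 4.7]{catO_charp} as an unspecified finite set — and there is no reason $\tilde\Upsilon\notin\mathcal{E}$. Your proposed fix ``one arranges $\mathcal{E}$ to exclude $\tilde\Upsilon$, since abelian localization holds for Weil-generic $\lambda\in\tilde\Upsilon$'' is circular: that holding is exactly the statement of the lemma. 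Your second proposed fix, via Lemma~\ref{Lem:inverse_Morita}, has the same problem — that lemma only says the Morita-equivalence locus contains the complement of \emph{some} finite union of essential hyperplanes, and you have no way to know $\tilde\Upsilon$ is not one of them. Nowhere in your argument do you use the actual \emph{definition} of a singular hyperplane, which is the only hypothesis that distinguishes $\tilde\Upsilon$ from a bad hyperplane.

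The paper's proof fixes exactly this point with a two-step structure and a slice argument. First choose $\chi$ in the chamber of $\theta$ so that $\tilde\Upsilon+n\chi$ for all $n>0$ avoids the finitely many essential hyperplanes of Lemma~\ref{Lem:inverse_Morita}; then Lemma~\ref{Cor:abel_loc} applied starting from $\lambda+\chi$ (not $\lambda$!) gives abelian localization for $\A_{\lambda+\chi}^\theta$, with Weil genericity of $\lambda$ only needed to dodge hyperplanes shifted by $n\geqslant 1$, so the $n=0$ problem never arises. By Proposition~\ref{Prop:localization_translation} it then suffices to show that $\A_{\lambda,\chi}$ and $\A_{\lambda+\chi,-\chi}$ are mutually inverse Morita equivalences. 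This is where the hypothesis that $\tilde\Upsilon$ is non-singular enters, via the restriction functor $\bullet_{\dagger,y}$ of Section~\ref{SSS_res_fun}: restricting the kernel/cokernel bimodule $\B$ of the multiplication map to a point $y$ in a minimal leaf over a generic point of the classical wall sends it, by (iv) and (v), to the corresponding object for the rank-one slice $\underline{\A}$ — and that vanishes precisely because $\eta(\tilde\Upsilon)\notin\Sigma_\Upsilon$, i.e.\ by the \emph{definition} of non-singular. Then Lemma~\ref{Lem:support_closed} promotes $\B_{\dagger,y}=0$ to properness of the bad locus in $\tilde\Upsilon$, and Weil genericity finishes. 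The slice reduction is the new input your proof is missing; the rest of your scaffolding can stay.
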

\begin{proof}
Fix  a generic element $\chi\in \Lambda$ lying in the chamber of $\theta$
such that
\begin{itemize}
\item $H^i(X^\theta, \Str(m\chi))=0$ for all $m,i>0$,
\item $H^i(X^{-\theta}, \Str(-m\chi))=0$ for all $m,i>0$,
\item $\tilde{\Upsilon}+n\chi$ is not one of the hyperplanes in
Lemma \ref{Lem:inverse_Morita} for all $n>0$.
\end{itemize}
This choice of $\chi$ guarantees that abelian localization holds for $(\lambda+\chi,\theta)$
for a Weil generic element $\lambda\in \tilde{\Upsilon}$, thanks to Lemma \ref{Cor:abel_loc}.
 Thanks to Proposition
\ref{Prop:localization_translation}, it is enough to show that
$\A_{\lambda,\chi}$ and $\A_{\lambda+\chi,-\chi}$ are mutually inverse
Morita equivalences. Consider the bimodules $$\A_{\tilde{\Upsilon},\chi}:=
\A_{\paramq,\chi}\otimes_{\C[\paramq]}\C[\tilde{\Upsilon}],
\A_{\tilde{\Upsilon}+\chi,-\chi}:=\A_{\paramq,\chi}\otimes_{\C[\paramq]}\C[\tilde{\Upsilon}+\chi].$$ Let $\B$ be the direct sum of the kernel and cokernel
of $$\A_{\tilde{\Upsilon},\chi}\otimes_{\A_{\tilde{\Upsilon}}}
\A_{\tilde{\Upsilon}+\chi,-\chi}\rightarrow \A_{\tilde{\Upsilon}+\chi}.$$
Pick a point $y$ in a minimal symplectic leaf in $Y_\zeta$. Form the corresponding
varieties $\underline{X},\underline{Y}$, and the quantization $\underline{\A}_\paramq$.
Recall that the restrictions of
$\A_{\tilde{\Upsilon},\chi},\A_{\tilde{\Upsilon}+\chi,-\chi}$
associated with $y$ are the translation bimodules for $\underline{\A}$, see
(v) in Section \ref{SSS_res_fun}. By the choice of $\tilde{\Upsilon}$ and
$\chi$, these restrictions are mutually inverse Morita equivalences.
So $\B_{\dagger,y}=0$.  It follows from here (and the same argument
for $\lambda$ and $\lambda+\chi$ swapped) that the locus in $\tilde{\Upsilon}$,
where $\A_{\lambda,\chi},\A_{\lambda+\chi,-\chi}$ fail to be mutually
inverse Morita equivalences is proper (and Zariski closed, by
Lemma \ref{Lem:support_closed}). So for a Weil generic $\lambda\in \tilde{\Upsilon}$,
the bimodules $\A_{\lambda,\chi}$ and $\A_{\lambda+\chi,-\chi}$ are mutually
inverse Morita equivalence bimodules and abelian localization holds for $\A_{\lambda+\chi}^\theta$.
As mentioned above in the proof, it follows that abelian localization holds for
$\A_\lambda^\theta$.
\end{proof}

\subsubsection{The case of quantized quiver varieties}\label{SSS_quant_quiver_walls}
Here we describe the singular hyperplanes for quantized quiver varieties of finite and
affine type. So let $X=\M^\theta(v,w)$ be a quiver variety.
We assume that the weight $\nu$ is dominant.
The following two propositions describe the result.

\begin{Prop}\label{Prop:singular_real}
Suppose that $Q$ is of finite or affine type.
Let $\beta$ be a real root such that $\ker\beta$ is a classical wall.
In the notation of Lemma \ref{Lem:leaves_generic_real},
the singular hyperplanes parallel to the wall $\ker\beta$
are given by $$\langle\beta, ?\rangle=\frac{1}{2}\underline{w}-i$$
for $i=1,\ldots, \underline{w}-1$.
\end{Prop}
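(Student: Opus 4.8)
The plan is to unwind Definition \ref{defi:singular_hyperplane} for the wall $\Upsilon=\ker\beta$, reduce the computation to the rank-one slice, identify that slice with $T^*$ of a Grassmannian, and finish with the classical Beilinson--Bernstein computation for a partial flag variety.

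First, fix $\zeta$ generic in $\ker\beta$ in the sense of Proposition \ref{Prop:rank1}. Since $\beta$ is a real root and $\nu$ is dominant, Lemma \ref{Lem:leaves_generic_real} gives a \emph{unique} minimal symplectic leaf in $Y_\zeta$, so there is a single point $y$ to feed into the construction of $\Sigma_\Upsilon$. By Lemma \ref{Lem:rk1_slice_real} the rank-one slice at $y$ is the quiver variety of the type $A_1$ quiver with $\underline v=m$, $\underline w=(\nu,\beta)+2m$, and the projection $\tilde\eta\colon\paramq\twoheadrightarrow\underline\paramq'$ is $?\mapsto\langle\beta,?\rangle$; in particular $\underline\paramq'_{\Z}=\Z$, and by Example \ref{Ex:Grassmanian} one has $\underline X=T^*\operatorname{Gr}(m,\underline w)$ with $\underline{\A}_{\underline\lambda}$ the global sections of the sheaf of $(\underline\lambda-\tfrac{\underline w}{2})$-twisted differential operators, where $\underline\lambda=\langle\beta,\lambda\rangle$. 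Thus the singular hyperplanes parallel to $\ker\beta$ are the preimages under $\langle\beta,?\rangle$ of the elements of the finite set $\Sigma_\Upsilon\subset\underline\paramq'$, and it remains to compute this set.

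For $T^*(G/P)$ the analogues of Theorems \ref{Thm:BB_abel} and \ref{Thm:BB_derived} are known (see the introduction and \cite{BL}), hence so is Conjecture \ref{Conj:der_loc}; so under the identification above $\Sigma_\Upsilon$ is exactly the locus in $\underline\paramq'$ where $\underline{\A}_{\underline\lambda}$ has infinite homological dimension. Now realize $\operatorname{Gr}(m,\underline w)=\operatorname{GL}_{\underline w}/P$ with $P$ of Levi type $\operatorname{GL}_m\times\operatorname{GL}_{\underline w-m}$. The algebra $\underline{\A}_{\underline\lambda}=\Gamma(\mathcal D^{\operatorname{Gr}}_{\underline\lambda})$ is a quotient of $U(\mathfrak{gl}_{\underline w})$ whose central character is the $W$-orbit of $(\underline\lambda-\tfrac{\underline w}{2})\varpi_m+\rho$ --- this is the standard computation: for $j\geqslant 0$ the sheaf $\Str(j)$ has twist $j\varpi_m$, hence, by Example \ref{Ex:Grassmanian}, corresponds to the parameter $\underline\lambda=j+\tfrac{\underline w}{2}$, while $\Gamma(\Str(j))$ is the $\operatorname{GL}_{\underline w}$-irreducible of highest weight $j\varpi_m$, of central character $W(j\varpi_m+\rho)$. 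So $\underline{\A}_{\underline\lambda}$ has infinite homological dimension precisely when $\langle(\underline\lambda-\tfrac{\underline w}{2})\varpi_m+\rho,\alpha^\vee\rangle=0$ for some positive root $\alpha$. For $\alpha$ a root of the Levi this never occurs; for $\alpha=\epsilon_i-\epsilon_j$ with $1\leqslant i\leqslant m<j\leqslant\underline w$ one has $\langle\varpi_m,\alpha^\vee\rangle=1$ and $\langle\rho,\alpha^\vee\rangle=j-i$, and $j-i$ runs over all of $\{1,\ldots,\underline w-1\}$. Hence the infinite-homological-dimension locus is $\underline\lambda-\tfrac{\underline w}{2}\in\{-1,\ldots,-(\underline w-1)\}$, i.e.\ $\underline\lambda\in\{\tfrac{\underline w}{2}-i\colon i=1,\ldots,\underline w-1\}$, which are exactly the hyperplanes $\langle\beta,?\rangle=\tfrac12\underline w-i$, $i=1,\ldots,\underline w-1$.

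The one genuinely delicate point is bookkeeping rather than ideas: one must carry the $\tfrac{\underline w}{2}$-shift of Example \ref{Ex:Grassmanian} through so that the answer comes out symmetric about $0$ (as it must, by the flop symmetry $\operatorname{Gr}(m,\underline w)\leftrightarrow\operatorname{Gr}(\underline w-m,\underline w)$), and one should double-check that the $\Sigma_y$-plus-``in between'' recipe of Definition \ref{defi:singular_hyperplane} really reproduces the homological-dimension locus: the failure locus $\Sigma_y$ of the translation bimodules for a fixed ample $\underline\chi$ also ``sees'' the shifted parameter $\underline\lambda+\underline\chi$, and it is the ``in between'' closure (together with $\underline\paramq'_\Z=\Z$) that reconciles the two --- alternatively, as used above, one simply quotes the known rank-one case of Conjecture \ref{Conj:der_loc} for $T^*(G/P)$ directly. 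I expect this reconciliation, which is the part actually carried out in \cite{BL}, to be where most of the writing goes; the rank-one reduction itself is already packaged in Lemmas \ref{Lem:leaves_generic_real} and \ref{Lem:rk1_slice_real}.
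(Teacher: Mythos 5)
Your proposal follows the same route as the paper's (very terse) proof: reduce to the type $A_1$ slice via Lemma~\ref{Lem:leaves_generic_real} and Lemma~\ref{Lem:rk1_slice_real}, identify the slice as $T^*\operatorname{Gr}(m,\underline{w})$, and finish with a Beilinson--Bernstein computation on the Grassmannian. The arithmetic is correct, and you bookkeep the $\tfrac{\underline{w}}{2}$ shift from Example~\ref{Ex:Grassmanian} correctly, so the answer comes out symmetric about $0$ as it must under the $\operatorname{Gr}(m,\underline{w})\leftrightarrow\operatorname{Gr}(\underline{w}-m,\underline{w})$ flop.

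The one difference worth flagging is the criterion you use in the rank-one step. The paper's sketch invokes the \emph{abelian} localization theorem for Grassmannians and characterizes the singular locus $\Sigma_\Upsilon$ as the set of parameters where abelian localization fails for \emph{both} choices of $\theta$; this unwinds Definition~\ref{defi:singular_hyperplane} directly and rests on the classical abelian BB theorem for $G/P$. Concretely, with $\mu:=\underline\lambda-\tfrac{\underline{w}}{2}$, abelian localization holds on $\operatorname{Gr}(m,\underline{w})$ iff $\mu\notin\Z_{\leqslant -1}$, and on the flopped $\operatorname{Gr}(\underline{w}-m,\underline{w})$ (where the parameter is $-\underline\lambda$) iff $\mu\notin\Z_{\geqslant 1-\underline{w}}$; the intersection of the two failure loci is $\mu\in\{-1,\ldots,-(\underline{w}-1)\}$, which is exactly the central-character locus you computed. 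You instead characterize the singular locus as the infinite-homological-dimension locus and justify this by asserting that the analogue of Theorem~\ref{Thm:BB_derived}, and hence Conjecture~\ref{Conj:der_loc}, is already ``known'' for $T^*(G/P)$. The introduction is actually noncommittal on this point: it says the $T^*(G/P)$ analogue of the derived theorem is provable only modulo determining the singular hyperplanes, which for the Grassmannian is precisely the statement at issue, so quoting it here risks circularity. You do flag this in your final paragraph, and for the Grassmannian the two criteria do coincide, so the computation is not at risk --- but to keep the rank-one step self-contained one should phrase it through the abelian BB theorem, as the paper does, rather than through the derived statement.
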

\begin{proof}
The proof reduces to the case when the quiver has type $A_1$. Here it follows
from the abelian Beilinson-Bernstein localization theorem for twisted differential operators
on the Grassmanians -- note that we have precisely the parameters such that abelian localization
fails for both choices of $\theta$.
\end{proof}

%

\begin{Prop}\label{Prop:singular_imaginary}
Suppose that $Q$ is of affine type.
Let $\delta$ be the indecomposable imaginary root and $\ker\delta$ be a classical wall.
Then the singular hyperplanes parallel to $\ker\delta$ are of the form
$\underline{w}/2-\alpha$, where $\alpha\in (0,\underline{w})$ is an element
with denominator between $1$ and $\underline{v}$.
\end{Prop}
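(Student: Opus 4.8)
The plan is to reduce Proposition \ref{Prop:singular_imaginary} to the rank-one case, exactly as in the proof of Proposition \ref{Prop:singular_real}, and then to identify the singular locus of the rank-one model directly. By Lemma \ref{Lem:rk1_slice_imaginary}, when we pass to a point $y$ in the unique minimal symplectic leaf of $\M^0_\zeta$ (whose existence and representation type $(v-m\delta;\delta,m)$ are provided by Lemma \ref{Lem:leaves_generic_imaginary}), the slice quiver $\underline{Q}$ is the Jordan quiver, with $\underline{v}=m$ and $\underline{w}=(\nu,\delta)$, and the map $\tilde\eta:\tilde{\paramq}\to\underline{\tilde\paramq}$ is $? \mapsto \langle\delta,?\rangle$. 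By Proposition \ref{Prop:rank1}, $\ker(\eta)$ equals the classical wall $\Upsilon=\ker\delta$, so $\underline{\paramq}'\cong\C$ and the singular hyperplanes parallel to $\ker\delta$ are exactly the preimages under $\eta$ of the points of $\Sigma_\Upsilon\subset\underline{\paramq}'$. Thus everything comes down to computing $\Sigma_y$ (and then $\Sigma_\Upsilon$) for the Gieseker-type quantization $\underline{\A}_{\underline{\paramq}'}$ attached to the Jordan quiver with dimension $m$ and framing $\underline{w}$.

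The heart of the argument is therefore: for the Jordan quiver with $v=m$, $w=\underline{w}$ (that is, for $X=\M^\theta(m,\underline w)$, the Gieseker moduli space, cf. Example \ref{Ex:Gieseker}), determine the set of parameters $\underline\lambda$ where abelian localization fails for all $\theta$, together with the ``between'' closure condition built into the definition of $\Sigma_\Upsilon$. Here I would invoke the known localization results for Gieseker varieties recalled in the introduction (\cite[Section 5]{Gies}), which pin down the aspherical/singular locus: up to the normalization conventions for the parameter (which is where the shift $\underline w/2$ and the half-sum-of-positive-roots type correction enter, as in Example \ref{Ex:Grassmanian}), the singular points are the rational numbers in the open interval $(0,\underline w)$ whose reduced denominator is at most $\underline v=m$. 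One must check that taking the two-sided ``between'' closure in the definition of $\Sigma_\Upsilon$ over the points $y_1,\dots,y_k$, one per minimal leaf, does not enlarge this set: since by Lemma \ref{Lem:leaves_generic_imaginary} there is a \emph{unique} minimal symplectic leaf, $k=1$ and $\Sigma_\Upsilon$ is built from the single finite set $\Sigma_{y}$, so I only need to verify that $\Sigma_y$ is already closed under the ``lying between'' operation, which for an interval of rationals of bounded denominator is immediate. Then $\eta^{-1}$ of these points gives precisely the hyperplanes $\underline w/2-\alpha$ with $\alpha\in(0,\underline w)$ of denominator between $1$ and $\underline v$, as claimed.

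Concretely, the steps I would carry out, in order, are: (1) invoke Lemma \ref{Lem:leaves_generic_imaginary} to get the unique minimal leaf and its representation type; (2) invoke Lemma \ref{Lem:rk1_slice_imaginary} to identify $(\underline Q,\underline v,\underline w)=(\text{Jordan},m,(\nu,\delta))$ and the formula for $\tilde\eta$; (3) invoke Proposition \ref{Prop:rank1} to identify $\ker\eta=\ker\delta$ so that the singular hyperplanes parallel to $\ker\delta$ are $\eta^{-1}(\Sigma_\Upsilon)$; (4) compute $\Sigma_y$ for the Gieseker case using \cite[Section 5]{Gies}, being careful with the parameter normalization that produces the $\underline w/2$ shift; (5) observe $k=1$ so $\Sigma_\Upsilon=\Sigma_y$ and check closure under the ``between'' condition; (6) assemble the description of the hyperplanes. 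The main obstacle I anticipate is step (4), namely matching the parameter conventions: the aspherical locus of the spherical Cherednik/Gieseker algebra is naturally phrased in a parameter that differs from our $\underline\lambda$ by the shift encoded in the quantization of line bundles (the analogue of the $\rho$-shift), and one must also correctly translate ``fails for both $\theta$'' — it is this two-sidedness that cuts the relevant set of rationals down to denominators $\le \underline v$ rather than all rationals in $(0,\underline w)$. Getting this bookkeeping exactly right, and confirming that the resulting finite set is stable under the ``lying between'' closure, is where the real care is needed; the rest is formal reduction via the slice machinery of Section \ref{SSS_slices_quiver}.
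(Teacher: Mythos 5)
Your proposal is correct and follows essentially the same route as the paper, which simply says to reduce to the single-vertex (Jordan quiver) case via the slice machinery and then cites the Gieseker localization results of \cite[Section 5]{Gies}. Your write-up is a careful expansion of that two-line argument — the reduction via Lemmas \ref{Lem:leaves_generic_imaginary}, \ref{Lem:rk1_slice_imaginary} and Proposition \ref{Prop:rank1}, the $\underline w/2$-shift bookkeeping, and the check that the unique minimal leaf makes the ``between'' closure of $\Sigma_\Upsilon$ innocuous — and matches the intended proof.
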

\begin{proof}
We again reduce the proof to the case of a quiver with a single vertex. Here
the claim follows from results of \cite[Section 5]{Gies}.
\end{proof}


\subsection{Categories $\mathcal{O}$}\label{SS_cat_O}
\subsubsection{Hamiltonian tori actions and gradings}\label{SSS_O_basics}
Let $T$ be a torus. Suppose that we have a Hamiltonian action of $T$ on $X$
that commutes with contracting $\C^\times$. Then $X^T$ is a smooth symplectic variety.
Moreover, we have seen in \cite[Proposition 5.1]{catO_R} that every irreducible component of
$X^T$ is a symplectic resolution.

The action of $T$ lifts to a Hamiltonian action on $\A_{\paramq}^\theta$
and hence gives a Hamiltonian action on $\A_{\paramq}$, see, e.g.,
\cite[Proposition 3.11]{BPW}. Let $\Phi:\mathfrak{t}
\rightarrow \A_{\paramq}$ denote the quantum comoment map.

Choose a  one-parameter subgroup $\nu$ of $T$. Let $h\in \A_{\paramq}$ denote
$\Phi(d_1\nu)$. The choice of $\nu$ equips $\A_{\paramq}$ with a grading,
$\A_{\paramq}=\bigoplus_{i\in \Z}\A_{\paramq}^i$. We set
$\A_{\paramq}^{\geqslant 0}=\bigoplus_{i\geqslant 0}\A_{\paramq}^i$
and define $\A_{\paramq}^{>0}$ similarly.

We will also use this construction with $\A_\lambda$ instead of $\A_{\paramq}$.

\subsubsection{Cartan subquotients}\label{SSS_Cartan_subquotient}
We define the subquotient
$$\Ca_\nu(\A_{\paramq}):=\A_{\paramq}^{\geqslant 0}/(\A^{\geqslant 0}_\paramq\cap \A_{\paramq}\A^{>0}_{\paramq})$$
to be called the {\it Cartan subquotient} of $\A_{\paramq}$. We note that if
$X^{\nu(\C^\times)}$ is finite, then $\Ca_\nu(\A_{\paramq})$ is a finitely
generated $\C[\paramq]$-module.

We can also define a microlocal sheaf $\Ca_{\nu}(\A_{\paramq}^\theta)$ on $X_{\paramq}^{\nu(\C^\times)}$
that is a quantization of $X_{\paramq}^{\nu(\C^\times)}$, see \cite[Section 5.2]{catO_R}.
We have an algebra
homomorphism $\Ca_\nu(\A_{\paramq})\rightarrow \Gamma(\Ca_\nu(\A_{\paramq}^\theta))$.

We will mostly need the special case when $X^{\nu(\C^\times)}$ is finite. In this case
$\Ca_\nu(\A^\theta_{\paramq})$  is the direct sum of copies of $\C[\paramq]$
labelled by $X^T$. In this situation we have the following result
from \cite[Proposition 4.13]{catO_charp}.

\begin{Lem}\label{Lem:cartan_iso}
Suppose $X^{\nu(\C^\times)}$ is finite. Then the locus in $\paramq$ where
$\Ca_\nu(\A_{\paramq})\rightarrow \Ca_\nu(\A^\theta_{\paramq})$ is an isomorphism
is Zariski open, and moreover, contains the complement to a finite union of
essential hyperplanes.
\end{Lem}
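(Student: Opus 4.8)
Set $N:=|X^{\nu(\C^\times)}|$ and let $\psi_\lambda\colon\Ca_\nu(\A_\lambda)\to\Gamma(\Ca_\nu(\A^\theta_\lambda))$ be the natural algebra homomorphism (induced from $\A_\lambda=\Gamma(\A^\theta_\lambda)$ on passing to Cartan subquotients). Since $X^{\nu(\C^\times)}$ is finite, $\Ca_\nu(\A^\theta_\paramq)$ is a sum of $N$ copies of $\C[\paramq]$, so $\Gamma(\Ca_\nu(\A^\theta_\lambda))\cong\C^{N}$ as an algebra, with primitive idempotents indexed by $X^{\nu(\C^\times)}$. The plan is to prove that $\psi_\lambda$ is an isomorphism \emph{exactly} when $\lambda\notin\paramq^{\Ocat-sing}$. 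Granting this, the lemma follows: $\paramq^{\Ocat-sing}$ is Zariski closed (see the discussion after Definition~\ref{defi:O_regular}), which gives the openness, and it is contained in a finite union of essential hyperplanes by \cite[Section~4.4]{catO_charp}. Openness can also be seen directly, from the fact that $\Ca_\nu(\A_\paramq)$ and $\Gamma(\Ca_\nu(\A^\theta_\paramq))\cong\C[\paramq]^{N}$ are coherent $\C[\paramq]$-modules and $\psi$ is $\C[\paramq]$-linear, modulo a comparison of $\Ca_\nu(\A_\lambda)$ with the specialization of $\Ca_\nu(\A_\paramq)$, onto which it only admits an a priori surjection.

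One inclusion is formal: if $\psi_\lambda$ is an isomorphism then $\Ca_\nu(\A_\lambda)\cong\C^{N}$ is semisimple commutative of dimension $N$, i.e.\ $\lambda$ is $\Ocat$-regular. For the converse, suppose $\lambda$ is $\Ocat$-regular, so $\Ca_\nu(\A_\lambda)\cong\C^{N}$; let $e_1,\dots,e_N$ be its primitive idempotents, labelled so that $e_j$ corresponds to the simple $L_j\in\Ocat_\nu(\A_\lambda)$ under the bijection between simples of $\Ocat_\nu(\A_\lambda)$ and $\Irr\Ca_\nu(\A_\lambda)$. Being a unital homomorphism $\C^{N}\to\C^{N}$, the map $\psi_\lambda$ sends each $e_j$ to the idempotent $\mathbf 1_{S_j}$ supported on some subset $S_j\subseteq X^{\nu(\C^\times)}$; the $S_j$ are pairwise disjoint with union $X^{\nu(\C^\times)}$, and as $|X^{\nu(\C^\times)}|=N$ it suffices to check that no $S_j$ is empty.

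For this I would use standard objects. For $p\in X^{\nu(\C^\times)}$ let $\Delta^\theta_p\in\Ocat_\nu(\A^\theta_\lambda)$ be the standard object attached to $p$; it is supported on the attracting set of $p$, a Lagrangian subvariety of $X$, so $R^i\Gamma^\theta_\lambda(\Delta^\theta_p)=0$ for $i>0$ and $\Delta_p:=\Gamma^\theta_\lambda(\Delta^\theta_p)$ is the standard object of $\Ocat_\nu(\A_\lambda)$ attached to $p$, with simple head $L_p$. As $\lambda$ is $\Ocat$-regular, $\Delta_p=L_p$, so the one-dimensional $\Ca_\nu(\A_\lambda)$-module attached to $\Delta_p$ is the one on which $e_j$ acts by $\delta_{jp}$. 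On the sheaf side, the $\Ca_\nu(\A^\theta_\lambda)$-module formed by the fibres of $\Delta^\theta_p$ at the fixed points is one-dimensional and supported at $p$, so $\mathbf 1_{S_j}$ acts on it by the scalar $[p\in S_j]$. Since $\psi_\lambda$ is built so as to intertwine the two ``lowest-weight space'' functors on standard objects, comparing these scalars yields $[p\in S_j]=\delta_{jp}$ for all $j,p$, i.e.\ $S_j=\{j\}$; hence $\psi_\lambda$ is an isomorphism.

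The main obstacle is precisely the compatibility used in the last step: one must set up $\psi_\lambda$ together with the functors $\Ocat_\nu(\A_\lambda)\to\Ca_\nu(\A_\lambda)\operatorname{-mod}$ and $\Ocat_\nu(\A^\theta_\lambda)\to\Gamma(\Ca_\nu(\A^\theta_\lambda))\operatorname{-mod}$ and verify that $\Gamma^\theta_\lambda$ intertwines them through $\psi_\lambda$, at least on the acyclic objects $\Delta^\theta_p$. Together with the cohomology vanishing $R^i\Gamma^\theta_\lambda(\Delta^\theta_p)=0$ and the identification $\Gamma^\theta_\lambda(\Delta^\theta_p)=\Delta_p$ (a Lagrangian-support argument, cf.\ \cite[Section~3.3]{BLPW}), and the description of $\Gamma(\Ca_\nu(\A^\theta_\lambda))$ and of the fibre functor from \cite[Section~5.2]{catO_R}, this is what makes the comparison of scalars meaningful; everything else is bookkeeping. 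A secondary, purely technical, point is the behaviour of $\Ca_\nu$ under specialization $\paramq\to\lambda$, which one needs only to the extent of knowing that the closed set $\paramq^{\Ocat-sing}$ and the iso-locus of $\psi_\lambda$ agree on the nose rather than merely generically.
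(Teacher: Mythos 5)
The paper does not actually give an internal proof of this lemma: it is imported wholesale from \cite[Proposition 4.13]{catO_charp}, so there is no in-text argument to compare your proposal against. As a stand-alone argument, however, your proposal is circular. You reduce the lemma to two facts about $\paramq^{\Ocat-sing}$ — that it is Zariski closed and that it sits inside a finite union of essential hyperplanes. But in the logical architecture of this paper the closedness is only established \emph{after} Lemma~\ref{Lem:O_reg_equivalent}, whose proof of (1)$\Rightarrow$(2) explicitly invokes Lemma~\ref{Lem:cartan_iso} (``this homomorphism is an isomorphism over the generic point in $\operatorname{Spec}(\ring)$ by Lemma~\ref{Lem:cartan_iso}''); and citing \cite[Section 4.4]{catO_charp} for the essential-hyperplane containment is simply appealing to the source the paper credits for the lemma itself. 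So these cannot be used as inputs without begging the question.

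Even if one grants the containment as a black box, the argument for ``$\lambda$ $\Ocat$-regular $\Rightarrow\psi_\lambda$ iso'' has a real gap. Identifying the top of $\Delta_\lambda(p)$ with $\C_p$ pulled back along $\psi_\lambda$ only \emph{defines} $S_j$ as $\{p : j(p)=j\}$, where $j(p)$ labels the head of $\Delta_\lambda(p)$; it gives no information about the sets $S_j$. The missing step is precisely the surjectivity of $p\mapsto j(p)$ — equivalently the injectivity of $\psi_\lambda$ — and that is the whole content. Your supporting assertion ``as $\lambda$ is $\Ocat$-regular, $\Delta_p=L_p$'' is also false: for $X=T^*\mathcal{B}$ and $\lambda$ regular dominant integral, $\Ca_\nu(\A_\lambda)\cong\C^{|W|}$ is semisimple commutative of the right dimension, yet the Verma modules are reducible. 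And the claim that Lagrangian support of $\Delta^\theta_p$ by itself forces $R^{>0}\Gamma^\theta_\lambda(\Delta^\theta_p)=0$ is a vanishing theorem that needs an argument, not a formal consequence of where the module is supported.
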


%
%

\subsubsection{Category $\mathcal{O}$ and Verma modules}
From now on and until the end of Section \ref{SS_cat_O}
we assume that $X^T$ is finite and $\nu:\C^\times\rightarrow T$
is generic meaning that $X^{\nu(\C^\times)}=X^T$.

We define $\Ocat_\nu(\A_{\lambda})$ as the category of all finitely generated $\A_\lambda$-modules,
where $\A_\lambda^{>0}$ acts locally nilpotently. We have the Verma module functor
$\Delta_\lambda:\Ca_\nu(\A_\lambda)\operatorname{-mod}\rightarrow \Ocat_\nu(\A_\lambda)$ given by tensoring with the $\A_\lambda$-$\Ca_\nu(\A_\lambda)$-bimodule $\A_\lambda/\A_{\lambda}\A_{\lambda}^{>0}$.

Fix $p\in X^T$. This gives rise to the 1-dimensional $\Ca_\nu(\A^\theta_\lambda)$-module to be denoted
by $\C_p$. Thanks to the homomorphism $\Ca_\nu(\A_\lambda)\rightarrow \Ca_\nu(\A_\lambda^\theta)$
we can view $\C_p$ as a $\Ca_\nu(\A_\lambda)$-module. Set $$\Delta_\lambda(p):=
[\A_\lambda/\A_{\lambda}\A_{\lambda}^{>0}]_{\otimes_{\Ca_\nu(\A_\lambda)}}\C_p.$$

Similarly, we get the $\Ca_\nu(\A_{\paramq})$-module $\C[\paramq]_p$ and the
$\A_{\paramq}$-module $\Delta_\paramq(p)$.

The following result was established in \cite[Section 3.4.2]{catO_charp}.

\begin{Lem}\label{Lem:Verma_flatness}
The locus in $\paramq$ where  $\Delta_{\paramq}(p)$ is flat is Zariski open and contains
the complement to the union of finitely many essential hyperplanes.
\end{Lem}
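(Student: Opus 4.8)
The plan is to deduce the statement from two ingredients: the $\C[\paramq]$-flatness of the universal Verma bimodule $\mathcal{V}_\paramq:=\A_\paramq/\A_\paramq\A_\paramq^{>0}$, and Lemma \ref{Lem:cartan_iso}, which says that the map $\Ca_\nu(\A_\paramq)\to\Ca_\nu(\A_\paramq^\theta)=\bigoplus_{q\in X^T}\C[\paramq]$ is an isomorphism over a Zariski-open $U\subseteq\paramq$ whose complement is contained in a finite union of essential hyperplanes. By definition $\Delta_\paramq(p)=\mathcal{V}_\paramq\otimes_{\Ca_\nu(\A_\paramq)}\C[\paramq]_p$, and since forming this balanced tensor product commutes with localization in $\C[\paramq]$, over $U$ we may use the orthogonal idempotents $e_q\in\Ca_\nu(\A_\paramq)|_U$, $q\in X^T$, with $e_p\,\Ca_\nu(\A_\paramq)|_U=\C[\paramq]_p|_U$, to identify $\Delta_\paramq(p)|_U$ with $\mathcal{V}_\paramq|_U\,e_p$. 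The latter is a direct summand of $\mathcal{V}_\paramq|_U$ as a left $\A_\paramq|_U$-module, hence as a $\C[\paramq]|_U$-module; so, granting flatness of $\mathcal{V}_\paramq$, the module $\Delta_\paramq(p)$ is $\C[\paramq]$-flat at every point of $U$. This already gives the assertion that the flat locus contains the complement of a finite union of essential hyperplanes.

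To prove that $\mathcal{V}_\paramq$ is $\C[\paramq]$-flat I would pass to associated graded for the order filtration on $\A_\paramq$. Because $T$ commutes with the contracting $\C^\times$, the $\nu$-grading is by filtered subspaces, so $\gr\A_\paramq=\C[X_\paramq]$ inherits the $\nu$-grading and $\gr(\A_\paramq^{>0})=\C[X_\paramq]^{>0}$; by the PBW-type identity for attracting-set filtrations familiar from the study of category $\Ocat$ for symplectic resolutions, $\gr(\A_\paramq\A_\paramq^{>0})=\C[X_\paramq]\cdot\C[X_\paramq]^{>0}$, so $\gr\mathcal{V}_\paramq$ is the structure sheaf of the relative attracting locus $X_\paramq^{+}$ of $X_\paramq$ towards the fixed-point family $X_\paramq^{T}$. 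Since $X^T$ is finite and each fixed point deforms uniquely, $X_\paramq^{T}\to\paramq$ is finite (\'etale), and $X_\paramq^{+}$ is a vector bundle over $X_\paramq^{T}$, so $X_\paramq^{+}\to\paramq$ is flat; thus $\gr\mathcal{V}_\paramq$ is $\C[\paramq]$-flat. A standard positive-filtration argument (the filtration is exhaustive, bounded below, and $\C[\paramq]$ lies in filtration degree $d>0$) then transfers flatness from $\gr\mathcal{V}_\paramq$ to $\mathcal{V}_\paramq$; in fact each $\nu$-weight space of $\mathcal{V}_\paramq$ is a finitely generated, flat, hence free, $\C[\paramq]$-module.

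Finally, openness of the flat locus $F$. Each $\nu$-weight space $\Delta_\paramq(p)_i$ is a finitely generated $\C[\paramq]$-module (its associated graded is a coherent sheaf on the attracting locus of $p$, which is of finite type), so $F$ is the intersection of the open flat loci of the $\Delta_\paramq(p)_i$; and since $\Delta_\paramq(p)\otimes_{\C[\paramq]}\kappa(\lambda)\cong\Delta_\lambda(p)$ for every $\lambda$ (right-exactness of base change, together with $\A_\lambda/\A_\lambda\A_\lambda^{>0}=\mathcal{V}_\paramq\otimes\kappa(\lambda)$), flatness at $\lambda$ is equivalent to the graded character $\sum_i(\dim\Delta_\lambda(p)_i)q^i$ attaining its (minimal) generic value. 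To see that this intersection of countably many opens is open, I would invoke a flattening-stratification/generic-flatness argument: over $\paramq$ only finitely many graded characters $\operatorname{ch}\Delta_\lambda(p)$ occur, so the coincidence with $\operatorname{ch}_{\mathrm{gen}}$ is cut out by finitely many of the open conditions ``$\dim\Delta_\lambda(p)_i$ is minimal''. Making this last point rigorous — essentially, organizing the modules $\Delta_\lambda(p)$ into a genuine flat-or-stratified family despite $\A_\paramq$ not being finite over $\C[\paramq]$, or equivalently producing a good filtration on $\Delta_\paramq(p)$ whose formation commutes with all specializations — is the step I expect to be the main obstacle; it is clear over the dense open $U$, where $\Delta_\paramq(p)$ is already flat, and must be bootstrapped to the boundary using the Noetherianity of $\C[Y_\paramq]$-presentations of $\gr\Delta_\paramq(p)$. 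Everything else is formal once the PBW identity $\gr(\A_\paramq\A_\paramq^{>0})=\C[X_\paramq]\cdot\C[X_\paramq]^{>0}$ and the flatness of $X_\paramq^{+}\to\paramq$ are in place.
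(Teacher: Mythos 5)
Your overall strategy—reduce $\Delta_\paramq(p)$ to a direct summand of the universal Verma $\mathcal{V}_\paramq := \A_\paramq/\A_\paramq\A_\paramq^{>0}$ over the open locus from Lemma \ref{Lem:cartan_iso}, and then establish $\C[\paramq]$-flatness of $\mathcal{V}_\paramq$ itself—is a reasonable plan, and the idempotent step is fine. But the argument for flatness of $\mathcal{V}_\paramq$ has two genuine gaps. First, the ``PBW-type identity'' $\gr(\A_\paramq\A_\paramq^{>0}) = \C[Y_\paramq]\cdot\C[Y_\paramq]^{>0}$ is asserted, not proved. In the Rees-algebra picture it is equivalent to $R_\hbar(\A_\paramq)/R_\hbar(\A_\paramq)R_\hbar(\A_\paramq)^{>0}$ being $\hbar$-torsion-free, i.e.\ to the left ideal $R_\hbar(\A_\paramq)R_\hbar(\A_\paramq)^{>0}$ being $\hbar$-saturated; there is no formal reason for this (the containment $\gr(IJ)\supset(\gr I)(\gr J)$ goes only one way), and it is exactly the kind of statement that is in tension with the attractor geometry. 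Second, even granting that identity, you identify $\gr\mathcal{V}_\paramq$ with ``the structure sheaf of the relative attracting locus $X_\paramq^+$''; but $\gr\mathcal{V}_\paramq = \C[Y_\paramq]/\C[Y_\paramq]\C[Y_\paramq]^{>0}$ is a quotient of functions on the singular $Y_\paramq$, not a sheaf on the resolution $X_\paramq$. Its $\operatorname{Spec}$ is the attractor scheme inside $Y_\paramq$, which is in general non-reduced and is \emph{not} a vector bundle over $X_\paramq^T$. Already for $Y=\C^2/(\Z/2)$ with the natural Hamiltonian $\C^\times$-action one computes this scheme to be $\operatorname{Spec}\,\C[v,w,t]/(v^2+t)$, whose central fibre is the thickened line $\C[v,w]/(v^2)$; flatness over $\paramq$ happens to hold there, but because $\C[v]$ is free over $\C[v^2]$, not because of any bundle structure. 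So ``$X_\paramq^+$ is a vector bundle over a finite $X_\paramq^T$'' does not deliver flatness of $\gr\mathcal{V}_\paramq$.

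You also flag yourself that the openness argument in the last paragraph is incomplete, and it is a real gap: $\Delta_\paramq(p)$ is $\C[\paramq]$-flat iff each $\nu$-weight space is, each of these has an open flat locus, but Grothendieck's openness-of-the-flat-locus theorem does not apply to $\Delta_\paramq(p)$ itself (since $\A_\paramq$ is not finite over $\C[\paramq]$), and you give no reason only finitely many of the infinitely many open conditions matter. Note also that you are trying to prove unconditional flatness of $\mathcal{V}_\paramq$ over all of $\paramq$, which is more than the lemma asserts and is precisely where the unsupported claims are. The route taken in the cited source \cite[Section 3.4.2]{catO_charp} instead works on the sheaf side: the geometric Verma $(\A_\paramq^\theta/\A_\paramq^\theta(\A_\paramq^\theta)^{>0})\otimes_{\Ca_\nu(\A_\paramq^\theta)}\C[\paramq]_p$ really is a quantization of the structure sheaf of an affine bundle over the relevant component of $X_\paramq^T$, hence $\C[\paramq]$-flat with vanishing higher cohomology; one then removes a \emph{finite} union of essential hyperplanes (using Lemma \ref{Lem:cartan_iso} and Lemma \ref{Cor:abel_loc}) to ensure that taking global sections returns $\Delta_\paramq(p)$. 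That route avoids both the unproved PBW identity and the analysis of the attractor scheme in $Y_\paramq$.
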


Let $N$ be an irreducible $\Ca_\nu(\A_\lambda)$-module. Then $\Delta_\lambda(N)$
has a unique irreducible quotient to be denoted by $L_\lambda(N)$. The map
$N\mapsto L_\lambda(N)$ defines a bijection between $\operatorname{Irr}(\Ca_\nu(\A_\lambda))$
and $\operatorname{Irr}(\Ocat_\nu(\A_\lambda))$.

Note that $\Delta_\nu(N)$ is graded with $N$ in degree $0$. On the $i$th
graded component the element $h$ acts by $h_N-i$, where $h_N$ is the scalar
of action of $h$ on $N$. Since every object in $\Ocat_\nu(\A_\lambda)$
admits a finite filtration by quotients of $\Delta_\lambda(N)$'s,
the element $h$ acts locally finitely on every object in $\Ocat_\nu(\A_\lambda)$.
And since $\operatorname{Irr}(\Ocat_\nu(\A_\lambda))$ is finite, we see
that every $\Delta_\nu(\A_\lambda)$ has finite length. And therefore
every object in $\Ocat_\nu(\A_\lambda)$ has finite length, see
also \cite[Lemma 5.7]{BLPW}.




Finally, let us discuss derived translation functors between the categories $\mathcal{O}$.
We write $D^b_{\Ocat}(\A_\lambda)$ for the full subcategory in $D^b(\A_\lambda\operatorname{-mod})$
consisting of all complexes with homology in $\Ocat_\nu(\A_\lambda)$. Note that a finitely
generated $\A_\lambda$-module lies in $\Ocat_\nu(\A_\lambda)$ if it admits a weakly
$\nu(\C^\times)$-equivariant structure and is supported on the contracting locus of $\nu$ in $Y$.
The functor  $\A_{\lambda,\chi}\otimes^L_{\A_\lambda}\bullet$ preserves both conditions and so maps $D^b_{\Ocat}(\A_\lambda)$
to $D^b_{\Ocat}(\A_{\lambda+\chi})$.

\subsubsection{Highest weight structure}\label{SSS_HW_struct}
The algebra $\A_{\lambda}^{opp}$ is identified with $\A_{-\lambda}$, this follows
from \cite[Section 2.3]{quant_iso}.
Moreover, $\Ca_{-\nu}(\A_\lambda^{opp})$ is naturally identified with
$\Ca_\nu(\A_\lambda)^{opp}$. So we can consider the Verma modules
$\Delta_\lambda^{opp}(p)$ as well as $\Delta_{\paramq}^{opp}(p)$. The following claim
has been established in \cite[Proposition 4.13]{catO_charp}.

\begin{Lem}\label{Lem:Tor_locus}
There is a finite union of essential hyperplanes such that for all $\lambda$
outside of this union we have $\Ca_\nu(\A_\lambda)\xrightarrow{\sim}\Ca_\nu(\A_\lambda^\theta)$
and $\operatorname{Tor}_i^{\A_\lambda}(\Delta_{\lambda}^{opp}(p), \Delta_{\lambda}(p'))=0$
for all $p,p'\in X^T$ and $i>0$.
\end{Lem}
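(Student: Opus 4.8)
The plan is to work with the universal family over $\paramq$ throughout, and to produce the required finite union of essential hyperplanes as the union of the ``bad loci'' coming from a handful of earlier genericity lemmas. The statement about Cartan subquotients is almost formal: by Lemma~\ref{Lem:cartan_iso} there is a Zariski open $U\subseteq\paramq$, containing the complement of a finite union of essential hyperplanes, on which $\Ca_\nu(\A_{\paramq})\to\Ca_\nu(\A^\theta_{\paramq})$ is an isomorphism. Since $X^T$ is finite, $\Ca_\nu(\A^\theta_{\paramq})$ is free over $\C[\paramq]$ of rank $|X^T|$, so $\Ca_\nu(\A_{\paramq})$ is free on $U$; as the defining quotients of $\Ca_\nu(\A_\lambda)$ and $\Ca_\nu(\A^\theta_\lambda)$ are right exact and the relevant sheaves are free over $U$, the isomorphism specializes to $\Ca_\nu(\A_\lambda)\xrightarrow{\sim}\Ca_\nu(\A^\theta_\lambda)$ for every $\lambda\in U$; in particular $\Ca_\nu(\A_\lambda)\cong\C^{X^T}$ is semisimple commutative of the expected dimension.

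For the Tor--vanishing I would shrink $U$ by a further finite union of essential hyperplanes so that in addition $\Delta_{\paramq}(p')$ and $\Delta^{opp}_{\paramq}(p)$ are flat over $\C[\paramq]$ for all $p,p'$, by Lemma~\ref{Lem:Verma_flatness} and its opposite version (available via $\A_{\paramq}^{opp}=\A_{-\paramq}$). Since $\A_{\paramq}$ is flat over $\C[\paramq]$, this flatness lets me compute $\operatorname{Tor}_i^{\A_\lambda}(\Delta^{opp}_\lambda(p),\Delta_\lambda(p'))$ by base change from $B_i:=\operatorname{Tor}_i^{\A_{\paramq}}(\Delta^{opp}_{\paramq}(p),\Delta_{\paramq}(p'))$: for $\lambda\in U$ and $i>0$ one gets $\operatorname{Tor}_i^{\A_\lambda}(\Delta^{opp}_\lambda(p),\Delta_\lambda(p'))=B_i\otimes_{\C[\paramq]}\C_\lambda$. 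So it suffices to show that $\operatorname{Supp}_{\C[\paramq]}(B_i)$, $i>0$, is contained in a finite union of essential hyperplanes, and then to adjoin those hyperplanes (together with the flatness--failure ones) to $U$.

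Closedness of $\operatorname{Supp}_{\C[\paramq]}(B_i)$ follows as in Lemma~\ref{Lem:support_closed}, since $B_i$ is built from the Harish-Chandra--type bimodules $\A_{\paramq}/\A_{\paramq}\A_{\paramq}^{>0}$ and its opposite. That it is proper, and more precisely contained in essential hyperplanes, I would see in two steps. First, for Weil generic $\lambda$, Proposition~\ref{Prop:outside_essential} gives finite homological dimension and abelian localization, so the Tor is computed on $X^\theta$ after applying $\Loc_\lambda^\theta$: there $\Loc_\lambda^\theta(\Delta_\lambda(p'))$ and $\Loc_\lambda^\theta(\Delta^{opp}_\lambda(p))$ are the geometric Verma sheaves attached to the attracting and repelling cells, and locally along their intersection, in a quantized Darboux chart, the derived tensor product over $\A_\lambda^\theta$ is a Koszul complex concentrated in homological degree $0$; hence $B_i\otimes\C_\lambda=0$. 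Second, for each classical wall $\Upsilon$ pick $\zeta\in\Upsilon$ and $y$ in a minimal symplectic leaf of $Y_\zeta$ as in Proposition~\ref{Prop:rank1}; the restriction functor $\bullet_{\dagger,y}$ of Section~\ref{SSS_res_fun} carries the universal Verma bimodules for $\A_{\paramq}$ to those for $\underline{\A}_{\paramq}$ (the slice being governed by the one-dimensional $\underline{\paramq}'$), so $B_i$ restricts to the corresponding $\underline{B}_i$; by the rank-$1$ analysis of Section~\ref{SS_essent_sing} this $\underline{B}_i$ is supported inside the finite set $\Sigma_y$, so the codimension-$1$ part of $\operatorname{Supp}(B_i)$ near $\Upsilon$ lies in the finitely many essential hyperplanes parallel to $\Upsilon$ obtained via $\eta^{-1}$ from $\bigcup_y\Sigma_y$, and no component of $\operatorname{Supp}(B_i)$ meets the complement of the classical walls (there the relevant slices are trivial, so the restriction of $B_i$ vanishes). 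Running over all classical walls gives the required finite union.

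The main obstacle is the passage from ``$\operatorname{Supp}(B_i)$ is a proper closed subset'' to ``$\operatorname{Supp}(B_i)$ is contained in a finite union of essential hyperplanes'': this forces one to go through the restriction functors, and since $B_i$ is a Tor of modules rather than an honest Harish-Chandra bimodule one must first repackage it via the universal Verma \emph{bimodules}, verify that $\bullet_{\dagger,y}$ is compatible with the relevant derived tensor products, and then carry out the rank-$1$ slice computation identifying the bad set with $\Sigma_y$. The other technical input, needed to make the Weil generic step rigorous, is the precise local Koszul model for $\Loc_\lambda^\theta$ of the standard and opposite-standard modules along the attracting and repelling cells; alternatively one can avoid the geometry and deduce the vanishing from the highest-weight orthogonality $\operatorname{Ext}^{>0}_{\Ocat_\nu(\A_\lambda)}(\Delta_\lambda(p'),\nabla_\lambda(p))=0$ together with a triangular decomposition of $\A_\lambda$ valid on $U$, but that again requires isolating the precise locus where the triangular decomposition degenerates.
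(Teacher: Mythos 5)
The paper does not actually prove this lemma; it cites \cite[Proposition 4.13]{catO_charp}, so there is no in-paper argument to compare against. Your proposed strategy — specialize the universal families, control the codimension-one part of the bad locus via restriction functors to rank-$1$ slices, and kill the Weil-generic fiber separately — is the standard one in this circle of ideas and almost certainly matches the cited reference. Your treatment of the $\Ca_\nu$ claim is essentially correct: the Cartan subquotient is a right-exact construction, so it commutes with specialization, and combining with Lemma~\ref{Lem:cartan_iso} and freeness of $\Ca_\nu(\A^\theta_\paramq)$ yields the isomorphism on the stated open set.

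For the Tor statement there are two genuine gaps, both of which you flag but should recognize as load-bearing rather than technical footnotes. First, the base change from $B_i:=\operatorname{Tor}^{\A_\paramq}_i(\Delta^{opp}_\paramq(p),\Delta_\paramq(p'))$ to the fiber Tor is not a clean flat base change: with both Vermas $\C[\paramq]$-flat, the complex $K_\bullet$ computing $B_\bullet$ has $\C[\paramq]$-flat terms, so one gets a universal-coefficients spectral sequence $\operatorname{Tor}^{\C[\paramq]}_p(B_q,\C_\lambda)\Rightarrow \operatorname{Tor}^{\A_\lambda}_{p+q}(\Delta^{opp}_\lambda(p),\Delta_\lambda(p'))$. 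Vanishing of $B_q$ near $\lambda$ for $q>0$ only reduces the fiber Tor to $\operatorname{Tor}^{\C[\paramq]}_i(B_0,\C_\lambda)$, so one additionally needs $B_0$ to be flat over $\C[\paramq]$ near $\lambda$; by generic flatness this fails on a proper closed subset, and one must show (again by the restriction machinery) that this subset too lies in finitely many essential hyperplanes. Second, $\bullet_{\dagger,y}$ is defined on HC bimodules, not on one-sided modules or their Tor groups. Your ``repackaging'' would take $\operatorname{Tor}^{\A_\paramq}_i$ of the $\A_\paramq$-$\Ca_\nu(\A_\paramq)$-bimodule $\A_\paramq/\A_\paramq\A^{>0}_\paramq$ against its opposite version, giving a $\Ca_\nu(\A_\paramq)^{opp}$-$\Ca_\nu(\A_\paramq)$-bimodule over $\C[\paramq]$; one must then verify it has a good filtration so that Lemma~\ref{Lem:support_closed} applies, and that $\bullet_{\dagger,y}$ is compatible with the relevant derived tensor products. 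Without these verifications you only get that $\operatorname{Supp}(B_i)$ is a proper Zariski-closed subset, not that it sits inside a finite union of essential hyperplanes — and the latter is the entire content of the lemma.
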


The next result follows in the same way as \cite[Theorem 5.12]{BLPW}.

\begin{Cor}\label{Cor:hw}
In the locus described in Lemma \ref{Lem:Tor_locus}, the category $\Ocat_\nu(\A_\lambda)$
is highest weight and the standard objects are the Verma modules $\Delta_\lambda(p)$.
\end{Cor}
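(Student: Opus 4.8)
The plan is to run the proof of \cite[Theorem 5.12]{BLPW}, whose two essential inputs --- semisimplicity of the Cartan subquotient and vanishing of the relevant $\operatorname{Tor}$-groups --- are exactly what Lemma \ref{Lem:Tor_locus} provides. So fix $\lambda$ in the locus of Lemma \ref{Lem:Tor_locus}. Then $\Ca_\nu(\A_\lambda)\xrightarrow{\sim}\Ca_\nu(\A_\lambda^\theta)$; since $X^T$ is finite the latter is a product of copies of $\C$ indexed by $X^T$, so $\Ca_\nu(\A_\lambda)$ is semisimple commutative, $\operatorname{Irr}(\Ocat_\nu(\A_\lambda))=\{L_\lambda(p)\mid p\in X^T\}$, and the corresponding Verma modules are precisely the $\Delta_\lambda(p)$. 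Recall from the discussion before Lemma \ref{Lem:Verma_flatness} that $\Delta_\lambda(p)$ is graded, generated in degree $0$, with $h$ acting on its degree-$i$ component by $h_p-i$ (here $h_p$ is the scalar by which $h$ acts on $\C_p$). Hence $L_\lambda(p)$ occurs exactly once in $\Delta_\lambda(p)$, in degree $0$, and every other composition factor $L_\lambda(q)$ occurs only in positive degrees, forcing $h_q=h_p-i$ for some $i\in\Z_{>0}$. Fix any partial order $\leqslant$ on $X^T$ with $q<p$ whenever $h_p-h_q\in\Z_{>0}$; with respect to $\leqslant$, the head of $\Delta_\lambda(p)$ is $L_\lambda(p)$ (multiplicity one) and all its remaining composition factors $L_\lambda(q)$ satisfy $q<p$.

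As in \cite[Section 5]{BLPW}, the category $\Ocat_\nu(\A_\lambda)$ has enough projectives and is equivalent to the category of finite-dimensional modules over a finite-dimensional algebra: it is artinian and noetherian with finitely many simples, by the finite-length statement recalled above (cf. \cite[Lemma 5.7]{BLPW}). For the costandard objects we use the opposite Verma modules $\Delta_\lambda^{opp}(p)$ over $\A_\lambda^{opp}\cong\A_{-\lambda}$ (formed with respect to $-\nu$), regarded as right $\A_\lambda$-modules, and set $\nabla_\lambda(p):=(\Delta_\lambda^{opp}(p))^{\vee}$, the graded $\C$-linear dual, which is a left $\A_\lambda$-module. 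Following the $h$-grading through $\A_\lambda^{opp}\cong\A_{-\lambda}$ one checks that $\nabla_\lambda(p)\in\Ocat_\nu(\A_\lambda)$, with socle $L_\lambda(p)$ and all remaining composition factors $L_\lambda(q)$ satisfying $q<p$.

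The homological core is the identification
\[
\operatorname{Ext}^i_{\Ocat_\nu(\A_\lambda)}\bigl(\Delta_\lambda(p),\nabla_\lambda(p')\bigr)\;\cong\;
\operatorname{Tor}^{\A_\lambda}_i\bigl(\Delta_\lambda^{opp}(p'),\Delta_\lambda(p)\bigr)^{*}\qquad(i\geqslant 0,\ p,p'\in X^T),
\]
where the isomorphism to the $\operatorname{Tor}$-term is graded tensor--hom duality, and the comparison of $\operatorname{Ext}$ in $\Ocat_\nu(\A_\lambda)$ with $\operatorname{Ext}$ in $\A_\lambda\operatorname{-mod}$ is carried out as in \cite{BLPW}, using the explicit projective objects of $\Ocat_\nu(\A_\lambda)$. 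By Lemma \ref{Lem:Tor_locus} the right-hand side vanishes for $i>0$; for $i=0$ it is the classical contravariant pairing between a Verma and an opposite Verma, which is one-dimensional if $p=p'$ and zero otherwise --- this holds for every $\lambda$ by weight bookkeeping, since the degree-$0$ generator of $\Delta_\lambda(p)$, of $h$-weight $h_p$, can map only into the one-dimensional $h_{p'}$-weight space of $\nabla_\lambda(p')$, and a nonzero map exists precisely when $p=p'$. Feeding the data $(\Ocat_\nu(\A_\lambda),\leqslant,\{\Delta_\lambda(p)\},\{\nabla_\lambda(p)\})$ into the standard recognition criterion for highest weight categories (as in \cite[Section 5]{BLPW}: a finite abelian category with enough projectives carrying a poset, objects $\Delta(p)$ with head $L(p)$ and other factors below $p$, objects $\nabla(p)$ with socle $L(p)$ and other factors below $p$, with $\dim\operatorname{Hom}(\Delta(p),\nabla(q))=\delta_{pq}$ and $\operatorname{Ext}^{>0}(\Delta(p),\nabla(q))=0$, is highest weight with standards the $\Delta(p)$) proves the corollary.

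The part demanding the most care is the bookkeeping around the opposite algebra: verifying that $\nabla_\lambda(p)=(\Delta_\lambda^{opp}(p))^{\vee}$ really lies in $\Ocat_\nu(\A_\lambda)$ with the asserted composition series, and --- more delicately --- that $\operatorname{Ext}$-groups computed inside $\Ocat_\nu(\A_\lambda)$ coincide with those in $\A_\lambda\operatorname{-mod}$, which is what allows Lemma \ref{Lem:Tor_locus} to be invoked. Both points are dealt with exactly as in \cite{BLPW}, which is the content of the phrase ``follows in the same way''.
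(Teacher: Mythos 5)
Your proposal is correct and follows essentially the same route as the paper, which simply defers to \cite[Theorem 5.12]{BLPW}. You have just unpacked what that reference does: use the isomorphism $\Ca_\nu(\A_\lambda)\xrightarrow{\sim}\Ca_\nu(\A_\lambda^\theta)$ to index simples by $X^T$, introduce the order by $h$-weight differences, define costandards via the opposite-algebra Vermas, and feed the $\operatorname{Tor}$-vanishing from Lemma~\ref{Lem:Tor_locus} through the Ext--Tor duality into the standard recognition criterion for highest weight categories.
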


An order making $\Ocat_\nu(\A_\lambda)$ into a highest weight category is as follows.
Let $h_p(\lambda)$ be the image of $h$ in $\C_p$ at the parameter $\lambda$.
We set $p<_\lambda p'$ if $h_{p'}(\lambda)-h_p(\lambda)\in \Z_{>0}$. This
is a highest weight order for $\Ocat_\nu(\A_\lambda)$. We also note that, as every
highest weight category, the category $\Ocat_\nu(\A_\lambda)$
admits the minimal (with respect to inclusion) highest
weight order: it is generated by the relation $\rightsquigarrow$ given by
$p\rightsquigarrow p'$ if
\begin{itemize}
\item[(i)]
the simple $L_\lambda(p')$ appears in $\Delta_\lambda(p)$
\item[(ii)]
or the standard $\Delta_\lambda(p)$ appears in the projective cover $P_\lambda(p')$
of $L_\lambda(p')$.
\end{itemize}

Here is another important corollary, \cite[Corollary 5.13]{BLPW}.

\begin{Cor}\label{Cor:full_embedding}
Let $\lambda$ be in the locus described in Lemma \ref{Lem:Tor_locus}.
Further, suppose abelian localization holds for $\A_\lambda^\theta$
with some generic $\theta$.
Then the natural functor $D^b(\Ocat_\nu(\A_\lambda))\rightarrow
D^b_{\Ocat}(\A_\lambda)$ is an equivalence.
\end{Cor}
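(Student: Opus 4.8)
Write $\Ocat:=\Ocat_\nu(\A_\lambda)$, and let $\iota\colon D^b(\Ocat)\to D^b_{\Ocat}(\A_\lambda)$ denote the functor induced by the exact fully faithful inclusion $\Ocat\hookrightarrow\A_\lambda\operatorname{-mod}$. The plan is to follow \cite[Corollary 5.13]{BLPW}. First note that essential surjectivity of $\iota$ is automatic once $\iota$ is fully faithful: via its canonical truncation triangles, every object of $D^b_{\Ocat}(\A_\lambda)$ is an iterated extension of shifts of objects of $\Ocat$, so $D^b_{\Ocat}(\A_\lambda)$ is generated as a triangulated category by $\Ocat$, while the essential image of a fully faithful triangulated functor is a triangulated subcategory. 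Hence the statement reduces to showing that for all $M,N\in\Ocat$ and all $i$ the natural map $\operatorname{Ext}^i_{\Ocat}(M,N)\to\operatorname{Ext}^i_{\A_\lambda}(M,N)$ is an isomorphism; for $i=0$ this holds because $\Ocat$ is a full subcategory.

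By Corollary \ref{Cor:hw}, $\Ocat$ is a highest weight category with standard objects the Verma modules $\Delta_\lambda(p)$; write $\nabla_\lambda(p)$ for its costandard objects. Thus $\Ocat$ has enough projectives, each filtered by the $\Delta_\lambda(p)$, and enough injectives, each filtered by the $\nabla_\lambda(p)$. Granting the vanishing
\[
\operatorname{Ext}^i_{\A_\lambda}(\Delta_\lambda(p),\nabla_\lambda(p'))=0\qquad\text{for all }p,p'\in X^T\text{ and all }i>0,
\]
a dévissage gives the comparison: long exact sequences along a costandard filtration of an injective $I$ of $\Ocat$ give $\operatorname{Ext}^{>0}_{\A_\lambda}(\Delta_\lambda(p),I)=0$; a standard filtration of a projective $P$ of $\Ocat$ then gives $\operatorname{Ext}^{>0}_{\A_\lambda}(P,I)=0$; for arbitrary $N\in\Ocat$, a one-step coresolution $0\to N\to I\to C\to 0$ inside $\Ocat$, together with the surjectivity of $\operatorname{Hom}_{\A_\lambda}(P,I)\to\operatorname{Hom}_{\A_\lambda}(P,C)$ coming from projectivity of $P$ in $\Ocat$, and induction on the injective dimension, upgrades this to $\operatorname{Ext}^{>0}_{\A_\lambda}(P,N)=0$; finally an $\Ocat$-projective resolution $P_\bullet\to M$ is then $\operatorname{Hom}_{\A_\lambda}(-,N)$-acyclic, and since $\operatorname{Hom}_{\A_\lambda}(P_\bullet,N)=\operatorname{Hom}_{\Ocat}(P_\bullet,N)$ the two Ext groups agree in all degrees.

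It remains to prove the displayed $\operatorname{Ext}$-vanishing, and this is the main point. Using $\A_\lambda^{opp}\cong\A_{-\lambda}$, the triangular decomposition of $\A_\lambda$, and the semisimplicity of $\Ca_\nu(\A_\lambda)$ available in the locus of Lemma \ref{Lem:Tor_locus}, one identifies $\nabla_\lambda(p')$ with the $\nu(\C^\times)$-finite graded dual of the opposite Verma $\Delta_\lambda^{opp}(p')$. Passing to $\nu(\C^\times)$-finite duals turns $R\operatorname{Hom}_{\A_\lambda}(\Delta_\lambda(p),\nabla_\lambda(p'))$ into the dual of $\Delta_\lambda^{opp}(p')\otimes^{L}_{\A_\lambda}\Delta_\lambda(p)$, so that
\[
\operatorname{Ext}^i_{\A_\lambda}(\Delta_\lambda(p),\nabla_\lambda(p'))\cong\operatorname{Tor}^{\A_\lambda}_i(\Delta_\lambda^{opp}(p'),\Delta_\lambda(p))^\vee,
\]
and the right-hand side vanishes for $i>0$ by Lemma \ref{Lem:Tor_locus}; the same computation at $i=0$, with $\Ca_\nu(\A_\lambda)$ semisimple, recovers $\operatorname{Hom}_{\A_\lambda}(\Delta_\lambda(p),\nabla_\lambda(p'))=\delta_{p,p'}\C$. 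The abelian localization hypothesis enters here: it implies derived localization, hence (as noted after Conjecture \ref{Conj:fin_hom_dim}) that $\A_\lambda$ has finite homological dimension, which both makes $D^b_{\Ocat}(\A_\lambda)$ a well-behaved category and underlies the identification of $\nabla_\lambda(p')$ with the dual of $\Delta_\lambda^{opp}(p')$ as an object of $\Ocat$; alternatively one may use abelian localization to transport the whole comparison to $\Coh(\A_\lambda^\theta)$, where the objects of $\Ocat_\nu(\A_\lambda^\theta)$ are supported on the Lagrangian attracting set and the vanishing can also be seen geometrically. I expect the delicate step to be precisely this identification of costandards with duals of opposite Vermas, since it is what couples the homological bookkeeping inside $\Ocat$ (Corollary \ref{Cor:hw}) to the $\operatorname{Tor}$-vanishing over $\A_\lambda$ (Lemma \ref{Lem:Tor_locus}); the rest is the formal dévissage above.
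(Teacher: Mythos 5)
Your proof is correct and reconstructs exactly the argument behind \cite[Corollary 5.13]{BLPW}, which is what the paper cites for this statement: reduce to full faithfulness, reduce further via standard/costandard d\'evissage to the vanishing $\operatorname{Ext}^{>0}_{\A_\lambda}(\Delta_\lambda(p),\nabla_\lambda(p'))=0$, and obtain that from the $\operatorname{Tor}$-vanishing of Lemma \ref{Lem:Tor_locus} by identifying $\nabla_\lambda(p')$ with the restricted dual of $\Delta^{opp}_\lambda(p')$ (this identification is precisely how the paper defines $\nabla_\lambda(p')$ in Section \ref{SSS_HW_struct}). The one small inaccuracy is the role you assign to abelian localization at the end: it is not needed to identify $\nabla_\lambda(p')$ with the dual of $\Delta^{opp}_\lambda(p')$ (that is a definition), but rather to guarantee that the geometric category $\Ocat_\nu(\A_\lambda^\theta)$ matches $\Ocat_\nu(\A_\lambda)$ so that the highest weight structure and the $\operatorname{Tor}$-vanishing can be transported between the two sides; apart from this bookkeeping point the argument is sound.
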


By applying a contravariant duality functor $\Ocat_{-\nu}(\A^{opp}_{\lambda})
\xrightarrow{\sim} \Ocat_\nu(\A_\lambda)$, see, e.g., \cite[Section 3.4.3]{catO_charp},
to the objects $\Delta^{opp}_{\lambda}(p)$ we get the objects denoted by
$\nabla_\lambda(p)$. If $\lambda$ is in the locus described in Lemma
\ref{Lem:Tor_locus}, then the objects $\nabla_\lambda(p)$ are costandard
in the highest weight category $\Ocat_\nu(\A_\lambda)$.

\begin{Lem}\label{Lem:stand_costand}
There is a finite union of essential hyperplanes such that, for $\lambda$
outside of that locus, the $K_0$-classes of $\Delta_\lambda(p)$ and
$\nabla_\lambda(p)$ coincide. In particular, the conditions
(i) and (ii) above are equivalent.
\end{Lem}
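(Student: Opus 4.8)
The plan is to reduce the statement to the generic behaviour of the Verma and dual Verma modules, which we already control by Lemmas \ref{Lem:Verma_flatness} and \ref{Lem:Tor_locus}. Recall that $\Delta_\lambda(p)$ and $\nabla_\lambda(p)$ are both obtained by specializing the families $\Delta_\paramq(p)$ and $\Delta^{opp}_\paramq(p)$ (the latter composed with the contravariant duality) at $\lambda$. First I would restrict to the complement $U$ of the (finite) union of essential hyperplanes appearing in Lemmas \ref{Lem:Verma_flatness}, \ref{Lem:cartan_iso}, and \ref{Lem:Tor_locus}; on $U$ the module $\Delta_\paramq(p)$ is $\C[\paramq]$-flat, the Cartan subquotient map $\Ca_\nu(\A_\paramq)\to \Ca_\nu(\A^\theta_\paramq)$ is an isomorphism (so the simples in $\Ocat_\nu(\A_\lambda)$ are indexed by $X^T$ uniformly), and $\Ocat_\nu(\A_\lambda)$ is highest weight with standards $\Delta_\lambda(p)$ and costandards $\nabla_\lambda(p)$.

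The key point is that the $K_0$-class of a finite-length object in $\Ocat_\nu(\A_\lambda)$ is determined by the action of the semisimple element $h$, i.e.\ by the graded character with respect to the $\nu$-grading (each $\A_\lambda$-module in $\Ocat$ carries a canonical $h$-weight decomposition into finite-dimensional pieces, by the discussion following Lemma \ref{Lem:Verma_flatness}). Both $\Delta_\paramq(p)$ and $\Delta^{opp}_\paramq(p)$ are graded $\A_\paramq$-modules, free as graded $\C[\paramq]$-modules over $U$ by flatness, and by construction their graded $\C[\paramq]$-ranks in each degree coincide: $\Delta_\paramq(p)$ is $\A_\paramq/\A_\paramq\A^{>0}_\paramq \otimes_{\Ca_\nu(\A_\paramq)}\C[\paramq]_p$, while the opposite Verma for $\A_\paramq^{opp}=\A_{-\paramq}$ has the mirror description, and the contravariant duality reverses the grading while preserving dimensions in each degree. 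Hence the graded character of $\nabla_\lambda(p)$ equals that of $\Delta_\lambda(p)$ for every $\lambda\in U$, so $[\Delta_\lambda(p)]=[\nabla_\lambda(p)]$ in $K_0(\Ocat_\nu(\A_\lambda))$.

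For the last sentence: in a highest weight category the conditions (i) $L_\lambda(p')$ appears in $\Delta_\lambda(p)$ and (ii) $\Delta_\lambda(p)$ appears in $P_\lambda(p')$ are governed by BGG reciprocity, $[P_\lambda(p'):\Delta_\lambda(q)]=[\nabla_\lambda(q):L_\lambda(p')]$; since $[\Delta_\lambda(q)]=[\nabla_\lambda(q)]$ the two multiplicities $[\Delta_\lambda(p):L_\lambda(p')]$ and $[P_\lambda(p'):\Delta_\lambda(p)]$ agree, so (i) and (ii) are equivalent. The main obstacle I anticipate is the bookkeeping needed to verify that the contravariant duality really matches graded dimensions degree-by-degree across the whole family over $U$ — one must check that the duality intertwines the $\paramq$-linear structures compatibly with the identification $\A_\paramq^{opp}\cong \A_{-\paramq}$ and that it is exact and grading-reversing at the level of the universal families, not just fibrewise. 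Once that compatibility is in place, flatness does the rest automatically.
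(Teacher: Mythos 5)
Your approach — compare graded characters of $\Delta_\lambda(p)$ and $\nabla_\lambda(p)$, then use that the $K_0$-class is read off from the character — is genuinely different from the paper's, which invokes \cite[Corollary 6.4]{BLPW}: the classes $[\Delta_\lambda(p)]$ are shown (via \cite[Proposition 4.10]{catO_charp}) to form an orthonormal basis with respect to the Euler/Ext pairing on the highest weight category, the $[\nabla_\lambda(p)]$ are the dual basis by the standard $\Ext^\bullet(\Delta,\nabla)$ computation, and orthonormality of both forces them to coincide. The paper's route has the advantage of never needing to know anything about $h$-weight multiplicities.

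Your route has a genuine gap, and it is precisely the one you flag at the end as ``the main obstacle.'' The assertion that ``by construction their graded $\C[\paramq]$-ranks in each degree coincide'' is not a construction-level fact: $\gr\Delta_\paramq(p)$ and $\gr\Delta^{opp}_\paramq(p)$ are supported on the attracting and repelling Lagrangians through $p$ respectively, and while the $\nu$-weights on the two tangent spaces at $p$ are negatives of each other (the symplectic form being $T$-invariant), that does not by itself give a degree-by-degree equality of Hilbert series of the two global Lagrangian subvarieties, nor a matching of the top $h$-eigenvalues $h_p(\lambda)$ and $-h^{opp}_p(\lambda)$ after the identification $\A_\lambda^{opp}\cong\A_{-\lambda}$. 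This is the whole content of the lemma, so ``once that compatibility is in place, flatness does the rest'' leaves the proof essentially where it started. There is also a secondary issue: even granting equality of graded characters, the map $K_0(\Ocat_\nu(\A_\lambda))\to\{\text{graded characters}\}$ is injective only if the characters of the standards (equivalently simples) are linearly independent, which is guaranteed when the $h_p(\lambda)$ for $p\in X^T$ are pairwise distinct; this is an extra genericity condition that you would need to impose and show is cut out by finitely many essential hyperplanes, a step that the Ext-pairing argument does not require. I would recommend either completing the character comparison by relating both sides to a fixed-point localization/Hilbert-series identity for the two Lagrangian attracting sets, or switching to the orthonormality argument used in the paper.
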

\begin{proof}
This is proved as \cite[Corollary 6.4]{BLPW} but we use
\cite[Proposition 4.10]{catO_charp} to prove that the classes
of $\Delta_\lambda(p)$ form an orthonormal basis with respect to
the Ext pairing, which is a claim of \cite[Proposition 6.3]{BLPW}.

The last claim of the lemma follows from the general form of
the BGG reciprocity.
\end{proof}

%

\subsection{Tilting bundles from quantizations in positive characteristic}
\subsubsection{Tilting generators}
Let $X$ be a smooth CY algebraic variety that is projective over an affine variety $Y$.
By a {\it tilting generator} on $X$ we mean a vector bundle $E$ on $X$ subject
to the following two conditions.

\begin{enumerate}
\item $\Ext^i_{\Str_X}(E)=0$ for all $i>0$,
\item and the algebra $H:=\End(E)$ has finite homological dimension.
\end{enumerate}

We can consider the functor $R\Gamma(\mathcal{T}\otimes \bullet):
D^b(\Coh(X))\rightarrow D^b(H\operatorname{-mod})$. This functor
is a category equivalence, \cite[Proposition 2.2]{BK}. Here is another useful
general property, \cite[Lemma 4.2]{BL_modular}.

\begin{Lem}\label{Lem:tilting_CM}
The algebra $H$ is Cohen-Macaulay and Gorenstein.
\end{Lem}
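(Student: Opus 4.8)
The plan is to deduce the Cohen--Macaulay and Gorenstein properties of $H=\End(E)$ from geometric properties of $X$ via the tilting equivalence $R\Gamma(E\otimes\bullet):D^b(\Coh(X))\xrightarrow{\sim}D^b(H\operatorname{-mod})$. First I would recall that, since $Y=\Spec\C[X]$ is affine and $X\to Y$ is projective, it suffices to work with the finitely generated $\C[Y]$-algebra $H$, and that $H$ is a module-finite algebra over its center-containing subalgebra $\C[Y]=\C[X]$. The key structural input is that $X$ is Calabi--Yau (which holds here because $X$ is symplectic, so $\omega_X\cong\Str_X$), and hence $D^b(\Coh(X))$ carries a Serre functor given, up to shift, by the identity; transporting this along the tilting equivalence shows that the derived category $D^b(H\operatorname{-mod})$ has a Serre functor that is (a shift of) the identity functor on the subcategory of perfect complexes, i.e. $H$ has a trivial dualizing complex. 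Concretely, $R\Hom_H(H,H)\cong H[-n]$ in the appropriate normalization, where $n=\dim X$; this is exactly the statement that $H$ is a (twisted, but here untwisted) Calabi--Yau algebra of dimension $n$ over $\C$, or over $\C[Y]$ with the relevant relative dualizing complex.

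Next I would convert this homological-duality statement into the commutative-algebra statements claimed. Because $H$ has finite global dimension (condition (2) in the definition of a tilting generator) and is module-finite over $\C[Y]$, finiteness of the injective dimension of $H$ as a module over itself plus the Auslander--Gorenstein type bound forces $H$ to be Gorenstein in the noncommutative sense: $\operatorname{injdim}_H H=\operatorname{injdim}_{H^{op}}H<\infty$, and this finite value equals $\dim\C[Y]$ since $H$ is finite over $\C[Y]$ and $X\to Y$ is birational. For Cohen--Macaulayness, I would use that $R\Gamma$ of a vector bundle on $X$, computed against the structure sheaf summand (or directly), is concentrated in degree zero on the large affine $Y$, together with the Grauert--Riemenschneider-type vanishing $H^i(X,\Str_X)=0$ for $i>0$ (Lemma \ref{Lem:cohom_vanishing}(2)); more precisely, $H=\Gamma(X,\mathcal{E}nd(E))$ and higher cohomology of $\mathcal{E}nd(E)$ vanishes by the tilting condition, so $H$ is a maximal Cohen--Macaulay module over $\C[Y]$ because $\C[Y]=\C[X]$ is Cohen--Macaulay (it has a symplectic, hence Gorenstein canonical singularity) and $\operatorname{depth}_{\C[Y]}H=\operatorname{depth}_{\C[Y]}\Gamma(X,\mathcal{E}nd(E))=\dim Y$ by the local cohomology / vanishing argument. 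A maximal Cohen--Macaulay algebra that is Gorenstein in the noncommutative sense and module-finite over a Gorenstein base is then Cohen--Macaulay and Gorenstein as a ring.

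The main obstacle I anticipate is pinning down the Gorenstein property with the correct normalization of the dualizing complex: one must check that the Serre functor on $D^b(\Coh(X))$ is genuinely the shift $[n]$ (not twisted by a nontrivial line bundle), which needs $\omega_X\cong\Str_X$ as $\Str_X$-modules, and that transporting it through $R\Gamma(E\otimes\bullet)$ gives precisely $R\Hom_{\C[Y]}(-,\C[Y][?])$ realized on $H$-modules with no spurious twist; equivalently, that $\omega_Y\cong\C[Y]$ so that the base is Gorenstein and the relative dualizing sheaf is trivial. This is where the symplectic (CY) hypothesis is doing all the work, and I would cite the corresponding statement from \cite{BL_modular} or argue it directly from $\omega_X\cong\Str_X$ and Grothendieck duality for $\pi:X\to Y$. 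Once the dualizing complex of $H$ is identified with $H$ itself up to shift, the conclusion that $H$ is Cohen--Macaulay and Gorenstein is formal from the theory of (noncommutative) Gorenstein rings that are finite over a commutative Gorenstein ring.
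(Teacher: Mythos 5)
Your approach is correct and is exactly the argument behind the paper's cited reference \cite[Lemma~4.2]{BL_modular}; the paper itself gives no proof, only the citation, so there is no genuinely different route to compare against.

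Two small points worth cleaning up. First, the displayed duality statement $R\Hom_H(H,H)\cong H[-n]$ is garbled: $R\Hom_H(H,H)=H$ tautologically. What you want to write is the $\C[Y]$-linear statement coming from Grothendieck--Serre duality for $\pi:X\to Y$, namely $R\Hom_{\C[Y]}\bigl(R\pi_*\mathcal{E}nd(E),\,\Str_Y\bigr)\cong R\pi_*\bigl(\mathcal{E}nd(E)^{\vee}\otimes\omega_X\bigr)$; using $\omega_X\cong\Str_X$, $\omega_Y\cong\Str_Y$ (symplectic singularities are Gorenstein), and the self-duality $\mathcal{E}nd(E)^{\vee}\cong\mathcal{E}nd(E)$, this reads $R\Hom_{\C[Y]}(H,\C[Y])\cong H$ as $H$-bimodules, concentrated in degree zero — which is simultaneously the MCM and the (symmetric-Gorenstein) dualizing-complex statement. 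Second, and related: the step ``higher cohomology of $\mathcal{E}nd(E)$ vanishes, therefore $H$ is MCM'' is too quick as written — vanishing of $R^{>0}\pi_*\mathcal{F}$ alone does not yield MCM for a general bundle $\mathcal{F}$; you also need vanishing for the dual $\mathcal{F}^{\vee}$, which here is supplied by the self-duality of $\mathcal{E}nd(E)$. Making that self-duality explicit closes the gap and yields the clean conclusion.
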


\subsubsection{Construction}\label{SSS_tilting_construction}
Let $X\rightarrow Y$ be a conical symplectic resolution. In many cases one can construct a
tilting generator on $X$ using quantizations in characteristic $p\gg 0$\footnote{There is
a general construction due to Kaledin, \cite{Kaledin}, which, however, has some gaps. These gaps
are supposed to be easy to fix. Still, we will only consider special cases as the property
($\heartsuit$) that is crucial for our purposes only holds in special cases}. The construction
below is known to hold in the following cases:
\begin{itemize}
\item[(i)] $X=T^*\mathcal{B}$, see \cite[Section 1.5]{BM}, and, more generally, for $X=T^*(G/P)$, \cite[Section 4]{BM}.
It also carries over to parabolic Slodowy varieties.
\item[(ii)] $X=\mathcal{M}^\theta_0$ for a generic $\theta$, \cite[Section 7]{BL}.  This includes almost all of the symplectic resolutions of  symplectic quotient singularities, that case was fully treated in
\cite{BK}.
\item[(iii)] $X$ is a smooth Coulomb branch of a gauge theory, \cite[Section 3]{Webster_tilting}.
\end{itemize}

We will recall the construction of $E$ as we will need it below.

In all of these cases, $X$
is defined over a finite localization of $\Z$, to be denoted by $\ring$. This is what
we are going to assume to slightly simplify the exposition. Let $X_{\ring}$
be a form of $X$ over $\ring$. Replacing $\ring$ with a finite localization, we can
ensure that $X_{\ring}$ behaves nicely, for example, that $X_{\ring}\rightarrow Y_{\ring}$
is a symplectic resolution, $R\Gamma(\Str_{X_\ring})=\ring[Y]$, etc.

Choose $p\gg 0$ and let $\F:=\bar{\F}_p$. Consider the Frobenius twist $X_{\F}^{(1)}$. Note that $X_{\F}^{(1)}$ is isomorphic to $X_{\F}$. We have an $\F$-scheme
morphism $X_{\F}\rightarrow X_{\F}^{(1)}$ called the {\it Frobenius
morphism} and denoted by $\Fr$.

The following definition is essentially due to \cite{BK}.

\begin{defi}\label{defi_Frob_constant}
Let $\mathfrak{A}$ be a sheaf of Azumaya algebras on $X_\F^{(1)}$. We say that $\mathfrak{A}$ is a
{\it Frobenius constant quantization} if its restriction to the conical topology on $X$ is equipped with a filtration such that we have a graded Poisson  $\Str_{X_\F^{(1)}}$-linear isomorphism
$\gr\mathfrak{A}\cong \Fr_*\Str_{X_\F}$.
\end{defi}

We note that the $\hbar$-adic completion $R_\hbar(\mathfrak{A})^{\wedge_\hbar}$
may be viewed as a formal
quantization of $X_\F$. This formal quantization is a $\mathbb{G}_m$-equivariant
sheaf of $\Str_{X_\F^{(1)}}[[\hbar]]$-algebras. One can recover $\mathfrak{A}$
from $R_\hbar(\mathfrak{A})^{\wedge_\hbar}$ by taking $\mathbb{G}_m$-finite sections
and then setting $\hbar=1$. This has the following corollary.

\begin{Lem}\label{Lem:cohomology_vanishing}
Let $\varphi: X_\F\rightarrow X'_\F$ be a projective birational morphism  such that
$R^i\varphi_*\Str_{X_\F}=0$ for $i>0$.  Then, for $\varphi$ viewed as a morphism
$X_\F^{(1)}\rightarrow X_\F'^{(1)}$, we have $R^i\varphi_* \mathfrak{A}=0$.
\end{Lem}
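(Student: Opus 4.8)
# Proof Proposal for Lemma \ref{Lem:cohomology_vanishing}

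\emph{Plan.} The statement says that if $\varphi\colon X_\F\to X'_\F$ is projective birational with $R^i\varphi_*\Str_{X_\F}=0$ for $i>0$, then the same vanishing holds for the Frobenius constant quantization $\mathfrak{A}$ viewed as a sheaf on $X_\F^{(1)}$ (pushed forward to $X_\F'^{(1)}$). The natural approach is to lift the cohomology vanishing from the associated graded to $\mathfrak{A}$ itself by a standard degeneration/completeness argument, exploiting the extra $\hbar$-variable and the $\mathbb{G}_m$-action built into the definition of a Frobenius constant quantization.

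\emph{First step.} I would pass to the $\hbar$-adic formal quantization $\mathcal{R}:=R_\hbar(\mathfrak{A})^{\wedge_\hbar}$, which by the discussion preceding the lemma is a $\mathbb{G}_m$-equivariant sheaf of flat $\Str_{X_\F^{(1)}}[[\hbar]]$-algebras with $\mathcal{R}/\hbar\mathcal{R}\cong \Fr_*\Str_{X_\F}$. Since $\varphi$, regarded between the Frobenius twists, is affine on the underlying topological spaces in the sense that $\Fr$ is a homeomorphism, the pushforward $\varphi_*$ computing $R^i\varphi_*\mathfrak{A}$ agrees with the pushforward along $X_\F\to X'_\F$ of the corresponding sheaf after untwisting by Frobenius. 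The key input is the hypothesis $R^i\varphi_*(\Fr_*\Str_{X_\F})=0$ for $i>0$, which follows from $R^i\varphi_*\Str_{X_\F}=0$ together with $\Fr$ being finite (hence affine), so that $R\varphi_*\circ R\Fr_* = R(\varphi\circ\Fr)_*$ and $R^i\Fr_*=0$ for $i>0$.

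\emph{Main step.} Now I would run the standard ``filtered/graded'' induction on $\hbar$-adic quotients. Consider the exact sequences
\begin{equation*}
0\to \mathcal{R}/\hbar^{n}\mathcal{R} \xrightarrow{\hbar} \mathcal{R}/\hbar^{n+1}\mathcal{R}\to \mathcal{R}/\hbar\mathcal{R}\to 0,
\end{equation*}
valid because $\mathcal{R}$ is $\hbar$-torsion free and flat over $\F[[\hbar]]$. Since $\mathcal{R}/\hbar\mathcal{R}\cong \Fr_*\Str_{X_\F}$ has vanishing higher $\varphi$-pushforwards, the long exact sequence gives $R^i\varphi_*(\mathcal{R}/\hbar^{n}\mathcal{R})=0$ for all $i>0$ and all $n\geqslant 1$ by induction, and moreover that $\Gamma(\mathcal{R}/\hbar^{n+1}\mathcal{R})\to\Gamma(\mathcal{R}/\hbar^{n}\mathcal{R})$ is surjective (Mittag-Leffler). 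Taking the inverse limit over $n$ and using that cohomology on a Noetherian scheme commutes with the relevant inverse limits of the $\mathcal{R}/\hbar^n\mathcal{R}$ (here one invokes the formal function theorem / completeness, valid because everything is coherent over $X'_\F$ which is of finite type over $\F$), one concludes $R^i\varphi_*\mathcal{R}=0$ for $i>0$. Finally, $\mathfrak{A}$ is recovered from $\mathcal{R}$ by taking $\mathbb{G}_m$-finite local sections and specializing $\hbar=1$; since taking $\mathbb{G}_m$-finite vectors is exact on the relevant category of equivariant sheaves (each graded piece is a coherent sheaf and $\varphi_*$ commutes with the grading) and setting $\hbar=1$ is a flat base change on the $\mathbb{G}_m$-finite part, the vanishing $R^i\varphi_*\mathfrak{A}=0$ for $i>0$ follows.

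\emph{Expected obstacle.} The main technical point to be careful about is the interchange of $R\varphi_*$ with the $\hbar$-adic inverse limit: this requires a coherence/finiteness statement for the sheaves $\mathcal{R}/\hbar^n\mathcal{R}$ as $\Str_{X'_\F}$-modules (via $\varphi$) and an application of the theorem on formal functions, so one should set things up so that $\mathcal{R}$ is, locally on $Y'$, a complete filtered algebra with Noetherian associated graded. The $\mathbb{G}_m$-equivariance is what makes this clean: working with $\mathbb{G}_m$-finite sections reduces everything to honest coherent sheaves and finite-dimensional graded pieces, so the passage from the formal picture back to $\mathfrak{A}$ causes no trouble. The Frobenius-twist bookkeeping (that $\varphi$ as a morphism $X_\F^{(1)}\to X_\F'^{(1)}$ has the same underlying map of spaces, up to the homeomorphism $\Fr$, as the original $\varphi$) is routine but should be stated explicitly to justify that the hypothesis on $R^i\varphi_*\Str_{X_\F}$ is the right input.
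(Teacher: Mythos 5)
Your proposal is correct and follows exactly the route the paper intends: the lemma is stated there as a ``corollary'' of the observation that $R_\hbar(\mathfrak{A})^{\wedge_\hbar}$ is a $\mathbb{G}_m$-equivariant formal quantization of $X_\F$ with special fiber $\Fr_*\Str_{X_\F}$ and that $\mathfrak{A}$ is recovered by taking $\mathbb{G}_m$-finite sections and setting $\hbar=1$, which is precisely what you spell out (compare also the proof of the later Lemma~\ref{Lem:direct_image}, which runs the same reduction). One small point to state explicitly rather than gesture at: to reduce $R^i\varphi^{(1)}_*(\Fr_*\Str_{X_\F})=0$ to the hypothesis $R^i\varphi_*\Str_{X_\F}=0$ you should use the identity $\varphi^{(1)}\circ\Fr=\Fr'\circ\varphi$ together with affineness of both $\Fr$ and $\Fr'$; as written your composition $R\varphi_*\circ R\Fr_*$ elides the distinction between $\varphi$ and $\varphi^{(1)}$ and the role of $\Fr'$.
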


We can produce a tilting generator out of $\mathfrak{A}$ provided $\Gamma(\mathfrak{A})$
has finite homological dimension, which is equivalent, by \cite[Section 2.2]{BK} to
$R\Gamma: D^b(\Coh(\mathfrak{A}))\xrightarrow{\sim} D^b(\Gamma(\mathfrak{A}))$
being an equivalence. In the examples (i)-(iii) above we can construct
$\mathfrak{A}$ by reducing from characteristic $0$. Namely, we have $\lambda\in \paramq_{\mathbb{Q}}$
such that $\A_\lambda$ has finite homological dimension. Then we get $\mathfrak{A}$ out of
$\mathcal{A}_\lambda^\theta$ (we reduce to $\F$, then pass from $\mathcal{A}_{\lambda,\F}^\theta$ to
a formal quantization of $X_\F$, in all the cases of interest this quantization
comes from a Frobenius constant quantization as explained in the paragraph preceding
Lemma \ref{Lem:cohomology_vanishing}).

Once we have $\mathfrak{A}$ as in the previous paragraph, to construct a tilting generator
we proceed as follows.

\begin{itemize}
\item[(a)] In all of our examples, the Azumaya algebra
$\mathfrak{A}^{\wedge_0}:=\mathfrak{A}|_{X_\F^{(1)\wedge_0}}$ splits.
Let $\hat{E}_\F^{(1)}$ denote a splitting bundle. Recall that it is
defined up to a twist with a line bundle.
\item[(b)] The bundle $\hat{E}_\F^{(1)}$ has a $\mathbb{G}_m$-equivariant
structure by \cite{Vologodsky} and we can use it to extend $E_\F^{(1)\wedge_0}$
to a unique equivariant bundle  $E_\F^{(1)}$ on $X_\F^{(1)}$. Then we can view the extension as a bundle $E_\F$ on $X_\F$.
\item[(c)] The bundle $E_\F$ is defined over a finite field $\F_q$.
Let $R^{\wedge_q}$ denote a complete local ring with residue field $\F_q$
and fraction field that can be embedded into $\C$. We  deform $E_{\F_q}$
to a vector bundle $E^{\wedge_q}_{R}$ on the formal neighborhood of $X_{\F_q}$
in $X_{R^{\wedge_q}}$.
\item[(d)] Thanks to the $\mathbb{G}_m$-equivariance, from $E^{\wedge_q}_{R}$
we get a vector bundle $E_{R^{\wedge_q}}$ on $X_{R^{\wedge_q}}$. Then we
change the base to $\C$, finally getting $E$.
\end{itemize}

\subsubsection{Consequences}
Note that we can deform $E$ to a vector bundle $E_\param$ on $X_{\param}$.
Namely, we first deform $E$ to the formal neighborhood of $X$ in $X_{\param}$,
which is possible because $E$ has no higher self-extensions. Then we use the $\mathbb{G}_m$-equivariance
to extend from the formal neighborhood to $X$ itself.

The following proposition summarizes properties of $E_\param$ that we will need.

\begin{Prop}\label{Prop:tilting_properties1}
The following claims are true:
\begin{itemize}
\item
$\operatorname{Ext}^i(E_\param,E_\param)=0$ for $i>0$.
In particular, $H_{\param}:=\End(E_\param)$ is a flat deformation of $H$.
\item The functor $R\Gamma(E_\param\otimes\bullet)$ is a derived
equivalence $D^b(\Coh(\Str_{X_\param}))\xrightarrow{\sim} D^b(H_\param\operatorname{-mod})$.
\item The algebra $H_{\param}$ is independent of the choice of
the symplectic resolution $X$.
\end{itemize}
\end{Prop}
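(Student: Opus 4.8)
The plan is to establish the three bullet points in order, deducing the second and third from the first together with the already-available technology for $E$ itself. First I would address the higher-extension vanishing $\operatorname{Ext}^i(E_\param,E_\param)=0$ for $i>0$. Since $E_\param$ is, by construction, the $\mathbb{G}_m$-equivariant extension of the formal deformation of $E$ over the completed parameter base, and $\operatorname{Ext}^i(E,E)=0$ for $i>0$ (this is condition (1) in the definition of a tilting generator), the standard deformation argument applies: on the formal neighborhood of $X$ in $X_\param$ one runs the semicontinuity/base-change machinery for $\mathcal{E}xt$ sheaves, using that $R^j\pi_*\mathcal{E}xt^i(E,E)$ for the projective morphism $\pi\colon X\to Y$ vanishes in the relevant range because $Y$ is affine and the fiber $\operatorname{Ext}^i(E,E)=H^i(X,\mathcal{E}nd(E))$ vanishes; then the $\mathbb{G}_m$-action spreads the vanishing from the formal neighborhood to all of $X_\param$. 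Once $\operatorname{Ext}^i(E_\param,E_\param)=0$ for $i>0$, the algebra $H_\param:=\operatorname{End}(E_\param)=R\Gamma(\mathcal{E}nd(E_\param))$ is computed by the underived global sections, and flatness over $\C[\param]$ follows since $H$ is the fiber at $0$ and there is no $\operatorname{Ext}^1$ obstruction; hence $H_\param$ is a flat deformation of $H$.

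For the second bullet, I would invoke \cite[Proposition 2.2]{BK}: a vector bundle on a scheme projective over an affine, with no higher self-extensions and with endomorphism algebra of finite homological dimension, gives a derived equivalence $R\Gamma(E_\param\otimes\bullet)\colon D^b(\Coh(X_\param))\xrightarrow{\sim}D^b(H_\param\operatorname{-mod})$. The no-higher-self-extensions part is exactly the first bullet. For the finite homological dimension of $H_\param$ one argues by specialization: $H$ has finite homological dimension (it is a tilting generator), $H_\param$ is flat over the regular ring $\C[\param]$ with closed fiber $H$, so $\operatorname{gl.dim} H_\param\le \operatorname{gl.dim} H+\dim\param<\infty$; alternatively, one checks that $E_\param$ is itself a tilting generator for the CY scheme $X_\param$ (which is smooth and projective over the affine $Y_\param$) directly. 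Then \cite[Proposition 2.2]{BK} applies verbatim with $X$ replaced by $X_\param$.

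For the third bullet — independence of $H_\param$ from the choice of symplectic resolution $X$ — the key point is that for two resolutions $X^1,X^2$ of $Y$ the deformations $X^1_\param$ and $X^2_\param$ agree away from a locus of codimension $\ge 2$ (both being identified with $\operatorname{Pic}$-compatible deformations of the common $Y^0_\param$, as recalled in Section \ref{SSS_classif_resol}), and the tilting bundle $E_\param$ extends uniquely across such a locus because it is reflexive (a vector bundle) and has no higher self-extensions. More precisely, I would show that the two constructions of $E_\param$ produce bundles whose restrictions to $\breve{X}^1_\param\cong\breve{X}^2_\param$ coincide — this can be traced back to the characteristic-$p$ construction, where the splitting bundle of the Azumaya algebra $\mathfrak{A}$ is determined (up to a line-bundle twist, which is pinned down by the $\mathbb{G}_m$-equivariant normalization) on the generic locus, and $\mathfrak{A}$ itself only depends on $Y$ via $\mathcal{A}_\lambda$ and not on the resolution (compare the discussion around $\tilde{\Gamma}$ in the introduction and \cite[Section 7]{BL}). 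Since $\operatorname{End}(E_\param)=\Gamma(\mathcal{E}nd(E_\param))$ and global sections over $X^i_\param$ agree with global sections over the common open $\breve{X}^i_\param$ (the complement has codimension $\ge 2$ and $\mathcal{E}nd(E_\param)$ is reflexive, so sections extend), we get $H^1_\param=H^2_\param$.

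The main obstacle I expect is the third bullet: pinning down that the two tilting bundles $E^1_\param$ and $E^2_\param$ restrict to the \emph{same} bundle on the common open locus, rather than merely to bundles with the same endomorphism algebra. This requires going back into the characteristic-$p$ construction of Section \ref{SSS_tilting_construction} and checking that the splitting module for $\mathfrak{A}^{\wedge_0}$, its $\mathbb{G}_m$-equivariant extension, and the subsequent lift to characteristic $0$ are all carried out compatibly for the two resolutions — in particular that the line-bundle ambiguity in step (a) is resolved by a choice that does not see the resolution. The cohomology-vanishing input needed for the global-sections comparison (that $R^i\varphi_*\mathfrak{A}=0$ for the relevant birational contractions) is supplied by Lemma \ref{Lem:cohomology_vanishing}, so that part is routine; the genuinely delicate point is the canonicity of $E_\param$ itself across flops.
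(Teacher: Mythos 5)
Your outline for the first two bullets matches the paper's intent — the paper dismisses (1) and (2) as ``standard consequences of the construction of $E_\param$,'' which is precisely the deformation/semicontinuity argument and then \cite[Proposition 2.2]{BK} that you spell out — so that part is fine.

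For the third bullet, however, you are proposing to prove more than the paper does, and the extra strength is exactly where your plan stalls. You aim to show that $E^1_\param$ and $E^2_\param$ restrict to the \emph{same} bundle on the codimension-$\geqslant 2$ common locus $\breve{X}^1_\param\cong\breve{X}^2_\param$, and you honestly flag that resolving the line-bundle ambiguity in step (a) of the characteristic-$p$ construction compatibly for two resolutions is the delicate point. The paper avoids this entirely by arguing at the level of the \emph{endomorphism sheaves} rather than the bundles, following \cite[Section 5.1]{Kaledin}. Concretely: let $Y^2$ be the complement in $Y$ of the symplectic leaves of codimension $\geqslant 4$, and $X^2,X'^2$ its preimages in $X,X'$. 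Kaledin's slice analysis over the codimension-$\leqslant 2$ leaves shows that $\mathcal{E}nd(E)|_{X^2}$ and $\mathcal{E}nd(E')|_{X'^2}$ coincide — note this is insensitive to twisting the splitting bundle by a line bundle, which is what dissolves your obstruction. Then one uses that $H$ is Cohen-Macaulay (Lemma \ref{Lem:tilting_CM}) together with $\operatorname{codim}_Y(Y\setminus Y^2)\geqslant 4$ to get $H^i(Y^2,H|_{Y^2})=0$ for $i=1,2$, and hence $H^1(X^2,\mathcal{E}nd(E))=0$; this $H^1$-vanishing is what forces the $\param$-deformations of $\operatorname{End}(E)=\Gamma(X^2,\mathcal{E}nd(E))$ coming from $X_\param$ and $X'_\param$ to agree. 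So the two points to adjust in your plan are: (i) replace ``match the bundles'' with ``match the endomorphism sheaves,'' which is genuinely weaker and is all that Kaledin's argument delivers; and (ii) the codimension input needed is $\geqslant 4$ in $Y$ with Cohen-Macaulayness of $H$, not $\geqslant 2$ with reflexivity of $E$ — the latter gives section-extension but not the $H^1$, $H^2$ vanishing that controls the deformation. As written, your route for (3) has a real gap at exactly the place you identify; the paper's route is a different and cleaner argument that sidesteps it.
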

\begin{proof}
(1) and (2) are standard consequences of the construction of $E_\param$
and (3) is proved in the same way as the analogous result in \cite[Section 5.1]{Kaledin}.
Namely, let $Y^2$ denote the complement in $Y$ of the  symplectic leaves
of codimension $\geqslant  4$. Let $X^2$ denote the preimage of $Y^2$ in $X$.
For another symplectic resolution $X'\rightarrow Y$, we have a unique
isomorphism $X^2\xrightarrow{\sim} X'^2$ of schemes over $Y$. Let $E,E'$
be tilting generators on $X,X'$ produced from the quantizations with the same
parameter. As in \cite[Section 5.1]{Kaledin} we prove that the pullbacks of $H,H'$
to $Y^2$ are isomorphic, hence $H\cong H'$. Since $\operatorname{codim}_{Y}Y\setminus Y^2
\geqslant 4$ and $H$ is a Cohen-Macaulay $\C[Y]$-module, see Lemma
\ref{Lem:tilting_CM}, we see that $H^i(Y^2, H|_{Y_2})=0$ for $i=1,2$.
It follows that $H^1(X^2, \mathcal{E}nd(E))=H^1(X'^2, \mathcal{E}nd(E'))=0$.
As in \cite[Section 5.1]{Kaledin}, the restrictions on $\mathcal{E}nd(E)$
and $\mathcal{E}nd(E')$ to $X^2$ and $X'^2$, respectively,  coincide.
Since this sheaf has no 1st cohomology, the deformations of
$$\operatorname{End}(E)=\Gamma(X^2, \mathcal{E}nd(E))=\Gamma(X'^2,
\mathcal{E}nd(E'))$$ coming from $X_\param, X'_\param$ must coincide as well,
by an argument similar to \cite[Section 5.1]{Kaledin}. This finishes the proof.
\end{proof}

\subsubsection{Property ($\heartsuit$)}
We recall our most important assumption on $E$.

\begin{itemize}
\item[($\heartsuit$)] Assume that $E$ has a direct summand of rank $1$.
Then we can twist $E$ with a line bundle and assume that $\Str_X$ is a direct
summand of $E$. Also assume that the indecomposable constituents of $E$ are
independent of $p$ (the prime used in the construction of $E$).
\end{itemize}

Checking ($\heartsuit$) in concrete examples is
quite nontrivial. There are three families of examples, where ($\heartsuit$) is known to
hold.

\begin{Ex}\label{Ex:Springer_rk1}
Let $X$ be the Springer resolution of the nilpotent cone in $\g$.
($\heartsuit$) is true in this case. We take the tilting bundle $\tilde{E}$
for $T^*\mathcal{B}$ that comes from the construction explained
in Section \ref{SSS_tilting_construction} with $\lambda=\rho$ (so
that the corresponding Azumaya algebra $\mathfrak{A}$ is
$D_{\B_\F}$).
The presence of a rank 1 summand follows from \cite[Section 2.2.5]{BMR_sing}
(compare to the proof of \cite[Lemma 4.7]{BL_modular}). By the construction of \cite{BM}, the indecomposable summands of $\tilde{E}$ are independent of the choice of $p$.

More generally, let $X=T^*(G/P)$.  Let $Z:=G\times^B \mathfrak{p}^\perp$
denote the standard correspondence between $T^*\mathcal{B}$ and $T^*(G/P)$
and let $\iota:Z\hookrightarrow T^*\mathcal{B}$ and $\varpi: Z\twoheadrightarrow
T^*(G/P)$ be the natural inclusion and the natural projection, respectively. Then the tilting bundle $E$ obtained from the Azumaya algebra
$D_{G_\F/P_\F}$ is related to $\tilde{E}$ via $E=\varpi_*\iota^* \tilde{E}$. This is by
the construction in \cite[Section 4]{BM}.
Clearly, $\varpi_* \iota^* \Str_{T^*\mathcal{B}}=\Str_{T^*(G/P)}$. So $T^*(G/P)$
satisfies ($\heartsuit$) as well.
\end{Ex}

\begin{Ex}\label{Ex:sing_resol}
Let $V$ be a symplectic vector space, $\Gamma$ a finite subgroup of linear
symplectomorphisms of $V$. We set $Y=V/\Gamma$ and assume there is a projective conical
symplectic resolution to be denoted by $X$. Then ($\heartsuit$) holds thanks to \cite{BK}. More precisely,
for a suitable choice of $\lambda$ above we can replace $E$ with
a direct sum of its direct summands with suitable grading shifts
and achieve that $H\cong \C[V]\#\Gamma$, an  isomorphism of graded
$\C[Y]=\C[V]^\Gamma$ algebras. So, applying the trivial idempotent  $e\in \C\Gamma$
to $E$ we get  a line bundle. The direct summands of $E$ are independent
of the choice of $p$ by the classification result of \cite{Losev_Procesi}.

The resulting tilting generator $E$ with $eE=\Str_X$ will be denoted
by $\Pro$ and called a {\it Procesi} bundle. The algebra  $H_{\param}$
is the symplectic reflection algebra from \cite{EG} at $t=0$, see
\cite[Section 6]{quant_iso}.
\end{Ex}

\begin{Ex}
Let $X$ be a smooth Coulomb branch of a gauge theory. Then ($\heartsuit$) holds for all
choices of $\lambda$, \cite{Webster_tilting}. Indeed, by the paragraph preceding
\cite[Lemma 3.10]{Webster_tilting}, the idempotent $e_{\tau}$ there splits
into the  sum of $|W|$ idempotents. Then we can take $\mu=\tau$ in
\cite[Lemma 3.10]{Webster_tilting} getting a tilting generator
$\hat{\mathcal{Q}}_\tau$ of rank $|W|$. Hitting it with one of those
$|W|$ idempotents we get a line bundle summand.
The independence of $p$ is discussed after \cite[Lemma 3.24]{Webster_tilting}.
So ($\heartsuit$) indeed holds.

We will
be interested in the case when $X$ is a Nakajima quiver variety of finite
or affine type $A$, see Remark \ref{Rem:Coulomb}.
\end{Ex}

Now we will discuss some structural features.
The direct summand $\Str_X$ in $E$ gives rise to an idempotent in $H$
to be denoted by $e$, compare to the proof of Proposition
\ref{Prop:tilting_properties1}. This idempotent is independent of the choice of $X$.
Note that $eHe\cong \C[Y]$ and we have $\Gamma(E)=He$
and $\Gamma(E^*)=eH$. The idempotent $e$ deforms to $H_{\param}$,
denote the extension by the same letter.

We now record some properties of the algebra $H$ and the modules $He$ and $eH$.

\begin{Lem}\label{Lem:He_properties}
The following claims are true:
\begin{enumerate}
\item $eH$ and $He$ are maximal Cohen-Macaulay $\C[Y]$-modules.
\item The natural homomorphism $H\rightarrow \End_{eHe}(He)$ is
an isomorphism.
\end{enumerate}
\end{Lem}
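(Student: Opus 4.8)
The plan is to deduce both claims from the tilting generator picture together with the structural results already assembled, especially Lemma \ref{Lem:tilting_CM} and Proposition \ref{Prop:tilting_properties1}. For claim (1): recall $He = \Gamma(E)$ and $eH = \Gamma(E^*)$, while $\C[Y] = \Gamma(\Str_X)$, and all of these are modules over $\C[Y] = eHe$. First I would note that $H$ itself is a maximal Cohen-Macaulay $\C[Y]$-module: by Lemma \ref{Lem:tilting_CM} the algebra $H$ is Cohen-Macaulay, and since $X \to Y$ is a symplectic (hence crepant, Gorenstein) resolution with $R\Gamma(\Str_X) = \C[Y]$, the $\C[Y]$-module $H = \Gamma(\mathcal{E}nd(E))$ has depth equal to $\dim \C[Y]$ on every localization — concretely one uses that $\mathcal{E}nd(E)$ is a vector bundle on $X$, that $R^i\pi_* \mathcal{E}nd(E) = 0$ for $i>0$ (Grauert–Riemenschneider type vanishing, as $\mathcal{E}nd(E)$ is a summand-twist of a bundle with vanishing higher cohomology against $\Str_X$; more precisely $\mathcal{E}nd(E)$ is tilting so $H^{>0}(X,\mathcal{E}nd(E))=0$, and the same holds after restricting to preimages of affine opens in $Y$), and that $\operatorname{codim}_Y(Y^{\mathrm{sing}}) \geq 2$. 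Then $He$ and $eH$ are direct summands of $H$ as $\C[Y]$-modules (cut out by the idempotent $e$ acting on one side), and a direct summand of a maximal Cohen-Macaulay module is maximal Cohen-Macaulay. This gives (1).

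For claim (2), the natural map $H \to \End_{eHe}(He) = \End_{\C[Y]}(He)$ comes from the left $H$-action on $He$; I want to show it is an isomorphism. The geometric input is that $He = \Gamma(X, E)$ and $\End_{\C[Y]}(\Gamma(X,E))$ should be computed via the equivalence. The cleanest route: on $X$ we have $\mathcal{E}nd_{\Str_X}(E) \xrightarrow{\sim} \mathcal{H}om_{\Str_X}(E,E)$ tautologically, and since $E$ is a tilting generator, $R\Gamma$ induces $\End_{D^b(\Coh X)}(E) = \Gamma(\mathcal{E}nd(E)) = H$ on one side and $\Hom_{D^b(H\operatorname{-mod})}(\Gamma E, \Gamma E)$ on the other; but $\Gamma(E) = He$ as an $(H, eHe)$-bimodule, so $\End_{eHe}(He) \supseteq \End_H(\text{something})$... — rather, I would argue directly on $Y$. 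Localize at a point of $Y$; over the smooth locus $Y^0$ (complement of codimension $\geq 2$) the resolution is an isomorphism and $E$ restricts to a bundle, so $\mathcal{E}nd(E)|_{Y^0} \xrightarrow{\sim} \mathcal{H}om_{\Str_{Y^0}}(E|_{Y^0}, E|_{Y^0})$ and taking sections gives $H|_{Y^0} \xrightarrow{\sim} \End_{\Str_{Y^0}}(He|_{Y^0})$. Now both $H$ and $\End_{\C[Y]}(He)$ are maximal Cohen-Macaulay $\C[Y]$-modules — for the target this is the standard fact that $\operatorname{Hom}$ into an MCM module over a Cohen-Macaulay ring is MCM, using part (1) — and a map between reflexive (indeed MCM) $\C[Y]$-modules that is an isomorphism in codimension $1$ (here even away from codimension $2$) is an isomorphism. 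So $H \xrightarrow{\sim} \End_{eHe}(He)$.

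The main obstacle I expect is making the depth/Cohen-Macaulayness bookkeeping in claim (1) fully rigorous without circularity: one needs $H^{>0}$-vanishing of $\mathcal{E}nd(E)$ not just globally but compatibly with restriction to affine opens of $Y$ (so that depth can be checked locally), and one needs to know $\dim H = \dim \C[Y]$ as a $\C[Y]$-module, i.e. that $H$ is finite over $\C[Y] = eHe$ — the latter follows since $H = \End(E)$ with $E$ coherent and $X$ projective over $Y$, so $H$ is a finitely generated $\C[Y]$-module. Everything else is a combination of Grauert–Riemenschneider, the codimension estimate on the singular locus of $Y$, and the elementary homological algebra of MCM modules over Cohen-Macaulay rings (reflexivity, the ``$S_2$ plus iso in codimension $1$'' criterion). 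If one prefers, claim (2) can alternatively be deduced from the derived equivalence $R\Gamma(E \otimes -)$ of Proposition \ref{Prop:tilting_properties1} by identifying $\End$ of the image of $\Str_X$, but the codimension-$2$ argument above is more self-contained.
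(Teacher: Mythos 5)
Your proof is correct and follows essentially the same route as the paper's: part (1) is deduced from Lemma \ref{Lem:tilting_CM} plus the observation that $eH$ and $He$ are $\C[Y]$-module direct summands of $H$, and for part (2) the paper likewise uses that the map is an isomorphism off a codimension-$\geqslant 2$ locus together with the reflexivity/$S_2$ property of $H$ (phrased there as: torsion-freeness gives injectivity and $H = \Gamma(H|_{Y^{reg}})$ gives surjectivity). One small caveat on your part (2): you only need the target $\End_{eHe}(He)$ to be reflexive, which holds because $He$ is reflexive by (1) and $\Hom$ of a finitely generated module into a reflexive module over a normal domain is reflexive — the parenthetical claim that it is ``indeed MCM'' (via ``$\Hom$ into MCM is MCM'') is not valid in general in dimension $\geqslant 3$, but reflexivity is all that your ``isomorphism in codimension $1$'' conclusion requires.
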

\begin{proof}
(1) is a direct corollary of Lemma \ref{Lem:tilting_CM}.

Now we prove (2). Note that the kernel and the cokernel of  $H\rightarrow \End_{eHe}(He)$
are supported on $Y^{sing}$. Since $H$ is a maximal Cohen-Macaulay $\C[Y]$-module we
see that it is torsion free and $H=\Gamma(H|_{Y^{reg}})$. The former implies that the
homomorphism is injective and the latter implies that it is surjective.
\end{proof}

\section{Exactness of global section functor}

\subsection{Cohomology vanishing for tilting bundles}
Let $E=E^\theta$  be a tilting bundle on $X^\theta$ coming from a quantization in  characteristic $p$ as explained in Section \ref{SSS_tilting_construction}. We write $H^\theta$ for $\mathcal{E}nd(E^\theta)$.

We write $C$ be for the closure of
the chamber of $\theta$.
Pick $\chi\in C$ and let $X^\chi$ be
the partial resolution of $Y$ corresponding to $\chi$. We write $\pi_1$ for the
morphism $X\rightarrow X^\chi$.

\begin{Lem}\label{Lem:direct_image}
We have $R^i \pi_{1*} H^\theta=0$ for $i>0$.
\end{Lem}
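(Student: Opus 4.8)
The plan is to deduce this from the cohomology vanishing for Frobenius constant quantizations, Lemma~\ref{Lem:cohomology_vanishing}, after reducing modulo a large prime. Over $\C$ the statement is essentially Grauert--Riemenschneider: $X$ is smooth and symplectic, so $\omega_X\cong\Str_X$, while $\pi_1\colon X\to X^\chi$ is projective and birational by the construction of partial resolutions in Section~\ref{SSS_partial_resolutions}; hence $R^i\pi_{1*}\Str_X=R^i\pi_{1*}\omega_X=0$ for $i>0$. The real content is therefore to transport this vanishing into characteristic $p$, where Grauert--Riemenschneider is not directly available but the Frobenius constant quantization $\mathfrak{A}$ from Section~\ref{SSS_tilting_construction} provides a substitute.

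First I would spread everything out: the morphism $\pi_1$, the bundle $E^\theta$, the sheaf $H^\theta=\mathcal{E}nd(E^\theta)$, and $\mathfrak{A}$ are all defined over a finite localization $\ring$ of $\Z$, and after shrinking $\ring$ I may assume that $H^\theta_\ring=\mathcal{E}nd(E^\theta_\ring)$ is a vector bundle (hence $\ring$-flat), that formation of $R^i\pi_{1,\ring*}$ applied to $\Str_{X_\ring}$ and to $H^\theta_\ring$ commutes with base change, and --- since $R^i\pi_{1,\ring*}\Str_{X_\ring}$ vanishes at the generic point by the previous paragraph --- that $R^i\pi_{1,\ring*}\Str_{X_\ring}=0$ for $i>0$, so that $R^i\pi_{1,\F*}\Str_{X_\F}=0$ for every residue field $\F$ of characteristic $p\gg0$. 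The key characteristic‑$p$ input is that $\mathfrak{A}$ splits globally on $X_\F^{(1)}$: the splitting bundle on the formal neighbourhood of the central fibre extends $\mathbb{G}_m$-equivariantly (Section~\ref{SSS_tilting_construction}(b)), and since the contracting $\mathbb{G}_m$-action carries any $\mathbb{G}_m$-stable open containing the central fibre onto all of $X_\F$, the resulting equivariant map $\mathcal{E}nd(E_\F^{(1)})\to\mathfrak{A}$, an isomorphism on the formal neighbourhood, is an isomorphism everywhere; by the construction and ($\heartsuit$), $E_\F^{(1)}$, viewed on $X_\F$ via the Frobenius twist, coincides up to a line bundle twist (which does not affect $\mathcal{E}nd$) with the reduction $E^\theta_\ring\otimes_\ring\F$. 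Thus $H^\theta_\ring\otimes_\ring\F\cong\mathfrak{A}$ under the Frobenius twist, and Lemma~\ref{Lem:cohomology_vanishing} applied to $\varphi=\pi_{1,\F}$ gives $R^i\pi_{1,\F*}(H^\theta_\ring\otimes_\ring\F)\cong R^i\pi_{1,\F*}\mathfrak{A}=0$ for all $i>0$ (with $\pi_{1,\F}$ viewed as a morphism of Frobenius twists).

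Finally I would descend back to $\C$: by base change, $\big(R^i\pi_{1,\ring*}H^\theta_\ring\big)\otimes_\ring\F=0$ for $i>0$ at every closed point of residue characteristic $p\gg0$, so the support of the coherent sheaf $R^i\pi_{1,\ring*}H^\theta_\ring$ maps into a subset of $\Spec\ring$ missing a dense set of closed points; being constructible, its image avoids the generic point, and since $\Spec\C\to\Spec\ring$ is flat and lands at the generic point, flat base change yields $R^i\pi_{1*}H^\theta=\big(R^i\pi_{1,\ring*}H^\theta_\ring\big)\otimes_\ring\C=0$ for $i>0$. The conceptual skeleton --- Grauert--Riemenschneider together with Lemma~\ref{Lem:cohomology_vanishing} --- is short; I expect the main obstacle to be the bookkeeping that makes the characteristic‑zero vanishing survive reduction mod $p$ (compatibility of cohomology with base change over $\ring$) and the identification $H^\theta_\ring\otimes_\ring\F\cong\mathfrak{A}$, i.e.\ checking that $\mathfrak{A}$ splits on all of $X_\F^{(1)}$ and that the reduction of $E^\theta$ is the splitting bundle produced by the characteristic‑$p$ construction.
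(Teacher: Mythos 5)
Your proposal follows the same overall strategy as the paper: reduce the assertion to $R^i\pi^{(1)}_{1*}\mathfrak{A}=0$ for the Frobenius constant quantization, prove this by deforming to $\operatorname{Fr}_*\Str_{X_\F}$ and using $\pi_1^{(1)}\circ\Fr=\Fr^\chi\circ\pi_1$ (the content of Lemma~\ref{Lem:cohomology_vanishing}, which you cite and the paper re-derives inline), and feed in Grauert--Riemenschneider over $\C$ to get the base case $R^i\pi_{1*}\Str_{X_\F}=0$ for $p\gg 0$. Where you differ is in how the vanishing gets transported from $\mathfrak{A}$ back to $H^\theta$. You spread $H^\theta$ out over $\ring$, assert a \emph{global} isomorphism between the reduction of $H^\theta_\ring$ and $\mathfrak{A}$ under the Frobenius twist, and then descend by flat base change; the paper instead propagates the vanishing step-by-step along the chain $\mathfrak{A}\dashrightarrow\mathfrak{A}^{\wedge_0}=\mathcal{E}nd(\hat{E}_\F^{(1)})\dashrightarrow\mathcal{E}nd(E_\F^{(1)})\dashrightarrow\cdots\dashrightarrow H^\theta$ used to build $E$, never needing more than the formal splitting plus $\mathbb{G}_m$-equivariance at each transition. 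Two places in your version deserve more care. First, you invoke an ``equivariant map $\mathcal{E}nd(E_\F^{(1)})\to\mathfrak{A}$,'' but a priori $E_\F^{(1)}$ is just a $\mathbb{G}_m$-equivariant extension of $\hat{E}_\F^{(1)}$ as an $\Str$-module; to get a map one needs the $\mathfrak{A}^{\wedge_0}$-module structure on $\hat{E}_\F^{(1)}$ to be $\mathbb{G}_m$-equivariant so that it propagates to $E_\F^{(1)}$, giving a map $\mathfrak{A}\to\mathcal{E}nd(E_\F^{(1)})$ which is then an isomorphism by equivariance — this is believable (it is essentially what Vologodsky's result provides) but is not stated in Section~\ref{SSS_tilting_construction} and should be spelled out. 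Second, identifying the spread-out reduction $H^\theta_\ring\otimes_\ring\F$ with $\mathcal{E}nd(E_\F)$ reverses the direction of the construction (which goes from $\F$ up to $\C$, not down) and is precisely the bookkeeping the paper's chain-of-sheaves argument is set up to avoid. Neither point looks fatal, but both are glossed over in your write-up.
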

\begin{proof}
We use the notation of Section \ref{SSS_tilting_construction}.
Tracking the construction of $E$ in Section \ref{SSS_tilting_construction},
we see that it is enough to
prove that
\begin{equation}\label{eq:another_direct_image}
R^i \pi^{(1)}_{1*} \mathfrak{A}=0, \forall i>0,
\end{equation}
for $p\gg 0$.
Indeed, for each of the following sheaves it is straightforward
to see that the cohomology vanishing of its endomorphism sheaf implies that for the next:
$\hat{E}_\F^{(1)}\dashrightarrow E_\F^{(1)}\dashrightarrow
E_\F\dashrightarrow E_{\F_q}\dashrightarrow E_{R}^{\wedge_q}
\dashrightarrow E_{R^{\wedge_q}}\dashrightarrow E$.
And the cohomology vanishing for $\mathfrak{A}$ implies that for
$\mathfrak{A}^{\wedge_0}=\mathcal{E}nd(E_\F^{(1)\wedge_0})$.

Next note that (\ref{eq:another_direct_image}) will follow once we know that
$R^i \pi^{(1)}_{1*} R_\hbar(\mathfrak{A})^{\wedge_\hbar}=0$ for all $i>0$, here
$\bullet^{\wedge_\hbar}$ stands for the $\hbar$-adic completion. Since
$R_\hbar(\mathfrak{A})^{\wedge_\hbar}$ is a formal deformation of $\operatorname{Fr}_* \Str_{X_\F}$
it suffices to show that $R^i\pi^{(1)}_{1*}\operatorname{Fr}_* \Str_{X_\F}=0$ for all $i>0$.
Note that $R^i\pi^{(1)}_{1*}\operatorname{Fr}_*\cong R^i (\pi^{(1)}_1\circ \Fr)$ and
$\pi_1^{(1)}\circ \Fr=\Fr^\chi\circ \pi_1$, where we write $\Fr^\chi$ for the
Frobenius morphism $X^\chi_\F\rightarrow X^{\chi(1)}_\F$. So we reduce to
checking $R^i \pi_{1*}\Str_{X_\F}=0$. Since $p$ is very large, this claim follows
from $R^i\pi_{1*}\Str_X=0$, where it is a consequence of the Grauert-Riemenschneider
theorem, compare to Lemma \ref{Lem:cohom_vanishing}.
\end{proof}

By the discussion in Section \ref{SSS_partial_resolutions}, there is $m_0>0$ such that
\begin{itemize}
\item
$\Str(m_0\chi)\cong \bar{\rho}^* \mathcal{L}$ for a line bundle
$\mathcal{L}$ on $X^\chi$,
\item and  $\mathcal{L}$ is automatically ample on $X^\chi$.
\end{itemize}

The main result of this section is as follows.

\begin{Prop}\label{Prop:cohomology_vanishing}
The  bundle $H^\theta\otimes\Str(m_0n\chi)$ has no higher cohomology for all $n>0$.
\end{Prop}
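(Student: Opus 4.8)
The plan is to follow the proof of Lemma~\ref{Lem:direct_image}, carrying the line bundle twist through the entire construction of $E$. The key idea is that reducing to characteristic $p$ converts the twist by $\Str(m_0n\chi)$ into one by $\Str(pm_0n\chi)$, i.e.\ into a \emph{high} power of an ample bundle on $X^\chi$ (high because $p\gg 0$), so that Serre vanishing can dispatch all $n>0$ at once; a direct argument in characteristic $0$ would only give the vanishing for $n\gg 0$ without further input.

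First I would normalize $\mathcal{L}$. Replacing $m_0$ by a sufficiently divisible multiple, which only replaces $\mathcal{L}$ by a power of itself and does not affect the assertion, I may assume that $\bigoplus_{m\geqslant 0}\Gamma(X^\chi,\mathcal{L}^{\otimes m})$ is generated in degree $1$ over $\C[Y]$ and that $H^i(X^\chi,\mathcal{L}^{\otimes m})=0$ for all $i>0$ and all $m\geqslant 0$. For $m\geqslant 1$ this is Serre vanishing, since $\mathcal{L}$ is ample on $X^\chi$ and $X^\chi$ is projective over the affine variety $Y$; for $m=0$ it follows from Lemma~\ref{Lem:cohom_vanishing} together with $R^i\pi_{1*}\Str_X=0$ and $\pi_{1*}\Str_X=\Str_{X^\chi}$ ($\pi_1$ is birational and $X^\chi$ is normal). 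This vanishing for all $m\geqslant 0$ spreads out to characteristic $p\gg 0$ by standard semicontinuity, being a statement about the uniformly bounded top degrees of the graded local cohomology modules of the finitely generated graded algebra $\bigoplus_{m\geqslant 0}\Gamma(X^\chi,\mathcal{L}^{\otimes m})$. Finally, from $\Str(m_0\chi)\cong\pi_1^*\mathcal{L}$ I get $\Str(m_0n\chi)\cong\pi_1^*(\mathcal{L}^{\otimes n})$.

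Next I would run the argument of Lemma~\ref{Lem:direct_image} with every sheaf twisted by the appropriate avatar of $\Str(m_0n\chi)$. Tracking the construction of $E$ in Section~\ref{SSS_tilting_construction}, and using that $\Str(\chi)\in\Pic(X)$ reduces modulo $p$ and deforms compatibly along the chain $\hat E_\F^{(1)}\dashrightarrow E_\F^{(1)}\dashrightarrow E_\F\dashrightarrow E_{\F_q}\dashrightarrow E_R^{\wedge_q}\dashrightarrow E_{R^{\wedge_q}}\dashrightarrow E$, the asserted vanishing on $X$ will follow once I know that
$$H^i\big(X_\F^{(1)},\ \mathfrak{A}\otimes\pi_1^{(1)*}((\mathcal{L}^{(1)})^{\otimes n})\big)=0\qquad(i>0,\ n>0,\ p\gg 0),$$
where $\mathcal{L}^{(1)}$ is the Frobenius twist of the reduction of $\mathcal{L}$ modulo $p$ and $\pi_1^{(1)}\colon X_\F^{(1)}\to X_\F^{\chi(1)}$ is $\pi_1$ over $\F$; indeed this first gives the same vanishing for $\mathfrak{A}^{\wedge_0}=\mathcal{E}nd(\hat E_\F^{(1)})$ and then propagates along the chain to $\mathcal{E}nd(E)=H^\theta$. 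As in Lemma~\ref{Lem:direct_image}, since $R_\hbar(\mathfrak{A})^{\wedge_\hbar}$ is a $\mathbb{G}_m$-equivariant formal deformation of $\Fr_*\Str_{X_\F}$ and $\pi_1^{(1)*}((\mathcal{L}^{(1)})^{\otimes n})$ is $\mathbb{G}_m$-equivariant, this in turn reduces to showing that $H^i\big(X_\F^{(1)},\Fr_*\Str_{X_\F}\otimes\pi_1^{(1)*}((\mathcal{L}^{(1)})^{\otimes n})\big)=0$ for $i>0$.

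To finish I would compute this last group. By the projection formula it equals $H^i\big(X_\F^{(1)},\Fr_*(\Fr^*\pi_1^{(1)*}((\mathcal{L}^{(1)})^{\otimes n}))\big)$; using the identity $\pi_1^{(1)}\circ\Fr=\Fr^\chi\circ\pi_1$ (with $\Fr^\chi\colon X_\F^\chi\to X_\F^{\chi(1)}$ the Frobenius morphism) together with $\Fr^{\chi*}\mathcal{L}^{(1)}\cong\mathcal{L}^{\otimes p}$, one gets $\Fr^*\pi_1^{(1)*}((\mathcal{L}^{(1)})^{\otimes n})\cong\pi_1^*(\mathcal{L}^{\otimes pn})\cong\Str(pm_0n\chi)$. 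Since $\Fr$ is finite, the group is $H^i(X_\F,\pi_1^*(\mathcal{L}^{\otimes pn}))$, and applying the Leray spectral sequence for $\pi_1\colon X_\F\to X_\F^\chi$, the projection formula, $R^i\pi_{1*}\Str_{X_\F}=0$ for $i>0$ (a consequence for $p\gg 0$ of Grauert--Riemenschneider in characteristic $0$, exactly as in Lemma~\ref{Lem:direct_image}), and $\pi_{1*}\Str_{X_\F}=\Str_{X_\F^\chi}$, one obtains $H^i(X_\F^\chi,\mathcal{L}^{\otimes pn})$, which vanishes for $i>0$ by the first paragraph (as $pn\geqslant 0$). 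The main technical point is the bookkeeping in the third paragraph --- ensuring the twisted cohomology vanishing genuinely propagates through each link of the chain defining $E$ --- but this is entirely parallel to the proof of Lemma~\ref{Lem:direct_image}, so no new difficulty arises there.
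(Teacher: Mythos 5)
Your proof is correct and takes essentially the same route as the paper's: reduce through the chain of constructions of $E$ exactly as in Lemma~\ref{Lem:direct_image}, pass to the Frobenius-constant quantization $\mathfrak{A}$ and its formal deformation of $\Fr_*\Str_{X_\F}$, and then exploit $\Fr^{\chi*}\mathcal{L}^{(1)}\cong\mathcal{L}^{\otimes p}$ together with $R^i\pi_{1*}\Str_{X_\F}=0$ and ampleness of $\mathcal{L}$ on $X^\chi$. The only cosmetic difference is bookkeeping: the paper immediately reduces to $n=1$ by replacing $\mathcal{L}$ with $\mathcal{L}^{\otimes n}$ (still ample), so it can simply cite Serre vanishing for $\mathcal{L}^{\otimes p}$ with $p\gg0$; you instead carry $n$ through and pre-normalize $m_0$ so that $H^i(X^\chi,\mathcal{L}^{\otimes m})=0$ for all $m\geqslant 0$ and then spread this out, which is sound but a bit more machinery than the paper uses.
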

\begin{proof}
In the proof we can assume that $n=1$.
Arguing as in the proof of Lemma \ref{Lem:direct_image} and using its claim, we see that we can
reduce the proof to the claim of the current proposition to showing that
\begin{equation}\label{eq:cohom_vanishing}
H^i(X_\F^\chi, \pi^{(1)}_{1*}[\operatorname{Fr}_* \Str_{X_\F}\otimes
\mathcal{L}^{(1)}_\F])=0.
\end{equation}
But $ \pi^{(1)}_{1*}\operatorname{Fr}_* \Str_{X_\F}=\operatorname{Fr}^\chi_* \Str_{X^\chi_\F}$. So the left hand
side of (\ref{eq:cohom_vanishing}) is
$$H^i(X_\F^\chi, \mathcal{L}_\F^{\otimes p}).$$
Since $p$ is sufficiently large and $\mathcal{L}$ is ample, the previous space is zero.
\end{proof}

We remark that if $\chi$ is ample for $X$, then we can take $m_0=1$.

\begin{Cor}\label{Cor:sublattice}
There is a full rank sublattice $\underline{\Lambda}\subset \Lambda$ such that for every $\chi\in \underline{\Lambda}$ there is a generic element $\theta$ such that $H^i(X^\theta, H^\theta(m\chi))=0$
for all $i>0$ and $m>0$. Moreover, for $\theta$ we can choose any element such that $\chi$
lies in the closure of the chamber of $\theta$.
\end{Cor}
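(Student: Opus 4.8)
The plan is to construct $\underline{\Lambda}$ so that, for every $\chi\in\underline{\Lambda}$ and every generic $\theta$ with $\chi$ in the closure $C$ of the chamber of $\theta$, the line bundle $\Str_{X^\theta}(\chi)$ is \emph{itself} — not just some positive multiple of it — the pullback $\pi_1^*\mathcal{L}$ of an ample line bundle $\mathcal{L}$ on the partial resolution $X^{\underline{C}}$, where $\underline{C}$ is the unique face of $C$ containing $\chi$ in its relative interior and $\pi_1\colon X^\theta\to X^{\underline{C}}$ is the associated contraction from Section \ref{SSS_partial_resolutions}. Once this is arranged, the argument of Proposition \ref{Prop:cohomology_vanishing} runs with $m_0=1$ (its proof only uses that $\Str(m_0\chi)$ is such a pullback, cf.\ the remark following it), yielding $H^i(X^\theta,H^\theta(m\chi))=0$ for all $i,m>0$. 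Before anything else I would observe that the assertion is invariant under the diagonal $W_X$-action on pairs $(\chi,\theta)$ — it identifies $X^\theta$ with $X^{w\theta}$ and $\Str_{X^\theta}(\chi)$ with $\Str_{X^{w\theta}}(w\chi)$ — and that $C_{mov}$ is a fundamental domain for $W_X$; so I may assume $\theta\in C_{mov}$, hence $C\subset C_{mov}$, which reduces everything to the finitely many faces $\underline{C}$ of closures of chambers of $C_{mov}$.

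To build $\underline{\Lambda}$: for each of these finitely many faces $\underline{C}$, the pullback $\pi_1^*\colon\Pic(X^{\underline{C}})\hookrightarrow\Pic(X^\theta)=\Lambda$ is injective (a projective birational morphism onto a normal variety), its image $\Lambda_{\underline{C}}:=c_1(\pi_1^*\Pic(X^{\underline{C}}))$ lies inside $\Lambda\cap\operatorname{span}_{\R}(\underline{C})$, and both lattices have rank $\dim\underline{C}$ — this follows from the properties of $\pi_1$ and $\underline{C}$ recorded in Section \ref{SSS_partial_resolutions}, since $\pi_1^*$ carries the (full-dimensional) ample cone of $X^{\underline{C}}$ onto the relative interior of $\underline{C}$. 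Hence each index $[\Lambda\cap\operatorname{span}_{\R}(\underline{C}):\Lambda_{\underline{C}}]$ is finite. I would then let $N$ be a positive integer divisible by all of these finitely many indices and set $\underline{\Lambda}:=N\Lambda$, a full rank sublattice of index $N^{\dim\param}$.

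The verification is then routine. Given $\chi\in\underline{\Lambda}$ and generic $\theta$ with $\chi\in C$, reduced as above to $\theta\in C_{mov}$: the case $\chi=0$ is immediate, since $H^i(X^\theta,H^\theta)=\Ext^i_{\Str_{X^\theta}}(E^\theta,E^\theta)=0$ for $i>0$ as $E^\theta$ is a tilting bundle. For $\chi\neq0$, let $\underline{C}$ be the face of $C$ with $\chi$ in its relative interior; then $\chi\in\operatorname{span}_{\R}(\underline{C})$, so writing $\chi=N\chi'$ with $\chi'\in\Lambda$ gives $\chi'\in\Lambda\cap\operatorname{span}_{\R}(\underline{C})$, whence $\chi=N\chi'\in\Lambda_{\underline{C}}$ by the choice of $N$. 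Thus $\Str_{X^\theta}(\chi)=\pi_1^*\mathcal{L}$ for some $\mathcal{L}\in\Pic(X^{\underline{C}})$, and since $c_1(\pi_1^*\mathcal{L})=\chi$ lies in the relative interior of $\underline{C}$, the bundle $\mathcal{L}$ is ample on $X^{\underline{C}}$ (the converse direction in Section \ref{SSS_partial_resolutions}). Now Proposition \ref{Prop:cohomology_vanishing} applies with $m_0=1$ and gives the desired vanishing for all $m>0$; the final clause of the corollary is automatic, since the only property of $\theta$ used is $\chi\in C$.

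I expect the one genuinely delicate point to be the lattice-and-cone bookkeeping: checking that the finitely many sublattices $\Lambda_{\underline{C}}$ really do have finite index in $\Lambda\cap\operatorname{span}_{\R}(\underline{C})$ (so that a single $N$ works), and hence that an element of $\underline{\Lambda}$ sitting on the boundary of an ample cone descends to the relevant $X^{\underline{C}}$ as an \emph{ample} rather than merely nef class — this is exactly what makes the strong form of Proposition \ref{Prop:cohomology_vanishing} (all $m>0$, not just $m\gg0$) available. Everything else — the cohomology vanishing itself and the Frobenius reduction — is already packaged in Proposition \ref{Prop:cohomology_vanishing} and Lemma \ref{Lem:direct_image}.
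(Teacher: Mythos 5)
Your proof is correct and follows the same route as the paper: apply Proposition \ref{Prop:cohomology_vanishing} with $m_0=1$ after passing to a full-rank sublattice on which every element already descends to the relevant partial resolution $X^{\underline{C}}$ as an ample class. The paper's own argument is considerably terser --- it only notes that one may pass to a finitely generated free subgroup $\Lambda'\subset\Lambda$ with the same image in $\param$ and then says the claim ``follows directly'' from Proposition \ref{Prop:cohomology_vanishing} --- so your lcm-of-indices bookkeeping over the finitely many faces $\underline{C}$ is exactly the step the paper leaves implicit, and it is worth having spelled out. Two small remarks. First, $\Lambda=\Pic$ may have torsion, so $N\Lambda$ need not be free; you should first pass to a free subgroup $\Lambda'\subset\Lambda$ with image $\param_\Z$ (as the paper does) and then set $\underline{\Lambda}:=N\Lambda'$, which also makes the later sentence ``$\underline{\Lambda}$ embeds into $\param$ under $c_1$'' automatic. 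Second, the reduction modulo $W_X$ is harmless but not needed for finiteness: since there are only finitely many classical walls, $\param_\R$ is already cut into finitely many chambers, and one can run the same argument over all of their faces directly.
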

\begin{proof}
We note that the image of $\Lambda$ in $\param$ is a lattice. So we can choose a finitely generated
subgroup $\Lambda'\subset \Lambda$ whose image in $\param$ coincides with that of $\Lambda$.
Now the claim of the corollary follows directly from Proposition \ref{Prop:cohomology_vanishing}.
\end{proof}

We note that $\underline{\Lambda}$ embeds into $\param$ under $c_1$.
In what follows we will identify $\underline{\Lambda}$ with its image.

\subsection{Quantizations of tilting bundles and their endomorphisms}
\subsubsection{Construction}
Since $E^\theta$ has no higher self-extensions, we see that it uniquely
quantizes to a right $\A_\lambda^\theta$-module $\Ecal^\theta_\lambda$.
Note that since $\Str_X$ is a direct summand of $E^\theta$,
we see that $\A_\lambda^\theta$ is a direct summand in $\Ecal^\theta_\lambda$.
We set $\Hcal_\lambda^\theta:=\mathcal{E}nd_{\A_\lambda^\theta}(\Ecal^\theta_\lambda)$.

Similarly, we can start with $E_\param^\theta$ and produce a right $\A_\paramq^\theta$-module
$\Ecal^\theta_{\paramq}$ and a sheaf of algebras $\Hcal^\theta_\paramq$. Note that, by the construction,
$\Ecal^\theta_\lambda=\Ecal^\theta_\paramq\otimes_{\C[\paramq]}\C_\lambda$ and
$\Hcal^\theta_\lambda=\Hcal^\theta_{\paramq}\otimes_{\C[\paramq]}\C_\lambda$.

The next proposition should be viewed as a quantum analog of Proposition \ref{Prop:tilting_properties1}.

\begin{Prop}\label{Prop:tilting_properties_quantum}
The following claims are true:
\begin{enumerate}
\item We have $R^i\Gamma(\Hcal_\lambda^\theta)=0, R^i\Gamma(\Hcal_\paramq^\theta)=0$ for $i>0$.
In particular, $\Gamma(\Hcal^\theta_\lambda)$ is a filtered deformation of
$H$ and $\Gamma(\Hcal^\theta_\paramq)$ is a filtered deformation of $H_\param$.
\item The functor $R\Gamma(\Ecal^\theta_\param\otimes\bullet)$ is a derived
equivalence $D^b(\Coh(\A^\theta_\paramq))\xrightarrow{\sim} D^b(\Gamma(\Hcal^\theta_\param)\operatorname{-mod})$.
\item The algebra $\Hcal_{\paramq}:=\Gamma(\Hcal_\paramq^\theta)$ is independent of $\theta$.
\item $\Hcal_\lambda:=\Gamma(\Hcal^\theta_\lambda)$ coincides with $\Hcal_{\paramq}\otimes_{\C[\paramq]}\C_\lambda$.
\end{enumerate}
\end{Prop}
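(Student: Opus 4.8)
The plan is to establish the four claims in the order (1), (4), (2), (3): first the cohomology vanishing over the base $\paramq$, then the statements over a point $\lambda$ by flatness, then the derived equivalence, and finally the independence of $\theta$, which I expect to be the delicate part. For the part of (1) over $\paramq$: since $\Ecal^\theta_\paramq$ is the quantization of the vector bundle $E^\theta_\param$, it is locally a projective right $\A^\theta_\paramq$-module, so $\mathcal{E}xt^{>0}_{\A^\theta_\paramq}(\Ecal^\theta_\paramq,\Ecal^\theta_\paramq)=0$, and $\Hcal^\theta_\paramq=\mathcal{H}om_{\A^\theta_\paramq}(\Ecal^\theta_\paramq,\Ecal^\theta_\paramq)$ is a sheaf of filtered algebras with $\gr\Hcal^\theta_\paramq\cong\mathcal{E}nd_{\Str_{X_\param}}(E^\theta_\param)$. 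I would then pass to the Rees sheaf and run the $\hbar$-adic argument from the proof of Lemma~\ref{Lem:specialization}: from the short exact sequence $0\to R_\hbar(\Hcal^\theta_\paramq)\xrightarrow{\hbar}R_\hbar(\Hcal^\theta_\paramq)\to\mathcal{E}nd(E^\theta_\param)\to 0$, $\hbar$-completeness, and the vanishing $H^{>0}(X_\param,\mathcal{E}nd(E_\param))=\Ext^{>0}(E_\param,E_\param)=0$ from Proposition~\ref{Prop:tilting_properties1}(1), one gets $R^{>0}\Gamma(\Hcal^\theta_\paramq)=0$; the same sequence gives $\gr\Gamma(\Hcal^\theta_\paramq)=\Gamma(X_\param,\mathcal{E}nd(E_\param))=H_\param$ (using $R^{>0}\Gamma(\mathcal{E}nd(E_\param))=0$, the $\paramq$-family form of the $\chi=0$ case of Lemma~\ref{Lem:direct_image}), so $\Hcal_\paramq$ is a filtered deformation of $H_\param$.

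Next, since $\gr\Hcal^\theta_\paramq\cong\mathcal{E}nd(E_\param)$ is the endomorphism bundle of a vector bundle on $X_\param$, it is flat over $\C[\paramq]$, hence so is $\Hcal^\theta_\paramq$. As $\C[\paramq]$ is a polynomial ring, the maximal ideal $\mathfrak{m}_\lambda$ is generated by a regular sequence $f_1,\dots,f_n$, and I would induct on $j$, showing that $\Hcal^{(j)}:=\Hcal^\theta_\paramq/(f_1,\dots,f_j)\Hcal^\theta_\paramq$ is flat over $\C[\paramq]/(f_1,\dots,f_j)$, has $R^{>0}\Gamma=0$, and satisfies $\Gamma(\Hcal^{(j)})=\Gamma(\Hcal^\theta_\paramq)/(f_1,\dots,f_j)\Gamma(\Hcal^\theta_\paramq)$; the inductive step is the long exact cohomology sequence of $0\to\Hcal^{(j)}\xrightarrow{f_{j+1}}\Hcal^{(j)}\to\Hcal^{(j+1)}\to 0$, where injectivity of $f_{j+1}$ comes from flatness. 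Taking $j=n$ identifies $\Hcal^{(n)}$ with $\Hcal^\theta_\lambda$ and yields $R^{>0}\Gamma(\Hcal^\theta_\lambda)=0$, the identity $\Hcal_\lambda=\Hcal_\paramq\otimes_{\C[\paramq]}\C_\lambda$ of (4), and, arguing as before, that $\Hcal_\lambda$ is a filtered deformation of $H$.

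For (2) I would argue by deformation, as for the equivalence $R\tilde\Gamma$ discussed in the introduction. By Proposition~\ref{Prop:tilting_properties1}(2), $R\Gamma(E_\param\otimes\bullet)$ is an equivalence $D^b(\Coh(\Str_{X_\param}))\xrightarrow{\sim}D^b(H_\param\operatorname{-mod})$. Passing to $\hbar$-adic quantizations, the functor $R\Gamma(R_\hbar(\Ecal^\theta_\paramq)^{\wedge_\hbar}\otimes\bullet)$ reduces modulo $\hbar$ to this equivalence; since its source and target are $\hbar$-complete, a standard completeness (derived Nakayama) argument shows it is itself an equivalence, and taking $\C^\times$-finite parts and setting $\hbar=1$ gives that $R\Gamma(\Ecal^\theta_\paramq\otimes\bullet):D^b(\Coh(\A^\theta_\paramq))\xrightarrow{\sim}D^b(\Hcal_\paramq\operatorname{-mod})$. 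Alternatively, $\Ecal^\theta_\paramq$ is a tilting generator of $\Coh(\A^\theta_\paramq)$ — locally projective, with no higher self-extensions by (1), and generating since it contains $\A^\theta_\paramq$ as a direct summand by ($\heartsuit$) — so the conclusion also follows from the tilting formalism of \cite[Proposition 2.2]{BK} in the sheaf-of-algebras setting.

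The remaining claim (3), independence of $\theta$, is the quantum analog of Proposition~\ref{Prop:tilting_properties1}(3), and I expect it to be the main obstacle; the plan is to follow \cite[Section 5.1]{Kaledin} closely. For two symplectic resolutions $X,X'$ of $Y$, let $Y^2\subset Y$ be the complement of the symplectic leaves of codimension $\geqslant 4$, and let $X^2,X'^2$ (resp. $X^2_\paramq,X'^2_\paramq$) be the preimages of $Y^2$ (resp. of $Y^2_\param$); one has a canonical identification $X^2_\paramq\cong X'^2_\paramq$ over $Y^2_\param$ and $\codim_{Y_\param}(Y_\param\setminus Y^2_\param)\geqslant 4$. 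Over this locus the canonical quantization $\A^\theta_\paramq$ is independent of the resolution (cf.\ \cite[Proposition 3.8]{BPW} and Section~\ref{SSS_classif_resol}), and, as in \cite[Section 5.1]{Kaledin}, $E_\param|_{X^2_\paramq}$ and $E'_\param|_{X'^2_\paramq}$ agree up to a line bundle twist; by the uniqueness of the quantization of a fixed bundle, $\Hcal^\theta_\paramq|_{X^2_\paramq}=\mathcal{E}nd(\Ecal^\theta_\paramq)|_{X^2_\paramq}$ is therefore canonical. It remains to check that restriction is an isomorphism $\Hcal_\paramq=\Gamma(X_\paramq,\Hcal^\theta_\paramq)\xrightarrow{\sim}\Gamma(X^2_\paramq,\Hcal^\theta_\paramq|_{X^2_\paramq})$: since $\gr\Hcal_\paramq=H_\param$ is a Cohen-Macaulay $\C[Y_\param]$-module (Lemma~\ref{Lem:tilting_CM} together with flatness over $\C[\paramq]$) and the codimension is $\geqslant 4$, the analogous statement holds for the associated graded, and hence, along the filtration and using $R^{>0}\Gamma(\Hcal^\theta_\paramq)=0$, for $\Hcal_\paramq$ itself. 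The delicate points — present already in Kaledin's argument — are the codimension bookkeeping for $Y^2_\param$ and the canonicity over $X^2_\paramq$ of the restricted objects.
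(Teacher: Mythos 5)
Your proposal is correct and follows essentially the same route as the paper's (very terse) proof: (1) and (4) are deduced from the classical Proposition~\ref{Prop:tilting_properties1} via the standard Rees-sheaf/$\hbar$-adic and flatness arguments, (2) is the same $\hbar$-adic completeness argument that the paper cites from \cite{GL}, and (3) mirrors the proof of Proposition~\ref{Prop:tilting_properties1}(3), using the codimension-$\geqslant 4$ locus and the Cohen--Macaulay property of $H_\param$ as in \cite[Section 5.1]{Kaledin}. The only place you could tighten is (3): the Cohen--Macaulay/codimension argument should be invoked not just for the restriction isomorphism $\Gamma(X_\paramq,\Hcal^\theta_\paramq)\xrightarrow{\sim}\Gamma(X^2_\paramq,\Hcal^\theta_\paramq|_{X^2_\paramq})$ but also, via $H^1(X^2_\param,\mathcal{E}nd(E_\param))=0$, to justify the uniqueness of the quantization of $E_\param|_{X^2_\param}$ over the open locus, which is what makes ``canonicity over $X^2_\paramq$'' actually work.
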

\begin{proof}
(1) follows from the corresponding statement on the classical level, see, for example,
(1) of Proposition \ref{Prop:tilting_properties1}. (2) is deduced from the corresponding
quasi-classical statements (e.g. (2) of Proposition \ref{Prop:tilting_properties1})
as explained in \cite{GL}. And (3) is proved in the same way as (3) of
Proposition \ref{Prop:tilting_properties1}. (4) follows from (1).
\end{proof}

\begin{Ex}\label{Ex:SRA_Procesi}
Suppose $Y=(\C^2)^{\oplus n}/\Gamma_n$, where $\Gamma_n=S_n\ltimes \Gamma_1^n$
and $\Gamma_1\subset \operatorname{SL}_2(\C)$ is a finite subgroup. Then
for $E$ we take the Procesi bundle. By \cite[Section 6]{quant_iso},
$\Hcal_{\paramq}$ is the universal symplectic reflection algebra at $t=1$.
\end{Ex}

\subsubsection{Localization}
We can consider the functor $\tilde{\Gamma}^\theta_\lambda: D^b(\A_\lambda^\theta\operatorname{-mod})
\rightarrow D^b(\Hcal_\lambda\operatorname{-mod})$ given by $\Gamma(\Ecal^\theta_\lambda\otimes_{\A_\lambda^\theta}\bullet)$ and its left adjoint
$\tilde{\Loc}^\theta_\lambda:=(\Ecal^\theta_\lambda)^*\otimes_{\Hcal_\lambda}\bullet$.
\begin{defi}\label{defi:local_H}
When these functors
are mutually quasi-inverse equivalences we say that abelian localization holds for $\Hcal_\lambda^\theta$.
\end{defi}

We point out that, by Proposition \ref{Prop:tilting_properties_quantum}, the derived functors  $R\tilde{\Gamma}^\theta_\lambda$ and $\tilde{\Loc}^\theta_\lambda$ are mutually quasi-inverse
equivalences. Equivalently the functors
\begin{equation}\label{eq:H_functors} R\Gamma:D^b(\Coh(\Hcal^\theta_\lambda))\rightleftarrows D^b(\Hcal_\lambda\operatorname{-mod}):
\Hcal_\lambda^\theta\otimes^L_{\Hcal_\lambda}\bullet
\end{equation}
are mutually inverse equivalences.

Note that the obvious equivalence $\Coh(\A_\lambda^\theta)\cong \Coh(\Hcal^\theta_\lambda)$ intertwines the functor
$\tilde{\Gamma}_\lambda^\theta$ with the global section functor for $\Hcal_\lambda^\theta$-modules.

Now we relate localization for $\Hcal_\lambda^\theta$ to that for $\A_\lambda^\theta$.

Note that $\Hcal_{\paramq}$ has a distinguished idempotent
to be denoted by $e$, this is the projector to the direct summand $\A_{\paramq}^\theta$ in
$\Ecal_{\paramq}^\theta$. Its images in the specializations $\Hcal_\lambda^\theta$ will also
be denoted by $e$.

Observe the following functor isomorphism
\begin{equation}\label{eq:localization_connection}
e\tilde{\Gamma}_\lambda^\theta\cong \Gamma^\theta_\lambda.
\end{equation}

We have the following two consequences of (\ref{eq:localization_connection}).

\begin{Lem}\label{Lem:derived_localization_connection}
The following three conditions are equivalent.
\begin{enumerate}
\item Derived equivalence holds for $\A_\lambda^\theta$.
\item $\Hcal_\lambda e \Hcal_\lambda=\Hcal_\lambda$.
\item $\A_\lambda$ has finite homological dimension.
\end{enumerate}
\end{Lem}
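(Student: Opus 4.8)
The plan is to prove a chain of implications $(1)\Rightarrow(3)\Rightarrow(2)\Rightarrow(1)$, using the fact that $R\tilde\Gamma^\theta_\lambda$ is \emph{always} a derived equivalence (Proposition \ref{Prop:tilting_properties_quantum}) and the functor identity $e\tilde\Gamma^\theta_\lambda\cong\Gamma^\theta_\lambda$ from \eqref{eq:localization_connection}. First, $(1)\Rightarrow(3)$ is the easy, well-known direction: if $R\Gamma^\theta_\lambda$ and $L\Loc^\theta_\lambda$ are mutually inverse equivalences between $D^b(\Coh(\A^\theta_\lambda))$ and $D^b(\A_\lambda\text{-mod})$, then $D^b(\A_\lambda\text{-mod})$ has a bounded ``$t$-structure coming from'' the geometric side, so every object has finite projective dimension; more concretely, $\A_\lambda$ itself has finite homological dimension because $\Coh(\A^\theta_\lambda)$ has finite homological dimension (as $\A^\theta_\lambda$ is a quantization of the smooth variety $X$) and a derived equivalence transports finiteness of global dimension. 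This is the implication already flagged as ``(2) easily implies (1)'' in the discussion after Conjecture \ref{Conj:fin_hom_dim}.

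Next, $(3)\Rightarrow(2)$. Here I would invoke the deformation-to-characteristic-$p$ input underlying the construction of $E^\theta$: the algebra $H=\End(E^\theta)$ (equivalently $\Hcal_\lambda$ via Proposition \ref{Prop:tilting_properties_quantum}, since $\Hcal_\lambda$ is a filtered deformation of $H$) has finite homological dimension by the definition of a tilting generator. Since $e\Hcal_\lambda e=\A_\lambda$ has finite homological dimension by hypothesis $(3)$, the standard criterion of Bezrukavnikov, \cite[Theorem 5.5]{Etingof_affine}, applies: for an algebra $\Hcal_\lambda$ of finite global dimension together with an idempotent $e$, the corner algebra $e\Hcal_\lambda e$ has finite global dimension if and only if $\Hcal_\lambda e\Hcal_\lambda=\Hcal_\lambda$. (This is exactly the mechanism the introduction refers to when it says ``From \cite[Theorem 5.5]{Etingof_affine} one can deduce that Conjecture \ref{Conj:fin_hom_dim} holds''.) One should check the hypotheses of that theorem are met — finite global dimension of $\Hcal_\lambda$, plus the Cohen--Macaulay/Gorenstein structure from Lemma \ref{Lem:tilting_CM} and Lemma \ref{Lem:He_properties}, which guarantee the relevant homological niceness of $e$ — and this is the step I expect to require the most care, though it is essentially a citation.

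Finally, $(2)\Rightarrow(1)$. Assume $\Hcal_\lambda e\Hcal_\lambda=\Hcal_\lambda$. Then the functors $\Hcal_\lambda e\otimes_{\A_\lambda}\bullet$ and $e\Hcal_\lambda\otimes_{\Hcal_\lambda}\bullet=e(\bullet)$ are mutually inverse equivalences between $\A_\lambda\text{-mod}$ and $\Hcal_\lambda\text{-mod}$ (a standard consequence of surjectivity of the trace ideal, together with $e\Hcal_\lambda e=\A_\lambda$ and the fact that $\Hcal_\lambda e$ is a projective generator of $\Hcal_\lambda\text{-mod}$); likewise on the derived level. On the geometric side, applying the idempotent $e$ to $\Ecal^\theta_\lambda$ recovers $\A^\theta_\lambda$, so the obvious equivalence $\Coh(\A^\theta_\lambda)\cong\Coh(\Hcal^\theta_\lambda)$ intertwines $e$ with ``multiply by $e$'', and under this equivalence $\tilde\Gamma^\theta_\lambda$ becomes the global section functor for $\Hcal^\theta_\lambda$. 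Since $R\tilde\Gamma^\theta_\lambda$ is a derived equivalence $D^b(\Coh(\Hcal^\theta_\lambda))\xrightarrow{\sim}D^b(\Hcal_\lambda\text{-mod})$ unconditionally, composing with the Morita equivalence $e\colon D^b(\Hcal_\lambda\text{-mod})\xrightarrow{\sim}D^b(\A_\lambda\text{-mod})$ and using $e\tilde\Gamma^\theta_\lambda\cong\Gamma^\theta_\lambda$ shows $R\Gamma^\theta_\lambda$ is a derived equivalence, with quasi-inverse the composite of the inverse Morita equivalence and $\tilde\Loc^\theta_\lambda$; one checks this composite is indeed $L\Loc^\theta_\lambda$ by adjunction. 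This closes the cycle.
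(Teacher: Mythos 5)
Your proof follows exactly the same chain $(1)\Rightarrow(3)\Rightarrow(2)\Rightarrow(1)$ as the paper, citing the same ingredients: finite homological dimension of $\Coh(\A_\lambda^\theta)$, Bezrukavnikov's theorem \cite[Theorem 5.5]{Etingof_affine} together with Lemma \ref{Lem:He_properties}, and the functor isomorphism $e\tilde\Gamma_\lambda^\theta\cong\Gamma_\lambda^\theta$. One small caution: as you state it, the ``if and only if'' form of Bezrukavnikov's criterion is not true for an arbitrary idempotent in an arbitrary finite-global-dimension algebra; the nontrivial direction genuinely needs the double-centralizer and Cohen--Macaulay hypotheses furnished by Lemmas \ref{Lem:tilting_CM} and \ref{Lem:He_properties}, which you do flag, but the criterion should be quoted with those hypotheses built in rather than as a bare equivalence.
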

\begin{proof}
(1)$\Rightarrow$(3) follows because $\operatorname{Coh}(\A_\lambda^\theta)$
has finite homological dimension, see, e.g., the proof of \cite[Lemma 9.2]{BL}. (3)$\Rightarrow$(2) follows by combining
Lemma \ref{Lem:He_properties} with \cite[Theorem 5.5]{Etingof_affine}.
Note that the condition in (2) just means that the functor $e\bullet:
\Hcal_\lambda\operatorname{-mod}\rightarrow \A_\lambda\operatorname{-mod}$
is an equivalence. Now (2)$\Rightarrow$(1) follows from (\ref{eq:localization_connection}).
\end{proof}

\begin{Cor}\label{Cor:abelian_localization_connection}
The following two conditions are equivalent:
\begin{enumerate}
\item Abelian localization holds for $\A_\lambda^\theta$.
\item Abelian localization holds for $\Hcal_\lambda^\theta$ and
$\Hcal_\lambda e\Hcal_\lambda=\Hcal_\lambda$.
\end{enumerate}
\end{Cor}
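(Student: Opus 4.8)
The plan is to reduce the corollary to a purely formal comparison between exactness of the two global sections functors $\Gamma_\lambda^\theta$ and $\tilde{\Gamma}_\lambda^\theta$, using the identity \eqref{eq:localization_connection} together with Lemma \ref{Lem:derived_localization_connection}. First I would record two reformulations. Via the equivalence $\Coh(\A_\lambda^\theta)\cong\Coh(\Hcal_\lambda^\theta)$ the functor $\tilde{\Gamma}_\lambda^\theta$ is the global sections functor for $\Hcal_\lambda^\theta$-modules, and $R\tilde{\Gamma}_\lambda^\theta$ is always a derived equivalence (the discussion following Proposition \ref{Prop:tilting_properties_quantum}). Now an exact functor of abelian categories whose derived functor is an equivalence is itself an equivalence of abelian categories: full faithfulness is immediate on $\operatorname{Hom}$-groups, such a functor is in particular conservative, so any object $C$ of $D^b(\Coh(\Hcal_\lambda^\theta))$ whose image $R\tilde{\Gamma}_\lambda^\theta(C)$ lies in the heart $\Hcal_\lambda\operatorname{-mod}$ must itself lie in $\Coh(\Hcal_\lambda^\theta)$, which gives essential surjectivity; and the left adjoint of an abelian equivalence is automatically a quasi-inverse. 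Hence abelian localization holds for $\Hcal_\lambda^\theta$ if and only if $\tilde{\Gamma}_\lambda^\theta$ is exact. On the other side, abelian localization holds for $\A_\lambda^\theta$ if and only if $\Gamma_\lambda^\theta$ is exact and derived localization holds for $\A_\lambda^\theta$, and by Lemma \ref{Lem:derived_localization_connection} the latter is equivalent to $\Hcal_\lambda e\Hcal_\lambda=\Hcal_\lambda$. So the corollary reduces to: \emph{if $\Hcal_\lambda e\Hcal_\lambda=\Hcal_\lambda$, then $\Gamma_\lambda^\theta$ is exact if and only if $\tilde{\Gamma}_\lambda^\theta$ is exact.}

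To prove this reduction I would exploit that, since $e\Hcal_\lambda e=\A_\lambda$, the hypothesis $\Hcal_\lambda e\Hcal_\lambda=\Hcal_\lambda$ says precisely that $e$ is a full idempotent, so the functor $e\cdot\colon\Hcal_\lambda\operatorname{-mod}\to\A_\lambda\operatorname{-mod}$ is an equivalence; in particular it is exact and conservative (if $eN=0$ then $N=\Hcal_\lambda N=(\Hcal_\lambda e\Hcal_\lambda)N=\Hcal_\lambda eN=0$). Because $e\cdot$ is exact, the identity $\Gamma_\lambda^\theta=e\,\tilde{\Gamma}_\lambda^\theta$ from \eqref{eq:localization_connection} shows that exactness of $\tilde{\Gamma}_\lambda^\theta$ forces exactness of $\Gamma_\lambda^\theta$. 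For the converse, assuming $\Gamma_\lambda^\theta$ exact, take a short exact sequence $0\to\M'\to\M\to\M''\to 0$ in $\Coh(\A_\lambda^\theta)$; since $\tilde{\Gamma}_\lambda^\theta$ is left exact it suffices to see that $\tilde{\Gamma}_\lambda^\theta(\M)\to\tilde{\Gamma}_\lambda^\theta(\M'')$ is surjective, and applying the exact functor $e\cdot$ to its cokernel $Q$ yields $eQ=\operatorname{coker}\big(\Gamma_\lambda^\theta(\M)\to\Gamma_\lambda^\theta(\M'')\big)=0$ by exactness of $\Gamma_\lambda^\theta$; conservativity of $e\cdot$ then gives $Q=0$.

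With the reduction in hand both implications are formal. For $(1)\Rightarrow(2)$: abelian localization for $\A_\lambda^\theta$ yields exactness of $\Gamma_\lambda^\theta$ and derived localization for $\A_\lambda^\theta$, hence $\Hcal_\lambda e\Hcal_\lambda=\Hcal_\lambda$ by Lemma \ref{Lem:derived_localization_connection}, and then the reduction gives exactness of $\tilde{\Gamma}_\lambda^\theta$, i.e.\ abelian localization for $\Hcal_\lambda^\theta$. For $(2)\Rightarrow(1)$: abelian localization for $\Hcal_\lambda^\theta$ makes $\tilde{\Gamma}_\lambda^\theta$ an equivalence, $\Hcal_\lambda e\Hcal_\lambda=\Hcal_\lambda$ makes $e\cdot$ an equivalence, so the composite $\Gamma_\lambda^\theta=e\,\tilde{\Gamma}_\lambda^\theta$ is an equivalence and its left adjoint $\Loc_\lambda^\theta$ is then a quasi-inverse. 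I do not expect a genuine obstacle here: the content is entirely formal once \eqref{eq:localization_connection} and Lemma \ref{Lem:derived_localization_connection} are available. The one mildly delicate point is the converse half of the reduction, where fullness of the idempotent $e$ — equivalently, derived localization for $\A_\lambda^\theta$ — is genuinely used rather than merely recorded as part of condition (2).
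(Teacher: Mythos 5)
Your proof is correct and uses exactly the two ingredients the paper signals — the identity $\Gamma_\lambda^\theta = e\,\tilde{\Gamma}_\lambda^\theta$ from (\ref{eq:localization_connection}) and Lemma \ref{Lem:derived_localization_connection} — so it is in essence the argument the paper leaves implicit when it states the corollary without proof. Two small stylistic remarks: your two-paragraph reduction $(\Gamma_\lambda^\theta$ exact $\iff \tilde{\Gamma}_\lambda^\theta$ exact, given $\Hcal_\lambda e\Hcal_\lambda=\Hcal_\lambda)$ can be collapsed to the observation that $\tilde{\Gamma}_\lambda^\theta = (\Hcal_\lambda e\otimes_{\A_\lambda}\bullet)\circ\Gamma_\lambda^\theta$, a composition with the (exact) quasi-inverse of $e\cdot$, so the equivalence of exactness is immediate without the left-exactness-plus-cokernel manipulation; and the sentence deducing essential surjectivity from conservativity alone is slightly compressed — what one really uses is that a t-exact equivalence has a t-exact quasi-inverse (which is proved using t-exactness of $R\tilde{\Gamma}_\lambda^\theta$ together with its conservativity, not conservativity by itself). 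Neither point affects the correctness of the argument.
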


\subsection{Enhanced translation bimodules}
\subsubsection{Construction}
In this section we introduce translation bimodules $\Hcal_{\lambda,\chi}$ and $H_{\paramq,\chi}$.

Consider the line bundle $\Str(\chi)$ on $X^\theta$.
We can define the $\Hcal^\theta_{\paramq}$-bimodule $\Hcal^\theta_{\paramq,\chi}$ by
$$\Hcal^\theta_{\paramq,\chi}:=\Ecal^\theta_{\paramq}\otimes_{\A^\theta_{\paramq}}\A_{\paramq,\chi}^\theta
\otimes_{\A_\paramq^\theta}(\Ecal^\theta_\paramq)^*.$$
Note that $e \Hcal^\theta_{\paramq,\chi} e\cong \A^\theta_{\paramq,\chi}$.

We can also consider the specialization $\Hcal^\theta_{\lambda,\chi}$, this is an
$\Hcal_{\lambda+\chi}^\theta$-$\Hcal_\lambda^\theta$-bimodule.


\begin{Lem}\label{Lem:enhanced_translation_independence}
$\Gamma(\Hcal_{\paramq,\chi}^\theta)$ is independent of the
choice of $\theta$.
\end{Lem}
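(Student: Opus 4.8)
The plan is to reduce the independence statement for $\Gamma(\Hcal_{\paramq,\chi}^\theta)$ to the independence statements that have already been established, namely Proposition \ref{Prop:tilting_properties_quantum}(3) for $\Hcal_\paramq = \Gamma(\Hcal^\theta_\paramq)$ and Propositions 6.23–6.24 of \cite{BPW} for the translation bimodule $\A_{\paramq,\chi} = \Gamma(\A^\theta_{\paramq,\chi})$, together with the observation that the Picard group $\Pic(X^\theta)$ and the line bundle $\Str_\param(\chi)$ have been identified across the various resolutions in Section \ref{SSS_classif_resol}. Recall that $\Hcal^\theta_{\paramq,\chi} = \Ecal^\theta_{\paramq}\otimes_{\A^\theta_{\paramq}}\A_{\paramq,\chi}^\theta\otimes_{\A_\paramq^\theta}(\Ecal^\theta_\paramq)^*$, so passing to global sections and using that $\Ecal^\theta_\paramq$ quantizes the tilting bundle $E^\theta_\param$ (whose endomorphism algebra $H_\param$ is resolution-independent by Proposition \ref{Prop:tilting_properties1}(3)) should present $\Gamma(\Hcal_{\paramq,\chi}^\theta)$ as a bimodule built functorially out of data that does not see $\theta$.

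First I would establish the cohomology-vanishing input: since $E^\theta_\param$ is a tilting bundle and $\A_{\paramq,\chi}^\theta$ quantizes $\Str_\param(\chi)$, the sheaf $\Hcal^\theta_{\paramq,\chi}$ quantizes $\mathcal{E}nd(E^\theta_\param)\otimes \Str_\param(\chi) = E^\theta_\param\otimes \Str_\param(\chi)\otimes (E^\theta_\param)^*$, and one checks $R^i\Gamma$ of this vanishes for $i>0$ after possibly replacing $\chi$ by a multiple (using Corollary \ref{Cor:sublattice}, or directly Proposition \ref{Prop:cohomology_vanishing} which gives the vanishing for $H^\theta\otimes\Str(m_0n\chi)$; the general $\chi$ case then follows by the same Rees-algebra degeneration argument as in Lemma \ref{Lem:specialization}). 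This lets me compute $\Gamma(\Hcal^\theta_{\paramq,\chi})$ as $\Gamma(\Ecal^\theta_\paramq)\otimes_{\A_\paramq}\A_{\paramq,\chi}\otimes_{\A_\paramq}\Gamma((\Ecal^\theta_\paramq)^*)$, i.e. as $(H_\param e$-style module$)\otimes_{\A_\paramq}\A_{\paramq,\chi}\otimes_{\A_\paramq}(e H_\param$-style module$)$, where $\Gamma(\Ecal^\theta_\paramq) = \Hcal_\paramq e$ and $\Gamma((\Ecal^\theta_\paramq)^*) = e\Hcal_\paramq$ by the construction in Proposition \ref{Prop:tilting_properties_quantum}. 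Each of the three tensor factors is now manifestly independent of $\theta$: $\Hcal_\paramq$ and its idempotent $e$ by Proposition \ref{Prop:tilting_properties_quantum}(3) (and the fact, noted after it, that $e$ is resolution-independent), and $\A_{\paramq,\chi}$ by \cite[Propositions 6.23, 6.24]{BPW}. Hence so is $\Gamma(\Hcal_{\paramq,\chi}^\theta) \cong \Hcal_\paramq e\otimes_{\A_\paramq}\A_{\paramq,\chi}\otimes_{\A_\paramq} e\Hcal_\paramq$.

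The main obstacle I anticipate is justifying the interchange of $\Gamma$ with the tensor products over $\A_\paramq^\theta$ — a priori $\Gamma$ is only left exact and does not commute with $\otimes$ on sheaves. The way around this is exactly the argument used in \cite{GL} and in the proof of Lemma \ref{Lem:specialization}: pass to Rees sheaves, use the $\hbar$-adic / graded-$\C^\times$-finite framework, and exploit that $R^i\Gamma$ vanishes for all the relevant coherent sheaves on $X^\theta_\paramq$ (namely $\mathcal{E}nd(E^\theta_\param)$, $E^\theta_\param(\chi)$, and their deformations), so that the derived tensor-and-global-sections computation collapses to the underived one. One should also check compatibility of the identifications $\Pic(X^\theta_\paramq)\cong \Pic(Y^0_\param)$ from \eqref{eq:Pic_ident} with the quantized line-bundle module $\A^\theta_{\paramq,\chi}$, but this is already contained in \cite[Section 5.1]{BPW}. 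Modulo these technical points, which are routine given the machinery assembled above, the independence follows formally; I would write the proof in two or three lines citing Lemma \ref{Lem:specialization}, Proposition \ref{Prop:tilting_properties_quantum}(3), and \cite[Propositions 6.23, 6.24]{BPW}.
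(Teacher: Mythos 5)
Your proposed identity
\[
\Gamma(\Hcal^\theta_{\paramq,\chi}) \;\cong\; \Hcal_\paramq e \otimes_{\A_\paramq} \A_{\paramq,\chi} \otimes_{\A_\paramq} e\Hcal_\paramq
\]
is false, and this is a genuine gap rather than a routine technical point. You correctly identified that the obstacle is interchanging $\Gamma$ with the tensor product over $\A^\theta_\paramq$, but the remedy you propose --- cohomology vanishing plus a Rees-algebra degeneration --- does not address the actual difficulty. Cohomology vanishing lets you replace $\Gamma$ by $R\Gamma$ on each individual sheaf, but it says nothing about whether the natural map
\[
R\Gamma(\mathcal{F})\otimes^L_{\A_\paramq} R\Gamma(\mathcal{G}) \longrightarrow R\Gamma(\mathcal{F}\otimes^L_{\A^\theta_\paramq}\mathcal{G})
\]
is an isomorphism. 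For that one essentially needs derived localization to hold for $\A^\theta_\paramq$, which is precisely what is \emph{not} assumed in this lemma and which in fact fails along the singular locus.

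The failure is already visible at $\chi=0$. There $\Hcal^\theta_{\paramq,0}=\Hcal^\theta_\paramq$ and $\A_{\paramq,0}=\A_\paramq$, so your formula asserts $\Hcal_\paramq \cong \Hcal_\paramq e\otimes_{\A_\paramq}e\Hcal_\paramq$. But the multiplication map $\Hcal_\paramq e\otimes_{\A_\paramq}e\Hcal_\paramq\to\Hcal_\paramq$ has image the two-sided ideal $\Hcal_\paramq e\Hcal_\paramq$, which is proper whenever $Y$ has any singular parameter $\lambda$ (since then $\Hcal_\lambda e\Hcal_\lambda\subsetneq\Hcal_\lambda$ by Lemma~\ref{Lem:derived_localization_connection}, and the ideal specializes into a proper ideal). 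So the right-hand side is not $\Hcal_\paramq$; it is at best a smaller bimodule, and the asserted isomorphism cannot hold.

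The paper's proof sidesteps tensor-product commutation entirely. It uses that the exceptional locus of $X_\param\to Y_\param$ has codimension $\geqslant 2$, so that
\[
\Hcal_{\paramq,\chi} \;=\; \Gamma\bigl(\Hcal^\theta_{\param,\chi}\big|_{Y^0_\param}\bigr)
\]
(a Hartogs-type extension for the quantization of a vector bundle). The open locus $Y^0_\param\subset X_\param$ is canonically identified across all resolutions, and both $\Ecal^\theta_\paramq\big|_{Y^0_\param}=(\Hcal_\paramq e)\big|_{Y^0_\param}$ and $\A^\theta_{\paramq,\chi}\big|_{Y^0_\param}$ are $\theta$-independent; hence so is $\Gamma$ of the restriction. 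This is a purely local-in-codimension-$\leqslant 1$ argument that never requires computing the global sections of a tensor product, and it works uniformly for all $\lambda$ including the singular ones. If you want to retain your structural picture, the correct takeaway is that the identification $\Ecal^\theta_\paramq=\Hcal_\paramq e$ (and likewise for the translation sheaf) should be used \emph{locally over} $Y^0_\param$, not after taking global sections.
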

\begin{proof}
Let $Y^0_\param$ denote the locus in $X_\param$ where the morphism $X_\param\rightarrow
Y_\param$ is an isomorphism. Since $\codim_{X_\param}X_\param\setminus Y_\param^0\geqslant 2$,
we see that $\Hcal_{\paramq,\chi}=\Gamma(\Hcal_{\param,\chi}^\theta|_{Y^0_\param})$.
On the other hand, $\Ecal^\theta_{\paramq}|_{Y^0_\param}$ and $\A^\theta_{\paramq,\chi}|_{Y^0_\param}$
are independent of $\theta$. For the former, this follows from the observation that
$\Ecal^\theta_{\paramq}|_{Y^0_\param}=(\Hcal_{\paramq}e)|_{Y^0_\param}$.
For the latter, the proof is similar to \cite[Proposition 3.3]{BL}.
We conclude that $\Gamma(\Hcal_{\param,\chi}^\theta|_{Y^0_\param})$ is
independent of the choice of $\theta$.
\end{proof}

We write $\Hcal_{\paramq,\chi}$ for $\Gamma(\Hcal_{\paramq,\chi}^\theta)$ and
$\Hcal_{\lambda,\chi}:=\Hcal_{\paramq,\chi}\otimes_{\C[\paramq]}\C_\lambda$.
The latter is a $\Hcal_{\lambda+\chi}$-$\Hcal_{\lambda}$-bimodule.

Note that $\Hcal_{\paramq,\chi}$ inherits a filtration from $\Gamma(\Hcal_{\paramq,\chi}^\theta)$.

\subsubsection{$\Hcal_{\lambda,\chi}$ vs $R\Gamma(\Hcal_{\lambda,\chi}^\theta)$}
\begin{Lem}\label{Lem:translation_coincidence}
Suppose the following conditions holds:
\begin{itemize}
\item[(i)]
 $H^i(X^\theta, H^\theta(\chi))=0$ for all $i>0$
\item[(ii)] or
$\tilde{\Gamma}^\theta_{\lambda+\chi}$ is exact.
\end{itemize}
(i) implies $\Hcal_{\paramq,\chi}$ is flat over $\C[\paramq]$. (i) or (ii) imply  $\Hcal_{\lambda,\chi}=R\Gamma(\Hcal^\theta_{\lambda,\chi})$.
\end{Lem}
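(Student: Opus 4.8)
The plan is to mirror the proof of Lemma \ref{Lem:specialization}, which handles the analogous statement for the translation bimodules $\A_{\paramq,\chi}^\theta$, and to reduce claim (ii) to the fact that $\tilde\Gamma$ is always a \emph{derived} equivalence (Proposition \ref{Prop:tilting_properties_quantum}). First I would treat (i). The vanishing $H^i(X^\theta,H^\theta(\chi))=0$ for $i>0$ propagates: since $\Hcal_{\paramq,\chi}^\theta$ quantizes $\mathcal{E}nd(E^\theta)\otimes\Str(\chi)=H^\theta(\chi)$, and $\C[\paramq]$ sits in the Rees filtration with $\paramq^*$ in positive degree, the same vanishing holds for the family $\Hcal^\theta_{\paramq,\chi}$ over $\param$ (the higher cohomology of a $\C^\times$-equivariant sheaf on the conical $X_\param$ vanishes iff it does on the central fiber). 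Now apply $R\Gamma$ to the short exact sequence of Rees sheaves
\begin{equation*}
0\to R_\hbar(\Hcal_{\paramq,\chi}^\theta)\xrightarrow{\hbar\cdot} R_\hbar(\Hcal_{\paramq,\chi}^\theta)\to \gr\Hcal^\theta_{\paramq,\chi}\to 0,
\end{equation*}
identifying $\gr\Hcal^\theta_{\paramq,\chi}$ with $\mathcal{E}nd(E_\param^\theta)\otimes\Str_\param(\chi)$. The long exact sequence, together with the vanishing of higher cohomology of the associated graded, gives $R^i\Gamma(\Hcal^\theta_{\paramq,\chi})=0$ for $i>0$, so $R\Gamma(\Hcal^\theta_{\paramq,\chi})=\Hcal_{\paramq,\chi}$, and moreover $\Hcal_{\paramq,\chi}$ is a filtered deformation of $\Gamma(\mathcal{E}nd(E_\param^\theta)\otimes\Str_\param(\chi))$; the latter is a flat (indeed free) $\C[\param]$-module because its associated graded is $\Gamma$ of a sheaf on $X_\param=\param\times X$ pushed to a point, so flatness of $\Hcal_{\paramq,\chi}$ over $\C[\paramq]$ follows. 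Then $R\Gamma(\Hcal^\theta_{\lambda,\chi})=\Hcal_{\lambda,\chi}$ follows by base change $\otimes^L_{\C[\paramq]}\C_\lambda$: flatness kills the derived tensor correction, and the cohomology vanishing is preserved under specialization because $H^i(X^\theta,H^\theta(\chi))=0$ for $i>0$ forces $R^i\Gamma(\Hcal^\theta_{\lambda,\chi})=0$ directly (the central fiber of the family $\Hcal^\theta_{\paramq,\chi}$ is $\Hcal^\theta_{\lambda,\chi}$, whose associated graded is $H^\theta(\chi)$).

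Next I would handle (ii). Here there is no a priori cohomology vanishing, so I argue differently. By construction $\Hcal^\theta_{\lambda,\chi}\otimes_{\Hcal^\theta_\lambda}\bullet$ is the functor $\Coh(\Hcal^\theta_\lambda)\to\Coh(\Hcal^\theta_{\lambda+\chi})$ corresponding, under the equivalences $\Coh(\A^\theta_\lambda)\cong\Coh(\Hcal^\theta_\lambda)$ and $\Coh(\A^\theta_{\lambda+\chi})\cong\Coh(\Hcal^\theta_{\lambda+\chi})$, to $\A^\theta_{\lambda,\chi}\otimes_{\A^\theta_\lambda}\bullet$, i.e. to twisting by the line bundle $\Str(\chi)$; in particular it is an exact equivalence of abelian categories, and a fortiori of derived categories. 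Composing derived functors we get
\begin{equation*}
R\Gamma\bigl(\Hcal^\theta_{\lambda,\chi}\otimes^L_{\Hcal^\theta_\lambda}\tilde\Loc^\theta_{\lambda+\chi}(\bullet)\bigr)\cong R\tilde\Gamma^\theta_{\lambda+\chi}\bigl(\Hcal^\theta_{\lambda,\chi}\otimes^L_{\Hcal^\theta_\lambda}\tilde\Loc^\theta_{\lambda+\chi}(\bullet)\bigr),
\end{equation*}
wait — more precisely I want to compute $R\Gamma(\Hcal^\theta_{\lambda,\chi})$ as an object of $D^b(\Hcal_{\lambda+\chi}\text{-mod})$ by recognizing $\Hcal^\theta_{\lambda,\chi}$ as $\Hcal^\theta_{\lambda,\chi}\otimes^L_{\Hcal^\theta_\lambda}\Hcal^\theta_\lambda$ and using that $R\tilde\Gamma^\theta_{\lambda+\chi}$ agrees with the plain global sections $R\Gamma$ on the category $\Coh(\Hcal^\theta_{\lambda+\chi})$. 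So $R\Gamma(\Hcal^\theta_{\lambda,\chi})=R\tilde\Gamma^\theta_{\lambda+\chi}$ applied to the object of $\Coh(\Hcal^\theta_{\lambda+\chi})$ obtained by twisting $\Hcal^\theta_{\lambda+\chi}$ (viewed via the equivalence with $\A^\theta$-modules) by $\Str(\chi)$. If $\tilde\Gamma^\theta_{\lambda+\chi}$ is exact then $R\tilde\Gamma^\theta_{\lambda+\chi}$ has no higher cohomology on any coherent sheaf, so $R\Gamma(\Hcal^\theta_{\lambda,\chi})$ is concentrated in degree $0$ and equals $\Gamma(\Hcal^\theta_{\lambda,\chi})=\Hcal_{\lambda,\chi}$, as desired.

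The main obstacle is bookkeeping rather than conceptual: one must be careful that the identification of $\Hcal^\theta_{\lambda,\chi}\otimes_{\Hcal^\theta_\lambda}\bullet$ with line-bundle twisting is compatible with the $e$-corner statement $e\Hcal^\theta_{\paramq,\chi}e\cong\A^\theta_{\paramq,\chi}$ and with the functor $e\tilde\Gamma=\Gamma$, and that the Rees-sequence argument in part (i) genuinely reduces to cohomology of the \emph{fixed} bundle $H^\theta(\chi)$ and not to some $\hbar$-dependent sheaf — this is exactly the point already handled in the proof of Lemma \ref{Lem:specialization}, and I would cite that proof verbatim, replacing $\A^\theta_{\paramq,\chi}$ by $\Hcal^\theta_{\paramq,\chi}$ and $\Str_\param(\chi)$ by $\mathcal{E}nd(E^\theta_\param)\otimes\Str_\param(\chi)=\gr\Hcal^\theta_{\paramq,\chi}$. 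I would also invoke Remark \ref{Rem:translation_coincidence} as the precise analogue of the implication (ii), noting that the exactness of $\tilde\Gamma^\theta_{\lambda+\chi}$ plays the same role for $\Hcal$ that exactness of $\Gamma^\theta_\lambda$ plays for $\A$ in that remark.
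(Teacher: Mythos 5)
Your overall plan — mirror Lemma \ref{Lem:specialization} for part (i) and use the derived equivalence from Proposition \ref{Prop:tilting_properties_quantum} together with exactness for part (ii) — is the right one, and matches the paper in spirit. But there are two concrete problems.

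First, in the flatness argument you write that $\Gamma(\mathcal{E}nd(E^\theta_\param)\otimes\Str_\param(\chi))$ is flat ``because its associated graded is $\Gamma$ of a sheaf on $X_\param=\param\times X$ pushed to a point.'' This is false: $X_\param$ is the universal Poisson deformation of $X$ over $\param$, which is a genuinely nontrivial family, not a product. Flatness of the pushforward therefore does not come for free, and the paper proves it differently: it identifies $\gr\Hcal_{\paramq,\chi}$ with $\Gamma(X_\param, H^\theta_\param(\chi))$ and then shows the derived specialization $\Gamma(X_\param, H^\theta_\param(\chi))\otimes^L_{\C[\param]}\C_0\cong R\Gamma(H^\theta_\param(\chi)\otimes^L_{\C[\param]}\C_0)$ is concentrated in degree $0$ by a degree comparison (Tor is in nonnegative degrees, $R\Gamma$ in nonpositive). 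That argument — or a careful appeal to cohomology and base change over the whole family $X_\param\to\param$, using the $\C^\times$-grading to propagate the vanishing from the central fiber — is what is needed here.

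Second, and more seriously, the last step of your treatment of (ii) writes ``$R\Gamma(\Hcal^\theta_{\lambda,\chi})$ ... equals $\Gamma(\Hcal^\theta_{\lambda,\chi})=\Hcal_{\lambda,\chi}$, as desired'' — but the middle equality is exactly the nontrivial claim. Recall $\Hcal_{\lambda,\chi}$ is \emph{defined} as $\Hcal_{\paramq,\chi}\otimes_{\C[\paramq]}\C_\lambda$, i.e.\ as the specialization of the universal global-sections bimodule, and it is not automatic that this specialization agrees with $\Gamma$ of the specialized sheaf. You correctly deduce $R^{>0}\Gamma(\Hcal^\theta_{\lambda,\chi})=0$ from exactness of $\tilde\Gamma^\theta_{\lambda+\chi}$, but that alone does not identify $\Gamma(\Hcal^\theta_{\lambda,\chi})$ with the specialization $\Hcal_{\lambda,\chi}$. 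The paper closes this gap by running the \cite[Proposition 6.26]{BPW}-type argument: restrict to the line $\ell=\lambda+\C\theta$, show $\Hcal_{\ell,\chi}=\Gamma(\Hcal^\theta_{\ell,\chi})$ (using the codimension-two argument as in Lemma \ref{Lem:enhanced_translation_independence}), then show $H^1(X^\theta,\Hcal^\theta_{\ell,\chi})$ has finite support on $\ell$ not containing $\lambda$, so the specialization to $\lambda$ commutes with $\Gamma$. You gesture at this by invoking Remark \ref{Rem:translation_coincidence}, but that remark itself defers to the same BPW argument; as written, your proof does not actually carry it out, and this step is the real content of the lemma.
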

\begin{proof}
The sheaf $\Hcal^\theta_{\paramq,\chi}$ is flat over $\C[\paramq]$  by the construction.
It follows that  $\Hcal^\theta_{\lambda,\chi}\cong \Hcal^\theta_{\paramq,\chi}\otimes_{\C[\paramq]}\C_\lambda$.

Let us show that (i) implies that $\Hcal_{\paramq,\chi}$ is flat over $\C[\paramq]$.
Note that (i) implies  $H^i(X^\theta_\param, H^\theta_\param(\chi))=0$ for all $i>0$.
This in turn implies that $\gr \Hcal_{\paramq,\chi}=\Gamma(X_{\param},H^\theta(\chi))$.
It suffices to show that  $\Gamma(X_{\param},H^\theta(\chi))$ is flat. But this module is
graded so it is enough to show that $\Gamma(X_{\param},H^\theta(\chi))\otimes^L_{\C[\param]}\C_0$
has no higher homology. For this we observe that $$\Gamma(X_{\param},H^\theta(\chi))\otimes^L_{\C[\param]}\C_0\cong R\Gamma(H^\theta_\param(\chi)\otimes^L_{\C[\param]}\C_0).$$
The left hand side has no positive homology, while the right hand side has no
positive homology. We have proved that
$\Hcal_{\paramq,\chi}$ is flat over $\C[\paramq]$.

Now we show that (i) or (ii) imply $\Hcal_{\lambda,\chi}=R\Gamma(\Hcal_{\lambda,\chi}^\theta)$.

Note that both conditions (i) and (ii) imply that $H^i(X^\theta,\Hcal^\theta_{\lambda,\chi})=0$ for all
$i>0$. So we reduce to showing that $H^1(X^\theta, \Hcal^\theta_{\lambda,\chi})=0$
implies that the natural homomorphism $\Hcal_{\lambda,\chi}\rightarrow \Gamma(\Hcal_{\lambda,\chi}^\theta)$ is an isomorphism. For this we argue exactly
as in \cite[Proposition 6.26]{BPW}. Namely, we choose a line $\ell:=\lambda+\C\theta\subset \paramq$.
Note that $\Hcal_{\ell,\chi}:=\Hcal_{\paramq,\chi}\otimes_{\C[\paramq]}\C[\ell]$
coincides with $\Gamma(\Hcal_{\ell,\chi}^\theta)$ using the argument of
the proof of Lemma \ref{Lem:enhanced_translation_independence}.
So we need to show that the specialization of  $\Gamma(\Hcal_{\ell,\chi}^\theta)$
to $\lambda\in \ell$ coincides with $\Gamma(\Hcal_{\lambda,\chi}^\theta)$.
For this we prove that $H^1(X^\theta,\Hcal_{\ell,\chi}^\theta)$ has finite support in $\ell$
and $\lambda$ is not contained in that support. Both arguments are as in \cite[Proposition 6.26]{BPW}.
%

\end{proof}

\begin{Cor}\label{Cor:derived_equivalence}
Under (i) or (ii), the functor $\Hcal_{\lambda,\chi}\otimes^L_{\Hcal_\lambda}\bullet$
is an equivalence $D^b(\Hcal_\lambda\operatorname{-mod})\rightarrow
D^b(\Hcal_{\lambda+\chi}\operatorname{-mod})$.
\end{Cor}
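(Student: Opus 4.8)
The plan is to reduce the claim, via the derived equivalences (\ref{eq:H_functors}), to the (hypothesis-free) fact that tensoring with the sheaf-level enhanced translation bimodule is an equivalence, and then to match the two pictures by a projection-formula argument of the type used in Lemma \ref{Lem:fun_iso1}. First I would record that
$$\Phi:=\Hcal^\theta_{\lambda,\chi}\otimes^L_{\Hcal^\theta_\lambda}\bullet\colon D^b(\Coh(\Hcal^\theta_\lambda))\xrightarrow{\ \sim\ }D^b(\Coh(\Hcal^\theta_{\lambda+\chi}))$$
is an equivalence, with quasi-inverse $\Hcal^\theta_{\lambda+\chi,-\chi}\otimes^L_{\Hcal^\theta_{\lambda+\chi}}\bullet$. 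Indeed, specializing at $\lambda$ the formula defining $\Hcal^\theta_{\paramq,\chi}$ gives $\Hcal^\theta_{\lambda,\chi}\cong\Ecal^\theta_{\lambda+\chi}\otimes_{\A^\theta_{\lambda+\chi}}\A^\theta_{\lambda,\chi}\otimes_{\A^\theta_\lambda}(\Ecal^\theta_\lambda)^*$; the quantized vector bundle $\Ecal^\theta_\mu$ is a local progenerator, so $\Ecal^\theta_\mu\otimes_{\A^\theta_\mu}\bullet$ and $(\Ecal^\theta_\mu)^*\otimes_{\Hcal^\theta_\mu}\bullet$ are mutually inverse equivalences $\Coh(\A^\theta_\mu)\simeq\Coh(\Hcal^\theta_\mu)$, while $\A^\theta_{\lambda,\chi}$ and $\A^\theta_{\lambda+\chi,-\chi}$ are mutually inverse equivalences $\Coh(\A^\theta_\lambda)\simeq\Coh(\A^\theta_{\lambda+\chi})$ because locally they quantize the mutually inverse line bundles $\Str(\pm\chi)$ (compare \cite[Section 5.1]{BPW}). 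All tensor products here are underived, the relevant sheaves being locally free over the respective algebras, so $\Phi$ is an equivalence; note this uses neither (i) nor (ii).

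Next I would invoke (\ref{eq:H_functors}) at $\lambda$ and at $\lambda+\chi$: for $\mu\in\{\lambda,\lambda+\chi\}$ the functors $\Hcal^\theta_\mu\otimes^L_{\Hcal_\mu}\bullet\colon D^b(\Hcal_\mu\operatorname{-mod})\to D^b(\Coh(\Hcal^\theta_\mu))$ and $R\Gamma$ are mutually inverse equivalences. Hence the composite $F:=R\Gamma\circ\Phi\circ\bigl(\Hcal^\theta_\lambda\otimes^L_{\Hcal_\lambda}\bullet\bigr)\colon D^b(\Hcal_\lambda\operatorname{-mod})\to D^b(\Hcal_{\lambda+\chi}\operatorname{-mod})$ is an equivalence, being a composition of three equivalences, and it remains to identify $F$ with $\Hcal_{\lambda,\chi}\otimes^L_{\Hcal_\lambda}\bullet$.

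For $M\in D^b(\Hcal_\lambda\operatorname{-mod})$ one has $\Phi(\Hcal^\theta_\lambda\otimes^L_{\Hcal_\lambda}M)\cong\Hcal^\theta_{\lambda,\chi}\otimes^L_{\Hcal_\lambda}M$, where $\Hcal^\theta_{\lambda,\chi}$ is viewed as a right $\Hcal_\lambda$-module via $\Hcal_\lambda=\Gamma(\Hcal^\theta_\lambda)$. By (1) of Proposition \ref{Prop:tilting_properties_quantum}, $\gr\Hcal_\lambda\cong H$ is Noetherian of finite homological dimension, hence so is $\Hcal_\lambda$; thus $M$ is perfect and is represented by a bounded complex $P^\bullet$ of finitely generated projective $\Hcal_\lambda$-modules. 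Since $R\Gamma$ commutes with finite direct sums, retracts and shifts, one gets $R\Gamma(\Hcal^\theta_{\lambda,\chi}\otimes_{\Hcal_\lambda}P^\bullet)\cong R\Gamma(\Hcal^\theta_{\lambda,\chi})\otimes_{\Hcal_\lambda}P^\bullet\cong R\Gamma(\Hcal^\theta_{\lambda,\chi})\otimes^L_{\Hcal_\lambda}M$; this is the projection-formula identity, established just as in Lemma \ref{Lem:fun_iso1}. Finally, under (i) or (ii), Lemma \ref{Lem:translation_coincidence} gives $R\Gamma(\Hcal^\theta_{\lambda,\chi})=\Hcal_{\lambda,\chi}$, so $F\cong\Hcal_{\lambda,\chi}\otimes^L_{\Hcal_\lambda}\bullet$, which is therefore an equivalence.

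The one point that is not purely formal is this last projection-formula identification: it uses that $\Hcal_\lambda$ has finite homological dimension (so that every object of $D^b(\Hcal_\lambda\operatorname{-mod})$ is perfect) together with Lemma \ref{Lem:translation_coincidence} to replace the sheaf $\Hcal^\theta_{\lambda,\chi}$ by the bimodule $\Hcal_{\lambda,\chi}$; the rest is formal from the equivalences already in hand.
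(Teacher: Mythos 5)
Your proof is correct and takes essentially the same route as the paper's: the paper's one-line argument is exactly the identity $R\Gamma(\Hcal^\theta_{\lambda,\chi})\otimes^L_{\Hcal_\lambda}\bullet\cong R\tilde{\Gamma}^\theta_{\lambda+\chi}\bigl(\A^\theta_{\lambda,\chi}\otimes_{\A^\theta_\lambda}L\tilde{\Loc}^\theta_\lambda(\bullet)\bigr)$, which is your composite $F$, followed by the same appeal to Lemma \ref{Lem:translation_coincidence}. You have simply written out in full the two ingredients the paper treats as routine (that the sheaf-level functor $\Phi$ is an equivalence, and the projection-formula step used to establish the displayed isomorphism, analogous to Lemma \ref{Lem:fun_iso1}).
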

\begin{proof}
This follows from the observation that
$$R\Gamma(\Hcal^{\theta}_{\lambda,\chi})\otimes^L_{\Hcal_{\lambda}}\bullet\cong
R\tilde{\Gamma}^{\theta}_{\lambda+\chi} \left(\A^{\theta}_{\lambda,\chi}\otimes_{\A^\theta_{\lambda}}
L\tilde{\Loc}^{\theta}_{\lambda}(\bullet)\right).$$
Note that the all functors in the right hand side are derived equivalences.
\end{proof}

\subsubsection{Morphism $\Hcal_{\lambda+\chi,\chi}\otimes^L_{\Hcal_{\lambda+\chi}}\Hcal_{\lambda,\chi}
\rightarrow \Hcal_{\lambda,\chi+\chi'}$}



Recall the sublattice $\underline{\Lambda}\subset \Lambda$ from Corollary
\ref{Cor:sublattice}. Below we only consider $\chi$ in this lattice. With $\chi$ fixed,  for
$\theta$ we take an element satisfying the conclusion of that corollary.


We want to compare the bimodules
$\Hcal_{\lambda,\chi+\chi'}$ and $\Hcal_{\lambda+\chi,\chi'}\otimes^L_{\Hcal_{\lambda+\chi}}\Hcal_{\lambda,\chi}$.
Note that we have a natural homomorphism
\begin{equation}\label{eq:transl_homom_univ}
\Hcal_{\paramq,\chi'}\otimes^L_{\Hcal_{\paramq}}\Hcal_{\paramq,\chi}
\rightarrow \Hcal_{\paramq,\chi+\chi'}.
\end{equation}
It is induced by the isomorphism
\begin{equation}\label{eq:transl_homom_local}
\Hcal^\theta_{\paramq,\chi'}\otimes_{\Hcal^\theta_{\paramq}}\Hcal^\theta_{\paramq,\chi}
\rightarrow \Hcal^\theta_{\paramq,\chi+\chi'}.
\end{equation}
(\ref{eq:transl_homom_univ}) specializes to
\begin{equation}\label{eq:transl_homom_spec}
\Hcal_{\lambda+\chi,\chi'}\otimes^L_{\Hcal_{\lambda+\chi}}\Hcal_{\lambda,\chi}
\rightarrow \Hcal_{\lambda,\chi+\chi'}.
\end{equation}
We remark that the latter homomorphism automatically factorizes through
$\Hcal_{\lambda+\chi,\chi'}\otimes_{\Hcal_{\lambda+\chi}}\Hcal_{\lambda,\chi}$.
The same holds for (\ref{eq:transl_homom_univ}).

Below we will need a characterization of (\ref{eq:transl_homom_spec}).

\begin{Lem}\label{Lem:homom_spec}
Let $\chi,\chi'\in \underline{\Lambda}$. Then
(\ref{eq:transl_homom_spec}) is the unique homomorphism whose microlocalization
to $X^{reg}$ coincides with the microlocalization to $X^{reg}$ of the specialization of
(\ref{eq:transl_homom_local}) to $\lambda$.
\end{Lem}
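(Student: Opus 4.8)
The plan is to deduce the uniqueness from the standard principle that a homomorphism between Harish--Chandra-type $\Hcal$-bimodules is detected by its microlocalization to the regular locus $X^{reg}=Y^{reg}$, whose complement in $X$ has codimension $\geqslant 2$.

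First I would replace the derived morphism by an honest one. As remarked right before the statement, (\ref{eq:transl_homom_spec}) factors through the underived tensor product, so it is really a bimodule map $f_0\colon M\to \Hcal_{\lambda,\chi+\chi'}$ with $M:=\Hcal_{\lambda+\chi,\chi'}\otimes_{\Hcal_{\lambda+\chi}}\Hcal_{\lambda,\chi}$, a finitely generated $\Hcal_{\lambda+\chi}$-$\Hcal_\lambda$-bimodule. Since $\Hcal_{\lambda+\chi,\chi'}\otimes^L_{\Hcal_{\lambda+\chi}}\Hcal_{\lambda,\chi}$ lives in non-positive cohomological degrees with $H^0=M$, while $\Hcal_{\lambda,\chi+\chi'}$ is a module, a morphism out of the derived tensor product is the same datum as a morphism out of $M$. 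So it suffices to prove: a bimodule homomorphism $g\colon M\to\Hcal_{\lambda,\chi+\chi'}$ whose microlocalization to $X^{reg}$ vanishes is itself zero. Existence in the lemma is then automatic — by construction the microlocalization of (\ref{eq:transl_homom_spec}) to $X^{reg}$ is the microlocalization of the specialization of (\ref{eq:transl_homom_local}) to $\lambda$ — and uniqueness is the vanishing statement applied to the difference of two candidates. (Here one uses that over $X^{reg}$ all symplectic resolutions agree, so all the sheaves $\A^\theta_\lambda$ coincide there and are the microlocalization of the global algebra; hence ``microlocalization to $X^{reg}$'' is unambiguous.)

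Next I would control the target. By Lemma~\ref{Lem:enhanced_translation_independence} the bimodule $\Hcal_{\lambda,\chi+\chi'}$ does not depend on $\theta$, so I may invoke Corollary~\ref{Cor:sublattice} for the single vector $\chi+\chi'\in\underline{\Lambda}$ and pick $\theta'$ with $H^i(X^{\theta'},H^{\theta'}(\chi+\chi'))=0$ for $i>0$. Then Lemma~\ref{Lem:translation_coincidence}(i) gives $\Hcal_{\lambda,\chi+\chi'}=\Gamma(X^{\theta'},\Hcal^{\theta'}_{\lambda,\chi+\chi'})$, and, by the flatness of $\Hcal^{\theta'}_{\paramq,\chi+\chi'}$ over $\C[\paramq]$ built into its construction, $\gr\Hcal^{\theta'}_{\lambda,\chi+\chi'}\cong\mathcal{E}nd(E^{\theta'})\otimes\Str(\chi+\chi')$, a vector bundle on $X^{\theta'}$. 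Left exactness of $\Gamma$ then equips $\Hcal_{\lambda,\chi+\chi'}$ with a good filtration whose associated graded embeds into $\Gamma(X^{\theta'},\mathcal{E}nd(E^{\theta'})\otimes\Str(\chi+\chi'))$; since $\codim_X(X\setminus X^{reg})\geqslant 2$ and $Y$ is normal, the latter $\C[Y]$-module equals the sections over $Y^{reg}$ of a vector bundle, hence is reflexive, in particular torsion free. Consequently $\Hcal_{\lambda,\chi+\chi'}$ has a good filtration with torsion-free associated graded, i.e.\ it has no nonzero sub-bimodule whose associated variety is contained in $Y\setminus Y^{reg}$.

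Finally I would close the argument. If $g\colon M\to\Hcal_{\lambda,\chi+\chi'}$ microlocalizes to $0$ on $X^{reg}$, then, microlocalization being exact, $g(M)$ is a finitely generated sub-bimodule of $\Hcal_{\lambda,\chi+\chi'}$ with zero microlocalization to $X^{reg}=Y^{reg}$; as the microlocalization of a Harish--Chandra bimodule $P$ is supported on $\pi^{-1}(\VA(P))$, this forces $\VA(g(M))\subseteq Y\setminus Y^{reg}$, whence $g(M)=0$ and $g=0$. I expect the main obstacle to be precisely the bookkeeping in the middle paragraph: ensuring that after the specialization $\paramq\rightsquigarrow\lambda$ the bimodule $\Hcal_{\lambda,\chi+\chi'}$ still carries a good filtration with torsion-free associated graded (handled via the cohomology-vanishing choice of $\theta'$ together with $\theta$-independence), and that $g(M)$ is genuinely Harish--Chandra so that $\VA$ agrees with the support of its associated graded — everything past that point is the standard ``codimension $\geqslant 2$'' torsion-freeness mechanism.
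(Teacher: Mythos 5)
Your proof is correct and follows essentially the same route as the paper: you reduce to showing that $\Hcal_{\lambda,\chi+\chi'}$ embeds into its microlocalization to $X^{reg}$, and you establish this by choosing $\theta'$ with $H^{>0}(X^{\theta'},H^{\theta'}(\chi+\chi'))=0$, invoking Lemma \ref{Lem:translation_coincidence} to identify $\Hcal_{\lambda,\chi+\chi'}$ with $\Gamma(\Hcal_{\lambda,\chi+\chi'}^{\theta'})$, and then using that $\Hcal_{\lambda,\chi+\chi'}^{\theta'}$ quantizes a vector bundle. The only cosmetic difference is that you spell out the torsion-freeness mechanism via the associated graded and the codimension-two locus, whereas the paper leaves this implicit in the phrase ``is a quantization of a vector bundle.''
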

\begin{proof}
Compare to the proof of \cite[Lemma 2.10]{catO_charp}.
Clearly, (\ref{eq:transl_homom_spec}) has the required property. We need to show that
the property characterizes the homomorphism uniquely. This will follow if we check that
$\Hcal_{\lambda,\chi+\chi'}\hookrightarrow \Gamma(\Hcal_{\lambda,\chi+\chi'}|_{X^{reg}})$.

Let $\theta$ be such that $H^i(X^\theta, H^\theta(\chi+\chi'))=0$
for all $i>0$. By Lemma \ref{Lem:translation_coincidence}, we have that
$\Hcal_{\lambda,\chi+\chi'}\xrightarrow{\sim} R\Gamma(\Hcal_{\lambda,\chi+\chi'}^\theta)$.
And  $\Gamma(\Hcal_{\lambda,\chi+\chi'}^\theta)
\hookrightarrow \Gamma(\Hcal_{\lambda,\chi+\chi'}^\theta|_{X^{reg}})$
follows because $\Hcal_{\lambda,\chi+\chi'}^\theta$ is a quantization
of a vector bundle.
\end{proof}

\subsubsection{Sufficient conditions for (\ref{eq:transl_homom_spec})
being an isomorphism}
In the proof of Theorem \ref{Thm:exactness} we will need three sufficient conditions for (\ref{eq:transl_homom_spec}) to be an isomorphism. We will state and prove two in
this section and the third one later, after we discuss a connection with
localization.

 The first condition is when one of the bimodules in
the left hand side is a Morita equivalence bimodule. We need the following definition.

\begin{defi}\label{defi:strong_inverse}
We say that $\Hcal_{\lambda+\chi,-\chi}$ is a {\it strong inverse} of $\Hcal_{\lambda,\chi}$
if the homomorphisms
$$\Hcal_{\lambda,\chi}\otimes_{\Hcal_{\lambda}}\Hcal_{\lambda+\chi,-\chi}\rightarrow
\Hcal_{\lambda+\chi},
\Hcal_{\lambda+\chi,-\chi}\otimes_{\Hcal_{\lambda+\chi}}\Hcal_{\lambda,\chi}\rightarrow
\Hcal_{\lambda}$$
coming from (\ref{eq:transl_homom_spec}) are isomorphisms.
\end{defi}

\begin{Lem}\label{Lem:iso_equiv}
Suppose that $\Hcal_{\lambda+\chi,\chi'}$ is a Morita equivalence bimodule
with strong inverse $\Hcal_{\lambda+\chi+\chi',-\chi'}$.
Then (\ref{eq:transl_homom_spec}) is an isomorphism. The same holds if
$\Hcal_{\lambda,\chi}$ is a Morita equivalence bimodule with strong inverse
$\Hcal_{\lambda+\chi,-\chi}$.
\end{Lem}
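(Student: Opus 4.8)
The plan is to reduce the claim to the composition-associativity of translation bimodules, using the strong-inverse hypothesis to cancel. Suppose $\Hcal_{\lambda+\chi,\chi'}$ is a Morita equivalence bimodule with strong inverse $\Hcal_{\lambda+\chi+\chi',-\chi'}$; the other case is symmetric. First I would consider the composite of the homomorphism
\begin{equation*}
\Hcal_{\lambda+\chi,\chi'}\otimes_{\Hcal_{\lambda+\chi}}\Hcal_{\lambda,\chi}
\rightarrow \Hcal_{\lambda,\chi+\chi'}
\end{equation*}
with the map $\Hcal_{\lambda+\chi+\chi',-\chi'}\otimes_{\Hcal_{\lambda+\chi+\chi'}}(\bullet)$. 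Since tensoring with a Morita equivalence bimodule is an exact functor and an equivalence of module categories, it suffices to prove that after applying $\Hcal_{\lambda+\chi+\chi',-\chi'}\otimes_{\Hcal_{\lambda+\chi+\chi'}}\bullet$, the map (\ref{eq:transl_homom_spec}) becomes an isomorphism. Using the strong-inverse identities from Definition \ref{defi:strong_inverse} on the left side and functoriality of the composition maps (\ref{eq:transl_homom_spec}), the composite map becomes identified with
\begin{equation*}
\Hcal_{\lambda,\chi}\rightarrow \Hcal_{\lambda+\chi+\chi',-\chi'}\otimes_{\Hcal_{\lambda+\chi+\chi'}}\Hcal_{\lambda,\chi+\chi'},
\end{equation*}
which in turn is (\ref{eq:transl_homom_spec}) for the pair $(\chi+\chi', -\chi')$ applied at $\lambda$; but that homomorphism, after tensoring with $\Hcal_{\lambda+\chi,\chi'}$, recovers the identity, so all these maps are invertible. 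Rather than organizing this as an explicit diagram chase, I would phrase it via the uniqueness statement of Lemma \ref{Lem:homom_spec}: both the original map and the one obtained by composing with the strong inverse are determined by their microlocalizations to $X^{reg}$, and there (\ref{eq:transl_homom_local}) is an isomorphism of quantizations of vector bundles by construction, so the composite is an isomorphism on $X^{reg}$; then I invoke the embedding $\Hcal_{\lambda,\chi+\chi'}\hookrightarrow \Gamma(\Hcal_{\lambda,\chi+\chi'}|_{X^{reg}})$ and the corresponding statement for the tensor product to conclude the map itself is an isomorphism.

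More concretely, the cleanest route is: pick $\theta$ so that Corollary \ref{Cor:sublattice} applies simultaneously to $\chi,\chi',\chi+\chi'$ (possible since $\underline{\Lambda}$ is a lattice and $\theta$ only needs to have all of $\chi,\chi',\chi+\chi'$ in the closure of its chamber — here I would note that one may need to be slightly careful, shrinking to the sublattice where a common such $\theta$ exists, or simply handling the generic-$\theta$ choice that works for a fixed $\chi$ and noting the bimodules are $\theta$-independent by Lemma \ref{Lem:enhanced_translation_independence}). With such $\theta$, the sheaf-level map (\ref{eq:transl_homom_local}) is an isomorphism, hence so is its specialization to $\lambda$ after microlocalizing to $X^{reg}$. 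Then I would compare (\ref{eq:transl_homom_spec}) with the sheaf-theoretic isomorphism via Lemma \ref{Lem:homom_spec}: the microlocalization of (\ref{eq:transl_homom_spec}) to $X^{reg}$ agrees with the microlocalization of the specialization of (\ref{eq:transl_homom_local}), which is invertible. So (\ref{eq:transl_homom_spec}) is an isomorphism after restriction to $X^{reg}$. Its source $\Hcal_{\lambda+\chi,\chi'}\otimes_{\Hcal_{\lambda+\chi}}\Hcal_{\lambda,\chi}$ injects into $\Gamma$ over $X^{reg}$ of its microlocalization because $\Hcal_{\lambda+\chi,\chi'}$ is a Morita equivalence bimodule (so that tensoring preserves the relevant torsion-freeness/embedding property — this is where the hypothesis enters), and the target $\Hcal_{\lambda,\chi+\chi'}$ injects into its $X^{reg}$-sections because it quantizes a vector bundle, as in Lemma \ref{Lem:homom_spec}. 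A map of two such objects that is an isomorphism on $X^{reg}$ is an isomorphism.

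The main obstacle I anticipate is justifying that the source bimodule $\Hcal_{\lambda+\chi,\chi'}\otimes_{\Hcal_{\lambda+\chi}}\Hcal_{\lambda,\chi}$ embeds into its microlocalization's global sections over $X^{reg}$ — i.e. that it is "saturated" in the appropriate sense. Here the Morita equivalence hypothesis is essential: tensoring $\Hcal_{\lambda,\chi}$, which does embed into its $X^{reg}$-sections (being a quantized vector bundle), with the Morita equivalence bimodule $\Hcal_{\lambda+\chi,\chi'}$ and using that this operation intertwines the global-sections/microlocalization picture on both sides (via $\Ecal^\theta_\paramq$-twists and the definition $\Hcal^\theta_{\paramq,\chi'}=\Ecal^\theta_{\paramq}\otimes_{\A^\theta_{\paramq}}\A_{\paramq,\chi'}^\theta\otimes_{\A_\paramq^\theta}(\Ecal^\theta_\paramq)^*$), one transports the embedding property across. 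Alternatively, and perhaps more robustly, I would avoid this point entirely by arguing purely on the derived level: tensoring the derived map $\Hcal_{\lambda+\chi,\chi'}\otimes^L_{\Hcal_{\lambda+\chi}}\Hcal_{\lambda,\chi}\to\Hcal_{\lambda,\chi+\chi'}$ with the strong inverse $\Hcal_{\lambda+\chi+\chi',-\chi'}\otimes^L_{\Hcal_{\lambda+\chi+\chi'}}\bullet$, using that a strong inverse gives (\ref{eq:transl_homom_spec})-isomorphisms in both directions, reduces the claim to the already-known derived equivalence statement of Corollary \ref{Cor:derived_equivalence} together with the identity-map recognition; uniqueness from Lemma \ref{Lem:homom_spec} then pins down that the derived map is the structural one and hence an isomorphism, and since all terms are concentrated in degree $0$ (the bimodules are flat under the relevant hypotheses, or the tensor products have no higher Tor by the Morita property), the underived statement (\ref{eq:transl_homom_spec}) follows.
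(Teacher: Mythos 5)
Your overall strategy is the right one and matches the paper's: use the strong inverse $\Hcal_{\lambda+\chi+\chi',-\chi'}$ to produce a candidate inverse to (\ref{eq:transl_homom_spec}), then verify the inverse property by microlocalizing to $X^{reg}$ and invoking the embedding of the relevant bimodule into its $X^{reg}$-sections. You also correctly identify the real obstacle: the source $\Hcal_{\lambda+\chi,\chi'}\otimes_{\Hcal_{\lambda+\chi}}\Hcal_{\lambda,\chi}$ is not a priori known to embed into $\Gamma(\bullet|_{X^{reg}})$, so the ``isomorphism after restriction to $X^{reg}$ implies isomorphism'' step is not automatic for this object.

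The weak point is precisely your proposed fix for that obstacle. The claim that tensoring with a Morita equivalence bimodule ``preserves the relevant torsion-freeness/embedding property'' is asserted but not obviously true, and in any case it is not what the paper does. The paper sidesteps proving the source embeds: it builds an explicit candidate inverse $\Hcal_{\lambda,\chi+\chi'}\to\Hcal_{\lambda+\chi,\chi'}\otimes\Hcal_{\lambda,\chi}$ from the strong-inverse isomorphism, checks the composite ending in $\Hcal_{\lambda,\chi+\chi'}$ is the identity by using the embedding of the \emph{target} (which is a quantized vector bundle, hence always embeds), and then for the other composite tensors the whole equation with the Morita bimodule $\Hcal_{\lambda+\chi+\chi',-\chi'}$. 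This tensoring converts the problematic source $\Hcal_{\lambda+\chi,\chi'}\otimes\Hcal_{\lambda,\chi}$ into $\Hcal_{\lambda+\chi}\otimes\Hcal_{\lambda,\chi}\cong\Hcal_{\lambda,\chi}$ via the strong-inverse identity, and this last object does embed. So the Morita tensor is used to \emph{replace} the source, not to show the source itself embeds. That distinction is what makes the argument go through, and your sketch does not supply it.

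Two smaller issues: the arrow in your displayed map $\Hcal_{\lambda,\chi}\to\Hcal_{\lambda+\chi+\chi',-\chi'}\otimes\Hcal_{\lambda,\chi+\chi'}$ goes the opposite way from the structural map (\ref{eq:transl_homom_spec}) for $(\chi+\chi',-\chi')$, so the ``identification'' you assert there is a direction confusion — what you have actually produced is a candidate inverse to that structural map, not the map itself. And the aside about choosing $\theta$ so that $\chi,\chi',\chi+\chi'$ all lie in one chamber is a red herring: the sheaf-level map (\ref{eq:transl_homom_local}) is an isomorphism for every $\theta$, and the whole reason Lemma \ref{Lem:iso_equiv} exists alongside Lemma \ref{Lem:one_direction} is to handle the case where no such common $\theta$ is available. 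Leaning on that choice undercuts the point of the lemma.
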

\begin{proof}
Note that if $\Hcal_{\lambda+\chi,\chi'}$ is a Morita equivalence bimodule,
then the left hand side of (\ref{eq:transl_homom_spec}) has no higher homology.
We need to produce an inverse to
\begin{equation}\label{eq:bimod_homom1}
\Hcal_{\lambda+\chi,\chi'}\otimes_{\Hcal_{\lambda+\chi}}\Hcal_{\lambda,\chi}
\rightarrow \Hcal_{\lambda,\chi+\chi'}.
\end{equation}
Consider the homomorphism
$$
\Hcal_{\lambda+\chi+\chi',-\chi'}\otimes_{\Hcal_{\lambda+\chi+\chi'}}\Hcal_{\lambda,\chi+\chi'}
\rightarrow \Hcal_{\lambda,\chi}.
$$
Apply $\Hcal_{\lambda+\chi,\chi'}\otimes_{\Hcal_{\lambda+\chi}}\bullet$ to this homomorphism. Compose the resulting homomorphism
$$
\Hcal_{\lambda+\chi,\chi'}\otimes_{\Hcal_{\lambda+\chi}}\Hcal_{\lambda+\chi+\chi',-\chi'}
\otimes_{\Hcal_{\lambda+\chi+\chi'}}\Hcal_{\lambda,\chi+\chi'}
\rightarrow \Hcal_{\lambda+\chi,\chi'}\otimes_{\Hcal_{\lambda+\chi}}\Hcal_{\lambda,\chi}
$$
with
$$\Hcal_{\lambda,\chi+\chi'}\xrightarrow{\sim} \left(\Hcal_{\lambda+\chi,\chi'}\otimes_{\Hcal_{\lambda+\chi}}\Hcal_{\lambda+\chi+\chi',-\chi'}\right)
\otimes_{\Hcal_{\lambda+\chi+\chi'}}\Hcal_{\lambda,\chi+\chi'}.$$
We get a homomorphism
\begin{equation}\label{eq:bimod_homom2}\Hcal_{\lambda,\chi+\chi'}\rightarrow
\Hcal_{\lambda+\chi,\chi'}\otimes_{\Hcal_{\lambda+\chi}}\Hcal_{\lambda,\chi}.
\end{equation}
By the construction, it is inverse to (\ref{eq:bimod_homom1}) after microlocalization
to $X^{reg}$. In particular, the composed endomorphism of $\Hcal_{\lambda,\chi+\chi'}$
is the identity after microlocalization to $X^{reg}$. As we have seen in the proof of
Lemma \ref{Lem:homom_spec}, $\Hcal_{\lambda,\chi+\chi'}\hookrightarrow
\Gamma(\Hcal_{\lambda,\chi+\chi'}|_{X^{reg}})$. So
(\ref{eq:bimod_homom2}) is the right inverse of (\ref{eq:bimod_homom1}). It
is also the left inverse: we can tensor (\ref{eq:bimod_homom1}) and
(\ref{eq:bimod_homom2}) with the Morita equivalence bimodule  $\Hcal_{\lambda+\chi+\chi',-\chi'}$
and repeat the argument.
\end{proof}

In Section \ref{SSS_enhanced_abel_transl} we will get a sufficient condition for $\Hcal_{\lambda,\chi}$ and
$\Hcal_{\lambda+\chi,-\chi}$ to be strongly inverse to each other.


Here is the second condition that guarantees that (\ref{eq:transl_homom_spec})
is an isomorphism. For this we need the following definition.

\begin{defi}\label{defi:good_line_bundle}
We say that $\chi\in \underline{\Lambda}$ is good for generic $\theta\in \param_{\R}$ if
$H^i(X^\theta,H^\theta(m\chi))=0$ for all $m,i>0$.
\end{defi}

Note that, by Corollary \ref{Cor:sublattice}, for each $\chi\in \underline{\Lambda}$,
there is a generic element $\theta$ such that $\chi$ is good for $\theta$.

\begin{Lem}\label{Lem:one_direction}
Let $\chi,\chi'$ be such that there is $\theta$ for which
both $\chi$ and $\chi'$ are good.
Then (\ref{eq:transl_homom_spec}) is an isomorphism for all $\lambda$.
\end{Lem}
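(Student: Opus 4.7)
The strategy is to establish the isomorphism at the universal level over $\paramq$ and then specialize to $\lambda$.

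First I would observe that $\chi+\chi'$ is itself good for $\theta$. By Corollary \ref{Cor:sublattice}, goodness of $\chi$ (resp.\ $\chi'$) for $\theta$ places it in the closure of the chamber of $\theta$; this closure is a convex cone, so $\chi+\chi'$ lies in it as well, hence $\chi+\chi'$ is good for $\theta$. Consequently Lemma \ref{Lem:translation_coincidence}(i) applies to all three elements $\chi,\chi',\chi+\chi'$: the bimodules $\Hcal_{\paramq,\chi}$, $\Hcal_{\paramq,\chi'}$, $\Hcal_{\paramq,\chi+\chi'}$ are flat over $\C[\paramq]$, and each coincides with $R\Gamma$ of its sheaf counterpart.

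Next I would prove the universal version
$$\Hcal_{\paramq,\chi'}\otimes^L_{\Hcal_{\paramq}}\Hcal_{\paramq,\chi}\cong \Hcal_{\paramq,\chi+\chi'},$$
induced by (\ref{eq:transl_homom_univ}). At the sheaf level each $\Hcal^\theta_{\paramq,\chi}$ quantizes a line bundle and so is locally isomorphic to $\Hcal^\theta_{\paramq}$ on either side; hence the underived sheaf tensor agrees with the derived tensor, and by (\ref{eq:transl_homom_local}) it equals $\Hcal^\theta_{\paramq,\chi+\chi'}$. Combining with the derived equivalence (\ref{eq:H_functors}), and using $\Hcal^\theta_{\paramq,\chi}\cong \Hcal^\theta_\paramq\otimes^L_{\Hcal_\paramq}\Hcal_{\paramq,\chi}$ (valid because $\chi$ is good), one obtains
$$\Hcal_{\paramq,\chi'}\otimes^L_{\Hcal_\paramq}\Hcal_{\paramq,\chi}\cong R\Gamma\bigl(\Hcal^\theta_{\paramq,\chi'}\otimes_{\Hcal^\theta_\paramq}\Hcal^\theta_{\paramq,\chi}\bigr)\cong R\Gamma(\Hcal^\theta_{\paramq,\chi+\chi'})\cong \Hcal_{\paramq,\chi+\chi'},$$
the last step using goodness of $\chi+\chi'$.

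Finally I would specialize. Thanks to the flatness established in the first step, the base change $\bullet\otimes^L_{\C[\paramq]}\C_\lambda$ is underived on $\Hcal_{\paramq,\chi}$, $\Hcal_{\paramq,\chi'}$ and $\Hcal_{\paramq,\chi+\chi'}$. Since $\Hcal_\paramq$ is itself flat over $\C[\paramq]$ (Proposition \ref{Prop:tilting_properties_quantum}), the tensor over $\Hcal_{\lambda+\chi}$ after specialization is computed by the tensor over $\Hcal_\paramq$ at the universal level, giving
$$\Hcal_{\lambda+\chi,\chi'}\otimes^L_{\Hcal_{\lambda+\chi}}\Hcal_{\lambda,\chi}\cong \bigl(\Hcal_{\paramq,\chi'}\otimes^L_{\Hcal_\paramq}\Hcal_{\paramq,\chi}\bigr)\otimes^L_{\C[\paramq]}\C_\lambda\cong \Hcal_{\paramq,\chi+\chi'}\otimes_{\C[\paramq]}\C_\lambda=\Hcal_{\lambda,\chi+\chi'}.$$
By construction this isomorphism arises from (\ref{eq:transl_homom_local}), so its microlocalization to $X^{\mathrm{reg}}$ coincides with that of (\ref{eq:transl_homom_spec}); Lemma \ref{Lem:homom_spec} then forces the two to agree globally. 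The main obstacle is the middle step: one must check carefully that the tilting-type derived equivalence (\ref{eq:H_functors}) intertwines the module-level derived bimodule tensor with the sheaf-level derived tensor, taking into account the shifted left actions. The crucial structural input that makes this work is that the bimodule quantizations $\Hcal^\theta_{\paramq,\chi}$ are locally free on both sides, so that derived and underived sheaf tensors coincide.
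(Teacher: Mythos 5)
Your proof is correct and hinges on the same three ingredients as the paper's (Lemma \ref{Lem:translation_coincidence}, the sheaf-level isomorphism (\ref{eq:transl_homom_local}) together with the derived equivalence (\ref{eq:H_functors}), and the uniqueness statement of Lemma \ref{Lem:homom_spec}), but you reach the conclusion along a genuinely different route. The paper works directly at a fixed $\lambda$: it constructs a chain of natural isomorphisms between the functors $\Hcal_{\lambda+\chi,\chi'}\otimes^L_{\Hcal_{\lambda+\chi}}\Hcal_{\lambda,\chi}\otimes^L_{\Hcal_\lambda}\bullet$ and $\Hcal_{\lambda,\chi+\chi'}\otimes^L_{\Hcal_\lambda}\bullet$ on $D^b(\Hcal_\lambda\operatorname{-mod})$, then evaluates at $\Hcal_\lambda$ to get the bimodule isomorphism, and finally identifies it with (\ref{eq:transl_homom_spec}) via Lemma \ref{Lem:homom_spec}. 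You instead establish the isomorphism first at the universal level over $\paramq$ and then base-change along $\C[\paramq]\to\C_\lambda$, using flatness of the translation bimodules (which Lemma \ref{Lem:translation_coincidence}(i) supplies). Both are valid; the paper's avoids the flatness and base-change bookkeeping, while yours makes the role of the universal bimodule (\ref{eq:transl_homom_univ}) more transparent. Two remarks. First, the claim that ``each $\Hcal^\theta_{\paramq,\chi}$ quantizes a line bundle'' is not accurate: $\Hcal^\theta_{\paramq,\chi}$ quantizes $E\otimes\Str(\chi)\otimes E^*$, which has rank $(\operatorname{rk}E)^2$; what is true, and what your argument actually uses, is that as a one-sided $\Hcal^\theta_\paramq$-module it is invertible (locally isomorphic to $\Hcal^\theta_\paramq$), so derived and underived sheaf tensors agree. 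Second, you correctly note and resolve (via convexity of the chamber closure in Corollary \ref{Cor:sublattice}) that $\chi+\chi'$ is also good for $\theta$ — the paper's proof uses this implicitly in its last invocation of Lemma \ref{Lem:translation_coincidence}, so making it explicit is a small improvement.
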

\begin{proof}
Consider the functor $D^b(\Hcal_{\lambda}\operatorname{-mod})\rightarrow
D^b(\Hcal_{\lambda+\chi+\chi'}\operatorname{-mod})$ given by taking the
derived tensor product with $\Hcal_{\lambda+\chi,\chi'}\otimes^L_{\Hcal_{\lambda+\chi}}
\Hcal_{\lambda,\chi}$. Thanks to Lemma \ref{Lem:translation_coincidence}, we get
$$\Hcal_{\lambda,\chi}\otimes^L_{\Hcal_\lambda}\bullet=R\Gamma(\Hcal_{\lambda,\chi}^\theta
\otimes_{\Hcal_\lambda^\theta}\Hcal^\theta_{\lambda}\otimes^L_{\Hcal_\lambda}\bullet).$$
Since the functors in (\ref{eq:H_functors}) -- for $\lambda+\chi$ instead of $\lambda$ --
are mutually inverse equivalences, we see that
\begin{align*}
&\Hcal_{\lambda+\chi,\chi'}\otimes^L_{\Hcal_{\lambda+\chi}}
\Hcal_{\lambda,\chi}\otimes^L_{\Hcal_\lambda}\bullet
\xrightarrow{\sim}
R\Gamma(\Hcal^\theta_{\lambda+\chi,\chi'}
\otimes_{\Hcal^\theta_{\lambda+\chi}}\Hcal^\theta_{\lambda,\chi}\otimes_{\Hcal^\theta_{\lambda}}
\Hcal_\lambda^\theta\otimes^L_{\Hcal_\lambda}(\bullet))\xrightarrow{\sim}\\&
R\Gamma(\Hcal^\theta_{\lambda,\chi+\chi'}
\otimes_{\Hcal^\theta_{\lambda}}
L\Loc_\lambda^\theta(\bullet))\xrightarrow{\sim}\Hcal_{\lambda,\chi+\chi'}\otimes^L_{\Hcal_\lambda}\bullet.
\end{align*}
The last isomorphism holds by Lemma \ref{Lem:translation_coincidence}.
So we get  an isomorphism $\Hcal_{\lambda+\chi,\chi'}\otimes^L_{\Hcal_{\lambda+\chi}}
\Hcal_{\lambda,\chi}\xrightarrow{\sim} \Hcal_{\lambda,\chi+\chi'}$. Its microlocalization
to $X^{reg}$ is as in Lemma \ref{Lem:homom_spec}, so it coincides with (\ref{eq:transl_homom_spec}).
\end{proof}

\begin{Rem}\label{Rem:A_analogs} Note that the direct analogs of Lemmas \ref{Lem:iso_equiv}
and \ref{Lem:one_direction} hold for the sheaves $\A_?^?$ with the same proofs.
\end{Rem}

\subsubsection{Abelian localization vs $\Hcal_{\lambda,\chi}$}\label{SSS_enhanced_abel_transl}
We discuss a relation between the bimodules $\Hcal_{\lambda,\chi}$
and abelian localization for $\Hcal_{\lambda}^\theta$ and $\Hcal_{\lambda+\chi}^\theta$.
Then we will state our third sufficient condition for (\ref{eq:transl_homom_spec})
to be an isomorphism.

\begin{Lem}\label{Lem:abloc_Morita}
Suppose that abelian localization holds for $\Hcal_{\lambda}^\theta$. Then the following three conditions
are equivalent:
\begin{enumerate}
\item Abelian localization holds for $\Hcal_{\lambda+\chi}^\theta$.
\item The bimodule $\Hcal_{\lambda,\chi}$ is a Morita equivalence bimodule
with strong inverse (Definition \ref{defi:strong_inverse}) $\Hcal_{\lambda+\chi,-\chi}$.
\item The bimodule $\Hcal_{\lambda+\chi,-\chi}$ is a Morita
equivalence bimodule.
\end{enumerate}
\end{Lem}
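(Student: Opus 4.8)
The plan is to prove the three equivalences by transferring the analogous statement for $\A_\lambda^\theta$ and $\A_{\lambda+\chi}^\theta$ (via Proposition \ref{Prop:localization_translation} and its proof) through the derived equivalence $R\tilde\Gamma^\theta_\bullet$, using that localization for $\Hcal_\bullet^\theta$ is, by construction, the ``abelian'' shadow of the derived equivalence \eqref{eq:H_functors}. First I would note $(2)\Rightarrow(3)$ is trivial. For $(1)\Rightarrow(2)$: assuming abelian localization holds for both $\Hcal_\lambda^\theta$ and $\Hcal_{\lambda+\chi}^\theta$, the functor $\Hcal_{\lambda,\chi}\otimes^L_{\Hcal_\lambda}\bullet$ can be computed using Corollary \ref{Cor:derived_equivalence} together with Corollary \ref{Cor:abel_loc_bimod} (applied in the $\Hcal$-setting, which is legitimate since $R\tilde\Gamma^\theta$ is an equivalence and $e\tilde\Gamma_\lambda^\theta\cong\Gamma_\lambda^\theta$): under the two abelian-localization hypotheses the derived equivalence $\Hcal_{\lambda,\chi}\otimes^L_{\Hcal_\lambda}\bullet\cong R\Gamma\bigl(\Hcal_{\lambda,\chi}^\theta\otimes_{\Hcal_\lambda^\theta}\tilde\Loc_\lambda^\theta(\bullet)\bigr)$ is t-exact, because both $\tilde\Loc_\lambda^\theta$ (= inverse of $\tilde\Gamma_\lambda^\theta$, exact) and $\Gamma_{\lambda+\chi}$ restricted via $\Ecal^\theta_{\lambda+\chi}$ (= inverse of $\tilde\Gamma_{\lambda+\chi}^\theta$, exact) are exact, and tensoring by the locally free $\Hcal_{\lambda,\chi}^\theta$ is exact. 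A t-exact derived equivalence of module categories is induced by a Morita equivalence bimodule, so $\Hcal_{\lambda,\chi}$ is invertible; the strong-inverse property for $\Hcal_{\lambda+\chi,-\chi}$ then follows because the natural maps \eqref{eq:transl_homom_spec} for the compositions $\chi,-\chi$ and $-\chi,\chi$ become isomorphisms after microlocalizing to $X^{reg}$ (they are isomorphisms of quantizations of $\Str_X$ there), and both target bimodules $\Hcal_\lambda$, $\Hcal_{\lambda+\chi}$ inject into their sections over $X^{reg}$, exactly as in the proof of Lemma \ref{Lem:homom_spec}.

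For $(3)\Rightarrow(1)$: given that $\Hcal_{\lambda+\chi,-\chi}$ is a Morita equivalence bimodule and abelian localization holds for $\Hcal_\lambda^\theta$, I would argue that abelian localization holds for $\Hcal_{\lambda+\chi}^\theta$ as follows. Consider the composite functor $R\Gamma\circ(\Hcal_{\lambda+\chi}^\theta\otimes^L_{\Hcal_{\lambda+\chi}}\bullet)$; by \eqref{eq:H_functors} it is the identity on $D^b(\Hcal_{\lambda+\chi}\operatorname{-mod})$, so it suffices to show $\Hcal_{\lambda+\chi}^\theta\otimes^L_{\Hcal_{\lambda+\chi}}\bullet$ is t-exact. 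Using the Morita equivalence $\Hcal_{\lambda+\chi,-\chi}\otimes_{\Hcal_{\lambda+\chi}}\bullet$ (exact on both sides, with inverse $\Hcal_{\lambda,\chi}\otimes_{\Hcal_\lambda}\bullet$ — here I use the strong-inverse statement, which by Lemma \ref{Lem:iso_equiv} and the argument of the previous paragraph follows once one of them is a Morita bimodule), one reduces t-exactness of $\Hcal_{\lambda+\chi}^\theta\otimes^L_{\Hcal_{\lambda+\chi}}\bullet$ to t-exactness of $\Hcal_\lambda^\theta\otimes^L_{\Hcal_\lambda}\bullet$ after identifying $\Hcal_{\lambda,\chi}^\theta\otimes_{\Hcal_\lambda^\theta}\tilde\Loc_\lambda^\theta\cong \tilde\Loc_{\lambda+\chi}^\theta\circ(\Hcal_{\lambda+\chi,-\chi}\otimes_{\Hcal_{\lambda+\chi}}\bullet)^{-1}$ via Corollary \ref{Cor:derived_equivalence}; and $\Hcal_\lambda^\theta\otimes^L_{\Hcal_\lambda}\bullet=L\Loc_\lambda^\theta$ (in the $\Hcal$-sense) is exact precisely because abelian localization holds for $\Hcal_\lambda^\theta$. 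One must be slightly careful that the identification of functors used is compatible with the homomorphism \eqref{eq:transl_homom_spec}; this is handled by microlocalizing to $X^{reg}$ and invoking Lemma \ref{Lem:homom_spec}, exactly as in Lemma \ref{Lem:one_direction}.

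The main obstacle I anticipate is the bookkeeping needed to show that the various abstract derived equivalences (between the $D^b(\Hcal_{\bullet}\operatorname{-mod})$'s, coming from Corollary \ref{Cor:derived_equivalence}) are \emph{the same} as the bimodule functors $\Hcal_{\lambda,\chi}\otimes^L\bullet$ realized by \eqref{eq:transl_homom_spec} — i.e., that all the natural transformations composing the triangle $(\lambda)\to(\lambda+\chi)\to(\lambda+\chi-\chi)=(\lambda)$ are the canonical ones and not merely abstractly isomorphic. The clean way around this is the one already used repeatedly in the paper: every bimodule $\Hcal_{\lambda,\chi}$ embeds into its microlocalization over $X^{reg}$ (proof of Lemma \ref{Lem:homom_spec}), where all translation bimodules become the trivial quantization of $\Str_X(\chi)$ and all comparison maps are forced to be the evident isomorphisms; so any two homomorphisms agreeing on $X^{reg}$ agree. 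Modulo that rigidity argument, the proof is a formal diagram chase once the t-exactness statements above are in place, and the only genuinely new input beyond Proposition \ref{Prop:localization_translation}, Corollary \ref{Cor:derived_equivalence}, and Lemma \ref{Lem:iso_equiv} is the observation that a t-exact derived equivalence between categories of finitely generated modules over two Noetherian algebras is induced by an honest invertible bimodule.
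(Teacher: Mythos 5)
Your $(1)\Rightarrow(2)$ and $(2)\Rightarrow(3)$ steps match the paper's proof: under abelian localization at both $\lambda$ and $\lambda+\chi$ the derived translation functor $\Hcal_{\lambda,\chi}\otimes^L_{\Hcal_\lambda}\bullet$ is identified (via Lemma \ref{Lem:translation_coincidence} and the proof of Corollary \ref{Cor:derived_equivalence}) with the composition of t-exact equivalences $R\tilde{\Gamma}^\theta_{\lambda+\chi}\circ(\Hcal^\theta_{\lambda,\chi}\otimes\bullet)\circ L\tilde{\Loc}^\theta_\lambda$, hence is a Morita equivalence; the adjunction counits then coincide with the maps \eqref{eq:transl_homom_spec} by the rigidity of Lemma \ref{Lem:homom_spec}, giving the strong inverse.

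The gap is in $(3)\Rightarrow(1)$. You invoke the statement that $\Hcal_{\lambda,\chi}$ is the strong inverse of $\Hcal_{\lambda+\chi,-\chi}$, justified ``by Lemma \ref{Lem:iso_equiv} and the argument of the previous paragraph ... once one of them is a Morita bimodule.'' But Lemma \ref{Lem:iso_equiv} takes the strong-inverse relation as hypothesis; it does not produce it. And the argument of your previous paragraph establishes the strong-inverse relation only under abelian localization at \emph{both} $\lambda$ and $\lambda+\chi$ — the latter being exactly what $(3)\Rightarrow(1)$ is supposed to prove. Knowing only that $\Hcal_{\lambda+\chi,-\chi}$ is an abstract Morita bimodule does not yet tell you that its inverse is $\Hcal_{\lambda,\chi}$ (and the microlocalization-rigidity argument alone does not decide this either, since the question is precisely whether the comparison map of global sections is an isomorphism, not just whether it is uniquely determined). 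The paper's proof avoids the issue entirely: it uses that $\Hcal_{\lambda+\chi,-\chi}\otimes^L_{\Hcal_{\lambda+\chi}}\bullet$ coincides with the composition $R\Gamma^\theta_\lambda\circ(\Hcal^\theta_{\lambda+\chi,-\chi}\otimes_{\Hcal^\theta_{\lambda+\chi}}\bullet)\circ L\tilde{\Loc}^\theta_{\lambda+\chi}$ (this needs only $\tilde{\Gamma}^\theta_\lambda$ exact, which you have), and since the left-hand side is t-exact (being a Morita equivalence) while $R\Gamma^\theta_\lambda$ is a t-exact equivalence and the middle sheaf-level tensoring is t-exact and invertible, $L\tilde{\Loc}^\theta_{\lambda+\chi}$ is forced to be t-exact — without ever identifying the inverse bimodule. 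You should replace the circular appeal to the strong-inverse property with this factorization argument.
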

\begin{proof}
Suppose (1) holds. Then, thanks to Lemma \ref{Lem:translation_coincidence},
$$\Hcal_{\lambda,\chi}\cong R\Gamma(\Hcal_{\lambda,\chi}^\theta), \Hcal_{\lambda+\chi,-\chi}\cong R\Gamma(\Hcal_{\lambda+\chi,-\chi}^\theta).$$
Since abelian localization holds for both $\Hcal_\lambda^\theta$ and $\Hcal_{\lambda+\chi}^\theta$, the functors $R\Gamma(\Hcal_{\lambda,\chi}^\theta)\otimes^L_{\Hcal_\lambda}\bullet$ and
$R\Gamma(\Hcal_{\lambda+\chi,-\chi}^\theta)\otimes^L_{\Hcal_{\lambda+\chi}}\bullet$
are mutually inverse t-exact equivalences. The homomorphism
$\Hcal_{\lambda+\chi,-\chi}\otimes_{\Hcal_{\lambda+\chi}}\Hcal_{\lambda,\chi}\rightarrow
\Hcal_{\lambda+\chi}$ coming from an adjunction counit satisfies the uniqueness property
of Lemma \ref{Lem:homom_spec}. The similar claim holds for the other homomorphism in
Definition \ref{defi:strong_inverse}. We see that $\Hcal_{\lambda+\chi,-\chi}$
is the strong inverse of $\Hcal_{\lambda,\chi}$. This is (2).

Tautologically, (2) implies (3).

Now suppose that (3) holds. Consider the equivalence $$R\Gamma^\theta_\lambda(\Hcal^\theta_{\lambda+\chi,-\chi}\otimes_{\Hcal^\theta_{\lambda+\chi}}
L\Loc^\theta_{\lambda+\chi}(\bullet)):D^b(\Hcal_{\lambda+\chi}\operatorname{-mod})
\xrightarrow{\sim} D^b(\Hcal_{\lambda}\operatorname{-mod}).$$
This equivalence coincides with $R\Gamma(\Hcal_{\lambda+\chi,-\chi}^\theta)\otimes^L_{\Hcal_{\lambda+\chi}}\bullet$,
 i.e., thanks to Lemma \ref{Lem:translation_coincidence}, with $\Hcal_{\lambda+\chi,-\chi}\otimes^L_{\Hcal_{\lambda+\chi}}\bullet$.
 In particular, it is t-exact. Since $R\Gamma^\theta_{\lambda}$
 is also t-exact, we see that $L\Loc^\theta_\lambda$ is t-exact. This implies (1).
\end{proof}

We proceed to the third condition that ensures that (\ref{eq:transl_homom_spec}) is an isomorphism.
Suppose that $\lambda,\chi,\chi'$ and $\theta,\theta',\theta''$ are such that

\begin{itemize}
\item[(a)] Abelian localization holds for $\Hcal_{\lambda}^\theta,
\Hcal_{\lambda+\chi}^{\theta'},\Hcal_{\lambda+\chi+\chi'}^{\theta''}$.
\item[(b)] Every classical wall that is relevant for $\lambda$ (see
Definition \ref{defi:essential_hyperplanes}) and separates
$\theta$ and $\theta'$ also separates $\theta$ and $\theta''$.
\end{itemize}

\begin{Lem}\label{Lem:iso_right_order}
Under the assumptions (a),(b) above, (\ref{eq:transl_homom_spec}) holds.
\end{Lem}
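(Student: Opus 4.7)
My plan is to deduce Lemma \ref{Lem:iso_right_order} from the already-established Lemmas \ref{Lem:iso_equiv} and \ref{Lem:abloc_Morita}, using condition (b) to bridge the three different choices of $\theta$.

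First, by Lemma \ref{Lem:homom_spec} it suffices to exhibit any isomorphism
\[
\Hcal_{\lambda+\chi,\chi'}\otimes^L_{\Hcal_{\lambda+\chi}}\Hcal_{\lambda,\chi} \xrightarrow{\sim} \Hcal_{\lambda,\chi+\chi'}
\]
whose microlocalization to $X^{\mathrm{reg}}$ agrees with the canonical one coming from the sheaf-level isomorphism (\ref{eq:transl_homom_local}). Next, because $\chi, \chi' \in \underline{\Lambda}$ are integral and the arrangement of essential hyperplanes is invariant under integral shifts, the classical walls relevant for $\lambda$, for $\lambda+\chi$, and for $\lambda+\chi+\chi'$ all coincide. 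Consequently, hypothesis (b) concerns a single common collection of relevant walls, even though it is phrased in terms of $\lambda$ alone.

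My strategy is then to show that $\Hcal_{\lambda+\chi,\chi'}$ is a Morita equivalence bimodule with strong inverse $\Hcal_{\lambda+\chi+\chi',-\chi'}$, at which point Lemma \ref{Lem:iso_equiv} concludes the argument. By Lemma \ref{Lem:abloc_Morita} applied to the parameter $\lambda+\chi$ and the shift $\chi'$, this Morita property follows once we produce a single $\theta^\ast$ for which abelian localization holds simultaneously for $\Hcal_{\lambda+\chi}^{\theta^\ast}$ and $\Hcal_{\lambda+\chi+\chi'}^{\theta^\ast}$. I plan to obtain such a $\theta^\ast$ (likely $\theta^\ast = \theta''$, or an intermediate parameter chosen by wall-crossing from $\theta''$) by propagating abelian localization from $\theta'$: the walls separating $\theta'$ from $\theta^\ast$ are either non-relevant for $\lambda+\chi$ (in which case the wall-crossing preserves abelian localization automatically) or are of a type that, under condition (b) combined with the abelian localization at $\theta$ for $\lambda$, are forced to lie on the correct side relative to the chamber structure of the abelian-localization locus described by Lemmas \ref{Lem:ab_loc_Weil_generic} and \ref{Cor:abel_loc}.

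The main obstacle is precisely the propagation step. The asymmetry of (b)—that wall-sets relative to $\theta$ are included rather than merely aligned—is exactly the combinatorial input that controls which direction across a relevant wall can be crossed while preserving abelian localization at the shifted parameter. Concretely, for each relevant wall one must compare the ``preferred side'' for abelian localization at $\lambda$, $\lambda+\chi$, and $\lambda+\chi+\chi'$; I expect that (b) together with the three abelian localizations in (a) forces these preferred sides to be compatibly arranged along the path from $\theta'$ to $\theta''$, so that the intermediate parameter $\lambda+\chi$ also admits abelian localization at $\theta^\ast$. Once this is in hand, the rigidity from Lemma \ref{Lem:homom_spec} identifies the isomorphism built from Lemmas \ref{Lem:iso_equiv} and \ref{Lem:abloc_Morita} with (\ref{eq:transl_homom_spec}), completing the proof.
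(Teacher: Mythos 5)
There is a genuine gap in your proposal: the claim that $\Hcal_{\lambda+\chi,\chi'}$ is a Morita equivalence bimodule with strong inverse $\Hcal_{\lambda+\chi+\chi',-\chi'}$ does not follow from (a),(b) and is, in general, false. In the key application inside the proof of Theorem~\ref{Thm:exactness_precise} (Step 6), $\lambda+\chi$ and $\lambda+\chi+\chi'$ lie in different quantum chambers $\tilde{C}_{i+1}$ and $\tilde{C}_i$, and the translation between them typically crosses a singular hyperplane, so the bimodule is only a derived Morita equivalence (Corollary~\ref{Cor:derived_equivalence}), not an abelian one. The asymmetric hypothesis (b) does not rescue this: it constrains which walls separate $\theta$ from $\theta'$ versus $\theta''$, but there is no mechanism that forces abelian localization to hold for $\Hcal_{\lambda+\chi}^{\theta^\ast}$ and $\Hcal_{\lambda+\chi+\chi'}^{\theta^\ast}$ with a \emph{common} $\theta^\ast$ just because each holds with its own $\theta'$ or $\theta''$. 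Your ``propagation'' step asserts this without proof, and it is exactly the point where the argument breaks: crossing a wall that is relevant for $\lambda+\chi$ can and does destroy abelian localization at $\lambda+\chi$, regardless of what (b) says about $\lambda$ and $\theta$.

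The paper avoids this trap by never asserting that $\Hcal_{\lambda+\chi,\chi'}$ itself is Morita. Instead it picks an auxiliary triple $\tilde{\lambda}_1\in\lambda+\underline{\Lambda}$, $\tilde{\lambda}_2=\tilde{\lambda}_1+\tilde{\chi}$, $\tilde{\lambda}_3=\tilde{\lambda}_2+\tilde{\chi}'$ with $\tilde{\chi},\tilde{\chi}'$ both good for the single $\theta''$ (this is where (b) enters, via \cite[Theorem 6.35]{BPW}); for those anchor parameters Lemma~\ref{Lem:one_direction} gives~(\ref{eq:transl_homom_spec}) outright. The only bimodules required to be Morita are the ``vertical'' translations $\Hcal_{\lambda_j\leftarrow\tilde{\lambda}_j}$, and these \emph{are} Morita because both endpoints admit abelian localization with the \emph{same} $\theta_j$, so Lemma~\ref{Lem:abloc_Morita} applies cleanly. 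Lemma~\ref{Lem:iso_equiv} is then used with the Morita bimodules sitting in the outer positions of a triple composition, not in the position your argument requires. To repair your proof you would need to adopt this anchor-and-conjugate structure; the direct route via the Morita property of $\Hcal_{\lambda+\chi,\chi'}$ cannot be made to work.
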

Note that this lemma is similar to \cite[Theorem 6.35]{BPW}.
\begin{proof}
Arguing as in the proof of \cite[Theorem 6.35]{BPW}, we see that we can find
$\tilde{\lambda}\in \lambda+\underline{\Lambda},\tilde{\chi},\tilde{\chi}'\in \underline{\Lambda}$ such that
\begin{itemize}
\item[(i)] Abelian localization holds for $$\A_{\tilde{\lambda}}^\theta,
\A_{\tilde{\lambda}+\tilde{\chi}}^{\theta'}, \A_{\tilde{\lambda}+\tilde{\chi}+\tilde{\chi}'}^{\theta''},$$
\item[(ii)] the elements $\tilde{\chi},\tilde{\chi}'$ satisfy the conditions
of Lemma \ref{Lem:one_direction} for $\theta''$.
\end{itemize}
To simplify the notation below we will write $\lambda_1,\lambda_2,\lambda_3$ for
$\lambda,\lambda+\chi,\lambda+\chi+\chi'$, respectively. The notation
$\tilde{\lambda}_1,\tilde{\lambda}_2,\tilde{\lambda}_3$ has the similar meaning.
We will use the notation $\Hcal_{\lambda_1\leftarrow \lambda_2}$ for $\Hcal_{\lambda_2,-\chi}$,
etc.

Thanks to (i) and  Corollary \ref{Cor:abelian_localization_connection}, we see that abelian
localization holds for
$$\Hcal_{\tilde{\lambda}_1}^\theta,
\Hcal_{\tilde{\lambda}_2}^{\theta'}, \Hcal_{\tilde{\lambda}_3}^{\theta''}.$$


It follows from Lemma \ref{Lem:one_direction} that (\ref{eq:transl_homom_spec})
is an isomorphism for  $\tilde{\lambda},\tilde{\chi},\tilde{\chi}'$. Now we can use
Lemma \ref{Lem:iso_equiv} to prove that (\ref{eq:transl_homom_spec})
holds for $\lambda,\chi,\chi'$. More precisely, we have the three isomorphisms
\begin{equation}\label{eq:triple_composition}
\Hcal_{\lambda_j\leftarrow \tilde{\lambda}_j}\otimes^L_{\Hcal_{\tilde{\lambda}_j}}\Hcal_{\tilde{\lambda}_j\leftarrow\tilde{\lambda}_i}
\otimes^L_{\Hcal_{\tilde{\lambda}_i}}\Hcal_{\tilde{\lambda}_\leftarrow \lambda}
\xrightarrow{\sim}\Hcal_{\lambda_j\leftarrow \lambda_i}
\end{equation}
for $1\leqslant i<j\leqslant 3$
because the first and the third factors in the left hand side are Morita equivalence
bimodules with required strong inverses. Take the derived tensor product of the isomorphisms
for $(i,j)=(1,2)$ and $(i,j)=(2,3)$. Now we use the claims that
\begin{itemize}
\item (\ref{eq:transl_homom_spec})
is an isomorphism for $\tilde{\lambda},\tilde{\chi},\tilde{\chi}'$,
\item  and that
$\Hcal_{\lambda_j\leftarrow \tilde{\lambda}_j}$ and
$\Hcal_{\tilde{\lambda}_j\leftarrow \lambda_j}$ are strong inverses
\end{itemize}
to deduce that   (\ref{eq:transl_homom_spec})
holds for $\lambda,\chi,\chi'$.
%
%
\end{proof}

\begin{Rem}\label{Rem:usual_bimod_another}
Note that an analog of Lemma \ref{Lem:iso_right_order}  for the usual translation bimodules $\A^?_{?}$ with the same proof
(this is essentially a part of the proof of \cite[Theorem 6.35]{BPW}). An analog
of Lemma \ref{Lem:abloc_Morita} also holds for the sheaves $\A_?^?$ -- with the same proof.
\end{Rem}

\subsection{Main result}
\subsubsection{Setting}
Fix $\lambda^\circ\in \paramq$. We are interested in the locus $\lambda^\circ+\underline{\Lambda}$.
Let $\Upsilon_j, j\in J,$ be all classical walls that are relevant for $\lambda^\circ$
(in the sense of Definition \ref{defi:essential_hyperplanes}).
They split $\param_{\R}$ into (closed) chambers. A choice of a (closed) chamber, say $C$, gives rise
to vectors $\alpha_j\in \param_{\mathbb{R}}^*$ such that $\alpha_j|_{C}\geqslant 0$
and $\ker\alpha_j=\Upsilon_i$. Each vector $\alpha_j$ is defined up to a positive multiple.

Each chamber $C_j$ gives rise to a locus in $\lambda^\circ+\underline{\Lambda}$ defined as follows.
Let $m_j$ denote the maximum of $\alpha_j$ on the singular hyperplanes parallel to $\Upsilon_i$
and intersecting $\lambda^\circ+\param_{\Z}$.
Then we consider the locus
\begin{equation}\label{eq:quantum_chamber}
\tilde{C}:=\{\lambda\in \lambda^\circ+\underline{\Lambda}| \langle \alpha_j,\lambda\rangle> m_i\}.
\end{equation}
to be called the {\it quantum chamber} associated with $C$.
Note that the union of all possible quantum chambers is precisely the complement to
the union of all singular hyperplanes in $\lambda^\circ+\underline{\Lambda}$. Note also
that $\tilde{C}+(C\cap \underline{\Lambda})\subset \tilde{C}$.

Now we restate Theorem \ref{Thm:exactness}.

\begin{Thm}\label{Thm:exactness_precise}
Suppose that $\theta$ is a generic element in $C$. Then for all $\lambda\in \tilde{C}$ abelian localization
holds for $\Hcal^\theta_\lambda$. In particular, the functor
$\Gamma^\theta_\lambda:\Coh(\A^\theta_\lambda)\rightarrow \A_\lambda\operatorname{-mod}$
is exact.
\end{Thm}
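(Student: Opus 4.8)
The plan is to reduce the statement to the claim that the family of identity-type homomorphisms $\Hcal_{\lambda+\chi,\chi'}\otimes^L_{\Hcal_{\lambda+\chi}}\Hcal_{\lambda,\chi}\to\Hcal_{\lambda,\chi+\chi'}$ of (\ref{eq:transl_homom_spec}) are isomorphisms for a controlled supply of $\lambda\in\tilde C$ and $\chi,\chi'\in\underline\Lambda$, together with the Morita-equivalence criterion of Lemma \ref{Lem:abloc_Morita}. First I would pin down a ``base point'': by Lemma \ref{Lem:ab_loc_Weil_generic} abelian localization holds for $\A_\lambda^\theta$, hence (Corollary \ref{Cor:abelian_localization_connection}) for $\Hcal_\lambda^\theta$, at a Weil generic point of each non-singular essential hyperplane whose positive halfspace agrees with $C$; more basically, by Proposition \ref{Prop:outside_essential} and Corollary \ref{Cor:abelian_localization_connection} it holds for $\Hcal_\lambda^\theta$ at any $\lambda$ outside the union of essential hyperplanes. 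The idea is then to propagate abelian localization for $\Hcal_\lambda^\theta$ from such a starting point to every $\lambda\in\tilde C$ by moving along directions $\chi\in C\cap\underline\Lambda$ (so that $\tilde C+(C\cap\underline\Lambda)\subset\tilde C$), using at each step that $\Hcal_{\lambda,\chi}$ and $\Hcal_{\lambda+\chi,-\chi}$ are mutually strong-inverse Morita equivalence bimodules.

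The heart of the argument is therefore the following inductive claim: for $\lambda\in\tilde C$ and $\chi\in C\cap\underline\Lambda$, if abelian localization holds for $\Hcal_\lambda^\theta$ then $\Hcal_{\lambda,\chi}$ is a Morita equivalence bimodule with strong inverse $\Hcal_{\lambda+\chi,-\chi}$, and hence (Lemma \ref{Lem:abloc_Morita}, (3)$\Rightarrow$(1), applied with $\lambda$ and $\lambda+\chi$ interchanged, noting $-\chi$ keeps us in the same halfspaces) abelian localization holds for $\Hcal_{\lambda+\chi}^\theta$ as well. To get this I would first handle the case when $\chi$ is good for $\theta$ in the sense of Definition \ref{defi:good_line_bundle}, where Lemma \ref{Lem:translation_coincidence} gives $\Hcal_{\lambda,\chi}\cong R\Gamma(\Hcal_{\lambda,\chi}^\theta)$ and, once abelian localization holds at $\lambda+\chi$ too, the functor is a t-exact equivalence; the Morita and strong-inverse properties then follow as in Lemma \ref{Lem:abloc_Morita}. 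For a general $\chi\in C\cap\underline\Lambda$ I would factor $\chi$ as a sum of elements each of which is good for some generic $\theta'$ in the appropriate chamber (possible by Corollary \ref{Cor:sublattice}, since $\chi$ lies in the closure of those chambers), establish the isomorphism (\ref{eq:transl_homom_spec}) along the way using Lemma \ref{Lem:one_direction} and Lemma \ref{Lem:iso_right_order} — here condition (b) of Lemma \ref{Lem:iso_right_order} is exactly what forces us to stay inside the quantum chamber $\tilde C$, because $\tilde C$ is characterized (via the $m_j$'s) precisely so that no relevant singular hyperplane is crossed — and then compose via Lemma \ref{Lem:iso_equiv} to transport the Morita property. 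The key geometric input is that $\tilde C+(C\cap\underline\Lambda)\subset\tilde C$ and that $\tilde C$ meets the ``localization-friendly'' region where we have a base point, so that finitely many such steps reach any prescribed $\lambda\in\tilde C$.

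The main obstacle I anticipate is controlling the homomorphism (\ref{eq:transl_homom_spec}) for line bundles that are on the boundary of the ample cone of $X^\theta$ or not good for the chosen $\theta$: one cannot simply invoke cohomology vanishing, and the composition-of-equivalences argument (Lemma \ref{Lem:iso_right_order}) requires arranging auxiliary parameters $\tilde\lambda,\tilde\chi,\tilde\chi'$ and chambers $\theta,\theta',\theta''$ satisfying the compatibility (b) with the walls relevant for $\lambda^\circ$. Verifying that such auxiliary data exist while staying inside $\tilde C$ — i.e. that the combinatorics of the relevant classical walls $\Upsilon_j$ and the parallel singular hyperplanes is consistent with assumption (*) — is where the real work lies; it amounts to checking that the quantum chamber $\tilde C$ is ``convex enough'' under translation by $\underline\Lambda$ that any two of its points are connected by a chain of good steps that never exits $\tilde C$. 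Once this chain-connectivity is in hand, exactness of $\Gamma^\theta_\lambda$ is immediate from (\ref{eq:localization_connection}): $\Gamma^\theta_\lambda=e\tilde\Gamma^\theta_\lambda$ and $\tilde\Gamma^\theta_\lambda$ is an equivalence of abelian categories, while $e\,\bullet$ is exact.
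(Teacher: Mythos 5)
Your proposal has a circular step at its core and misses the key global idea of the paper's argument. The inductive claim you state --- that abelian localization for $\Hcal_\lambda^\theta$ together with $\lambda,\lambda+\chi\in\tilde{C}$ implies $\Hcal_{\lambda,\chi}$ is a Morita equivalence bimodule with strong inverse $\Hcal_{\lambda+\chi,-\chi}$ --- is, by Lemma \ref{Lem:abloc_Morita}, \emph{equivalent} to abelian localization at $\lambda+\chi$; it is not an independently checkable intermediate step. You acknowledge this when you write ``once abelian localization holds at $\lambda+\chi$ too, the functor is a t-exact equivalence.'' Lemma \ref{Lem:one_direction} cannot supply the Morita/strong-inverse property of the individual bimodule either: it identifies the \emph{composite} $\Hcal_{\lambda+\chi,\chi'}\otimes^L\Hcal_{\lambda,\chi}$ with $\Hcal_{\lambda,\chi+\chi'}$, a statement about the composition, not about either factor being invertible. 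With the lemmas at hand there is no way to certify condition (3) of Lemma \ref{Lem:abloc_Morita} while staying inside a single chamber, so the ``inductive step'' never closes. (A secondary problem: your proposed base point outside all essential hyperplanes is unavailable in the lattice $\lambda^\circ+\underline{\Lambda}$ whenever $\lambda^\circ$ lies on an essential hyperplane, since integral shifts stay on parallel essential hyperplanes, and Weil-generic points are not lattice points either; the paper upgrades Weil genericity to Zariski genericity in step 4, claim $(\natural)$, precisely to land on lattice points.)

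The missing idea is this: rather than propagating localization point by point within $\tilde{C}$, fix $\lambda\in\tilde{C}$ and a deep reference point $\lambda_m\in\tilde{C}$ where localization is known, and prove that the composite translation $\lambda_m\rightsquigarrow\lambda\rightsquigarrow\lambda_m$ is the identity. Both translations are right t-exact derived equivalences; a mutually inverse pair of right t-exact equivalences must both be t-exact, and then Lemma \ref{Lem:abloc_Morita} delivers abelian localization at $\lambda$. To prove that the composite is the identity one must \emph{leave} $\tilde{C}$ and walk through the entire fan $C_0=-C, C_1,\ldots, C_m=C$, using a different generic $\theta_i\in C_i$ at each crossing: the paper runs an induction on the claims $(T_i)$ comparing $\Hcal_{\lambda,\lambda_i-\lambda}\otimes^L\Hcal_{\lambda_m,\lambda-\lambda_m}$ with $\Hcal_{\lambda_m,\lambda_i-\lambda_m}$ for $\lambda_i\in\tilde{C}_i$, using Lemmas \ref{Lem:one_direction}, \ref{Lem:iso_right_order}, \ref{Lem:iso_equiv} with the $\theta_i$'s chosen so the ampleness conditions match. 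Your remark that ``the real work lies in staying inside $\tilde{C}$'' is exactly backwards: the whole trick is to go around and come back.
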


\subsubsection{Proof}
We introduce some notation. We select chambers $C_0,\ldots,C_m$ in the following way:
\begin{itemize}
\item
$C_m=C, C_0=-C$,
\item The chambers $C_i,C_{i+1}$ have a common codimension 1 face, to be denoted
by $C_{i,i+1}$ for $i=0,\ldots,m-1$. Let $\Upsilon_i$ denote the classical wall
spanned by $C_{i,i+1}$.
\item $C_{j_1}$ and $C_{j_2}$ with $j_1<j_2$ lie on different sides of $\Upsilon_i$
if and only if $j_1\leqslant i$ and $j_2\geqslant i+1$.
\end{itemize}

Here is a picture (with $m=4$).

\begin{picture}(100,50)
\put(2,25){\line(1,0){80}}
\put(22,5){\line(1,1){40}}
\put(42,2){\line(0,1){46}}
\put(62,5){\line(-1,1){40}}
\put(15,30){$C_4$}
\put(33,42){$C_3$}
\put(50,42){$C_2$}
\put(65,30){$C_1$}
\put(65,15){$C_0$}
\end{picture}

We pick generic elements $\theta_i\in C_i$. We assume that $\theta_m=\theta$.


\begin{proof}[Proof of Theorem \ref{Thm:exactness_precise}]
The proof is in several steps.

{\it Step 1}.
Pick $\lambda_i\in \tilde{C}_i\cap (\lambda^\circ+\underline{\Lambda})$ such that abelian localization
holds for $\Hcal^{\theta_i}_{\lambda_i}$. We are going to prove the following
series of claims for $i=1,\ldots,m$:
\begin{itemize}
\item[($T_i$)]
The homomorphism in \ref{eq:transl_homom_spec}), $\Hcal_{\lambda,\lambda_i-\lambda}\otimes^L_{\Hcal_\lambda}\Hcal_{\lambda_m,\lambda-\lambda_m}
    \rightarrow\Hcal_{\lambda_m,\lambda_i-\lambda_m}$ is an isomorphism.
\end{itemize}

{\it Step 2}.
There are two important things to observe about the claims ($T_i$).

First, note that $(T_i)$ is independent of the choice of $\lambda_i$ (and $\lambda_m$)
such that abelian localization holds for $\Hcal_{\lambda_i}^{\theta_i}$ (and $\Hcal_{\lambda_m}^{\theta_m}$) thanks to Lemmas \ref{Lem:iso_equiv} and
\ref{Lem:abloc_Morita}, compare to the proof of Lemma \ref{Lem:iso_right_order}.

{\it Step 3}.
Second, we claim that ($T_m$)
is equivalent to the claim that abelian localization holds for $\Hcal_{\lambda}^{\theta}$.
Condition ($T_m$) reads that
\begin{equation}\label{eq:tensor_product_special}
\Hcal_{\lambda,\lambda_m-\lambda}\otimes^L_{\Hcal_{\lambda}}\Hcal_{\lambda_m,\lambda-\lambda_m}
\xrightarrow{\sim} \Hcal_{\lambda_m}.\end{equation}
We claim that the functors $\Hcal_{\lambda_m,\lambda-\lambda_m}\otimes^L_{\Hcal_{\lambda_m}}\bullet:
D^b(\Hcal_{\lambda_m})\rightarrow D^b(\Hcal_{\lambda})$ and
$\Hcal_{\lambda,\lambda_m-\lambda}\otimes^L_{\Hcal_{\lambda}}\bullet:
D^b(\Hcal_{\lambda_m})\rightarrow D^b(\Hcal_{\lambda})$ are equivalences.
Let $\theta'$ be such  that $\lambda-\lambda_m$ is good for $\theta'$
(Definition \ref{defi:good_line_bundle}). Then,
by Lemma \ref{Lem:translation_coincidence}, we have
$$\Hcal_{\lambda_m,\lambda-\lambda_m}
\xrightarrow{\sim} R\Gamma(\Hcal^{\theta'}_{\lambda_m,\lambda-\lambda_m}).$$
Recall (the proof of Corollary \ref{Cor:derived_equivalence}) that
$$R\Gamma(\Hcal^{\theta'}_{\lambda_m,\lambda-\lambda_m})\otimes^L_{\Hcal_{\lambda_m}}\bullet\cong
R\tilde{\Gamma}^{\theta'}_{\lambda_m} \left(\A^{\theta'}_{\lambda_m,\lambda-\lambda_m}\otimes_{\A^{\theta'}_{\lambda_m}}
L\tilde{\Loc}^{\theta'}_{\lambda}(\bullet)\right).$$
It follows that $\Hcal_{\lambda_m,\lambda-\lambda_m}\otimes^L_{\Hcal_{\lambda_m}}\bullet$
is indeed an equivalence. Similarly, the functor $\Hcal_{\lambda,\lambda_m-\lambda}\otimes^L_{\Hcal_{\lambda}}\bullet$
is an equivalence.

Note that these two equivalences are right t-exact. Their composition is a t-exact
equivalence (the identity). So both $\Hcal_{\lambda_m,\lambda-\lambda_m}\otimes^L_{\Hcal_{\lambda_m}}\bullet$
and $\Hcal_{\lambda,\lambda_m-\lambda}\otimes^L_{\Hcal_{\lambda}}\bullet$
are t-exact. Abelian localization holds for $\Hcal_{\lambda_m}^\theta$
by the choice of $\lambda_m$. We use Lemma \ref{Lem:abloc_Morita}
to conclude that abelian localization holds for $\Hcal_{\lambda}^\theta$.

{\it Step 4}. We will prove ($T_i$) by ascending induction on $i$ starting
with $i=1$. To start with, we claim that the following claim holds:

\begin{itemize}
\item[($\natural$)] Let $\tilde{C}'$ be a
quantum chamber in $\lambda^\circ+\underline{\Lambda}$ and $C'$ be the corresponding classical chamber.
Let $\tilde{\Upsilon}$ be an essential hyperplane that is parallel to a wall
of $C'$ and intersects $\tilde{C}'$. Let $\theta'$ be a generic element
of $C'$. Then, for a Zariski generic  element $\lambda'\in \tilde{\Upsilon}\cap
\tilde{C'}$, abelian localization holds for $\A_{\lambda'}^{\theta'}$.
\end{itemize}

Namely, by Lemma \ref{Cor:abel_loc}, there is $\chi'\in \underline{\Lambda}$
such that abelian localization holds for $\A_{\lambda''}^{\theta'}$ for
$\lambda''\in \tilde{C}'+\chi'$.
Consider the locus $\tilde{\Upsilon}^0$ in $\tilde{\Upsilon}$ where the homomorphisms
$$\A_{\tilde{\Upsilon},\chi'}\otimes_{\A_{\tilde{\Upsilon}}}
\A_{\tilde{\Upsilon}+\chi',-\chi'}\rightarrow \A_{\tilde{\Upsilon}+\chi'},
\A_{\tilde{\Upsilon}+\chi',-\chi'}\otimes_{\A_{\tilde{\Upsilon}+\chi'}}
\A_{\tilde{\Upsilon},\chi'}\rightarrow \A_{\tilde{\Upsilon}}$$
are isomorphisms. It is Zariski open. By Lemma \ref{Lem:ab_loc_Weil_generic},
abelian localization holds for $\A_{\lambda'}^{\theta'}$ when $\lambda'$
is Weil generic. Using the direct analog of Lemma \ref{Lem:abloc_Morita}
for the sheaves $\A_?^?$ (see Remark \ref{Rem:usual_bimod_another}) we see that
$\tilde{\Upsilon}^0$ contains a Weil generic element in $\tilde{\Upsilon}$ hence it is
nonempty. Pick  $\lambda'\in \tilde{\Upsilon}^0\cap \tilde{C}'$.
Applying the analog of Lemma \ref{Lem:abloc_Morita} again we see that
abelian localization holds for $\A_{\lambda'}^{\theta'}$. This proves ($\natural$).


Note that, thanks to Corollary \ref{Cor:abelian_localization_connection}, ($\natural$)
implies that abelian localization holds for $\Hcal_{\lambda'}^{\theta'}$ with
Zariski generic $\lambda'\in \tilde{C}'\cap \tilde{\Upsilon}$.


{\it Step 5}. We prove ($T_1$).  Note that we can choose $\lambda_m,\lambda_1$ and $\theta_0$ such that  $\lambda_1\in \lambda+C_{0,1}, \lambda_m\in \lambda-C_0$
and  both $\lambda_1-\lambda,\lambda-\lambda_m$ are good for $\theta_0$, see the picture:

\begin{picture}(100,50)
\put(2,25){\line(1,0){80}}
\put(22,5){\line(1,1){40}}
\put(42,2){\line(0,1){46}}
\put(62,5){\line(-1,1){40}}
\put(10,35){$\tilde{C}_4$}
\put(65,35){$\tilde{C}_1$}
\put(65,15){$\tilde{C}_0$}
\put(5,27){\tiny $\bullet$}
\put(5,29){\tiny $\lambda_4$}
\put(30,27){\tiny $\bullet$}
\put(30,29){\tiny $\lambda$}
\put(70,27){\tiny $\bullet$}
\put(70,29){\tiny $\lambda_1$}
\end{picture}

Thanks to the conclusion of Step 4, abelian localization holds
for $\Hcal^{\theta_1}_{\lambda_1},\Hcal^{\theta_m}_{\lambda_m}$ as long as
$\lambda_1$ is deep enough inside $\lambda+C_{0,1}$ and $\lambda_m$
is deep enough in $\lambda-C_{0,1}$.
Now ($T_1$) follows from Lemma \ref{Lem:one_direction}.

{\it Step 6}. Now we prove that ($T_i$)$\Rightarrow$($T_{i+1}$) completing the
proof of the theorem. We can modify $\lambda_i,\lambda_{i+1}$ so that
\begin{itemize}
\item[(I)] $\lambda_{i+1}-\lambda$ and $\lambda_i-\lambda_{i+1}$
are good for $\theta_i$,
\item[(II)] abelian localization
holds for $\Hcal_{\lambda_i}^{\theta_i}, \Hcal_{\lambda_{i+1}}^{\theta_{i+1}}$.
\end{itemize}
Namely, for a Zariski generic element
$\chi_{i+1}\in \underline{\Lambda}$ in  $C_{i,i+1}$
we set $\lambda_{i+1}:=\lambda+\chi_{i+1}\in \tilde{C}_{i+1}$. Thanks to Step 4, abelian localization
holds for $\Hcal_{\lambda_{i+1}}^{\theta_{i+1}}$.
Next for a Zariski generic element $\chi_i\in \underline{\Lambda}\cap C_i$ we set
$\lambda_i:=\lambda_{i+1}+\chi_i\in \tilde{C}_i$. We also have that
abelian localization holds for $\Hcal_{\lambda_{i+1}}^{\theta_{i+1}}$.
So (II) hold. We have $\chi_{i+1}=\lambda_{i+1}-\lambda,\chi_i=\lambda_i-\lambda_{i+1}\in C_i$.
So (I) holds by Corollary \ref{Cor:sublattice}.

Thanks to (I),  from
Lemma \ref{Lem:one_direction}, it follows that
\begin{equation}\label{eq:tens_prod1}
\Hcal_{\lambda_{i+1},\lambda_i-\lambda_{i+1}}\otimes^L_{\Hcal_{\lambda_{i+1}}}
\Hcal_{\lambda, \lambda_{i+1}-\lambda}\xrightarrow{\sim}\Hcal_{\lambda,\lambda_i-\lambda}.
\end{equation}
Hence
\begin{equation}\label{eq:tens_prod2}
\begin{split}
&\Hcal_{\lambda_{i+1},\lambda_i-\lambda_{i+1}}\otimes^L_{\Hcal_{\lambda_{i+1}}}
\Hcal_{\lambda, \lambda_{i+1}-\lambda}\otimes^L_{\Hcal_{\lambda}}
\Hcal_{\lambda_m,\lambda-\lambda_m}\xrightarrow{\sim}
\Hcal_{\lambda,\lambda_i-\lambda}\otimes^L_{\Hcal_\lambda}
\Hcal_{\lambda_m,\lambda-\lambda_m}\\
&\xrightarrow{\sim}
\Hcal_{\lambda_m,\lambda_i-\lambda_m}.
\end{split}
\end{equation}
The second isomorphism follows from ($T_{i}$).
On the other hand, by Lemma \ref{Lem:iso_right_order} (and (II)), we have
\begin{equation}\label{eq:tens_prod3}
\Hcal_{\lambda_{i+1},\lambda_i-\lambda_{i+1}}\otimes^L_{\Hcal_{\lambda_{i+1}}}
\Hcal_{\lambda_m, \lambda_{i+1}-\lambda_m}\xrightarrow{\sim}\Hcal_{\lambda_m,\lambda_i-\lambda}.
\end{equation}
Combining (\ref{eq:tens_prod2}),(\ref{eq:tens_prod3}) we deduce that
\begin{equation}\label{eq:tens_prod4}
\Hcal_{\lambda_{i+1},\lambda_i-\lambda_{i+1}}\otimes^L_{\Hcal_{\lambda_i}}
\Hcal_{\lambda, \lambda_{i+1}-\lambda}\otimes^L_{\Hcal_{\lambda}}
\Hcal_{\lambda_m,\lambda-\lambda_m}\xrightarrow{\sim}
\Hcal_{\lambda_{i+1},\lambda_i-\lambda_{i+1}}\otimes^L_{\Hcal_{\lambda_i}}
\Hcal_{\lambda_m, \lambda_{i+1}-\lambda_m}.
\end{equation}
But $\Hcal_{\lambda_{i+1},\lambda_i-\lambda_{i+1}}$ is a derived Morita
equivalence bimodule by Corollary \ref{Cor:derived_equivalence}. So we can cancel it out in
(\ref{eq:tens_prod4}). We arrive at ($T_{i+1}$). This finishes the induction step and hence
the proof of the theorem.
\end{proof}
\subsection{Remarks}
\subsubsection{Equivalence of Conjectures \ref{Conj:der_loc} and \ref{Conj:abelian}}
\label{SSS_conj_equiv}
Clearly, Conjecture \ref{Conj:abelian} implies Conjecture \ref{Conj:der_loc}.

Let $\lambda,\theta$ be as in Theorem \ref{Thm:exactness_precise} but we make no assumptions
on the tilting generator.
Assume that Conjecture \ref{Conj:fin_hom_dim} holds (or, a formally weaker assumption, that if
derived localization holds for $\A_\lambda^\theta$ with some $\theta$ then it
holds for all generic $\theta$). Then using the argument of the proof of
Theorem \ref{Thm:exactness_precise} (and replacing various technical lemmas
there with their obvious analogs for the sheaves $\A^?_?$ that have the
same proof, compare with Remark \ref{Rem:A_analogs}), one can prove the following
result.

\begin{Prop}\label{Prop:A_analog} Under the assumption of the previous paragraph,
if derived localization holds for
$\A_\lambda^\theta$ and $\lambda\in \tilde{C}$, then abelian localization holds for $\A_\lambda^\theta$.
\end{Prop}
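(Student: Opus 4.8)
The plan is to adapt the six-step proof of Theorem~\ref{Thm:exactness_precise} verbatim, replacing the sheaves $\Hcal^?_?$ and the algebras $\Hcal_?$ by the sheaves $\A^?_?$ and the algebras $\A_?$, and translation bimodules $\Hcal_{?,?}$ by the usual translation bimodules $\A_{?,?}$. The technical inputs needed — the analogues of Lemma~\ref{Lem:translation_coincidence}, Lemma~\ref{Lem:iso_equiv}, Lemma~\ref{Lem:one_direction}, Lemma~\ref{Lem:abloc_Morita} and Lemma~\ref{Lem:iso_right_order} for the sheaves $\A^?_?$ — are all available: they are recorded in Remarks~\ref{Rem:A_analogs} and~\ref{Rem:usual_bimod_another}, together with the observation that their proofs are word-for-word the same (the proof of Lemma~\ref{Lem:iso_right_order} is in fact already part of the proof of \cite[Theorem 6.35]{BPW}). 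Note that the enhanced structure ($\heartsuit$) played no role in those lemmas except to pass between $\Hcal$ and $\A$ via the idempotent $e$; in the present setting we simply work with $\A$ throughout.

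Concretely, first I would fix $\lambda^\circ$, the relevant classical walls $\Upsilon_j$, the chamber $C$ containing $\theta$, and the chain of chambers $C_0=-C,\ldots,C_m=C$ with generic points $\theta_i\in C_i$, exactly as before. Then, for $\lambda\in\tilde C$, I would formulate the family of statements
\begin{equation}\label{eq:Ti_A}
(T_i):\quad \A_{\lambda,\lambda_i-\lambda}\otimes^L_{\A_\lambda}\A_{\lambda_m,\lambda-\lambda_m}
\xrightarrow{\ \sim\ }\A_{\lambda_m,\lambda_i-\lambda_m},
\end{equation}
where $\lambda_i\in\tilde C_i$ is chosen so that abelian localization holds for $\A^{\theta_i}_{\lambda_i}$. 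The independence of $(T_i)$ from the choice of the $\lambda_i$ follows from the $\A$-analogues of Lemmas~\ref{Lem:iso_equiv} and~\ref{Lem:abloc_Morita}. The key reduction is that $(T_m)$ is equivalent to abelian localization for $\A^\theta_\lambda$: here is exactly the point where the hypothesis enters. Derived localization for $\A^\theta_\lambda$ (our assumption, combined with Conjecture~\ref{Conj:fin_hom_dim}, i.e.\ the assumption that it then holds for all generic $\theta$) guarantees that both translation functors $\A_{\lambda_m,\lambda-\lambda_m}\otimes^L_{\A_{\lambda_m}}\bullet$ and $\A_{\lambda,\lambda_m-\lambda}\otimes^L_{\A_\lambda}\bullet$ are derived equivalences between $D^b(\A_{\lambda_m}\operatorname{-mod})$ and $D^b(\A_\lambda\operatorname{-mod})$; since they are both right t-exact and their composite is the identity, both are t-exact, and then the $\A$-analogue of Lemma~\ref{Lem:abloc_Morita} upgrades abelian localization for $\A^{\theta_m}_{\lambda_m}$ to abelian localization for $\A^\theta_\lambda$.

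The remaining steps are the induction on $i$: the base case $(T_1)$ follows from the $\A$-analogue of Lemma~\ref{Lem:one_direction} after choosing $\lambda_1,\lambda_m$ so that $\lambda_1-\lambda$ and $\lambda-\lambda_m$ are good for $\theta_0$ (the existence of such choices with abelian localization for $\A^{\theta_1}_{\lambda_1},\A^{\theta_m}_{\lambda_m}$ is Step~4 of the original proof, which only invoked ($\natural$) and hence is already stated for the sheaves $\A^?_?$). The inductive step $(T_i)\Rightarrow(T_{i+1})$ reproduces Step~6: pick Zariski-generic $\chi_{i+1}\in\underline\Lambda\cap C_{i,i+1}$ and $\chi_i\in\underline\Lambda\cap C_i$, set $\lambda_{i+1}=\lambda+\chi_{i+1}$, $\lambda_i=\lambda_{i+1}+\chi_i$, apply the $\A$-analogue of Lemma~\ref{Lem:one_direction} to get $\A_{\lambda_{i+1},\lambda_i-\lambda_{i+1}}\otimes^L_{\A_{\lambda_{i+1}}}\A_{\lambda,\lambda_{i+1}-\lambda}\xrightarrow{\sim}\A_{\lambda,\lambda_i-\lambda}$, apply the $\A$-analogue of Lemma~\ref{Lem:iso_right_order} to get the corresponding isomorphism with $\lambda$ replaced by $\lambda_m$, tensor $(T_i)$ against $\A_{\lambda_{i+1},\lambda_i-\lambda_{i+1}}\otimes^L_{\A_{\lambda_i}}\bullet$, and cancel the derived Morita bimodule $\A_{\lambda_{i+1},\lambda_i-\lambda_{i+1}}$ (which is a derived equivalence by the $\A$-analogue of Corollary~\ref{Cor:derived_equivalence}, available since derived localization holds everywhere in $\lambda^\circ+\underline\Lambda$ by our hypothesis). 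This yields $(T_{i+1})$, and $(T_m)$ gives the conclusion. The only genuinely new point relative to Theorem~\ref{Thm:exactness_precise} — and thus the step to handle with care — is the equivalence between $(T_m)$ and abelian localization in Step~3 without recourse to ($\heartsuit$): there, the t-exactness argument that in the original proof passed through the equivalence $R\tilde\Gamma^{\theta'}$ of Proposition~\ref{Prop:tilting_properties_quantum} must instead be run using derived localization for $\A^{\theta'}_?$ directly, which is precisely what our added assumption supplies.
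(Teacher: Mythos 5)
Your proposal is correct and follows the same route the paper intends: the paper's proof of Proposition~\ref{Prop:A_analog} is precisely the instruction to re-run the six-step argument of Theorem~\ref{Thm:exactness_precise} with the sheaves $\A^?_?$ and bimodules $\A_{?,?}$ in place of the $\Hcal$-versions, invoking Remarks~\ref{Rem:A_analogs} and~\ref{Rem:usual_bimod_another}, and your write-up correctly identifies Step~3 as the only point where an unconditional fact about $\Hcal$ (that $R\tilde\Gamma$ is always an equivalence) must be replaced by the hypothesis that derived localization holds. One small imprecision: your parenthetical justification in the inductive step asserts that ``derived localization holds everywhere in $\lambda^\circ+\underline\Lambda$ by our hypothesis,'' which is not what the hypothesis gives; rather, derived localization holds at the specific parameters $\lambda_i,\lambda_{i+1},\lambda_m$ used because abelian localization holds there (at $\lambda_i$ with $\theta_i$, etc.), and at $\lambda$ itself by hypothesis, which combined with the Conjecture~\ref{Conj:fin_hom_dim}-type assumption lets one run the $\A$-analogs of Lemma~\ref{Lem:one_direction} and Corollary~\ref{Cor:derived_equivalence} at exactly the parameters where they are needed. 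This does not affect the validity of the argument, since those are the only parameters at which those lemmas are invoked.
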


\subsubsection{The locus where abelian localization holds for $\Hcal^\theta_\lambda$}\label{SSS_enhanced_localization}
We expect that one can strengthen Theorem \ref{Thm:exactness_precise} to cover all
parameters in $\paramq$, not just those lying outside of singular hyperplanes.
For every classical wall $\Upsilon$ relevant for $\lambda\in \paramq$ we pick a  shift
$\hat{\Upsilon}$ so that $\hat{\Upsilon}\cap (\lambda^\circ+\param_{\Z})=\varnothing$. Note that the collection of $\hat{\Upsilon}$ splits $\lambda+\param_{\Z}$
into the disjoint union of  chambers that are shifts of classical chambers for $\lambda$. For the
classical chamber $C$, let $\tilde{C}$ denote the corresponding shifted chamber.

\begin{Conj}\label{Conj:upgraded_localization}
With a suitable choice of the shifts $\hat{\Upsilon}$ that depends on the choice of $E$
the following holds:
for every generic $\theta\in C$ and $\lambda\in \tilde{C}$, abelian localization holds for
$\Hcal^\theta_\lambda$.
\end{Conj}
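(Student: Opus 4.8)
The plan is to run the proof of Theorem \ref{Thm:exactness_precise} with ``quantum chambers'' replaced by the ``shifted chambers'' $\tilde C$, the one genuinely new input being a precise analysis of abelian localization for the \emph{enhanced} quantizations on the rank $1$ slices attached to the relevant classical walls. Recall that $\Hcal_\lambda$ always has finite homological dimension and $R\tilde\Gamma_\lambda^\theta$ is always an equivalence (Proposition \ref{Prop:tilting_properties_quantum}), so the question is purely one of exactness. The first step is to recast the target: arguing exactly as for Proposition \ref{Prop:localization_translation} and Lemma \ref{Lem:abloc_Morita}, and using the cohomology vanishing of Proposition \ref{Prop:cohomology_vanishing} together with the sublattice $\underline\Lambda$ of Corollary \ref{Cor:sublattice}, one shows that for $\chi\in\underline\Lambda$ ample for $X^\theta$ abelian localization holds for $\Hcal_\lambda^\theta$ if and only if $\Hcal_{\lambda+m\chi,\chi}$ and $\Hcal_{\lambda+(m+1)\chi,-\chi}$ are mutually strongly inverse Morita equivalence bimodules (Definition \ref{defi:strong_inverse}) for all $m\geqslant 0$. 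Thus it suffices to control, for fixed $\chi$, the locus of $\lambda\in\lambda^\circ+\underline\Lambda$ on which the pair $\Hcal_{\lambda,\chi},\Hcal_{\lambda+\chi,-\chi}$ is strongly mutually inverse.

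Next I would reduce this Morita question, wall by wall, to rank $1$. The restriction functors $\bullet_{\dagger,y}$ of Section \ref{SSS_res_fun} are purely formal and monoidal, so they extend to $\Hcal_\paramq$-bimodules with $(\Hcal_{\paramq,\chi})_{\dagger,y}=\underline\Hcal_{\paramq,\chi}$ (as for the $\A$'s, cf.\ \cite{GL}); hence the kernel and cokernel of $\Hcal_{\lambda,\chi}\otimes_{\Hcal_\lambda}\Hcal_{\lambda+\chi,-\chi}\to\Hcal_{\lambda+\chi}$ (and of the opposite map) vanish if and only if their restrictions to all $y$ do, and combined with the closedness of supports (Lemma \ref{Lem:support_closed}) this localizes the failure locus to the slices $\underline X$ attached to the walls $\Upsilon$ relevant for $\lambda^\circ$ and to points $y$ in the minimal symplectic leaves, exactly as in the proof of Lemma \ref{Lem:ab_loc_Weil_generic}. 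One must also check that $\underline E$, the restriction of $E$ to $\underline X$ (compatible with the splitting of the Azumaya algebra in Section \ref{SSS_tilting_construction}), is again a tilting generator satisfying ($\heartsuit$); this is automatic for $X=T^*(G/P)$ and in the Coulomb-branch cases since the slices are of the same kind, so that $\underline\Hcal_{\underline\paramq}$ and its translation bimodules make sense.

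The heart of the matter is the rank $1$ base case. For $\underline X$ with $\dim H^2(\underline X)=1$ --- e.g.\ $T^*\mathbb P^1$, a Slodowy subvariety of $T^*(G_1/P_1)$, or a Gieseker-type variety $\bar\M^{\underline\theta}(\underline v,\underline w)$ --- and each of the two generic choices of $\underline\theta$, I would show that the set of $\underline\lambda\in\underline\paramq'$ where abelian localization for $\underline\Hcal_{\underline\lambda}^{\underline\theta}$ fails is a union of $\underline\param'_\Z$-shifts of $\Sigma_\Upsilon$ that \emph{all lie on the $(-\underline\theta)$-side} of some point of $\underline\paramq'$ not in $\underline\param'_\Z$. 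For $T^*\mathbb P^1$ this is elementary: exactness of $\tilde\Gamma$ amounts to exactness of $\Gamma$ at the finitely many parameters $\underline\lambda+\underline\chi_j$ coming from the line-bundle summands of $\underline E$, which by Beilinson--Bernstein on $\mathbb P^1$ fails precisely on a half-line of integral points. For the Gieseker slices it should follow from the computations in \cite[Section 5]{Gies} (see Example \ref{Ex:Gieseker}). This rank $1$ picture is exactly what dictates the shifts in the statement: one takes $\hat\Upsilon=\eta^{-1}$ of a point separating the origin from that bad half-line (so indeed $\hat\Upsilon\cap(\lambda^\circ+\paramq_\Z)=\varnothing$), and then the shifted chamber $\tilde C$ is, by construction, contained in the good locus for every relevant wall.

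With the shifts fixed, the inductive wall-crossing argument of Steps $1$--$6$ in the proof of Theorem \ref{Thm:exactness_precise} goes through essentially verbatim: choose chambers $C_0=-C,\dots,C_m=C$ and generic $\theta_i\in C_i$, pick $\lambda_i\in\tilde C_i$ for which abelian localization for $\Hcal^{\theta_i}_{\lambda_i}$ is already known (for $\lambda_i$ generic in a shift of an essential hyperplane lying in the good locus this is the enhanced form of Lemma \ref{Lem:ab_loc_Weil_generic} via Corollary \ref{Cor:abelian_localization_connection} and the rank $1$ input), and prove the statements $(T_i)$ that the relevant instances of (\ref{eq:transl_homom_spec}) are isomorphisms using the three criteria Lemmas \ref{Lem:iso_equiv}, \ref{Lem:one_direction}, \ref{Lem:iso_right_order}; then $(T_m)$ yields abelian localization for $\Hcal_\lambda^\theta$. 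The main obstacle is precisely the rank $1$ base case for the non-classical slices: one needs both the cohomology vanishing $H^i(\underline X^{\underline\theta},\underline H^{\underline\theta}(m\underline\chi))=0$ on the entire good half-space (not merely outside finitely many walls) and the strong-Morita property of the enhanced translation bimodules there, and without an explicit model for $\underline\Hcal_{\underline\lambda}$ --- available only for symplectic quotient singularities, where it is a symplectic reflection algebra --- or new positive-characteristic cohomology estimates I do not see how to obtain these uniformly, which is why the assertion is stated only as a conjecture.
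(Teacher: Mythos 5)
This statement is a conjecture, and the paper does not prove it; the only guidance the paper gives (in the paragraphs following the statement) is the brief remark that one should run the argument of Theorem \ref{Thm:exactness_precise} once the claim is known for $\lambda$ Weil generic in an essential hyperplane, that the latter should reduce to $\dim\param=1$, and that pinning down the shifts $\hat\Upsilon$ requires explicit information about $\Hcal_\lambda$ that is only available in special cases. Your sketch is essentially a fleshed-out version of exactly that remark, and you correctly and explicitly identify the missing rank $1$ base case for the non-classical slices as the obstruction, which matches the paper's own assessment — so you have not proved the statement, you have reproduced (in more detail) the paper's reason for calling it a conjecture.

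One point worth flagging in your reduction: you assert that $(\Hcal_{\paramq,\chi})_{\dagger,y}=\underline\Hcal_{\paramq,\chi}$ and, implicitly, that the slice tilting bundle $\underline E$ inherits $(\heartsuit)$ and compatibility with the restriction functor. For $T^*(G/P)$ and for Coulomb branches this is plausible because the slices are of the same type, but the paper does not state it, and for a general symplectic resolution with a tilting generator satisfying $(\heartsuit)$ it is not established that the restriction of $E$ to the formal slice is again a tilting generator compatible with the construction of Section \ref{SSS_tilting_construction}, nor that the resulting $\underline\Hcal$-translation bimodules behave well under $\bullet_{\dagger,y}$. This is a genuine extra input, not a formality, and if you wanted to turn the sketch into a proof you would need to verify it case by case (exactly the situation the paper alludes to when it says the shifts $\hat\Upsilon$ ``depend on the choice of $E$'' and require explicit knowledge of $\Hcal_\lambda$). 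Your instinct that the $T^*\mathbb P^1$ base case is elementary and the Gieseker base case should follow from \cite[Section 5]{Gies} is reasonable, but those references treat $\A_\lambda$, not $\Hcal_\lambda$, and passing from the former to the latter requires knowing $\Hcal_\lambda e\Hcal_\lambda=\Hcal_\lambda$ on the relevant half-line, which again is part of what is conjectural.
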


As a consequence, for each $\lambda\in \paramq$, there is $\theta$ such that $\Gamma^\theta_\lambda$
is exact.

To prove this claim one can use an argument of the proof of Theorem \ref{Thm:exactness_precise}
once one knows the claim is true for $\lambda$ Weil generic in an essential hyperplane. It should not
be difficult to reduce the latter claim to the case when $\dim \param=1$.

In the special case when $X=\M^\theta_0(n\delta,\epsilon_0)$ for an affine type A quiver
(i.e., $X$ is a resolution of $(\C^2)^n/\Gamma_n$ for cyclic $\Gamma$) Conjecture
\ref{Conj:upgraded_localization} should follow from the main result of
\cite{cycl_ab_loc} and some easy combinatorics.

A more subtle question is how to determine the hyperplanes $\hat{\Upsilon}$. They depend on
the choice of the tilting bundles $E^\theta$ and to determine them one should have some
explicit information on the algebras $\Hcal_\lambda$ that is only available in very few
cases (resolutions of $(\C^2)^n/\Gamma_n$ for arbitrary $\Gamma$, where we get symplectic reflection
algebras, and, perhaps, hypertoric varieties, where tilting bundles are direct sums of line bundles).
One could also expect that an explicit description of the algebras $\Hcal_\lambda$
should be available for smooth Coulomb branches.

We also would like to point out that Conjecture \ref{Conj:upgraded_localization} does not require
that ($\heartsuit$) holds. We expect that it holds even without ($\heartsuit$).

\section{$\mathcal{O}$-regular parameters}
Let $T$ denote a Hamiltonian torus acting on $X$.
Throughout the section we assume that $X^T$ is finite. So it makes sense to speak about
the category $\mathcal{O}$, Section \ref{SS_cat_O}. The category $\mathcal{O}$ for $\A_\lambda$
associated to a generic one-parameter subgroup $\nu:\C^\times\rightarrow T$ will be denoted
by $\Ocat_\nu(\A_\lambda)$. Recall that $\Ca_\nu(\A_\lambda),\Ca_{\nu}(\A_\lambda^\theta)$
stand for the Cartan subquotients of $\A_\lambda$ and $\A_\lambda^\theta$,
see Section \ref{SSS_Cartan_subquotient}.
The goal of this section is to understand the conditions on $\lambda$ for
the natural homomorphism $\Ca_\nu(\A_\lambda)\rightarrow \Ca_\nu(\A_\lambda^\theta)$
to be an isomorphism.

\subsection{Deformed category $\mathcal{O}$}
Consider the completion $\ring:=\C[\paramq]^{\wedge_\lambda}$.
In this section we will discuss a deformation of the category $\Ocat_\nu(\A_\lambda)$
over $\operatorname{Spec}(\ring)$.

\subsubsection{Construction}
We can consider the $\ring$-algebra
$\A_{\ring}:=\A_{\paramq}\otimes_{\C[\paramq]}\ring$.

\begin{Lem}\label{Lem:partial_completion_Noetherian}
$\A_{\ring}$ is a Noetherian algebra.
\end{Lem}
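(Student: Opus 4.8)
The statement is that $\A_{\ring} := \A_{\paramq} \otimes_{\C[\paramq]} \ring$ is Noetherian, where $\ring = \C[\paramq]^{\wedge_\lambda}$ is the completion of the polynomial ring at a maximal ideal. The standard approach is to pass to the associated graded algebra with respect to a suitable filtration and apply the fact that an algebra is Noetherian if its associated graded algebra is. First I would recall that $\A_{\paramq}$ carries its filtration with $\gr \A_{\paramq} \cong \C[Y_\param]$, a finitely generated commutative (hence Noetherian) $\C[\param]$-algebra. The issue is that completing at $\lambda \in \param$ interacts with this filtration in a delicate way, since $\param^*$ sits in degree $d > 0$ and the maximal ideal of $\ring$ is generated by the degree-$d$ elements $\alpha - \langle \lambda, \alpha\rangle$, $\alpha \in \param^*$, which are \emph{not} homogeneous unless $\lambda = 0$.

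The cleanest route is to work with the Rees algebra. Set $R_\hbar(\A_\paramq) = \bigoplus_i \A_{\paramq,\leqslant i}\hbar^i$, a graded $\C[\hbar]$-algebra with $R_\hbar(\A_\paramq)/(\hbar) \cong \C[Y_\param]$ and $R_\hbar(\A_\paramq)/(\hbar - 1) \cong \A_\paramq$. Then $\gr R_\hbar(\A_\paramq) \cong \C[Y_\param][\hbar]$ with respect to the $\hbar$-adic filtration (or one can argue directly from flatness over $\C[\hbar]$). The point is that $\C[Y_\param]$ is a finitely generated algebra over $\C[\param]$, and completing $\C[\param]$ at the maximal ideal $\mathfrak{m}_\lambda$ gives a Noetherian ring $\C[\param]^{\wedge_\lambda} = \ring$; hence $\C[Y_\param] \otimes_{\C[\param]} \ring$ is a finitely generated $\ring$-algebra, therefore Noetherian. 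I would then transfer this back: choose on $\A_\ring$ the filtration induced from that of $\A_\paramq$ (the images of $\A_{\paramq,\leqslant i} \otimes \ring$), observe that its associated graded is a quotient of $(\gr \A_\paramq) \otimes_{\C[\param]} \ring = \C[Y_\param] \otimes_{\C[\param]} \ring$ — in fact equal to it, since $\ring$ is flat over $\C[\param]$ — and conclude $\A_\ring$ is Noetherian by the standard filtered-to-graded argument (a left ideal whose associated graded is finitely generated is finitely generated; this uses that the filtration on $\A_\ring$ is exhaustive and bounded below, which it is, with $\bigcap_i \A_{\ring, \leqslant i}$ eventually zero in each filtration degree).

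The one subtlety I would be careful about — and the step I expect to be the main obstacle — is verifying that $\gr(\A_\ring) = (\gr \A_\paramq) \otimes_{\C[\param]} \ring$ rather than merely a quotient of it, i.e.\ that tensoring the filtration with $\ring$ commutes with taking $\gr$. This holds because $\ring$ is flat over $\C[\param]$ (completion of a Noetherian ring at an ideal is flat) and each $\A_{\paramq,\leqslant i}$ is a $\C[\param]$-submodule, so $0 \to \A_{\paramq,\leqslant i-1} \to \A_{\paramq, \leqslant i} \to \gr_i \A_\paramq \to 0$ stays exact after $\otimes_{\C[\param]} \ring$. One should also check the filtration on $\A_\ring$ is separated and each graded piece is a finitely generated $\ring$-module, which follows since $\gr_i \A_\paramq$ is a finitely generated $\C[\param]$-module (being a graded piece of a finitely generated graded $\C[\param]$-algebra). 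With these in hand the Noetherian property of $\A_\ring$ is immediate. Alternatively, one can bypass $\gr$ entirely and note that $\A_\ring$ is an $\ring$-algebra whose $\hbar$-adic Rees version is finite over a finitely generated commutative $\ring[\hbar]$-algebra modulo $\hbar$, and invoke a standard completeness/Noetherianity lemma for filtered algebras; but the filtered-to-graded argument above is the most transparent.
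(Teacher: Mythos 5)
Your proposal correctly identifies the right general strategy — pass to a filtration, show $\gr\A_\ring$ is Noetherian, and lift back — and it even flags the crucial subtlety in its opening paragraph, namely that $\param^*$ sits in degree $d>0$ in the standard filtration, so the maximal ideal at $\lambda$ is not compatible with the grading. But the actual argument does not resolve this: the step where you write ``each $\A_{\paramq,\leqslant i}$ is a $\C[\param]$-submodule, so $0 \to \A_{\paramq,\leqslant i-1} \to \A_{\paramq, \leqslant i} \to \gr_i \A_\paramq \to 0$ stays exact after $\otimes_{\C[\param]} \ring$'' is false for the standard filtration. Multiplying $\A_{\paramq,\leqslant i}$ by an element $\alpha\in\param^*$ raises the filtration degree by $d$, so $\A_{\paramq,\leqslant i}$ is a $\C$-subspace, not a $\C[\param]$-submodule, and the expression $\A_{\paramq,\leqslant i}\otimes_{\C[\param]}\ring$ is not even defined. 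The Rees-algebra digression does not escape this either, since in $R_\hbar(\A_\paramq)$ the copy of $\C[\param]$ is $\C[\param^*\hbar^d]$, which is not in degree zero, so tensoring over it with the (non-graded) ring $\ring$ destroys the grading one needs.

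The paper's proof sidesteps this by switching to a \emph{different} $\Z_{\geqslant 0}$-filtration on $\A_\paramq$, taken from the proof of \cite[Lemma 3.5]{BL}, in which $\param^*$ is placed in degree $0$ and $\gr\A_\paramq = \C[Y]\otimes\C[\param]$. With that filtration each piece $\A_{\paramq,\leqslant i}$ genuinely is a $\C[\param]$-module, so one can base-change the filtration along $\C[\param]\to\ring$ and get $\gr\A_\ring = \C[Y]\otimes\ring$, a finitely generated $\ring$-algebra and hence Noetherian, after which the standard filtered-to-graded argument finishes. To repair your proof you would need to replace the good filtration with this $\C[\param]$-linear one (or construct some other $\C[\param]$-linear filtration with Noetherian associated graded); as written, the claimed flatness/exactness step fails and the associated graded of your induced filtration is not identified.
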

\begin{proof}
We have a $\Z_{\geqslant 0}$-filtration on $\A_{\paramq}$ with $\paramq^*$ in
degree $0$ such that $\gr\A_{\paramq}=\C[Y]\otimes \C[\paramq]$, see the proof
of \cite[Lemma 3.5]{BL}. This induces a $\Z_{\geqslant 0}$-filtration on $\A_{\ring}$
whose associated graded is $\C[Y]\otimes \ring$, a Noetherian algebra. The claim of the lemma
follows.
\end{proof}

We consider the category $\Ocat_\nu(\A_{\ring})$ that consists
of all finitely generated $\A_\ring$-modules   where $\A_\ring^{>0}$
acts locally nilpotently. For example, the universal Verma module
$$\Delta_\ring(\Ca_\nu(\A_\ring)):=\A_\ring/\A_\ring\A_{\ring}^{>0}$$
is an object in $\Ocat_\nu(\A_\ring)$.

First, we are going to establish a weight decomposition for a module in
$\Ocat_\nu(\A_\ring)$. Note that we still have the grading element $h\in \A_{\ring}$,
compare to Section \ref{SSS_O_basics}.

\begin{Lem}\label{Lem:deformed_weights}
There are  elements $F_1,\ldots,F_k\in \ring[t]$ such that
the image of every $F_i$ in $\C[t]$ have only one root
and, for every $M\in \Ocat_\nu(\A_\ring)$ and every $m\in M$,
the element $m$ is annihilated by the product of some elements of the form
$(F_i(h)-j)$ for $j\in \Z_{\geqslant 0}$.
\end{Lem}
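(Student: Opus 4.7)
The plan is to extract the polynomials $F_i$ from the action of $h$ on the Cartan subquotient $\Ca_\nu(\A_\ring)$, factor via Hensel, and then propagate the resulting annihilator to arbitrary elements using the canonical filtration by iterated powers of $\A_\ring^{>0}$.

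First I would verify that $\Ca_\nu(\A_\ring) := \A_\ring^{\geqslant 0}/(\A_\ring^{\geqslant 0} \cap \A_\ring \A_\ring^{>0})$ is a finitely generated $\ring$-module. By the remark in Section \ref{SSS_Cartan_subquotient}, the Cartan subquotient $\Ca_\nu(\A_\paramq)$ is a finitely generated $\C[\paramq]$-module whenever $X^{\nu(\C^\times)}$ is finite, which is our standing assumption; tensoring with $\ring$ over $\C[\paramq]$ gives the claim. Since $h \in \A_\ring^{\geqslant 0}$ acts $\ring$-linearly on $\Ca_\nu(\A_\ring)$, Cayley--Hamilton produces a monic polynomial $F(t) \in \ring[t]$ with $F(h) \equiv 0$ in $\Ca_\nu(\A_\ring)$.

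Second, I would factor $F$ via Hensel's lemma. The image $\bar{F} \in \C[t]$ annihilates $h$ acting on $\Ca_\nu(\A_\lambda) = \Ca_\nu(\A_\ring)\otimes_\ring \C_\lambda$; composing with the map to $\Ca_\nu(\A_\lambda^\theta) = \bigoplus_{p\in X^T}\C_p$ (Section \ref{SSS_Cartan_subquotient}), the roots of $\bar{F}$ lie among the scalars $h_p(\lambda)$ for $p \in X^T$. Group by distinct values: $\bar{F} = \prod_{i=1}^k (t - h_{p_i}(\lambda))^{m_i}$ with the $h_{p_i}(\lambda)$ pairwise distinct. Since $\ring$ is complete local Noetherian and these factors are pairwise coprime over $\C$, Hensel's lemma uniquely lifts this to a factorization $F = F_1 \cdots F_k$ in $\ring[t]$ with $\bar{F_i} = (t - h_{p_i}(\lambda))^{m_i}$, which is the required shape for the $F_i$.

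Third, for $m \in M \in \Ocat_\nu(\A_\ring)$, local nilpotency of $\A_\ring^{>0}$ gives $N \geqslant 0$ with $(\A_\ring^{>0})^{N+1} m = 0$. Consider the filtration $V_s := \{x \in M : (\A_\ring^{>0})^s x = 0\}$, $s \geqslant 0$. Using that $\A_\ring^{>0}$ is a two-sided ideal in $\A_\ring^{\geqslant 0}$ and that $[\A_\ring^{\geqslant 0},(\A_\ring^{>0})^s] \subseteq (\A_\ring^{>0})^s$, one checks $V_s$ is $\A_\ring^{\geqslant 0}$-stable, $\A_\ring^{>0}$ maps $V_s$ into $V_{s-1}$, and each subquotient $V_s/V_{s-1}$ is naturally a module over the image of $\Ca_\nu(\A_\ring)$ in $\A_\ring^{\geqslant 0}/\A_\ring^{\geqslant 0}\A_\ring^{>0}$. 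The shift analysis after Lemma \ref{Lem:Verma_flatness}, which on the universal Verma says that on the $i$-th graded component $h$ acts as ``top minus $i$'', then transfers via the tautological surjection $\A_\ring/\A_\ring\A_\ring^{>0} \twoheadrightarrow V_s/V_{s-1}$ (composed with a chosen lift of generators) to show that a suitably shifted version of $F$ annihilates $V_s/V_{s-1}$. Chaining these annihilators from $s=N$ down to $s=1$ expresses $m$ as annihilated by the product $\prod_{s=0}^N F(h+s)$; expanding each $F(h+s) = F_1(h+s) \cdots F_k(h+s)$ yields a product of elements of the claimed form.

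The main obstacle will be justifying the shift identity in Step 3, namely that $F(h+s) \cdot V_s \subseteq V_{s-1}$ (or equivalently that $h$ acts with the correct shifted spectrum on the subquotients). The cleanest route is to reduce to the universal Verma module $\Delta_\ring(\Ca_\nu(\A_\ring)) = \A_\ring/\A_\ring \A_\ring^{>0}$, where the grading inherited from $\nu$ makes the shift manifest via $[h,a] = (\deg a)\, a$: any $m$ with $(\A_\ring^{>0})^{N+1}m = 0$ lies in the image of a morphism from a module of the form $\A_\ring/\A_\ring(\A_\ring^{>0})^{N+1}$, which carries a compatible grading in degrees $0,-1,\ldots,-N$, and on whose $(-s)$-th component $h$ acts through $F$ shifted by $+s$; transporting this annihilation along the map recovers the claim for $M$.
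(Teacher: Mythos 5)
Your Steps 1 and 2 are essentially the content of the paper's proof and are correct: because $X^{\nu(\C^\times)}$ is finite, $\Ca_\nu(\A_\paramq)$ is a finite $\C[\paramq]$-module, hence (by flatness of $\ring$ over $\C[\paramq]$, which commutes with forming the Cartan subquotient) $\Ca_\nu(\A_\ring)$ is a finite $\ring$-module; the determinant trick gives a monic $F\in \ring[t]$ annihilating $h$; and Hensel over the complete local ring $\ring$ produces the factors $F_i$ with $\bar F_i$ a power of $(t-\alpha_i)$. That matches what the paper does with the single phrase ``This algebra is a finitely generated module over $\ring$. The claim of the lemma easily follows from the Hensel lemma.''

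Step 3 — the propagation from $\Ca_\nu(\A_\ring)$ to an arbitrary $m\in M$ — has a genuine gap, and both of your proposed routes have problems. The filtration $V_s=\{x:(\A_\ring^{>0})^s x=0\}$: you can indeed check $V_s$ is $\A_\ring^{\geqslant 0}$-stable and that $\A_\ring^{>0}V_s\subseteq V_{s-1}$, so $V_s/V_{s-1}$ is a module over $\A_\ring^{\geqslant 0}/\A_\ring^{\geqslant 0}\A_\ring^{>0}$. But that algebra is strictly \emph{bigger} than $\Ca_\nu(\A_\ring)=\A_\ring^{\geqslant 0}/(\A_\ring^{\geqslant 0}\cap\A_\ring\A_\ring^{>0})$: it surjects onto $\Ca_\nu(\A_\ring)$, not the other way around. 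You have only established that $\prod F_i(h)$ annihilates the smaller quotient. For $s=1$ one does get a genuine $\Ca_\nu(\A_\ring)$-module structure (if $\A_\ring^{>0}x=0$ and $a=\sum b_i c_i$ with $c_i\in\A_\ring^{>0}$, then $ax=\sum b_i c_i x=0$), but the analogous statement for $V_s/V_{s-1}$ with $s>1$ fails because the $b_i$ need not lie in $\A_\ring^{\geqslant 0}$, so $b_i c_i x$ is not obviously in $V_{s-1}$. The ``cleaner route'' via $\A_\ring/\A_\ring(\A_\ring^{>0})^{N+1}$ does not fix this: that module is \emph{not} graded in degrees $0,-1,\ldots,-N$ — its grading is unbounded below, and its degree-$0$ piece is a larger quotient of $\A_\ring^0$ than $\Ca_\nu(\A_\ring)$, so again you cannot apply the annihilation you have.

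The paper's argument instead filters $M$ by quotients of the universal Verma module $\Delta_\ring(\Ca_\nu(\A_\ring))=\A_\ring/\A_\ring\A_\ring^{>0}$. Any nonzero $M\in\Ocat_\nu(\A_\ring)$ has a nonzero vector killed by $\A_\ring^{>0}$ (by local nilpotency), which is a genuine $\Ca_\nu(\A_\ring)$-module vector (the computation above for $s=1$), hence yields a nonzero map $\Delta_\ring(\Ca_\nu(\A_\ring))\to M$; iterating on the cokernel and using Noetherianity of $\A_\ring$ (Lemma \ref{Lem:partial_completion_Noetherian}) gives a finite filtration with subquotients that are quotients of the Verma. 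Since the statement is inherited by quotients and extends along finite filtrations (the various $F_i(h)-j$ commute, so one can multiply the annihilators), one reduces to $\Delta_\ring(\Ca_\nu(\A_\ring))$ itself. There, the $\nu$-grading lives in degrees $\leqslant 0$, the degree-$0$ piece is exactly $\Ca_\nu(\A_\ring)$, and $[h,a]=(\deg a)\,a$ gives the required integer shift on each graded component. That reduction — nonzero Verma map plus Noetherianity — is the ingredient your Step 3 is missing.
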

\begin{proof}
Every object in $\Ocat_{\nu}(\A_{\ring})$ admits a nonzero homomorphism from
$\Delta_\ring(\Ca_\nu(\A_{\ring}))$.
Since  $\A_\ring$ is Noetherian, the object $M$ is filtered by quotients of $\Delta_\ring(\Ca_\nu(\A_{\ring}))$. So it is enough
to prove the lemma for $\Delta_\ring(\Ca_\nu(\A_{\ring}))$.
Since the adjoint action of $h$ on $\A_\ring/\A_\ring\A_\ring^{>0}$ has eigenvalues in
$\Z_{\leqslant 0}$,  we need to show that
$\Ca_\nu(\A_\ring)$ is annihilated by the product of elements
of the form $F_i(h)$. This algebra is a finitely generated module over $\ring$. The claim of the lemma easily follows from the Hensel lemma.
\end{proof}

For $M\in \Ocat_\nu(\A_\ring)$ and $\alpha\in \C$ we set
$M^\alpha$ for the subset of all $m$ that is annihilated by $\prod_{i=1}^k
(F_{i}(h)-j_i)$ with $j_i\in \Z$ and  $\alpha+j_i$ being the only root of the image of
$F_i(t)$ in $\C[t]$.
Clearly, $M^\alpha$ is an $\ring$-submodule of $M$.

\begin{Cor}\label{Cor:deformed_wt_decomp}
We have $M=\bigoplus_\alpha M^\alpha$.
\end{Cor}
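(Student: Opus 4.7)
The plan is to combine Lemma \ref{Lem:deformed_weights} with Hensel's lemma in the complete local ring $\ring$ and the Chinese Remainder Theorem, so as to realize each $M^\alpha$ as a generalized $h$-eigenspace and to split an arbitrary element accordingly.

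Fix $m\in M$. By Lemma \ref{Lem:deformed_weights}, $m$ is annihilated by some polynomial of the form $\prod_s (F_{i_s}(h)-j_s)\in \ring[h]$. I would enlarge this polynomial by multiplying through by the defining polynomials of $M^\alpha$ for every $\alpha$ in a finite set $S\subset\C$ chosen to contain all roots of the reductions encountered, obtaining a polynomial $P(h)\in\ring[h]$ that still annihilates $m$ and that is divisible (in $\ring[h]$) by the defining polynomial of each $M^\alpha$, $\alpha\in S$. The reduction $\bar P(t)\in\C[t]$ then factors as $\prod_{\alpha\in S}(t-\alpha)^{e_\alpha}$ with pairwise coprime linear factors, and, since $\ring$ is complete local, Hensel's lemma lifts this to a factorization $P(h)=\prod_{\alpha\in S}Q_\alpha(h)$ in $\ring[h]$ with pairwise coprime factors $Q_\alpha$ and $\bar Q_\alpha(t)=(t-\alpha)^{e_\alpha}$.

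The Chinese Remainder Theorem gives $\ring[h]/(P(h))\cong\prod_\alpha\ring[h]/(Q_\alpha(h))$; transporting this decomposition to the cyclic $\ring[h]$-submodule $\ring[h]\cdot m\subset M$ yields $m=\sum_{\alpha\in S}m_\alpha$, with each $m_\alpha$ killed by $Q_\alpha(h)$. Since the defining polynomial of $M^\alpha$ divides $P$ in $\ring[h]$ and its reduction in $\C[t]$ has $\alpha$ as its only root, the uniqueness part of Hensel's lemma forces this polynomial to divide $Q_\alpha(h)$ in $\ring[h]$. Consequently $m_\alpha\in M^\alpha$, so $M=\sum_\alpha M^\alpha$.

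For the directness of the sum I would use that the defining polynomials of $M^\alpha$ and $M^{\alpha'}$ for $\alpha\neq\alpha'$ are coprime in $\ring[h]$, since their reductions in $\C[t]$ have disjoint root sets; a standard Bezout-identity argument in $\ring[h]$ then isolates each $m_\alpha=0$ from any vanishing relation $\sum_\alpha m_\alpha=0$. The main technical obstacle is to verify that the reduction modulo $\mathfrak m$ of the explicit polynomial $\prod_{i=1}^k(F_i(h)-j_i)$ in the corollary's definition of $M^\alpha$ has $\alpha$ as its unique root, which requires careful bookkeeping of the indexing convention relating $j_i$ to the single root of $\bar F_i$; in the generic situation where each $\bar F_i$ is linear this is transparent, and the general case follows by the same principle after unpacking the convention.
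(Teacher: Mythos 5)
Your overall strategy is the right one: since the paper gives no explicit proof of this corollary, one has to supply the argument, and Hensel lifting a coprime factorization of the annihilating polynomial followed by the Chinese Remainder Theorem is exactly the expected route. Your Bezout argument for directness of the sum is also fine.

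However, the step establishing $m_\alpha\in M^\alpha$ has a genuine gap, and the logic runs in the wrong direction. You show that the ``defining polynomial'' $D_\alpha$ of $M^\alpha$ divides $Q_\alpha$, and then write ``consequently $m_\alpha\in M^\alpha$.'' But $D_\alpha\mid Q_\alpha$ together with $Q_\alpha m_\alpha=0$ tells you nothing about $D_\alpha m_\alpha$: you would need $Q_\alpha$ to divide a polynomial that cuts out $M^\alpha$, not the other way around. (Intuitively: a bigger annihilating polynomial is a weaker condition, so knowing that a small polynomial divides $Q_\alpha$ does not put $m_\alpha$ into the smaller kernel.) The preliminary enlargement of $P$ by the $D_\alpha$'s exists only to set up this inference and is otherwise inert.

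The fix is simpler than what you attempt and avoids the enlargement entirely. Do not modify $P=\prod_s(F_{i_s}(h)-j_s)$. Group the factors $(F_{i_s}(h)-j_s)$ according to the unique root $\alpha_s\in\C$ of their reduction modulo $\mathfrak m$, and let $\tilde Q_\alpha$ be the product over the factors with $\alpha_s=\alpha$. Then $\bar{\tilde Q}_\alpha=(t-\alpha)^{e_\alpha}$ up to a scalar, the $\tilde Q_\alpha$ are pairwise coprime modulo $\mathfrak m$ and hence in $\ring[h]$ because $\ring$ is complete local, and $P=\prod_\alpha\tilde Q_\alpha$. By uniqueness in Hensel's lemma this \emph{is} the factorization you were after, $Q_\alpha=\tilde Q_\alpha$; and since each $\tilde Q_\alpha$ is, by construction, precisely a product of factors of the form $(F_i(h)-j_i)$ with the integer shift $j_i$ matched to $\alpha$ as in the definition of $M^\alpha$, the identity $\tilde Q_\alpha m_\alpha=0$ coming from CRT places $m_\alpha$ in $M^\alpha$ directly, with no divisibility detour and no need for a single ``defining polynomial.'' The sign/indexing convention you flag at the end is indeed somewhat opaque in the paper's statement of the definition, but once it is fixed the direct argument above closes the proof.
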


In particular, any object in $\Ocat_\nu(\A_\ring)$
is gradable in a functorial way.

Now we proceed to  the main result of this section. By a {\it finite} $\ring$-algebra
we mean an associated unital $\ring$-algebra that is  finitely generated as a module over $\ring$.

\begin{Prop}\label{Prop:deformed_category_O}
The category $\Ocat_\nu(\A_\ring)$ is equivalent to
the category of modules over a finite $\ring$-algebra.
\end{Prop}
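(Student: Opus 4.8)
The strategy is the standard one for realizing a highest‑weight‑type category as modules over its endomorphism algebra of a projective generator. First I would construct a projective generator $P$ of $\Ocat_\nu(\A_\ring)$ and then set $A:=\End_{\A_\ring}(P)^{opp}$, showing that the functor $\Hom_{\A_\ring}(P,-)$ is an equivalence $\Ocat_\nu(\A_\ring)\xrightarrow{\sim} A\operatorname{-mod}$ and that $A$ is finite over $\ring$. The natural candidate for a projective generator is built from the universal Verma module: by Corollary \ref{Cor:deformed_wt_decomp} every object of $\Ocat_\nu(\A_\ring)$ carries a functorial $\ring$‑linear weight decomposition $M=\bigoplus_\alpha M^\alpha$, and the element $h$ acts on each generalized weight space through a polynomial of $h$ whose reduction mod the maximal ideal of $\ring$ has a single root. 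This is exactly the input needed to cut $\Delta_\ring(\Ca_\nu(\A_\ring))$ into finitely many ``blocks'' and, more importantly, to perform the truncation argument below.

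\textbf{Truncation and projectivity.} The key point is that, just as in the classical BGG setting, one works with truncated subcategories $\Ocat_\nu(\A_\ring)_{\leqslant n}$ consisting of modules all of whose weights $\alpha$ satisfy $\operatorname{Re}(\alpha)\geqslant -n$ (using the $h$‑eigenvalue bookkeeping from Lemma \ref{Lem:deformed_weights}); since each object of $\Ocat_\nu(\A_\ring)$ is finitely generated over the Noetherian algebra $\A_\ring$ (Lemma \ref{Lem:partial_completion_Noetherian}) and is filtered by quotients of $\Delta_\ring(\Ca_\nu(\A_\ring))$, each object actually lies in some $\Ocat_\nu(\A_\ring)_{\leqslant n}$, so $\Ocat_\nu(\A_\ring)=\bigcup_n \Ocat_\nu(\A_\ring)_{\leqslant n}$. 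Inside $\Ocat_\nu(\A_\ring)_{\leqslant n}$ one has a projective generator, namely a suitable truncation $P_{\leqslant n}$ of $\Delta_\ring(\Ca_\nu(\A_\ring))$: projectivity follows because $\Hom_{\A_\ring}(P_{\leqslant n},-)$ is, on the truncated category, the composite of the exact ``take the top weight spaces'' functor with a Hom over the finite $\ring$‑algebra $\Ca_\nu(\A_\ring)$, and the latter is representable. Then $A_n:=\End_{\A_\ring}(P_{\leqslant n})$ is finitely generated over $\ring$ because $P_{\leqslant n}$ has only finitely many weights, each weight space being a finitely generated $\ring$‑module (it is a subquotient of the finitely generated $\A_\ring$‑module $P_{\leqslant n}$, and the weight decomposition is $\ring$‑linear). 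The functor $\Hom_{\A_\ring}(P_{\leqslant n},-):\Ocat_\nu(\A_\ring)_{\leqslant n}\to A_n\operatorname{-mod}$ is then an equivalence by the usual Gabriel‑type argument (exactness + $P_{\leqslant n}$ a generator + faithfulness on the truncated category).

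\textbf{Passing to the limit and the main obstacle.} Finally I would check that the inclusions $\Ocat_\nu(\A_\ring)_{\leqslant n}\hookrightarrow \Ocat_\nu(\A_\ring)_{\leqslant n+1}$ stabilize in the relevant sense: because the $h$‑eigenvalue shifts produced by $\A_\ring^{>0}$ lie in $\Z_{\geqslant 0}$ and the set of relevant weights of $\Delta_\ring(\Ca_\nu(\A_\ring))$ is, modulo the maximal ideal, discrete and bounded above, there is $n_0$ such that $\Ocat_\nu(\A_\ring)_{\leqslant n}=\Ocat_\nu(\A_\ring)$ for $n\geqslant n_0$ once we restrict to a single block; the category decomposes into finitely many blocks, so one may take $P:=P_{\leqslant n_0}$ and $A:=\End_{\A_\ring}(P)$, a finite $\ring$‑algebra, with $\Ocat_\nu(\A_\ring)\simeq A\operatorname{-mod}$. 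The main obstacle I anticipate is precisely this finiteness/stabilization step: over the non‑reduced ring $\ring=\C[\paramq]^{\wedge_\lambda}$ one must be careful that ``weights'' are $\ring$‑families rather than scalars, so one cannot literally separate them; the correct formulation is the one furnished by Lemma \ref{Lem:deformed_weights}, namely grouping weights by the root of the reduction of $F_i$, and one has to verify that this grouping is compatible with the $\A_\ring$‑action (i.e.\ that $\A_\ring^i$ shifts a generalized $F$‑weight space into another such space) so that the truncations are honest $\A_\ring$‑submodules. Granting this bookkeeping, which is a routine if slightly tedious check using that the $h$‑adjoint action on $\A_\ring^i$ is semisimple with eigenvalue $-i$, the rest is the standard projective‑generator formalism.
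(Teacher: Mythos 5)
Your overall plan---build a projective generator $P$ with finitely many weight spaces each finitely generated over $\ring$, then set $A=\End_{\A_\ring}(P)^{opp}$ and apply the projective-generator formalism---is the same shape as the paper's, but the specific construction of $P$ you propose does not work. You truncate the universal Verma module $\Delta_\ring(\Ca_\nu(\A_\ring))$ from below (keeping only weights with $\operatorname{Re}(\alpha)\geqslant -n$) and assert that (a) every object of $\Ocat_\nu(\A_\ring)$ lies in some such truncated subcategory, (b) the truncated subcategories stabilize at some finite $n_0$ once one restricts to a block, and (c) the truncated Verma $P_{\leqslant n}$ is projective because the ``take the top weight spaces'' functor is exact. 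All three fail. For (a) and (b): the universal Verma module itself has weight spaces in all degrees $\alpha_0, \alpha_0-1,\alpha_0-2,\ldots$, unbounded below, so it lies in no truncated subcategory, and blocks do not help since the Verma lives in a single block; there is no $n_0$ at which the truncations capture the whole category. For (c): $P_{\leqslant n}$ is a \emph{quotient} of the Verma, and quotients of projectives are not projective; the functor $M\mapsto M^{\A_\ring^{>0}}$ that you invoke is only left exact (on $0\to K\to M\to M'\to 0$ the induced map on $\A_\ring^{>0}$-invariants need not be surjective, already for $\mathfrak{sl}_2$), and cutting off modules at a weight wall only makes things worse by creating spurious invariants at the cutoff.

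The paper's construction (following GGOR) goes in the opposite direction. Rather than shrinking the Verma, it \emph{enlarges} it: for a suitable $n_i$ the module $\A_\ring/\A_\ring\A_\ring^{>n_i}$ kills only the part of $\A_\ring^{>0}$ of degree $>n_i$ (not all of $\A_\ring^{>0}$), so it surjects onto $\Delta_\ring(\Ca_\nu(\A_\ring))$, still lies in $\Ocat_\nu(\A_\ring)$ because $\A_\ring^{>0}$ acts locally nilpotently on it, and, after passing to the graded lift $\Ocat_\nu(\A_\ring)^{gr}$ and cutting out the summand indexed by $z_i$, represents the functor $\bar M\mapsto\bar M[z_i]^{z_i}$ that extracts a single graded component. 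That functor is honestly exact, so the representing object is projective. The finite list of pairs $(z_i,n_i)$ is extracted from the specialization $\Ocat_\nu(\A_\lambda)$ (finitely many simples, all objects of finite length), and one then checks this same data works over $\ring$ because each $\bar M_\ring[z_i]^{z_i}$ is still killed by $\A_\ring^{>n_i}$. If you want to repair your proof you must replace ``truncate the Verma from below'' by ``quotient $\A_\ring$ by the left ideal generated by $\A_\ring^{>n}$'', and replace ``highest weight space'' by ``a fixed graded piece''.
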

\begin{proof}
We just need to show that
\begin{enumerate}
\item The Hom modules between two objects in $\Ocat_\nu(\A_\ring)$
are finitely generated over $\ring$,
\item
and there is a projective object $P$ in $\Ocat_\nu(\A_\ring)$
such that every other object is a quotient of the direct sum of finitely many copies of $P$.
\end{enumerate}
(1) is easy and is left as an exercise for the reader.

We prove (2). We follow a similar argument in \cite[Section 2.4]{GGOR}. Let $\Ocat_\nu(\A_\lambda)^{gr}, \Ocat_\nu(\A_\ring)^{gr}$ denote the category of
graded objects in $\Ocat_\nu(\A_\lambda), \Ocat_\nu(\A_\ring)$. Note that every projective in
$\Ocat_\nu(\A_\lambda)^{gr}, \Ocat_\nu(\A_\ring)^{gr}$ is projective also in
$\Ocat_\nu(\A_\lambda), \Ocat_\nu(\A_\ring)$.

Every object $\bar{M}\in\Ocat_\nu(\A_\lambda)^{gr}$ decomposes as $\bigoplus_{z\in \C}\bar{M}[z]$,
where $\bar{M}[z]=\bigoplus_{i\in\Z}\bar{M}_{i}^{i+z}$. Note that $\bar{M}[z]$ is an $\A_\lambda$-submodule.
Similarly, every object
$\bar{M}_\ring\in\Ocat_\nu(\A_\lambda)^{gr}$
decomposes as the direct sum of $\A_\ring$-submodules $\bigoplus_{z\in \C}\bar{M}_\ring[z]$,
where $\bar{M}_\ring[z]=\bigoplus_{i\in\Z}\bar{M}_{\ring,i}^{i+z}$.

We can find a finite collection $(z_i,n_i)\in \C\times \Z_{>0}$
such that $M^\alpha\neq 0$ implies that $\alpha\in z_i+\Z$ for some $i$ and
\begin{itemize}
\item[(i)]
$(\A_\lambda/ \A_\lambda \A_{\lambda}^{>n_i})[z_i]\in \Ocat_\nu(\A_\lambda)^{gr}$ (with its grading inherited from
$\A_\lambda$) is projective that represents
the functor $\bar{M}\mapsto \bar{M}[z_i]^{z_i}$,
\item[(ii)] and
every simple object in $\Ocat_\nu(\A_\lambda)^{gr}$ is a quotient of a graded shift of one of these projective
objects.
\end{itemize}
 From (i) we deduce that, for every $\bar{M}_\ring\in \Ocat_\nu(\A_\ring)^{gr}$, the direct summand
 $\bar{M}_\ring[z_i]^{z_i}$
 is annihilated by $\A_\ring^{>n_i}$. It follows that
$(\A_\ring/ \A_\ring \A_\ring^{>n_i})[z_i]$ represents the functor
$\bar{M}\mapsto \bar{M}_\ring[z_i]^{z_i}$.
So $(\A_\ring/ \A_\ring \A_\ring^{>n_i})[z_i]$
is projective. Similarly, (ii) implies that  every object in
$\Ocat_\nu(\A_\ring)^{gr}$ is the quotient of a finite direct sum
of the projectives that are shifts of $(\A_\ring/ \A_\ring \A_\ring^{>m_i})[z_i]$.
This implies (2).
\end{proof}

\subsubsection{Deformed highest weight structure}\label{SSS_hw_deformed}

We can also consider the completion $\A_{\paramq}^{\wedge_\lambda}$ of $\A_{\paramq}$
with respect to the two-sided ideal generated by the maximal ideal of $\lambda$
in $\C[\paramq]$. We have an algebra homomorphism $\A_\ring
\rightarrow \A_{\paramq}^{\wedge_\lambda}$.

Now suppose that $\lambda$ is in the intersection of the loci described in
Lemmas \ref{Lem:Verma_flatness}, \ref{Lem:Tor_locus}
Corollary \ref{Cor:full_embedding}, i.e.,
\begin{itemize}
\item[(i)]
$\Ca_\nu(\A_\lambda)\xrightarrow{\sim}\Ca_\nu(\A_\lambda^\theta)$,
and $\operatorname{Tor}_i^{\A_\lambda}(\Delta_{\lambda}^{opp}(p), \Delta_{\lambda}(p'))=0$
for all $p,p'\in X^T$ and $i>0$. In particular, $\mathcal{O}_\nu(\A_\lambda)$ is
a highest weight category with standard objects $\Delta_\lambda(p), p\in X^T$.
\item[(ii)] Abelian localization holds for $\A_\lambda^\theta$, and hence
the natural functor $D^b(\mathcal{O}_\nu(\A_\lambda))\rightarrow
D^b_{\mathcal{O}}(\A_\lambda)$ is an equivalence.
\item[(iii)] For all $p\in X^T$, the $\C[\param]$-module $\Delta_{\paramq}(p)$ is flat in a Zariski neighborhood of
$\lambda$.
\end{itemize}

Thanks to (ii), every object $M$ in $\Ocat_\nu(\A_\lambda)$ that has no higher self-extensions
in this category has no higher self-extensions in $\A_\lambda\operatorname{-mod}$
either. So such an object admits a unique deformation to an $\ring$-flat $\A_{\paramq}^{\wedge_\lambda}$-module, to be denoted by
$\hat{M}$. Note that every object in $\Ocat_\nu(\A_\lambda)$ admits a weakly $\nu(\C^\times)$-equivariant
structure. So $\hat{M}$ also has one. We can consider the $\nu(\C^\times)$-finite part
of $\hat{M}$ to be denoted by $\hat{M}_{fin}$.

\begin{Lem}\label{Lem:deformed_object}
$\hat{M}_{fin}$ is an object in $\Ocat_\nu(\A_\ring)$
that is flat over $\ring$.
\end{Lem}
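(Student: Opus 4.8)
The plan is to verify the two defining properties of an object in $\Ocat_\nu(\A_\ring)$ — finite generation over $\A_\ring$ and local nilpotence of $\A_\ring^{>0}$ — together with $\ring$-flatness, by transporting structure from $M$ along the deformation $\hat M$. The key point to exploit is that $\hat M$ is, by construction, an $\ring$-flat module over $\A_\paramq^{\wedge_\lambda}$ lifting $M$, so $\hat M/\mathfrak{m}_\ring \hat M \cong M$ where $\mathfrak m_\ring$ is the maximal ideal of $\ring$; passing to the $\nu(\C^\times)$-finite part $\hat M_{fin}$ replaces the complete local algebra $\A_\paramq^{\wedge_\lambda}$ by the Noetherian algebra $\A_\ring$ of Lemma \ref{Lem:partial_completion_Noetherian}.

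\emph{First} I would establish that the weight grading behaves well: $\hat M$ carries the weak $\nu(\C^\times)$-equivariant structure deforming that of $M$, and by Corollary \ref{Cor:deformed_wt_decomp} (applied once we know $\hat M_{fin}\in\Ocat_\nu(\A_\ring)$, or by a direct completion argument) its $h$-weight spaces are finitely generated over $\ring$ and the weights lie in finitely many cosets $z_i+\Z$, each bounded above — this boundedness is inherited from $M$, whose weights are bounded above since $M$ has finite length. \emph{Next} I would show $\hat M_{fin}$ is finitely generated over $\A_\ring$: pick a finite homogeneous generating set of $M$ over $\A_\lambda$, lift to $\hat M_{fin}$; the $\A_\ring$-submodule $N$ they generate has $N/\mathfrak m_\ring N \twoheadrightarrow M$ an isomorphism (as $M$ is generated by the images), and since $\hat M_{fin}$ is $\ring$-flat and $\mathfrak m_\ring$-adically separated (being $\nu$-finite, each weight space is a finitely generated $\ring$-module hence separated), a graded Nakayama argument gives $N=\hat M_{fin}$. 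Local nilpotence of $\A_\ring^{>0}$ is then immediate from the weight bound: $\A_\ring^{>0}$ strictly raises $h$-weight, and on any finitely generated submodule only finitely many weights occur below the top, so iterated application kills any element.

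\emph{For flatness}, $\hat M_{fin}$ is flat over $\ring$ because it is the $\nu$-finite part of the $\ring$-flat module $\hat M$, and taking $\nu(\C^\times)$-finite vectors is exact here (it is a direct summand, weight space by weight space) and preserves flatness of each $\ring$-module summand; alternatively, $\hat M_{fin}$ is a direct sum of its $h$-weight spaces each of which is a finitely generated flat, hence projective (over the complete local, or over $\ring$ after noting it is torsion-free and finitely generated over a regular local ring of the relevant dimension — but more simply: flat finitely generated over local Noetherian is free), $\ring$-module. Finally, to be safe one should check $\hat M_{fin}$ is genuinely an $\A_\ring$-module and not merely an $\A_\paramq^{\wedge_\lambda}$-module: this holds because $\A_\ring\to\A_\paramq^{\wedge_\lambda}$ sends $\A_\ring$ into operators preserving $\nu(\C^\times)$-finiteness (the $T$-action is by filtered automorphisms, compare Section \ref{SSS_O_basics}), so $\A_\ring$ acts on $\hat M_{fin}$.

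\emph{The main obstacle} I anticipate is the interplay between the two completions: $\hat M$ lives over $\A_\paramq^{\wedge_\lambda}$ (completed at a two-sided ideal, a noncommutative completion), while we want an $\A_\ring$-module, $\ring$ being only the completion of the \emph{center-adjacent} polynomial ring $\C[\paramq]$. The passage is precisely what the $\nu(\C^\times)$-finiteness accomplishes — on $\nu$-finite vectors the noncommutative completion collapses to the central one because $h$ acts locally finitely and the grading is bounded above — but making this rigorous requires care, essentially reproving for $\hat M_{fin}$ the structure-theoretic facts (weight-space finiteness, boundedness) that were proved for abstract objects of $\Ocat_\nu(\A_\ring)$ in Lemma \ref{Lem:deformed_weights} and Corollary \ref{Cor:deformed_wt_decomp}, but now starting from the deformation side rather than assuming membership in the category. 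I would organize the proof so that finiteness of weight spaces over $\ring$ is derived first (from $\ring$-flatness of $\hat M$ plus finiteness over $\C$ of the weight spaces of $M$, via the exact sequence $0\to\mathfrak m_\ring\hat M\to\hat M\to M\to 0$ restricted to a weight space and Nakayama/completeness), and everything else follows formally.
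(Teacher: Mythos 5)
Your proposal is correct and follows essentially the same route as the paper's proof: decompose into $h$-weight spaces, use $\ring$-flatness of $\hat M$ weight-space-by-weight-space, invoke the upper bound on weights for local nilpotence of $\A_\ring^{>0}$, and lift a finite homogeneous generating set from $M$ for finite generation. The paper organizes the flatness step slightly more explicitly by writing $\hat M=\varprojlim_k\hat M_k$ with each $\hat M_k$ a flat deformation over the Artinian quotient $\C[\param]/\mathfrak m^k$ and each $\hat M_k^i$ a (free) flat deformation of $M^i$, so that $\hat M^i=\varprojlim_k\hat M_k^i$ is manifestly flat; your ``direct summand weight space by weight space'' observation is the same fact in different clothing, and your ``alternatively'' argument is exactly the paper's.
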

\begin{proof}
Consider the quotient $\C[\paramq]/\mathfrak{m}^k$,
where $\mathfrak{m}$ is the maximal ideal of $\lambda$. Let $\hat{M}_k$
denote the unique flat deformation of $M$ over $\A_{\paramq}/(\mathfrak{m}^k)$
so that $\hat{M}=\varprojlim_k \hat{M}_k$. Let $M^i$ denote the degree $i$
graded component of $M$. The action of $\nu(\C^\times)$ on each $\hat{M}_k$
is rational. Note that $M^i_{k+1}\twoheadrightarrow M^i_k$.
So each $\hat{M}_k^i$ is a flat deformation of $M^i$ over
$\C[\paramq]/\mathfrak{m}^k$. We have $\hat{M}_{fin}=\bigoplus_i \hat{M}^i$, where
$\hat{M}^i=\varprojlim_k \hat{M}^i_k$. In particular, $\hat{M}_{fin}$
is flat over $\ring$.

Now we need to show that $\hat{M}_{fin}$ lies in $\Ocat_\nu(\A_\ring)$.
The weakly  $\nu(\C^\times)$-equivariant structure is manifest from the construction.
The submodule $\A_\ring^{>0}$ acts locally nilpotently  because the degrees are bounded from the above. It remains to show that $\hat{M}_{fin}$ is finitely generated.
All graded components of $\hat{M}_{fin}$ are finitely generated $\ring$-modules.
We can find $j$ such that the finite dimensional subspace $\bigoplus_{i>j}M^i$ generates
$M$. It is easy to see that the finitely generated $\ring$-module $\bigoplus_{i>j}\hat{M}^i$ generates $\hat{M}_{fin}$.
\end{proof}

\begin{Lem}\label{Lem:deformed_hw}
\begin{enumerate}
\item If $M=\Delta_\lambda(p)$, then $\hat{M}_{fin}=\Delta_\ring(p)$.
\item If $M$ is projective in $\Ocat_\nu(\A_\lambda)$, then $\hat{M}_{fin}$ is projective
in $\Ocat_\nu(\A_\ring)$.
\item
$\Ocat_\nu(\A_\ring)$ is a highest weight category over
$\ring$ in the sense of Rouquier,
\cite{Rouquier}, with standard objects $\Delta_\ring(p)$.
\end{enumerate}
\end{Lem}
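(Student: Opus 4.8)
The plan is to reduce the three claims to statements about the structure maps already recorded in the excerpt and the finiteness established in Proposition~\ref{Prop:deformed_category_O}. For (1), I would argue as follows: $\Delta_\lambda(p) = [\A_\lambda/\A_\lambda\A_\lambda^{>0}]\otimes_{\Ca_\nu(\A_\lambda)}\C_p$ has, by hypothesis (i) together with the vanishing $\operatorname{Tor}_i^{\A_\lambda}(\Delta_\lambda^{opp}(p),\Delta_\lambda(p'))=0$, no higher self-extensions in $\Ocat_\nu(\A_\lambda)$ hence (via (ii) and Corollary~\ref{Cor:full_embedding}) none in $\A_\lambda\operatorname{-mod}$; so it deforms uniquely to an $\ring$-flat $\A_\paramq^{\wedge_\lambda}$-module, and by Lemma~\ref{Lem:deformed_object} its $\nu(\C^\times)$-finite part $\widehat{M}_{fin}$ lies in $\Ocat_\nu(\A_\ring)$ and is $\ring$-flat. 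On the other hand, hypothesis (iii) says $\Delta_\paramq(p)$ is flat near $\lambda$, so $\Delta_\ring(p) := \A_\ring/\A_\ring\A_\ring^{>0}\otimes_{\Ca_\nu(\A_\ring)}\ring_p$ is itself an $\ring$-flat deformation of $\Delta_\lambda(p)$ lying in $\Ocat_\nu(\A_\ring)$. By uniqueness of the flat deformation of a module without self-extensions (applied over each $\C[\paramq]/\mathfrak{m}^k$ and passed to the limit as in the proof of Lemma~\ref{Lem:deformed_object}), the two agree: $\widehat{M}_{fin}\cong \Delta_\ring(p)$.

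For (2), let $P$ be projective in $\Ocat_\nu(\A_\lambda)$. Since projectives have no self-extensions, $\widehat{P}_{fin}$ is defined and lies in $\Ocat_\nu(\A_\ring)$ by Lemma~\ref{Lem:deformed_object}. To see it is projective I would check that $\operatorname{Hom}_{\Ocat_\nu(\A_\ring)}(\widehat{P}_{fin},-)$ is exact. One way: in the proof of Proposition~\ref{Prop:deformed_category_O} the generating projectives of $\Ocat_\nu(\A_\ring)$ were produced as $(\A_\ring/\A_\ring\A_\ring^{>n_i})[z_i]$, which are manifestly the $\nu(\C^\times)$-finite deformations of the generating projectives $(\A_\lambda/\A_\lambda\A_\lambda^{>n_i})[z_i]$ of $\Ocat_\nu(\A_\lambda)$; any projective $P$ over $\A_\lambda$ is a summand of a finite sum of these, and the corresponding idempotent in $\operatorname{End}$ deforms (again because endomorphism algebras are finite over $\ring$ and $\ring$ is local complete, so idempotents lift), giving $\widehat{P}_{fin}$ as a summand of a finite sum of projective generators. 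Hence $\widehat{P}_{fin}$ is projective.

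For (3), I would verify Rouquier's axioms for a highest weight category over the commutative noetherian base $\ring$: (a) there is a finite poset (the poset $X^T$ ordered by $<_\lambda$ from Section~\ref{SSS_HW_struct}) and standard objects $\Delta_\ring(p)$, each $\ring$-flat with $\operatorname{End}_{\Ocat_\nu(\A_\ring)}(\Delta_\ring(p))=\ring$ and $\operatorname{Hom}(\Delta_\ring(p),\Delta_\ring(p'))=0$ unless $p\le p'$; (b) the projective generators $\widehat{P}_{fin}(p)$ admit finite filtrations with sections $\Delta_\ring(q)$, $q\ge p$, with top section $\Delta_\ring(p)$ appearing once. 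Claim (a) follows by specialization: the Hom and End computations hold over $\lambda$ by (i) (the highest weight structure of $\Ocat_\nu(\A_\lambda)$ from Corollary~\ref{Cor:hw}), the modules are $\ring$-flat by part (1), and Nakayama over the complete local ring $\ring$ lifts these to $\ring$. Claim (b) follows similarly: $P_\lambda(p)$ has a standard filtration in $\Ocat_\nu(\A_\lambda)$ (BGG reciprocity plus Lemma~\ref{Lem:stand_costand}), the subquotients are $\Delta_\lambda(q)$'s, and since each $\Delta_\lambda(q)$ deforms flatly and Ext-vanishing between a standard and a costandard ($\operatorname{Tor}_i^{\A_\lambda}(\Delta^{opp}_\lambda,\Delta_\lambda)=0$) persists in a Zariski neighborhood, the filtration of $P_\lambda(p)$ deforms to a filtration of $\widehat{P}_{fin}(p)$ with the same sections. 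I expect the main obstacle to be the bookkeeping in (b): making precise that a standard filtration of a projective in the special fiber lifts to a filtration over $\ring$ — this requires knowing that the relevant $\operatorname{Ext}^1_{\Ocat_\nu(\A_\ring)}(\Delta_\ring(q),\Delta_\ring(q'))$ vanish appropriately (equivalently that the specialization map on Ext-groups is well-behaved), which in turn rests on (iii) together with the compatibility of the deformed duality with $\nabla_\ring(p)$ and a base-change spectral sequence argument; once that is in place, lifting the filtration step by step via the vanishing of the obstruction classes is routine.
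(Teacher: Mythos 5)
Your proof matches the paper's approach. For (1) the paper argues exactly as you do: $\hat{M}_{fin}$ is a flat deformation of $M=\Delta_\lambda(p)$ by Lemma~\ref{Lem:deformed_object}, $\Delta_\ring(p)$ is a flat deformation by hypothesis (iii), and the absence of higher self-extensions of $\Delta_\lambda(p)$ (ultimately from the highest weight structure via (i) and the comparison of Ext groups in $\Ocat$ and $\A_\lambda\operatorname{-mod}$ via (ii)) forces the two flat deformations to agree. For (2) the paper only says ``(2) is easy''; your idempotent-lifting argument over the complete local ring $\ring$ is a perfectly good way to carry this out. For (3) the paper is even terser, asserting that the projectives are $\Delta_\ring$-filtered because of (1), (2) and inheriting the upper-triangularity from the special fiber, while you unpack this as verifying Rouquier's axioms by specialization plus Nakayama and lifting the standard filtration of $P_\lambda(p)$ to $\hat{P}_{fin}(p)$; you correctly flag the filtration-lifting step as the point where one must invoke the appropriate one-sided $\operatorname{Ext}^1$-vanishing and base change, which is precisely what the paper's phrase ``the required upper-triangularity properties follow from those of $\Ocat_\nu(\A_\lambda)$'' is sweeping up. The argument is sound; the only cosmetic issue is that ``vanish appropriately'' could be stated more precisely (one needs $\operatorname{Ext}^1(\Delta_\ring(q),\Delta_\ring(q'))=0$ for $q'\not> q$, the usual highest weight orthogonality, not vanishing of all $\operatorname{Ext}^1$ between standards).
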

\begin{proof}
We prove (1). By Lemma \ref{Lem:deformed_object},  $\hat{M}_{fin}$ is a flat deformation of
$M$. Thanks to (iii), $\Delta_\ring(p)$ is a flat deformation of $\Delta_\lambda(p)$.
Thanks to (ii), $\Delta_\lambda(p)$ has no higher self-extensions, which implies
that a flat deformation is unique. Hence, for $M=\Delta_\lambda(p)$, we get
$\hat{M}_{fin}\cong \Delta_\ring(p)$.

(2) is easy. To prove (3) we observe that, thanks to (1) and (2), the projectives
in $\Ocat_\nu(\A_\ring)$ are filtered by Verma modules. The required upper-triangularity
properties follow from those of $\Ocat_\nu(\A_\lambda)$.
\end{proof}

\subsubsection{Deformed quotient functor}
Now we investigate the structure of $\Ocat_\nu(\A_\ring)$
under the assumption that the functor $\Gamma_\lambda^\theta$ is an exact functor
(and without the assumptions (i)-(iii) from Section \ref{SSS_hw_deformed}).
Note that in this case $\Gamma_\lambda^\theta\circ \Loc^\theta_\lambda\cong \operatorname{id}$.
In particular, $\Gamma_\lambda^\theta$ is a Serre quotient functor. Recall also
that we can identify $\Ocat_\nu(\A_\lambda^\theta)$ with $\Ocat_\nu(\A_{\lambda'})$
via $\Gamma(\A_{\lambda,\chi}^\theta\otimes_{\A_\lambda^\theta}\bullet)$,
where $\chi:=\lambda'-\lambda$ is in $\underline{\Lambda}$ and is sufficiently deep in the chamber of $\theta$,
see Corollary \ref{Cor:abel_loc}.
Then $\Gamma_\lambda^\theta:\Ocat_\nu(\A_\lambda^\theta)\rightarrow
\Ocat_\nu(\A_\lambda)$ is identified with $\Hom_{\A_{\lambda'}}(\A_{\lambda,\chi},\bullet):
\Ocat_\nu(\A_{\lambda'})\rightarrow \Ocat_\nu(\A_\lambda)$ by
Corollary \ref{Cor:abel_loc_bimod}.
Note that, by the choice of $\chi$, the bimodule $\A_{\paramq,\chi}$ is flat over $\C[\paramq]$,
compare with Lemma \ref{Lem:translation_coincidence}.
Set $\A_{\ring,\chi}:=\A_{\paramq,\chi}\otimes_{\C[\paramq]}\ring$.

We need analogs of these statements for $\Ocat_\nu(\A_\ring)$.
We write $\ring'$ for the completion of $\C[\paramq]$ at $\lambda'$
so that $\ring'$ is identified with $\ring$ via the automorphism of $\paramq$
given by $\lambda\mapsto \lambda+\chi$.
We can form the category  $\Ocat_\nu(\A_{\ring'})$.
As we have argued in Section \ref{SSS_hw_deformed}, the latter is a highest
weight category over $\ring'$. Let $\mathsf{A}_{\ring}$
denote the opposite algebra of the endomorphism algebra of a projective generator
of $\Ocat_\nu(\A_{\ring'})$. So $\mathsf{A}_{\ring}$
is flat over $\ring$ and
$\mathsf{A}_{\ring}\operatorname{-mod}\cong \Ocat_\nu(\A_{\ring'})$.

\begin{Prop}\label{Prop:deformed_quotient}
Under the assumptions above, there is an idempotent $\epsilon\in \mathsf{A}_{\ring}$ with the following properties:
\begin{enumerate}
\item $\Ocat_\nu(\A_{\ring})\cong \epsilon \mathsf{A}_{\ring}
\epsilon\operatorname{-mod}$,
\item and the functor $\epsilon\bullet: \mathsf{A}_{\ring}\rightarrow
\epsilon \mathsf{A}_{\ring}
\epsilon\operatorname{-mod}$ is identified with $\Hom_{\A_{\ring'}}(\A_{\ring,\chi},\bullet)$.
\item The quotient $\A_{\ring}/\A_{\ring}\epsilon \A_{\ring}$ is $\ring$-torsion.
\end{enumerate}
\end{Prop}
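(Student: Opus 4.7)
The plan is to construct $\epsilon$ as the idempotent cutting out a projective direct summand of the progenerator of $\Ocat_\nu(\A_{\ring'})$ that corresponds, under the left adjoint $G_\ring := \A_{\ring,\chi}\otimes_{\A_\ring}\bullet$, to a progenerator of $\Ocat_\nu(\A_\ring)$, and then deduce (1)--(3) by the tensor-Hom adjunction combined with Nakayama-type lifting from the special fiber at $\lambda$. Throughout, I read the $\A_\ring$ in assertion (3) as $\mathsf{A}_\ring$ (otherwise the expression is not well-typed).

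First I would shrink $\chi\in\underline{\Lambda}$ inside the chamber of $\theta$ so that, in addition to abelian localization holding for $\A_{\lambda'}^\theta$, the conditions (i)--(iii) of Section \ref{SSS_hw_deformed} hold at $\lambda'$; this is possible since the corresponding failure loci are finite unions of essential hyperplanes (by Lemmas \ref{Lem:Tor_locus} and \ref{Lem:Verma_flatness}). Lemma \ref{Lem:deformed_hw} then makes $\Ocat_\nu(\A_{\ring'})$ a highest weight category over $\ring$ with projective generator $P$, so $\Ocat_\nu(\A_{\ring'})\cong\mathsf{A}_\ring\operatorname{-mod}$ for the $\ring$-flat algebra $\mathsf{A}_\ring = \operatorname{End}_{\A_{\ring'}}(P)^{opp}$. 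Combining Proposition \ref{Prop:deformed_category_O} with the deformation procedure of Lemma \ref{Lem:deformed_object}, I take a projective generator $\mathcal{Q}$ of $\Ocat_\nu(\A_\ring)$ that is a flat deformation of a projective generator $Q$ of $\Ocat_\nu(\A_\lambda)$, and set $\mathsf{B}_\ring := \operatorname{End}_{\A_\ring}(\mathcal{Q})^{opp}$, so $\Ocat_\nu(\A_\ring)\cong\mathsf{B}_\ring\operatorname{-mod}$. Next I define $\mathcal{P} := G_\ring(\mathcal{Q}) = \A_{\ring,\chi}\otimes_{\A_\ring}\mathcal{Q}$. Since $\A_{\ring,\chi}$ and $\mathcal{Q}$ are $\ring$-flat, $\mathcal{P}$ is $\ring$-flat with specialization $\mathcal{P}\otimes_\ring\C_\lambda = \A_{\lambda,\chi}\otimes_{\A_\lambda}Q$. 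Under the identification $\Ocat_\nu(\A_\lambda^\theta)\cong\Ocat_\nu(\A_{\lambda'})$ this is $\Loc_\lambda^\theta(Q)$, which is projective because $\Loc_\lambda^\theta$ is left adjoint to the exact functor $\Gamma_\lambda^\theta$. Flatness over the complete local ring $\ring$ together with projectivity of the special fiber over $\mathsf{A}_\lambda$ then yields projectivity of $\mathcal{P}$ over $\mathsf{A}_\ring$. Replacing $P$ by $P\oplus\mathcal{P}$ if necessary, $\mathcal{P}$ becomes a direct summand of $P$, producing $\epsilon\in\mathsf{A}_\ring$ with $P\epsilon\cong\mathcal{P}$ and $\epsilon\mathsf{A}_\ring\epsilon\cong\operatorname{End}_{\A_{\ring'}}(\mathcal{P})^{opp}$.

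To identify $\epsilon\mathsf{A}_\ring\epsilon$ with $\mathsf{B}_\ring$ I would verify that the adjunction unit $\eta_\mathcal{Q}:\mathcal{Q}\to F_\ring G_\ring(\mathcal{Q})$ is an isomorphism. On the special fiber $\eta_\mathcal{Q}\otimes_\ring\C_\lambda$ becomes the unit of $\Loc_\lambda^\theta\dashv\Gamma_\lambda^\theta$ at $Q$, which is an isomorphism because $\Gamma_\lambda^\theta\Loc_\lambda^\theta\cong\operatorname{id}$ by the hypothesis. Using $\ring$-flatness of $\mathcal{Q}$ and of $\A_{\ring,\chi}$, a Nakayama argument then forces $\eta_\mathcal{Q}$ itself to be an isomorphism. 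By the tensor-Hom adjunction one obtains
\begin{equation*}
\epsilon\mathsf{A}_\ring\epsilon \;\cong\; \operatorname{End}_{\A_{\ring'}}(G_\ring\mathcal{Q})^{opp} \;\cong\; \operatorname{Hom}_{\A_\ring}(\mathcal{Q}, F_\ring G_\ring\mathcal{Q})^{opp} \;\cong\; \mathsf{B}_\ring,
\end{equation*}
which gives (1), and for any $M\in\mathsf{A}_\ring\operatorname{-mod}$,
\begin{equation*}
\epsilon M \;\cong\; \operatorname{Hom}_{\mathsf{A}_\ring}(\mathsf{A}_\ring\epsilon, M) \;\cong\; \operatorname{Hom}_{\A_{\ring'}}(\mathcal{P}, M) \;\cong\; \operatorname{Hom}_{\A_\ring}(\mathcal{Q}, F_\ring M) \;\cong\; F_\ring M
\end{equation*}
as a $\mathsf{B}_\ring$-module, which gives (2).

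Finally, for (3), by Proposition \ref{Prop:outside_essential} the complement in a Zariski neighborhood of $\lambda$ of the finitely many relevant essential hyperplanes is Zariski dense, and at any point $\mu$ in this complement, abelian localization holds for both $\A_\mu^\theta$ and $\A_{\mu+\chi}^\theta$; hence the specialized $F_\mu$ is an equivalence, forcing the specialization of $\mathsf{A}_\ring/\mathsf{A}_\ring\epsilon\mathsf{A}_\ring$ at $\mu$ to vanish. Since this quotient is finitely generated over $\ring$ and vanishes on a dense open, it is $\ring$-torsion. The main obstacle is the Nakayama/flatness analysis underlying the projectivity of $\mathcal{P}$ and the invertibility of $\eta_\mathcal{Q}$ over $\ring$; in particular, one must show that $F_\ring$ commutes with specialization at $\lambda$ when applied to the projective $\mathcal{P}$ (equivalently, that $\operatorname{Ext}^1_{\A_{\ring'}}(\A_{\ring,\chi},\mathcal{P})$ vanishes), which rests on the exactness hypothesis for $\Gamma_\lambda^\theta$ and the $\ring$-flatness of $\A_{\ring,\chi}$. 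Once those technical points are in place, the adjunction computation and the generic-fiber argument for (3) are formal.
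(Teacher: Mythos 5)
Your approach reorganizes the proof around a projective generator $\mathcal{Q}$ of $\Ocat_\nu(\A_\ring)$ and its image $\mathcal{P}=G_\ring(\mathcal{Q})$, then tries to finish by a Nakayama argument; the paper instead first establishes exactness of $F_\ring$ on all of $\Ocat_\nu(\A_{\ring'})$ and the identity $F_\ring G_\ring\cong\operatorname{id}$, and only then extracts $\epsilon$ formally. The routes diverge at a real technical point, and your version leaves the crux unfilled. The $\ring$-flatness of $\mathcal{P}$ does \emph{not} follow just from $\ring$-flatness of $\A_{\ring,\chi}$ and $\mathcal{Q}$: in general $M\otimes_A N$ need not be $\ring$-flat when $A$, $M$, $N$ are. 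What you actually need is $\operatorname{Tor}^{\A_\lambda}_{>0}(\A_{\lambda,\chi},Q)=0$ so that the local criterion of flatness applies; similarly your Nakayama argument for $\eta_{\mathcal{Q}}$ requires $\Ext^1_{\A_{\ring'}}(\A_{\ring,\chi},\mathcal{P})=0$ (you flag the latter, not the former). Both are exactly what the paper's Step 1 delivers: since abelian localization holds at $\lambda'=\lambda+\chi$, the module $\A_{\lambda,\chi}$ is projective over $\A_{\lambda'}$; deforming via $\C[\paramq]$-flatness of $\A_{\paramq,\chi}$ gives projectivity of $\A_{\ring_k,\chi}$ over $\A_{\ring'_k}$, whence $\Ext^{>0}_{\A_{\ring'}}(\A_{\ring,\chi},M)=0$ for any $\mathfrak{m}^k$-torsion $M$. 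From there the paper gets exactness of $F_\ring$ and, by a five-lemma/completion argument, $F_\ring G_\ring\cong\operatorname{id}$ for all objects, after which your Steps 5--6 ingredients are ``standard''. Without supplying this vanishing, your projectivity and isomorphism claims are unsupported. You also do not check that $G_\ring$ and $F_\ring$ carry $\Ocat_\nu(\A_\ring)$ and $\Ocat_\nu(\A_{\ring'})$ into each other, which the paper handles with a filtration-and-support argument (its Steps 2 and 4).

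A smaller point concerns (3): since $\ring=\C[\paramq]^{\wedge_\lambda}$ is a complete local ring, the phrase ``at any point $\mu$ in this complement'' of a Zariski neighborhood of $\lambda$ is not well-posed; the closed points of $\operatorname{Spec}\ring$ do not range over a neighborhood of $\lambda$ in $\paramq$. The correct formulation (which the paper uses, citing Lemma \ref{Lem:inverse_Morita}) is to pass to $\operatorname{Frac}\ring$ and observe that, because the locus where $\A_{\paramq,\chi}$ and $\A_{\paramq,-\chi}$ fail to be mutually inverse Morita bimodules is a proper Zariski-closed subset of $\paramq$ avoided by the generic point of $\operatorname{Spec}\ring$, the functor $F$ becomes an equivalence over $\operatorname{Frac}\ring$, i.e.\ $\mathsf{A}_\ring/\mathsf{A}_\ring\epsilon\mathsf{A}_\ring$ dies generically and hence is $\ring$-torsion. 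Your idea is the right one; the phrasing needs to live on $\operatorname{Spec}\ring$ rather than on $\paramq$.
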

\begin{proof}
The proof is in several steps.

{\it Step 1}. It follows from Corollary \ref{Cor:abel_loc_bimod}
that  $\A_{\lambda,\chi}$ is a projective $\A_{\lambda'}$-module.
For $k>0$, set  $\ring_k:=\C[\paramq]/\mathfrak{m}^k$
and $\ring'_k:=\C[\paramq]/\left(\mathfrak{m}'\right)^k$, where $\mathfrak{m},
\mathfrak{m}'$ are the maximal ideals of $\lambda,\lambda'$, respectively.
Since $\A_{\paramq,\chi}$
is flat over $\paramq$, we conclude that the base change $\A_{\ring_k,\chi}$
is flat over $\ring_k$. Hence $\A_{\ring_k,\chi}$
is a projective $\A_{\ring'_k}$-module. It follows that, for an $\A_{\ring'_k}$-module
$M$, we have $\Ext^i_{\A_{\ring'}}(\A_{\ring,\chi},M)\cong
\Ext^i_{\A_{\ring'_k}}(\A_{\ring_k,\chi},M)=0$ for $i>0$.

{\it Step 2}. We claim that the functor $\operatorname{Ext}^i_{\A_{\ring'}}(\A_{\ring,\chi},\bullet)$
restricts to $\Ocat_\nu(\A_{\ring'})\rightarrow \Ocat_\nu(\A_{\ring})$.
We equip $\A_{\paramq}$ with the filtration whose associated graded is
$\C[Y][\param]$, see the proof of Lemma \ref{Lem:partial_completion_Noetherian}.
We have induced filtrations on $\A_{\ring'},
\A_{\ring}$. The bimodule $\A_{\ring,\chi}$ also carries
a natural filtration, whose associated graded is the finitely generated
$\C[Y]\otimes \C[\param]^{\wedge_0}$-module $\Gamma(X^\theta,\Str(\chi))\otimes
\C[\param]^{\wedge_0}$. For $M\in \Ocat_\nu(\A_{\ring'})$ we have
a good filtration compatible with the filtration on $\A_{\ring'}$ whose associated graded is
set theoretically supported on $X_+\times \param^{\wedge_0}$, where
$X_+$ denote the contracting locus of $\nu$ in $X$. The module
$N:=\operatorname{Ext}^i_{\A_{\ring'}}(\A_{\ring,\chi},M)$
inherits a filtration whose associated graded is supported on $X_+\times \param^{\wedge_0}$.
It follows that the grading on $N$ is bounded from above. And $N$ is finitely
generated. So it lies in $\Ocat_\nu$.

{\it Step 3}. A short exact sequence of $0\rightarrow M_1\rightarrow M_2\rightarrow M_3\rightarrow 0$
in $\Ocat_\nu(\A_{\ring'})\cong \mathsf{A}_{\ring}\operatorname{-mod}$ gives rise to
a long exact sequence for $R\Hom_{\A_{\ring'}}(\A_{\ring,\chi},\bullet)$.
Note that if $M_3$ is torsion over $\ring$, then
$\operatorname{Ext}^i_{\A_{\ring'}}(\A_{\ring,\chi},M_3)=0$ for $i>0$
by Step 1. Since $\Ocat_\nu(\A_{\ring})$ is equivalent to the category of
modules over a $\ring$-algebra that is a finitely generated module,
we easily deduce that
$\Hom_{\A_{\ring'}}(\A_{\ring,\chi},\bullet)$
is an exact functor $\Ocat_\nu(\A_{\ring'})
\rightarrow \Ocat_\nu(\A_{\ring})$.

{\it Step 4}. We now produce a left inverse functor to
$F:=\Hom_{\A_{\ring'}}(\A_{\ring,\chi},\bullet)$. This functor
is $G:=\A_{\ring,\chi}\otimes_{\A_{\ring}}\bullet$.
To check that it maps $\Ocat_\nu(\A_{\ring})$ to $\Ocat_\nu(\A_{\ring'})$
one argues as in Step 2.

Now we show that the adjunction unit $\operatorname{id}\rightarrow F\circ G$ is an isomorphism.
If $M\in \Ocat_\nu(\A_{\ring})$ is killed by $\mathfrak{m}$,
then $F\circ G=\Hom_{\A_{\lambda'}}(\A_{\lambda,\chi}, \A_{\lambda,\chi}\otimes_{\A_\lambda}\bullet)
\cong \Gamma^\theta_\lambda\circ \Loc^\theta_\lambda$. The morphism $M\rightarrow
F\circ G(M)$ is an isomorphism. Thanks to the five lemma, we see that
this generalizes to the case when $M$ is killed by $\mathfrak{m}^k$
for arbitrary $k$. And since $\Ocat_\nu(\A_{\ring})$
is equivalent to the category of modules over a finite $\ring$-algebra (Proposition
\ref{Prop:deformed_category_O}), we are done.

{\it Step 5}. The functor $G$ sends a projective generator for $\Ocat_\nu(\A_{\ring'})$
to a projective in $\Ocat_\nu(\A_{\ring})$. The latter is a direct sum of indecomposable
projectives. For $\epsilon$ we take an idempotent corresponding to these indecomposable projectives.
Properties (1) and (2) are now standard.

{\it Step 6}. We now show (3). It is equivalent to the condition that $F$ is an equivalence
after base change to $\operatorname{Frac}\ring$. This is a consequence of Lemma
\ref{Lem:inverse_Morita}.
%
\end{proof}

\begin{Cor}\label{Cor:flat_algebra}
Under the assumptions of Proposition \ref{Prop:deformed_quotient},
$\Ocat_\nu(\A_{\ring})$ is equivalent to the category of
modules over an $\ring$-algebra that is a free finite rank
$\ring$-module. In particular, the Hom module between two
projective objects in $\Ocat_\nu(\A_{\ring})$ is a free
finite rank $\ring$-module.
\end{Cor}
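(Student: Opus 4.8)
The plan is to extract the free-ness of the algebra from the two structural inputs provided by Proposition \ref{Prop:deformed_quotient}: the $\ring$-flatness of $\mathsf{A}_{\ring}$ (which comes from the deformed highest weight structure on $\Ocat_\nu(\A_{\ring'})$, Lemma \ref{Lem:deformed_hw}) and the identification $\Ocat_\nu(\A_{\ring})\cong \epsilon\mathsf{A}_{\ring}\epsilon\operatorname{-mod}$ for an idempotent $\epsilon\in\mathsf{A}_{\ring}$. First I would take a projective generator $P_{\ring'}$ of $\Ocat_\nu(\A_{\ring'})$ so that $\mathsf{A}_{\ring}=\operatorname{End}_{\A_{\ring'}}(P_{\ring'})^{opp}$, and recall that, being the endomorphism algebra of a tilting/projective object in a highest weight category over $\ring$ whose standard objects $\Delta_\ring(p)$ are $\ring$-flat (indeed $\ring$-free of finite rank by the argument in Section \ref{SSS_hw_deformed}, since the graded pieces of the deformed Verma modules are base changes of the $\C[\paramq]$-flat modules $\Delta_\paramq(p)$), $\mathsf{A}_{\ring}$ is a finite rank free $\ring$-module: a projective object in $\Ocat_\nu(\A_{\ring'})$ is $\Delta_\ring$-filtered, and $\operatorname{Hom}$ and $\operatorname{Ext}^1$ between $\Delta$-filtered objects are computed from the $\ring$-free modules $\operatorname{Hom}_{\A_{\ring'}}(\Delta_\ring(p),\nabla_\ring(p'))$, which vanish for $p\neq p'$ and are $\cong\ring$ for $p=p'$; so $\operatorname{Hom}$-spaces between $\Delta$-filtered objects are $\ring$-free of the expected finite rank. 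This gives freeness of $\mathsf{A}_{\ring}$, hence of $\epsilon\mathsf{A}_{\ring}\epsilon$ as an $\ring$-submodule — but a submodule of a free module over a complete local ring need not be free, so I cannot stop here.

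The key point is therefore to show $\epsilon\mathsf{A}_{\ring}\epsilon$ is itself $\ring$-\emph{free}, and the cleanest route is to realize it directly as an endomorphism algebra of a projective $\Delta$-filtered object. Indeed $\epsilon\mathsf{A}_{\ring}\epsilon=\operatorname{End}_{\mathsf{A}_{\ring}}(\mathsf{A}_{\ring}\epsilon)^{opp}$, and $\mathsf{A}_{\ring}\epsilon$ corresponds under $\Ocat_\nu(\A_{\ring'})\cong\mathsf{A}_{\ring}\operatorname{-mod}$ to the projective object $Q:=G(P_{\ring})$, where $P_{\ring}$ is a projective generator of $\Ocat_\nu(\A_{\ring})$ and $G=\A_{\ring,\chi}\otimes_{\A_{\ring}}\bullet$ is the functor from Step 4 of the proof of Proposition \ref{Prop:deformed_quotient}; by Step 5 of that proof $Q$ is a projective object of $\Ocat_\nu(\A_{\ring'})$, hence $\Delta_\ring$-filtered, so $\operatorname{End}_{\A_{\ring'}}(Q)$ is $\ring$-free of finite rank by exactly the computation in the previous paragraph. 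This $\operatorname{End}$ algebra is, up to Morita-equivalence bookkeeping through $\epsilon$, the opposite of the algebra governing $\Ocat_\nu(\A_{\ring})$. Concretely: $\Ocat_\nu(\A_{\ring})\cong\operatorname{End}_{\A_{\ring}}(P_{\ring})^{opp}\operatorname{-mod}$ and, since $F=\operatorname{Hom}_{\A_{\ring'}}(\A_{\ring,\chi},\bullet)$ and $G$ are mutually inverse (Step 4), $\operatorname{End}_{\A_{\ring}}(P_{\ring})\cong\operatorname{End}_{\A_{\ring'}}(G(P_{\ring}))=\operatorname{End}_{\A_{\ring'}}(Q)$, which is $\ring$-free. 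This proves the first assertion.

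For the second assertion I would argue that if $B$ denotes this free finite rank $\ring$-algebra, then an object of $\Ocat_\nu(\A_{\ring})\cong B\operatorname{-mod}$ is projective iff it is a direct summand of a finite sum of copies of $B$, and $\operatorname{Hom}$ between two summands of $B^{\oplus n}$ is a direct summand of $\operatorname{Hom}_B(B^{\oplus n},B^{\oplus n})=\operatorname{Mat}_n(B)$, which is $\ring$-free; but a direct summand of a free module over the complete local Noetherian ring $\ring$ is free (every finitely generated projective module over a local ring is free). Hence the $\operatorname{Hom}$ module between two projectives is free of finite rank, as claimed. The main obstacle in the whole argument is the first paragraph's subtlety — that passing to the corner algebra $\epsilon\mathsf{A}_{\ring}\epsilon$ a priori only preserves flatness, not freeness — which is exactly why the proof must reidentify the corner algebra as $\operatorname{End}$ of the \emph{$\Delta$-filtered projective} $Q=G(P_{\ring})$ rather than merely quoting Proposition \ref{Prop:deformed_quotient}; once that identification is in place, finite rank freeness over the complete local ring $\ring$ is automatic. $\qed$
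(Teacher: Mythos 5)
Your proof is correct, but it is organized around a worry that does not actually arise, which makes the argument significantly longer than needed. You are right to first note that $\mathsf{A}_{\ring}$ is free of finite rank over $\ring$ (in fact the paper already records that $\mathsf{A}_{\ring}$ is $\ring$-flat, and a finitely generated flat module over the local Noetherian ring $\ring$ is automatically free, so even your $\Delta/\nabla$ computation in the first paragraph is dispensable). The place where you take a detour is the assertion that you ``cannot stop here'' because passing to the corner $\epsilon\mathsf{A}_{\ring}\epsilon$ a priori only gives a submodule of a free module. In fact $\epsilon\mathsf{A}_{\ring}\epsilon$ is a \emph{direct summand} of $\mathsf{A}_{\ring}$ as an $\ring$-module: the Peirce decomposition
\begin{equation*}
\mathsf{A}_{\ring}\;=\;\epsilon\mathsf{A}_{\ring}\epsilon \,\oplus\, \epsilon\mathsf{A}_{\ring}(1-\epsilon) \,\oplus\, (1-\epsilon)\mathsf{A}_{\ring}\epsilon \,\oplus\, (1-\epsilon)\mathsf{A}_{\ring}(1-\epsilon)
\end{equation*}
is a direct sum of $\ring$-submodules, because the projection $a\mapsto \epsilon a\epsilon$ is $\ring$-linear ($\ring$ being central). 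So $\epsilon\mathsf{A}_{\ring}\epsilon$ is a direct summand of a finite free $\ring$-module, hence finitely generated projective, hence free over the local ring $\ring$. That already gives the first assertion; your reidentification of $\epsilon\mathsf{A}_{\ring}\epsilon$ with $\operatorname{End}_{\A_{\ring'}}(G(P_\ring))^{opp}$ and the appeal to the $\Delta$-filtered structure is a valid alternative, but it reproves (via highest weight theory) a fact that is already available by elementary ring-theoretic means. Your handling of the second assertion is correct and is the standard argument: projectives in $\epsilon\mathsf{A}_{\ring}\epsilon\operatorname{-mod}$ are summands of finite free modules over the algebra, Homs between them are $\ring$-module summands of matrix algebras over $\epsilon\mathsf{A}_{\ring}\epsilon$, and a finitely generated direct summand of a free module over the local ring $\ring$ is free.
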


\begin{Cor}\label{Cor:flat_image}
For every $M\in \Ocat_\nu(\A_{\ring'})$ that is flat over
$\ring$ (e.g., $M$ is standardly filtered), we have that
$\Hom_{\A_{\ring'}}(\A_{\ring,\chi},M)$ is a flat $\ring$-module.
\end{Cor}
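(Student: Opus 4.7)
The plan is to deduce the flatness from the idempotent description of the functor $F := \Hom_{\A_{\ring'}}(\A_{\ring,\chi},\bullet)$ given in Proposition~\ref{Prop:deformed_quotient}. First, via the equivalence $\Ocat_\nu(\A_{\ring'})\xrightarrow{\sim}\mathsf{A}_{\ring}\operatorname{-mod}$, an object $M$ corresponds to the $\mathsf{A}_{\ring}$-module that I will also denote by $M$. Part (2) of Proposition~\ref{Prop:deformed_quotient} identifies $F$ with the idempotent truncation $M\mapsto \epsilon M$. The $\ring$-module structure on $F(M)$ coming from the center of $\A_{\ring'}$ (via the identification $\ring\cong \ring'$ by the shift by $\chi$) agrees with the $\ring$-module structure on $\epsilon M$ inherited from $M$, because $\ring$ acts centrally on $\A_{\ring'}$ and hence on $\mathsf{A}_{\ring}$.

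Second, since $\epsilon\in \mathsf{A}_{\ring}$ is an idempotent and its action on $M$ commutes with that of $\ring$, we get a splitting of $\ring$-modules
\[
M = \epsilon M \oplus (1-\epsilon)M.
\]
In particular, $\epsilon M$ is a direct summand of $M$ as an $\ring$-module. Assuming $M$ is flat over $\ring$, any direct summand is flat, so $F(M)\cong \epsilon M$ is flat over $\ring$.

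For the parenthetical example, I note that standardly filtered objects are flat over $\ring$: by the construction described in Section~\ref{SSS_hw_deformed} (applied at $\lambda'$ in place of $\lambda$, using the identification $\ring\cong \ring'$), the standard $\Delta_{\ring'}(p)$ is obtained as the $\nu(\C^\times)$-finite part of the unique flat deformation of $\Delta_{\lambda'}(p)$, hence is flat over $\ring'$ by Lemma~\ref{Lem:deformed_object}. An extension of flat modules is flat, so any object admitting a finite filtration by standards is flat over $\ring$, making the ``e.g.'' legitimate.

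There is no real obstacle here; the corollary is essentially a repackaging of Proposition~\ref{Prop:deformed_quotient}(2) combined with the elementary fact that idempotent direct summands preserve flatness. The only point that requires any care is verifying that the equivalence $\Ocat_\nu(\A_{\ring'})\cong \mathsf{A}_{\ring}\operatorname{-mod}$ is $\ring$-linear in the correct sense, which as indicated is immediate from the centrality of $\ring$.
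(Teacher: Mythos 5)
Your argument is correct and is exactly what the paper intends (the statement is given without proof as a direct consequence of Proposition~\ref{Prop:deformed_quotient}): identify $F$ with $\epsilon\bullet$ via the equivalence $\Ocat_\nu(\A_{\ring'})\cong\mathsf{A}_{\ring}\operatorname{-mod}$, note that $\ring$ acts centrally so $\epsilon M$ is an $\ring$-module direct summand of $M$, and use that a direct summand of a flat module is flat; the parenthetical ``e.g.'' is handled correctly via Lemma~\ref{Lem:deformed_hw}(1) and the fact that extensions of flat modules are flat.
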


\subsection{Main result}
\subsubsection{$\Ocat$-regular parameters}
Recall, Definition \ref{defi:O_regular}, that $\lambda$ is $\Ocat$-regular
if $\Ca_\nu(\A_\lambda)$ is a commutative semisimple algebra of dimension $|X^T|$.

Below we will use the following notation:
$\Ca_\nu(\A^\theta_{\ring}):=\ring\otimes_{\C[\paramq]}\Ca_\nu(\A^\theta_{\paramq})$

\begin{Lem}\label{Lem:O_reg_equivalent}
The following two conditions are equivalent:
\begin{enumerate}
\item $\lambda$ is $\Ocat$-regular,
\item the homomorphism $\Ca_\nu(\A_\lambda)\rightarrow \Ca_\nu(\A_\lambda^\theta)$ is an isomorphism.
\end{enumerate}
\end{Lem}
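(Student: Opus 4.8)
The plan is to unwind both conditions in terms of the fixed-point set $X^T$ and the known structure of $\Ca_\nu(\A^\theta_\lambda)$. Recall from Section \ref{SSS_Cartan_subquotient} that when $X^T$ is finite, the sheaf $\Ca_\nu(\A^\theta_\paramq)$ is just the direct sum of $|X^T|$ copies of $\C[\paramq]$, one for each $T$-fixed point; hence $\Ca_\nu(\A^\theta_\lambda) = \Gamma(\Ca_\nu(\A^\theta_\lambda))$ is a commutative semisimple algebra of dimension exactly $|X^T|$, regardless of $\lambda$. Therefore condition (2) — that $\Ca_\nu(\A_\lambda) \to \Ca_\nu(\A^\theta_\lambda)$ is an isomorphism — immediately forces $\Ca_\nu(\A_\lambda)$ to be commutative semisimple of dimension $|X^T|$, which is precisely condition (1). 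This gives the implication (2)$\Rightarrow$(1) essentially for free.

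For the converse (1)$\Rightarrow$(2), the first step is to produce the homomorphism $\Ca_\nu(\A_\lambda) \to \Ca_\nu(\A^\theta_\lambda)$ itself, which already exists by the discussion in Section \ref{SSS_Cartan_subquotient} (coming from the global section map $\A_\lambda^{\geqslant 0} \to \Gamma(\A^{\theta, \geqslant 0}_\lambda)$ compatible with the relevant ideals). Next I would argue that this homomorphism is always injective: this should follow because the simple objects of $\Ocat_\nu(\A_\lambda)$ are in bijection with $\Irr(\Ca_\nu(\A_\lambda))$, while the simples of $\Ocat_\nu(\A^\theta_\lambda)$ are in bijection with $X^T = \Irr(\Ca_\nu(\A^\theta_\lambda))$, and the global section functor (or rather its compatibility with Verma modules) does not collapse the Cartan subquotient — more concretely one checks the map is injective on associated graded or uses that $\A^{\geqslant 0}_\lambda$ embeds appropriately. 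Then, assuming (1), both source and target are commutative semisimple of the same finite dimension $|X^T|$, so an injective algebra homomorphism between them is automatically an isomorphism.

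The main obstacle I anticipate is establishing injectivity of $\Ca_\nu(\A_\lambda) \to \Ca_\nu(\A^\theta_\lambda)$ cleanly, since a priori the global section functor could have a kernel and one must rule out that the Cartan subquotient sees it. The cleanest route is probably to pass to the completed/deformed picture: use the identification of $\Ca_\nu(\A^\theta_\lambda)$ with $\Gamma$ of the trivial quantization of $X^{\nu(\C^\times)} = X^T$ and compare both algebras with $\Ca_\nu(\A_\paramq)$ via the results of Lemma \ref{Lem:cartan_iso}, localizing near a parameter where the isomorphism is known and then using semicontinuity together with the dimension count in (1) to force the isomorphism at $\lambda$ itself. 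Alternatively, one can argue directly that the kernel of $\Ca_\nu(\A_\lambda)\to\Ca_\nu(\A^\theta_\lambda)$ is a nilpotent ideal (by a support/associated-graded argument on the level of $\A^{\geqslant 0}$) — since under (1) the algebra $\Ca_\nu(\A_\lambda)$ is semisimple it has no nilpotent ideals, so the kernel vanishes, and then the dimension count finishes the proof.
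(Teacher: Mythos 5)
Your $(2)\Rightarrow(1)$ step is exactly the paper's, so I will focus on $(1)\Rightarrow(2)$, where there is a genuine gap. Neither of the two mechanisms you offer for injectivity is justified, and both are in fact false in general. You first claim the map $\Ca_\nu(\A_\lambda)\rightarrow\Ca_\nu(\A_\lambda^\theta)$ is ``always injective'': but for $\Ocat$-singular $\lambda$ the source can have dimension strictly greater than $|X^T|$ while the target always has dimension exactly $|X^T|$, so the map cannot be injective in general. Your alternative claim, that the kernel is always a nilpotent ideal, fails for the same reason: when localization fails, $\Ocat_\nu(\A_\lambda)$ can have more simples than $\Ocat_\nu(\A^\theta_\lambda)$, so $\Ca_\nu(\A_\lambda)$ has more irreducible representations than $\Ca_\nu(\A_\lambda^\theta)$, forcing the kernel to contain a non-nilpotent ideal. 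In either version you are really asserting a statement that is equivalent to what is to be proved, and you would need $(1)$ at the outset to even have a chance --- but you invoke $(1)$ only after asserting these properties, and with $(1)$ in hand it is still not clear how to get them without the deformation argument.

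Your last paragraph is the right instinct: deform over $\ring=\C[\paramq]^{\wedge_\lambda}$ and use Lemma~\ref{Lem:cartan_iso} to get a generic isomorphism. But ``semicontinuity together with the dimension count'' does not close the argument, because a map of free rank-$|X^T|$ $\ring$-modules that is generically an isomorphism need not be an isomorphism (e.g., multiplication by a non-unit). The paper's resolution is specifically \emph{algebraic}: from $(1)$ plus the generic isomorphism one first deduces that $\Ca_\nu(\A_\ring)$ is free of rank $|X^T|$ (the kernel of a surjection $\ring^{|X^T|}\twoheadrightarrow \Ca_\nu(\A_\ring)$ has generic rank $0$, hence is torsion, hence zero in the free module over the domain $\ring$). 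Then, using that $\ring$ is complete local and the special fiber is $\C^{\oplus X^T}$, lift idempotents to get $\Ca_\nu(\A_\ring)\cong\ring^{\oplus X^T}$ \emph{as an algebra}. The map to $\Ca_\nu(\A^\theta_\ring)\cong\ring^{\oplus X^T}$ then decomposes (using the generic isomorphism to see the idempotents biject) into $\ring$-algebra maps $\ring\rightarrow\ring$, and any nonzero such map is the identity because the only idempotents of a domain are $0$ and $1$. This last step --- exploiting the algebra structure rather than just module ranks --- is the essential idea your sketch is missing.
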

\begin{proof}
(2)$\Rightarrow$(1) follows because $\Ca_\nu(\A_\lambda^\theta)$ is a commutative
semisimple algebra of dimension $|X^T|$.

(1)$\Rightarrow$(2): assume (1) holds. Consider
the homomorphism $\Ca_\nu(\A_{\ring})\rightarrow
\Ca_\nu(\A^\theta_{\ring})$. This homomorphism
is an isomorphism over the generic point in $\operatorname{Spec}(\ring)$ by Lemma
\ref{Lem:cartan_iso}. It follows that
$\Ca_\nu(\A_\ring)$ is a free  $\ring$-module of rank $|X^T|$.
Therefore $\Ca_{\nu}(\A_{\ring})\cong \ring^{\oplus X^T}$ as an
algebra. Since the homomorphism $\Ca_\nu(\A_{\ring})\rightarrow
\Ca_\nu(\A^\theta_{\ring})$ is an isomorphism generically, it gives a
bijection between the direct summands in the two algebras. This homomorphism restricts to
nonzero $\ring$-algebra homomorphisms (that a priori do not preserve units)
between the corresponding summands. But every nonzero (not necessarily unital) $\ring$-algebra homomorphism
$\ring\rightarrow \ring$ is an isomorphism.
It follows that $\Ca_\nu(\A_\ring)\rightarrow \Ca_\nu(\A^\theta_\ring)$
is an isomorphism and hence we get (2).
\end{proof}

We let $\paramq^{\Ocat-reg}$ denote the locus of regular parameters and $\paramq^{O-sing}$
denote its complement. Note  that the latter is Zariski closed.
The following is a slightly more precise version of Theorem \ref{Thm:O_regular}
that contains one more statement that will be used below when we deal with type A quiver varieties.

\begin{Thm}\label{Thm:O_regular_precise}
Recall that we assume that $X^T$ is finite.
Let $\lambda$ and $X$ be such that $\Gamma_\lambda^\theta: \Coh(\A_\lambda^\theta)
\rightarrow \A_\lambda\operatorname{-mod}$ is exact. Let $\ring=\C[\param]^{\wedge_\lambda}$.
Suppose that one of the
following two conditions hold:
\begin{enumerate}
\item  $\paramq^{O-sing}\cap \operatorname{Spec}(\ring)$ has codimension at least $2$ in $\operatorname{Spec}(\ring)$,
\item or $\Ca_\nu(\A_\lambda)\twoheadrightarrow \Ca_\nu(\A_\lambda^\theta)$.
\end{enumerate}
Then $\lambda$ is $\Ocat$-regular.
\end{Thm}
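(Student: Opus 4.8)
The plan is to reduce both cases to Lemma \ref{Lem:O_reg_equivalent}, i.e., to showing that the homomorphism $\Ca_\nu(\A_\lambda)\to\Ca_\nu(\A_\lambda^\theta)$ is an isomorphism. Since $\Gamma_\lambda^\theta$ is exact, we are in the situation of Section \ref{SSS_hw_deformed}, so all the structural results about $\Ocat_\nu(\A_\ring)$ apply; in particular, by Corollary \ref{Cor:flat_algebra} the deformed category $\Ocat_\nu(\A_\ring)$ is equivalent to the category of modules over a free finite rank $\ring$-algebra $\mathsf{B}_\ring$. First I would observe, as in the proof of Lemma \ref{Lem:O_reg_equivalent}, that the homomorphism $\Ca_\nu(\A_\ring)\to\Ca_\nu(\A_\ring^\theta)$ is an isomorphism over the generic point of $\operatorname{Spec}(\ring)$ by Lemma \ref{Lem:cartan_iso} (the $\Ocat$-singular locus lies in a union of essential hyperplanes and hence is proper in $\operatorname{Spec}(\ring)$). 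Thus $\Ca_\nu(\A_\ring^\theta)\cong\ring^{\oplus X^T}$ already, and the target is well understood; the entire problem is to control $\Ca_\nu(\A_\ring)$ itself.

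Next I would identify $\Ca_\nu(\A_\ring)$ with an algebra built from $\Ocat_\nu(\A_\ring)$: by the deformed highest weight structure (Lemma \ref{Lem:deformed_hw}) the standard objects $\Delta_\ring(p)$ are $\ring$-flat, their top quotients biject with $X^T$, and $\Ca_\nu(\A_\ring)$ is the ``Cartan'' algebra $\bigoplus_p \operatorname{End}(\Delta_\ring(p))$ together with the off-diagonal maps recording which simples occur at the top of each standard — more precisely $\Ca_\nu(\A_\ring)$ is the endomorphism algebra of $\bigoplus_p \Delta_\ring(p)$ modulo radical-type pieces, so it is a finite $\ring$-algebra whose generic fiber is $\operatorname{Frac}(\ring)^{\oplus X^T}$ and whose special fiber surjects onto $\Ca_\nu(\A_\lambda)$. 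The key point is then that $\Ca_\nu(\A_\ring)$ is torsion-free as an $\ring$-module: this follows because it embeds into (a summand of) $\operatorname{Hom}$ between $\ring$-flat standardly filtered objects, and Corollary \ref{Cor:flat_algebra}/\ref{Cor:flat_image} give that such Hom modules are $\ring$-flat. Combined with generic semisimplicity, this shows $\Ca_\nu(\A_\ring)$ is an $\ring$-lattice inside $\operatorname{Frac}(\ring)^{\oplus X^T}$.

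With $\Ca_\nu(\A_\ring)$ a torsion-free (hence, over the regular local ring $\ring$, reflexive once we control codimension) $\ring$-algebra that is generically split étale of rank $|X^T|$, the two hypotheses enter as follows. Under (1), the failure locus of $\Ca_\nu(\A_\ring)\to\Ca_\nu(\A_\ring^\theta)$ being an isomorphism has codimension $\geqslant 2$ in $\operatorname{Spec}(\ring)$; since $\Ca_\nu(\A_\ring^\theta)\cong\ring^{\oplus X^T}$ is free (hence reflexive) and $\Ca_\nu(\A_\ring)$ is torsion-free, the comparison map is an isomorphism in codimension $1$ by the codimension-$1$ argument of Lemma \ref{Lem:cartan_iso}/the generic statement, and an $\ring$-module map between a torsion-free module and a free module that is an isomorphism away from codimension $2$ is forced to be an isomorphism after passing to reflexive hulls — here the source is already seen to be a sub-$\ring$-module of the free target of full rank, so an equality in codimension $1$ propagates. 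Under (2), the surjectivity $\Ca_\nu(\A_\lambda)\twoheadrightarrow\Ca_\nu(\A_\lambda^\theta)$ combined with the equality of $\ring$-ranks (the special fiber of $\Ca_\nu(\A_\ring)$ surjects onto $\Ca_\nu(\A_\lambda)$, which surjects onto the $|X^T|$-dimensional $\Ca_\nu(\A_\lambda^\theta)$, while the generic rank is exactly $|X^T|$) forces the composite $\Ca_\nu(\A_\ring)\to\Ca_\nu(\A_\ring^\theta)$ to be surjective on the special fiber, hence surjective by Nakayama, hence an isomorphism by rank count; then Lemma \ref{Lem:O_reg_equivalent} finishes. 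In either case $\lambda$ is $\Ocat$-regular.

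\textbf{Main obstacle.} The subtle point I expect to be the main obstacle is the precise flatness/torsion-freeness statement for $\Ca_\nu(\A_\ring)$ and its compatibility with the map to $\Ca_\nu(\A_\ring^\theta)$: one must be careful that $\Ca_\nu(\A_\ring)$ really does inject into an $\ring$-flat module (this is where Proposition \ref{Prop:deformed_quotient} and Corollaries \ref{Cor:flat_algebra}, \ref{Cor:flat_image} are essential, via the realization of $\Ca_\nu$ in terms of Hom's among standardly filtered deformed objects and the idempotent truncation $\epsilon\mathsf{A}_\ring\epsilon$), and that the comparison with $\Ca_\nu(\A_\ring^\theta)$ is a genuine $\ring$-algebra map whose cokernel is $\ring$-torsion (Proposition \ref{Prop:deformed_quotient}(3) / Lemma \ref{Lem:inverse_Morita}). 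Once that structural input is in hand, the codimension-$1$ propagation argument in case (1) and the Nakayama + rank argument in case (2) are routine.
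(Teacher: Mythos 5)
Your overall strategy — pass to the deformed Cartan subquotient $\Ca_\nu(\A_\ring)$ over $\ring=\C[\param]^{\wedge_\lambda}$, show it is a finite flat (or at least torsion-free) $\ring$-module of generic rank $|X^T|$, and then deduce the theorem in case (1) by a determinant-divisor/codimension-two argument and in case (2) by a Nakayama plus rank argument — is exactly the right shape, and the two closing arguments you sketch would be correct once the flatness is in hand. You have also correctly identified where the difficulty lies. But the step you flag as ``the main obstacle'' is precisely the place where your argument has a genuine gap, and your proposed way around it does not work.

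The claim that ``$\Ca_\nu(\A_\ring)$ embeds into (a summand of) $\operatorname{Hom}$ between $\ring$-flat standardly filtered objects'' is not justified and is essentially equivalent to what you are trying to prove. The universal Verma $\Delta_\ring(\Ca_\nu(\A_\ring))=\A_\ring/\A_\ring\A_\ring^{>0}$ is \emph{not} standardly filtered in $\Ocat_\nu(\A_\ring)$ unless $\Ca_\nu(\A_\ring)$ is already split semisimple over $\ring$, which is the conclusion, not a hypothesis; and $\Ca_\nu(\A_\ring)$ is a quotient, not a subobject, of $\A_\ring^{\geqslant 0}$, so there is no a priori torsion-freeness. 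What one can say cheaply is that for the \emph{top} $h$-eigenvalue $\alpha_1$, the functor $M\mapsto M^{\alpha_1}$ is represented by $\Delta_\ring(\Ca_\nu(\A_\ring)^{\alpha_1})$, which is therefore projective, and $\End$ of a projective is free over $\ring$ by Corollary \ref{Cor:flat_algebra}; this gives freeness of the single weight space $\Ca_\nu(\A_\ring)^{\alpha_1}$, not of the whole algebra. To push this down to $\Ca_\nu(\A_\ring)^{\alpha_i}$ for $i>1$, the paper has to work inside the Serre subcategory $\Ocat_{\geqslant i}$ spanned by modules with weights $\leqslant \alpha_j$ for $j\geqslant i$, show (by an induction on $i$) that the quotient functor $F=\Hom(\A_{\ring,\chi},-)$ and its adjoint $G$ restrict to this truncation, that $(X^T)_{\geqslant i+1}$ is a coideal for the coarsest highest weight order on $\Ocat_\nu(\A_{\ring'})$ (this uses Lemma \ref{Lem:stand_costand}, i.e.\ equality of $K$-classes of $\Delta$ and $\nabla$), and only then apply the freeness-of-$\End$-of-projectives argument inside $\Ocat_{\geqslant i}$. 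Your proposal collapses this entire induction into a single sentence about Hom spaces, and there is no direct argument of the kind you describe: the flatness of $\Ca_\nu(\A_\ring)$ really does have to be established eigenvalue-by-eigenvalue, with the inductive bookkeeping on the quotient functor and on the highest-weight order carried along.

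A smaller point: your description of $\Ca_\nu(\A_\ring)$ as ``the endomorphism algebra of $\bigoplus_p\Delta_\ring(p)$ modulo radical-type pieces'' is not a definition that can be used here, since the objects $\Delta_\ring(p)$ live a priori in $\Ocat_\nu(\A_{\ring'})$ (after translating by an ample $\chi$) and the very meaning of ``which simples occur at the top of each standard'' presupposes the semisimplicity of the Cartan algebra you are trying to prove. Once the inductive flatness argument is in place, your treatment of cases (1) and (2) is fine, and in fact matches the way the paper finishes: the failure locus of $\Ca_\nu(\A_\ring)^{\alpha_i}\to\Ca_\nu(\A^\theta_\ring)^{\alpha_i}$ between free modules of equal rank is the vanishing of a determinant, hence a divisor, which is empty under (1); and (2) gives surjectivity at the closed point, hence surjectivity, hence an isomorphism by rank.
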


\subsubsection{Notation}
Here we introduce the notation we need in our proof of Theorem \ref{Thm:O_regular_precise}.
We consider the quotient functor $F:\Ocat_\nu(\A_{\ring'})
\rightarrow \Ocat_\nu(\A_{\ring})$ given by $\Hom_{\A_{\ring'}}(
\A_{\ring,\chi},\bullet)$. Recall, Proposition
\ref{Prop:deformed_quotient}, that under the identification
of  $\Ocat_\nu(\A_{\ring'})$ with $\mathsf{A}_{\ring}\operatorname{-mod}$
the functor $F$ becomes $\epsilon\bullet$ for an idempotent $\epsilon\in \mathsf{A}_{\ring}$. Let $G$ denote the left adjoint
functor so that $F\circ G=\operatorname{id}$. By (3) of Proposition \ref{Prop:deformed_quotient},
the functor $F$ is an equivalence after based change to $\operatorname{Frac}\ring$.

Recall the element $h\in \A_{\ring}$. Let $\alpha_1,\ldots,\alpha_k$ be its
eigenvalues in $\Ca_{\nu}(\A_\lambda)$. We order these elements in such a way that
$\alpha_i-\alpha_j\in \Z_{>0}\Rightarrow i<j$. As argued in the proof of
Lemma \ref{Lem:deformed_weights},  we can decompose
$\Ca_{\nu}(\A_\ring)$ into the direct sum
$\Ca_{\nu}(\A_\ring)=\bigoplus_{i=1}^k \Ca_{\nu}(\A_\ring)^{\alpha_i}$.
Similarly, $\Ca_{\nu}(\A^\theta_\ring)=\bigoplus_{i=1}^k \Ca_{\nu}(\A^\theta_\ring)^{\alpha_i}$. The homomorphism
$\Ca_\nu(\A_\ring)\rightarrow
\Ca_\nu(\A^\theta_\ring)$ decomposes into the direct sum
of homomorphisms $\Ca_{\nu}(\A_\ring)^{\alpha_i}
\rightarrow \Ca_{\nu}(\A^\theta_\ring)^{\alpha_i}$ for $i=1,\ldots,k$.

Recall, Corollary \ref{Cor:deformed_wt_decomp},  that we have a direct sum decomposition $M=\bigoplus_{\beta\in \C}M^\beta$
for all modules $M\in \Ocat_\nu(\A_\ring)$. For $i=1,\ldots,k$, we consider the subcategory
$\Ocat_\nu(\A_\ring)_{\geqslant i}$ consisting of all objects
$M$ such that $M^\beta\neq \{0\}$ implies $\beta-\alpha_j\in \Z_{\leqslant 0}$ for
some $j\geqslant i$. For notational convenience, we also set
$\Ocat_\nu(\A_\ring)_{\geqslant 0}:=
\Ocat_\nu(\A_\ring)_{\geqslant 1}(=\Ocat_\nu(\A_\ring))$.

Recall also that the simple objects in $\Ocat_{\nu}(\A_{\lambda'})$ are labelled by $X^T$
via the equivalence with $\Ocat_\nu(\A_\lambda^\theta)$. The category $\Ocat_\nu(\A_{\ring'})$
is highest weight, and $F: \Ocat_\nu(\A_{\ring'})\twoheadrightarrow \Ocat_\nu(\A_\ring)$ is an equivalence generically. Let $(X^T)_{\geqslant i}$ denote
set of labels that correspond to the simples in the base change of
$\Ocat_\nu(\A_\ring)_{\geqslant i}$ to $\operatorname{Frac}(\ring)$.

\subsubsection{Proof}
\begin{proof}[Proof of Theorem \ref{Thm:O_regular_precise}]
In the proof we will establish the following three claims by ascending induction on $i=0,\ldots,k$.
\begin{itemize}
\item[($A_i$)] The homomorphism $\Ca_\nu(\A_\ring)^{\alpha_i}\rightarrow
\Ca_\nu(\A^\theta_\ring)^{\alpha_i}$ is an isomorphism (for $i=0$, we assume
that both algebras are zero, so the claim is tautologically true).
\item[($B_i$)] $(X^T)_{\geqslant i+1}$ is a poset in the coarsest highest weight poset
for $\Ocat_\nu(\A_\lambda^\theta)\xrightarrow{\sim}\Ocat_\nu(\A_{\lambda'})$ (the coarsest highest weight posets were described in Section
\ref{SSS_HW_struct}).
\end{itemize}
Thanks to ($B_{i-1}$) it makes sense to define the highest weight subcategory
$\Ocat_\nu(\A_{\ring'})_{\geqslant i}$ spanned by the standard
objects labelled by $(X^T)_{\geqslant i}$.
\begin{itemize}
\item[($C_i$)] The functors  $F,G$ map between $\Ocat_\nu(\A_{\ring'})_{\geqslant i+1}$
and $\Ocat_\nu(\A_\ring)_{\geqslant i+1}$.
In particular, $F$ gives a quotient functor $\Ocat_\nu(\A_{\ring'})_{\geqslant i+1}
\rightarrow \Ocat_\nu(\A_\ring)_{\geqslant i+1}$.
\end{itemize}
Note that ($A_0$) and ($B_0$) are tautological and ($C_0$) is Proposition
\ref{Prop:deformed_quotient}. On the other hand, ($A_1$),\ldots,($A_k$) is what we need
to prove.

Below we will prove that
\begin{itemize}
\item
($C_{i-1}$) implies ($A_i$),
\item ($C_{i-1}$) and  ($A_i$) imply ($B_i$),
\item and  ($C_{i-1}$),($A_i$) and ($B_i$)
imply ($C_i$).
\end{itemize}

{\it Proof of $(C_{i-1})\Rightarrow (A_i)$}. Set $\alpha:=\alpha_i$.
We will write $\Ocat_{\geqslant i}$ for $\Ocat_{\nu}(\A_\ring)_{\geqslant i}$.
First,  we claim that
the module $\Delta_\ring(\Ca_\nu(\A_\ring)^\alpha)$, which is
clearly an object of $\Ocat_{\geqslant i}$,
is projective in that category. More precisely, we claim that
\begin{equation}\label{eq:Hom_description}
\Hom_{\Ocat_{\geqslant i}}(\Delta_\ring(\Ca_\nu(\A_\ring)^\alpha),M)
\cong M^\alpha.
\end{equation}
Indeed,
$$\Hom_{\Ocat_{\geqslant i}}(\Delta_\ring(\Ca_\nu(\A_{\ring})_\alpha),M)
\cong \Hom_{\Ca_\nu(\A_\ring)}(\Ca_\nu(\A_\ring)_\alpha,
M^{\A_{\ring}^{>0}}). $$
The image of $1\in \Ca_\nu(\A_\ring)_\alpha$ in $M$ must lie in
$M^\alpha$. Since $M$ has no weights in $\alpha+\Z_{>0}$,
the whole module $M^\alpha$ is annihilated by $\A_\ring^{>0}$, and
(\ref{eq:Hom_description}) follows.

Thanks to ($C_{i-1}$), compare to  Corollary \ref{Cor:flat_algebra}, we see
that the endomorphism algebra
of any projective object in $\Ocat_{\geqslant i}$ is free over $\ring$.
(\ref{eq:Hom_description}) implies that  $\End(\Delta_\ring(\Ca_\nu(\A_\ring)^\alpha))=\Ca_{\nu}(\A_\ring)^\alpha$. So $\Ca_\nu(\A_\ring)^\alpha$ is free over $\ring$.

Now we are ready to establish ($A_i$). The $\ring$-modules $\Ca_{\nu}(\A_\ring)^\alpha$
and $\Ca_\nu(\A^\theta_\ring)^\alpha$ are free. Over $\operatorname{Frac}\ring$, the algebras
$\Ca_{\nu}(\A_\ring)^\alpha$
and $\Ca_\nu(\A^\theta_\ring)^\alpha$ are isomorphic.  So they are of
the same rank.
The locus where a homomorphism of such modules fails to be an isomorphism is a divisor given
by the  determinant of the homomorphism. If the locus has codimension at least two, then it is empty.
We also easily see that (2)  implies ($A_i$) as well.

{\it Proof of ($C_{i-1}$)\&($A_i$)$\Rightarrow$($B_i$)}.  Take an indecomposable idempotent
$\varepsilon\in \Ca_\nu(\A_\lambda)^\alpha\cong \Ca_\nu(\A^\theta_{\lambda})^\alpha$ and
deform it to an idempotent in $\Ca_\nu(\A_\ring)^\alpha$ to be also
denoted by $\varepsilon$.  Note that $\varepsilon$ corresponds to a point
in $(X^T)_{\geqslant i}\setminus (X^T)_{\geqslant i+1}$, denote it by $p=p(\epsilon)$.

Set $\Ocat'_{\geqslant i}:=\Ocat_\nu(\A_{\ring'})_{\geqslant i}$, this is
a highest weight category over $\ring$ by Lemma \ref{Lem:deformed_hw}.
The Verma module $\Delta_\ring(\Ca_\nu(\A_\ring)\varepsilon)$
is projective in $\Ocat_{\geqslant i}$. By ($C_{i-1}$),  there is an indecomposable projective in
$\Ocat'_{\geqslant i}$, say $P_\epsilon$, such that $F(P_\epsilon)=\Delta_\nu(\Ca_\nu(\A_\ring)\varepsilon)$.
Recall that $F$ is an equivalence over $\operatorname{Frac}\ring$.
Moreover, thanks to (2) of \cite[Proposition 4.11]{catO_charp}, we have
$$G(\Delta_{\operatorname{Frac}\ring}(p'))=\A_{\operatorname{Frac}\ring,\chi}
\otimes_{\A_{\operatorname{Frac}}\ring}\Delta_{\operatorname{Frac}\ring}(p')\cong
\Delta_{\operatorname{Frac}\ring'}(p'), \forall p'\in X^T.$$
It follows that $P_\epsilon=\Delta_{\ring'}(p)$. This is true for all
$p\in (X^T)_i:=(X^T)_{\geqslant i}\setminus (X^T)_{\geqslant i+1}$.

Now we prove ($B_i$), equivalently, that $(X^T)_i$
is a poset coideal in $(X^T)_{\geqslant i}$. Since the classes of standard and costandard
objects in $\Ocat_\nu(\A_{\lambda'})$ coincide, see Lemma \ref{Lem:stand_costand}, our condition
follows from the fact that each $\Delta_{\lambda'}(p)$ with $p\in (X^T)_i$
is projective. ($B_i$) is proved.

{\it Proof of ($C_{i-1}$)\&($A_i$)\&($B_i$)$\Rightarrow$ ($C_i$)}. We need to prove
that $F:\Ocat'_{\geqslant i}\rightarrow \Ocat_{\geqslant i}$ sends
$\Ocat'_{\geqslant i+1}$ to $\Ocat_{\geqslant i+1}$, while
$G: \Ocat_{\geqslant i}\rightarrow \Ocat'_{\geqslant i}$ sends
$\Ocat_{\geqslant i+1}$ to $\Ocat'_{\geqslant i+1}$.

First, we prove that $F(\Ocat'_{\geqslant i+1})\subset \Ocat_{\geqslant i+1}$.
Note that $\Ocat'_{\geqslant i+1}$ consists of all objects in $\Ocat'_{\geqslant i}$
that admit no nonzero
Hom's from the projectives $\Delta_{\ring'}(p)$ for $p\in (X^T)_i$, and
similarly, $\Ocat'_{\geqslant i+1}$ consists of all objects that admit no
nonzero Hom's from the projectives $\Delta_{\ring'}(p)$. Note that
$G$ maps indecomposable projectives in $\Ocat_{\geqslant i}$ to indecomposable
projectives in $\Ocat'_{\geqslant i}$. As was noted in the previous part of
the proof, over $\operatorname{Frac}\ring$, the object $G(\Delta_\ring(p))$
coincides with $\Delta_{\ring'}(p)$. It follows that
$G(\Delta_\ring(p))=\Delta_{\ring'}(p)$. By adjointness,
$F$ sends  $\Ocat'_{\geqslant i+1}$ to $\Ocat_{\geqslant i+1}$.

Now we prove that $G(\Ocat_{\geqslant i+1})\subset\Ocat'_{\geqslant i+1}$.
Let $G_\lambda,F_\lambda$ denote the specializations of $G$ to the closed point in
$\operatorname{Spec}\ring$ and let $\Ocat_{\geqslant i,\lambda},\Ocat'_{\geqslant i,\lambda}$
denote the specializations of the categories. So $F_\lambda: \Ocat'_{\geqslant i,\lambda}
\rightarrow \Ocat_{\geqslant i,\lambda}$ is a Serre quotient functor and
$G_\lambda$ is its left adjoint (and right inverse). Note that
$F_\lambda(L_{\lambda'}(p))\neq 0$ for all $p\in (X^T)_i$. This is because $\Delta_{\lambda'}(p)$
is the projective cover of $L_{\lambda'}(p)$ and, by the previous part of the proof,
we have that
 $F_\lambda(\Delta_{\lambda'}(p))$  is the
indecomposable projective $\Delta_\lambda(p)$. This also implies that
$F_\lambda(L_{\lambda'}(p))\not\in \Ocat_{\geqslant i+1,\lambda}$.

So the simples in $\Ocat_{\geqslant i+1,\lambda}$
are nonzero objects of the form $F_\lambda(L)$, where $L$ is a simple in
$\Ocat'_{\geqslant i+1,\lambda}$. For such $L$, the object $G_\lambda(F_\lambda(L))$
surjects onto $L$ and the kernel is annihilated by $F_\lambda$. Therefore the
kernel lies in   $\Ocat'_{\geqslant i+1,\lambda}$ and hence
$G_\lambda$ sends every simple in $\Ocat_{\geqslant i+1,\lambda}$ to
$\Ocat'_{\geqslant i+1,\lambda}$. Hence $G_\lambda(\Ocat_{\geqslant i+1,\lambda})
\rightarrow \Ocat'_{\geqslant i+1,\lambda}$.

We deduce that $G$ maps every object in $\Ocat_{\geqslant i+1}$ that is annihilated
by $\mathfrak{m}^k$ to $\Ocat'_{\geqslant i+1}$. Since $\Ocat_{\geqslant i+1},\Ocat'_{\geqslant i+1}$
are equivalent to  categories of modules over finite $\ring$-algebras, this implies that $G(\Ocat_{\geqslant i+1})
\subset \Ocat'_{\geqslant i+1}$ and completes the proof.
\end{proof}

\subsubsection{Comments}
Suppose that $\Gamma^\theta_\lambda$ is exact and $\lambda\in \paramq^{\Ocat-reg}$.
In particular,  $\Gamma_\lambda^\theta:\Ocat_\nu(\A_\lambda^\theta)\rightarrow
\Ocat_\nu(\A_\lambda)$ is a quotient functor between categories with the same
number of simples. So it is an equivalence.  This equips $\Ocat_\nu(\A_\lambda)$
with a highest weight structure and we would like to describe its order and
standard objects. To $p\in X^T$ we assign the image of $h$ in the quotient
of $\Ca_\nu(\A_\lambda)=\Ca_\nu(\A_\lambda^\theta)$ associated to $p$, to be denoted
by $\alpha(p)$. Set $p<_\lambda p'$  if $h_{p'}(\lambda)-h_p(\lambda)\in \Z_{>0}$.

The following claim has been established in the proof of Theorem \ref{Thm:O_regular_precise}.

\begin{Thm}\label{Thm:hw}
Suppose that $\Gamma^\theta_\lambda$ is exact and $\lambda\in \paramq^{\Ocat-reg}$.
Then the order $<_\lambda$ is a highest weight order for the highest weight category
$\Ocat_\nu(\A_\lambda)$. Moreover, the standard objects are $\Delta_\lambda(p)$,
where $p$ runs over $X^T$.
\end{Thm}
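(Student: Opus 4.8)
The statement to be proved is Theorem \ref{Thm:hw}: under the assumptions that $\Gamma^\theta_\lambda$ is exact and $\lambda\in\paramq^{\Ocat-reg}$, the category $\Ocat_\nu(\A_\lambda)$ is highest weight with order $<_\lambda$ and standard objects $\Delta_\lambda(p)$, $p\in X^T$.

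\textbf{The plan.} The proof is to extract what was already built inside the proof of Theorem \ref{Thm:O_regular_precise}. First I would note that, by Lemma \ref{Lem:O_reg_equivalent}, the $\Ocat$-regularity of $\lambda$ is equivalent to $\Ca_\nu(\A_\lambda)\xrightarrow{\sim}\Ca_\nu(\A_\lambda^\theta)$, and the latter algebra is commutative semisimple with $|X^T|$ idempotents, one for each fixed point $p$; this gives the labelling of $\Irr(\Ocat_\nu(\A_\lambda))$ by $X^T$ and the scalars $\alpha(p)=h_p(\lambda)$. Next, since $\Gamma^\theta_\lambda$ is exact it is a Serre quotient functor, and restricted to categories $\Ocat$ it is a quotient functor $\Ocat_\nu(\A_\lambda^\theta)\to\Ocat_\nu(\A_\lambda)$ between categories with the same number of simples — hence an equivalence (this is exactly the argument recorded just before the statement of Theorem \ref{Thm:hw} in Section \ref{SS_O_reg_intro} and at the end of Section on $\Ocat$-regular parameters). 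Identifying $\Ocat_\nu(\A_\lambda^\theta)$ with $\Ocat_\nu(\A_{\lambda'})$ for $\chi=\lambda'-\lambda$ deep in the chamber of $\theta$ via $\Gamma(\A^\theta_{\lambda,\chi}\otimes_{\A^\theta_\lambda}\bullet)$ (Corollary \ref{Cor:abel_loc_bimod}), and using Corollary \ref{Cor:hw} together with Corollary \ref{Cor:full_embedding}, the category $\Ocat_\nu(\A_{\lambda'})$ is highest weight with standard objects $\Delta_{\lambda'}(p)$. Transporting the highest weight structure through the equivalence $\Gamma^\theta_\lambda$ then makes $\Ocat_\nu(\A_\lambda)$ highest weight with standard objects $\Gamma^\theta_\lambda(\Delta_{\lambda'}(p))$.

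\textbf{Identifying the standard objects and the order.} The content beyond the formal transport is two identifications. First, I must check $\Gamma^\theta_\lambda(\Delta_{\lambda'}(p))\cong\Delta_\lambda(p)$. This was proved inside the proof of Theorem \ref{Thm:O_regular_precise}: in the deformed setting one shows $F_\lambda(\Delta_{\lambda'}(p))$ is the indecomposable projective $\Delta_\lambda(p)$ for $p\in(X^T)_i$, working through all strata $i=1,\ldots,k$; specializing at $\lambda$ and assembling the strata gives $\Gamma^\theta_\lambda(\Delta_{\lambda'}(p))\cong\Delta_\lambda(p)$ for all $p$. Concretely, $\Gamma^\theta_\lambda$ is the specialization at the closed point of the deformed functor $F=\Hom_{\A_{\ring'}}(\A_{\ring,\chi},\bullet)$, and Proposition \ref{Prop:deformed_quotient} together with the identity $G(\Delta_{\operatorname{Frac}\ring}(p))\cong\Delta_{\operatorname{Frac}\ring'}(p)$ from \cite[Proposition 4.11]{catO_charp} pins down $F(\Delta_{\ring'}(p))=\Delta_\ring(p(\varepsilon))$, which specializes correctly because $\Delta_\ring(p)$ is $\ring$-flat. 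Second, the highest weight order: the equivalence $\Gamma^\theta_\lambda$ carries the order $<_{\lambda'}$ (defined by $h_{p'}(\lambda')-h_p(\lambda')\in\Z_{>0}$) to the order $<_\lambda$ on $\Ocat_\nu(\A_\lambda)$ because $\Gamma^\theta_\lambda$ commutes with the grading element $h$ — indeed $h$ lies in $\A_{\paramq}$ and acts compatibly on both categories — so the $h$-weight of the image is determined by $\alpha(p)$ as computed in $\Ca_\nu(\A_\lambda)$, and $h_p(\lambda)$ differs from $h_p(\lambda')$ by the constant $\langle d_1\nu,\chi\rangle$ (a fixed shift, independent of $p$), hence $<_\lambda$ and $<_{\lambda'}$ agree after the identification. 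Therefore $<_\lambda$ is a valid highest weight order.

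\textbf{Main obstacle.} The genuinely delicate point is the identification $\Gamma^\theta_\lambda(\Delta_{\lambda'}(p))\cong\Delta_\lambda(p)$ for \emph{all} $p$ simultaneously, not merely for $p$ in the open stratum. Without $\Ocat$-regularity one only knows $\Gamma^\theta_\lambda$ is a quotient functor, and the image of a standard object need not be standard. The inductive stratification by the eigenvalues $\alpha_1,\ldots,\alpha_k$ of $h$ (the chain of subcategories $\Ocat_\nu(\A_\ring)_{\geqslant i}$ and the claims $(A_i),(B_i),(C_i)$ in the proof of Theorem \ref{Thm:O_regular_precise}) is precisely the device that handles this: at each stratum, $\Ocat$-regularity forces $\Ca_\nu(\A_\ring)^{\alpha_i}\xrightarrow{\sim}\Ca_\nu(\A^\theta_\ring)^{\alpha_i}$, which upgrades the corresponding Verma-type projective in the quotient to an \emph{indecomposable} projective, forcing its preimage under $F$ to be the standard $\Delta_{\ring'}(p)$. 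So the proof of Theorem \ref{Thm:hw} is essentially a corollary: I would state that the assertion was established in the course of the proof of Theorem \ref{Thm:O_regular_precise} (specifically in the steps proving $(C_{i-1})\&(A_i)\Rightarrow(B_i)$), specialize the deformed statements at the closed point $\lambda$, and note that the order statement follows from the $h$-equivariance of the quotient equivalence together with Lemma \ref{Lem:stand_costand} (which guarantees $<_\lambda$, generated by $\rightsquigarrow$, is a genuine highest weight order since standards and costandards have equal $K_0$-classes).
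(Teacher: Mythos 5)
Your final answer --- defer to the inductive argument inside the proof of Theorem \ref{Thm:O_regular_precise} and specialize the deformed statements at the closed point $\lambda$ --- is exactly what the paper does; its ``proof'' of Theorem \ref{Thm:hw} is the single sentence pointing back to the proof of Theorem \ref{Thm:O_regular_precise}. So the method matches and the $\Gamma^\theta_\lambda(\Delta_{\lambda'}(p))\cong\Delta_\lambda(p)$ identification you extract from the $(B_i)$-step is the right thing.

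However, your subsidiary justification for the order statement has a genuine error. You claim $h_p(\lambda+\chi)-h_p(\lambda)=\langle d_1\nu,\chi\rangle$ is independent of $p$, hence $<_\lambda$ and $<_{\lambda'}$ agree after the identification. This is false: $h_p$ is affine in $\lambda$ with $p$-dependent linear part. Already for $X=T^*\mathbb{P}^1$ with $\paramq=\h^*\cong\C$ the two fixed points give $h_p(\lambda)\in\{\lambda-1,\ -\lambda-1\}$, so the slopes are $+1$ and $-1$. In general the slope is governed by the $T$-weight of $\Str(\chi)$ at $p$, which varies with $p$. Consequently $<_\lambda$ and $<_{\lambda'}$ can be different partial orders, and asserting their agreement is both wrong and unnecessary. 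What the paper's proof of $(B_i)$ actually establishes is that the stratification of $X^T$ by the $h$-eigenvalues $\alpha_1,\dots,\alpha_k$ \emph{computed at $\lambda$} is compatible with the coarsest highest weight poset of $\Ocat_\nu(\A_{\lambda'})$ (via $\Delta_{\lambda'}(p)$ with $p\in (X^T)_i$ being projective in $\Ocat_\nu(\A_{\ring'})_{\geqslant i}$, combined with Lemma \ref{Lem:stand_costand}); this is strictly weaker than equality of the two orders, and it is exactly what makes $<_\lambda$ a valid highest weight order for $\Ocat_\nu(\A_\lambda)$ once the structure is transported through the equivalence. You should drop the constant-shift claim and quote the $(B_i)$'s directly, as your final paragraph in fact does.
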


Before, this was known for $\lambda$ outside an unspecified finite union of essential
hyperplanes.

Also let us point out that it looks likely that for each $\lambda\in \paramq$, there is $\theta$ such that
$\Gamma^\theta_\lambda$ is exact. This, together with Theorem \ref{Thm:O_regular_precise},
that $\paramq^{O-sing}\subset \paramq$ has pure codimension $1$ and, therefore,
is the union of some essential hyperplanes.

\subsection{Reduction to rank $1$}
We are interested in determining essential but non-singular hyperplanes whose Weil generic points
are $\Ocat$-singular. Fix such an essential hyperplane, $\tilde{\Upsilon}$. Let $\Upsilon$ be the
classical wall parallel to $\tilde{\Upsilon}$.

We use the notation of Section \ref{SSS_conical_slices}. Pick an element $\zeta\in \Upsilon$
as in Proposition \ref{Prop:rank1}. The torus $T$
acts on $X_{\zeta}$, the fixed points are in a natural bijection with $X^T$.
So the number of fixed points in $Y_{\zeta}$ is also finite: let $y_1,\ldots,
y_k$ denote these fixed points.  We can form the corresponding slice resolutions,
$\underline{X}_1,\ldots,\underline{X}_k$. Set $\underline{\param}_i:=H^2(\underline{X}_i,\C)$
and let $\eta_i$ denote the pullback map $\param\rightarrow \underline{\param}_i$.

\begin{Prop}\label{Prop:O_reg_rk1}
For an essential non-singular
hyperplane $\tilde{\Upsilon}\subset \paramq$ parallel to $\Upsilon$,  the following two conditions are equivalent:
\begin{enumerate}
\item $\tilde{\Upsilon}\subset \paramq^{O-sing}$,
\item For some $i=1,\ldots,k$, the point $\eta_i(\tilde{\Upsilon})\in \underline{\paramq}_i$
is $\Ocat$-singular.
\end{enumerate}
\end{Prop}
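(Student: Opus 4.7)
The plan is to reduce both conditions to statements at a Weil generic $\lambda \in \tilde{\Upsilon}$ and then establish a direct sum decomposition of $\Ca_\nu(\A_\lambda)$ indexed by the torus fixed points $y_1,\dots,y_k$ of $Y_\zeta$, with each summand naturally identified with the Cartan subquotient of the slice algebra $\underline{\A}_{\eta_i(\lambda)}$. First, since $\paramq^{\Ocat-sing}$ is Zariski closed and $\tilde{\Upsilon}$ is irreducible, $\tilde{\Upsilon} \subset \paramq^{\Ocat-sing}$ is equivalent to $\Ocat$-singularity at a Weil generic $\lambda \in \tilde{\Upsilon}$. Moreover $\eta_i(\tilde{\Upsilon})$ is a single point of $\underline{\paramq}_i$, since $\ker\eta_i = \Upsilon$ by Proposition \ref{Prop:rank1}, so the condition on each slice is unambiguous. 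For a Weil generic $\lambda$, nonsingularity of $\tilde{\Upsilon}$ means $\lambda$ avoids every singular hyperplane, so by Lemma \ref{Lem:ab_loc_Weil_generic} abelian localization holds for some $\A_\lambda^\theta$; hence $\Ca_\nu(\A_\lambda^\theta)=\bigoplus_{p\in X^T}\C_p$ and the test of $\Ocat$-regularity reduces, by Lemma \ref{Lem:O_reg_equivalent}, to showing the map $\Ca_\nu(\A_\lambda)\to \Ca_\nu(\A_\lambda^\theta)$ is an isomorphism. Corollary \ref{Cor:loc_slice} transfers abelian localization to each slice, so the analogous test applies at each $\eta_i(\lambda)$.

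Next I would establish the decomposition
$$\Ca_\nu(\A_\lambda)\cong \bigoplus_{i=1}^k \Ca_{\underline{\nu}}(\underline{\A}_{\eta_i(\lambda)})$$
compatibly with the sheaf-side decomposition
$$\Ca_\nu(\A_\lambda^\theta)=\bigoplus_i \bigoplus_{p\in (\underline{X}_i)^T}\C_p=\bigoplus_i \Ca_{\underline{\nu}}(\underline{\A}_{\eta_i(\lambda)}^{\underline{\theta}}),$$
which is immediate from the stratification $X^T=\bigsqcup_i (\underline{X}_i)^T$ via the projection $X\to Y$. For the algebraic side the input is the isomorphism of completions
$$R_\hbar(\A_\lambda)^{\wedge_{y_i}}\cong R_\hbar(\mathbb{A}_i\otimes \underline{\A}_{\eta_i(\lambda)})^{\wedge_0}$$
of Section \ref{SSS_slice_quant}, where $\mathbb{A}_i$ is the Weyl algebra on $T_{y_i}\mathcal{L}_i$. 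The $T$-action on $\A_\lambda$ near $y_i$ matches the product of the Hamiltonian $T$-actions on the two tensor factors, and since $\nu$ is generic its weights on $T_{y_i}\mathcal{L}_i$ are all nonzero, so $\Ca_\nu(\mathbb{A}_i)=\C$ and the Cartan subquotient is compatible with the tensor factorization. Assembling these local identifications produces the required global decomposition, and compatibility with the map to $\Ca_\nu(\A_\lambda^\theta)$ reduces the equivalence (1)$\Leftrightarrow$(2) to the triviality that a direct sum of algebra maps between finite-dimensional commutative algebras is an isomorphism iff each summand is.

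The main obstacle is converting the purely formal isomorphism of completions into a bona fide decomposition of the global finite-dimensional algebra $\Ca_\nu(\A_\lambda)$: one needs genuine orthogonal idempotents in $\Ca_\nu(\A_\lambda)$ corresponding to the components $y_i$ of $(Y_\zeta)^T$, not merely an isomorphism after completion. The natural route is to view the universal Verma $\A_\lambda/\A_\lambda \A_\lambda^{>0}$ (whose Cartan-$\nu$-invariants compute $\Ca_\nu(\A_\lambda)$) as a Harish-Chandra-type object and apply the monoidal, exact restriction functors $\bullet_{\dagger,y_i}$ from Section \ref{SSS_res_fun}; combined with the fact that the support of $\Ca_\nu(\A_\lambda)$ in $Y_\zeta$ is concentrated at the isolated points $y_1,\dots,y_k$ (since each $h$-generalized eigenspace must sit over a $T$-fixed point of $Y_\zeta$), this should yield the orthogonal idempotent decomposition. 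The delicate verifications are that $\Ca_\nu$ commutes with $\bullet_{\dagger,y_i}$ and that the Weyl-algebra factor contributes only $\C$ to the Cartan subquotient in the slice decomposition; both are local computations in the formal neighborhood of $y_i$ and should follow from the explicit form of the isomorphism above together with genericity of $\nu$.
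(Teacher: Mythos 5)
You correctly identify the key geometric input (the slice factorization near each $y_i$ and the bijection $X^T=\bigsqcup_i(\underline{X}_i)^T$) and the key formal isomorphism of completions, and you correctly flag as ``the main obstacle'' the passage from a formal isomorphism to a genuine direct sum decomposition of the finite-dimensional algebra $\Ca_\nu(\A_\lambda)$. However, the route you propose to close that gap does not work, and the paper does something materially different.

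Your proposed decomposition $\Ca_\nu(\A_\lambda)\cong\bigoplus_{i=1}^k\Ca_{\underline{\nu}}(\underline{\A}_{\eta_i(\lambda)})$ for a \emph{fixed} $\lambda\in\tilde{\Upsilon}$ is precisely what the paper declines to establish, and for good reason: $\Ca_\nu(\A_\lambda)$ is a finite-dimensional $\C$-algebra, not naturally a sheaf on $Y_\zeta$, so the phrase ``the support of $\Ca_\nu(\A_\lambda)$ in $Y_\zeta$ is concentrated at $y_1,\ldots,y_k$'' has no direct meaning, and the restriction functors $\bullet_{\dagger,y_i}$ of Section \ref{SSS_res_fun} are built for HC bimodules over $\A_\paramq$, not for subquotients like $\Ca_\nu(\A_\lambda)$ or for the one-sided module $\A_\lambda/\A_\lambda\A_\lambda^{>0}$. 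There is no reason for orthogonal idempotents labelled by the $y_i$ to exist in $\Ca_\nu(\A_\lambda)$ itself.

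The paper sidesteps this entirely. Instead of decomposing $\Ca_\nu(\A_\lambda)$, it works with the one-parameter Rees family $R_\hbar(\Ca_\nu(\A_{\tilde{\Upsilon}}))$ over $\C[\tilde{\Upsilon}_\hbar]$, whose specialization at $\hbar=0$ is $\Ca_\nu(\C[Y_\Upsilon])$, a finite $\C[\Upsilon]$-algebra supported on $Y_\Upsilon^T$. Because $Y_\Upsilon^T\to\Upsilon$ is finite, \emph{completing at $\zeta\in\Upsilon$} does give a genuine direct sum over the $y_i$, and the slice isomorphism of Section \ref{SSS_slice_quant} identifies each summand with the completed Rees algebra of $\Ca_\nu(\underline{\A}^i_{\eta_i(\tilde{\Upsilon})})$ (tensored with $\C[[\tilde{\Upsilon}_\hbar]]$), after observing that the Weyl-algebra factor contributes only scalars to the Cartan subquotient. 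That yields (\ref{eq:reduct_main}). The argument is then finished not by decomposing $\Ca_\nu(\A_\lambda)$, but by a \emph{generic rank count}: if all $\eta_i(\tilde{\Upsilon})$ are $\Ocat$-regular, inverting $\hbar$ shows the generic rank of the $\C[\tilde{\Upsilon}_\hbar]$-module $R_\hbar(\Ca_\nu(\A_{\tilde{\Upsilon}}))$ is $|X^T|$; for a Zariski-generic $\lambda\in\tilde{\Upsilon}$ non-singularity of $\tilde{\Upsilon}$ gives $|X^T|$ irreducibles in $\Ocat_\nu(\A_\lambda)$ and hence in $\Ca_\nu(\A_\lambda)$; and a finite-dimensional algebra of dimension $|X^T|$ with $|X^T|$ irreducibles is forced to be $\C^{\oplus X^T}$. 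This combination of a completed decomposition plus a dimension count replaces the global splitting your proposal requires. In short: the gap you flag is real, your proposed fix does not close it, and the correct mechanism is the Rees-algebra completion at $\zeta$ together with the rank comparison.
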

\begin{proof}
Let $\tilde{\Upsilon}_\hbar$ denote the linear subspace in $\param\oplus \C$ spanned by
$\tilde{\Upsilon}$ (embedded into the affine hyperplane $\hbar=1$).

Consider the specialization $\A_{\tilde{\Upsilon}}$, its Cartan subquotient $\Ca_\nu(\A_{\tilde{\Upsilon}})$
and its Rees algebra $R_\hbar(\Ca_\nu(\A_{\tilde{\Upsilon}}))=\Ca_\nu(R_\hbar(\A_{\tilde{\Upsilon}}))$.
We have $R_\hbar(\Ca_\nu(\A_{\tilde{\Upsilon}}))/(\hbar)=\Ca_\nu(\C[Y_\Upsilon])$, where we write
$Y_\Upsilon$ for $\Upsilon\times_\param Y_\param$.

Let $y$ be one of the points $y_1,\ldots,y_k$ and $\eta$
for the corresponding pullback map. Consider the algebra $R_\hbar(\Ca_\nu(\A_{\tilde{\Upsilon}}))^{\wedge_y}$. It is naturally isomorphic to
$\Ca_\nu(R_\hbar(\A_{\tilde{\Upsilon}})^{\wedge_y})$. From Section \ref{SSS_slice_quant} it follows that we have a $T$-equivariant
isomorphism $$R_\hbar(\A_{\tilde{\Upsilon}})^{\wedge_y}\cong
\C[[\tilde{\Upsilon}_\hbar]]\widehat{\otimes}_{\C[[\hbar]]}\Weyl_\hbar(T_y\mathcal{L})^{\wedge_0}
\widehat{\otimes}_{\C[[\hbar]]}R_\hbar(\underline{\A}_{\eta(\tilde{\Upsilon})})^{\wedge_0}.$$
The Cartan subquotient of the right hand side is easily seen to coincide
with $$\C[[\tilde{\Upsilon}_\hbar]]\widehat{\otimes}_{\C[[\hbar]]} R_\hbar(\Ca_\nu(\underline{\A}_{\eta(\tilde{\Upsilon})}))^{\wedge_0}.$$
So we conclude that $$R_\hbar(\Ca_\nu(\A_{\tilde{\Upsilon}}))^{\wedge_y}\cong \C[[\tilde{\Upsilon}_\hbar]]\widehat{\otimes}_{\C[[\hbar]]} R_\hbar(\Ca_\nu(\underline{\A}_{\eta(\tilde{\Upsilon})}))^{\wedge_0}.$$
When we need to indicate the dependence of the right hand side on $i$ we will write
$\underline{\A}^i_{\eta(\tilde{\Upsilon})}$ instead of $\underline{\A}_{\eta(\tilde{\Upsilon})}$.

Note that the variety defined by the finite $\C[\Upsilon]$-algebra $\Ca_\nu(\C[Y_\Upsilon])$ is $Y_\Upsilon^T$.
Since $Y_\Upsilon^T$ is finite over $\Upsilon$, we have that
$$R_\hbar(\Ca_\nu(\A_{\tilde{\Upsilon}}))^{\wedge_{\zeta}}\cong \bigoplus_{i=1}^k
R_\hbar(\Ca_\nu(\A_{\tilde{\Upsilon}}))^{\wedge_{y_i}}.$$
We conclude that
\begin{equation}\label{eq:reduct_main}
R_\hbar(\Ca_\nu(\A_{\tilde{\Upsilon}}))^{\wedge_{\zeta}}
\cong \bigoplus _i\C[[\tilde{\Upsilon}_\hbar]]
\widehat{\otimes}_{\C[[\hbar]]}\Ca_\nu(R_\hbar(\underline{\A}^i_{\eta_i(\tilde{\Upsilon})}))^{\wedge_0}.
\end{equation}

Now we are ready to show that (1)$\Rightarrow$(2). Assume first that all $\eta_i(\tilde{\Upsilon})$ are $\Ocat$-regular. Then, after inverting $\hbar$, the right
hand side is isomorphic to $\C[[\tilde{\Upsilon}_\hbar]][h^{-1}]^{\oplus X^T}$. So the same
holds for the left hand side. In particular, the generic rank of the finitely generated
$\C[\tilde{\Upsilon}_\hbar]$-module $\C[\tilde{\Upsilon}_\hbar]\otimes_{\C[\param,\hbar]} R_\hbar(\Ca_\nu(\A_{\paramq}))$ equals $|X^T|$.
On the other hand, note that for a Zariski generic parameter $\lambda\in \tilde{\Upsilon}$,
$\Gamma_\lambda^\theta$ is an equivalence because $\tilde{\Upsilon}$ is not singular. It follows
that the number of irreducible representations of $\Ca_\nu(\A_\lambda)$ for a Zariski
generic parameter $\lambda\in \tilde{\Upsilon}$ equals to $|X^T|$. But $\dim \Ca_\nu(\A_\lambda)$
equals to the generic rank of $\C[\tilde{\Upsilon}_\hbar]\otimes_{\C[\param,\hbar]} R_\hbar(\Ca_\nu(\A_{\paramq}))$, which is $|X^T|$. It follows that $\lambda$ is $\Ocat$-regular. This proves (1)$\Rightarrow$(2).

The proof of (2)$\Rightarrow$(1) is similar.
\end{proof}

\subsection{Connection to Hikita  conjecture}\label{SS_Hikita_connection}
Suppose that $X$ is a conical symplectic resolution with a Hamiltonian action of
a torus $T$ satisfying $|X^T|<\infty$. Choose a generic one-parameter
subgroup $\nu:\C^\times\rightarrow T$. It was conjectured in \cite[Section 10]{BLPW} that there
is a {\it dual} conical symplectic resolution $X^!$ depending on $\nu$ that also comes equipped with
a Hamiltonian action of a torus, $T^!$, and $X^T$ is in bijection with $(X^!)^{T^!}$.

Now we recall Hikita's conjecture. We can still consider the algebra
$$\Ca_\nu(\C[Y]):=\C[Y]^0/(\sum_{i>0}\C[Y]^{-i}\C[Y]^i).$$

\begin{Conj}[Hikita]\label{Conj:Hikita}
We have an algebra isomorphism $\Ca_\nu(\C[Y])\cong H^*(X^!)$.
\end{Conj}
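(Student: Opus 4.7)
The plan is to approach Hikita's conjecture by a combination of deformation methods and reduction to rank-one slices, following the philosophy of the earlier sections. The key observation is that both sides of the proposed isomorphism are graded finite dimensional commutative algebras whose dimension should be $|X^T| = |(X^!)^{T^!}|$, so one is really comparing two specific commutative algebra structures on a vector space of known dimension.

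First, I would upgrade both sides to natural flat families over a common base. On the algebraic side this is $\Ca_\nu(\A_{\paramq})$, which by Lemma \ref{Lem:cartan_iso} is a finitely generated $\C[\paramq]$-module that becomes $\C[\paramq]^{\oplus X^T}$ away from a finite union of essential hyperplanes, and whose specialization at $\lambda=0$ recovers $\Ca_\nu(\C[Y])$ (after passing to the associated graded with respect to $\hbar$ as in the proof of Proposition \ref{Prop:O_reg_rk1}). On the geometric side one takes $T^!$-equivariant cohomology $H^*_{T^!}(X^!)$, which is a free $H^*_{T^!}(\mathrm{pt})\cong \C[\mathfrak{t}^!]$-module of rank $|(X^!)^{T^!}|$; under symplectic duality $\C[\mathfrak{t}^!]$ should be identified with $\C[\paramq]$, and passing to the non-equivariant cohomology corresponds to specializing at the origin. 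Thus one wants to produce a $\C[\paramq]$-algebra isomorphism $\Ca_\nu(\A_\paramq) \cong H^*_{T^!}(X^!)$ lifting the desired identification.

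Next, I would reduce the comparison to a ``rank one'' situation using the slice technique underlying Proposition \ref{Prop:O_reg_rk1}. Specifically, for $\zeta$ generic in a classical wall $\Upsilon$ and $y$ in a minimal leaf of $Y_\zeta$, the isomorphism
\[
R_\hbar(\Ca_\nu(\A_{\paramq}))^{\wedge_\zeta} \cong \bigoplus_i \C[[\Upsilon_\hbar]]\,\widehat{\otimes}_{\C[[\hbar]]}\, \Ca_\nu(R_\hbar(\underline{\A}^i_{\eta_i(\paramq)}))^{\wedge_0}
\]
established in the proof of Proposition \ref{Prop:O_reg_rk1} should be matched, under symplectic duality, with the Mayer--Vietoris / localization decomposition of $H^*_{T^!}(X^!)$ at fixed points dual to the slices. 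This reduces the conjecture to the case $\dim\paramq = 1$, where $\underline{X}$ is a Kleinian surface resolution or a hypertoric surface, and $\underline{X}^!$ is the corresponding dual; in both these families the conjecture is essentially known (Hikita for hypertorics, and a direct computation in the $T^*\mathbb{P}^1$ case). Gluing the rank-one identifications over each $\Upsilon$ and each minimal leaf would assemble the global isomorphism.

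The main obstacle will be twofold. First, a precise and functorial form of symplectic duality identifying $(X,T,\nu)$ with $(X^!,T^!,\nu^!)$, matching $\paramq\leftrightarrow \mathfrak{t}^!$ and $X^T\leftrightarrow (X^!)^{T^!}$ together with the slice decompositions, is currently available only in families (finite and affine type A quiver varieties, hypertorics, slices in affine Grassmannians), so a ``uniform'' proof is out of reach and one must argue family by family. Second, and more seriously, the reduction only matches \emph{formal neighborhoods} at the torus fixed points on both sides; promoting this to an isomorphism of global algebras requires identifying the cup product on $H^*(X^!)$ with the multiplication inherited from $\C[Y]^0$, which is not visible at the level of fixed-point localization. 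For this I would use the Frobenius algebra structures on both sides -- Poincar\'e duality on $H^*(X^!)$ and a bilinear form on $\Ca_\nu(\C[Y])$ coming from a trace on $\A_\lambda$ in the spirit of the BGG/Ext pairing used in Lemma \ref{Lem:stand_costand} -- to rigidify the comparison, since a Frobenius algebra is determined by its restriction to any faithful subalgebra together with the trace pairing.
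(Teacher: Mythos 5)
The statement you were asked to prove is labeled \textbf{Conjecture} \ref{Conj:Hikita} in the paper: it is \emph{not} proved there. The paper only states Hikita's conjecture, records its consequence (Corollary \ref{Cor:Hikita_flatness}: $\Ca_\nu(\C[Y])$ is commutative of dimension $|X^T|$), and in Lemma \ref{Lem:Hikita} cites Hikita's own work \cite{Hikita} for the two cases the paper actually needs, namely $X=T^*\operatorname{Gr}(k,n)$ and $X=\operatorname{Hilb}_n(\C^2)$. Indeed, the paper explicitly remarks ``Note that we will only need to know that $\dim \Ca_\nu(\C[Y])=|X^T|$,'' so even the ring-isomorphism content is never used. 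Your proposal therefore cannot be compared against a proof in the paper, because there is none.

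As a program for attacking the conjecture, your sketch is thoughtful but, as you yourself acknowledge in the final paragraph, it has two essential gaps that keep it from being a proof. First, the claimed matching of the completion decomposition of $R_\hbar(\Ca_\nu(\A_\paramq))$ along classical walls (the formula from the proof of Proposition \ref{Prop:O_reg_rk1}) with a fixed-point localization decomposition of $H^*_{T^!}(X^!)$ presupposes a functorial form of symplectic duality that identifies slices on the $X$ side with the correct geometric data on the $X^!$ side; no such compatibility is established in the paper (or in the literature in this generality), and it is far from formal that ``the dual of a slice is what you want.'' Second, even granting the fixed-point/slice matching, you would only have identified formal local models; promoting this to a global algebra isomorphism is the whole content of Hikita's theorem in the known cases, and your appeal to Frobenius-algebra rigidity would require constructing and matching trace pairings on both sides, which is itself a substantial open problem. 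In short, what you have written is a plausible outline of a research direction, not a proof, and the paper treats the statement accordingly: as an input conjecture, verified externally in the specific cases required.
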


The most important consequence for us is the following result.

\begin{Cor}\label{Cor:Hikita_flatness}
Suppose that Conjecture \ref{Conj:Hikita} holds. Then $\Ca_\nu(\A_\lambda)$ is a commutative algebra
of dimension $|X^T|$.
\end{Cor}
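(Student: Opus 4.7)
The plan is to prove the claim by combining an upper bound on $\dim \Ca_\nu(\A_\lambda)$ coming from Hikita's conjecture, a matching lower bound coming from generic semisimplicity, and a flatness argument for commutativity.

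First I would establish the upper bound $\dim \Ca_\nu(\A_\lambda) \leq |X^T|$. Using the canonical filtration on $\A_\lambda$ with $\gr \A_\lambda = \C[Y]$, and noting that this filtration is $T$-equivariant, the $\nu$-weight spaces $\A_\lambda^j$ are filtered subspaces with $\gr \A_\lambda^j = \C[Y]^j$. Using the rewritten formula
\[
\Ca_\nu(\A_\lambda) = \A_\lambda^0 \Big/ \sum_{j>0} \A_\lambda^{-j}\A_\lambda^{j},
\]
one gets a surjection $\Ca_\nu(\C[Y]) \twoheadrightarrow \gr \Ca_\nu(\A_\lambda)$. By Conjecture \ref{Conj:Hikita} combined with the Bia\l ynicki--Birula decomposition of $X^!$ with respect to a generic one-parameter subgroup of $T^!$, we have $\dim \Ca_\nu(\C[Y]) = \dim H^*(X^!) = |(X^!)^{T^!}| = |X^T|$. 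Hence $\dim \Ca_\nu(\A_\lambda) = \dim \gr \Ca_\nu(\A_\lambda) \leq |X^T|$.

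For the reverse inequality, I would use Lemma \ref{Lem:cartan_iso}: the homomorphism $\Ca_\nu(\A_\paramq) \to \Ca_\nu(\A^\theta_\paramq) \cong \C[\paramq]^{|X^T|}$ is an isomorphism outside a finite union of essential hyperplanes, so $\Ca_\nu(\A_\paramq)$ is a finitely generated $\C[\paramq]$-module of generic rank $|X^T|$. Since $\paramq^* \subset \A_\paramq^0$, the quotient construction commutes with base change, giving $\Ca_\nu(\A_\lambda) \cong \Ca_\nu(\A_\paramq) \otimes_{\C[\paramq]} \C_\lambda$. Upper semicontinuity of fiber dimension for finitely generated modules then yields $\dim \Ca_\nu(\A_\lambda) \geq |X^T|$.

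Combining the two bounds, $\dim \Ca_\nu(\A_\lambda) = |X^T|$. Moreover, the coincidence of fiber dimension and generic rank over the Noetherian local domain $\C[\paramq]_\lambda$ forces $\Ca_\nu(\A_\paramq)$ to be locally free near $\lambda$. For commutativity, observe that $\Ca_\nu(\A_\paramq)$ is generically commutative, so any commutator $[a,b]$ of sections of this locally free sheaf vanishes over the generic fiber; by torsion-freeness it vanishes identically in a neighborhood of $\lambda$, and hence its specialization $\Ca_\nu(\A_\lambda)$ is commutative.

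The main obstacle I expect is verifying the surjection $\Ca_\nu(\C[Y]) \twoheadrightarrow \gr \Ca_\nu(\A_\lambda)$, i.e., checking that the filtration on $\A_\lambda^0$ and the graded pieces of the ideal $\sum_{j>0} \A_\lambda^{-j}\A_\lambda^{j}$ behave as expected under $\gr$. This is a bookkeeping exercise once one knows that the quantization is $T$-equivariant with respect to the filtration, so that $T$-weight components can be chosen to be graded lifts. Modulo this routine check, the corollary follows from an elementary combination of Hikita's conjecture with the generic semisimplicity of the Cartan subquotient.
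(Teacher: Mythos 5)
Your argument is correct and rests on the same two pillars as the paper's proof: the classical-limit identification of $\Ca_\nu(\C[Y])$ giving dimension $|X^T|$ via Hikita's conjecture, and the generic semisimplicity of the Cartan subquotient (Lemma \ref{Lem:cartan_iso}), combined with (semi)continuity to force constant rank and hence commutativity. The only organizational difference is that the paper treats the two degenerations simultaneously by working with the Rees algebra $R_\hbar(\Ca_\nu(\A_{\paramq}))$ as a graded module over $\C[\paramq,\hbar]$ and invoking graded Nakayama once, whereas you split the argument into a $\gr$ step (upper bound) and a $\C[\paramq]$-base-change step (lower bound, local freeness, commutativity), which is an equivalent reshuffling of the same ideas.
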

\begin{proof}
Consider the $\C[\paramq,\hbar]$-algebra $R_\hbar(\Ca_\nu(\A_{\paramq}))=
\Ca_\nu(R_\hbar(\A_{\paramq}))$. Its specialization to $0\in \param\oplus \C$
is $\Ca_\nu(\C[Y])$ hence has dimension $|(X^!)^{T^!}|=|X^T|$, thanks to
Conjecture \ref{Conj:Hikita}. Its generic specialization
has dimension $|X^T|$ and is commutative. It follows that
$R_\hbar(\Ca_\nu(\A_{\paramq}))$ is commutative and is a free
$\C[\paramq,\hbar]$-module of rank $|X^T|$.
\end{proof}

\begin{Cor}\label{Cor:localization_Hikita}
Under the assumption of Corollary \ref{Cor:Hikita_flatness}, if $\Gamma^\theta_\lambda:
\Ocat_\nu(\A_\lambda^\theta)\rightarrow \Ocat_\nu(\A_\lambda)$ is an equivalence,
then $\lambda$ is $\Ocat$-regular.
\end{Cor}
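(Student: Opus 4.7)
The plan is to verify the two defining conditions of $\Ocat$-regularity (Definition \ref{defi:O_regular}) in turn: that $\Ca_\nu(\A_\lambda)$ is commutative of dimension $|X^T|$, and that it is semisimple. Corollary \ref{Cor:Hikita_flatness} already gives the first condition for free, so the real content is upgrading commutativity to semisimplicity, and this is where the hypothesis that $\Gamma^\theta_\lambda$ is an equivalence between the two categories $\Ocat$ enters.

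First I would invoke Corollary \ref{Cor:Hikita_flatness} to record that $\Ca_\nu(\A_\lambda)$ is a commutative $\C$-algebra of dimension $|X^T|$. Next I would count simples on both sides of the equivalence $\Gamma^\theta_\lambda: \Ocat_\nu(\A^\theta_\lambda) \xrightarrow{\sim} \Ocat_\nu(\A_\lambda)$. Because $X^T$ is finite, the Cartan subquotient $\Ca_\nu(\A_\lambda^\theta)$ is isomorphic to $\C^{X^T}$ (see Section \ref{SSS_Cartan_subquotient}), so by the bijection between $\operatorname{Irr}(\Ocat_\nu(\A_\lambda^\theta))$ and $\operatorname{Irr}(\Ca_\nu(\A_\lambda^\theta))$ recorded in Section \ref{SS_O_reg_intro}, the category $\Ocat_\nu(\A_\lambda^\theta)$ has exactly $|X^T|$ simples. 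The assumed equivalence then forces $|\operatorname{Irr}(\Ocat_\nu(\A_\lambda))| = |X^T|$ as well, and by the same general bijection $|\operatorname{Irr}(\Ca_\nu(\A_\lambda))| = |X^T|$.

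The concluding step is the purely algebraic observation that a finite-dimensional commutative $\C$-algebra whose number of maximal ideals equals its vector space dimension is necessarily a product of copies of $\C$: indeed, the semisimple quotient $\Ca_\nu(\A_\lambda)/\operatorname{rad}\Ca_\nu(\A_\lambda)$ is a product of $|X^T|$ copies of $\C$ and therefore already has dimension $|X^T|$, forcing the Jacobson radical to vanish. Combining this with the previous paragraph yields that $\Ca_\nu(\A_\lambda)$ is a semisimple commutative algebra of dimension $|X^T|$, which is precisely the definition of $\lambda$ being $\Ocat$-regular.

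There is no real obstacle: once Hikita's conjecture supplies the dimension count and the equivalence supplies the count of simples, the result is a one-line commutative-algebra fact. If anything, the only minor thing to be careful about is citing the right bijections between simples in $\Ocat_\nu$ and simples of the Cartan subquotient on both the $\A^\theta_\lambda$- and $\A_\lambda$-sides, both of which are already recorded in the paper.
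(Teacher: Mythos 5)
Your proposal is correct and follows essentially the same approach as the paper: use Corollary \ref{Cor:Hikita_flatness} to get the dimension, use the equivalence to count simples of $\Ca_\nu(\A_\lambda)$ via simples of $\Ocat_\nu(\A_\lambda^\theta)$, then conclude by the elementary commutative-algebra observation that the radical vanishes. The paper compresses the counting step into one line but the reasoning is identical.
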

\begin{proof}
Since $\Gamma^\theta_\lambda$ is an equivalence, then the number of irreducibles in
$\Ocat_\nu(\A_\lambda)$ equals $|X^T|$. So the same is true for the number of
irreducible representations of
$\Ca_\nu(\A_\lambda)$. Since $\Ca_\nu(\A_\lambda)$ is a commutative algebra of dimension
$|X^T|$, Corollary \ref{Cor:Hikita_flatness}, shows that $\Ca_\nu(\A_\lambda)\cong \C^{\oplus X^T}$,
i.e., $\lambda$ is $\Ocat$-regular.
\end{proof}

Hikita's conjecture is known in a number of cases, \cite{Hikita}. For our purposes,
the following is important.

\begin{Lem}\label{Lem:Hikita}
Hikita's conjecture holds when $X=T^*\operatorname{Gr}(k,n)$ and when
$X=\operatorname{Hilb}_n(\C^2)$.
\end{Lem}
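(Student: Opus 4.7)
The plan is to invoke Hikita's original results \cite{Hikita}, which treat both cases directly; no new argument is required beyond a verification of conventions matching our setup (contracting $\C^\times$-grading on $\C[Y]$, choice of generic cocharacter $\nu$, and identification of the BLPW symplectic dual $X^!$).

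For $X=\operatorname{Hilb}_n(\C^2)$, I would first record that $Y=\operatorname{Sym}^n(\C^2)$ and take the generic cocharacter $\nu$ acting with weight $+1$ on one of the $\C$-factors and $-1$ on the other. The degree-zero subalgebra $\C[Y]^0$ is then the ring of $S_n$-invariants in $\C[x_1,\ldots,x_n,y_1,\ldots,y_n]$ of balanced bidegree, and the ideal $\sum_{i>0}\C[Y]^{-i}\C[Y]^i$ cuts this down to the $S_n$-invariants in $\C[\{x_iy_j\}]$ modulo its positive-degree part. Hikita identifies the resulting quotient $\Ca_\nu(\C[Y])$ with $H^*(\operatorname{Hilb}_n(\C^2))$ via the classical Ellingsrud--Str\o mme presentation (equivalently, via Haiman's isospectral description of the cohomology). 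Here the BLPW symplectic dual of $\operatorname{Hilb}_n(\C^2)$ is the Hilbert scheme itself, so this supplies the required isomorphism $\Ca_\nu(\C[Y])\cong H^*(X^!)$.

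For $X=T^*\operatorname{Gr}(k,n)$, I would identify $Y$ with the closure of the two-row nilpotent orbit of Jordan type $(n-k,k)$ in $\mathfrak{sl}_n^*$ and pick $\nu$ to be a generic one-parameter subgroup of the diagonal torus of $\operatorname{SL}_n$ acting on $Y$. Hikita's corresponding example computes $\Ca_\nu(\C[Y])$ by using the $\operatorname{SL}_n$-equivariant geometry of $Y$ (reduction to a transverse slice at a $\nu$-fixed point) and matches the answer with Tanisaki's presentation of $H^*(\operatorname{Gr}(k,n))$ by generators and relations on Chern classes of the tautological bundle. Combined with the identification of the BLPW dual as $T^*\operatorname{Gr}(k,n)$ itself in this situation, this yields the desired isomorphism.

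The only substantive point to address is to reconcile Hikita's conventions (his choice of $\nu$, his normalization of the Poisson grading on $Y$, and his identification of $X^!$) with the ones used here; this is routine bookkeeping and constitutes the sole, very minor, obstacle.
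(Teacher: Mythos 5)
Your proposal takes the same route as the paper: the lemma is proved by direct citation of \cite[Theorems 1.2, 1.3]{Hikita}, with only a check that conventions (the choice of $\nu$, the Poisson grading, the identification of the dual $X^!$) line up with the present setup, which is exactly what you observe. One small factual slip in your elaboration, immaterial here since the proof rests on the citation: for $X=T^*\operatorname{Gr}(k,n)$ the affinization $Y$ is the closure of the nilpotent orbit of Jordan type $(2^{\min(k,n-k)},1^{n-2\min(k,n-k)})$ (all parts $\leqslant 2$), i.e.\ a \emph{two-column} type, not the two-row orbit of type $(n-k,k)$; the latter is the orbit through which one takes a Slodowy slice to produce the symplectic dual, whose resolution is again $T^*\operatorname{Gr}(k,n)$.
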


This is proved in \cite[Theorems 1.2,1.3]{Hikita}.

Note that we will only need to know that $\dim \Ca_\nu(\C[Y])=|X^T|$.

\section{Quiver varieties of type $A$}
\subsection{Main result}\label{SS_quiver_A_main}
Here we consider the case when $X$ is a quiver variety of finite or affine type $A$.
Here is the main result.

\begin{Thm}\label{Thm:type_A}
Conjectures \ref{Conj:fin_hom_dim}, \ref{Conj:der_loc}, \ref{Conj:abelian}
hold for finite or affine type $A$ quiver varieties.
\end{Thm}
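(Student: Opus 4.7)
The plan is to assemble Conjecture \ref{Conj:der_loc} for finite and affine type A Nakajima quiver varieties from Theorems \ref{Thm:exactness_precise} and \ref{Thm:O_regular_precise} together with the rank-one analysis of Section \ref{SSS_quant_quiver_walls}; Conjecture \ref{Conj:fin_hom_dim} is already known for quiver varieties by item (3) of Section \ref{SS_known_results}, and via Section \ref{SSS_conj_equiv}, Conjecture \ref{Conj:abelian} will then follow from Conjecture \ref{Conj:der_loc}. First, using Lemma \ref{Lem:dominant} and the standard reflection isomorphisms between quiver varieties, I reduce to the case in which the weight $\nu$ attached to the dimension vector $v$ is dominant. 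In that case $(\sharp)$ holds, the tilting generator built from the smooth Coulomb branch realization of $X=\M^\theta_0(v,w)$ satisfies $(\heartsuit)$ (Remark \ref{Rem:Coulomb}), and the singular hyperplanes are the explicit ones of Propositions \ref{Prop:singular_real} and \ref{Prop:singular_imaginary}, from which condition $(*)$ is visible directly.

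Next I would identify $\paramq^{\Ocat-sing}$ with the union of the singular hyperplanes. By Lemma \ref{Lem:type_A_slices} every rank-one slice is again of finite or affine type A, and Lemmas \ref{Lem:rk1_slice_real} and \ref{Lem:rk1_slice_imaginary} identify it concretely as either $T^*\operatorname{Gr}(k,n)$ or a Gieseker space for the Jordan quiver. Hikita's conjecture, in the weak form of Corollary \ref{Cor:Hikita_flatness}, is known in these cases by Lemma \ref{Lem:Hikita}, so Corollary \ref{Cor:localization_Hikita} forces the slice $\Ocat$-singular locus to equal the slice singular hyperplanes; Proposition \ref{Prop:O_reg_rk1} then propagates this equality back to $X$. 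Applying Theorem \ref{Thm:exactness_precise} to a generic $\theta\in C$ gives exactness of $\Gamma^\theta_\lambda$ on the associated quantum chamber $\tilde{C}$, whence for $\lambda\in\tilde{C}$ the Serre quotient argument of Section \ref{SS_O_reg_intro} combined with $\Ocat$-regularity produces an equivalence $\Gamma^\theta_\lambda\colon \Ocat_\nu(\A^\theta_\lambda)\xrightarrow{\sim}\Ocat_\nu(\A_\lambda)$.

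The main obstacle is to promote this category $\Ocat$ equivalence to the full abelian equivalence of $\Coh(\A^\theta_\lambda)$ and $\A_\lambda\operatorname{-mod}$. By Corollary \ref{Cor:abelian_localization_connection} I still need $\Hcal_\lambda e\Hcal_\lambda=\Hcal_\lambda$, which by Lemma \ref{Lem:derived_localization_connection} is equivalent to $\A_\lambda$ having finite homological dimension. My plan is to prove this in two stages. For $\lambda$ off all singular hyperplanes, finite homological dimension at Weil generic points of each essential non-singular hyperplane is supplied by Lemma \ref{Lem:ab_loc_Weil_generic}, and I will spread it through the quantum chamber by using the translation bimodules $\A_{\paramq,\chi}$ and the restriction functors $\bullet_{\dagger,y}$, reducing the invertibility of $\A_{\paramq,\chi}\otimes_{\A_\paramq}\A_{\paramq,-\chi}\to\A_\paramq$ to the rank-one slice case, where it follows from classical Beilinson--Bernstein for $T^*\operatorname{Gr}(k,n)$ together with the results of \cite{KR,GS,Gies} for the Gieseker space. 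Conversely, for $\lambda$ on a singular hyperplane, the same rank-one reduction combined with the known aspherical-locus computations in those two families forces $\A_\lambda$ to have infinite homological dimension. Together these two stages yield Conjecture \ref{Conj:der_loc}, and hence Conjecture \ref{Conj:abelian} by Section \ref{SSS_conj_equiv}.
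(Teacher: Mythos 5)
There are two substantive gaps in your plan, both corresponding to the two hardest steps in the paper's actual argument.

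First, when you invoke Hikita's conjecture on the rank-one slices, you overstate what Lemma \ref{Lem:Hikita} provides. That lemma covers only $T^*\operatorname{Gr}(k,n)$ and $\operatorname{Hilb}_n(\C^2)$; but the imaginary-root slices coming out of Lemma \ref{Lem:rk1_slice_imaginary} are Gieseker spaces $\M^\theta(m,r)$ with framing $\underline{w}=(\nu,\delta)$, which is generally larger than $1$. Hikita's conjecture is not known for these higher-rank Gieseker spaces, so Corollary \ref{Cor:localization_Hikita} is unavailable there. This is precisely the point the paper flags in Section \ref{SS_quiver_A_main} and resolves by a separate, substantial argument: Proposition \ref{Prop:Gies_O_sing} is obtained via Lemma \ref{Lem:Cartan_surjectivity}, which proves surjectivity of $\Ca_{\nu_0}(\A_\lambda(n,r)) \twoheadrightarrow \Gamma(\Ca_{\nu_0}(\A_\lambda^\theta(n,r)))$ by a delicate localization-to-the-regular-locus and slice-comparison argument, and then feeds that surjectivity into alternative (2) of Theorem \ref{Thm:O_regular_precise} instead of alternative (1). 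Without this step your argument does not reach the Gieseker slices.

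Second, your plan for promoting the $\Ocat$-equivalence to the full equivalence $\Coh(\A_\lambda^\theta)\xrightarrow{\sim}\A_\lambda\operatorname{-mod}$ is underdeveloped and risks circularity. You propose to prove $\Hcal_\lambda e\Hcal_\lambda=\Hcal_\lambda$ by spreading Lemma \ref{Lem:ab_loc_Weil_generic} through the quantum chamber using translation bimodules and restriction functors; but the invertibility of $\A_{\paramq,\chi}\otimes_{\A_\paramq}\A_{\paramq,-\chi}\to\A_\paramq$ on a Zariski-open set (Lemma \ref{Lem:inverse_Morita}) gives you no control over which essential hyperplanes the bad locus lies on, which is precisely the problem already identified in \cite{catO_charp} that this paper is designed to overcome. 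The paper's mechanism is Proposition \ref{Prop:O_to_mod}: it proceeds by induction on symplectic leaves, uses Section \ref{SSS_res_fun} to show that the kernel and cokernel of $\A_{\lambda,\chi}\otimes_{\A_\lambda}\A_{\lambda+\chi,-\chi}\to\A_{\lambda+\chi}$ restrict to zero on every proper slice and hence are finite dimensional, and then uses the $\Ocat$-equivalence (acting trivially on finite-dimensional modules) to kill them. That finite-dimensionality step is the crux, and it is absent from your outline. (A minor bonus in your favor: deducing Conjecture \ref{Conj:abelian} from \ref{Conj:der_loc} via Proposition \ref{Prop:A_analog} and the known case of Conjecture \ref{Conj:fin_hom_dim} is a legitimate shortcut, though the paper effectively obtains abelian localization directly from Proposition \ref{Prop:O_to_mod} rather than passing through derived localization.)
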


Note that we know all singular hyperplanes, see Section \ref{SSS_quant_quiver_walls}.
So Theorem \ref{Thm:type_A}
completely describes the locus where abelian and derived localization hold.

The varieties $X$ in question satisfy both ($\heartsuit$) and $|X^T|<\infty$. Thanks
to Theorems \ref{Thm:exactness} and \ref{Thm:O_regular}, what remains to prove
is the following two claims:

\begin{enumerate}
\item For a suitable choice of a generic one-parameter subgroup $\nu:\C^\times\rightarrow T$
every $\Ocat$-singular hyperplane is singular,
\item Once $\Gamma_\lambda^\theta$ is an equivalence between categories $\Ocat$ for $X$ and all
of its slices, it is an equivalence $\Coh(\A_\lambda^\theta)\xrightarrow{\sim}\A_\lambda\operatorname{-mod}$.
\end{enumerate}

We note that all slices to $X$ are again affine or finite type $A$ quiver varieties by
Lemma \ref{Lem:type_A_slices} so (1) and (2) indeed prove Theorem \ref{Thm:type_A}.

(2) is quite standard, while (1) is more complicated. There are three key ingredients
of proving (1).

First, using Proposition \ref{Prop:O_reg_rk1}, we can reduce to the case when
the quiver has a single vertex. Using results of Section \ref{SS_Hikita_connection},
we then handle the cases of $X=T^*\operatorname{Gr}(k,n)$ and $\operatorname{Hilb}_n(\C^2)$.
This leaves the case of higher rank Gieseker moduli spaces. It is handled by reduction
to the case of Hilbert schemes. This reduction is morally similar to the approach to abelian localization
for these varieties from \cite[Section 5]{Gies}.

\subsection{Gieseker case}\label{SS_Gies_O_reg}
\subsubsection{The main result}
Consider the variety $X=\M^\theta(n,r)$. Set $V:=\C^n$ and $W:=\C^r$. Let $T_0$ denote a maximal torus in
$\GL(W)$ and $\C^\times$ denote the one-dimensional torus acting on $V$.
Then $T:=T_0\times \C^\times$ acts on $X$ with finitely many fixed points.
We choose $\nu:\C^\times\rightarrow T_0\times \C^\times$ of the form
$(\operatorname{diag}(t^{d_1},\ldots, t^{d_r}),t)$ with $d_i-d_{i+1}>n$
for all $i$. Consider also the one-parameter subgroup $\nu_0:\C^\times\rightarrow
T_0\times \C^\times$ given by $t\mapsto (\operatorname{diag}(t^{d_1},\ldots,t^{d_r}),1)$.

We have $\paramq=\C$.
Recall, \cite[Theorem 1.1]{Gies}, that the singular values of $\lambda$ are characterized
as follows: $\lambda-r/2$ is a rational number in $(-r,0)$ with denominator
$\leqslant n$.

\begin{Prop}\label{Prop:Gies_O_sing}
For $\nu$ as above, we have $\paramq^{O-sing}=\paramq^{sing}$.
\end{Prop}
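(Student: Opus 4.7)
The plan is to reduce to the Hilbert scheme case, where Hikita's conjecture is known by Lemma \ref{Lem:Hikita}, and then to separate classically regular from classically singular parameters.

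The first step is reduction via the intermediate one-parameter subgroup $\nu_0$. Its fixed locus
\begin{equation*}
X^{\nu_0(\C^\times)}=\bigsqcup_{n_1+\cdots+n_r=n}\prod_{i=1}^r\operatorname{Hilb}_{n_i}(\C^2)
\end{equation*}
is a disjoint union of products of Hilbert schemes, and the residual $\C^\times$-action (from the second factor of $\nu$) retains finite fixed-point set $X^T$. I aim to identify the sheaf of Cartan subquotients $\Ca_{\nu_0}(\A_\paramq^\theta)$ of \cite[Section 5]{catO_R} on $X^{\nu_0(\C^\times)}$, component by component and up to a linear shift of the quantum parameter, with the tensor product of the Hilbert-scheme quantizations. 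Taking the further Cartan subquotient with respect to the residual $\C^\times$ then computes $\Ca_\nu(\A_\paramq)$ as the direct sum, over components, of tensor products of Hilbert-scheme Cartan subquotients. By Hikita for the Hilbert scheme combined with Corollary \ref{Cor:Hikita_flatness}, each factor is commutative of dimension $p(n_i)$ for every parameter, so $\Ca_\nu(\A_\lambda)$ is commutative of dimension $|X^T|=\sum_{\sum n_i=n}\prod_i p(n_i)$ for \emph{all} $\lambda\in\paramq$.

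With the dimension in hand, the two inclusions of $\paramq^{\Ocat-sing}=\paramq^{sing}$ follow by separating regular and singular $\lambda$. At classically regular $\lambda$, derived localization holds by Conjecture \ref{Conj:fin_hom_dim} (known for Gieseker by the results recalled in Section \ref{SS_known_results}), and Theorem \ref{Thm:exactness} gives exactness of $\Gamma_\lambda^\theta$ for a suitable generic $\theta$; together these upgrade to abelian localization. The resulting equivalence $\Ocat_\nu(\A_\lambda^\theta)\cong \Ocat_\nu(\A_\lambda)$ forces $\Ca_\nu(\A_\lambda)$ to have $|X^T|$ characters, and the dimension bound then gives $\Ca_\nu(\A_\lambda)\cong\C^{|X^T|}$, i.e.\ $\lambda\in\paramq^{\Ocat-reg}$. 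At classically singular $\lambda$, the enumeration of simples in $\Ocat_\nu(\A_\lambda)$ carried out in \cite[Section 5]{Gies} produces strictly fewer than $|X^T|$ simples, so $\Ca_\nu(\A_\lambda)$ has fewer than $|X^T|$ characters; the dimension bound then forces non-semisimplicity, hence $\lambda\in\paramq^{\Ocat-sing}$.

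The main obstacle is the tensor-product identification for $\Ca_{\nu_0}(\A_\paramq^\theta)$ in the reduction step, including the careful tracking of the linear parameter shifts arising from the framing weights of the Gieseker quiver. This mirrors and refines the formal-neighborhood analysis underpinning the abelian localization result of \cite[Section 5]{Gies}; once it is carried out, the rest of the argument reduces to combining results already established in the paper.
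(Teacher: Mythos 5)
Your general strategy — reduce to Hilbert schemes through the intermediate subgroup $\nu_0$, use Hikita there, then separate regular from singular parameters — matches the paper's in spirit, but you have misidentified where the actual technical difficulty lies, and as a result there is a genuine gap.

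You want to compute $\Ca_\nu(\A_\lambda)$ by taking the Cartan subquotient of $\Ca_{\nu_0}(\A^\theta_\lambda)$ and invoking the identification of the latter with a direct sum of tensor products of Hilbert scheme quantizations. But that identification (Lemma \ref{Lem:global_section_Cartan}, taken from \cite[Proposition 3.5]{Gies}) is a statement about the \emph{sheaf-level} Cartan $\Gamma(\Ca_{\nu_0}(\A^\theta_\lambda))$, not about the algebraic Cartan $\Ca_{\nu_0}(\A_\lambda)$ defined from the global sections $\A_\lambda$. There is only a natural map $\Ca_{\nu_0}(\A_\lambda)\rightarrow\Gamma(\Ca_{\nu_0}(\A^\theta_\lambda))$, which in general is neither injective nor surjective, and nothing you write controls it. This is precisely what Lemma \ref{Lem:Cartan_surjectivity} addresses, and it has an eight-step proof involving a careful formal-neighborhood analysis at generic points $z\in Z=\g\quo G$, the restriction-to-slices functor, and a contradiction argument against \cite[Proposition 5.4]{Gies}. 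You flag the tensor-product identification for $\Ca_{\nu_0}(\A^\theta_\paramq)$ as the ``main obstacle,'' but that is essentially already settled by \cite{Gies}; the surjectivity of the comparison map is the actual hard part, and you do not address it.

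This gap breaks your argument in two places. First, your claim that $\Ca_\nu(\A_\lambda)$ is commutative of dimension $|X^T|$ \emph{for all} $\lambda$ is unsupported and likely false: Hikita is not known for Gieseker varieties, and even the paper's surjectivity statement (Lemma \ref{Lem:Cartan_surjectivity}) is established only for regular $\lambda$, which is why the paper deliberately routes through the surjectivity alternative (2) in Theorem \ref{Thm:O_regular_precise} rather than through dimension/commutativity. Second, because you do not have the dimension bound, your argument in both the regular and singular cases loses its hinge. The paper only needs to show $\paramq^{O-sing}\subset\paramq^{sing}$ at this point, so the singular direction is not the issue; but in the regular direction, Theorem \ref{Thm:O_regular_precise} cannot be applied without either a codimension-two bound on $\paramq^{O-sing}$ (not available here, since $\paramq$ is one-dimensional) or a surjection $\Ca_\nu(\A_\lambda)\twoheadrightarrow\Ca_\nu(\A^\theta_\lambda)$ — which is exactly the output of the chain of lemmas you are skipping.
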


This proposition will be proved in Section
\ref{SSS_Gies_sing_proof}.

\subsubsection{Surjectivity of $\Ca_{\nu_0}(\A_\lambda(n,r))
\rightarrow \Gamma(\Ca_{\nu_0}(\A^\theta_{\lambda}(n,r)))$}
Set $X:=\M^\theta(n,r), Y:=\M^0(n,r)$.
We have the quantization $\Ca_{\nu_0}(\A^\theta_{\lambda}(n,r))$
of $X^{\nu_0(\C^\times)}$, \cite[Section 5.2]{catO_R}. The locus $X^{\nu_0(\C^\times)}$
is the disjoint union $\bigsqcup \prod_{i=1}^r \operatorname{Hilb}_{n_i}(\C^2)$,
where the union is taken over all compositions $(n_1,\ldots,n_r)$ of $n$.

The following result was obtained in \cite[Proposition 3.5]{Gies}.

\begin{Lem}\label{Lem:global_section_Cartan}
We have
$$\Gamma(\Ca_{\nu_0}(\A^\theta_{\lambda}(n,r)))\cong
\bigoplus \bigotimes_{i=1}^r \A_{\lambda+(i-1)}(n_i,1).$$
\end{Lem}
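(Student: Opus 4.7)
The plan is to identify $\Ca_{\nu_0}(\A^\theta_\lambda(n,r))$ as a microlocal sheaf on the fixed-point locus $X^{\nu_0(\C^\times)}$ via quantum Hamiltonian reduction in stages, and then take global sections factor by factor. First, I would analyze the classical picture: the $\nu_0$-fixed points in $T^*R(n,r)$ correspond to $\GL(n)$-orbits of quadruples $(B,i,j,B^*)$ in which the vector space $V=\C^n$ splits as $V=V_1\oplus\ldots\oplus V_r$ along the eigenspaces of a cocharacter of $\GL(n)$ whose weights match those of $\nu_0$ on $W$. Using the gap condition $d_i-d_{i+1}>n$, all off-diagonal blocks $\Hom(V_i,V_j)$ ($i\neq j$) and all $\Hom(W_i,V_j),\Hom(V_j,W_i)$ with $i\neq j$ carry nonzero $\nu_0$-weights, so the fixed locus in $T^*R(n,r)$, after $G$-reduction, decomposes as
\[
X^{\nu_0(\C^\times)}=\bigsqcup_{(n_1,\ldots,n_r)}\prod_{i=1}^r \operatorname{Hilb}_{n_i}(\C^2),
\]
each factor arising as the Hamiltonian reduction of $T^*(\mathfrak{gl}_{n_i}\oplus\Hom(V_i,W_i))$ by $\GL(n_i)$ at some character, matching the description of $\operatorname{Hilb}_{n_i}(\C^2)$ as $\M^\theta(n_i,1)$.

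Next, on the quantum side, I would invoke the general construction of $\Ca_\nu(\A_\lambda^\theta)$ from \cite[Section 5.2]{catO_R} combined with the Hamiltonian reduction description of $\A_\lambda^\theta(n,r)$. Because $\Ca_{\nu_0}$ commutes with Hamiltonian reduction by $G$ (since $\nu_0$ and $G$ centralize each other up to an inner cocharacter), on each connected component the sheaf $\Ca_{\nu_0}(\A^\theta_\lambda(n,r))$ is obtained by reducing the $\nu_0$-Cartan subquotient of $D(R)$ at a suitably modified value of the moment map. The Weyl algebra of the normal directions to the fixed component contributes a shift to the effective quantum comoment map: applying the Cartan subquotient construction of Section \ref{SSS_Cartan_subquotient} to $D(\Hom(V_i,V_j))$ or $D(\Hom(W_i,V_j))$ (for $i<j$) produces a polynomial algebra with a shift equal to $\tfrac12\operatorname{tr}(\nu_0|\text{positive weights})$. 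Summing these contributions over all off-diagonal blocks involving $V_i$ yields that the effective central character governing the $\GL(n_i)$-reduction on the $i$th factor equals $\lambda+(i-1)$; this is the source of the shift appearing in the lemma.

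With the microlocal identification
\[
\Ca_{\nu_0}(\A^\theta_\lambda(n,r))\cong \bigoplus_{(n_1,\ldots,n_r)}\boxtimes_{i=1}^r \A^\theta_{\lambda+(i-1)}(n_i,1)
\]
in hand, the computation of global sections is standard: for each summand, higher cohomology of the quantization of $\prod_i\operatorname{Hilb}_{n_i}(\C^2)$ vanishes by Lemma \ref{Lem:cohom_vanishing} combined with the Künneth formula, and $\Gamma(\A^\theta_\mu(n_i,1))=\A_\mu(n_i,1)$ by the identification of global sections of the quantized quiver variety with the quantum Hamiltonian reduction, compare to the paragraph following the construction of $\A^\theta_\lambda$ in Section \ref{SS_Nakajima}. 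Assembling the pieces gives the stated formula.

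The main obstacle is the precise bookkeeping of the shift $\lambda\mapsto\lambda+(i-1)$. This requires matching the symmetrized quantum comoment map $\Phi(\xi)=\tfrac12(\xi_R+\xi_{R^*})$ used in the construction of $\A^\theta_\lambda$ with the shift produced by passing to $\nu_0$-Cartan subquotients on each normal direction, and then checking that the $r-i$ off-diagonal blocks lying "above" $V_i$ contribute exactly $i-1$ (not, say, $r-i$ or $(i-1)/2$) once the moment-map convention is taken into account.
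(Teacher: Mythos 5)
The paper gives no self-contained argument here: it simply cites \cite[Proposition 3.5]{Gies}. Your proposal outlines exactly the reduction-in-stages strategy that one expects to be behind that reference --- identify the $\nu_0$-fixed locus, argue that taking Cartan subquotients is compatible with quantum Hamiltonian reduction, track a shift in the parameter coming from the normal directions, then use higher-cohomology vanishing and K\"{u}nneth to pass to global sections. As a blueprint this is sound, and the identification of $X^{\nu_0(\C^\times)}$ with $\bigsqcup \prod_i \operatorname{Hilb}_{n_i}(\C^2)$ is correct.

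However, the decisive point --- the appearance of $\lambda+(i-1)$ --- is never actually computed, and the partial formula you offer is not the right object. The shift you need is an element of $\tilde{\paramq}$, i.e.\ a character $\xi\mapsto c\cdot\operatorname{tr}(\xi)$ of $\mathfrak{gl}(n_i)$, so it should be expressed via the determinant character of $\GL(n_i)$ on (say) the negative-weight part of $R$ restricted to the fixed component, combined with the normal-ordering constants that relate $\tfrac12(\xi_R+\xi_{R^*})$ to the moment map on $R_0$; writing it as ``$\tfrac12\operatorname{tr}(\nu_0|_{\text{positive weights}})$'' conflates the shift in the grading element $h=\Phi(d_1\nu_0)$ with the shift in $\lambda$. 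Moreover, your own block count undermines the conclusion: there are $r-i$ off-diagonal blocks $\Hom(W_k,V_i)$, $\Hom(V_i,W_k)$ with $k>i$, so if those were the contributors the shift would be $r-i$, not $i-1$; you notice the ambiguity (``not, say, $r-i$ or $(i-1)/2$'') but do not resolve it. There are also cancellations between $\Hom(V_j,V_i)$ and $\Hom(V_i,V_j)$ (opposite $\det_{\GL(n_i)}$-characters) that your formula does not account for; without carrying these out one cannot confirm that the $n_j$-dependent terms disappear. Finally, ``$\Ca_{\nu_0}$ commutes with Hamiltonian reduction'' hides a genuine subtlety: the $\nu_0$-fixed locus of $\M^\theta$ consists of orbits fixed only by $\nu_0$ twisted by an inner cocharacter $\sigma$ of $\GL(n)$ depending on the component, so the Cartan subquotient of the reduction is not literally the reduction of a $\nu_0$-Cartan subquotient of $D(R)$. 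In short: the strategy matches the cited source, but the proposal stops precisely where the work begins, and the hints it offers toward the shift are both dimensionally off and numerically inconclusive.
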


By the construction of     $\Ca_{\nu_0}(\A^\theta_{\lambda}(n,r))$
we have a homomorphism
\begin{equation}\label{eq:Cartan_homom} \Ca_{\nu_0}(\A_\lambda(n,r))
\rightarrow \Gamma(\Ca_{\nu_0}(\A^\theta_{\lambda}(n,r)))\end{equation}

The following lemma will allow us to reduce the proof of Proposition
\ref{Prop:Gies_O_sing} to the case of Hilbert schemes.

\begin{Lem}\label{Lem:Cartan_surjectivity}
If $\lambda$ is regular, then (\ref{eq:Cartan_homom}) is surjective.
\end{Lem}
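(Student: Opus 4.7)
The plan is to prove the surjectivity by a filtered argument, passing to associated graded and exploiting that regularity guarantees the Cartan subquotient on the quantum side has the ``expected'' associated graded. Both sides of (\ref{eq:Cartan_homom}) inherit Bernstein-type filtrations from the Hamiltonian reduction presentation of $\A_\lambda(n,r)$ via $D(R)$, and the map in question is filtered, so it suffices to establish surjectivity on the associated graded level together with appropriate compatibility.

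First I would equip $\Ca_{\nu_0}(\A_\lambda(n,r))$ with the quotient filtration coming from the good filtration on $\A_\lambda^{\geqslant 0}$, and $\Gamma(\Ca_{\nu_0}(\A^\theta_\lambda(n,r)))$ with the filtration induced by viewing it, via Lemma \ref{Lem:global_section_Cartan}, as $\bigoplus\bigotimes_{i=1}^r \A_{\lambda+(i-1)}(n_i,1)$. Next I would identify the associated graded of the target: it is isomorphic to $\bigoplus \bigotimes_i \C[\M^0(n_i,1)]$, which is precisely $\Gamma$ of the sheaf $\Ca_{\nu_0}(\Str_X)$ on the (disjoint union of products of) Hilbert schemes. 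Then I would analyze the associated graded of the source: I expect that for regular $\lambda$ one has the natural identification $\gr \Ca_{\nu_0}(\A_\lambda(n,r)) \cong \Ca_{\nu_0}(\C[Y])$, so the associated graded map becomes the classical restriction $\Ca_{\nu_0}(\C[Y]) \to \Gamma(\Ca_{\nu_0}(\Str_X))$. Surjectivity of this classical map would then follow from the geometric description of $X^{\nu_0(\C^\times)}$ as a union of symplectic leaves' fixed loci together with the fact that the Hamiltonian reduction commutes with passing to $\nu_0$-fixed vectors on $T^*R$. Finally, by a standard completion / degreewise-finiteness argument on the filtrations, surjectivity on $\gr$ lifts to surjectivity of the original map.

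The main obstacle is Step 3, the comparison $\gr \Ca_{\nu_0}(\A_\lambda(n,r)) \to \Ca_{\nu_0}(\C[Y])$. In general this map need not be an isomorphism because the left ideal $\A_\lambda \A_\lambda^{>0}$ can have associated graded strictly larger than $\C[Y]\cdot \C[Y]^{>0}$; equivalently, there can be ``unexpected'' relations in the subquotient coming from higher-order quantum corrections. Here is where regularity enters: when $\lambda$ lies outside the singular hyperplanes the algebra $\A_\lambda$ has finite homological dimension (Conjecture \ref{Conj:fin_hom_dim}, which is known in this case by \cite{MN_derived}) and, by Theorem \ref{Thm:exactness_precise} combined with Corollary \ref{Cor:abelian_localization_connection}, $\Gamma^\theta_\lambda$ is exact for an appropriate $\theta$. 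Exactness of $\Gamma^\theta_\lambda$ permits us to commute $\Gamma$ with the quotient by $\A_\lambda^{>0}$ and hence to identify the associated graded cleanly; alternatively, one can use exactness to argue that the kernel of the associated graded comparison map has empty support in $X^{\nu_0(\C^\times)}$, forcing the map to at least be surjective onto $\Gamma(\Ca_{\nu_0}(\Str_X))$, which is all we need.

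If this direct approach runs into trouble, a fallback is to argue via the universal family: form $\Ca_{\nu_0}(\A_{\ring})$ and $\Gamma(\Ca_{\nu_0}(\A^\theta_{\ring}))$ for $\ring=\C[\paramq]^{\wedge_\lambda}$, use Lemma \ref{Lem:cartan_iso} together with Proposition \ref{Prop:deformed_category_O} to conclude that the comparison map becomes surjective generically along $\operatorname{Spec}(\ring)$, and then deduce surjectivity at the closed point $\lambda$ from the fact that, for regular $\lambda$, the cokernel is a finitely generated $\ring$-module whose support is contained in $\paramq^{\Ocat-sing}\cap\operatorname{Spec}(\ring)$, which is proper. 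This mirrors the style of argument used in the proof of Theorem \ref{Thm:O_regular_precise}.
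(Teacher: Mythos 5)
Your proposal takes a fundamentally different route from the paper, and it has a gap that I do not think can be repaired in the form you describe. The paper's proof is not a filtered-associated-graded argument at all: it treats the cokernel $C$ of (\ref{eq:Cartan_homom}) as a Harish-Chandra bimodule over $\Ca_{\nu_0}(\A_\lambda)$, locates its associated variety inside $Y_0'=\operatorname{Spec}(\Ca_{\nu_0}(\C[Y]))^{red}$, picks a point $z$ in an open stratum of the image of $\operatorname{V}(C)$ in $\g\quo G$, completes at $z$, and uses the slice isomorphisms of Section \ref{SSS_slice_quant} to identify the completed map with the product of the corresponding maps for smaller Gieseker algebras $\A_\lambda(\tau_i,r)$. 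A finite-dimensional nonzero cokernel for those then contradicts \cite[Proposition 5.4]{Gies}. The heavy lifting is done by the completion-at-a-slice argument and the earlier Gieseker result, not by any $\gr$ comparison.

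The key step your proposal leans on --- that the classical map $\Ca_{\nu_0}(\C[Y])\to\Gamma(\Ca_{\nu_0}(\Str_X))$ is surjective and that $\gr\,\Ca_{\nu_0}(\A_\lambda)$ can be compared cleanly to $\Ca_{\nu_0}(\C[Y])$ --- is precisely the hard part, and your justification does not close it. Passing to $\gr$ of a quotient gives a \emph{surjection} $\Ca_{\nu_0}(\C[Y])\twoheadrightarrow\gr\,\Ca_{\nu_0}(\A_\lambda)$, so to conclude surjectivity of $\gr$(\ref{eq:Cartan_homom}) you would need the composite classical map $\Ca_{\nu_0}(\C[Y])\to\C[Y_0]=\bigoplus\bigotimes_i\C[\M^0(n_i,1)]$ to be surjective. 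But (as the paper makes explicit in Step 1 of its proof) this map factors through the finite dominant morphism $\pi_1\colon Y_0\to Y_0'$, and $\pi_1^*$ is generally only injective, not surjective, unless $\pi_1$ is an isomorphism. Establishing that amounts to a Hikita-type statement for the higher-rank Gieseker variety with respect to the non-generic subgroup $\nu_0$, which is not available --- indeed, this lemma exists in the paper precisely because the Hikita input (Lemma \ref{Lem:Hikita}) only covers $T^*\operatorname{Gr}(k,n)$ and $\operatorname{Hilb}_n(\C^2)$, and the higher-rank Gieseker case must be handled by reduction. The remark that ``exactness of $\Gamma^\theta_\lambda$ permits us to commute $\Gamma$ with the quotient by $\A_\lambda^{>0}$'' is not a precise statement and does not address this obstruction: exactness of the global-section functor on $\Coh(\A_\lambda^\theta)$ says nothing about the relation between $\gr$ of the left ideal $\A_\lambda\A_\lambda^{>0}$ and the classical ideal.

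Your fallback also does not work as stated. First, the locus where (\ref{eq:Cartan_homom}) fails to be surjective is tied to $\nu_0$, which is not a generic one-parameter subgroup, so a priori it need not be contained in $\paramq^{\Ocat-sing}$, which is defined via the generic $\nu$. Second, and more basically, even if you knew that the cokernel of the deformed map over $\ring$ has support contained in a proper closed subscheme of $\operatorname{Spec}(\ring)$, that does not force the support to avoid the closed point $\lambda$; concluding vanishing at $\lambda$ would require knowing $\lambda$ lies outside that closed subscheme, which is essentially what the lemma is being used to establish (this lemma feeds into Proposition \ref{Prop:Gies_O_sing}, which is the statement $\paramq^{\Ocat-sing}=\paramq^{sing}$). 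The argument is circular at this point. The paper avoids both issues by working not over $\operatorname{Spec}(\ring)$ but over $Y$ itself, and by invoking the concrete Gieseker result \cite[Proposition 5.4]{Gies} at the slice.
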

\begin{proof}
The proof is in several steps.

{\it Step 1}. Let $Y_0:=\operatorname{Spec}(\C[X^{\nu_0(\C^\times)}])$.
So $Y_0$ is the disjoint union of  $(\mathfrak{h}\oplus \mathfrak{h}^*)/(\prod_{i=1}^r S_{n_i})$,
where $\mathfrak{h}$ stands for the Cartan subalgebra in $\g$. Next, set $Y_0':=
\operatorname{Spec}(\Ca_{\nu}(\C[Y]))^{red}$, where the superscript ``red''
means that we consider the reduced scheme structure.

The resolution of singularities morphism $X\rightarrow Y$
gives rise to a finite dominant morphism $\pi_1:
Y_0\rightarrow Y'_0$. Set $Z:=\g\quo G$. Also consider the morphism
$\pi_2:Y_0'\rightarrow Z$ restricted from
$Y\rightarrow Z$. Set $\pi:=\pi_2\circ\pi_1$.
The variety $Z$ is naturally stratified (via the stabilizer stratification). We note that the image of the closure of every leaf in $Y_0$ is the closure of a single
stratum and the dimension of the leaf is twice bigger than the dimension
of its image in $Z$. Note also that, according to \cite[Lemma 6.7]{catO_R}, $Y_0$ has finitely
many leaves.

Below we will write $\A_\lambda$ for $\A_\lambda(n,r)$ and
$\A_\lambda^\theta$ for $\A^\theta_\lambda(n,r)$.

{\it Step 2}. Consider the cokernel, $C$, of (\ref{eq:Cartan_homom}). This is a HC
$\Ca_{\nu_0}(\A_\lambda)$-bimodule. So it makes sense to speak about its
associated variety, $\operatorname{V}(C)$, in $Y_0'$. This variety must be a union of symplectic leaves. We have  $\operatorname{V}(C)\subset \operatorname{V}(\Gamma(\Ca_{\nu_0}(\A_\lambda^\theta)))$ and
$\operatorname{V}(\Gamma(\Ca_{\nu_0}(\A_\lambda^\theta)))=\pi_1(Y_0)$. Note that every symplectic
leaf in $\pi_1(Y_0)$ is the image of a symplectic leaf in $Y_0$. Also the image of every symplectic
leaf in $Y_0$ under $\pi$ is a stratum in $Z$. It follows that
$\pi_2(\operatorname{V}(C))$ is a union of strata.  Choose
a point $z$ in an open stratum of $\pi_2(\operatorname{V}(C))$.

{\it Step 3}. We will arrive at a contradiction with \cite[Proposition 5.4]{Gies} by
comparing  $\Ca_{\nu_0}(\A_\lambda)\rightarrow \Gamma(\Ca_{\nu_0}(\A_\lambda^\theta))$
with the similarly defined homomorphism for the slice algebras.

Let $\tau=(\tau_1,\ldots,\tau_\ell)$ be the partition corresponding to the stratum of  $z$.
We write $\A_\lambda(\tau)$ for $\bigotimes_{i=1}^\ell \A_\lambda(\tau_i,r)$
and $\A_\lambda^\theta(\tau)$ for the sheaf version.


{\it Step 4}. Consider the algebra
$\A_\lambda^{\wedge_z}:=\C[Z]^{\wedge_z}\otimes_{\C[Z]}
\A_\lambda$.

Our next goal is to describe the structure of $\A_\lambda^{\wedge_z}$.
Now note that $\A_\lambda^{\wedge_z}$ is the quantum Hamiltonian reduction
for the action of $G$ on
$$\C[Z]^{\wedge_z}\otimes_{\C[Z]} D(R)
=D(Z^{\wedge_z}\times_Z R).$$
Let $\xi$ be a semisimple element in $\g$ mapping to $z$. We can assume that $\xi$
is diagonal and that the centralizer of $\xi$ in $G$ is a standard Levi subgroup,
to be denoted by $G_\tau$.  Note
$\g_\tau$ embeds into $\g$ via $x\mapsto \xi+x$, this realizes $\g_\tau$ as a transverse slice to $G\xi$ in $\g$. We write $Z_\tau$ for $\g_\tau\quo G_\tau$
and $R_\tau$ for $\g_\tau\oplus \operatorname{Hom}(V,W)$.
The embedding of $\g_\tau$ into $\g_\tau$ gives rise to an isomorphism
$$G\times^L (Z_\tau^\wedge\times_{Z_\tau}R_\tau)\xrightarrow{\sim} Z^{\wedge_z}\times_Z R.$$
Hence we get an identification of quantum Hamiltonian reductions:
$$
\A_\lambda^{\wedge_z}\cong \C[Z_\tau]^{\wedge_0}\otimes_{\C[Z_\tau]}D(R_\tau)/\!/\!/_\lambda G_\tau.$$
Note that the right hand side is naturally identified with
$$\A_\lambda(\tau)^{\wedge_0}:=\C[Z_\tau]^{\wedge_0}\otimes_{\C[Z_\tau]}\A_\lambda(\tau).$$
So we arrive at the identification
\begin{equation}\label{eq:Ham_red_ident}
\A_\lambda^{\wedge_z}\cong \A_\lambda(\tau)^{\wedge_0}.
\end{equation}

{\it Step 5}.
Set
\begin{align*}&\Ca_{\nu_0}(\A_\lambda)^{\wedge_z}:=\C[Z]^{\wedge_z}\otimes_{\C[Z]}\Ca_{\nu_0}(\A_\lambda)=
\Ca_{\nu_0}(\A_\lambda^{\wedge_z}),\\
&\Gamma(\Ca_{\nu_0}(\A_\lambda^\theta))^{\wedge_z}:=\C[Z]^{\wedge_z}\otimes_{\C[Z]}
\Gamma(\Ca_{\nu_0}(\A_\lambda^\theta)),\\
&\Ca_{\nu_0}(\A_\lambda(\tau))^{\wedge_0}:=\C[Z_\tau]^{\wedge_0}\otimes_{\C[Z_\tau]}\Ca_{\nu_0}(\A_\lambda(\tau))=
\Ca_{\nu_0}(\A_\lambda(\tau)^{\wedge_0}),\\
&\Gamma(\Ca_{\nu_0}(\A_\lambda^\theta(\tau)))^{\wedge_0}:=\C[Z_\tau]^{\wedge_0}\otimes_{\C[Z_\tau]}
\Gamma(\Ca_{\nu_0}(\A_\lambda^\theta(\tau))).
\end{align*}
We note that (\ref{eq:Ham_red_ident}) gives rise to an isomorphism
\begin{equation}\label{eq:Cartan_iso1}
\Ca_{\nu_0}(\A_\lambda)^{\wedge_z}\xrightarrow{\sim} \Ca_{\nu_0}(\A_\lambda(\tau))^{\wedge_0}.
\end{equation}

On the other hand, after we identify $\Gamma(\Ca_{\nu_0}(\A_\lambda^\theta)),\Gamma(\Ca_{\nu_0}(\A_\lambda^\theta(\tau)))$
with the direct sums of spherical rational Cherednik algebras and use the isomorphism
(or, more precisely, the direct sum of isomorphisms) similar to
(\ref{eq:Ham_red_ident}), we get an isomorphism
\begin{equation}\label{eq:Cartan_iso2}
\Gamma(\Ca_{\nu_0}(\A_\lambda^\theta))^{\wedge_z}\xrightarrow{\sim}
\Gamma(\Ca_{\nu_0}(\A_\lambda^\theta(\tau)))^{\wedge_0}.
\end{equation}
In the subsequent steps we will prove that
\begin{itemize}
\item[(*)]
the isomorphisms
(\ref{eq:Cartan_iso1}) and (\ref{eq:Cartan_iso2}) intertwine
\begin{equation}\label{eq:Cartan_homom1} \Ca_{\nu_0}(\A_\lambda(n,r))^{\wedge_z}
\rightarrow \Gamma(\Ca_{\nu_0}(\A^\theta_{\lambda}(n,r)))^{\wedge_z}
\end{equation}
with
\begin{equation}\label{eq:Cartan_homom2}\Ca_{\nu_0}(\A_\lambda(\tau))^{\wedge_0}\xrightarrow{\sim}
\Gamma(\Ca_{\nu_0}(\A_\lambda^\theta(\tau)))^{\wedge_0}.
\end{equation}
\end{itemize}
We will then use \cite[Proposition 5.4]{Gies} to arrive at a contradiction with the choice of $z$.

{\it Step 6}. To prove (*) we will need a characterization of (\ref{eq:Cartan_iso1}).
For this, we will use a construction from the proof of \cite[Proposition 5.5]{Gies}.
Let $Z^{Reg}$ denote the open stratum in $Z$. The notations $\g^{Reg}$ and $\h^{Reg}$
have the similar meaning.

Set
$\A_\lambda^{Reg}:=\C[Z^{Reg}]\otimes_{\C[Z]}\A_\lambda$. So $\A_\lambda^{Reg}$
is identified with the quantum Hamiltonian reduction of
$D(\g^{Reg}\times \operatorname{Hom}(V,W))$. This gives rise to an identification
\begin{equation}\label{eq:reg_locus_ident}
\A_\lambda^{Reg}\cong \left(\C[\mathfrak{h}^{Reg}]\otimes_{\C[\mathfrak{h}]} \A_\lambda(1,r)^{\otimes n}\right)^{S_n}.
\end{equation}
We note that (\ref{eq:reg_locus_ident}) is compatible with
(\ref{eq:Ham_red_ident}) in the following way. Set $Z^{\wedge_z,Reg}:=Z^{\wedge_z}\cap Z^{Reg}$
intersection of the subschemes in $Z$ so that $Z^{\wedge_z,Reg}$ is a principal open
affine subscheme in $Z^{\wedge_z}$. The pullbacks of $\A_{\lambda}^{\wedge_z}$ and $\A_\lambda^{Reg}$
to $Z^{\wedge_z,Reg}$ are naturally identified. The pullbacks of the right hand sides of
(\ref{eq:reg_locus_ident}) and (\ref{eq:Ham_red_ident})  to $Z^{\wedge_z,Reg}$ are also
naturally identified. These identifications intertwine the pullbacks of isomorphisms
(\ref{eq:Ham_red_ident}) and (\ref{eq:reg_locus_ident}).

{\it Step 7}. Consider the natural homomorphism $\A_\lambda^{\wedge_z}\rightarrow \A_\lambda^{\wedge_z,Reg}$. Both algebras are filtered (with filtrations coming
from the filtration by the order of differential operator) and the homomorphism
preserves the filtration. It is easy to see that the associated graded homomorphism
is injective. Hence  $\A_\lambda^{\wedge_z}\rightarrow \A_\lambda^{\wedge_z,Reg}$
is injective. It follows that the induced homomorphism
$\left(\A_\lambda^{\wedge_z}\right)^{\nu_0(\C^\times)}\rightarrow
\left(\A_\lambda^{\wedge_z,Reg}\right)^{\nu_0(\C^\times)}$ is also injective.
Therefore, (\ref{eq:Cartan_iso1}),(\ref{eq:Cartan_homom1}) are uniquely recovered
from their localization to $\C[Z^{\wedge_z,Reg}]$. A similar claim holds
for (\ref{eq:Cartan_iso2}),(\ref{eq:Cartan_homom2}). It follows that it is sufficient to
verify  our claim --
that (\ref{eq:Cartan_iso1}), (\ref{eq:Cartan_iso2}) intertwine
(\ref{eq:Cartan_homom1}) with (\ref{eq:Cartan_homom2}) -- after the
localization to the regular locus.

For this we note that
$$\Ca_{\nu_0}\left(\left(\C[\mathfrak{h}^{Reg}]\otimes_{\C[\mathfrak{h}]} \A_\lambda(1,r)^{\otimes n}\right)^{S_n}\right)\cong
\left( D(\mathfrak{h}^{Reg})\otimes \Ca_{\nu_0}\left(\bar{\A}_\lambda(1,r)^{\otimes n}\right)\right)^{S_n}.$$
As was explained in the proof of \cite[Proposition 5.5]{Gies},
$\Ca_{\nu_0}\left(\bar{\A}_\lambda(1,r)^{\otimes n}\right)$ is naturally identified
with $(\C^{\oplus r})^{\otimes n}$, where the copies of $\C$ in the brackets are
indexed by the $\nu_0$-eigen-basis elements of $W$.
So we have an identification
\begin{equation}\label{eq:localiz_ident}
\Ca_{\nu_0}(\A_\lambda^{Reg})\cong
\left( D(\mathfrak{h}^{Reg})\otimes (\C^{\oplus r})^{\otimes n}\right)^{S_n}
\end{equation}
The algebra
$$\Gamma(\Ca_{\nu_0}(\A^\theta_{\lambda}(n,r)))^{Reg}=\C[Z^{Reg}]\otimes_{\C[Z]}
\Gamma(\Ca_{\nu_0}(\A^\theta_{\lambda}(n,r)))$$
naturally identifies with the right hand side of (\ref{eq:localiz_ident})
so that the localization of (\ref{eq:Cartan_homom}) is the identity -- in fact,
this is a part of the argument in \cite[Proposition 5.5]{Gies}. (\ref{eq:localiz_ident})
gives rise to an identification
\begin{equation}\label{eq:localiz_ident1}
\Ca_{\nu_0}(\A_\lambda^{\wedge_z,Reg})\cong
\left( D(\mathfrak{h}^{Reg})^{\wedge_z}\otimes (\C^{\oplus r})^{\otimes n}\right)^{S_n}
\end{equation}
We can write similar identifications for the other three algebras in
(\ref{eq:Cartan_iso1}) and (\ref{eq:Cartan_iso2}). Under these identifications,
all four homomorphisms (\ref{eq:Cartan_iso1}),(\ref{eq:Cartan_iso2}),
(\ref{eq:Cartan_homom1}),(\ref{eq:Cartan_homom2}) become the identity homomorphisms.
So the localized version of (*) is verified. Hence (*) is verified.

{\it Step 8}. Now we can finish the proof. Set
$\bar{\A}_\lambda(\tau)=\bigotimes_{i=1}^\ell \bar{\A}_\lambda(\tau_i)$
and let $\bar{\A}^\theta_\lambda(\tau)$ have the similar meaning.
Thanks to (*), the claim that the cokernel of
(\ref{eq:Cartan_homom}) is nonzero and is supported on the stratum of $z$
in $Z$ translates to saying that the cokernel of
$$\Ca_{\nu_0}(\bar{\A}_\lambda(\tau))\xrightarrow{\sim}
\Gamma(\Ca_{\nu_0}(\bar{\A}_\lambda^\theta(\tau)))$$
is nonzero and finite dimensional. This contradicts
\cite[Proposition 5.4]{Gies}. The contradiction finishes the proof.
\end{proof}

\subsubsection{Proof of Proposition \ref{Prop:Gies_O_sing}}\label{SSS_Gies_sing_proof}
\begin{proof}
By Lemma \ref{Lem:Cartan_surjectivity}, the homomorphism
\begin{equation}\label{eq:surjection_Cartan} \Ca_{\nu_0}(\bar{\A}_\lambda(n,r))\rightarrow
\Gamma(\Ca_{\nu_0}(\bar{\A}^\theta_\lambda(n,r)))
\end{equation}
is surjective.

Pick $m\gg 0$ and consider the one-parameter subgroup $\nu=(m\nu_0,\nu'):
\C^\times\rightarrow T=T_0\times \C^\times$, where $\nu':\C^\times
\rightarrow \C^\times$ is the identity homomorphism. From (\ref{eq:surjection_Cartan}),
it follows that
\begin{equation}\label{eq:surjection_Cartan1}
\Ca_{\nu'}\left(\Ca_{\nu_0}(\bar{\A}_\lambda(n,r))\right)\rightarrow
\Ca_{\nu'}\left(\Gamma(\Ca_{\nu_0}(\bar{\A}^\theta_\lambda(n,r)))\right)
\end{equation}
is surjective.

Since
$m\gg 0$, from \cite[Section 5.5]{catO_R}, we see that
\begin{equation}\label{eq:Cartan_iso3}
\Ca_{\nu'}(\Ca_{\nu_0}(\bar{\A}_\lambda(n,r)))\xrightarrow{\sim}
\Ca_{\nu}(\bar{\A}_\lambda(n,r)),
\end{equation}
moreover, both sides are the same quotients of $\bar{\A}_\lambda(n,r)^T$.

On the other hand, again using \cite[Section 5.5]{catO_R}, we see that,
for $\nu$ as in the previous paragraph,
the homomorphism $\Ca_\nu(\bar{\A}_\lambda(n,r))
\rightarrow \Ca_\nu(\bar{\A}_\lambda^\theta(n,r))$ factors through
$$\Ca_{\nu'}(\Ca_{\nu_0}(\bar{\A}_\lambda(n,r)))\rightarrow
\Ca_{\nu'}(\Gamma(\Ca_{\nu_0}(\bar{\A}^\theta_\lambda(n,r)))).$$
The direct summands of $\Gamma(\Ca_{\nu_0}(\bar{\A}^\theta_\lambda(n,r)))$
are tensor products of rational Cherednik algebras. Their parameters
are regular provided $\lambda$ is. We use Corollary \ref{Cor:localization_Hikita}
and Lemma \ref{Lem:Hikita} to see that
\begin{equation}\label{eq:Cartan_iso4}
\Ca_{\nu'}(\Gamma(\Ca_{\nu_0}(\bar{\A}^\theta_\lambda(n,r))))
\xrightarrow{\sim} \Ca_{\nu}(\bar{\A}_\lambda^\theta(n,r)).
\end{equation}

Using (\ref{eq:surjection_Cartan1}),(\ref{eq:Cartan_iso3}), (\ref{eq:Cartan_iso4}),
we conclude that $\Ca_\nu(\bar{\A}_\lambda(n,r))\twoheadrightarrow
\Ca_\nu(\bar{\A}_\lambda^\theta(n,r))$. Using Theorem \ref{Thm:O_regular_precise},
we see that $\lambda$ is $\Ocat$-regular.
\end{proof}

\subsection{Reduction to rank 1}
In this section we will prove (1) from Section \ref{SS_quiver_A_main}.
Let $\Upsilon$ be a classical wall and $\tilde{\Upsilon}$ be a non-singular essential hyperplane
parallel to $\Upsilon$.

\subsubsection{Real root case}\label{SSS_real_rk1}
Suppose, first, that $\Upsilon$ is defined by a real root. Let $\zeta\in\Upsilon$
be as in Proposition \ref{Prop:rank1}. Then the corresponding slice varieties $\underline{X}$
are all of the form $T^*\operatorname{Gr}(k,n)$. Using Corollary \ref{Cor:localization_Hikita}
and Lemma \ref{Lem:Hikita}, we see that $\underline{\paramq}^{O-sing}=
\underline{\paramq}^{sing}$. Now we can use Proposition \ref{Prop:O_reg_rk1}
to show that a Zariski generic point in $\tilde{\Upsilon}$ is $\Ocat$-regular.

\subsubsection{Imaginary root case}\label{SSS_imaginary_rk1}
Note that $\underline{X}$ is the product of Gieseker varieties
whose framing spaces are all identified with $\bigoplus_{i\in Q_0}W_i$.
This follows from Section \ref{SSS_slices_quiver}.

We can argue as in the case of real roots by using Proposition
\ref{Prop:Gies_O_sing} instead of Lemma \ref{Lem:Hikita}, if we
know that the torus action on $\underline{X}$ induced by $\nu$
is the product of the actions considered in Section \ref{SS_Gies_O_reg}.

Take $\nu_0: \C^\times\rightarrow \prod_{i\in Q_0}\operatorname{GL}(W_i)$
with weights far away from one another.

Now pick a generic $\zeta\in \Upsilon$ and  a $T_0$-stable point $y\in Y_\zeta$.
Let $r\in \mu^{-1}(\zeta)$ be an element from the closed $G$-orbit over $y$,
i.e. a semisimple representation. Its stabilizer in $G\times T_0$ projects
surjectively onto $T_0$. We can pick a section of the projection. This gives
an action of $T_0$ on $T^*V$ fixing $r$.

Recall the construction of the slice representation $T^*\underline{V}$.
We have $T^*\underline{V}\oplus (\g/\g_r)^{\oplus 2}\cong T^*V$
up to summands that are trivial over $G_r$.
Recall that $G_r\cong \prod_{i=1}^k \GL_{n_i}$, see
Section \ref{SSS_sympl_leaves}. From here we see that
the slice module, as a module over $(G\times T_0)_r$,
has the form $\bigoplus_{i=1}^k \mathfrak{sl}_{n_i}^{\oplus 2}\oplus
\bigoplus_{i\in Q_0}[\Hom(V_i,W_i)\oplus \Hom(W_i,V_i)]$.
Up to trivial over $G_r$ summands, the second direct sum is
$\bigoplus_{i=1}^k (\Hom(\C^{n_i},W)\oplus \Hom(W,\C^{n_i}))$.
The action of $T_0$ on this $\Hom(\C^{n_i},W)$ is induced by a character on
$\C^{n_i}$ and the initial action on $W$. The variety $\underline{X}$
is the product of Gieseker moduli spaces and the action of $\nu_0$
coincides with the product action of the one-parameter subgroups
denoted by $\nu_0$ in Section \ref{SSS_Gies_sing_proof}.

From here we see that the one-parameter subgroup $\nu=m\nu_0+\nu'$, where $m\gg 0$ and $\nu'$ acts on the cycle, gives rise to the one-parameter subgroup acting on $\underline{X}$ as $m\nu_0+\nu'$
from the proof of Proposition \ref{Prop:Gies_O_sing}.
Using Proposition \ref{Prop:O_reg_rk1} combined with
Proposition \ref{Prop:Gies_O_sing}, we conclude that a Zariski generic parameter on $\tilde{\Upsilon}$ is $\Ocat$-regular.

\subsubsection{$\Gamma^\theta_\lambda$ is an equivalence between the categories $\mathcal{O}$}
So at this point we know that a Zariski generic parameter on every non-singular
essential hyperplane is non-singular, see Sections \ref{SSS_real_rk1}
and \ref{SSS_imaginary_rk1}. For every regular $\lambda$, there is
$\theta$ such that $\Gamma^\theta_\lambda$ is exact, this follows
from Theorem \ref{Thm:exactness_precise}. So we see that $\paramq^{\Ocat-sing}
\subset \paramq^{sing}$. And if $\Gamma^\theta_\lambda$ is exact and $\lambda\in \paramq^{\Ocat-reg}$,
then $\Gamma^\theta_\lambda:\Ocat_\nu(\A^\theta_\lambda)\xrightarrow{\sim}
\Ocat_\nu(\A_\lambda)$. This finishes the proof of (1) from Section
\ref{SS_quiver_A_main}.

\subsection{From categories $\Ocat$ to all modules}
Note that every slice variety $\underline{X}$ over $0\in \param$ is again a finite or
affine type $A$ quiver variety, Lemma \ref{Lem:type_A_slices}.
Let $\underline{\Gamma}^\theta_\lambda$ denote the
global section functor $\Coh(\underline{\A}^\theta_\lambda)\rightarrow
\underline{\A}\operatorname{-mod}$. Let $\eta$ denote the natural map
$\paramq\rightarrow \underline{\paramq}$.

\begin{Lem}\label{Lem:singular_hyperplane_slice}
The $\underline{\tilde{\Upsilon}}$ be a singular hyperplane for $\underline{X}$. Then
$\eta^{-1}(\underline{\tilde{\Upsilon}})$ is a singular hyperplane for $X$.
\end{Lem}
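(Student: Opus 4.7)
The plan is to reduce the statement to transitivity of the slice construction. Let $\underline{\Upsilon} \subset \underline{\paramq}$ be the classical wall of $\underline{X}$ for which $\underline{\tilde{\Upsilon}}$ is singular, so that $\underline{\tilde{\Upsilon}} = \underline{\eta}^{-1}(\sigma)$ for some $\sigma \in \Sigma_{\underline{\Upsilon}}$, where $\underline{\eta}$ is the rank-1 pullback map associated to $\underline{\Upsilon}$. Unpacking Definition \ref{defi:singular_hyperplane}, there is a generic $\underline{\zeta} \in \underline{\Upsilon}$ and a point $\underline{y}$ in a minimal symplectic leaf of $\underline{Y}_{\underline{\zeta}}$ whose rank-1 slice $\underline{\underline{X}} \to \underline{\underline{Y}}$ witnesses $\sigma$ as a parameter where the translation bimodules for $\underline{\underline{\A}}$ fail to be mutually inverse Morita equivalences (or lies between two such, as in the definition of $\Sigma_\Upsilon$).

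Next I would invoke transitivity of slices. Recall that $\underline{X}$ itself was constructed as a slice to $X$ at some $y \in Y_\zeta$ with $\zeta$ generic in a classical wall $\Upsilon \subset \paramq$ of $X$. The point is that iterated slices are slices: the pair $(y,\underline{y})$ determines a point $y' \in Y$ whose symplectic leaf is nested in the closure of the leaf of $y$, and whose transverse slice Poisson structure coincides with $\underline{\underline{Y}}$. In the type A quiver variety setting this transitivity is visible from Section \ref{SSS_slices_quiver}: if $y$ has representation type yielding slice data $(\underline{Q},\underline{v},\underline{w})$, and $\underline{y} \in \underline{Y}_{\underline{\zeta}}$ has representation type in $\underline{Q}$ yielding further slice data $(\underline{\underline{Q}},\underline{\underline{v}},\underline{\underline{w}})$, then one reads off directly that there is a point $y'$ in $Y$ whose slice data equals $(\underline{\underline{Q}},\underline{\underline{v}},\underline{\underline{w}})$; in the general framework one uses uniqueness (up to Hamiltonian automorphism) of decompositions $\C[Y]^{\wedge_{y'}} \cong \C[[T_{y'}\mathcal{L}']] \widehat{\otimes} \underline{\underline A}'$.

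Having identified the iterated slice with a one-step slice of $X$, I would then locate the classical wall $\Upsilon' \subset \paramq$ corresponding to $y'$ such that the rank-1 pullback $\eta': \paramq \twoheadrightarrow \underline{\underline{\paramq}}'$ equals the composition $\underline{\eta} \circ \eta$. This follows from Proposition \ref{Prop:rank1} together with functoriality of the pullback on $H^2$: $y'$ sits in a leaf whose transverse slice is rank-1, which forces $y'$ to be generic in a unique classical wall of $X$, and by transitivity of the slice data the two factorizations of the Picard pullback must agree. Hence
\[
\eta^{-1}(\underline{\tilde{\Upsilon}}) \;=\; \eta^{-1}\bigl(\underline{\eta}^{-1}(\sigma)\bigr) \;=\; \eta'^{\,-1}(\sigma).
\]
Finally, since failure of the Morita property is detected intrinsically by the rank-1 slice quantization $\underline{\underline{\A}}$, which is the same algebra whether obtained in one step or two (by Section \ref{SSS_slice_quant}), we get $\sigma \in \Sigma_{\Upsilon'}$, so $\eta^{-1}(\underline{\tilde{\Upsilon}})$ is singular.

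The main obstacle is the middle paragraph: cleanly packaging transitivity of the slice construction so that the pullback maps on $H^2$ compose correctly and the target of the composed map is literally the parameter space of the iterated slice. In the type A situation all bookkeeping is combinatorial via Section \ref{SSS_slices_quiver}; in fact, since the lemma is only invoked for finite/affine type A in the application, it suffices to verify the transitivity at the level of representation types, which is a direct inspection of the quiver-theoretic slice formula.
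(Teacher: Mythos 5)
The paper's proof goes by contradiction and never touches the combinatorics of iterated slices: assuming $\tilde{\Upsilon}=\eta^{-1}(\underline{\tilde{\Upsilon}})$ were non-singular, Lemma \ref{Lem:ab_loc_Weil_generic} gives abelian localization for $\A_\lambda^\theta$ at a Weil generic $\lambda\in\tilde{\Upsilon}$; Corollary \ref{Cor:loc_slice} transports this to $\underline{\A}^{\eta(\theta)}_{\eta(\lambda)}$; and then a translation-bimodule argument (Lemmas \ref{Lem:inverse_Morita}, \ref{Lem:abloc_Morita}) propagates abelian localization from the locus $\eta(\tilde{\Upsilon})$ to a Weil generic point of all of $\underline{\tilde{\Upsilon}}$, contradicting singularity of $\underline{\tilde{\Upsilon}}$. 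Your plan is a genuinely different, ``forward'' route: realize the rank-1 slice that witnesses singularity of $\underline{\tilde{\Upsilon}}$ as a rank-1 slice of $X$ directly via transitivity, and then read off the singular hyperplane of $X$ from Definition \ref{defi:singular_hyperplane}.

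The forward route, as you have set it up, has several real gaps beyond the one you flag. First, the hypothesis of the lemma (see the sentence preceding it and its use in Proposition \ref{Prop:O_to_mod}) is that $\underline{X}$ is a slice of $X$ over $0\in\param$; it is not a rank-1 slice at a generic point of a classical wall $\Upsilon$. Your second paragraph assumes the latter, and the derived point $y'$ lands in $Y_{\zeta'}$ with $\zeta'\ne0$, not in $Y$ as you write. Second, and more seriously, after composing the two slice constructions you need the kernel of $\eta'=\underline{\eta}\circ\eta$, namely $\eta^{-1}(\underline{\Upsilon})$, to be a classical wall of $X$: Definition \ref{defi:singular_hyperplane} only builds singular hyperplanes from rank-1 pullbacks associated to classical walls of $X$, via Proposition \ref{Prop:rank1}. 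That $\eta^{-1}(\underline{\Upsilon})$ is a classical wall (and that $\eta$ does not have image contained in $\underline{\Upsilon}$, which would make $\eta'$ the zero map) is not automatic from functoriality of $c_1$ and would need a separate argument; in the quiver setting it amounts to the nontrivial fact that $\sum_i\underline{v}'_i v^i$ is again a root of $Q$ when $\underline{v}'$ is a root of the slice quiver. Third, $\Sigma_{\Upsilon'}$ is assembled only from points in minimal symplectic leaves of $Y_{\zeta'}$, and your iterated slice point $y'$ has no reason to be in a minimal leaf; one would need the further reduction that Morita failure detected at a non-minimal leaf is also detected at a minimal leaf in its closure, which is itself another instance of slice transitivity and is not supplied. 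The paper's contradiction argument is designed precisely to avoid having to match up walls, leaves and iterated slice data; it reduces everything to the intrinsic property ``abelian localization at a Weil generic point,'' which passes to slices painlessly by Corollary \ref{Cor:loc_slice} and can be translated by $\eta(\chi)$ across a whole hyperplane via Lemma \ref{Lem:inverse_Morita}.
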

\begin{proof}
Assume the contrary: $\tilde{\Upsilon}:=\eta^{-1}(\underline{\tilde{\Upsilon}})$ is non-singular.
Equivalently, by Lemma \ref{Lem:ab_loc_Weil_generic}, for a Weil generic point $\lambda\in \tilde{\Upsilon}$, there is
a regular element $\theta\in \param_{\Q}$ such that abelian localization
holds for $\A_\lambda^\theta$. By Corollary \ref{Cor:loc_slice},
abelian localization holds for $\underline{\A}^{\eta(\theta)}_{\eta(\lambda)}$.
We can pick an ample line bundle $\chi$ on $X$ such that abelian localization
holds for $\underline{\A}^{\eta(\theta)}_{\lambda'}$, where $\lambda'$ is either a Weil generic point
in $\underline{\tilde{\Upsilon}}+\eta(\chi)$ or $\lambda'=\eta(\lambda+\chi)$.
The locus of $\underline{\lambda}\in \underline{\tilde{\Upsilon}}$ such that
$\A_{\underline{\lambda},\eta(\chi)}, \A_{\underline{\lambda}+\eta(\chi),-\eta(\chi)}$
are mutually inverse Morita equivalence bimodules is Zariski open, by Lemma
\ref{Lem:inverse_Morita}.
For such $\underline{\lambda}$, if abelian localization holds for
$\underline{\A}^{\eta(\theta)}_{\underline{\lambda}+\eta(\chi)}$,
then it also holds for  $\underline{\A}^{\eta(\theta)}_{\underline{\lambda}}$, compare
to Lemma \ref{Lem:abloc_Morita}.
We conclude that abelian localization holds for $\underline{\A}^{\eta(\theta)}_{\underline{\lambda}}$
with a Weil generic $\underline{\lambda}\in \underline{\tilde{\Upsilon}}$. Hence $\underline{\tilde{\Upsilon}}$
is not singular. A contradiction.
\end{proof}

Now we prove the following proposition that finally implies
Theorem \ref{Thm:type_A}.

\begin{Prop}\label{Prop:O_to_mod}
Suppose that, for each leaf in $Y$, there is a one-parameter subgroup
$\underline{\nu}$ such that $\underline{\Gamma}:
\Ocat_{\underline{\nu}}(\underline{\A}_{\underline{\lambda}}^{\underline{\theta}})
\rightarrow \Ocat_{\underline{\nu}}(\underline{\A}_{\underline{\lambda}})$
is an equivalence as long as $\underline{\lambda}$ is regular and $\underline{\theta}$
lies in the chamber determined by $\underline{\lambda}$. Then,
for any regular $\lambda$ and any $\theta$ in the chamber determined by $\lambda$,
abelian localization holds for $\A_\lambda^\theta$.
\end{Prop}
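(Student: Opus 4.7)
The plan is to prove that $\Gamma^\theta_\lambda$ is both exact and faithful; combined with the identity $\Gamma^\theta_\lambda\circ\Loc^\theta_\lambda\cong\operatorname{id}$ (which is automatic once $\Gamma^\theta_\lambda$ is exact, since then it is a Serre quotient whose left adjoint is fully faithful), this gives the desired equivalence. Exactness will be obtained by invoking Theorem \ref{Thm:exactness_precise}. Namely, in the intended application to finite and affine type A quiver varieties, property ($\heartsuit$) holds and (*) has been verified in Section \ref{SSS_quant_quiver_walls}, so Theorem \ref{Thm:exactness_precise} directly yields exactness of $\Gamma^\theta_\lambda$ for every regular $\lambda$ and every generic $\theta$ in the chamber it determines.

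For faithfulness, I would argue by contradiction: suppose $M\in\Coh(\A^\theta_\lambda)$ is a nonzero simple object annihilated by $\Gamma^\theta_\lambda$. Let $Z\subset Y$ be the image of $\operatorname{supp}(M)$ under $\pi$; this is a closed, irreducible, $\C^\times$-stable Poisson subvariety (with respect to a good filtration on $M$), hence a leaf closure, so $Z=\overline{\mathcal{L}}$ for some symplectic leaf $\mathcal{L}\subset Y$. Pick $y\in\mathcal{L}$ and pass to the completion $M^{\wedge_y}$, a coherent module over the completed quantization $\A^{\theta,\wedge_y}_\lambda$. Using the slice decomposition recalled in Section \ref{SSS_slice_quant}, $\A^{\theta,\wedge_y}_\lambda$ is Morita equivalent (via a splitting of the Weyl algebra factor on $T_y\mathcal{L}$) to the completion $\underline{\A}^{\underline{\theta},\wedge_0}_{\eta(\lambda)}$ of the slice quantization. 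Under this equivalence $M^{\wedge_y}$ corresponds to a nonzero coherent $\underline{\A}^{\underline{\theta},\wedge_0}_{\eta(\lambda)}$-module that is supported on $\underline{\pi}^{-1}(0)$ (because we chose $y$ in the \emph{open} leaf $\mathcal{L}$ of $Z$, so the slice support collapses to the central fiber). By $\C^\times$-rigidity this module uniquely extends to a coherent $\underline{\A}^{\underline{\theta}}_{\eta(\lambda)}$-module $\underline{M}$, which lies in $\Ocat_{\underline{\nu}}(\underline{\A}^{\underline{\theta}}_{\eta(\lambda)})$ for the $\underline{\nu}$ furnished by the hypothesis for the leaf $\mathcal{L}$.

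Next I would verify the compatibility $\underline{\Gamma}(\underline{M})^{\wedge_0}\cong \Gamma^\theta_\lambda(M)^{\wedge_y}$ (obtained by completing along $Y^{\wedge_y}\cong(\underline{Y}\times T_y\mathcal{L})^{\wedge_0}$ and using that $\Gamma^\theta_\lambda$ and $\underline{\Gamma}$ commute with the corresponding completions, since completion is flat and cohomology vanishes in positive degrees for the sheaves in question). Assuming $\Gamma^\theta_\lambda(M)=0$ then forces $\underline{\Gamma}(\underline{M})=0$. But by hypothesis $\underline{\Gamma}:\Ocat_{\underline{\nu}}(\underline{\A}^{\underline{\theta}}_{\eta(\lambda)})\to\Ocat_{\underline{\nu}}(\underline{\A}_{\eta(\lambda)})$ is an equivalence (note that $\eta(\lambda)$ is regular because $\lambda$ is, by Lemma \ref{Lem:singular_hyperplane_slice}, and $\underline{\theta}$ lies in the appropriate chamber by construction), so $\underline{M}=0$, contradicting $M^{\wedge_y}\neq 0$.

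The main obstacle is the second paragraph: making precise the passage from the abstract completion $M^{\wedge_y}$ to an honest coherent module $\underline{M}$ on the slice $\underline{X}$, and establishing the compatibility between $\Gamma$ on $X$ and $\underline{\Gamma}$ on $\underline{X}$ after completion. The construction of $\underline{M}$ should be analogous to the restriction functors for Harish-Chandra bimodules reviewed in Section \ref{SSS_res_fun}, but one has to replace ``bimodule over $\underline{\A}$'' with ``coherent sheaf on $\underline{X}$'', using $\C^\times$-equivariance as in the passage from formal to algebraic quantizations in Section \ref{SSS_deformations}. The compatibility of global sections with the slice completion should follow from the derived base change
\[
R\Gamma(\A^\theta_\lambda\text{-module})\otimes^L_{\C[Y]}\C[Y]^{\wedge_y}\cong R\underline{\Gamma}(\text{slice module})
\]
together with the vanishing of higher cohomology of $\A^\theta_\lambda$ and $\underline{\A}^{\underline{\theta}}_{\eta(\lambda)}$.
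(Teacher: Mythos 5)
Your first paragraph is sound and matches the paper's starting point: exactness comes from Theorem \ref{Thm:exactness_precise}, and once $\Gamma^\theta_\lambda$ is exact one indeed has $\Gamma^\theta_\lambda\circ\Loc^\theta_\lambda\cong\operatorname{id}$ (both sides are right exact and agree on free modules). The approach you then take for faithfulness, however, diverges from the paper's and contains a genuine gap.

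The gap is in the step where you claim the sliced module $\underline{M}$ lies in $\Ocat_{\underline{\nu}}(\underline{\A}^{\underline{\theta}}_{\eta(\lambda)})$ ``because'' it is supported on $\underline{\pi}^{-1}(0)$. Membership in the sheaf-level category $\Ocat_{\underline{\nu}}$ requires \emph{two} things: support on the attracting locus of $\underline{\nu}$ \emph{and} a weakly $\underline{\nu}(\C^\times)$-equivariant structure (equivalently, local finiteness of the operator $h=\Phi(d_1\underline{\nu})$). Support on the central fiber gives the first, but not the second: an arbitrary simple coherent $\A^\theta_\lambda$-module $M$ carries no a priori $T$- or $\nu(\C^\times)$-structure, and completing and slicing does not manufacture one. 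Since the hypothesis of the Proposition only furnishes the $\Ocat$-equivalence for the slices, it gives no information about the non-$\Ocat$ module $\underline{M}$, so the intended contradiction cannot be extracted. (Note that this is precisely the situation where the distinction between ``coherent sheaf supported on $\pi^{-1}(0)$'' and ``object of category $\Ocat$'' matters; for \emph{finite-dimensional} $\underline{\A}_{\eta(\lambda)}$-modules $h$ automatically acts locally finitely and $\A^{>0}$ nilpotently, so those do lie in $\Ocat$ unconditionally — this is the class of modules the paper works with.)

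The paper avoids this issue by arguing on translation bimodules rather than on a hypothetical simple $M$. It uses an induction on the size of $Y$ to upgrade the $\Ocat$-equivalence hypothesis on a proper slice $\underline{Y}$ to \emph{full} abelian localization there, deduces that the kernel and cokernel of $\A_{\lambda,\chi}\otimes_{\A_\lambda}\A_{\lambda+\chi,-\chi}\to\A_{\lambda+\chi}$ restrict to zero on every proper slice and hence are finite-dimensional, and then invokes the $\Ocat$-equivalence on $X$ itself only for finite-dimensional modules, where $\Ocat$-membership is automatic. That detour through finite-dimensional modules, rather than through an abstract simple killed by $\Gamma$, is exactly what sidesteps the weak-equivariance problem you run into. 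If you want to salvage your contradiction argument, you would need to either (i) replace the $\Ocat$-equivalence hypothesis on the slice by full abelian localization there (as the paper does via induction), so that $\underline{\Gamma}(\underline{M})=0$ forces $\underline{M}=0$ with no $\Ocat$-membership required, or (ii) show that the slicing of a simple module killed by $\Gamma$ can be arranged to produce a weakly $\underline{\nu}(\C^\times)$-equivariant object; neither is addressed in your sketch.
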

\begin{proof}
Pick $\lambda$ and $\theta$ as in the statement of the lemma. We already know
that $\Gamma^\theta_\lambda$ is exact, by Theorem \ref{Thm:exactness_precise}.

By Lemma \ref{Lem:singular_hyperplane_slice}, $\eta(\lambda)$ is regular for every
slice.
So we can  assume that $\underline{\Gamma}$ is an equivalence
$\operatorname{Coh}(\underline{\A}_{\underline{\lambda}}^{\underline{\theta}})
\rightarrow \underline{\A}_{\underline{\lambda}}\operatorname{-mod}$
for all proper slices.

Now pick $\chi$ in the chamber of $\theta$ such that abelian
localization holds for $\A_{\lambda+\chi}^\theta$.
Since abelian localization holds for $\A_{\lambda+\chi}^\theta$ and $\Gamma_{\lambda}^\theta$
is exact, we see that under the identification of $\Coh(\A_{\lambda}^\theta)$
with $\A_{\lambda+\chi}\operatorname{-mod}$ as in Corollary \ref{Cor:abel_loc_bimod}, we have $\Loc_\lambda^\theta=\A_{\lambda,\chi}\otimes_{\A_\lambda}\bullet$
and $\Gamma_\lambda^\theta=\A_{\lambda+\chi,-\chi}\otimes_{\A_{\lambda+\chi}}\bullet$.
This is a consequence of Remark \ref{Rem:translation_coincidence}
and Lemma \ref{Lem:fun_iso1}. In particular, it follows that
$\A_{\lambda+\chi,-\chi}\otimes_{\A_{\lambda+\chi}}\A_{\lambda,\chi}
\xrightarrow{\sim} \A_\lambda$. We need to show that
\begin{equation}\label{eq:another_bimod_homom}\A_{\lambda,\chi}\otimes_{\A_{\lambda}}\A_{\lambda+\chi,-\chi}
\rightarrow \A_{\lambda+\chi}\end{equation}
is an isomorphism. Note that, since $\Gamma_\lambda^\theta:\Ocat_\nu(\A_\lambda^\theta)
\rightarrow \Ocat_\nu(\A_\lambda)$ is an equivalence, we have
\begin{equation}\label{eq:another_homom}\A_{\lambda,\chi}\otimes_{\A_{\lambda}}\A_{\lambda+\chi,-\chi}
\otimes_{\A_{\lambda+\chi}} M
\xrightarrow{\sim} M\end{equation}
for all $M\in \Ocat_\nu(\A_{\lambda+\chi})$, in particular, for all finite dimensional modules.


Now we claim that the kernel and the cokernel of (\ref{eq:another_bimod_homom})
are finite dimensional.
The restrictions (to a proper slice $\underline{Y}$) of
$\A_{\lambda,\chi}$ and $\A_{\lambda+\chi,-\chi}$ are $\underline{\A}_{\eta(\lambda),\eta(\chi)},
\underline{\A}_{\eta(\lambda+\chi),-\eta(\chi)}$ by Section
\ref{SSS_res_fun}. Since abelian localization holds for
$\underline{\A}^{\eta(\theta)}_{\eta(\lambda)},
\underline{\A}^{\eta(\theta)}_{\eta(\lambda+\chi)}$, tensoring with the bimodules $\underline{\A}_{\eta(\lambda),\eta(\chi)},
\underline{\A}_{\eta(\lambda+\chi),-\eta(\chi)}$ give mutually quasi-inverse equivalences
between the categories of all modules at parameters $\eta(\lambda+\chi),\eta(\lambda)$,
see Lemma \ref{Lem:abloc_Morita} and Remark \ref{Rem:usual_bimod_another}.
In particular, the analog of (\ref{eq:another_bimod_homom}) is an isomorphism.
Since this holds for any proper slice, we conclude that the claim in the beginning
of the paragraph holds.

Now we can prove that (\ref{eq:another_bimod_homom}) is an isomorphism. As we have remarked
above in this proof,
the functor $[\A_{\lambda,\chi}\otimes_{\A_{\lambda}}\A_{\lambda+\chi,-\chi}]\otimes_{\A_{\lambda+\chi}}\bullet$
is the identity endofunctor of $\A_\lambda\operatorname{-mod}_{fd}$. Since the cokernel of
(\ref{eq:another_bimod_homom}) is finite dimensional,
it follows that (\ref{eq:another_bimod_homom}) is surjective.

Let $K$ denote the kernel of (\ref{eq:another_bimod_homom}). It is finite dimensional. Restricting
the bimodule $\A_{\lambda,\chi}\otimes_{\A_{\lambda}}\A_{\lambda+\chi,-\chi}$
 to $Y^{reg}$
and taking the global sections of the restriction, we see that $K$ must split as a direct summand.
Since $[\A_{\lambda,\chi}\otimes_{\A_{\lambda}}\A_{\lambda+\chi,-\chi}]\otimes_{\A_{\lambda+\chi}}\bullet$
is the identity endofunctor of $\A_\lambda\operatorname{-mod}_{fd}$, we see that
the tensor product of $K$ with any finite dimensional $\A_\lambda$-module is zero.
Since $K$ itself is finite dimensional, we conclude that $K=0$.

So $\A_{\lambda,\chi},\A_{\lambda+\chi,-\chi}$ are mutually inverse Morita equivalence bimodules.
By Lemma \ref{Lem:abloc_Morita} and Remark \ref{Rem:usual_bimod_another}, abelian localization holds for $\A_\lambda^\theta$.
\end{proof}

\end{document}